\definecolor{darkred}{RGB}{139,0,0}
\definecolor{darkblue}{RGB}{0,0,139}
\definecolor{darkgreen}{RGB}{0,100,0}
\newcommand{\listofsymbolsname}{List of Symbols} 
	\def\MR#1{}
\DeclareMathAlphabet{\mathpzc}{OT1}{pzc}{m}{it}
\newcommand{\catsingle}[1]{\ensuremath{\mathscr{#1}}}
\newcommand{\cat}[1]{\ensuremath{\mathsf{#1}}}
\newcommand{\icat}[1]{\ensuremath{\mathscr{#1}}}
\newcommand{\oC}{\ensuremath{\mathrm{C}}}
\newcommand{\oH}{\ensuremath{\mathrm{H}}}
\newcommand{\oO}{\ensuremath{\mathrm{O}}}
\newcommand{\bfN}{\ensuremath{\mathbf{N}}}
\newcommand{\bfR}{\ensuremath{\mathbf{R}}}
\newcommand{\bfZ}{\ensuremath{\mathbf{Z}}}
\newcommand{\bfQ}{\ensuremath{\mathbf{Q}}}
\newcommand{\bfS}{\ensuremath{\mathbf{S}}}
\newcommand{\cA}{\ensuremath{\catsingle{A}}}
\newcommand{\cB}{\ensuremath{\catsingle{B} }}
\newcommand{\cC}{\ensuremath{\catsingle{C}}}
\newcommand{\cD}{\ensuremath{\catsingle{D}}}
\newcommand{\cE}{\ensuremath{\catsingle{E}}}
\newcommand{\cM}{\ensuremath{\catsingle{M}}}
\newcommand{\cO}{\ensuremath{\catsingle{O}}}
\newcommand{\cP}{\ensuremath{\catsingle{P}}}
\newcommand{\cS}{\ensuremath{\catsingle{S}}}
\newcommand{\cU}{\ensuremath{\catsingle{U}}}
\newcommand{\cW}{\ensuremath{\catsingle{W}}}
\newcommand{\catA}{\ensuremath{\cat{A}}}
\newcommand{\catB}{\ensuremath{\cat{B}}}
\newcommand{\catC}{\ensuremath{\cat{C}}}
\newcommand{\catS}{\ensuremath{\cat{S}}}
\newcommand{\catTop}{\ensuremath{\cat{Top}}}
\newcommand{\Diff}{\ensuremath{\mathrm{Diff}}}
\newcommand{\BDiff}{\ensuremath{\mathrm{BDiff}}}
\newcommand{\BHomeo}{\ensuremath{\mathrm{BHomeo}}}
\newcommand{\Homeo}{\ensuremath{\mathrm{Homeo}}}
\newcommand{\hAut}{\ensuremath{\mathrm{hAut}}}
\newcommand{\BhAut}{\ensuremath{\mathrm{BhAut}}}
\newcommand{\Emb}{\ensuremath{\mathrm{Emb}}}
\newcommand{\holim}{\ensuremath{\mathrm{holim}}}
\newcommand{\OO}{\mathrm{O}}
\newcommand{\Hom}{\mathrm{Hom}}
\newcommand{\GL}{\mathrm{GL}}
\newcommand{\Map}{\mathrm{Map}}
\newcommand{\id}{\mathrm{id}}
\newcommand{\fr}{\mathrm{fr}}
\newcommand{\onefr}{\mathrm{1{-}fr}}
\newcommand{\im}{\mathrm{im}}
\newcommand{\Bun}{\mathrm{Bun}}
\newcommand{\hofib}{\mathrm{hofib}}
\newcommand{\inc}{\mathrm{inc}}
\newcommand{\diag}{\mathrm{diag}}
\newcommand{\pr}{\mathrm{pr}}
\newcommand{\stst}{\mathrm{st}}
\newcommand{\colim}{\mathrm{colim}}
\newcommand{\hocolim}{\mathrm{hocolim}}
\newcommand{\Aut}{\mathrm{Aut}}
\newcommand{\BAut}{\mathrm{BAut}}
\newcommand{\HCG}{\mathrm{HCG}}
\newcommand{\GC}{\mathrm{GC}}
\newcommand{\op}{\mathrm{op}}
\newcommand{\geo}{\mathrm{geo}}
\newcommand{\Fr}{\mathrm{Fr}}
\newcommand{\sur}{\mathrm{sur}}
\newcommand{\interior}{\mathrm{int}}
\newcommand{\ev}{\mathrm{ev}}
\newcommand{\Wh}{\mathrm{Wh}}
\newcommand{\tr}{\mathrm{tr}}
\newcommand{\nonunital}{\mathrm{nu}}
\newcommand{\quasiunital}{\mathrm{qu}}
\newcommand{\fib}{\mathrm{fib}}
\newcommand{\Sing}{\mathrm{Sing}}
\newcommand{\res}{\mathrm{res}}
\newcommand{\ind}{\mathrm{ind}}
\newcommand{\glue}{\mathrm{glue}}
\newcommand{\Latch}{\mathrm{Latch}}
\newcommand{\Match}{\mathrm{Match}}
\newcommand{\TOP}{\mathrm{Top}}
\newcommand{\BTOP}{\mathrm{BTop}}
\newcommand{\BO}{\mathrm{BO}}
\newcommand{\BSpin}{\mathrm{BSpin}}
\newcommand{\End}{\mathrm{End}}
\newcommand\unl{}
\DeclareRobustCommand\unl[1]{{\underline{#1}}}
\newcommand{\inj}{\mathrm{inj}}
\newcommand{\can}{\mathrm{can}}
\newcommand{\wall}{\mathrm{wall}}
\newcommand{\ch}{\mathrm{ch}}
\newcommand{\tch}{\mathrm{tch}}
\newcommand{\lab}{\mathrm{lab}}
\newcommand{\wlab}{\mathrm{wlab}}
\newcommand{\coll}{\mathrm{coll}}
\newcommand{\coh}{\mathrm{coh}}
\newcommand{\act}{\mathrm{act}}
\newcommand{\yon}{y}
\newcommand{\grtp}{\mathrm{grtp}}
\newcommand{\rep}{\mathrm{rep}}
\newcommand{\ab}{\mathrm{ab}}
\newcommand{\Mul}{\mathrm{Mul}}
\newcommand{\rev}{\mathrm{rev}}
\newcommand{\Dend}{\Omega}
\newcommand{\rDend}{\smash{\overline{\Omega}}}
\newcommand{\PSh}{\ensuremath{\mathrm{PSh}}}
\newcommand{\DiscInf}{\ensuremath{\icat{D}\mathrm{isc}}}
\newcommand{\Man}{\ensuremath{\cat{Man}}}
\newcommand{\Manc}{\ensuremath{\cat{Man}^{\mathrm{c}}}}
\newcommand{\ManInf}{\ensuremath{\icat{M}\mathrm{an}}}
\newcommand{\Fin}{\ensuremath{\cat{Fin}}}
\newcommand{\Fun}{\ensuremath{\mathrm{Fun}}}
\newcommand{\ALG}{\ensuremath{\mathrm{ALG}}}
\newcommand{\Alg}{\ensuremath{\mathrm{Alg}}}
\newcommand{\Mon}{\ensuremath{\mathrm{Mon}}}
\newcommand{\CMon}{\ensuremath{\mathrm{CMon}}}
\newcommand{\Nat}{\ensuremath{\cat{Nat}}}
\newcommand{\BordInf}{\ensuremath{\icat{B}\mathrm{ord}}}
\newcommand{\ncBord}{\ensuremath{\cat{ncBord}}}
\newcommand{\ncBordInf}{\ensuremath{\mathrm{nc}\icat{B}\mathrm{ord}}}
\newcommand{\ModInf}{\ensuremath{\icat{M}\mathrm{od}}}
\newcommand{\Gap}{\ensuremath{\cat{Gap}}}
\newcommand{\Kan}{\ensuremath{\cat{Kan}}}
\newcommand{\GapInf}{\ensuremath{\icat{G}\mathrm{ap}}}
\newcommand{\sCat}{\ensuremath{\cat{sCat}}}
\newcommand{\Cat}{\ensuremath{\mathrm{Cat}}}
\newcommand{\Bimod}{\ensuremath{\mathrm{BMod}}}
\newcommand{\BMod}{\ensuremath{\mathrm{BMod}}}
\newcommand{\Cocart}{\ensuremath{\mathrm{Cocart}}}
\newcommand{\Assoc}{\ensuremath{\icat{A}\mathrm{ssoc}}}
\newcommand{\Ass}{\ensuremath{\mathrm{Ass}}}
\newcommand{\CatInf}{\ensuremath{\icat{C}\mathrm{at}_\infty}}
\newcommand{\CatInfS}{\ensuremath{\cat{Cat}_\infty}}
\newcommand{\CatInfTwo}{\ensuremath{\icat{C}\mathrm{at}_{(\infty,2)}}}
\newcommand{\SPAN}{\ensuremath{\mathrm{SPAN}}}
\newcommand{\COSPAN}{\ensuremath{\mathrm{COSPAN}}}
\newcommand{\CSS}{\ensuremath{\icat{C}\mathrm{SS}}}
\newcommand{\Mod}{\ensuremath{\icat{M}\mathrm{od}}}
\newcommand{\grp}{\ensuremath{{\mathrm{grp}}}}
\newcommand{\OpdInf}{\ensuremath{{\icat{O}\mathrm{pd}_\infty}}}
\newcommand{\sOp}{\ensuremath{{s\icat{O}\mathrm{p}}}}
\newcommand{\ra}{\rightarrow}
\newcommand{\la}{\leftarrow}
\newcommand{\lra}{\longrightarrow}
\newcommand{\lla}{\longleftarrow}
\newcommand{\xra}[1]{\xrightarrow{#1}}
\newcommand{\xlra}[1]{\overset{#1}{\longrightarrow}}
\newcommand{\xlla}[1]{\overset{#1}{\longleftarrow}}
\newcommand{\longhookrightarrow}{\lhook\joinrel\longrightarrow}
\newcommand{\rbr}{\hspace{.1em}\rrparenthesis}
\newcommand{\lbr}{\llparenthesis\hspace{.1em}}
\newcommand\smallsquare{{\mathbin{\text{\raise0.17ex\hbox{\scalebox{.7}{$\blacksquare$}}}}}}
\newcommand{\modulo}[1]{\ (\mathrm{mod}\ #1)}
\newcommand{\circled}[1]{\raisebox{.5pt}{\textcircled{\raisebox{-.9pt} {#1}}}}
\newtheorem{bigthm}{Theorem}
\newtheorem{bigcor}[bigthm]{Corollary}
\newtheorem{thm}{Theorem}[section]
\newtheorem{lem}[thm]{Lemma}
\newtheorem{prop}[thm]{Proposition}
\newtheorem{cor}[thm]{Corollary}
\theoremstyle{definition}
\newtheorem{dfn}[thm]{Definition}
\newtheorem*{nconvention}{Convention}
\newtheorem{convention}[thm]{Convention}
\theoremstyle{remark}
\newtheorem{ex}[thm]{Example}
\newtheorem*{nex}{Example}
\newtheorem{rem}[thm]{Remark}
\newtheorem*{nrem}{Remark}
\newtheorem{construction}[thm]{Construction}
\newglossaryentry{catinf}{%
	name={\ensuremath{\CatInf}},
	description={$\infty$-category of $\infty$-categories},
	type=symbols
}
\newglossaryentry{sinf}{%
	name={\ensuremath{\cS}},
	description={$\infty$-category of spaces},
	type=symbols
}
\newglossaryentry{ncoh}{%
	name={\ensuremath{N_\coh}},
	description={Coherent nerve},
	type=symbols
}
\newglossaryentry{homcat}{%
	name={\ensuremath{h}},
	description={Homotopy category},
	type=symbols
}
 \newglossaryentry{cocartpush}{%
	name={\ensuremath{(-)_!}},
	description={Cocartesian pushforward},
	type=symbols
}
\newglossaryentry{fin}{%
	name={\ensuremath{\Fin_*}},
	description={Category of pointed finite sets},
	type=symbols
}
\newglossaryentry{langlerangle}{%
	name={\ensuremath{\langle p \rangle}},
	description={Pointed finite set $\{1,2,\ldots,p,\ast\}$ in $\Fin_\ast$},
	type=symbols
}
\newglossaryentry{delta}{%
	name={\ensuremath{\Delta}},
	description={Category of non-empty totally ordered finite sets},
	type=symbols
}
\newglossaryentry{deltainj}{%
	name={\ensuremath{\Delta_\inj}},
	description={Wide sub-category of $\Delta$ of injective maps},
	type=symbols
}
\newglossaryentry{brpbr}{%
	name={\ensuremath{[p]}},
	description={Totally ordered finite set $(0<1<\cdots<p)$ in $\Delta$},
	type=symbols
}
\newglossaryentry{gap}{%
	name={\ensuremath{\Gap}},
	description={Category of gaps},
	type=symbols
}
\newglossaryentry{lbrrbr}{%
	name={\ensuremath{\lbr p \rbr}},
	description={Totally ordered set $(L<1<\cdots<p<R)$ in $\Gap$},
	type=symbols
}
\newglossaryentry{gapsur}{%
	name={\ensuremath{\Gap_\sur}},
	description={Wide sub-category of $\Gap$ of surjective maps},
	type=symbols
}
\newglossaryentry{caticat}{%
	name={\ensuremath{\Cat(\icat{C})}},
	description={$\infty$-category of double $\infty$-categories},
	type=symbols
}
\newglossaryentry{monicat}{%
	name={\ensuremath{\Mon(\icat{C})}},
	description={$\infty$-category of monoid objects in $\icat{C}$},
	type=symbols
}
\newglossaryentry{catnuicat}{%
	name={\ensuremath{\Cat_\nonunital(\icat{C})}},
	description={$\infty$-category of non-unital double $\infty$-categories},
	type=symbols
}
\newglossaryentry{monnuicat}{%
	name={\ensuremath{\Mon_\nonunital(\icat{C})}},
	description={$\infty$-category of non-unital monoid objects in $\icat{C}$},
	type=symbols
}
\newglossaryentry{cmonnuicat}{%
	name={\ensuremath{\CMon(\icat{C})}},
	description={$\infty$-category of commutative monoid objects in $\icat{C}$},
	type=symbols
}
\newglossaryentry{cabmappingcat}{%
	name={\ensuremath{\cC_{A,B}}},
	description={Mapping $\infty$-category from $A$ to $B$ in double $\infty$-category $\icat{C}$},
	type=symbols
}
\newglossaryentry{catquc}{%
	name={\ensuremath{\Cat_\quasiunital(\cC)}},
	description={$\infty$-category of quasi-unital category objects in $\cC$},
	type=symbols
}
\newglossaryentry{simeq2}{%
	name={\ensuremath{(-)^{(\infty,1)}}},
	description={Functor extracting $\infty$-category from double $\infty$-category},
	type=symbols
}
\newglossaryentry{infty2}{%
	name={\ensuremath{(-)^{(\infty,2)}}},
	description={Functor extracting $(\infty,2)$-category from double $\infty$-category},
	type=symbols
}
\newglossaryentry{infty1}{%
	name={\ensuremath{(-)^{\simeq_2}}},
	description={Functor extracting $\infty$-category from $(\infty,2)$-category},
	type=symbols
}
\newglossaryentry{psh}{%
	name={\ensuremath{\PSh(\cC)}},
	description={$\infty$-category of $\cS$-valued presheaves on $\cC$},
	type=symbols
}
\newglossaryentry{yon}{%
	name={\ensuremath{\yon}},
	description={Yoneda embedding},
	type=symbols
}
\newglossaryentry{mul}{%
	name={\ensuremath{\Mul_\cO}},
	description={Spaces of multi-operations of an $\infty$-operad $\cO$},
	type=symbols
}
\newglossaryentry{circo}{%
	name={\ensuremath{\circ_\cO}},
	description={Operadic composition map of an $\infty$-operad $\cO$},
	type=symbols
}
\newglossaryentry{endc}{%
	name={\ensuremath{\End_\cC(x)}},
	description={Endomorphism operad of $x$ in $\cC$},
	type=symbols
}
\newglossaryentry{lambda}{%
	name={\ensuremath{\Lambda_{/[p]}}},
	description={Full subcategory of cellular maps in $\Delta_{/[p]}$},
	type=symbols
}
\newglossaryentry{assc}{%
	name={\ensuremath{\Ass(\cC)}},
	description={$\infty$-category of associative algebra objects in $\cC$},
	type=symbols
}
\newglossaryentry{bimodc}{%
	name={\ensuremath{\Bimod(\cC)}},
	description={$\infty$-category of bimodule objects in $\cC$},
	type=symbols
}
\newglossaryentry{uab}{%
	name={\ensuremath{U_{A,B}}},
	description={Forgetful functor from $(A,B)$-modules},
	type=symbols
}
\newglossaryentry{fab}{%
	name={\ensuremath{F_{A,B}}},
	description={Free $(A,B)$-bimodule functor},
	type=symbols
}
\newglossaryentry{prealg}{%
	name={\ensuremath{\overline{\ALG}(\cC)}},
	description={Pre-Morita double $\infty$-category of $\cC$},
	type=symbols
}
\newglossaryentry{alg}{%
	name={\ensuremath{\ALG(\cC)}},
	description={Morita double $\infty$-category of $\cC$},
	type=symbols
}
\newglossaryentry{rhd}{%
	name={\ensuremath{(-)^{\rhd}}},
	description={Cocone construction},
	type=symbols
}
\newglossaryentry{cospan}{%
	name={\ensuremath{\COSPAN^+(\cC)}},
	description={Double $\infty$-category of cospans in $\cC$},
	type=symbols
}
\newglossaryentry{Efunctor}{%
	name={\ensuremath{E}},
	description={Functor from bordism category of presheaf Morita category},
	type=symbols
}
\newglossaryentry{ncbordnu}{%
	name={\ensuremath{\ncBord(d)^{\nonunital}}},
	description={Non-compact $d$-bordism non-unital double $\Kan$-enriched category},
	type=symbols
}
\newglossaryentry{ncbordinfnu}{%
	name={\ensuremath{\ncBordInf(d)^{\nonunital}}},
	description={Non-compact $d$-bordism non-unital double $\infty$-category},
	type=symbols
}
\newglossaryentry{wmu}{%
	name={\ensuremath{(W,\mu)}},
	description={$[p]$-walled $d$-manifold $W$},
	type=symbols
}
\newglossaryentry{mand}{%
	name={\ensuremath{\Man_d^\otimes}},
	description={Monoidal $\Kan$-enriched category of $d$-manifolds},
	type=symbols
}
\newglossaryentry{maninf}{%
	name={\ensuremath{\ManInf_d^\otimes}},
	description={Monoidal $\infty$-category of $d$-manifolds},
	type=symbols
}
\newglossaryentry{wall}{%
	name={\ensuremath{\wall(W,\mu)}},
	description={Submanifold of $(W,\mu)$ of walls},
	type=symbols
}
\newglossaryentry{chamber}{%
	name={\ensuremath{\ch(W,\mu)}},
	description={Submanifold of $(W,\mu)$ of chambers},
	type=symbols
}
\newglossaryentry{thickchamber}{%
	name={\ensuremath{\tch(W,\mu)}},
	description={Submanifold of $(W,\mu)$ of thickened chambers},
	type=symbols
}
\newglossaryentry{collar}{%
	name={\ensuremath{\coll(W,\mu)}},
	description={Submanifold of $(W,\mu)$ of collars},
	type=symbols
}
\newglossaryentry{lab}{%
	name={\ensuremath{\lab_\alpha(W,\mu)}},
	description={Submanifold of $(W,\mu)$ of pieces labelled by $\alpha$},
	type=symbols
}
\newglossaryentry{wlab}{%
	name={\ensuremath{\wlab_\alpha(W,\mu)}},
	description={Submanifold of $(W,\mu)$ of thick walls labelled by $\alpha$},
	type=symbols
}
\newglossaryentry{gapkan}{%
	name={\ensuremath{\unl{\Gap}_{\lbr p \rbr/}}},
	description={$\Kan$-enriched thickening of $\Gap_{\lbr p \rbr/}$},
	type=symbols
}
\newglossaryentry{gapinf}{%
	name={\ensuremath{\unl{\GapInf}_{\lbr p \rbr/}}},
	description={$\infty$-categorical thickening of $\Gap_{\lbr \bullet \rbr/}$},
	type=symbols
}
\newglossaryentry{psi}{%
	name={\ensuremath{E^{\geo}}},
	description={Functor from bordism category to manifold pre-Morita category},
	type=symbols
}
\newglossaryentry{overlinee}{%
	name={\ensuremath{\overline{E}}},
	description={Functor from bordism category to presheaf pre-Morita category},
	type=symbols
}
\newglossaryentry{borddcat}{%
	name={\ensuremath{\BordInf(d)}},
	description={Compact $d$-dimensional bordism double $\infty$-category},
	type=symbols
}
\newglossaryentry{borddcatbdync}{%
	name={\ensuremath{\ncBordInf(d)^{\partial}}},
	description={Noncompact $d$-bordism double $\infty$-category with boundary},
	type=symbols
}
\newglossaryentry{borddcatbdy}{%
	name={\ensuremath{\BordInf(d)^{\partial}}},
	description={Compact $d$-bordism double $\infty$-category with boundary},
	type=symbols
}
\newglossaryentry{theta}{%
	name={\ensuremath{\theta}},
	description={Tangential structure},
	type=symbols
}
\newglossaryentry{ncborddcattheta}{%
	name={\ensuremath{\ncBordInf^{\theta}(d)}},
	description={Noncompact $d$-bordism double $\infty$-category with $\theta$-structure},
	type=symbols
}
\newglossaryentry{borddcattheta}{%
	name={\ensuremath{\BordInf^{\theta}(d)}},
	description={Compact $d$-bordism double $\infty$-category with $\theta$-structure},
	type=symbols
}
\newglossaryentry{modd}{%
	name={\ensuremath{\ModInf(d)}},
	description={Double $\infty$-category of bimodules in $\PSh(\DiscInf_d)$},
	type=symbols
}
\newglossaryentry{nullbordism}{%
	name={\ensuremath{{\BordInf(d)_{P}}}},
	description={$\infty$-groupoid of null bordisms of a compact $(d-1)$-manifold $P$},
	type=symbols
}
\newglossaryentry{rightbordism}{%
	name={\ensuremath{{\Mod(d)_{A}}}},
	description={$\infty$-category of right-modules over $A$ in $\PSh(\DiscInf_d)$},
	type=symbols
}
\newglossaryentry{sdisc}{%
	name={\ensuremath{S^{\DiscInf}_P(X)}},
	description={$\DiscInf$-structure space of a right-$E_{P \times I}$-module $X$},
	type=symbols
}
\newglossaryentry{sdiscpartial}{%
	name={\ensuremath{S^{\DiscInf}_\partial(W)}},
	description={$\DiscInf$-structure space of the right-$E_{\partial W \times I}$-module $E_W$},
	type=symbols
}
\newglossaryentry{bordism}{%
	name={\ensuremath{\leadsto}},
	description={A bordism},
	type=symbols
}
\newglossaryentry{whalf}{%
	name={\ensuremath{W(m,k]}},
	description={Part of bordism $W$ with $i$-handles for $m < i \leq k$},
	type=symbols
}
\newglossaryentry{wfull}{%
	name={\ensuremath{W[m,k]}},
	description={Part of bordism $W$ with $i$-handles for $m \leq i \leq k$},
	type=symbols
}
\newglossaryentry{hmanc}{%
	name={\ensuremath{h\Manc_d}},
	description={$1$-category of compact $d$-manifolds},
	type=symbols
}
\newglossaryentry{finn}{%
	name={\ensuremath{\Fin^{\bfN_0}_\ast}},
	description={$\infty$-operad of pointed finite sets with $\bfN_0$-grading on morphisms},
	type=symbols
}
\newglossaryentry{ppproduct}{%
	name={\ensuremath{\Ydown}},
	description={Pair-of-pants product},
	type=symbols
}
\newglossaryentry{tloc}{%
	name={\ensuremath{(-)_T}},
	description={Localisation at a set of primes $T$},
	type=symbols
}
\newglossaryentry{ratloc}{%
	name={\ensuremath{(-)_\bfQ}},
	description={Rationalisation},
	type=symbols
}
\newglossaryentry{mapopd}{%
	name={\ensuremath{\Map^h_\cat{Opd}(-,-)}},
	description={Derived mapping space of operads},
	type=symbols
}
\newglossaryentry{mapdend}{%
	name={\ensuremath{\Map_{\PSh(\Dend)}^h(-,-)}},
	description={Derived mapping space of dendroidal spaces},
	type=symbols
}
\newglossaryentry{maprdend}{%
	name={\ensuremath{\Map_{\PSh(\rDend)}^h(-,-)}},
	description={Derived mapping space of reduced dendroidal spaces},
	type=symbols
}
\newglossaryentry{dend}{%
	name={\ensuremath{\Dend}},
	description={Category of trees},
	type=symbols
}
\newglossaryentry{rdend}{%
	name={\ensuremath{\rDend}},
	description={Category of reduced trees},
	type=symbols
}
\newglossaryentry{dendn}{%
	name={\ensuremath{N_d}},
	description={Dendroidal nerve},
	type=symbols
}
\newglossaryentry{rdendk}{%
	name={\ensuremath{\rDend_{\leq k}}},
	description={Subcategory of trees whose vertices have $\leq k$ incoming edges},
	type=symbols
}
\newglossaryentry{latchk}{%
	name={\ensuremath{\Latch_k(\cO)}},
	description={$k$th latching object of reduced dendroidal space $\cO$},
	type=symbols
}
\newglossaryentry{matchk}{%
	name={\ensuremath{\Match_k(\cO)}},
	description={$k$th matching object of reduced dendroidal space $\cO$},
	type=symbols
}
\newglossaryentry{tower}{%
	name={\ensuremath{\{G_k\}}},
	description={Tower of groups $G_0 \leftarrow G_1 \leftarrow \cdots$},
	type=symbols
}
\begin{document}

\title{The Disc-structure space}

\author{Manuel Krannich}
\address{Department of Mathematics, Karlsruhe Institute of Technology, 76131 Karlsruhe, Germany}
\email{krannich@kit.edu}

\author{Alexander Kupers}
\address{Department of Computer and Mathematical Sciences, University of Toronto Scarborough, 1265 Military Trail, Toronto, ON M1C 1A4, Canada}
\email{a.kupers@utoronto.ca}

%\subjclass[2020]{57S05, 58D10, 18F50}

\begin{abstract}
We study the $\DiscInf$-structure space $S^{\DiscInf}_\partial(M)$ of a compact smooth manifold $M$. Informally speaking, this space measures the difference between $M$, together with its diffeomorphisms, and the diagram of ordered framed configuration spaces of $M$ with point-forgetting and point-splitting maps between them, together with its derived automorphisms. As the main results, we show that in high dimensions, the $\DiscInf$-structure space a) only depends on the tangential $2$-type of $M$, b) is an infinite loop space, and c) is nontrivial as long as $M$ is spin. The proofs involve intermediate results that may be of independent interest, including an enhancement of embedding calculus to the level of bordism categories, results on the behaviour of derived mapping spaces between operads under rationalisation, and an answer to a question of Dwyer and Hess in that we show that the map $\BTOP(d)\ra \BAut(E_d)$ is an equivalence if and only if $d$ is at most $2$.
\end{abstract}

\maketitle
\vspace{-.5cm}
\tableofcontents

\vspace{-0.7cm}
\section{Introduction}\label{sec:introduction}
The classification of closed smooth $d$-manifolds and families thereof---smooth fibre bundles---is one of the guiding problems of geometric topology. From a homotopy-theoretic perspective, it is the study of the $\infty$-groupoid\footnotemark[1] $\ManInf(d)^{\cong}$ of smooth closed $d$-manifolds and spaces of diffeomorphisms between them. A historically successful approach to relate---and partially reduce---the study of $\ManInf(d)^{\cong}$ in high dimensions to more homotopy-theoretic and algebraic questions goes by comparison to the $\infty$-groupoid $\cS^{\simeq}$ of spaces via the functor $\ManInf(d)^{\cong}\ra \cS^{\simeq}$ that assigns a manifold its homotopy type. For a given homotopy type $X$, one studies the fibre
\[
	S^{\cS}(X)\coloneqq \fib_X\big(\ManInf(d)^{\cong} \ra \cS^{\simeq}\big),
\]
which can be thought of as the space of manifold structures on $X$. The path components of this \emph{structure space} are equivalence classes of manifolds with a homotopy equivalence to $X$,
\[
	\pi_0\,S^{\cS}(X) = \frac{\left\{\text{\parbox{7cm}{\centering pairs $(M,\varphi)$ of a closed smooth $d$-manifold $M$ and \newline a homotopy equivalence $\varphi \colon M \to X$}}\right\}}{\parbox{7.5cm}{\centering $(M,\varphi) \sim (M',\varphi')\Leftrightarrow$ there exists a diffeomorphism $\alpha \colon M \to M'$ with $[\varphi' \circ \alpha]=[\varphi]\in\pi_0\,\Map_\cS(M,X),$}}
\] 
and the path component of $S^{\cS}(X)$ corresponding to such a pair $(M,\varphi)$ agrees with the identity component of the fibre $\hAut(M)/\Diff(M)$ of the map $\BDiff(M)\ra\BhAut(M)$ induced by considering diffeomorphisms as homotopy equivalences,
\[
	S^{\cS}(X)_{(M,\phi)} \simeq \big(\hAut(M)/\Diff(M)\big)_{\id}.
\]
Surgery theory and pseudoisotopy theory combine to provide an approximation to the structure space $S^{\cS}(X)$ up to extensions in terms of three infinite loop spaces---one in the realm of each, \emph{algebraic $K$-theory},  \emph{algebraic $L$-theory}, and \emph{stable homotopy theory} (see \cite{WWsurvey} for a survey). The unfortunate defect of this approach is that it really is only an approximation, in the sense that it can only capture a finite Postnikov truncation of $S^{\cS}(X)$ depending on the dimension.

\footnotetext[1]{This work is written $\infty$-categorically, so we treat homotopy types and $\infty$-groupoids as indistinguishable. In this introduction, readers unfamiliar with this principle may substitute topologically enriched categories or groupoids for $\infty$-categories or -groupoids; the former being related to homotopy types by taking classifying spaces.}

\medskip

Motivated by Goodwillie--Weiss' embedding calculus and factorisation homology, we pursue a different approach to relate the study of $\ManInf(d)^{\cong}$ to more homotopy-theoretic and algebraic questions, and we establish three fundamental properties of this alternative. Observing that the homotopy type of a manifold $M$ can be viewed as that of the space of ordered configurations of $k$ points in $M$ for $k=1$, this approach is motivated by the idea to remember the homotopy types of the configuration spaces for \emph{all} values of $k$, together with the natural point-forgetting maps between them. It is in fact beneficial to consider configuration spaces of thickened points which admit more natural maps between them, by ``splitting points''. To make this precise, one considers the $\infty$-category $\DiscInf_d$ of finite disjoint unions of $d$-dimensional Euclidean spaces, i.e.~$T\times \bfR^d$ for finite sets $T$, and spaces of smooth embeddings between them. A $d$-manifold $M$ gives rise to a presheaf $E_M\colon \DiscInf_d^{\op}\ra \cS$ on $\DiscInf_d$ with values in the $\infty$-category $\cS$ of spaces via
\begin{equation}\label{equ:presheaf-dfn}
	\smash{\DiscInf_d^\op\ni T\times \bfR^d\xmapsto{E_M}\Emb(T\times \bfR^d,M)\in\cS.}
\end{equation}
By taking derivatives at the centres, the space $E_M(T\times \bfR^d)$ is equivalent to the ordered configuration space of $k=|T|$ points in $M$ together with framings of the tangent space of $M$ at each of these points, and the homotopy type of the ordinary ordered configuration space of $k$ points in $M$ (in particular that of $M$ itself for $k=1$) can be recovered as the quotient by the $\Diff(\bfR^d)^{T} \simeq \OO(d)^T$-action on $E_M(T\times \bfR^d)$ obtained by functoriality. The assignment $M\mapsto E_M$ as in \eqref{equ:presheaf-dfn} is natural in embeddings of $M$, so it in particular defines a functor $E\colon \ManInf(d)^{\cong}\ra \PSh(\DiscInf_d)^\simeq$ to the $\infty$-groupoid of $\cS$-valued presheaves on $\DiscInf_d$. The fibre of this functor at a presheaf $X \colon \DiscInf^{\op}\ra \cS$
\begin{equation*}\label{equ:def-disc-structure-space}
	\smash{S^{\DiscInf}(X)\coloneqq \fib_X\big(\ManInf(d)^{\cong}\xra{E} \PSh(\DiscInf_d)^\simeq\big)}
\end{equation*}
is the eponymous \emph{$\DiscInf$-structure space of $X$}. Analogous to the more traditional structure space $S^{\cS}(X)$, the $\DiscInf$-structure space $S^{\DiscInf}(X)$ can be thought as a space of manifold structures, this time on a presheaf as opposed to just a homotopy type. Similar to before, the path components $\pi_0\,S^{\DiscInf}(X)$ are represented by pairs of a manifold with an equivalence of its presheaf to $X$,
\[
	\pi_0\,S^{\DiscInf}(X) = \frac{\left\{\text{\parbox{7cm}{\centering pairs $(M,\varphi)$ of a closed smooth $d$-manifold $M$ and \newline an equivalence of presheaves $\varphi \colon E_M \to X$}}\right\}}{\parbox{9cm}{\centering $(M,\varphi) \sim (M',\varphi')\Leftrightarrow$ there exists a diffeomorphism\newline $\alpha \colon M \to M'$ with $[\varphi' \circ E_\alpha]=[\varphi]\in\pi_0\,\Map_{\PSh(\DiscInf_d)}(E_M,X)$,}}
\] 
and the path component of $S^{\DiscInf}(X)$ corresponding to such a pair $(M,\varphi)$ agrees with the identity component of the fibre $\Aut(E_M)/\Diff(M)$ of the map $\BDiff(M)\ra \BAut(E_M)$ induced by $E$,
\[
	S^{\DiscInf}(X)_{(M,\varphi)} \simeq \big(\Aut(E_M)/\Diff(M)\big)_{\id}.
\]
In particular, the space $S^{\DiscInf}(X)$ is nonempty if and only if $X\simeq E_M$ for some closed smooth $d$-manifold $M$. If so, then $S^{\DiscInf}(X)\simeq S^{\DiscInf}(E_M)$ so nothing is lost by assuming $X=E_M$ in which case we abbreviate $S^{\DiscInf}(M)\coloneqq S^{\DiscInf}(E_M)$. These are the spaces we focus on in this work. Informally speaking, they measure by how many manifolds the presheaf $X=E_M$ is realised, and how much their diffeomorphism groups differ from the automorphism group of $X$.

\medskip

As the main results of this work, we establish three structural properties of $S^{\DiscInf}(M)$ that one could summarise by saying that for most choices of $M$
\begin{enumerate}[A)]
	\item $S^{\DiscInf}(M)$ depends only little on the manifold $M$,
	\item $S^{\DiscInf}(M)$ is an infinite loop space, and
	\item $S^{\DiscInf}(M)$ is nontrivial.
\end{enumerate}
We state these results in terms of a more general version $S_\partial^{\DiscInf}(M)$ for manifolds that may have boundary, which is crucial for our methods. We postpone its definition to \cref{sec:boundary-intro} below.

% modify the '\thesubsection' macro
\renewcommand\thesubsection{\Alph{subsection})} 

\subsection{Tangential $2$-type invariance}
To make the first property precise, recall that two manifolds $M$ and $N$, possibly with boundary, have the \emph{same tangential $2$-type} if there is a map $B\ra \BO$ so that the maps $M\ra \BO$ and $N\ra\BO$ classifying the stable tangent bundles of $M$ and $N$ admit lifts to maps $M\ra B$ and $N\ra B$ that are $2$-connected.

\begin{nex}
Choosing $B=\BSpin \times K(\pi,1)$, one sees that two spin manifolds $M$ and $N$ have the same tangential $2$-type if and only if their fundamental groupoids are equivalent. In particular, all simply connected spin manifolds have the same tangential $2$-type.
\end{nex} 

Our first main result is that in high dimensions, the $\DiscInf$-structure space $S^{\DiscInf}_\partial(M)$ depends only on the dimension $d$ and the tangential $2$-type of $M$.

\begin{bigthm}\label{bigthm:2-type-invariance}
For compact $d$-manifolds $M$ and $N$ with $d\ge5$ that have the same tangential $2$-type, there exists an equivalence $S_\partial^{\DiscInf}(M)\simeq S_\partial^{\DiscInf}(N)$.
\end{bigthm}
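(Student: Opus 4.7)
The strategy I would pursue has two distinct phases: a classical handle-theoretic reduction followed by an application of the bordism-categorical enhancement of embedding calculus announced as an intermediate result in the abstract. In the first phase, I use standard handle-exchange arguments (in the spirit of Wall and Kreck's modified surgery) to reduce to the case of a single elementary surgery: any two compact $d$-manifolds $M$ and $N$ of the same tangential 2-type with $d \geq 5$ can be connected by a finite zigzag of elementary cobordisms $M = M_0 \leadsto M_1 \leadsto \cdots \leadsto M_n = N$, where each step is given by a single handle of index $k$ with $3 \leq k \leq d-2$. The lower bound is precisely what preserves the tangential 2-type (handles of index $\leq 2$ can alter $\pi_1$ and the stable tangent data on the 2-skeleton), the upper bound is its dual, and the hypothesis $d \geq 5$ ensures that this range is non-empty and that the Whitney trick is available where needed. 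It thus suffices to prove $S^{\DiscInf}_\partial(M) \simeq S^{\DiscInf}_\partial(M')$ whenever $M'$ is obtained from $M$ by a single intermediate-index handle attachment.

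For the second phase, I would leverage the promised enhancement of embedding calculus to the bordism level. An elementary cobordism $W \colon M \leadsto M'$ of intermediate index lifts to a morphism in $\BordInf(d)^{\partial}$; pushing it through the enhanced functor $\overline{E}\colon \BordInf(d)^{\partial} \to \overline{\ALG}(\PSh(\DiscInf_d))$ produces a morphism in the pre-Morita category of presheaves on $\DiscInf_d$. The key claim to verify is that this morphism is invertible, i.e.\ that an attached $k$-handle with $3 \leq k \leq d-2$ is \emph{invisible} to the $\DiscInf$-presheaf once its attaching region in the boundary is accounted for. Granted this, one takes fibers of $E \colon \ManInf(d)^{\cong} \to \PSh(\DiscInf_d)^{\simeq}$ along the resulting equivalence in the target to obtain the desired equivalence of $\DiscInf$-structure spaces. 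The zigzag nature of the connection between $M$ and $N$ then chains together into the claimed equivalence.

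The main obstacle is clearly the second phase: one has to identify the class of bordisms whose $\overline{E}$-image becomes an equivalence in $\overline{\ALG}(\PSh(\DiscInf_d))$ and verify that intermediate-index handle attachments belong to this class. This will involve unpacking the pre-Morita structure of $\overline{\ALG}(\PSh(\DiscInf_d))$, interpreting $E_{(-)}$ as a presheaf-valued gluing construction, and crucially exploiting the convergence of embedding calculus---available precisely in the index range $3 \leq k \leq d-2$ when $d \geq 5$. I expect that the $(\infty,2)$-categorical (rather than merely $(\infty,1)$-categorical) structure of $\overline{E}$ will be essential, since the relevant invertibility is the existence of an adjoint-like inverse in the target Morita-type category, which is a genuinely $2$-categorical condition.
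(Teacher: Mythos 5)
Your phase-one reduction is broadly aligned with the paper's, which abstracts a general tangential $k$-type invariance result (\cref{thm:abstract-k-type-invariance}) whose hypothesis is that the functor inverts codimension-$0$ inclusions of relative handle type $[k+1,d]$; the paper's zigzag routes through closed doubles and $\theta$-bordism surgery (\cref{cor:theta-bordism-surgery}) rather than directly between $M$ and $N$ (whose boundaries may be unrelated), and the relevant handle range is $[3,d]$, not $[3,d-2]$. These are fixable details.

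The gap is in phase two. You want to show that $E_W$ is Morita-invertible as a $1$-morphism in $\Mod(d)^{(\infty,2)}$ and then ``take fibres along the resulting equivalence''. Neither step works. First, $E_W$ is not Morita-invertible for a nontrivial handle attachment: the natural candidate inverse $E_{\overline{W}}$ yields $E_W\otimes_{E_{Q\times I}}E_{\overline{W}}\simeq E_{W\cup_Q\overline{W}}$, the image of the \emph{double} of $W$, which is nothing like the unit bimodule $E_{P\times I}$. Second, even if $E_W$ were invertible, you could not conclude anything about the fibres, because the parallel horizontal map $(-)\cup_PW\colon\BordInf(d)_P\to\BordInf(d)_Q$ is manifestly not an equivalence of $\infty$-groupoids: its domain and codomain classify manifolds with entirely different boundaries. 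What \cref{prop:invariance-handles} actually proves---and what you need---is that the square
\[\begin{tikzcd}
\BordInf(d)_{P}\dar{E}\rar{(-)\cup_PW}& \BordInf(d)_{Q}\dar{E}\\
\ModInf(d)^{\rep,\simeq}_{E_{P \times I}}\rar{(-)\cup_{E_{P \times I}}E_W}& \ModInf(d)^{\rep,\simeq}_{E_{Q \times I}}
\end{tikzcd}\]
is a \emph{pullback}, despite neither horizontal arrow being an equivalence. The proof computes both vertical fibres as colimits over the source $\infty$-groupoids, funnels them into the initial object $P\times(-1,0]\cup_PW$ of the noncompact bordism category, and then uses descent for Weiss $\infty$-covers (\cref{prop:descent}), the isotopy extension theorem for embedding calculus (\cref{thm:isotopy-extension}), and Goodwillie--Klein convergence (\cref{cor:convergence}). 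You are right that the $(\infty,2)$-structure of $E$ is indispensable, but the reason is the isotopy extension theorem, formulated via the $2$-morphisms (embeddings) in $\ncBordInf(d)$---not an inversion of $E_W$ in the Morita $2$-category.
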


In particular, the $\DiscInf$-structure space of a spin $d$-manifold $M$ with $d\ge5$ only depends on the fundamental groupoid, so we in particular have $\smash{S_\partial^{\DiscInf}(M) \simeq S_\partial^{\DiscInf}(D^d)}$ if $M$ is simply connected. \cref{bigthm:2-type-invariance} also implies that $\smash{S_\partial^{\DiscInf}(M)}$ for a compact $d$-manifold $M$ does not depend on the smooth structure of $M$, since homeomorphic manifolds have equivalent tangential $2$-types (see \cref{rem:classification-2-types}).

\begin{nrem}
One ingredient in the above mentioned approximation to the conventional structure space $S^{\cS}_\partial(M)$ has a similar invariance property (namely, the $L$-theory part depends only on the fundamental groupoid), but the others depend more substantially on the homotopy type of $M$.
\end{nrem}

\begin{nrem}
Reformulated in terms of embedding calculus (see \cref{sec:emb-calc-intro2} for an outline of this relation), \cref{bigthm:2-type-invariance} is an extension of a result of Knudsen--Kupers \cite[6.23]{KnudsenKupers} which applies to certain path components of $S^{\DiscInf}_\partial(M)$ if $M$ is $2$-connected, of dimension $d\ge6$, and $\partial M=S^{d-1}$.
\end{nrem}

\subsection{Infinite loop space structure}
As previously mentioned, the more traditional structure space $\smash{S^{\cS}_\partial(M)}$ is an infinite loop space \emph{after a certain truncation} and \emph{up to extensions}. The $\DiscInf$-structure space $S^{\DiscInf}_\partial(M)$ on the other hand is in high dimensions an actual infinite loop space---no truncations or extensions are necessary. This is our second main result.

\begin{bigthm}\label{bigthm:infinite-loop-space}
For a compact manifold $M$ of dimension $d\ge8$, the space $S^{\DiscInf}_\partial(M)$ admits the structure of an infinite loop space.
\end{bigthm}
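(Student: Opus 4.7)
The plan is to realise $S^{\DiscInf}_\partial(M)$ as the (homotopy) fibre of a map of infinite loop spaces, using the enhancement of embedding calculus to bordism categories advertised in the introduction.

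First, extend $E$ to a symmetric monoidal functor of symmetric monoidal double $\infty$-categories
\[
	\overline{E}\colon \ncBordInf(d)^{\partial}\ra \ModInf(d)
\]
agreeing with the presheaf-valued embedding calculus on objects. Under this enhancement, a compact $d$-manifold $M$ with boundary is encoded as a 2-morphism and $\overline{E}$ sends it to $E_M$, viewed as a 2-morphism in $\ModInf(d)$; the structure space $S^{\DiscInf}_\partial(M)$ is then identified with the fibre over $M$ of the induced map of 2-morphism spaces. This identification is essentially the compatible bordism-theoretic repackaging of the definition already recorded in the glossary via right-$E_{\partial M\times I}$-modules.

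Next, I would equip the two 2-morphism spaces with infinite loop space structure. In a symmetric monoidal double $\infty$-category the 2-morphism space at fixed source and target carries two commuting compositions (vertical composition and the monoidal product), and the Eckmann--Hilton principle combines these with the remaining (horizontal) composition to produce an $E_\infty$-monoid structure. To force grouplikeness, stabilise by taking interior connect sum with copies of $S^p\times S^{d-p}$ for $3\le p\le d-3$; the hypothesis $d\ge 8$ opens up at least two such middle indices, so that Wall-style surgery arguments force invertibility in the colimit. The enhanced functor $\overline{E}$ carries manifold-side stabilisation to the corresponding stabilisation on the presheaf side, so in the colimit one obtains a map of infinite loop spaces.

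Finally, I would verify that $S^{\DiscInf}_\partial(M)$ is itself stable along these stabilisations, i.e. that each natural map
\[
	S^{\DiscInf}_\partial(M)\ra S^{\DiscInf}_\partial\big(M\#(S^p\times S^{d-p})\big)
\]
is an equivalence for $3\le p\le d-3$. Granting this, $S^{\DiscInf}_\partial(M)$ agrees with the fibre of the stabilised map of infinite loop spaces and inherits the desired infinite loop space structure.

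The principal obstacle I expect is this last step: establishing invariance of $S^{\DiscInf}_\partial(M)$ under the chosen stabilisation, compatibly on both sides of the defining fibre sequence. This is where the dimension hypothesis $d\ge 8$ is used most delicately, and is precisely the geometric content that the bordism-category enhancement of embedding calculus is designed to deliver; in particular one needs the stabilisation on the manifold side to induce, under $\overline{E}$, an equivalence of stabilised module-theoretic data, which is a nontrivial compatibility between the surgery arguments on the geometric side and the algebraic structure on the presheaf side.
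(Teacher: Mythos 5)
There is a genuine gap in the proposed construction of the $E_\infty$-structure, and it sits at the heart of the argument. You claim that for fixed source and target the 2-morphism space in a symmetric monoidal double $\infty$-category carries both vertical composition and the monoidal product as commuting operations, so that Eckmann--Hilton upgrades this to an $E_\infty$-structure. But the monoidal product is a map $\Map(A,B)\times\Map(A',B')\to\Map(A\otimes A',B\otimes B')$: it changes the endpoints, and so is not a binary operation on a fixed 2-morphism space unless $A$ and $B$ happen to be idempotent under $\otimes$. The same goes for horizontal composition. Thus the Eckmann--Hilton pairing you invoke simply does not exist, and no amount of stabilisation by interior connect sum with $S^p\times S^{d-p}$ will conjure it up; all that stabilisation does is enlarge the source/target objects of the 2-morphisms being considered.

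This is precisely why the paper does not argue via Eckmann--Hilton. The actual proof first reduces (by the tangential 2-type invariance of Theorem A, which you do not invoke) to the case $M=P\times D^{2n}$ with $P$ closed, then identifies the two spaces whose fibre computes $S^{\DiscInf}_\partial(P\times D^{2n})$ as algebras over a higher-dimensional analogue $\cW$ of Tillmann's surface operad, built from the bordisms $W_{g,k+1}=(S^n\times S^n)^{\sharp g}\setminus\interior(\sqcup D^{2n})$. The $E_\infty$-structure after group completion then comes from the theorem of Basterra--Bobkova--Ponto--Tillmann--Yaekel on \emph{operads with homological stability}, whose hypothesis (that the stabilised arity spaces all have the homology of the stabilised $0$-ary space) is verified using the stable homological stability theorem of Galatius--Randal-Williams. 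This is a substantive input that an abstract double-categorical argument cannot replace. The remaining work — showing group completion commutes with the fibre (again via tangential 2-type invariance) and that $S^{\DiscInf}_\partial(P\times D^{2n})$ is already group-like (via the $s$-cobordism theorem and Whitehead torsion) — is somewhat in the spirit of the ``stability'' step you anticipate as the main difficulty, but your proposal is missing the entire mechanism that would make the ambient map one of infinite loop spaces in the first place.
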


\begin{nrem}\,
\begin{enumerate}
\item The bound $d\ge8$ in \cref{bigthm:infinite-loop-space} is not optimal. It can for example be improved to $d\ge6$ if $M$ is simply connected and spin (see \cref{thm:oo-loop-general}). Further improvements are likely possible.
\item The $\DiscInf$-structure space $S_{\partial}^{\DiscInf}(M)$ extends to a space-valued functor on an $\infty$-category of compact $d$-manifolds and embeddings between them (see \cref{sec:sdisc-for-manifolds}), but our construction of the infinite loop space structure on $S_{\partial}^{\DiscInf}(M)$ has less functoriality (see \cref{rem:drawbacks-loop-space-structure}).
\end{enumerate}
\end{nrem}

\subsection{Nontriviality}
At this point a very optimistic reader may wonder whether the $\DiscInf$-structure spaces $S^{\DiscInf}_\partial(M)$ are just contractible, which would in particular say that the diffeomorphism group $\Diff(M)$ of a closed manifold $M$ is equivalent to the automorphism group $\Aut(E_M)$ of the associated presheaf. As our third main result, we show that this is never the case if one assumes the manifold to be spin and of dimension $d\ge5$.

\begin{bigthm}\label{bigthm:nontrivial}
For a compact spin $d$-manifold $M\neq\varnothing$ with $d\ge5$, the space $S^{\DiscInf}_\partial(M)$ is not contractible.
\end{bigthm}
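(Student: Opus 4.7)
The plan is to use Theorems~\ref{bigthm:2-type-invariance} and~\ref{bigthm:infinite-loop-space} to reduce to a small supply of model manifolds, and then to establish nontriviality for those models by exhibiting homotopy classes in $\BDiff_\partial$ that fail to be realised by automorphisms of the presheaf $E_M$.

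First I would invoke Theorem~\ref{bigthm:2-type-invariance} to replace $M$ with a more convenient compact spin $d$-manifold sharing its tangential $2$-type. Spin $2$-types of compact $d$-manifolds are indexed by a group $\pi$ together with a $2$-connected map to $\BSpin$; for each such $\pi$ one can realise the $2$-type by a model $M_\pi$ obtained by thickening a presentation $2$-complex of $\pi$, and one can arrange that $M_\pi$ contains an embedded disc $D^d$ in its interior. Extending diffeomorphisms and presheaf automorphisms of the disc by the identity produces a compatible map of fibre sequences relating $S^{\DiscInf}_\partial(D^d)$ to $S^{\DiscInf}_\partial(M_\pi)$, so it suffices to prove non-contractibility for $M=D^d$.

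For $M=D^d$, I would analyse the defining fibre sequence
\[
	S^{\DiscInf}_\partial(D^d) \longrightarrow \BDiff_\partial(D^d) \longrightarrow \BAut(E_{D^d}).
\]
The right-hand term should be essentially an automorphism space of the little $d$-disc operad $E_d$, since the presheaf $E_{D^d}$ together with its $\OO(d)^T$-action is the free right $E_d$-module on one generator. The paper's answer to the Dwyer--Hess question (an equivalence $\BTOP(d)\to\BAut(E_d)$ holds iff $d\leq 2$) provides concrete control over $\BAut(E_d)$ and its mapping spaces. On the other hand, results going through the Weiss fibre sequence and Waldhausen $K$-theory produce nontrivial rational classes in $\pi_*\BDiff_\partial(D^d)$ for spin $d\geq 5$, detected by rational Pontryagin data coming from the algebraic $K$-theory of $\bfZ$. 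The core of the argument is to verify that at least one of these classes fails to lift across the comparison $\BDiff_\partial(D^d) \to \BAut(E_{D^d})$. This should be arranged by combining the paper's rationalisation results for derived mapping spaces of operads (which pin down the rational homotopy type of $\BAut(E_d)$) with the $\infty$-categorical enhancement of embedding calculus to bordism categories (which identifies the comparison map up to rationalisation).

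The main obstacle will be the precise identification of $\BAut(E_{D^d})$ and of the comparison map from $\BDiff_\partial(D^d)$: one must check that it really factors through operadic automorphisms of $E_d$ and that, after rationalising, at least one of the known nontrivial Pontryagin-type classes escapes the image. Once this is secured for the disc, Theorem~\ref{bigthm:infinite-loop-space} promotes nontriviality of a single $\pi_k$ to nontriviality of the whole space, and Theorem~\ref{bigthm:2-type-invariance} transports the conclusion to every compact spin $M\neq\varnothing$ of dimension at least five.
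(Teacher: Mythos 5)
Your overall strategy is in the same spirit as the paper's: reduce to $D^d$ via tangential $2$-type invariance, relate $S^{\DiscInf}_\partial(D^d)$ to operadic automorphisms of $E_d$ via Boavida de Brito--Weiss, and exploit the rationalisation results for operad mapping spaces plus the Fresse--Turchin--Willwacher graph complex computations. However there are two substantive gaps in the proposal as stated.

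First, the rationalisation machinery does \emph{not} ``pin down the rational homotopy type of $\BAut(E_d)$,'' and the bordism-category enhancement of embedding calculus does \emph{not} ``identify the comparison map up to rationalisation.'' What \cref{thm:haut-uncountable-or-iso} actually yields is a dichotomy: for each degree $i$, either some $\pi_*(\Aut^h(E_d))$ is uncountable near degree $i$, or the natural map $\pi_i(\Aut^h(E_d))_\bfQ\to\pi_i(\Aut^h(E_d^\bfQ))$ is an isomorphism. Rationalisation does not commute with the Göppl tower in general, and this failure can only be absorbed into the uncountability alternative. Your plan silently assumes the second branch always holds, which is not established (and for all we know could be false). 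The paper handles both branches: in the uncountable case it compares against the countability of $\pi_*\TOP(d)$ (\cref{cor:btopd-countable}), while only in the Mittag--Leffler case does it compare against Fresse--Turchin--Willwacher and the rational homotopy of $\TOP(d)$ (\cref{thm:topd-surjectivity}). Without addressing the uncountable branch your argument is incomplete.

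Second, and relatedly, the paper's comparison does not analyse what the map $\TOP(d)\to\Aut^h(E_d)$ does to any specific class: it merely shows that in some fixed degree the source and target have homotopy groups of different cardinality or different rational dimension (see \cref{rem:map-doesnt-matter}). Your plan proposes to exhibit particular Pontryagin/$K$-theory classes in $\pi_*\BDiff_\partial(D^d)_\bfQ$ and check they ``fail to lift,'' which would require genuine control over the comparison map. That is both harder and, as the paper demonstrates, unnecessary; it also conflicts with the dichotomy issue above, since in the uncountable branch you have no handle on $\pi_*\BAut(E_{D^d})_\bfQ$ at all. Finally, two smaller points: the claim that $E_{D^d}$ is ``the free right $E_d$-module on one generator'' is off --- $E_{D^d}$ is a right $E_{\partial D^d\times I}$-module, and the identification with the fibre $\Aut^h(E_d)/\TOP(d)$ (after suitable looping) comes from the smoothing-theory argument of \cref{thm:sdisc-auted-topd}, not from a direct Morita-theoretic statement; and the reduction from $M$ to $D^d$ needs a retraction, as in \cref{cor:homotopy-retract}, rather than merely a ``compatible map of fibre sequences'' --- a map in one direction does not transport non-contractibility.
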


\begin{nrem}
There are partial results in low dimensions that complement \cref{bigthm:nontrivial}.
\begin{enumerate}
	\item For $d\le 2$, Theorem A of \cite{KrannichKupersSurfaces} implies $\smash{S^{\DiscInf}_\partial}(M)\simeq \ast$ (see Remark 1.1 (ii) loc.cit.).
	\item For $d=3$, we give several examples for which $\smash{S^{\DiscInf}_\partial}(M)$ is nontrivial, including $M=D^3$ and $M=S^3$ (see \cref{rem:3-manifolds}).
	\item For $d=4$, Theorem B of \cite{KnudsenKupers} implies that $\pi_0\,\smash{S^{\DiscInf}_\partial}(M)$ surjects onto the set of isotopy classes of smooth structures on $M$ as long as $M$ is $1$-connected and closed, so $S^{\DiscInf}_\partial(M)$ is nontrivial for all such $M$ that admit more than one smooth structure.
\end{enumerate}
\end{nrem}

This concludes the summary of our three main results. In the remainder of this introduction, we briefly indicate how $S^{\DiscInf}_\partial(M)$ relates to embedding calculus, the little $d$-discs operad, and factorisation homology, and then give a summary of the proofs of the main results, where we also make good for the omitted definition of $S_\partial^{\DiscInf}(M)$ for manifolds with boundary.

% restore the default '\thesubsection' macro
\renewcommand\thesubsection{\thesection.\arabic{subsection}}
\setcounter{subsection}{0}

\subsection{Relation to embedding calculus, the $E_d$-operad, and factorisation homology}

\subsubsection{Embedding calculus}\label{sec:emb-calc-intro2}
Goodwillie and Weiss' \emph{embedding calculus} \cite{WeissImmersion,GoodwillieWeiss} is a device to study embeddings via their restrictions to submanifolds of the source that are diffeomorphic to $T\times\bfR^d$ for finite sets $T$. It has the form of an approximation to the space of embeddings
\begin{equation}\label{equ:emb-calc-intro}
	\Emb(W,W')\lra T_\infty\Emb(W,W')
\end{equation}
whose target is the limit of a tower of maps whose fibres admit a description in terms of the configurations spaces and frame bundles of $W$ and $W'$. The main result in this context, due to Goodwillie--Klein \cite{GoodwillieKlein}, says that \eqref{equ:emb-calc-intro} is an equivalence if the handle codimension (dimension of $W'$ minus handle dimension of $W$) is at least three. In general, the map \eqref{equ:emb-calc-intro} can fail to be an equivalence, and in a sense the $\DiscInf$-structure spaces may be seen as the ``correction terms'' to \eqref{equ:emb-calc-intro} being an equivalence in codimension zero. Let us make this more precise.

The relation of the map \eqref{equ:emb-calc-intro} to $\DiscInf$-structure spaces is a reformulation of a result of Boavida de Brito--Weiss \cite{BdBWSheaf}, at least if $M$ is closed (c.f.\,\cref{rem:BdPW-other-boundary}). They show that \eqref{equ:emb-calc-intro} is equivalent to the map $\Emb(W,W')\ra \Map_{\PSh(\DiscInf_d)}(E_W,E_{W'})$ induced by the naturality of $E_W$ in embeddings, which---for closed $W$ and $W'$ and after discarding non-invertible components in source and target---is the map on mapping spaces induced by the functor $E\colon \ManInf(d)^{\cong}\ra \PSh(\DiscInf_d)^{\simeq}$ used to define the $\DiscInf$-structure space. Since the path space of a $\infty$-groupoid between two objects is naturally equivalent to the space of morphisms between the respective objects, this shows that the loop space of $S^{\DiscInf}(M)$ at $(M,\id_{E_M}) \in \pi_0\,S^{\DiscInf}(M)$ is equivalent to the fibre at $\id$ of \eqref{equ:emb-calc-intro} for $W=W'=M$, so
\begin{equation}\label{equ:rel-to-tinfty-looped}
	\Omega S^{\DiscInf}(M)\simeq \hofib_{\id}(\Emb(M,M)\ra T_\infty\Emb(M,M)).
\end{equation}

\begin{rem}\label{rem:BdPW-other-boundary}
A similar discussion applies if $M$ has boundary, but this does not follow directly from \cite{BdBWSheaf} since we deal with boundary conditions differently to loc.cit. (see \cref{sec:boundary-intro}). 
\end{rem}

Specialising Properties A--C to spin manifolds, they in particular imply:
\begin{bigcor}\label{cor:emb-calc-nontriviality}
For compact connected spin $d$-manifolds $M\neq\varnothing$ with $d\ge5$, the fibre
\[
	\hofib_{\id}\big(\Diff_\partial(M)=\Emb_\partial(M,M)\ra T_\infty\Emb_\partial(M,M)\big)
\] 
is nontrivial and depends only on the fundamental group of $M$. It is an infinite loop space for $d\ge8$.
\end{bigcor}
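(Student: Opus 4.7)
The plan is to deduce the three assertions from Theorems~\ref{bigthm:2-type-invariance}, \ref{bigthm:infinite-loop-space}, and~\ref{bigthm:nontrivial} by translating each claim through the embedding-calculus identification. The starting point is the relation~\eqref{equ:rel-to-tinfty-looped}, but since the corollary is phrased for manifolds with boundary, the first step is to upgrade that relation to a based equivalence
\[
\hofib_{\id}\big(\Emb_\partial(M,M)\ra T_\infty\Emb_\partial(M,M)\big) \simeq \Omega_{(M,\id)}\, S^{\DiscInf}_\partial(M),
\]
as flagged in Remark~\ref{rem:BdPW-other-boundary}. Morally this is a direct invocation of Boavida de Brito--Weiss' comparison theorem, but one must reconcile the boundary conventions in \cite{BdBWSheaf} with the definition of $S^{\DiscInf}_\partial$ used in this paper; this is the only nontrivial \emph{preliminary} input.

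Granted this identification, the fundamental-group dependence follows from Theorem~\ref{bigthm:2-type-invariance}: since $M$ is connected and spin, its tangential $2$-type is determined by $\pi_1(M)$ (take $B=\BSpin\times K(\pi_1(M),1)$ as in the example below that theorem). Hence for any compact connected spin $d$-manifold $N$ with $\pi_1(N)\cong\pi_1(M)$ one obtains an equivalence $S^{\DiscInf}_\partial(M)\simeq S^{\DiscInf}_\partial(N)$ compatible with the basepoints $(M,\id)$ and $(N,\id)$, and looping gives the desired equivalence of fibres. The infinite loop space statement for $d\ge 8$ is immediate from Theorem~\ref{bigthm:infinite-loop-space}, because the loop space of an infinite loop space is again an infinite loop space.

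For nontriviality, Theorem~\ref{bigthm:nontrivial} tells us that $S^{\DiscInf}_\partial(M)$ is not contractible whenever $M\neq\varnothing$ is compact spin and $d\ge5$. The hard point, and the main obstacle in the argument, is that this does not a priori rule out the possibility that the non-contractibility is concentrated in $\pi_0$ on components different from that of $(M,\id)$, in which case the loop space $\Omega_{(M,\id)}S^{\DiscInf}_\partial(M)$ could still be contractible. I would handle this by inspecting the proof of Theorem~\ref{bigthm:nontrivial} and checking that it in fact produces a nontrivial element in some $\pi_n$ with $n\ge 1$ of the identity component $\bigl(\Aut(E_M)/\Diff_\partial(M)\bigr)_{\id}\simeq S^{\DiscInf}_\partial(M)_{(M,\id)}$ (for instance via a characteristic-class obstruction); such a class immediately yields a nontrivial element in $\pi_{n-1}$ of the hofib and completes the proof.
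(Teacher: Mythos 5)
Your three deductions follow the same overall logic the paper intends (the paper just records that Corollary~B is a specialisation of Theorems~A--C and "certain refinements of them"), and the concern you raise about the nontriviality step is genuine and is exactly where the subtlety lies. A few points to sharpen, since as written the argument has small gaps in each of the three parts.

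First, on the preliminary identification $\hofib_{\id}(\Emb_\partial(M,M)\to T_\infty\Emb_\partial(M,M))\simeq\Omega_{(M,\id)}S^{\DiscInf}_\partial(M)$: you frame this as needing a delicate reconciliation of boundary conventions with \cite{BdBWSheaf}, but in fact the paper's own \cref{thm:emb-calc} (with $P=\varnothing$, $Q=\partial M$, $W=W'=M$, noting $\Emb_\partial(M,M)=\Diff_\partial(M)$ for compact $M$) gives $\Map_{\Mod(d)_{E_{\partial M\times I}}}(E_M,E_M)\simeq T_\infty\Emb_\partial(M,M)$ directly in the model in which $S^{\DiscInf}_\partial$ is defined, and looping the fibre sequence in \cref{def:disc-structure-space} at the basepoint $(M,\id)$ gives the identification. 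No comparison of boundary conventions with BdBW is needed here.

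Second, on the $\pi_1$-dependence: you invoke Theorem~\ref{bigthm:2-type-invariance} and then assert the equivalence "compatible with the basepoints $(M,\id)$ and $(N,\id)$". Theorem~\ref{bigthm:2-type-invariance} alone only provides an unpointed equivalence, which does not control which component of the target the basepoint $(M,\id)$ lands in. To get the claimed pointed equivalence you need the refined Theorem~\ref{thm:2-type-invariance-detailed}~\ref{enum:2-type-ii}, applied with $L=D^d$ (handle dimension $0\le 2$) and a chosen disc $e\colon D^d\hookrightarrow M$: the equivalence $S^{\DiscInf}_\partial(M)\simeq S^{\DiscInf}_\partial(N)$ can then be chosen compatibly with the gluing maps $e_*,e'_*$ from $S^{\DiscInf}_\partial(D^d)$, and since these send $(D^d,\id)$ to $(M,\id)$ and $(N,\id)$ respectively, the equivalence is basepoint-compatible.

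Third, on nontriviality: your concern is correct, and the proposed repair (inspect the proof of Theorem~\ref{bigthm:nontrivial} and extract a nontrivial element in $\pi_n$, $n\ge 1$, of the identity component) is the right move. However, the mechanism in the paper is not a "characteristic-class obstruction" but the countability/rational-dimension dichotomy of \cref{cor:precise-nontriviality-sdisc}: for each admissible $(d,k,m)$ one concludes that either $\pi_*(S^{\DiscInf}_\partial(D^d))$ is uncountable in degree $k-d-3$ or $k-d-2$, or $\pi_{m-d-1}\otimes\bfQ\ne 0$. Tracing through the proof of \cref{thm:thm-for-discs} shows that for every $d\ge 6$ (and for $d=3$) one can choose $k$ with $k-d-3\ge 1$, so all degrees appearing in the dichotomy are $\ge 1$ and the inspection you propose goes through: $\Omega_{(D^d,\id)}S^{\DiscInf}_\partial(D^d)\simeq\Omega^{d+2}(\Aut^h(E_d)/\TOP(d))$ is nontrivial, and retract along \cref{cor:homotopy-retract} to a general $M$. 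For $d=5$, however, the only admissible $k$ is $k=8$, which gives $k-d-3=0$; the dichotomy then leaves open the scenario that only $\pi_0$ of $S^{\DiscInf}_\partial(D^5)$ is (uncountably) nontrivial, which is invisible to $\Omega_{(D^5,\id)}$. So the inspection you propose does not close this case with the ingredients currently in the paper, and this $d=5$ case needs to be flagged explicitly rather than folded into a uniform "inspect the proof" step.
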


\subsubsection{The operad $E_d$ of little $d$-discs}
We continue by mentioning two connections between $S^{\DiscInf}_\partial(M)$ and the operad $E_d$ of little $d$-discs. The first is that $\DiscInf_d$ agrees with the monoidal envelope (also known as the associated PROP) of the framed $E_d$-operad, so $\PSh(\DiscInf_d)$ can be identified with the $\infty$-category of right-modules over this operad and hence the definition of $S_\partial^{\DiscInf}(M)$ for closed manifolds can be rephrased in these terms. There is a similar reformulation if $M$ has boundary.

The second relation is less obvious and once more a result of work of Boavida de Brito and Weiss \cite{BdBWConf}. To explain it, observe that the standard action of $\oO(d)$ on the disc $D^d$ induces an $\oO(d)$-action on the operad $E_d$ of little $d$-discs. This action extends to the topological group $\TOP(d)$ of homeomorphisms of $\bfR^d$, so there is a map
\begin{equation}
	\label{equ:topd-to-ed-intro}\BTOP(d)\lra \BAut(E_d)
\end{equation}
with $\Aut(E_d)$ the automorphism group of the $E_d$-operad. Reformulated in our setting, their work (or alternatively work of Ducoulombier--Turchin \cite{DucoulombierTurchin}) moreover implies that there is an equivalence 
\begin{equation}\label{equ:pedro-michael-equivalence}
	\Omega^{d+1}(\Aut(E_d)/\TOP(d))\simeq  S^{\DiscInf}_{\partial}(D^d).
\end{equation}
In particular \cref{bigthm:infinite-loop-space} and \cref{bigthm:nontrivial} for $M=D^d$ (or rather certain refinements of them) imply:

\begin{bigcor}\label{bigcor:top-vs-auted}
The map $\BTOP(d)\ra \BAut(E_d)$ is an equivalence if and only if $d\le2$. Moreover, its fibre admits for $d\ge6$ the structure of an infinite loop space after taking $(d+1)$-fold loop spaces.
\end{bigcor}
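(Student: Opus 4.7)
The plan is to deduce the corollary from \cref{bigthm:infinite-loop-space,bigthm:nontrivial} applied to $M=D^d$, through the Boavida--Weiss-style identification $\Omega^{d+2}(\Aut(E_d)/\TOP(d))\simeq \Omega S^{\DiscInf}_\partial(D^d)$ of \eqref{equ:pedro-michael-equivalence}. The disc $D^d$ is a compact, simply connected, spin, parallelisable $d$-manifold, so it lies in the strongest range of both theorems. Writing $F\coloneqq\Aut(E_d)/\TOP(d)$ for the homotopy fibre of $\BTOP(d)\to\BAut(E_d)$, this fibre is connected, and the map is an equivalence exactly when $F\simeq\ast$. Since $\Omega^{d+2}F\simeq \Omega S^{\DiscInf}_\partial(D^d)$ pointed at $(D^d,\id)$, the question of contractibility of $F$ is controlled by the identity component of $S^{\DiscInf}_\partial(D^d)$.

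For the implication ``$F\simeq\ast\Rightarrow d\le 2$'' I argue by contrapositive. When $d\ge 5$, \cref{bigthm:nontrivial} applied to $D^d$ gives that $S^{\DiscInf}_\partial(D^d)$ is not contractible; I then need to know that this nontriviality is in fact seen on the identity component---something that should be extracted either from the construction used in the proof of \cref{bigthm:nontrivial} or from the loop space equivalence itself---from which $\Omega S^{\DiscInf}_\partial(D^d)\not\simeq\ast$ and hence $F\not\simeq\ast$. For $d=3$ the remark following \cref{bigthm:nontrivial} states directly that $S^{\DiscInf}_\partial(D^3)$ is nontrivial, so the same conclusion holds. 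The case $d=4$ is not immediately covered by \cref{bigthm:nontrivial}, nor by the Knudsen--Kupers input (which is for closed manifolds), and requires a separate low-dimensional argument; this is the main obstacle in the proof.

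For the implication ``$d\le 2\Rightarrow F\simeq\ast$'', use Theorem~A of \cite{KrannichKupersSurfaces} to obtain $S^{\DiscInf}_\partial(D^d)\simeq\ast$, hence $\Omega^{d+2}F\simeq\ast$ and thus $\pi_iF=0$ for $i\ge d+2$. The finitely many remaining low homotopy groups are handled by classical input: for $d=0,1$ both $\BTOP(d)$ and $\BAut(E_d)$ are readily identified (both being equivalent to $\BO(d)$), and for $d=2$ one combines the vanishing above with known low-dimensional computations on the $\TOP(2)$- and $\Aut(E_2)$-sides.

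Finally, for the infinite loop space claim when $d\ge 6$, observe that $D^d$ is simply connected spin, so the refinement of \cref{bigthm:infinite-loop-space} mentioned in the subsequent remark yields that $S^{\DiscInf}_\partial(D^d)$ is an infinite loop space. Consequently $\Omega^{d+2}F\simeq \Omega S^{\DiscInf}_\partial(D^d)$ inherits an infinite loop space structure, which is precisely the asserted structure on the fibre after $(d+2)$-fold looping.
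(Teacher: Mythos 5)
Your plan inverts the logical direction the paper actually uses, and this inversion is precisely what creates both of the obstacles you flag. You propose to prove nontriviality of $S^{\DiscInf}_\partial(D^d)$ first (via \cref{bigthm:nontrivial}) and then push it through the identification $\Omega^{d+2}(\Aut(E_d)/\TOP(d))\simeq \Omega S^{\DiscInf}_\partial(D^d)$; but \cref{bigthm:nontrivial} only says the space is not contractible, which—as you correctly note—does not by itself show nontriviality of the identity component, and its proof via tangential $2$-type invariance excludes $d=4$. The paper goes in the opposite direction: it first proves \cref{thm:nontriviality-general} (the statement $(\operatorname{\mathbf{H}}^d_{k,m})$), which says that $\Aut^h(E_d)/\TOP(d)$ has a nontrivial (uncountable or rationally nontrivial) homotopy group in some explicit positive degree, and it does so for \emph{all} $d\ge 3$ including $d=4$ (case (ii) of that theorem, via \cref{thm:haut-uncountable-or-iso} and the comparison of $\pi_4(\BTOP(4))_\bfQ$ with $\pi_4(\BAut^h(E_4^\bfQ))$). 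The first part of \cref{bigcor:top-vs-auted} for $d\ge 3$ is an immediate consequence of \cref{thm:nontriviality-general}, and \cref{bigthm:nontrivial} is then \emph{deduced} from this (via \cref{cor:precise-nontriviality-sdisc}, \cref{thm:thm-for-discs}, and the homotopy retraction \cref{cor:homotopy-retract}), rather than the other way around. In short: the statement you would need is not \cref{bigthm:nontrivial} but the finer result \cref{thm:nontriviality-general}, and once you use that, both gaps disappear.

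For the $d\le 2$ direction, the paper does not route through $S^{\DiscInf}_\partial(D^d)$ at all; it simply invokes that $\BO(d)\to\BTOP(d)$ is an equivalence for $d\le 2$ by Kirby--Siebenmann and that $\BO(d)\to\BAut^h(E_d)$ is an equivalence by Horel for $d=2$ and folklore for $d=1$. Your proposed path via Theorem~A of \cite{KrannichKupersSurfaces} and ``classical input'' for the remaining low degrees is more roundabout; once you spell out the classical input you would find yourself needing exactly the Horel and Kirby--Siebenmann facts anyway.

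Your argument for the infinite loop space part is correct and matches the paper's: combine \cref{thm:oo-loop-general} (the $d\ge6$ refinement of \cref{bigthm:infinite-loop-space} for $1$-connected spin $M$, applied to $D^d$) with the $0$-coconnected map of \cref{thm:sdisc-auted-topd}, which after one more looping gives the equivalence \eqref{equ:pedro-michael-equivalence}, and observe that the infinite loop space structure passes across.
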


\begin{rem}A couple of remarks on the equivalence \eqref{equ:pedro-michael-equivalence} and \cref{bigcor:top-vs-auted} are in order.
	\begin{enumerate}
		\item Dwyer and Hess asked whether the map \eqref{equ:topd-to-ed-intro} is an equivalence \cite[58 min]{Dwyer}. The first part of \cref{bigcor:top-vs-auted} gives an answer.
		\item The cases $d\le 2$ of the first part of \cref{bigcor:top-vs-auted} are not due to us: Horel \cite{Horel} proved the case $d=2$. The case $d=1$ is folklore and can be proved via Horel's approach.
		\item The equivalence \eqref{equ:pedro-michael-equivalence} can strictly speaking only be deduced from \cite{BdBWConf} or \cite{DucoulombierTurchin} after passing to certain components (see \cref{thm:sdisc-auted-topd}), but a different proof that does not require this was given as part of \cite{KrannichKupersOperadic} (see \cref{rem:different-delooping-proof}).
	\end{enumerate}
\end{rem}

\subsubsection{Factorisation homology}\label{sec:factorisation-homology}
The final relation of $S^{\DiscInf}_\partial(M)$ we would like to mention is one to \emph{factorisation homology} (or \emph{topological chiral homology}) \cite{Salvatore, Francis, Andrade, AyalaFrancisTop, LurieHA}. In its simplest instance, this connection amounts to the (quite tautological) observation that for a framed $E_d$-algebra $A$ in a suitable $\infty$-category $\cC$, there is a commutative diagram 
\[\begin{tikzcd} 
	\ManInf(d)^{\cong}\rar{E}\arrow[dr,"\int_{(-)}A",swap]&\PSh(\DiscInf_d)\dar{(-)\otimes_{\DiscInf_{d}}A}\\
	&\cC
\end{tikzcd}\]
of $\infty$-categories in which the diagonal arrow is given by factorisation homology with coefficients in $A$ and the vertical arrow by taking coends, using that $A$ is in particular a functor $A\colon \DiscInf_d\ra \cC$. In fact, the functor $E$ itself is an instance of factorisation homology, namely with coefficients in the framed $E_d$-algebra $E_{D^d}\in \PSh(\DiscInf_d)$, so $E$ may be viewed as the universal factorisation homology invariant on $\ManInf(d)^{\cong}$, and the study of $\DiscInf$-structure spaces as closely related to the question to which extent the theory of manifolds can be captured by factorisation homology.

\subsection{Summary of proofs}
We conclude with a summary of the proofs of Theorems~\ref{bigthm:2-type-invariance}--\ref{bigthm:nontrivial}. \smallskip

\begin{center}\textit{Some steps may be of independent interest. We highlight them with the Roman numerals \ref{enum:bord-emb-calc}--\ref{enum:rationalisation-operads-intro}.}\end{center}

\subsubsection{The case with boundary}\label{sec:boundary-intro}
The more general $\DiscInf$-structure spaces $\cS_\partial^{\DiscInf}(M)$ for manifolds $M$ with boundary play a central role in the proofs of all main results of this work, even when specialised to closed manifolds, so we first make good on omitting its definition earlier. 

Fixing a closed $(d-1)$-manifold $Q$, one replaces $\smash{\ManInf(d)^{\cong}}$ with the $\infty$-groupoid $\ManInf(d)^{\cong}_Q$ of compact $d$-manifolds with an identification of their boundary with $Q$, and spaces of diffeomorphisms preserving these identifications. The definition \eqref{equ:presheaf-dfn} of the presheaf $E_M$ still makes sense if $M$ has boundary $Q$ and thus yields a functor $\ManInf(d)^{\cong}_Q\ra\PSh(\DiscInf_d)^\simeq$, but if $Q\neq\varnothing$ then the presheaf $E_M$ carries additional structure. Indeed, stacking cylinders induces an associative algebra structure on the presheaf $E_{Q\times I}\in \PSh(\DiscInf_d)$ with respect to the symmetric monoidal structure on $\PSh(\DiscInf_d)$ given by Day convolution, induced by taking disjoint unions in $\DiscInf_d$. Similarly, fixing a collar $Q\times I\hookrightarrow M$ of the boundary of $M$, the presheaf $E_{M}$ becomes a right-$E_{Q\times I}$-module. Made precise, this enhances the functor $E\colon \ManInf(d)^{\cong}_Q\ra\PSh(\DiscInf_d)^\simeq$ to a functor 
\begin{equation}\label{equ:e-functor-for-left-modules}
	E\colon \ManInf(d)^{\cong}_Q\lra\Mod(d)_{E_{Q\times I}}^\simeq
\end{equation}
with target the $\infty$-groupoid $\Mod(d)_{E_{Q\times I}}^\simeq$ of right-$E_{Q\times I}$-modules. The \emph{$\DiscInf$-structure space of a right-$E_{Q\times I}$-module} $X$ is then defined as the fibre
\[
	S^{\DiscInf}_{Q}(X)\coloneqq \fib_{X}\big(\ManInf(d)^{\cong}_Q\xra{E}\Mod(d)_{E_{Q\times I}}^\simeq\big);
\]
that this recovers the previous definition in the case $Q=\varnothing$ follows by observing that $E_{\varnothing\times I}$ is the monoidal unit. As in the closed case, we abbreviate $\smash{S^{\DiscInf}_\partial}(M)\coloneqq \smash{S^{\DiscInf}_{Q}}(E_M)$ if the right-$E_{Q \times I}$-module $X=E_M$ is induced by a manifold $M$ with identified boundary $\partial M\cong Q$. This is the generalisation of $S^{\DiscInf}(M)$ for manifolds with boundary in terms of which we stated Theorems~\ref{bigthm:2-type-invariance}--\ref{bigthm:nontrivial} above.
\subsubsection{Extension to the bordism category}\label{sec:intr-bordism}
For the proofs of these results, we need to generalise the functor \eqref{equ:e-functor-for-left-modules} further. Given another closed $(d-1)$-manifold $P$, we write $\BordInf(d)_{P,Q}$ for the $\infty$-groupoid of compact bordisms $W\colon P\leadsto Q$ and spaces of diffeomorphisms preserving the identifications of the ends. For such a bordism, the associated presheaf $E_W$ becomes a $(E_{P\times I},E_{Q\times I})$-bimodule in $\PSh(\DiscInf_d)$ and we have a functor 
\begin{equation}\label{equ:functor-bimodule}
	E\colon \BordInf(d)_{P,Q}\lra\Mod(d)_{E_{P\times I},E_{Q\times I}}^\simeq
\end{equation}
to the $\infty$-groupoid $\smash{\Mod(d)^\simeq_{E_{P\times I},E_{Q\times I}}}$ of $(E_{P\times I},E_{Q\times I})$-bimodules, generalising the case $P=\varnothing$ discussed in the previous subsection. Given another closed $(d-1)$-manifold $R$, one can show that there is a commutative square of $\infty$-groupoids
\[\begin{tikzcd}[column sep=2cm]
	\BordInf(d)_{P,Q}\times\BordInf(d)_{Q,R}\rar{(-)\cup_Q (-)}\dar[swap]{E\times E}&\BordInf(d)_{P,R}\dar{E}\\
	\Mod(d)^{\simeq}_{P,Q}\times \Mod(d)^{\simeq}_{Q,R}\rar{(-)\otimes_{E_{Q\times I}}(-)}& \Mod(d)^{\simeq}_{P,R},
\end{tikzcd}\]
whose horizontal functors are induced by gluing bordisms and tensoring bimodules respectively; this is essentially an instance of what is known as $\otimes$-excision in the theory of factorisation homology. These squares suggest that the functors \eqref{equ:functor-bimodule} might in fact arise as the maps induced on mapping spaces by a functor of $\infty$-categories
\begin{equation}\label{equ:e-on-compact-bordisms-intro}
	E\colon \BordInf(d)^{(\infty,1)}\lra \Mod(d)^{(\infty,1)}
\end{equation}
from the $d$-dimensional bordism category to a Morita category whose objects are associative algebras in $\PSh(\DiscInf_d)$ and whose morphisms are bimodules. This turns out to be the case, but to prove our results, we need even more functoriality. For this, one notes that the presheaf $E_M$ of a manifold makes equal sense if $M$ is noncompact, so \eqref{equ:e-on-compact-bordisms-intro} ought to extend to a functor 
\begin{equation}\label{equ:e-on-noncompact-bordisms-intro}
	E\colon \ncBordInf(d)^{(\infty,2)}\lra \Mod(d)^{(\infty,2)}
\end{equation}
of $(\infty,2)$-categories from a larger bordism category of possibly noncompact manifolds that has codimension $0$ embeddings as $2$-morphisms, not just diffeomorphisms, to a larger Morita category $\Mod(d)^{(\infty,2)}$ that has morphisms of bimodules as $2$-morphisms, not just invertible ones.

In \cref{sec:the-functor}, relying on work of Haugseng \cite{HaugsengMorita}, we carefully construct such a functor \eqref{equ:e-on-noncompact-bordisms-intro} of $(\infty,2)$-categories and show that it can be enhanced to a functor of \emph{symmetric monoidal} $(\infty,2)$-categories. As part of \cref{sec:functor-e-disc-structure}, we show that for (possibly noncompact) bordisms $W,W'\colon P\leadsto Q$ one can identify the map between mapping spaces of $2$-morphisms induced by \eqref{equ:e-on-noncompact-bordisms-intro}
\[
\begin{tikzcd}[row sep=0.2cm,ar symbol/.style = {draw=none,"\textstyle#1" description,sloped},
	equivalent/.style = {ar symbol={\simeq}}]
\Map_{\ncBordInf(d)_{P,Q}}(W,W')\rar{E}\arrow[d,equivalent] &\Map_{\Mod(d)_{P,Q}}(E_W,E_{W'})\arrow[d,equivalent] \\
\Emb_\partial(W,W')\rar& T_\infty\Emb_\partial(E_W,E_{W'})
\end{tikzcd}
\]
with Goodwillie--Weiss' embedding calculus approximation, so one might view the functor \eqref{equ:e-on-noncompact-bordisms-intro} as an enhancement of embedding calculus to the level of bordism categories. In particular,

\begin{enumerate}[label={(\Roman*)},leftmargin=0.8cm]
	\item \label{enum:bord-emb-calc} the functor \eqref{equ:e-on-noncompact-bordisms-intro} of symmetric monoidal $(\infty,2)$-categories equips the limit of the embedding calculus tower with homotopy coherent gluing, composition, and disjoint union maps.
\end{enumerate}
The functor \eqref{equ:e-on-noncompact-bordisms-intro} and its relation to embedding calculus forms the technical backbone of the proofs of Theorems~\ref{bigthm:2-type-invariance}--\ref{bigthm:nontrivial} in the later chapters, whose proof strategies we summarise now.

\begin{rem}As part of \cite{KrannichKupersOperadic}, the functor \eqref{equ:e-on-noncompact-bordisms-intro} was generalised in several directions.
\end{rem}

\subsubsection{\cref{bigthm:2-type-invariance}: tangential $2$-type invariance}\label{sec:intr-2-type-invariance}
The functor \eqref{equ:e-on-compact-bordisms-intro} in particular extends the $\DiscInf$-structure space of a manifold $S^{\DiscInf}_\partial(M)$ to a space-valued functor of $\infty$-categories 
\begin{equation}\label{equ:functor-on-nullbordism-cat-intro}
	S_\partial^{\DiscInf}(-)\colon \BordInf(d)^{(\infty,1)}_{\varnothing/}\lra \cS
\end{equation}
defined on the $\infty$-category of null bordisms, i.e.\,the undercategory of $\varnothing\in \BordInf(d)^{(\infty,1)}$. Relying on the relation to embedding calculus via \eqref{equ:e-on-noncompact-bordisms-intro}, a version of an isotopy extension theorem for embedding calculus due to Knudsen--Kupers \cite{KnudsenKupers}, and Goodwillie--Klein's above mentioned convergence theorem, we show that the functor \eqref{equ:functor-on-nullbordism-cat-intro} sends a bordism $W\colon P\leadsto Q$ to an equivalence if $W$ can be built from a collar on $P$ by attaching handles of index $\ge3$. This leads to a proof of \cref{bigthm:2-type-invariance}, since it turns out that the value of \emph{any} functor of the form \eqref{equ:functor-on-nullbordism-cat-intro} with this property depends up to equivalence only on the tangential $2$-type. This is an instance of 
\begin{enumerate}[label={(\Roman*)},leftmargin=0.8cm,resume]
	\item \label{enum:general-k-invariance} a general tangential $k$-type invariance result for the values of certain functors on the category $\BordInf(d)^{(\infty,1)}_{\varnothing/}$ of null bordisms.
	\end{enumerate}
The proof of \ref{enum:general-k-invariance} amounts to a sequence of surgery arguments that we became aware of through the literature on the space of metrics of positive scalar curvature, in particular \cite{EbertRWbordism,EbertWiemeler}.

\subsubsection{\cref{bigthm:infinite-loop-space}: infinite loop space}\label{sec:intr-infinite-loop-space}
To construct an infinite loop space structure on $S^{\DiscInf}_\partial(M)$, we first use the tangential $2$-type invariance to show that it suffices to consider manifolds of the form $M=P\times D^{2n}$ for $P$ a closed manifold and $2n\ge4$. From the definition
\begin{equation}\label{equ:map-defining-disc-intro}
	S^{\DiscInf}_\partial(P\times D^{2n})=\fib_{E_{P\times D^{2n}}}\big(\BordInf(d)_{P\times S^{2n-1}}\xra{E}\Mod(d)_{E_{P\times S^{2n-1}\times I}}^\simeq\big),
\end{equation}
it is clear that it suffices to prove that the right-hand map is a map of infinite loop spaces. After restriction to certain path-components that does not affect the fibre, this is what we do. More precisely, in the target, we restrict to modules equivalent to $\smash{E_{P\times W_{g,1}}}$ for $g\ge0$ where $W_{g,1}$ is short for the bordism $(S^n\times S^n)^{\sharp g}\backslash\interior(D^{2n})\colon \varnothing\leadsto S^{2n-1}$. In the source, we restrict to bordisms whose induced presheaf is equivalent to $E_{P\times W_{g,1}}$ for $g\ge0$ as a bimodule. We then use the full coherence provided by the functor \eqref{equ:e-on-compact-bordisms-intro} to enhance the restricted map to one of algebras over a certain higher-dimensional version $\cW$ of Tillmann's surface operad  \cite{Tillmann}, constructed out of bordisms of the form $\sqcup^{k} S^{2n-1}\leadsto \sqcup^{l} S^{2n-1}$ for $k,l\ge0$ that are obtained from the manifolds $W_{g,1}$ by creating more boundary spheres. A variant of this operad has already appeared in work of Basterra--Bobkova--Ponto--Tillmann--Yaekel \cite{BBPTY} on \emph{operads with homological stability}. They proved that algebras over this operad are $E_1$-spaces (via a  ``pair-of-pants'' product) which group-complete to infinite loop spaces, the main ingredient being a stable homological stability result of Galatius--Randal-Williams \cite{GRWII}. Translated to our setting, this implies that the fibre of the group completion of the restricted map is an infinite loop space. Using tangential $2$-type invariance once more, we then show that in this case group completion commutes with taking fibres. This only shows that $S^{\DiscInf}_\partial(P\times D^{2n})$ is an infinite loop space \emph{after group-completion}, but we also show that this $E_1$-space is already group-complete, using the $s$-cobordism theorem.

\subsubsection{\cref{bigthm:nontrivial}: nontriviality}\label{sec:intr-nontriviality}
To show that $\smash{S^{\DiscInf}_\partial}(M)$ is nontrivial for all compact spin manifolds $M$ of dimension $d\ge5$, we first reduce to the case $M=D^d$ using tangential $2$-type invariance. The equivalence \eqref{equ:pedro-michael-equivalence} then further reduces this to showing that the fibre $\Aut(E_d)/\TOP(d)$ of \eqref{equ:topd-to-ed-intro} has a nontrivial homotopy group in sufficiently high degree, which we do by showing that the individual homotopy groups of $\Aut(E_d)$ and $\TOP(d)$ are sufficiently different. While quite a bit is known about the homotopy groups of $\TOP(d)$, especially rationally, so far almost nothing is known about the homotopy groups of $\Aut(E_d)$ besides for small values of $d$. This is in stark contrast to the automorphism group $\Aut((E_d)_{\bfQ})$ of the \emph{rationalised} $E_d$-operad, whose homotopy groups have a complete description in terms of graph complexes à la Kontsevich due to work of Fresse--Turchin--Willwacher \cite{FTW}. Thus, to learn something about the homotopy groups of $\Aut(E_d)$, one could try to study the comparison map $\Aut(E_d)\ra \Aut((E_d)_{\bfQ})$ on homotopy groups. This is what we do. More generally, 
\begin{enumerate}[label={(\Roman*)},leftmargin=0.8cm,resume]
	\item \label{enum:rationalisation-operads-intro} we study the effect on homotopy groups of the map $\Map(\cO,\cP)\ra \Map(\cO_\bfQ,\cP_\bfQ)$ for operads $\cO$ and $\cP$, induced by rationalisation. 
\end{enumerate}
For this, we first use work of Göppl and Weiss \cite{Goppl} to decompose the mapping spaces as a limit of a tower of mapping spaces between truncated operads and show that under mild assumptions, the maps analogous to that in \ref{enum:rationalisation-operads-intro} between the stages of this tower are componentwise rationalisations. Rationalisation does \emph{not} commute with sequential limits in general, so this does \emph{not} imply that the map in \ref{enum:rationalisation-operads-intro} has the same property. However, we then show that this can only fail in an extreme way, namely when some of the homotopy groups of $\Map(\cO,\cP)$ are uncountable. We also explain similar results for more general localisations and for more general towers of spaces.

Applied to $\cO=\cP=E_d$, this shows that the homotopy groups of $\Aut(E_d)$ either agree rationally with those of $\Aut((E_d)_{\bfQ})$, as described in Fresse--Turchin--Willwacher's work, or some of them are uncountable. In either case, we can conclude that they are different from that of $\TOP(d)$: in the former by comparing them with known partial computations of the rational homotopy groups of $\TOP(d)$, and in the latter by using that $\TOP(d)$ has countable homotopy groups.

\subsection*{Acknowledgements} Our thanks go to Fabian Hebestreit and Markus Land for answering several questions on $\infty$-categories, to Rune Haugseng and Claudia Scheimbauer for helpful conversations on their work, to Calista Bernard for sharing her view on manifold calculus and bordism categories, and to Oscar Randal-Williams for general discussions.

MK was partially funded by the ERC under the European Union’s Horizon 2020 research and innovation programme (grant agreement No.\,756444), and partially by the Deutsche Forschungsgemeinschaft (DFG, German Research Foundation) under Germany's Excellence Strategy EXC 2044 –390685587, Mathematics Münster: Dynamics–Geometry–Structure. 

AK acknowledges the support of the Natural Sciences and Engineering Research Council of Canada (NSERC) [funding reference number 512156 and 512250], as well as the Research Competitiveness Fund of the University of Toronto at Scarborough.

This material is partially based on work supported by the Swedish Research Council under grant no.\,2016-06596 while the authors were in residence at Institut Mittag-Leffler in Djursholm, Sweden during the semester \emph{Higher algebraic structures in algebra, topology and geometry}.

\section{$\infty$-categorical preliminaries} \label{sec:preliminaries}
Except for the final two sections (see \cref{conv:no-more-infty}), we work in the setting of $\infty$-categories. This section---which may be skipped on first reading and referred back to when necessary---serves to establish some notation and to recall definitions and facts used in later sections, as well as to prove a few technical results that we could not find in the literature. The topics are:

\begin{minipage}[c]{\textwidth}
\begin{multicols}{2}
\begin{enumerate}[leftmargin=0.1cm]
	\item[\ref{sec:conventions}] Conventions.
	\item[\ref{sec:scat-vs-qcat}] The coherent nerve.
	\item[\ref{sec:straightening}] Cocartesian fibrations.
	\item[\ref{section:delta-gap}] The categories $\Delta$, $\Gap$, and $\Fin_*$.
	\item[\ref{sec:cat-objects}] Category and monoid objects.
	\item[\ref{sec:presheaf-category}] Presheaves and the Yoneda embedding.
	\item[\ref{sec:gen-infty-operads}] $\infty$-operads and generalised $\infty$-operads.
	\item[\ref{sec:assalg-bimodules}] Associative algebras and bimodules.
	\item[\ref{sec:haugseng-morita}] Haugseng's Morita category.
	\item[\ref{sec:span-cospan-cats}] Span and cospan categories.
\end{enumerate}
\end{multicols}
\end{minipage}

\subsection{Conventions}\label{sec:conventions}
Unless mentioned otherwise we follow the conventions and notation of Lurie \cite{LurieHTT,LurieHA}. In particular:
\begin{itemize}
	\item An \emph{$\infty$-category} is a \emph{quasi-category} \cite[1.1.2.4]{LurieHTT}. The \emph{$\infty$-category of $\infty$-categories} $\gls*{catinf}$ is the coherent nerve $\CatInf\coloneqq N_{\coh}(\CatInfS)$ of the $\Kan$-enriched category $\CatInfS$ of small $\infty$-categories \cite[3.0.0.1]{LurieHTT}. We consider 1-categories as $\infty$-categories via their nerve.
	\item A \emph{space} is a Kan complex. If topological spaces appear, we implicitly replace them by their singular simplicial sets. The category of simplicial sets is denoted $\catS$ and the full subcategory of Kan-complexes by $\Kan \subset \cat{S}$. Both are enriched over themselves. The \emph{$\infty$-category of spaces} $\gls*{sinf}$ is the coherent nerve $\cS \coloneqq N_{\coh}(\Kan)$  \cite[1.2.16.1]{LurieHTT}.
\end{itemize}

\noindent We use the following notational conventions:
\begin{itemize}
	\item The letters $\cA$, $\cB$, $\cC$, $\ldots$ typically stand for $\infty$-categories, whereas the letters $\catA$, $\catB$, $\catC$, $\ldots$ usually stand for $\catS$-enriched, $\Kan$-enriched, or 1-categories.
	\item Given an $\infty$-category $\cC$ and object $c$ of $\cC$, $\cC_{c/}^\op$ is short for $(\cC_{c/})^\op$ and similarly $\cC^\op_{/c}$ is short for $(\cC_{/c})^\op$. In other words, slices are taken \emph{before} opposite categories.
\end{itemize}

\subsection{The coherent nerve and the homotopy category}\label{sec:scat-vs-qcat} 
\label{sec:coherent-nerve-props}
The \emph{coherent nerve}  $\gls*{ncoh} \colon \sCat\ra \cat{S}$ is a functor from the $1$-category $\sCat$ of $\catS$-enriched categories to the $1$-category of simplicial sets \cite[1.1.5]{LurieHTT}. Some of its properties are:
\begin{enumerate}
	\item \label{enum:bergner-model-structure} It is the right-adjoint in a Quillen equivalence \cite[2.2.5.1]{LurieHTT}, where $\sCat$ is equipped with the Bergner model structure whose 
	\begin{enumerate}
		\item fibrant objects are $\Kan$-enriched categories \cite[A.3.2.24]{LurieHTT},
		\item weak equivalences are \emph{Dwyer--Kan equivalences}, so simplicial functors that induce weak homotopy equivalences on each mapping space and are an equivalence (of $1$-categories) on homotopy categories \cite[A.3.2.4]{LurieHTT},
		\item fibrations are simplicial functors that are Kan fibrations on each mapping space and isofibrations on homotopy categories \cite[A.3.2.24, A.3.2.25]{LurieHTT},
	\end{enumerate}
	and $\cat{S}$ is equipped with the Joyal model structure of which we only need to know that its fibrant objects are precisely $\infty$-categories \cite[2.4.6.1]{LurieHTT}. In particular, the coherent nerve of a $\Kan$-enriched category is an $\infty$-category.
	\item \label{enum:objects-morphisms}Taking coherent nerves preserves objects and morphisms, in the sense that the $0$- and $1$-simplices of $N_{\coh}(\cat{C})$ are the sets of objects and morphisms of $\cat{C}$ \cite[p.\,23]{LurieHTT}.
	\item \label{enum:simplicial-mapping-spaces} Taking coherent nerves preserves mapping spaces of $\Kan$-enriched categories in that for a $\Kan$-enriched category $\cat{C}$ we have $\Map_{\cat{C}}(c,c')\simeq \Map_{N_{\coh}(\cat{C})}(c,c')$ \cite[2.2]{LurieHTT}.
	\item \label{enum:coherent-opposite} There is a natural equivalence $N_\coh(\cat{C}^\op) \simeq N_\coh(\cat{C})^\op$. This is a consequence of the natural isomorphisms  $\mathfrak{C}([n]^\op) \cong \mathfrak{C}([n])^\op$, where $\mathfrak{C}(-)$ is the left adjoint to $N_{\coh}(-)$.
	\item \label{enum:coherent-functor} There is a canonical map $N_\coh(\Fun(\cat{C},\cat{D})) \to \Fun(N_\coh(\cat{C}),N_\coh(\cat{D}))$ obtained by appling $N_\coh$ to the evaluation $\Fun(\cat{C},\cat{D}) \times \cat{C} \ra \cat{D}$, using that as a right adjoint $N_\coh(-)$ preserves products to get $N_\coh(\Fun(\cat{C},\cat{D})) \times N_\coh(\cat{C}) \to N_\coh(\cat{D})$, and adjoining over $N_\coh(\cat{C})$.\end{enumerate}
Restricting $N_\coh$ to $\cat{Cat} \subset \sCat$ gives a fully faithful functor of $1$-categories from ordinary $1$-categories to $\infty$-categories. Applying $N_{\coh}$, we obtain a functor $\Cat\ra \CatInf$ of $\infty$-categories. This has a left-adjoint $\gls*{homcat} \colon \CatInf \to \Cat$ that assigns an $\infty$-category its \emph{homotopy category}. As described in \cite[1.2.3]{LurieHTT}, $h\cC$ has the same objects as $\cC$, morphism sets given by the path components of the respective mapping spaces in $\cC$, and composition is induced by the composition maps of mapping spaces. Some of its further properties are:
\begin{enumerate}
	\item The functor $h$ preserves products.
	\item The functor $h$ preserves pullbacks if one of the maps is between 1-categories.
	\item The functor $h$ preserves cocartesian morphisms when the target is an 1-category.
\end{enumerate}
These follow from the facts that taking mapping spaces in $\infty$-categories preserves pullbacks, and that taking components preserves pullbacks in $\cS$ whose bottom right corner is discrete. 

\subsection{Cocartesian fibrations} \label{sec:straightening}
Lurie's \emph{straightening equivalence} \cite[3.2]{LurieHTT}
\begin{equation}\label{equ:st-un}
		\Fun(\icat{C},\CatInf) \simeq \Cocart(\icat{C})
\end{equation}
identifies the $\infty$-category $\Fun(\icat{C},\CatInf)$ for an $\infty$-category $\cC$ with the  $\infty$-category of \emph{cocartesian fibrations}, which is the sub $\infty$-category $\Cocart(\cC)\subset (\CatInf)_{/\cC}$ with objects \emph{cocartesian fibrations} with target $\cC$ and whose morphisms are \emph{maps of cocartesian fibrations}, in the following sense:
\begin{dfn}\label{dfn:cocartesian-fibration}Let $\varphi\colon \cE \ra \cB $ be a functor between $\infty$-categories. 
	\begin{enumerate}
		\item 
		A morphism $f\colon e\ra e'$ in $\cE $ is \emph{$\varphi$-cocartesian} if for every $x\in \cE $ the square
		\[
		\begin{tikzcd}
			\Map_{\icat{E}}(e',x)\rar{f^*}\dar[swap]{\varphi}&\Map_{\icat{E}}(e,x)\dar{\varphi}\\
			\Map_{\icat{B}}(\varphi(e'),\varphi(x))\rar{\varphi(f)^*}&\Map_{\icat{B}}(\varphi(e),\varphi(x))
		\end{tikzcd}
		\]
		is homotopy cartesian.
		\item The functor $\varphi$ is a \emph{cocartesian fibration} if for every object $e\in\cE $ and morphism $f\colon \varphi(e)\ra b$ there exists a \emph{cocartesian lift} of $f$, i.e.\,a $\varphi$-cocartesian morphism $\tilde{f}\colon e\ra \tilde{b}$ for some $\tilde{b}$ in $\cE $ such that $\varphi(\tilde{f})=f$.
		\item A \emph{map of cocartesian fibrations} from $\varphi\colon \cE \ra \cB $ to $\varphi'\colon \cE '\ra \cB $ is a functor $\cE \ra \cE '$ over $\cB $ that sends $\varphi$-cocartesian morphisms to $\varphi'$-cocartesian morphisms.
	\end{enumerate}
\end{dfn}
Given a cocartesian fibration $\varphi\colon \cE \ra \cB $ and an object $b\in\cB $, one writes $\cE _b\in\CatInf$ for the fibre of $\varphi$ at $b$. Under the straightening equivalence \eqref{equ:st-un}, this corresponds to the value at $b$ of the associated functor $\cB \ra \CatInf$. Moreover, the value of this functor on a morphism $b\ra b'$ in $\cB $ corresponds to a functor $\cE_b\ra \cE_b'$ induced by choosing cocartesian lifts of $b\ra b'$. 

\begin{rem}\label{fact:cocart-from-s-cat}
	 \cref{dfn:cocartesian-fibration} makes equal sense for a functor $\varphi\colon \cat{E}\ra \cat{B}$ of $\Kan$-enriched categories. In view of the natural equivalence  $\Map_{\cat{C}}(c,c')\simeq \Map_{N_{\coh}(\cat{C})}(c,c')$, one sees that a morphism $f\colon e\ra e'$ in $\cat{E}$ is $\varphi$-cocartesian if and only if is $N_{\coh}(\varphi)$-cocartesian.
\end{rem}
\glsadd{cocartpush}
Given a cocartesian fibration $\varphi\colon \cE \ra \cB $ and an $\infty$-category $\cC$, the functor $\varphi_*\colon \Fun(\cC,\cE) \ra \Fun(\cC,\cB)$ is again a cocartesian fibration  \cite[3.1.2.1]{LurieHTT}. In particular, given a functor $f \colon \cC\ra \cE$ and a natural transformation $\eta \colon (\varphi\circ f)\ra *_b$ to the constant functor $*_b\colon \cC\ra \cB$ with value $b\in B$ (equivalently, an extension of $(\varphi\circ f)$ to a functor $\cC^\rhd \ra \cB$ on the right-cone whose value at the cone point is $b$), we can use that $\varphi_*$ is a cocartesian fibration to obtain a cocartesian lift to a functor $f_!\colon \cC\ra \cE_b$ into  the fibre over $b$. The functor $f_!$ is called a \emph{cocartesian pushforward} of $f$ along $\eta$.

\subsection{The categories $\Fin_*$, $\Delta$, and $\Gap$} \label{section:delta-gap}
Recall the 1-category \gls*{fin} of pointed finite sets and pointed maps in between, with skeleton $\gls*{langlerangle} = \{1,\ldots,p,\ast\}$ for $p \geq 0$. We write $\langle \mathring{p} \rangle \coloneqq \langle p \rangle \setminus \{\ast\}$ for the \emph{interior} of $\langle p \rangle$. Three special types of morphisms are relevant for us: a morphism $\alpha\colon \langle p \rangle \ra \langle q \rangle$ is 
\begin{enumerate}
	\item \emph{active} if it satisfies $\alpha^{-1}(\ast)=\{\ast\}$, 
	\item \emph{inert} if $\alpha^{-1}(i)$ consists of a single element for all $i \in \langle \mathring{q} \rangle$,
	\item \emph{Segal} if it agrees with $\rho_i\colon \langle p \rangle \ra \langle 1 \rangle$ for some $1\le i\le p$ where $\rho_i(i)=1$ and $\rho(j)=\ast$ otherwise. Note this is equivalent to being inert with target $\langle 1 \rangle$.
\end{enumerate}
A closely related 1-category is the \emph{simplex category}  $\gls*{delta}$ of non-empty finite totally ordered sets and weakly order-preserving maps between them. We mostly work with its skeleton given by $\gls*{brpbr}=(0<1<\ldots<p)$ for $p\ge0$. The wide subcategory of injective maps is denoted $\gls*{deltainj} \subset \Delta$. Four special types of morphisms are relevant for us: a morphism $\alpha\colon [p]\ra [q]$ is 
\begin{enumerate}
	\item \emph{active} if it satisfies $\alpha(0)=0$ and $\alpha(p)=q$, 
	\item \emph{cellular} if $\alpha(i+1)\le \alpha(i)+1$ for all $i$,
	\item \emph{inert} if it is the inclusion of a subinterval, i.e.\,$\alpha(i)=\alpha(0)+i$ for all $i$, and
	\item \emph{Segal} if it agrees with $\rho_i\colon [1]\ra [q]$ for some $1\le i\le q$ where $\rho_i(0)=i-1$ and $\rho(1)=i$. Note this is equivalent to being inert with domain $[1]$.
\end{enumerate}
Occasionally we work with a different model for $\Delta^{\op}$, given as follows. For $p\ge0$ we write $\gls*{lbrrbr}$ for the totally ordered set $(L<1<\ldots< p<R)$ and call $L$ and $R$ the \emph{left end} and \emph{right end} of $\lbr p\rbr$ respectively. The sets $\lbr p\rbr$ for $p\ge0$ form the objects of the category $\gls*{gap}$ whose morphisms are weakly order-preserving maps that are the identity on ends. There is an isomorphism \begin{equation}\label{equ:gap-iso}
	c\colon \Delta^\op\xlra{\cong} \Gap
\end{equation} 
that sends $[p]$ to $\lbr p \rbr$ and a morphism $\alpha\colon [p]\ra [q]$ to the morphism $c(\alpha)\colon \lbr q\rbr\ra  \lbr p\rbr$ given by
\begin{align*}
	{i}&\longmapsto \begin{cases}L& i\le  \alpha(0),\\
		j& \exists j\colon i\in [\alpha(j-1)+1,\alpha(j)],\\
		R& i>\alpha(p).\end{cases}
\end{align*}
This isomorphism maps $\smash{\Delta_\inj^\op}\subset \Delta^\op$ isomorphically onto the wide subcategory $\gls*{gapsur} \subset \Gap$ of surjective maps. Introducing the notation $\lbr \mathring{p}\rbr\coloneqq \lbr p\rbr\backslash\{L,R\}$ for the \emph{interior} of $\lbr p\rbr$, a morphism $\alpha\colon \lbr q\rbr \ra \lbr p\rbr$, when considered as a morphism $c^{-1}(\alpha)\colon [p]\ra [q]$ in $\Delta$, is 
\begin{enumerate}
	\item \emph{active} if $\alpha^{-1}\lbr \mathring{p}\rbr=\lbr \mathring{q}\rbr$ (we omit the parentheses in $\alpha^{-1}(\lbr \mathring{p}\rbr)$ for legibility),
	\item \emph{cellular} if the restriction $\alpha\colon \alpha^{-1}\lbr \mathring{p}\rbr\ra \lbr \mathring{p}\rbr$ is injective,
	\item \emph{inert} if the restriction $\alpha\colon \alpha^{-1}\lbr \mathring{p}\rbr\ra \lbr \mathring{p}\rbr$ is bijective,
	\item \emph{Segal} if it agrees with $\rho_i'\colon\lbr q\rbr \ra \lbr 1\rbr$ for some $1\le i\le q$ where $\rho_i'(j)=L$ if $j<i$, $\rho_i'(j)=1$ if $j=i$, and  $\rho_i'(j)=R$ if $j>i$.
\end{enumerate}

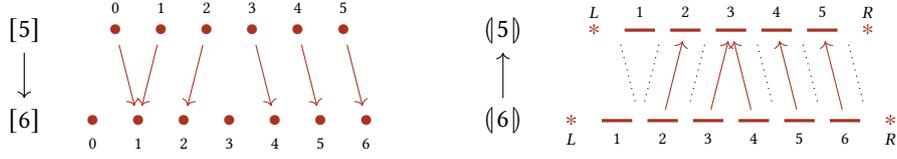
\begin{figure}
	\begin{tikzpicture}[scale=.6]
		
		\begin{scope}
			\node at (-2,0) {$[5]$};
			\node at (-2,-2) {$[6]$};
			\draw[->,shorten >=.3cm,shorten <=.3cm] (-2,0) -- (-2,-2);
			
			\foreach \i in {0,...,5}
			{
				\node [Mahogany] at (\i,0) {$\bullet$};
				\node at (\i,.5) {\tiny $\i$};
			}
			\foreach \i in {0,...,6}
			{
				\node [Mahogany] at ({\i-.5},-2) {$\bullet$};
				\node at ({\i-.5},-2.5) {\tiny $\i$};
			}
			\draw[Mahogany,->,shorten >=.2cm,shorten <=.2cm] (0,0) -- ({1-.5},-2);
			\draw[Mahogany,->,shorten >=.2cm,shorten <=.2cm] (1,0) -- ({1-.5},-2);
			\draw[Mahogany,->,shorten >=.2cm,shorten <=.2cm] (2,0) -- ({2-.5},-2);
			\draw[Mahogany,->,shorten >=.2cm,shorten <=.2cm] (3,0) -- ({4-.5},-2);
			\draw[Mahogany,->,shorten >=.2cm,shorten <=.2cm] (4,0) -- ({5-.5},-2);
			\draw[Mahogany,->,shorten >=.2cm,shorten <=.2cm] (5,0) -- ({6-.5},-2);
		\end{scope}
		
		\begin{scope}[xshift=11cm]
			\node at (-2.5,0) {$\lbr 5 \rbr$};
			\node at (-2.5,-2) {$\lbr 6 \rbr$};
			\draw[<-,shorten >=.3cm,shorten <=.3cm] (-2.5,0) -- (-2.5,-2);
			
			\node [Mahogany] at (-.5,0) {$\ast$};
			\node at (-.5,.4) {\tiny $L$};
			\node [Mahogany] at (5.5,0) {$\ast$};
			\node at (5.5,.4) {\tiny $R$};
			\foreach \i in {1,...,5}
			{
				\draw[Mahogany,very thick,shorten >=.1cm,shorten <=.1cm] ({\i-1},0) -- (\i,0);
				\node at ({\i-.5},.4) {\tiny $\i$};
			}
			
			\node [Mahogany] at (-1,-2) {$\ast$};
			\node at (-1,-2.4) {\tiny $L$};
			\node [Mahogany] at (6,-2) {$\ast$};
			\node at (6,-2.4) {\tiny $R$};
			\foreach \i in {1,...,6}
			{
				\draw[Mahogany,very thick,shorten >=.1cm,shorten <=.1cm] ({\i-1.5},-2) -- ({\i-.5},-2);
				\node at ({\i-1},-2.4) {\tiny $\i$};
			}
			\draw[dotted,shorten >=.2cm,shorten <=.2cm] (0,0) -- ({1-.5},-2);
			\draw[dotted,shorten >=.2cm,shorten <=.2cm] (1,0) -- ({1-.5},-2);
			\draw[Mahogany,<-,shorten >=.15cm,shorten <=.15cm] (1.5,0) -- (1,-2);
			\draw[dotted,shorten >=.2cm,shorten <=.2cm] (2,0) -- ({2-.5},-2);
			\draw[Mahogany,<-,shorten >=.15cm,shorten <=.15cm] (2.5,0) -- (2,-2);
			\draw[Mahogany,<-,shorten >=.15cm,shorten <=.15cm] (2.5,0) -- (3,-2);
			\draw[dotted,shorten >=.2cm,shorten <=.2cm] (3,0) -- ({4-.5},-2);
			\draw[Mahogany,<-,shorten >=.15cm,shorten <=.15cm] (3.5,0) -- (4,-2);
			\draw[dotted,shorten >=.2cm,shorten <=.2cm] (4,0) -- ({5-.5},-2);
			\draw[Mahogany,<-,shorten >=.15cm,shorten <=.15cm] (4.5,0) -- (5,-2);
			\draw[dotted,shorten >=.2cm,shorten <=.2cm] (5,0) -- ({6-.5},-2);
		\end{scope}
	\end{tikzpicture}
	\caption{The isomorphism \eqref{equ:gap-iso} between $\Delta^\op$ (on the left) and $\Gap$ (on the right, but we omitted the elements that map to $L$ or $R$). The morphism indicated is not active, cellular, inert, or Segal.}
	\label{figure:delta-to-gap}
\end{figure}

\begin{rem}\,
\begin{enumerate}
\item We think of $i \in \lbr p \rbr$ as the ``gap'' between $i-1$ and $i$ in $[p]$, and observe that $\alpha \colon [p] \to [q]$ induces a map the other way between these gaps; see \cref{figure:delta-to-gap} for an example.
\item The functor $c\colon \Delta^\op\ra \Gap$ is related to the functor $\mathrm{Cut} \colon \Delta^\op \to \Assoc^\otimes$ of \cite[4.1.2.9]{LurieHA}: the pointed set $\mathrm{Cut}([n]) \cong \langle n \rangle$ can be obtained from the set $c([n])$ by identifying $L$ and $R$.\end{enumerate}
\end{rem}

The three 1-categories $\Fin_*$, $\Delta$, and $\Gap$ are related by a sequence of functors 
\begin{equation}\label{equ:delta-to-fin}
	\Delta^{\op} \lra\Gap \lra \Fin_*
\end{equation}
where the first arrow is the isomorphism \eqref{equ:gap-iso}, and the second arrow is obtained by identifying the left and right ends $L$ and $R$ of objects in $\Gap$ and forgetting that morphisms are order-preserving.

\subsection{Category and monoid objects}\label{sec:cat-objects} Fix an $\infty$-category $\icat{C}$ with finite limits. 

\subsubsection{Category objects and monoid objects}
A \emph{category object} in $\icat{C}$ is a simplicial object $X\in \Fun(\Delta^{\op},\icat{C})$ satisfying the \emph{Segal condition}, i.e.\,the map 
		\begin{equation}\label{equ:segal-maps}
			X_{[p]} \lra X_{[1]} \times_{X_{[0]}} \cdots \times_{X_{[0]}} X_{[1]}
		\end{equation}
		induced by the Segal maps $\rho_i\colon [1] \to [p]$ for $1\le i\le p$, is an equivalence for all $p\ge0$. We call $X_{[1]}$ the \emph{underlying object} of $X$. A \emph{monoid object} is a category object $X$ for which the map $X_{[0]}\ra *$ to the terminal object is an equivalence; equivalently it is a simplicial object for which the analogues of the maps \eqref{equ:segal-maps} with pullbacks replaced by products are equivalences. We write 
\[
	\gls*{caticat} \subset \Fun(\Delta^{\op},\icat{C})\quad\text{and}\quad
	\gls*{monicat} \subset \Fun(\Delta^{\op},\icat{C})
\] 
for the full subcategories of category objects and monoid objects. Replacing simplicial by semisimplicial objects in this definition yields the categories 
\[
	\gls*{catnuicat} \subset \Fun(\Delta_\inj^{\op},\icat{C})\quad\text{and}\quad
	\gls*{monnuicat} \subset \Fun(\Delta_\inj^{\op},\icat{C})
\]
of \emph{non-unital category objects} and \emph{non-unital monoid objects}. 

\subsubsection{Commutative monoid objects} 
We may replace the role of the category $\Delta^{\op}$ in the definition of a monoid object with $\Fin_*$ to arrive at the notion of a \emph{commutative monoid object}: a functor $X\in\Fun(\Fin_*,\cC)$ for which the maps $X_{\langle p \rangle}\ra X_{\langle 1 \rangle}\times\ldots\times X_{\langle 1 \rangle}$ induced by the Segal maps $\rho_i\colon \langle p \rangle\ra \langle 1 \rangle$ for $1\le i\le p$ are equivalences for all $p\ge0$. These span the full subcategory
\[
	\gls*{cmonnuicat}\subset\Fun(\Fin_*,\cC)
\] 
of commutative monoid objects. Precomposition with the composition $\Delta^\op \to \Fin_*$ of \eqref{equ:delta-to-fin} induces a functor $\CMon(\cC)\ra\Mon(\cC)$ that ``forgets commutativity''.

\begin{rem}\label{rem:comm-mon-iterative}
There is a different perspective on commutative monoid objects in the form of an equivalence of $\infty$-categories $\Mon_{\infty}(\cC)\simeq \CMon(\cC)$ where $\Mon_{\infty}(\cC)$ is the limit in $\CatInf$ \[\Mon_{\infty}(\cC)\simeq \lim\big(\cdots \ra \Mon(\Mon(\Mon(\cC)))\ra \Mon(\Mon(\cC))\ra \Mon(\cC)\ra \cC\big)\]  over the maps induced $\ev_{[1]}\colon \Mon(\cC)\ra \cC$ (combine \cite[Proposition 10.11]{HaugsengSpans} with \cite[5.1.1.5, 2.4.2.5]{LurieHA}). In particular, there is an equivalence $\CMon(\Mon(\cC))\simeq \CMon(\cC)$.
\end{rem}

\subsubsection{Monoidal categories and double categories}\label{sec:monoidal-cats}
For $\icat{C}=\CatInf$, (non-unital) monoid objects in $\icat{C}$ are also called \emph{(non-unital) monoidal $\infty$-categories}, (non-unital) category objects in $\icat{C}$ are called \emph{(non-unital) double $\infty$-categories}, and (commutative) monoid objects in $\CatInf$ or $\Cat(\CatInf)$ are \emph{(symmetric) monoidal $\infty$- or double $\infty$-categories}. Via the straightening equivalence of \cref{sec:straightening}, these can be described equivalently as cocartesian fibrations $\cM\ra\Delta^{\op}$ (or $\cM\ra\Delta^{\op}_\inj$ in the non-unital case, or $\cM\ra\Fin_*$ in the commutative case) such that the functors
\[
	\cM_{[p]}\lra \cM_{[1]}\times \ldots \times \cM_{[1]}\quad\text{respectively}\quad \cM_{[p]}\lra \cM_{[1]}\times_{\cM_{[0]}} \ldots\times_{\cM_{[0]}} \cM_{[1]}
\]
induced by the cocartesian lifts of the Segal maps $\rho_i$ are equivalences.

\begin{ex}\label{ex:cartesian-structure}For an $\infty$-category $\cC$ with finite products, taking products induces a symmetric monoidal structure $\cC^{\times}\ra\Fin_*$ on $\cC$, the \emph{cartesian structure} \cite[2.4.1]{LurieHA}. Dually, if $\cC$ has finite coproducts, it carries a \emph{cocartesian} symmetric monoidal structure $\cC^{\sqcup}\ra\Fin_*$ \cite[2.4.3]{LurieHA}.
\end{ex}

\begin{rem}\label{rem:lurie-model-oo-cat}The definition of a monoidal $\infty$-category given in \cite[4.1.1.10]{LurieHA} is different from the one given above, but the resulting $\infty$-categories turn out to be equivalent \cite[4.1.3]{LurieHA}.
\end{rem}

\subsubsection{Mapping $\infty$-categories}\label{sec:mapping-infinity-category}
Given a double $\infty$-category $\cC\in\Cat(\CatInf)$ and objects $A,B\in\cC_{[0]}$, we define the \emph{mapping $\infty$-category} from $A$ to $B$ to be the $\infty$-category given as the fibre in $\CatInf$
\[
	\gls*{cabmappingcat} \coloneqq \fib_{(A,B)}\big((d_0,d_1)\colon \cC_{[1]}\ra \cC_{[0]} \times \cC_{[0]}\big).
\]
These mapping $\infty$-categories come with composition functors $\cC_{A,B}\times \cC_{B,C}\ra\cC_{A,C}$ defined by taking vertical fibres in the commutative diagram in $\CatInf$ 
\[\begin{tikzcd}[row sep=0.4cm]
	\cC_{[1]} \times_{\cC[0]} \cC_{[1]} \dar & \lar[swap]{\simeq} \rar \cC_{[2]} \dar & \cC_{[1]} \dar \\
	\cC_{[0]} \times \cC_{[0]} \times \cC_{[0]}  & \lar[equal] \cC_{[0]} \times \cC_{[0]} \times \cC_{[0]} \rar{\pr_{1,3}} & \cC_{[0]}  \times \cC_{[0]}\end{tikzcd}\]
with top-left horizontal map induced by the Segal morphisms, top-right horizontal map by the unique active morphism $[2] \to [1]$, and vertical map by the face maps.

\subsubsection{Quasi-unital monoid and category objects}\label{sec:quasi-unital}
A non-unital category object $X\in\Cat_{\nonunital}(\cC)$ is \emph{quasi-unital} if it admits a \emph{quasi-unit}, which is by definition a morphism $u\colon X_{[0]}\ra X_{[1]}$ together with a commutative diagram in $\cC$
\[\begin{tikzcd}[row sep=0.2cm]
	X_{[0]}\arrow[dr,"\diag",swap]\arrow[rr,"u"]&&X_{[1]}\arrow[dl,"{(d_0,d_1)}"]\\[-3pt]
	&X_{[0]}\times X_{[0]}&
\end{tikzcd}\]
such that the following two compositions are equivalent to the identity
\begin{equation}\label{equ:composition-with-qunit}
	\begin{gathered}X_{[1]} \simeq  X_{[0]}\times_{X_{[0]}}X_{[1]}\xrightarrow{(u,\id)}X_{[1]}\times_{X_{[0]}}X_{[1]}\simeq X_{[2]}\xlra{d_1}X_{[1]},\\ X_{[1]} \simeq  X_{[1]}\times_{X_{[0]}}X_{[0]}\xrightarrow{(\id,u)}X_{[1]}\times_{X_{[0]}}X_{[1]} \simeq  X_{[2]}\xlra{d_1}X_{[1]}.\end{gathered}
\end{equation}
Quasi-units are unique up to equivalence \cite[Remark 4.8]{HaugsengSegal}. A morphism $\phi\colon X\ra Y$ of non-unital category objects is \emph{quasi-unital} if there exists a commutative diagram in $\cC$ of the form
\begin{equation}\label{equ:morphism-quasiunital}
	\begin{tikzcd}[row sep=0.3cm]
		X_{[0]}\arrow[ddr,"\diag", near end,swap]\arrow[rrr,"\phi_0"]\arrow[drr,"u_X"]&&& Y_{[0]}\arrow[drr,"u_Y"] &&\\[-3pt]
		&& X_{[1]}\arrow[rrr,"\phi_1",near start]\arrow[dl,"{(d_0,d_1)}",swap]&&& Y_{[1]}\arrow[dl,"{(d_0,d_1)}"]\\[-3pt]
		& X_{[0]}\times X_{[0]}\arrow[rrr,"{(\phi_0,\phi_0)}",swap, near start]&&& Y_{[0]}\times Y_{[0]}\arrow[from=uul,"\diag", crossing over, near end, swap]&
	\end{tikzcd}
\end{equation}
such that the outer triangles are quasi-units for $X$ and $Y$. As a result of the uniqueness of quasi-units, the composition of two quasi-unital morphisms is quasi-unital. We write $	\gls*{catquc} \subset\Cat_{\nonunital}(\cC)$ for the subcategory of \emph{quasi-unital category objects} in $\cC$, generated by quasi-unital objects and morphisms. Every category object is quasi-unital ($s_0\colon X_{[0]}\ra X_{[1]}$ is a quasi-unit), and by \cite[Theorem 4.14]{HaugsengSegal}, the forgetful functor $\Cat(\cC)\ra\Cat_{\nonunital}(\cC)$ induces an equivalence 
\begin{equation}\label{equ:qu-is-good}
	\Cat(\cC)\overset{\simeq}\lra\Cat_{\quasiunital}(\cC).
\end{equation}

\begin{rem}\label{rem:quasi-unital-into-simplicial} Note that if $X$ a quasi-unital category object in $\cC$, $Y$ a simplicial object in $\cC$ (not necessarily a category object), and $f\colon X\ra Y$ a morphism of semisimplicial objects in $\cC$, then
	\begin{enumerate}
		\item\label{enum:quasi-unital-into-simplicial-i}it makes sense to ask for $f$ to be quasi-unital (replace $u_Y$ in \eqref{equ:morphism-quasiunital} by the $0$th degeneracy map). This property is preserved by postcomposition with maps of simplicial objects,
		\item\label{enum:quasi-unital-into-simplicial-ii} if $\cC=\CatInf$ and $Y'\subset Y$ is a levelwise full subcategory that is a quasi-unital category object, then a functor $X\ra Y'$ of non-unital category objects is quasi-unital if and only if the composition $X\ra Y'\subset Y$ is quasi-unital in the sense of \ref{enum:quasi-unital-into-simplicial-i}.
	\end{enumerate}
\end{rem}

\subsubsection{Double $\infty$-, $(\infty,2)$-, and $(\infty,1)$-categories} \label{sec:double-vs-infty2}
A double $\infty$-category has an underlying $(\infty,2)$-category (in fact two, but we will not need this) which in turn has an underlying $\infty$-category. More precisely, there are functors of $\infty$-categories
\vspace{-0.1cm}
\[
	\Cat(\CatInf) \xrightarrow{\gls*{infty2}} \CatInfTwo \xrightarrow{{\gls*{infty1}}} \CatInf.
\]
where $\CatInfTwo$ is the $\infty$-category of $(\infty,2)$-categories. We denote the composition by 
\[
	\gls*{simeq2} \colon \Cat(\CatInf) \lra  \CatInf.
\]
These functors have the following properties:
\begin{enumerate}
\item\label{enum:oo-2:i} The functors $(-)^{(\infty,2)}$ and $(-)^{\simeq_2}$ preserve finite products and hence (symmetric) monoidal structures, and so does their composition $(-)^{(\infty,1)}$.
\item \label{enum:oo-2:ii}For $\cC\in\Cat(\CatInf)$, the objects of $\cC^{(\infty,2)}$ can be identified with those of $\cC$. The analogous property holds for the functor $(-)^{\simeq_2}$ and thus also for their composition $(-)^{(\infty,1)}$.
\item\label{enum:oo-2:iii} For $\cC\in\Cat(\CatInf)$, the mapping $\infty$-category $\cC_{A,B}$ between two objects $A$ and $B$ in $\cC$ can be identified with the corresponding mapping $\infty$-category in $\cC^{(\infty,2)}$. The functor $(-)^{\simeq_2}$ is on mapping $\infty$-categories given by taking cores (hence the notation), and thus the same holds for $(-)^{(\infty,1)}$, so we have $\cC_{A,B}^\simeq\simeq\Map_{\cC^{(\infty,1)}}(A,B)$ for objects $A$ and $B$ in $\cC$.
\end{enumerate}
One way to implement these $\infty$-categories and functors between them is to use the equivalence $\Cat_\infty\simeq \CSS(\cS)$ to Rezk's \emph{complete Segal spaces} (a certain full subcategory of $\Cat(\cS)$ \cite[Section 3]{HaugsengSpans}) and model $\CatInfTwo$ as the $\infty$-category of \emph{$2$-fold complete Segal spaces} $\CSS_2(\cS)$ in the sense of Barwick (a certain full subcategory of $\Cat(\Cat(\cS))$ \cite[Section 4]{HaugsengSpans}). In these models, the functor $(-)^{(\infty,2)}$ is explained in \cite[Remark 3.15]{HaugsengMorita} and the functor $(-)^{(\infty,1)}$ can be constructed via the inductive description as 2-fold Segal spaces as $\CSS_2(\cS)=\CSS_{\CSS(\cS)}(\CSS(\cS))$ \cite[Section 7]{HaugsengSpans} by defining $(-)^{\simeq_2}$ as the right-adjoint $\CSS_{\CSS(\cS)}(\CSS(\cS))\ra \CSS_\cS(\cS)=\CSS(\cS)\simeq \CatInf$ induced by the right-adjoint $\ev_{[0]}\colon \CSS(\cS)\ra \cS$ to the inclusion $c\colon \cS\ra \CSS(\cS)$ as constant simplicial spaces, using \cite[Proposition 7.17]{HaugsengSpans}.

It remains to justify properties \ref{enum:oo-2:i}--\ref{enum:oo-2:iii}. That  \ref{enum:oo-2:i} holds for $(-)^{(\infty,2)}$ is justified in \cite[Remark 3.15]{HaugsengMorita} and for $(-)^{\simeq_2}$ it holds since it is a right adjoint. For \ref{enum:oo-2:ii} and \ref{enum:oo-2:iii}, one uses \cite[Lemma 5.50/5.51]{HaugsengMorita} and that $\ev_{[0]}$ corresponds to taking cores under the equivalence $\CatInf \simeq \CSS(\cS)$. 

\subsection{Presheaves and the Yoneda embedding}\label{sec:presheaf-category}\label{sec:yoneda-day}
Given an $\infty$-category $\cC$, we write $\gls*{psh} \coloneqq \Fun(\cC^{\op},\cS)$ for the $\infty$-category of $\cS$-valued presheaves on $\cC$. This admits all small limits and colimits  \cite[5.1.2.4]{LurieHTT}, and there is a natural fully faithful \emph{Yoneda embedding} \cite[5.1.3.1]{LurieHTT}
\[
	\gls*{yon} \colon \cC\longhookrightarrow \PSh(\cC).
\]
 If $\cC$ is (symmetric) monoidal, then its opposite $\cC^{\op}$ is (symmetric) monoidal \cite[2.4.2.7]{LurieHA}, and $\PSh(\cC)$ carries a (symmetric) monoidal structure by Day convolution \cite[2.2.6.17]{LurieHA} which, firstly, preserves small colimits in each variable, and, secondly, allows for an enhancement of the Yoneda embedding to a (symmetric) monoidal functor \cite[4.8.1.12, 4.8.1.13]{LurieHA}. Explicitly, a formula for Day convolution is given by
$(F \otimes G)(c'') = \colim_{c'' \to c \otimes c'} (F(c) \times G(c'))$
where the colimit is over the category of triples $(c,c',u)$ with $c,c' \in \cC$ and $u \colon c'' \to c \otimes c'$ \cite[2.2.6]{LurieHA}. Moreover, from the construction, one sees that a lax (symmetric) monoidal functor $\cC\ra \cD$ (see \cref{ex:monoidal-cat-as-operads})  induces a lax (symmetric) monoidal functor $\PSh(\cD)\ra\PSh(\cC)$ by precomposition.

\begin{rem}\label{fact:yoneda-comparison}Given a $\Kan$-enriched category $\catC$, there is a similar Yoneda embedding in $\Kan$-enriched categories $\yon_s\colon \catC\ra\Fun(\catC^{\op},\Kan)$. Taking coherent nerves and postcomposing with the map $N_{\coh}(\Fun(\catC^{\op},\Kan))\ra \Fun(N_{\coh}(\catC)^{\op},N_{\coh}(\Kan))=\PSh(N_{\coh}(\catC))$ of \cref{sec:coherent-nerve-props} \ref{enum:coherent-functor} yields a functor $N_{\coh}(\catC)\ra \PSh(N_{\coh}(\catC))$ which turns out to agree with $\yon$ up to equivalence, by the construction of $y$ for $\cC=N_{\coh}(\catC)$ in \cite[5.1.3.1]{LurieHTT}.\end{rem}

\subsection{$\infty$-operads}\label{sec:gen-infty-operads} \label{sec:infty-operads} 
Recall the following definition from \cite[2.1.1.10]{LurieHA}.
\begin{dfn} An \emph{$\infty$-operad} $\cO$ is a functor $p \colon \cO^\otimes \to \Fin_\ast$ with the following properties:
	\begin{enumerate}
		\item $\cO^\otimes$ has cocartesian lifts for inert morphisms in $\Fin_\ast$,
		\item\label{enum:operad-ii} the map
		$\sqcap (\rho_i)_!\colon\cO^\otimes_{\langle n \rangle} \rightarrow \sqcap_{i=1}^n  \cO^\otimes_{\langle 1 \rangle}$
		induced by the Segal morphisms is an equivalence,
		\item given an object $x \in \cO^\otimes_{\langle n \rangle}$ and cocartesian lifts $x \to x_i$ of the Segal morphisms $\rho_i \colon \langle n \rangle \to \langle 1 \rangle$, the  following commutative diagram in $\cS$ is cartesian
		\[\begin{tikzcd} \Map_{\cO^\otimes}(y,x) \rar \dar & \sqcap_{i=1}^n \Map_{\cO^\otimes}(y,x_i) \dar \\[-5pt]
			\Map_{\Fin_\ast}(\langle m \rangle,\langle n \rangle) \rar & \sqcap_{i=1}^n \Map_{\Fin_\ast}(\langle m \rangle,\langle 1 \rangle).\end{tikzcd}\]
	\end{enumerate}
	\end{dfn}
	A \emph{map of $\infty$-operads} is a functor over $\Fin_\ast$ that preserves cocartesian lifts over inert morphisms. Such a map $\cO^{\otimes} \to \cP^{\otimes}$ is also called an \emph{$\cO$-algebra in $\cP$}, and we write 
\[\Alg_{\cO}(\cP)\subset \Fun_{\Fin_*}(\cO^{\otimes},\cP^{\otimes})\] 
for the full subcategory of such maps. Given an $\infty$-operad $\cO$, we call the objects of $\cO^\otimes_{\langle 1 \rangle}$  the \emph{colours of $\cO$}. Given colours $x=(x_1,\ldots,x_n) \in \sqcap^n \smash{\cO^\otimes_{\langle 1 \rangle}}\simeq \smash{\cO^\otimes_{\langle n \rangle}}$ and $y \in \smash{\cO^\otimes_{\langle 1 \rangle}}$, the  \emph{space of multi-operations} is the subspace $\gls*{mul}(x;y) \subset \Map_{\cO^\otimes}(x,y)$ covering the unique active morphism $\langle n \rangle \to \langle 1 \rangle$ \cite[2.1.1.16]{LurieHA}. If $\cO^\otimes$ has a single colour $x\in \cO^\otimes_{\langle 1 \rangle}$, we abbreviate $\cO(k)\coloneq \Mul_\cO(x,\ldots,x;x)$ where $x$ appears in the domain $k$ times. These spaces of multi-operations can be composed using \emph{operadic composition maps}, denoted $\gls*{circo}$, that satisfy the axioms of a coloured operad in the classical sense up to coherent homotopies \cite[2.1.1.17]{LurieHA}. In particular, the \emph{homotopy operad} $h\cO^\otimes\ra \Fin_*$ (which is an operad as a result of the properties of $h$ discussed in \cref{sec:scat-vs-qcat} and satisfies $\Mul_{h\cO}(x,y)=\pi_0\,\Mul_{\cO}(x,y)$) gives a coloured operad in the classical sense. By construction, there is a map of $\infty$-operads $\cO^\otimes\ra h\cO^\otimes$. 

\begin{ex}\label{ex:monoidal-cat-as-operads}When viewed as a cocartesian fibration $\cC^{\otimes}\ra\Fin_\ast$ (see \cref{sec:monoidal-cats}), every symmetric monoidal category $\cC$ is an $\infty$-operad. A map of $\infty$-operads between symmetric monoidal categories is called a \emph{lax symmetric monoidal functor}.
\end{ex}

\begin{ex}\label{ex:associative-operad}
Every coloured operad in the category of $\Kan$-complexes in the classical sense gives rise to an $\infty$-operad via the \emph{operadic nerve} \cite[2.1.1.27]{LurieHTT}. For example, the \emph{associative $\infty$-operad} $\Assoc$ \cite[4.1.1.1, 4.1.1.3]{LurieHA} is the operadic nerve of the ordinary operad with a single colour $\ast$, whose $k$-ary multi-operations $\Assoc(k) = \Mul_{\Assoc}(*,\ldots,*;*)$ is the set of linear orders of $\unl{k} = \{1,2,\ldots,k\}$, and where operadic composition is concatenation of linear orders. An $\infty$-operad $\cO$ is equivalent to $\Assoc$ if and only if there is an isomorphism $h\cO \cong h\Assoc$ of operads in the $1$-category of sets and all spaces of operations in $\cO$ are homotopy discrete.
\end{ex}

\subsubsection{Suboperads, endomorphism operads, and algebras over them}\label{sec:suboperad}\label{sec:end-operads} \label{sec:map-as-algebra} 
Let $\cO$ be an $\infty$-operad and $\cat{O}_0 \subseteq h\cO$ be a suboperad of the ordinary operad $h\cO$ in sets. The corresponding \emph{suboperad} $\cO \times_{h \cO} \cat{O}_0$ of $\cO$ is defined as the pullback $\cO^\otimes \times_{h\cO^\otimes} \cat{O}_0^\otimes\ra \Fin_\ast$ in the $\infty$-category $\OpdInf$ of $\infty$-operads, which has limits by \cite[2.1.4]{LurieHA}. In particular, we may restrict $\cO$ to a fixed collection of colours closed under equivalences to obtain a new $\infty$-operad. We call this a \emph{full suboperad}. 

\begin{rem}
The forgetful functor $\OpdInf \to (\CatInf)_{/\Fin_\ast}$ creates limits by \cite[Lemma 1.13]{AyalaFrancisTanakaFact}, so the pullback $\cO \times_{h \cO} \cat{O}_0$ can be computes in $\CatInf$.
\end{rem}

\begin{ex}\label{ex:sub-sym-monoidal}
For a symmetric $\infty$-monoidal category $\cC$ viewed as an $\infty$-operad, its homotopy operad $h\cC$ is a symmetric monoidal $1$-category in the classical sense. Given a sub symmetric monoidal category of $\cat{C}_0\subset h\cC$ in the $1$-categorical sense, the associated sub $\infty$-operad $\cC \times_{h \cC} \cat{C}_0$ is again a symmetric $\infty$-monoidal category. Informally, this is given by restricting the objects and the components of the mapping spaces according to $\cat{C}_0$.
\end{ex}

Fix $\cC$ a symmetric monoidal $\infty$-category $\cC$,  viewed as an $\infty$-operad. The \emph{endomorphism operad} of an object $x$ in $\cC$ is the full sub $\infty$-operad \gls*{endc} obtained by restricting to the colours equivalent to $x$. Writing $\mathbbm{1}$ for the unit in $\cC$, we can form the composition of maps of $\infty$-operads
\begin{equation}\label{equ:algebra-over-end}\End_\cC(x)^{\otimes} \xlra{\subset} \cC^\otimes \overset{\yon}\lra \PSh(\cC)^{\otimes} \overset{\ev_\mathbbm{1}}\lra \cS^\times\end{equation}
to $\cS$ equipped with the cartesian symmetric monoidal structure (see \cref{ex:cartesian-structure}). The first map is induced by the inclusion, the second map the symmetric monoidal Yoneda embedding (see \cref{sec:yoneda-day}), and the third map the evaluation at the unit which is a map of $\infty$-operads by naturality of the Day convolution in lax symmetric monoidal functors (see \cref{sec:yoneda-day}). The composition \eqref{equ:algebra-over-end}  enhances the mapping space $\Map_\cC(\mathbbm{1},x)$ to an $\End_\cC(x)$-algebra in $\cS$.

\subsubsection{Generalised $\infty$-operads}The condition \ref{enum:operad-ii} in the definition of an $\infty$-operad $\cO$ in particular implies that $\smash{\cO^{\otimes}_{\langle0\rangle}}$ is trivial. Sometimes it it useful to relax the notion of an $\infty$-operad to that of a \emph{generalised $\infty$-operad} which need no longer satisfy $\smash{\cO^{\otimes}_{\langle0\rangle}}\simeq\ast$. The precise definition of a generalised $\infty$-operad is not important for us, but it suffices to know that it is a functor $\cO^{\otimes}\ra \Fin_\ast$ satisfying some weaker axioms than those for $\infty$-operads, but that the existence of cocartesian lifts for inert morphisms is still required. \emph{Maps of generalised operads} $\cO\ra\cP$ are defined in the same way as for $\infty$-operads. Generalising the case of $\infty$-operads, we denote the resulting subcategory by $\Alg_{\cO}(\cP)\subset\Fun_{\Fin_\ast}(\cO^{\otimes},\cP^{\otimes})$ and still call its objects $\cO$-algebras in $\cP$.

\subsubsection{(Generalised) nonsymmetric $\infty$-operads}\label{sec:gen-operads}
Replacing the category $\Fin_*$ by $\Delta^\op$ defines \emph{nonsymmetric} variants of all of the above definitions and constructions, e.g.\,(generalised) nonsymmetric operads, maps between them, algebras in them, etc. We use the same notation for the symmetric and nonsymmetric constructions, e.g.\,for (generalised) nonsymmetric $\infty$-operads $\cO$ and $\cP$, we write $\Alg_{\cO}(\cP)\subset \Fun_{\Delta^{\op}}(\cO^{\otimes},\cP^{\otimes})$ for the $\infty$-category of maps of (generalised) nonsymmetric $\infty$-operads aka $\cO$-algebras in $\cP$. 

\begin{ex}\label{example:gen-ns-operad}\label{example:maps-gen-ns-operad}

	The following examples of generalised nonsymmetric $\infty$-operads will be important:
	\begin{enumerate}
		\item Cocartesian fibrations obtained by unstraightening double $\infty$-categories.
		\item The projection $\Delta^{\op}_{/[p]}\ra \Delta^{\op}$ for all $p\ge0$, see \cite[Lemma 4.10]{HaugsengMorita}.
		\item The restriction $\Lambda^{\op}_{/[p]}\ra \Delta^{\op}$ of the projection $\Delta^{\op}_{/[p]}\ra \Delta^{\op}$ to the full subcategory $\gls*{lambda}\subset \Delta_{/[p]}$ spanned by the cellular maps in $\Delta$, see \cite[Lemma 4.14]{HaugsengMorita}.
	\end{enumerate}
	Examples of maps between generalised nonsymmetric $\infty$-operads that will be important are:
	\begin{enumerate}
		\item The map $\Delta^\op_{/[p]} \to \Delta^\op_{/[q]}$ over $\Delta^\op$ induced by a morphism $[p] \to [q]$ of $\Delta$.
		\item The inclusion $\Lambda^\op_{/[p]} \to \Delta^\op_{[p]}$ over $\Delta^\op$ \cite[Lemma 4.14]{HaugsengMorita}.
	\end{enumerate}
\end{ex}

\subsection{Associative algebras and bimodules in the nonsymmetric setting}\label{sec:assalg-bimodules}
Given a monoidal $\infty$-category viewed as a cocartesian fibration $\cC^{\otimes}\ra \Delta^{\op}$ with underlying category $\cC\coloneqq \cC^{\otimes}_{[1]}$, the $\infty$-categories $\gls*{assc}$ and $\gls*{bimodc}$ of \emph{associative algebras in $\cC$} and \emph{bimodules in $\cC$} are defined as
\[
	\Ass(\cC)\coloneqq \Alg_{\Delta^{\op}}(\cC^{\otimes})
	\quad\text{and}\quad
	\Bimod(\cC)\coloneqq \Alg_{\Delta^{\op}_{/[1]}}(\cC^{\otimes}).
\]
These are the $\infty$-categories of $\Delta^{\op}$- and $\Delta_{/[1]}^{\op}$-algebras in $\cC$ as in \cref{sec:gen-operads}. There is a functor
\begin{equation}\label{equ:bimodule-underlying-objects}\Bimod(\cC)\lra \Ass(\cC)\times \cC\times \Ass(\cC)\end{equation} 
consisting of the projections to $\Ass(\cC)$ induced by precomposition with the functors $ \Delta=\Delta_{/[0]}\ra \Delta_{/[1]}$ induced by the $0$th and $1$st face map $[0]\ra[1]$, and the functor to $\smash{\cC^{\otimes}_{[1]}}=\cC$ given by evaluation at $\id_{[1]}\in\Delta_{/[1]}$. The fibre in $\CatInf$
\[\Bimod_{A,B}(\cC)\coloneqq \fib_{(A,B)}\big(\Bimod(\cC)\ra \Ass(\cC)\times \Ass(\cC)\big)\]
at $(A,B)$ for associative algebras $A,B\in \Ass(\cC)$ of the postcomposition $\Bimod(\cC)\ra \Ass(\cC)\times \Ass(\cC)$ of \eqref{equ:bimodule-underlying-objects} with the projection is the \emph{$\infty$-category of $(A,B)$-bimodules}.

\begin{rem}\label{fact:ass-vs-mon}
Associative algebras are closely related to monoid objects in the sense of \cref{sec:cat-objects}: for a category $\cC$ with finite products, equipped with the cartesian monoidal structure (see \cref{ex:cartesian-structure}), we have an equivalence of $\infty$-categories $\Ass(\cC^\times)\simeq\Mon(\cC)$ \cite[2.4.2.5]{LurieHA}.
\end{rem}

\begin{rem}\label{rem:lurie-bimodules} Lurie uses different models for the $\infty$-categories of associative algebras and bimodules in a monoidal $\infty$-category $\cC$ (using the equivalent point of view on monoidal structures mentioned in \cref{rem:lurie-model-oo-cat}), but these turn out to be equivalent to $\Ass(\cC)$ and $\Bimod(\cC)$ as defined above. For $\Ass(\cC)$ this is proved as \cite[4.1.3.19]{LurieHA} and for $\Bimod(\cC)$ it follows from an extension of that argument, or from \cref{rem:other-model-morita} below.
\end{rem}

The following lemma on free $(A,B)$-bimodules will be important later:

\begin{lem}\label{lemma:free-modules}For a monoidal $\infty$-category $\cC$ and associative algebras $A,B\in\Ass(\cC)$, the forgetful functor  $\gls*{uab}\colon \Bimod_{A,B}(\cC)\ra \cC$ given as the composition of the inclusion into $\Bimod(\cC)$ followed by \eqref{equ:bimodule-underlying-objects} and the projection to $\cC$ has the following properties:
	\begin{enumerate}
		\item\label{enum:free-modules-ii} For a fixed $\infty$-category $I$ such that $\cC$ admits all $I$-indexed colimits, the functor $U_{A,B}$ preserves and detects $I$-indexed colimits. The same holds for limits instead of colimits.
		\item\label{enum:free-modules-iii} The functor $U_{A,B}$ reflects equivalences.
		\item\label{enum:free-modules-i} The functor $U_{A,B}$ has a left-adjoint $\gls*{fab}\colon \cC\ra \Bimod_{A,B}(\cC)$ whose unit $M\ra U_{A,B}F_{A,B}(M)$ for $M\in \cC$ agrees with the map $M\ra A\otimes M\otimes B$ given by tensoring with the units of $A$ and $B$.
		\item\label{enum:free-modules-iv} For a functor $\varphi\colon \cC\ra\cD$ of monoidal $\infty$-categories and $M\in\cC$, the canonical morphism $F_{\varphi(A),\varphi(B)}(\varphi(M))\ra \varphi(F_{A,B}(M))$ is an equivalence.
	\end{enumerate}
\end{lem}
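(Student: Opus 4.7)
The plan is to leverage the operadic description $\Bimod(\cC)=\Alg_{\Delta^{\op}_{/[1]}}(\cC^{\otimes})$ together with the Segal condition. For any object $\sigma\colon [n]\to [1]$ in $\Delta_{/[1]}$, the inert morphisms in $\Delta^{\op}_{/[1]}$ from $\sigma$ to its one-edge restrictions force the value of a bimodule $X\in \Bimod_{A,B}(\cC)$ at $\sigma$ to be canonically equivalent, via the cocartesian lifts, to a tensor product of $n$ factors: each factor is a copy of $A$ (respectively $B$) for edges mapped constantly to $0$ (resp.\,$1$), and at most one factor is a copy of the underlying object $M=U_{A,B}(X)$, corresponding to the unique ``crossing'' edge of $\sigma$ when one exists. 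The $A$- and $B$-factors are pinned by the fixed algebras, so only the middle factor depends on $X$.

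For part~\ref{enum:free-modules-i} I would construct $F_{A,B}$ by operadic left Kan extension along the inclusion of the single colour $\id_{[1]}\hookrightarrow \Delta^{\op}_{/[1]}$. Concretely, $F_{A,B}(M)$ is the bimodule whose value at $\sigma$ is the Segal-dictated tensor product with $M$ inserted at the crossing edge; the universal property then reduces under the Segal decomposition to the statement that a map $F_{A,B}(M)\to N$ in $\Bimod_{A,B}(\cC)$ is freely determined by its component $M\to N(\id_{[1]})=U_{A,B}(N)$. In particular $U_{A,B}F_{A,B}(M)\simeq A\otimes M\otimes B$, and the unit $M\simeq \mathbbm{1}\otimes M\otimes \mathbbm{1}\to A\otimes M\otimes B$ is obtained by tensoring with the unit maps of $A$ and $B$. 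Part~\ref{enum:free-modules-iv} is then a formal consequence of this formula: a monoidal functor $\varphi$ preserves tensor products and units, so it takes $F_{A,B}(M)\simeq A\otimes M\otimes B$ to $\varphi(A)\otimes \varphi(M)\otimes \varphi(B)\simeq F_{\varphi(A),\varphi(B)}(\varphi(M))$, and one verifies that this agrees with the comparison map induced by the adjunction.

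For parts~\ref{enum:free-modules-ii} and~\ref{enum:free-modules-iii}, conservativity of $U_{A,B}$ is immediate from the Segal decomposition: a morphism $X\to Y$ in $\Bimod_{A,B}(\cC)$ is an equivalence iff each component at some $\sigma$ is, and each such component is obtained from the single map at $\id_{[1]}$ by tensoring with identities on fixed tensor powers of $A$ and $B$. For weakly contractible colimits, one uses that the Segal and cocartesian conditions cutting out $\Bimod_{A,B}(\cC)$ inside $\Fun_{\Delta^{\op}}(\Delta^{\op}_{/[1]},\cC^{\otimes})$ are ``limit-type'' constraints that pass through colimits indexed by weakly contractible diagrams; given a $\cat{K}$-diagram of bimodules whose underlying diagram has a colimit $M$ in $\cC$, the bimodule whose value at each $\sigma$ is the Segal tensor product with $M$ inserted then provides the desired colimit. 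The main anticipated obstacle is executing this last step rigorously in the $\infty$-categorical setting, since the Segal conditions involve cocartesian lifts rather than strict equalities; this is precisely the content of the operadic Kan extension machinery developed in Haugseng's~\cite{HaugsengMorita} (cf.\,also Lurie's parallel results on algebras over $\infty$-operads in~\cite{LurieHA}), to which one would ultimately appeal.
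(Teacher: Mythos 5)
Your proposal is essentially correct and ultimately rests on the same foundation as the paper: for items \ref{enum:free-modules-ii}, \ref{enum:free-modules-iii}, \ref{enum:free-modules-i} the paper simply cites Haugseng's Corollary~4.49 (with the observation that his proof for sifted colimits extends to weakly contractible ones), and you also end up delegating the rigorous colimit step to that machinery. Your treatment of \ref{enum:free-modules-iv} matches the paper's: reduce via conservativity, then apply the unit formula and monoidality of $\varphi$. The one genuine divergence is your argument for conservativity (\ref{enum:free-modules-iii}): you give a direct Segal-decomposition argument (a map of $(A,B)$-bimodules is levelwise $\id_{A^{\otimes i}}\otimes f\otimes \id_{B^{\otimes j}}$, so it is an equivalence iff $f$ is), whereas the paper invokes the abstract fact that right adjoints of monadic adjunctions are conservative (\cite[4.7.3.5]{LurieHA}). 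Both are fine; yours is more hands-on but requires knowing the Segal picture in the fibre over a fixed pair $(A,B)$, which is precisely what Haugseng develops.

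Two small imprecisions worth flagging. First, when you describe $F_{A,B}(M)$ as ``the bimodule whose value at $\sigma$ is the Segal-dictated tensor product with $M$ inserted at the crossing edge,'' this reads as though the crossing edge carries $M$; but the Segal decomposition of $F_{A,B}(M)$ has $A\otimes M\otimes B$ at the crossing edge (since $U_{A,B}F_{A,B}(M)\simeq A\otimes M\otimes B$, as you correctly state two sentences later). The operadic left Kan extension is what produces the extra $A$- and $B$-factors, so its formula at $\id_{[1]}$ is not simply $M$. Second, for part \ref{enum:free-modules-ii}, your sketch ``the bimodule whose value at each $\sigma$ is the Segal tensor product with $M$ inserted then provides the desired colimit'' implicitly assumes that tensoring with powers of $A$ and $B$ preserves the given weakly contractible colimit, which is not part of the hypotheses; the actual argument in Haugseng's proof is more delicate (and this is exactly why the statement requires weak contractibility rather than holding for arbitrary colimits). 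You do flag that this step needs the operadic machinery, so this is an acknowledged gap rather than an overlooked one, but the informal justification given is not quite the mechanism that makes it work.
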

\begin{proof}Using \ref{rem:lurie-bimodules}, the first part follows from \cite[4.3.3.3, 4.3.3.9]{LurieHA}. The remaining items follow from \cite[Corollary 4.49]{HaugsengMorita}: The final part of this corollary in particular shows \ref{enum:free-modules-iii} since right adjoints in monadic adjunctions reflect equivalences \cite[4.7.3.5]{LurieHA} and the first part shows \ref{enum:free-modules-i}. This leaves \ref{enum:free-modules-iv}. As a result of \ref{enum:free-modules-iii} it suffices to show that 
	\[U_{\varphi(A),\varphi(B)}F_{\varphi(A),\varphi(B)}(\varphi(M))\lra U_{\varphi(A),\varphi(B)}(\varphi(F_{A,B}(M)))\simeq\varphi(U_{A,B}F_{A,B}(M))\] is an equivalence. Using the second part of \ref{enum:free-modules-i} this follows from the monoidality of $\varphi$.
\end{proof}

\subsection{Haugseng's Morita category}\label{sec:haugseng-morita} In analogy with the classical Morita category of a ring, for a sufficiently nice monoidal $\infty$-category $\cC$ one would expect a double $\infty$-category $\ALG(\cC)$---the \emph{Morita category} of $\cC$---whose $\infty$-category of objects $\ALG(\cC)_{[0]}$ is the $\infty$-category of associative algebras $\Ass(\cC)$, whose $\infty$-category of morphisms $\ALG(\cC)_{[1]}$ is the category of bimodules $\Bimod(\cC)$, and whose composition is given by ``tensoring bimodules''. Haugseng constructed such a Morita category in \cite{HaugsengMorita} (denoted $\ALG_1(\cC)$ therein) under mild assumptions on $\cC$. In what follows, we recall his construction and establish some properties not explicitly stated.

\subsubsection{The pre-Morita category}\label{sec:pre-morita}
For a monoidal $\infty$-category $\cC^{\otimes}\ra \Delta^{\op}$, the \emph{pre-Morita simplicial $\infty$-category of $\cC$} is the simplicial $\infty$-category $\gls*{prealg}\in\Fun(\Delta^\op,\CatInf)$ with
\[
	\overline{\ALG}(\cC )_{[p]}\coloneqq \Alg_{\Delta^{\op}_{/[p]}}(\cC^{\otimes})\subset \Fun_{\Delta^{\op}}(\Delta^{\op}_{/[p]},\cC^{\otimes}).
\] 
The simplicial structure is given by precomposition with the functors $\smash{\Delta_{/[p]}\ra \Delta_{/[q]}}$ induced by postcomposition with morphisms $[p]\ra [q]$ in $\Delta$; this uses \cref{example:maps-gen-ns-operad}. By construction, $\smash{\overline{\ALG}(-)}$ is natural in lax monoidal functors by postcomposition.

This definition extends the $\infty$-categories $\Ass(\cC)=\overline{\ALG}(\cC )_{[0]}$ and $\Bimod(\cC)=\overline{\ALG}(\cC )_{[1]}$ to a simplicial $\infty$-category $\smash{\overline{\ALG}(\cC)}$, but the result is not yet a double $\infty$-category: an object in $\smash{\overline{\ALG}(\cC)_{[p]}}$ gives associative algebras $M(i)$ for $0 \leq i \leq p$ and $(M(i),M(j))$-bimodules $M(i,j)$ for $0 \leq i<j \leq p$ and, informally speaking, we need to enforce that $M(i,j)$ is equivalent to the iterated tensor product $M(i,i+1) \otimes_{M(i+1)} M(i+1,i+2) \otimes_{M(i+2)} \cdots \otimes_{M(j-1)} M(j-1,j)$.

\subsubsection{Composite algebras and the Morita category}\label{sec:composite-algebras}
The condition on the $M(i,j)$ just mentioned can be made precise through the notion of a \emph{composite algebra} from \cite[Section 4.2]{HaugsengMorita}. This requires an assumption on $\cC$ Haugseng calls \emph{having good relative tensor products} \cite[Definition 4.18]{HaugsengMorita}, which is in particular satisfied if the underlying category $\cC$ admits all geometric realisations (colimits indexed over $\Delta^{\op}$) and if they are preserved by tensoring (on either side) with fixed objects of $\cC$; this follows from \cite[Lemma 4.19]{HaugsengMorita}. If $\cC$ has good relative tensor products, then the functor induced by the inclusion $\tau_p\colon \Lambda^\op_{/[p]}\hookrightarrow \Delta^{\op}_{/[p]}$ (see \cref{example:maps-gen-ns-operad})
\[\smash{\tau_p^* \colon \overline{\ALG}(\cC )_{[p]}=\Alg_{\Delta^{\op}_{/[p]}}(\cC^{\otimes})\lra \Alg_{\Lambda^{\op}_{/[p]}}(\cC^{\otimes})}\]
 admits a fully faithful left adjoint
\vspace{-0.2cm}
\[\Alg_{\Lambda^{\op}_{/[p]}}(\cC^{\otimes})\xlra{\tau_{p,!}}  \Alg_{\Delta^{\op}_{/[p]}}(\cC^{\otimes})=\overline{\ALG}(\cC )_p\] by \cite[Corollary 4.20]{HaugsengMorita}, and $M\in\overline{\ALG}(\cC )_{[p]}$ is called \emph{composite} if $M$ is in the essential image of $\tau_{p,!}$, or equivalently if the counit $\tau_{p,!}\tau_p^*M\ra M$ is an equivalence \cite[Definition 4.21]{HaugsengMorita}. By \cite[Corollary 4.38]{HaugsengMorita}, the simplicial structure on the pre-Morita category restricts to a simplicial structure on the full subcategories $\smash{\ALG(\cC)_{[p]}\subset \overline{\ALG}(\cC )_{[p]}}$ of composite objects, and by \cite[Theorem 4.39]{HaugsengMorita}, the result is a double $\infty$-category---the \emph{Morita double $\infty$-category of $\cC$} \[
	\gls*{alg}\in\Cat(\CatInf)\subseteq\Fun(\Delta^\op,\CatInf).
\] 
Note that $\Lambda_{/[p]}=\Delta_{/[p]}$ for $p=0,1$ so $\ALG(\cC)_{[p]}\subseteq \overline{\ALG}(\cC)_{[p]}$ is an equality for $p=0,1$, i.e.\,
\begin{equation}\label{equ:small-simplices-morita}
	\overline{\ALG}(\cC)_{[0]}=\ALG(\cC)_{[0]}=\Ass(\cC)\quad\text{and}\quad  \overline{\ALG}(\cC)_{[1]}=\ALG(\cC)_{[1]}=\Bimod(\cC).
\end{equation}
In particular, the mapping $\infty$-categories between $A,B\in \ALG(\cC)_{[0]}$ in the notation of \cref{sec:mapping-infinity-category} are given as $\ALG(\cC)_{A,B}=\Bimod_{A,B}(\cC)$.

\begin{rem}\label{rem:other-model-morita}In \cite[4.4.3.10, 4.4.3.11]{LurieHA}, Lurie describes a Morita double $\infty$-category $\BMod(\cC)^\circledast $ for monoidal $\infty$-categories $\cC$ that admit geometric realisations which are compatible with tensoring with a fixed object on either side (so they in particular admit good relative tensor products). One advantage of Lurie's model is that it is functorial in all lax monoidal functors, whereas Haugseng's is a priori only functorial in (strong) monoidal functors that are compatible with good relative tensor products (see \cref{sec:morita-functoriality} below). However, it turns out that Haugseng's Morita double $\infty$-category $ \ALG(\cC)$ is equivalent to Lurie's $\BMod(\cC)^\circledast $; see \cite[Corollary 5.14]{HaugsengRemarkMorita}. In particular, on $0$- and $1$-simplices, this comparison shows that Lurie's and Haugseng's models for the category of associative algebras and bimodules in a monoidal $\infty$-category $\cC$ are equivalent (cf.\,\cref{rem:lurie-bimodules}), and on composition functors it shows that Lurie's and Haugseng's models for relative tensor products of bimodules are equivalent.
\end{rem}

\subsubsection{Composite algebras in terms of semisimplicial objects}\label{sec:composite-multisimplicial}
We will now reformulate the condition on an object $M\in\overline{\ALG}(\cC )_{[p]}$ to be composite in a form closer to the informal description mentioned at the end of \cref{sec:pre-morita}, resulting in a convenient criterion for an object $M \in \overline{\ALG}(\cC)_{[p]}$ to be composite. Before turning to the technical details, we describe this criterion informally. As mentioned before, the object $M$ gives associative algebras $M(i)$ for $0 \leq i \leq p$ and $(M(i),M(j))$-bimodules $M(i,j)$ for $0 \leq i<j\leq p$. For each such bimodule, there is an ``iterated bar-construction'' semisimplicial object $M(i,j)_\bullet$ in $\cC$ augmented over $M(i,j)$, with $k$-simplices  \[M(i,j)_{[k]}\simeq  M(i,i+1) \otimes M(i+1)^{\otimes k} \otimes M(i+1,i+2) \otimes M(i+2)^{\otimes k}\otimes \cdots \otimes M(j-1)^{\otimes k}\otimes M(j-1,j).\]
The criterion is then equivalent to requiring that the augmentation geometrically realises to an equivalence for all $0 \leq i<j\leq p$. In fact, for bookkeeping reasons, it is convenient to rephrase this criterion slightly: For each $q\ge0$ and each sequence $\alpha=(0\le i_0 \leq \ldots \leq i_q\le p)$ of integers, we have an augmented semisimplicial object over $M(i_0,i_1)\otimes M(i_1,i_2)\otimes \ldots \otimes M(i_{q-1},i_q)$ given by the diagonal of the $q$-fold semisimplicial object $M(i_0,i_1)_\bullet\otimes M(i_1,i_2)_\bullet\otimes \ldots \otimes M(i_{q-1},i_q)_\bullet$. The criterion is that its augmentation has to realise to an equivalence (see \cref{cor:composite-algebras-multisimplicial}).

\medskip

\noindent To make this precise, we denote by $\Delta^{\act}$ the wide subcategory of $\Delta$ given by the active maps and by $\cC^{\otimes,\act}$ the pullback of $\cC^{\otimes}\ra \Delta^{\op}$ along the inclusion $\Delta^{\act,\op}\ra \Delta^{\op}$. The unique active maps $[1]\ra [p]$ define a natural transformation from the inclusion $\Delta^{\act,\op}\ra \Delta^\op$ to the constant functor at $[1]\in\Delta^\op$, which we can precompose with the projection $\cC^{\otimes,\act}\ra \Delta^{\act,\op}$. Taking a cocartesian pushforward (see \cref{sec:straightening}) of the canonical map $\cC^{\otimes,\act}\ra \cC^{\otimes}$ along this natural transformation gives a functor
\begin{equation}\label{equ:universal-active-pushforward}
	(-)_!\colon \cC^{\otimes,\act}\lra \cC_{[1]}^{\otimes}=\cC.
\end{equation}
Now write $\Delta^{\act}_{/[p]}$ for the wide subcategory of $\Delta_{/[p]}$ of those morphisms that map to $\Delta^{\act}$ under the projection and $	\Lambda^{\act,\op}_{/[p]}\subset \Delta^{\act,\op}_{/[p]}$ for the full subcategory of cellular maps. For $M\in\overline{\ALG}(\cC)_{[p]}\subset\Fun_{\Delta^\op}(\Delta^\op_{/[p]},\cC^{\otimes})$ and an object $\alpha\colon [q]\ra[p]$ of $\Delta^{\act,\op}_{/[p]}$, we consider the composition 
\begin{equation}\label{equ:kan-extension-diagram}
	((\Lambda^{\act,\op}_{/[p]})_{/\alpha})^{\rhd}\xra{\can} (\Delta^{\act,\op}_{/[p]})_{/\alpha}\xra{\pr}\Delta^{\act,\op}_{/[p]}\xra{M}\cC^{\act,\otimes}\xra{(-)_!}\cC
\end{equation}
where $\pr$ is the projection, $\gls*{rhd}$ is the right-cone (this freely adds a terminal object and can be modelled by the join $(-) \ast \Delta^0$), and $\can$ is the extension of the inclusion $\smash{(\Lambda^{\act,\op}_{/[p]})_{/\alpha} \subset (\Delta^{\act,\op}_{/[p]})_{/\alpha}}$ to the cone $((\Lambda^{\act,\op}_{/[p]})_{/\alpha})^{\rhd}$ by sending the terminal object to $\id_\alpha$.

\begin{lem}\label{lem:composite-algebras}For a monoidal $\infty$-category $\cC$ with good relative tensor products,
	an object $M\in \overline{\ALG}(\cC )_{[p]}$ is composite if and only if  the composition \eqref{equ:kan-extension-diagram} is a colimit diagram for all $\alpha$.
\end{lem}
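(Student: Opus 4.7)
The plan is to interpret ``composite'' as an operadic left Kan extension condition, apply its pointwise criterion, and then rewrite the criterion as the explicit colimit condition \eqref{equ:kan-extension-diagram}. For the first step, recall from \cite[Corollary 4.20]{HaugsengMorita} that the functor $\tau_p^*$ admits a fully faithful left adjoint $\tau_{p,!}$, so $M$ being composite is equivalent to the counit $\tau_{p,!}\tau_p^* M\ra M$ being an equivalence. Since $\tau_{p,!}$ is constructed as an operadic left Kan extension along the inclusion $\tau_p\colon \Lambda^\op_{/[p]}\hookrightarrow \Delta^\op_{/[p]}$ of generalized nonsymmetric $\infty$-operads, this is in turn equivalent to $M$ being an operadic left Kan extension of $\tau_p^* M$, which admits a pointwise criterion at each object $\alpha\colon [q]\ra [p]$ of $\Delta^\op_{/[p]}$.

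The second step---which I expect to be the main obstacle---is the reduction of the pointwise criterion from arbitrary $\alpha$ to active $\alpha$. When $\alpha$ is already cellular, the identity $\id_\alpha$ is a terminal object of the relevant comma category and the criterion is automatic, so only non-cellular $\alpha$ are at stake. For general $\alpha$, using the unique active/inert factorization $\alpha=\iota\circ\alpha'$ together with the fact that $M\in\overline{\ALG}(\cC)_{[p]}$ preserves cocartesian lifts over inert morphisms, one can express $M(\alpha)$ in terms of $M(\alpha')$ and the cocartesian pushforward along $\iota$, and a Segal-type argument then shows that the pointwise criterion at $\alpha$ is equivalent to the one at its active part $\alpha'\in \Delta^{\act,\op}_{/[p]}$. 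Writing this out carefully, and making the operadic pointwise formula explicit in terms of ordinary colimits, is the bulk of the work, but both pieces are essentially contained in \cite[Sections 4.1--4.2]{HaugsengMorita}, whose formalism the proof would follow closely.

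For the third step, fix an active $\alpha$. The inclusion $(\Lambda^{\act,\op}_{/[p]})_{/\alpha}\hookrightarrow(\Lambda^\op_{/[p]})_{/\alpha}$ is cofinal, since any cellular refinement of an active map is itself active, and the operadic colimit governing the Kan extension condition at $\alpha$ is converted into an ordinary colimit in $\cC$ by applying the active cocartesian pushforward \eqref{equ:universal-active-pushforward}. The resulting diagram, together with its canonical cone at $M(\alpha)_!$, is exactly \eqref{equ:kan-extension-diagram}, so the pointwise criterion at $\alpha$ is precisely the statement that this diagram is a colimit in $\cC$, completing the equivalence.
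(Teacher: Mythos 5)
Your overall strategy matches the paper's: interpret ``composite'' as the counit of the $\tau_{p,!}\dashv\tau_p^*$ adjunction being an equivalence, reinterpret this as an operadic left Kan extension condition, and then translate to an ordinary colimit criterion in $\cC$ via the active cocartesian pushforward $(-)_!$. The paper achieves the last two steps in one stroke by invoking Haugseng's Proposition 4.16, Corollary A.60, and especially Lemma A.53, which simultaneously handles the passage from operadic to ordinary Kan extensions and the reduction to active morphisms, after which Lurie's pointwise criterion \cite[4.3.2.2]{LurieHTT} for the full subcategory inclusion $\Lambda^{\act,\op}_{/[p]}\subset\Delta^{\act,\op}_{/[p]}$ gives exactly the stated condition.

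There are two genuine gaps in your proposal. First, the reduction you flag as ``the main obstacle'' in step two---passing from the pointwise criterion at arbitrary $\alpha$ to the criterion at active $\alpha$ only, and simultaneously replacing operadic colimits with ordinary colimits in $\cC$---really is the technical crux, and you leave it entirely to an appeal to Haugseng's ``Sections 4.1--4.2''; but the relevant machinery (Definition A.56, Lemma A.53, Corollary A.60) lives in his appendix, not there, and Corollary 4.20 alone only hands you the adjunction, not the pointwise operadic criterion or its conversion to ordinary colimits. Without that input, there is no proof yet.

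Second, the cofinality claim in step three is incorrectly justified and likely false as stated. You assert that $(\Lambda^{\act,\op}_{/[p]})_{/\alpha}\hookrightarrow(\Lambda^\op_{/[p]})_{/\alpha}$ is cofinal for $\alpha$ active ``since any cellular refinement of an active map is itself active.'' But that fact concerns the cellular leg $\alpha'\colon[r]\to[p]$: if $\alpha=\alpha'\circ\beta$ with $\alpha$ active and $\alpha'$ cellular, then $\alpha'$ is indeed forced to be active---but $\beta$ need not be. For instance take $\alpha=\id_{[1]}$, $\alpha'\colon[2]\to[1]$ given by $(0,0,1)$, and $\beta\colon[1]\to[2]$ given by $(1,2)$; here $\beta(0)=1\neq 0$. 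So the inclusion is a genuinely proper restriction on objects, and moreover it also restricts morphisms to active ones, so cofinality is not automatic and your stated reason does not address it. This reduction is precisely what the paper outsources to Haugseng's Lemma A.53; if you want to do it by hand, you need a different argument.
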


\begin{proof}By definition, $M$ is composite if the counit $\tau_{p,!}\tau_p^*M\ra M$ is an equivalence. Using Proposition 4.16 and Corollary A.60 of \cite{HaugsengMorita} this is equivalent to asking whether the identity $\tau_p^*M\ra \tau_p^*M$ exhibits $M$ as the operadic left Kan extension of $\tau_p^*M$ along $\tau_p$ in the sense of Definition A.56 loc.cit.. By Lemma A.53 loc.cit., this is in turn equivalent to the condition that the functor $((\Lambda^{\act,\op}_{/[p]})\times\Delta^1)\sqcup_{(\Lambda^{\act,\op}_{/[p]})\times \{0\}}(\Delta^{\act,\op}_{/[p]})\times \{0\}\ra \cC$ induced by the restriction of
	\[\smash{(\Delta^{\act,\op}_{/[p]})\times\Delta^1\xlra{\pr_1}\Delta^{\act,\op}_{/[p]}\xra{M}\cC^{\act,\otimes}\xra{(-)_!}\cC}\]
	to  $\Lambda^{\act,\op}_{/[p]}\times \Delta^1$ is a left Kan extension in the sense of \cite[4.3.3.2]{LurieHTT}. As $\Lambda^{\act,\op}_{/[p]}\subset \Delta^{\act,\op}_{/[p]}$ is a full subcategory inclusion, we can use the simpler characterisation of left Kan extensions from \cite[4.3.2.2]{LurieHTT}, which is exactly the condition of the statement. \end{proof}

Unravelling the definitions, the category $(\Lambda^{\act,\op}_{/[p]})_{/\alpha}$ for $\alpha \colon [q] \to [p]$ is the 1-category whose objects are factorisations of $\alpha$ into an active map followed by a cellular map,
\[\begin{tikzcd}[column sep=1.5cm]
	\left[q\right]\rar{\text{active}}\arrow[rr,"\alpha", bend right=20,swap]&\left[r\right]\rar{\text{cellular}}&\left[p\right].
\end{tikzcd}
\] 
Note that if $\alpha$ is active, then so must be $[r] \to [p]$. Given another such factorisation, with middle object $[r']$, a morphism from the factorisation involving $[r]$ to that involving $[r']$ is an active map $[r']\ra[r]$ that makes everything commute:
\[\begin{tikzcd}[column sep=1.5cm] &  {[r]} \arrow{rd}{\text{cellular}} \arrow{dd}[description]{\text{active}}& \\[-15pt] 
	[q] \arrow{ru}{\text{active}} \arrow{rd}[swap]{\text{active}} & & {[p]} \\[-15pt]
	& {[r']} \arrow{ru}[swap]{\text{cellular}} & \end{tikzcd}\]	
Colimits over this category---as appearing in \cref{lem:composite-algebras}---can be rephrased in terms of semisimplicial objects that are easier to handle. Making this precise involves the following construction:

\begin{construction}\label{const:rhoalpha}Let $\alpha \colon [q] \to [p]$ be an object of $\Delta^\op_{/[p]}$ considered as a sequence $(i_0\leq\ldots\leq i_q) \subseteq [p]$. Let $J \subseteq [q-1]$ be the set of indices $j$ for which $i_j < i_{j+1}$ and set $k_{\alpha} \coloneqq \sum_{j \in J} (i_{j+1}-i_j-1)$. Enumerating the indices in the interval $[i_0,i_q]$ that do \emph{not} lie in $(i_0,\ldots,i_q)$ in order as $m_1,\ldots,m_{k_{\alpha}}$, there is a functor 
\[
	\rho_\alpha \colon (\Delta^\op)^{k_{\alpha}} \lra (\Lambda^{\act,\op}_{/[p]})_{/\alpha}
\]
given as follows: writing $\textstyle{k_\alpha^{\vec{a}}\coloneqq q+\sum_{i=1}^{k_{\alpha}}(a_i+1)}$, it sends an object $([a_1],\ldots,[a_q])\in(\Delta^\op)^{k_{\alpha}}$ to
\[\begin{tikzcd}[column sep=0.8cm]
	\left[q\right]\rar{\alpha_0^{\vec{a}}}\arrow[rr,"\alpha", bend right=25,swap]&{[k_\alpha^{\vec{a}}]}\rar{\alpha_1^{\vec{a}}}&\left[p\right]
\end{tikzcd}\]
where $\alpha_1^{\vec{a}}$ is given by the weakly increasing sequence that contains $(i_0\le \ldots\le i_q)$ as well as each $m_j$ repeated $a_j+1$ times. The map $\alpha_0^{\vec{a}}$ is the unique injective map such that  $\alpha^{\vec{a}}_1\circ\alpha^{\vec{a}}_0=\alpha$.\end{construction}

\begin{ex}\label{ex:cofinal-alphas}
If $\alpha \colon [2] \to [p]$ is given by the sequence $(i\le i+2\le i+4)$ then $k_{\alpha}=2$, the map $\alpha^{\vec{a}}_1$ is given by the sequence $(i\le i+1\le \ldots\le i+1\le i+2\le i+3\le \ldots\le i+3\le i+4)$ where $i+1$ appears $a_1+1$ times and $i+3$ appears $a_2+1$ times, and the map $\alpha^{\vec{a}}_0$ is given by the inclusion of $(i\le i+2\le i+4)$ into this sequence.
\end{ex}

For later reference, we spell out how $\rho_\alpha$ translates under the isomorphism $\Delta\cong\Gap$.

\begin{construction}\label{const:rhoalpha-gap}
Let $\alpha \colon \lbr p\rbr \to \lbr q\rbr$ be an object of $\Gap_{\lbr p\rbr/}$ considered as a sequence $(i_1\leq\ldots\leq i_p)$ with $i_j\in\lbr \mathring{q}\rbr$. The quantity $k_\alpha$ is then given by $k_\alpha\coloneqq \#\{j\in\lbr \mathring{p{-}1}\rbr\mid i_j\in\lbr \mathring{q}\rbr\text{ and }i_j=i_{j+1}\}$. We enumerate this set in order by $n_1<\ldots <n_{k_\alpha}$. The functor $\rho_\alpha$ takes the form
\[
	\rho_\alpha \colon \Gap^{k_{\alpha}} \lra (\Gap^{\act,\op}_{\lbr p\rbr/})_{/\alpha}
\]
and can be described as follows: abbreviating $\textstyle{k_\alpha^{\vec{a}}\coloneqq q+\sum_{i=1}^{k_{\alpha}}(a_i+1)}$ as in the $\Delta^\op$-case, for an object $\lbr\vec{a}\rbr=(\lbr a_1\rbr,\ldots,\lbr a_q\rbr)\in\Gap^{k_{\alpha}}$, it sends $\lbr\vec{a}\rbr$ to the factorisation
\[\begin{tikzcd}[column sep=0.8cm]
	\lbr p\rbr\rar{\alpha_1^{\vec{a}}}\arrow[rr,"\alpha", bend right=25,swap]&\lbr k_\alpha^{\vec{a}}\rbr \rar{\alpha_0^{\vec{a}}}&\lbr q\rbr
\end{tikzcd}\]
where $\alpha_1^{\vec{a}}\colon \lbr p\rbr\ra \lbr k_\alpha^{\vec{a}}\rbr$ is given by the sequence obtained from the sequence $(i_1\leq\ldots\leq i_p)$ by inserting a gap of length $a_i$ between $i_{n_j}$ and $i_{n_j+1}$ for $j=1,\ldots, k_\alpha$, and $\alpha_0^{\vec{a}}\colon \lbr k_\alpha^{\vec{a}}\rbr\ra \lbr q\rbr$ is the unique surjective map such that $\alpha_0^{\vec{a}}\circ \alpha_1^{\vec{a}}=\alpha$.
\end{construction}

The following lemma is a generalisation of \cite[Lemma 4.17]{HaugsengMorita}.

\begin{lem}\label{lem:cofinal-multi-simplicial}The functor $\rho_\alpha$ from \cref{const:rhoalpha} is cofinal.
\end{lem}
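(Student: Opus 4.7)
The plan is to verify cofinality via Joyal's criterion \cite[4.1.3.1]{LurieHTT}, which reduces us to showing that for every object $X = (\beta_0, \beta_1)$ of $(\Lambda^{\act,\op}_{/[p]})_{/\alpha}$ --- corresponding to an active-cellular factorisation $[q] \xrightarrow{\beta_0} [r] \xrightarrow{\beta_1} [p]$ of $\alpha$ --- the comma simplicial set
\[
(\Delta^\op)^{k_\alpha} \times_{(\Lambda^{\act,\op}_{/[p]})_{/\alpha}} ((\Lambda^{\act,\op}_{/[p]})_{/\alpha})_{X/}
\]
is weakly contractible. Since $\beta_1$ is cellular and active, every $m \in [p]$ has nonempty preimage, and $\beta_1^{-1}(m)$ is a subinterval of $[r]$ of length $\ell_m \coloneqq |\beta_1^{-1}(m)| \geq 1$; this data will control the analysis.

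The first step is to unpack what a morphism $X \to \rho_\alpha(\vec{a})$ in $(\Lambda^{\act,\op}_{/[p]})_{/\alpha}$ amounts to, namely an active map $\gamma \colon [k_\alpha^{\vec{a}}] \to [r]$ over $[p]$ with $\gamma \circ \alpha_0^{\vec{a}} = \beta_0$. The plan is to exploit the block structure of $\alpha_1^{\vec{a}}$: on each of the $q+1$ singleton blocks where $\alpha_1^{\vec{a}}$ takes the value $i_j$, the map $\gamma$ is pinned to the value $\beta_0(j)$, while on the block of $a_k+1$ consecutive positions where $\alpha_1^{\vec{a}}$ takes the value $m_k$, monotonicity reduces $\gamma$ to a morphism $\gamma_k \colon [a_k] \to [\ell_{m_k}-1]$ in $\Delta$. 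Checking that monotonicity across block boundaries is automatic (consecutive preimages $\beta_1^{-1}(m) < \beta_1^{-1}(m')$ are separated in $[r]$) then gives a natural bijection between the morphism set from $X$ to $\rho_\alpha(\vec a)$ and the set of tuples $(\gamma_k)_{k=1}^{k_\alpha}$. A parallel blockwise analysis will identify morphisms $\rho_\alpha(\vec a) \to \rho_\alpha(\vec a')$ with arbitrary tuples $(\phi_k)_k$ of morphisms $\phi_k \colon [a_k'] \to [a_k]$ in $\Delta$ --- the endpoint condition for activity is forced by the singleton $i_0$- and $i_q$-blocks --- exhibiting $\rho_\alpha$ as fully faithful, and compositions match blockwise.

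Putting these together, the comma category identifies as a $1$-category with the product $\prod_{k=1}^{k_\alpha} (\Delta_{/[\ell_{m_k}-1]})^{\op}$: objects are tuples $([a_k], \gamma_k \colon [a_k] \to [\ell_{m_k}-1])$ and morphisms are tuples $\phi_k \colon [a_k'] \to [a_k]$ with $\gamma_k \phi_k = \gamma_k'$. Each factor $\Delta_{/[\ell_{m_k}-1]}$ has a terminal object $\id_{[\ell_{m_k}-1]}$, so has weakly contractible nerve, and taking opposites and finite products preserves this. Joyal's criterion then yields cofinality of $\rho_\alpha$. I expect the main obstacle to be the bookkeeping needed to match compositions in $(\Lambda^{\act,\op}_{/[p]})_{/\alpha}$ --- which involve the active-over-$[p]$ condition threaded through several layers of opposite categories and slices --- with the blockwise operations on $\Delta$; once this is in place, the contractibility argument is essentially immediate.
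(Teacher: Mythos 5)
Your proof is correct and follows essentially the same route as the paper: both apply Joyal's criterion and reduce to showing the comma category $((\Delta^\op)^{k_\alpha})_{X/}$ is weakly contractible, with your identification of it as $\prod_k(\Delta_{/[\ell_{m_k}-1]})^{\op}$ making explicit that the paper's initial object $\vec b = (\ell_{m_1}-1,\ldots,\ell_{m_{k_\alpha}}-1)$ is the tuple of identities. One small imprecision: $\beta_1$ is cellular but need not be active when $\alpha$ is not; however, since $\beta_1(0)=i_0$ and $\beta_1(r)=i_q$ (using activity of $\beta_0$) and $\beta_1$ is cellular, the preimages $\beta_1^{-1}(m_k)$ are still nonempty for the relevant $m_k\in(i_0,i_q)$, which is all the argument requires.
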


\begin{proof}By \cite[Theorem 4.1.3.1]{LurieHTT} it suffices to prove that $((\Delta^{\op})^{k_{\alpha}})_{X/}$ has an initial object for all objects $X$ of $(\Lambda^{\act,\op}_{/[p]})_{/\alpha}$, i.e. for all factorisations $X$
\vspace{-0.1cm}
\[
	[q] \overset{\beta} \lra [r] \overset{\alpha'}\lra [p]
\]
of $\alpha$ into an active $\beta$ followed by a cellular $\alpha'$. The category $((\Delta^\op)^{k_{\alpha}})_{X/}$ then has objects given by active maps $\delta\colon [k_\alpha^{\vec{a}}]\ra [r]$ such that in 
\vspace{-0.1cm}
\begin{equation}\label{eqn:factorisation-rho}
	\smash{[q]\xra{\alpha_0^{\vec{a}}}[k_\alpha^{\vec{a}}]\xra{\delta} [r] \xra{\alpha'} [p]}
\end{equation}
the full composition agrees with $\alpha$, the composition of the first two arrows with $\beta$, and the composition of the final two arrows with $\alpha_1^{\vec{a}}$. The morphisms are induced by those of $(\Delta^\op)^{{k_{\alpha}}}$ via $[k_\alpha^{\vec{a}}]$. We now describe an object of this category: define the number $b_j$ by letting $b_j+1$ be the number of times $m_j$ in \cref{const:rhoalpha} appears in $\alpha'$ ($b_j\ge1$ since $\alpha'$ is cellular and $\beta$ is active). Then there is a unique map $\smash{[k_\alpha^{\vec{b}}]\ra [r]}$ that fits in a factorisation as above, and this map is active because $[q] \to [r]$ is active. For another factorisation \eqref{eqn:factorisation-rho}, one checks there is unique morphism $([a_1],\ldots,[a_{k_{\alpha}}]) \to ([b_1],\ldots,[b_{k_{\alpha}}])$ in $\Delta^{k_{\alpha}}$ that induces a morphism of factorisations, so the factorisation we described provides an initial object in $((\Delta^{\op})^{{k_{\alpha}}})_{X/}$ as wished.
\end{proof}

\begin{ex}In the case of \cref{ex:cofinal-alphas} and $X$ given by $[2] \to [6] \to [p]$ with $[6] \to [p]$ given by $(i \leq i \leq i+1 \leq i+1 \leq i+2 \leq i+3 \leq i+4)$ (which determines the active morphism $[2] \to [6]$ uniquely), the initial object in $((\Delta^{\op})^{{k_{\alpha}}})_{X/}$ is given as follows: we get $k_\alpha =2$, $([b_1],[b_2]) = ([1],[0])$, $\smash{k^{\vec{b}}_\alpha} = 5$, and $\delta \colon [5] \to [6]$ is $(0 \leq 2 \leq 3 \leq 4 \leq 5 \leq 6)$. To see it is initial, note that equivalently it is terminal among pairs $([a_1],[a_2])$ with factorisations
	$[2] \to [k^{\vec{a}}_\alpha] \to [6] \to [p]$,
	which is true since $\delta \colon [5] \to [6]$ is bijective onto those elements in $[6]$ which do not get mapped to the image of $\alpha$.
\end{ex}

We now consider the composition
\[
\begin{tikzcd}[column sep=0.6cm]
	(\Delta^\op_{\inj})^{\rhd}\rar{\inc}\arrow[rrrrr,"\eta^\alpha",bend right=10,swap]&(\Delta^\op)^\rhd\rar{\diag}&((\Delta^\op)^{k_\alpha})^\rhd \rar{\rho_\alpha}&((\Lambda^{\act,\op}_{/[p]})_{/\alpha})^\rhd\rar{\can}& (\Delta^{\act,\op}_{/[p]})_{/\alpha}\rar{\pr}&\Delta^{\act,\op}_{/[p]}
\end{tikzcd}
\]
which we abbreviate as $\eta^\alpha$; similar for $\Gap$ instead of $\Delta^\op$. Unravelling the definitions, one checks that $\eta^{\alpha}$ maps an object $[a]\in \Delta^\op_{\inj}$ to $\alpha_1^{\vec{a}}\colon [k_\alpha^{\vec{a}}]\ra [p]$ with $\vec{a}=(a,\ldots,a)$ and the cone point to $\alpha\colon [q]\ra [p]$. The unique map from $[a]$ to the cone point is mapped to $\smash{\alpha_0^{\vec{a}}\colon [q]\ra [k_\alpha^{\vec{a}}]}$. Since the inclusion $\smash{\Delta^\op_{\inj}\subset \Delta^\op}$ is cofinal \cite[4.1.1.8]{LurieHTT}, the category $\Delta^\op$ is sifted and hence the diagonal $\Delta^\op\ra(\Delta^\op)^2$ is cofinal \cite[5.5.8.1, 5.5.8.4]{LurieHTT},  the functor $\rho_\alpha$ is cofinal by the previous lemma, and cofinal functors are closed under composition \cite[4.1.1.3 (2)]{LurieHTT}, the composition $\smash{\Delta^\op_\inj\ra (\Lambda^{\act,\op}_{/[p]})_{/\alpha}}$ is cofinal. As colimits in $\infty$-categories are unaffected by precomposition with cofinal functors \cite[4.1.1.8]{LurieHTT}, we can simplify the condition in \cref{lem:composite-algebras} further to:

\begin{cor}\label{cor:composite-algebras-multisimplicial}For a monoidal $\infty$-category $\cC$ with good relative tensor products, an object $M\in \overline{\ALG}(\cC )_{[p]}$ is composite if and only if for all $\alpha \in \Delta^{\op}_{/[p]}$, the following is a colimit diagram
	\[ (\Delta_\inj^\op)^\rhd\xra{\eta^\alpha}\Delta^{\act,\op}_{/[p]}\xra{M}\cC^{\otimes,\act}\xra{(-)_!}\cC.\]
\end{cor}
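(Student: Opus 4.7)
The plan is to obtain the corollary as an immediate consequence of \cref{lem:composite-algebras} by showing that the cofinality of the composition $\Delta^{\op}_\inj \to (\Lambda^{\act,\op}_{/[p]})_{/\alpha}$ (which can be read off from $\eta^\alpha$ by dropping the two projections at the end) allows us to replace the indexing category $(\Lambda^{\act,\op}_{/[p]})_{/\alpha}$ in \cref{lem:composite-algebras} by $\Delta^{\op}_\inj$ while keeping the same value at the cone point.

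First I would, by \cref{lem:composite-algebras}, rephrase the condition that $M$ is composite as the statement that for every $\alpha \colon [q]\to [p]$ in $\Delta^{\op}_{/[p]}$, the diagram
\[
	((\Lambda^{\act,\op}_{/[p]})_{/\alpha})^{\rhd}\xra{\can} (\Delta^{\act,\op}_{/[p]})_{/\alpha}\xra{\pr}\Delta^{\act,\op}_{/[p]}\xra{M}\cC^{\otimes,\act}\xra{(-)_!}\cC
\]
is a colimit diagram. Next I would assemble the three cofinality statements: the inclusion $\Delta^{\op}_\inj \subset \Delta^{\op}$ is cofinal by \cite[4.1.1.8]{LurieHTT}; the diagonal $\Delta^{\op}\to (\Delta^{\op})^{k_\alpha}$ is cofinal as a consequence of $\Delta^{\op}$ being sifted \cite[5.5.8.1, 5.5.8.4]{LurieHTT}; and $\rho_\alpha\colon (\Delta^\op)^{k_\alpha}\to (\Lambda^{\act,\op}_{/[p]})_{/\alpha}$ is cofinal by \cref{lem:cofinal-multi-simplicial}. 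Since cofinal functors are closed under composition \cite[4.1.1.3 (2)]{LurieHTT}, the composition
\[
	\Delta^{\op}_\inj\xra{\inc}\Delta^{\op}\xra{\diag}(\Delta^{\op})^{k_\alpha}\xra{\rho_\alpha}(\Lambda^{\act,\op}_{/[p]})_{/\alpha}
\]
is cofinal. By the invariance of colimits under precomposition with cofinal functors, the condition is therefore equivalent to the analogous diagram on the cone over this composition being a colimit diagram. Unravelling the definitions of $\rho_\alpha$, $\can$, and $\pr$ shows this cone diagram is exactly $(-)_! \circ M \circ \eta^\alpha$, with $\eta^\alpha$ mapping the cone point to $\alpha$ as required; this yields the claimed colimit description.

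The only subtle point is making sure the cone extension is compatible: cofinality of a functor $F\colon \cI\to \cJ$ implies that a diagram $\cJ^{\rhd}\to \cC$ is a colimit diagram iff the precomposed diagram $\cI^{\rhd}\to \cJ^{\rhd}\to \cC$ is, \emph{provided} the precomposition sends the cone point to the cone point. This is automatic from the construction of $\eta^\alpha$, which by definition sends the freshly adjoined terminal object of $(\Delta^{\op}_\inj)^{\rhd}$ to $\id_\alpha \in (\Delta^{\act,\op}_{/[p]})_{/\alpha}$ and hence to $\alpha$ after the projection, matching the image of the cone point under $\can$. With this compatibility in place, no further calculation is needed, so I do not anticipate any real obstacle beyond carefully tracking these cone-point identifications.
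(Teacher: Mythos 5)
Your proposal is correct and follows essentially the same approach as the paper: combine \cref{lem:composite-algebras} with the cofinality of $\Delta^{\op}_\inj\hookrightarrow\Delta^{\op}$, of the diagonal $\Delta^{\op}\to(\Delta^{\op})^{k_\alpha}$ (siftedness of $\Delta^{\op}$), and of $\rho_\alpha$ from \cref{lem:cofinal-multi-simplicial}, then use invariance of colimits under precomposition with cofinal functors. The attention you pay to matching cone points under $\eta^\alpha$ is a point the paper passes over silently, but your justification is exactly what the paper implicitly relies on.
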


\begin{ex} We spell out two exemplary cases of \cref{cor:composite-algebras-multisimplicial} and relate them to the informal description of \cref{cor:composite-algebras-multisimplicial} from the beginning of this subsection, involving the augmented semisimplicial objects $M(i,j)_\bullet$. Firstly, for $\alpha=(0\le 2) \colon [1] \to [2]$, we have $k_\alpha = 1$ and the functor $\smash{(\Delta_\inj^\op)^\rhd \to \Delta^{\act,\op}_{/[2]}}$ sends $[a]$ to the sequence $(0 \leq 1 \leq \ldots \leq 1 \leq 2)$ where $1$ appears $a+1$ times, and it sends the cone point to $(0 \leq 2)$. Applying $M$ and $(-)_!$ we obtain the augmented semisimplicial object corresponding to $M(0,2)_\bullet$. For $\alpha \colon [q] \to [p]$ given by a sequence $(i_0 \leq \cdots \leq i_q) \subseteq [p]$ so that $i_{j+1} = i_j+1$, we have $k_\alpha = 0$, so the composition in the statement is a constant augmented semisimplicial object; this fits with the informal description since $M(i,j)_\bullet$ is constant if $j=i+1$.
\end{ex} 

\subsubsection{Functoriality and monoidality}\label{sec:morita-functoriality}
Postcomposition induces a functor
\begin{equation}\label{equ:premorita-functor}
	\overline{\ALG}(-)\colon\Mon(\CatInf)\lra \Fun(\Delta^\op,\CatInf)
\end{equation}
which is on $p$-simplices given by $\Alg_{{\Delta^\op}_{/[p]}}(-)$. The latter preserves limits  \cite[p.\,1701]{HaugsengMorita}, so \eqref{equ:premorita-functor} does as well and in particular induces a functor between commutative monoid objects
\[
	\overline{\ALG}(-)\colon\CMon(\Mon(\CatInf))\lra \CMon(\Fun(\Delta^\op,\CatInf)).
\]
The situation for $\ALG(-)$ is only slightly more complicated. Let $\Mon(\CatInf)^{\grtp}\subset \Mon(\CatInf)$ the (non-full) subcategory of monoidal categories that admit good relative tensor products and functors that are compatible with them. The latter is made precise in \cite[Definition 4.18]{HaugsengMorita}, but all we need is that (i) monoidal categories admit good relative tensor products if their underlying categories admit all geometric realisations and these are compatible with tensoring with a fixed object on either side, and that (ii) functors of monoidal categories that preserve geometric realisations are compatible with good relative tensor products. Then $\ALG(-)$ gives rise to a functor $\ALG(-)\colon\Mon(\CatInf)^{\grtp}\lra \Cat(\CatInf)$ (see \cite[Corollary 5.41]{HaugsengMorita}). By \cite[Lemma 5.38 (iii)]{HaugsengMorita}, the category $\Mon(\CatInf)^{\grtp}$ admits products and these are preserved by the forgetful functor $\Mon(\CatInf)^{\grtp}\ra \Mon(\CatInf)$. Moreover, $\ALG(-)$ is product-preserving: we may test this on $p$-simplices for $p\ge0$ and since it has values in double $\infty$-categories, it suffices to check this for $p=0,1$ where it follows from \eqref{equ:small-simplices-morita} and the corresponding fact for $\overline{\ALG}(-)$.  $\ALG(-)$ thus induces a functor on commutative monoid objects:
\[\ALG(-)\colon\CMon(\Mon(\CatInf)^{\grtp})\lra \CMon(\Cat(\CatInf)).\]

\subsection{Span and cospan categories}\label{sec:span-cospan-cats} We summarise the construction of a double $\infty$-category $\COSPAN^+(\cC)$ of cospans, following \cite[Section 5]{HaugsengSpans}.

\subsubsection{Categories of (co)spans}
For an $\infty$-category $\cC$, the \emph{pre-span simplicial $\infty$-category of $\cC$} is \[\overline{\SPAN}^+(\cC)\coloneq \Fun(\Sigma^\bullet,\cC) \in \Fun(\Delta^\op,\CatInf)\]
where $\Sigma^\bullet \colon \Delta \ra \Cat$ is defined as follows: on objects it sends $[n] \in \Delta$ the poset $\Sigma^n$ of pairs $(i,j)$ with $0 \leq i \leq j \leq n$, and $(i,j) \preceq (i',j')$ if and only if $i \leq i'$ and $j' \leq j$, and on morphisms sends $\phi \colon [n] \to [m]$ to the functor $\Sigma^n \to \Sigma^m$ given by $(i,j) \mapsto (\phi(i),\phi(j))$. 

If $\cC$ has finite limits then the \emph{span double $\infty$-category of $\cC$}
\begin{equation}\label{equ:span-cat}\SPAN^+(\cC) \in \Cat(\CatInf)\end{equation}
is the levelwise full subcategory $\SPAN^+(\cC)\subset\overline{\SPAN}^+(\cC)$ of \emph{cartesian functors}, where $(\Sigma^p \to \cC)\in \overline{\SPAN}^+(\cC)_{[p]}$ is \emph{cartesian} if the natural map from it to the right Kan extension of its restriction to the full subcategory $\Lambda^p\subset \Sigma^p$ on $(i,j)$ with $j-i \leq 1$ is an equivalence.  By \cite[Proposition 5.14]{HaugsengSpans} $\SPAN^+(\cC)$ is indeed a double $\infty$-category. We have $\SPAN^+(\cC)_{[0]} = \cC$, and arguing as in \cite[Proposition 8.3]{HaugsengSpans} one sees that the mapping $\infty$-categories are $\SPAN^+(\cC)_{A,B} \simeq \cC_{/A \times B}$. 

Dually, one defines the \emph{cospan double $\infty$-category} of $\cC$ and its pre-version as
\[\overline{\COSPAN}^+(\cC^\op) \coloneqq \overline{\SPAN}^+(\cC^\op)^\op\quad\text{and}\quad\gls*{cospan} \coloneqq \SPAN^+(\cC^\op)^\op\]
where the outer $(-)^\op$ denotes taking levelwise opposites, and the second definition requires $\cC$ to have finite colimits. The mapping $\infty$-categories are then given by $\COSPAN^+(\cC)_{A,B}\simeq \cC_{A\sqcup B/}$.

\subsubsection{Relation to Morita categories}\label{sec:alg-cospans}Cospan categories and Morita categories are not unrelated: if $\cC$ has finite colimits, then it has good relative tensor products as in \cref{sec:composite-algebras} when equipped with the cocartesian symmetric monoidal structure $\cC^{\sqcup}$ \cite[Remark 2.5.14]{HaugsengMelaniSafranov}. By Corollaries 2.6.8 and 2.6.10 loc.cit.\,there is an equivalence of double $\infty$-categories
\begin{equation}\label{equ:cospan-are-alg} 
	\COSPAN^+(\cC)\simeq \ALG(\cC^{\sqcup}).
\end{equation}
Moreover, functors that preserve finite colimits are compatible with good relative tensor products, so $\COSPAN^+(\cC)$ inherits the functoriality and monoidality properties from $\ALG(\cC^{\sqcup})$ as discussed in \cref{sec:morita-functoriality} for monoidal categories with finite colimits and functors that preserve those (there is also an a priori description, but we will not need it). Tracing through the proof one sees that under the equivalence \eqref{equ:cospan-are-alg}, an object $M \in \ALG(\cC)_{[p]}$ is sent to the sequence of cospans
\[\begin{tikzcd}[column sep=.5cm,row sep=0.2cm] 
	& M(0,1) & &[5pt] \cdots &[5pt] &[-5pt] M(p{-}1,p) &[-5pt] \\[-6pt]
	M(0) \arrow{ru} & & M(1) \arrow{ru} \arrow{lu} & & M(p{-}1) \arrow{ru} \arrow{lu} & & M(p) \arrow{lu} 
\end{tikzcd}\]
where the map $M(i) \to M(i,i+1)$ is given by $M(i) \xra{\inc} M(i) \sqcup M(i,i+1) \xra{\text{act}} M(i,i+1)$, and similarly for $M(i+1) \to M(i,i+1)$. Here we used the notation from \cref{sec:pre-morita}.

\section{From the bordism to the Morita category}
\label{sec:the-functor}

As part of the introduction, we announced in \cref{sec:intr-bordism} the construction of a functor
\begin{equation}\label{equ:functor-to-morita-oo-2}
	\gls*{Efunctor} \colon \ncBordInf(d)\lra \Mod(d)
\end{equation}
of symmetric monoidal $(\infty,2)$-categories where the domain is an $(\infty,2)$-category of possibly non-compact $(d-1)$-dimensional manifolds with bordisms as $1$-morphisms and embeddings as $2$-morphisms, and the target is a Morita $(\infty,2)$-category of the symmetric monoidal $\infty$-category of presheaves on an $\infty$-category of disjoint unions of $d$-dimensional open discs with embeddings as morphisms and disjoint union as monoidal structure. What we will actually do is to construct \eqref{equ:functor-to-morita-oo-2} as a functor between symmetric monoidal double $\infty$-categories, which is more general by the discussion in \cref{sec:double-vs-infty2}. 

\medskip

\begin{center}\textit{For most of the arguments in the proofs of Theorems~\ref{bigthm:2-type-invariance}--\ref{bigthm:nontrivial}, the precise construction of \eqref{equ:functor-to-morita-oo-2} does not play a role. We summarise the key features in \cref{sec:functor-e-disc-structure}, so readers who mainly care about Theorems \ref{bigthm:2-type-invariance}--\ref{bigthm:nontrivial} may skip this technical section on a first reading.}
\end{center}

\medskip

The steps we take in this section to construct the functor \eqref{equ:functor-to-morita-oo-2}  are as follows:

\begin{enumerate}[leftmargin=1.3cm]
	\item[\ref{step:bordismcat}] Construct a non-unital double $\infty$-category $\ncBordInf(d)^{\nonunital}\in\Cat_{\nonunital}(\CatInf)$
	of possibly non-compact $(d-1)$-manifolds with embeddings and bordisms between them.
	\item[\ref{step:mand}] Construct a monoidal $\infty$-category $\ManInf_d\in \Mon(\CatInf)$ of possibly non-compact $d$-manifolds and embeddings between them, monoidal via disjoint union.
	\item[\ref{step:functor-to-premorita}]  Construct a morphism 
	$E^{\geo} \colon \ncBordInf(d)^{\nonunital}\ra \overline{\ALG}(\ManInf_d)$
	of semisimplicial $\infty$-categories to the pre-Morita category of $\ManInf_d$ from \cref{sec:haugseng-morita}, viewed as a semisimplicial object.
	\item[\ref{step:composite}]Show that the composition
	\[
		\qquad \quad \overline{E} \colon \ncBordInf(d)^{\nonunital} \ra \overline{\ALG}(\ManInf_d)\ra \overline{\ALG}(\PSh(\ManInf_d))\ra\overline{\ALG}(\PSh(\DiscInf_d))
	\]
	lands in the Morita category $\Mod(d)\coloneq \ALG(\PSh(\DiscInf_d))\subset\overline{\ALG}(\PSh(\DiscInf_d))$. The second map is induced by the Yoneda embedding and the third map by the full subcategory $\DiscInf_d\subset \ManInf_d$ on manifolds diffeomorphic to $T\times\bfR^d$ for finite sets $T$.
	\item[\ref{step:functor-to-morita}]Argue that $\ncBordInf(d)^{\nonunital}$ can be enhanced to a (unital) double $\infty$-category $\ncBordInf(d)\in\Cat(\CatInf)$, and that $\overline{E}$ can be enhanced to a functor of double $\infty$-categories as in \eqref{equ:functor-to-morita-oo-2}.
	\item[\ref{step:symmetric-monoidal-structure}] Argue that the resulting functor $E \colon \ncBordInf(d) \to \Mod(d)$ can be enhanced to a functor of \emph{symmetric monoidal} double $\infty$-categories.
\end{enumerate}

\noindent We will conclude the section with some enhancements of the bordism category $\ncBordInf(d)$:

\begin{enumerate}[leftmargin=1.3cm]
	\item[\ref{step:variants}] Construct variants $\BordInf(d)$, $\ncBordInf(d)^\partial$, and $\ncBordInf^\theta(d)$ of $\ncBordInf(d)$ by restricting to compact manifolds and diffeomorphisms instead of embeddings, allowing manifolds with boundary, and adding tangential structures.		
	\item[\ref{step:product}] Construct for a closed $p$-manifold $P$ a map of symmetric monoidal double $\infty$-categories $P \times (-) \colon \ncBordInf(d) \to \ncBordInf(d+p)$ induced by taking cartesian product with $P$, and extend this construction to the variants from \ref{step:variants}.
\end{enumerate}

\begin{rem}Some remarks on the construction of the functor \eqref{equ:functor-to-morita-oo-2}:
\begin{enumerate}
\item One may ask whether this construction can be ``fully extended''; that is, whether one can upgrade $\ncBordInf(d)$ to a symmetric monoidal $(d+1)$-fold $\infty$-category and the functor $E$ to a map of such objects with target the symmetric monoidal $(d+1)$-fold Morita $\infty$-category of $\PSh(\DiscInf_d)$ from \cite[Section 5]{HaugsengMorita}, which would in particular give a functor of symmetric monoidal $(\infty,d+1)$-categories. There are no conceptual issues in doing so, but it would involve additional bookkeeping and make our construction less transparent. Since we do not need it to prove the main results, we did not include it.
\item We construct \eqref{equ:functor-to-morita-oo-2} as a functor of symmetric monoidal double $\infty$-categories, but all later arguments only use the underlying functor of symmetric monoidal $(\infty,2)$-categories.
\item There are at least three constructions of a Morita $(\infty,2)$-category of a sufficiently nice monoidal $\infty$-category $\cC$ that for $\cC=\PSh(\DiscInf_d)$ might serve as potential targets for \eqref{equ:functor-to-morita-oo-2}:
\begin{enumerate}
\item\label{enum:lurie-model} Lurie's model $\BMod(\cC)$ from \cite[4.4.3.11]{LurieHA},
\item\label{enum:haugseng-model}  Haugseng's model $\ALG_1(\cC)$ from \cite[Section 4]{HaugsengMorita}, denoted $\ALG(\cC)$ in \cref{sec:haugseng-morita},
\item\label{enum:Scheimbauer-model}  Scheimbauer's model $\Alg_1(\cC)$ from \cite[Section 3]{Scheimbauer}.
\end{enumerate}
Haugseng's and Lurie's model are known to be equivalent (see \cref{rem:other-model-morita}). For our purposes, Haugseng's model turned out to be the most convenient choice. 
\item For some of our later arguments, it is crucial that $E$ is defined on the bordism category $\ncBordInf(d)$ that involves noncompact manifolds; the restriction to the typically considered subcategory $\BordInf(d)$ that only involves compact manifolds is not sufficient. If one is mainly interested in a functor from the compact variant $\BordInf(d)$ to a Morita category of $\PSh(\DiscInf_d)$, then there are other potential routes to a construction, e.g.\,by modifying a construction of Scheimbauer \cite{Scheimbauer} or relying on the cobordism hypothesis \cite{LurieCobordism}.
\end{enumerate}
\end{rem}

Throughout the following subsections corresponding to the steps above, we generically refer to \cref{sec:preliminaries} for a recollection of the $\infty$-categorical concepts and facts involved.

\renewcommand\thesubsection{\textbf{Step} $\circled{\arabic{subsection}}$}

\subsection{The bordism category via manifolds with walls}\label{step:bordismcat}
We will construct the non-unital double $\infty$-category $\ncBordInf(d)^{\nonunital}\in\Cat_{\nonunital}(\CatInf)$ as the levelwise coherent nerve of a semisimplicial object in $\Kan$-enriched categories 
\begin{equation}\label{equ:bordism-cat-simplicial}
	\gls*{ncbordnu} \in \Fun(\Delta^{\op}_\inj,\sCat).
\end{equation}

\begin{convention}\label{conv:epsilon-conventions}
	Throughout this section, we fix a constant $0<\epsilon<\tfrac{1}{2}$. We write $\tr_\lambda\colon\bfR\ra\bfR$ for the translation by $\lambda\in\bfR$. For a subset $W\subset \bfR\times\bfR^\infty$, we write \[W|_A\coloneqq W\cap(A\times\bfR^\infty)\subset \bfR\times\bfR^\infty\] for subsets $A\subset\bfR$. If $A=\{a\}$ is a singleton, we abbreviate $W|_a\coloneqq W|_{\{a\}}$.
\end{convention}

\begin{figure}
	\begin{tikzpicture}[scale=.9]
	\draw [dotted] (-6,0) -- (7,0);
	\node at (6.5,-.3) {\tiny $\bfR$};
	\draw (-5,0) -- (6,0);
	\node at (-3,0) {$\bullet$};
	\draw [thick,|-|] (-3.5,0) -- (-2.5,0);
	\draw [dashed] (-3,4.5) -- (-3,-1);
	\node at (-3,-.3) [fill=white] {\tiny $\mu(0)$};
	\node at (-1,0) {$\bullet$};
	\draw [thick,|-|] (-1.5,0) -- (-0.5,0);
	\draw [dashed] (-1,4.5) -- (-1,-1);
	\node at (-1,-.3) [fill=white] {\tiny $\mu(1)$};
	\node at (2,0) {$\bullet$};
	\draw [thick,|-|] (1.5,0) -- (2.5,0);
	\draw [dashed] (2,4.5) -- (2,-1);
	\node at (2,-.3) [fill=white] {\tiny $\mu(2)$};
	\node at (5,0) {$\bullet$};
	\draw [thick,|-|] (4.5,0) -- (5.5,0);
	\draw [dashed] (5,4.5) -- (5,-1);
	\node at (5,-.3) [fill=white] {\tiny $\mu(3)$};
	
	\draw [dotted,Mahogany,thick] (-5.5,3.5) -- (-5,3);
	\draw [dotted,Mahogany,thick] (-5.5,2.5) -- (-5,2);
	\draw [dotted,Mahogany,thick] (-5.5,4.5) -- (-5,4);
	\draw [dotted,Mahogany,thick] (6,1) -- (6.5,1);
	\draw [Mahogany,thick](-5,3) to[out=-45,in=180] (-3.5,2) to[out=0,in=180] (-2.5,2) to[out=0,in=90] (-2,1.5) to[out=-90,in=0] (-2.5,1) to[out=180,in=0] (-3.5,1) to[out=180,in=-45] (-5,2);
	\draw [Mahogany,thick] (-5,4) to[out=-45,in=180] (-3.5,3) to[out=0,in=180] (-.25,3) to[out=0,in=180] (1.5,1) -- (6,1);
	\draw [Mahogany,thick] (3.5,2.5) circle (.75cm);
	\node [Mahogany] at (0,3.5) {$W$};
	\end{tikzpicture}
	\caption{A $[3]$-walled $1$-manifold. The vertical projection is $\mu$ and the intervals in $\bfR$ are the $[\mu(i)-\epsilon,\mu(i)+\epsilon]$'s, which are disjoint in accordance with \ref{enum:mfd-wall-i}. The dashed lines indicate the hyperplanes $\{\mu(i)\} \times \bfR^\infty$. Note that $W$ is transverse to these and a product near them, as imposed in \ref{enum:mfd-wall-ii} and \ref{enum:mfd-wall-iii}.}
	\label{fig:mfd-with-walls}
\end{figure}
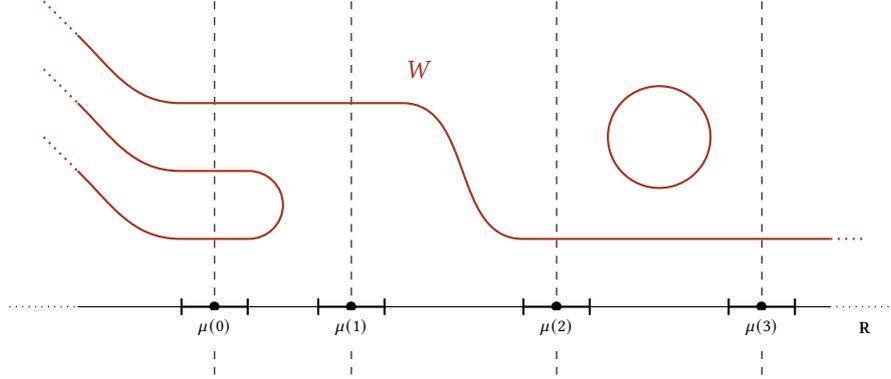

We first set up some language. A \emph{$[p]$-walled $d$-manifold} for $[p]\in\Delta$ is  a pair $\gls*{wmu}$ of a $d$-dimensional smooth submanifold $W\subset\bfR\times \bfR^\infty$ without boundary and an order-preserving map $\mu\colon [p]\ra \bfR$ such that the following is satisfied
\begin{enumerate}
	\item \label{enum:mfd-wall-i}$\mu(i)+\epsilon<\mu(i+1)-\epsilon$ for all $i$,
	\item  \label{enum:mfd-wall-ii}the projection $\pr\colon W\ra \bfR$ to the first coordinate is transverse to $\mu\colon [p]\ra \bfR$,
	\item \label{enum:mfd-wall-iii} $W|_{[\mu(i)-\epsilon,\mu(i)+\epsilon]}=\tr_{\mu(i)}[-\epsilon,+\epsilon]\times W|_{\mu(i)}$ for all $i$;
\end{enumerate}
see \cref{fig:mfd-with-walls} for an example. The space 
\[
	\Emb\big((W,\mu),(W',\mu')\big)\subset \Emb\big(W|_{[\mu(0)-\epsilon,\mu(p)+\epsilon]}, W'|_{[\mu'(0)-\epsilon,\mu'(p)+\epsilon])}\big)
\] 
of \emph{embeddings between $[p]$-walled $d$-manifolds} $(W,\mu)$ and $(W,\mu')$ is the subspace of those embeddings $\varphi$ that satisfy the following properties for all $i$:
\begin{enumerate}
	\item $\varphi^{-1}(W'|_{[\mu'(i)+\epsilon,\mu'(i+1)-\epsilon]})=W|_{[\mu(i)+\epsilon,\mu(i+1)-\epsilon]}$ and $\varphi^{-1}(W'|_{[\mu'(i)-\epsilon,\mu'(i)+\epsilon]})=W|_{[\mu(i)-\epsilon,\mu(i)+\epsilon]}$ 
	\item the embedding $\varphi$ restricts on $W|_{[\mu(i)-\epsilon,\mu(i)+\epsilon]}$ to an embedding of the form 
	\[(\tr_{\mu'(i)-\mu(i)}\times\varphi_i)\colon{\tr_{\mu(i)}[-\epsilon,+\epsilon]}\times W|_{\mu(i)}\longhookrightarrow \tr_{\mu'(i)}[-\epsilon,+\epsilon]\times W'|_{\mu'(i)}
	\] for some embedding $\varphi_i\in\Emb(W|_{\mu(i)}, W'|_{\mu'(i)})$.
\end{enumerate} 
Using this terminology, $\ncBord(d)^\nonunital$ is defined as the semisimplicial $\Kan$-enriched category whose $\Kan$-enriched category $\smash{\ncBord(d)^\nonunital_{[p]}}$ of $p$-simplices has possibly non-compact $[p]$-walled $d$-manifolds $(W,\mu)$ as its objects, spaces of embeddings between $[p]$-walled $d$-manifolds as morphisms, and composition given by composition of embeddings. The semisimplicial structure is given by ``forgetting walls'', i.e.\,by precomposition of $\mu\colon [p]\ra\bfR$ with morphisms in $\Delta_\inj$.

The non-unital double $\infty$-category
	\[\gls*{ncbordinfnu}\in\Cat(\CatInf)\subset\Fun(\Delta^{\op}_\inj,\CatInf)\]
	is now defined as the levelwise coherent nerve of $\ncBord(d)^\nonunital$, i.e.\,we have $(\ncBordInf(d)^{\nonunital})_{[p]}\coloneqq N_{\coh}((\ncBord(d)^\nonunital)_{[p]})$. This implicitly claims that the semisimplicial object $\ncBordInf(d)^{\nonunital}\in \Fun(\Delta^{\op}_\inj,\CatInf)$ is indeed a double $\infty$-category, i.e.\,that it satisfies the Segal condition.

\begin{lem}\label{lem:bord-is-category-object}
$\ncBordInf(d)^{\nonunital}$ is a non-unital double $\infty$-category.
\end{lem}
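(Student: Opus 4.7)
The plan is to verify the Segal condition for $\ncBordInf(d)^{\nonunital}$ directly on the $\Kan$-enriched level and then transport the result via the coherent nerve. Writing $X \coloneqq \ncBord(d)^{\nonunital}$ for brevity, $\ncBordInf(d)^{\nonunital}$ is by construction the levelwise coherent nerve of $X$, and $N_{\coh}$ preserves mapping spaces and finite products while being the right adjoint of a Quillen equivalence (\cref{sec:coherent-nerve-props}); it therefore suffices to show that for each $p\geq 2$ the Segal map
\[
	\sigma_p\colon X_{[p]} \lra X_{[1]} \times_{X_{[0]}} \cdots \times_{X_{[0]}} X_{[1]}
\]
is a Dwyer--Kan equivalence onto the homotopy pullback in the Bergner model structure on $\sCat$.

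First I would argue that this homotopy pullback is computed by the strict pullback $P_p$ in $\sCat$, by verifying that the face maps $d_0,d_1\colon X_{[1]}\to X_{[0]}$---which forget one of the two walls---are Bergner fibrations. On homotopy categories they are isofibrations because any diffeomorphism $W|_{\mu(i)}\to Q$ of $(d-1)$-submanifolds of $\bfR^\infty$ can be spread along the prescribed product collar of width $\epsilon$ and used to modify $W$ into a $[1]$-walled manifold with the specified wall datum. On each mapping space they agree with restrictions of $\Emb$-spaces along codimension-$0$ inclusions with product collars, which are Kan fibrations by the standard isotopy-extension theorem applied to the image of an embedding on the slab $[\mu(0)-\epsilon,\mu(1)+\epsilon]\times\bfR^\infty$. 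Unwinding, the resulting strict pullback $P_p$ has as objects tuples $((W_i,\mu_i))_{i=1}^p$ of $[1]$-walled manifolds with $W_i=W_{i+1}$ and $\mu_i(1)=\mu_{i+1}(0)$, and its mapping spaces are iterated strict pullbacks of $\Emb$-spaces of $[1]$-walled manifolds.

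With this reduction in place, the Dwyer--Kan equivalence $\sigma_p\colon X_{[p]}\to P_p$ follows from two nearly tautological observations. Fully faithfulness is a direct unwinding of the definition of an embedding of $[p]$-walled manifolds: it is prescribed precisely by its restrictions to the subintervals $[\mu(i-1)-\epsilon,\mu(i)+\epsilon]$ subject to product-collar agreement at each interior wall, which gives a homeomorphism from $\Emb((W,\mu),(W',\mu'))$ onto the iterated pullback of the $\Emb$-spaces of its $[1]$-walled pieces. Essential surjectivity is even more direct: an object of $P_p$ already comes with a common underlying manifold $W$ and a collection of wall heights $\mu_1(0)<\mu_1(1)=\mu_2(0)<\cdots<\mu_{p-1}(1)=\mu_p(0)<\mu_p(1)$ that assemble into an order-preserving map $\mu\colon[p]\to\bfR$, so that $(W,\mu)$ is a $[p]$-walled manifold---the spacing condition, transversality, and collar conditions all descend from the corresponding properties of each $(W_i,\mu_i)$---mapping to the given tuple under $\sigma_p$ on the nose. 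The main obstacle will be the fibration verification, since it requires parametrised isotopy extension for embeddings on slabs of $\bfR\times\bfR^\infty$ with possibly non-compact fibres; everything else is essentially bookkeeping.
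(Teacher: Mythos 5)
Your proposal is correct and follows essentially the same route as the paper's (sketched) proof: check that the Segal maps are Dwyer--Kan equivalences onto the strict pullbacks before taking coherent nerves, verify that the face maps $\ncBord(d)^\nonunital_{[1]}\to\ncBord(d)^\nonunital_{[0]}$ are Bergner fibrations (Kan fibrations on mapping spaces, isofibrations on homotopy categories) so that these strict pullbacks are homotopy pullbacks, and then transport the conclusion through the right Quillen functor $N_{\coh}$. The only cosmetic difference is that you spell out the isotopy-extension and gluing bookkeeping that the paper leaves implicit.
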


\begin{proof}
This is straight-forward, so we will only sketch the proof. One first observes that the Segal maps	$\ncBord(d)^\nonunital_{[p]}\ra \ncBord(d)^\nonunital_{[1]}\times_{\ncBord(d)^\nonunital_{[0]}}\ldots \times_{\ncBord(d)^\nonunital_{[0]}}\ncBord(d)^\nonunital_{[1]}$ before taking coherent nerves are Dwyer--Kan equivalences, so weak equivalences in the Bergner model structure from \cref{sec:scat-vs-qcat} \ref{enum:bergner-model-structure}. Since the $\ncBord(d)^\nonunital_{[p]}$ are $\Kan$-enriched, they are fibrant in this model structure. Next, one shows that source and target maps $\ncBord(d)^\nonunital_{[1]}\ra \ncBord(d)^\nonunital_{[0]}$ are Kan fibrations on morphism spaces and isofibrations on homotopy categories, so they are fibrations in the model structure and the pullbacks appearing in the above maps are homotopy pullbacks. Using that the coherent nerve is the right Quillen functor in the Quillen equivalence between the Joyal and the Bergner model structure (see \cref{sec:scat-vs-qcat} \ref{enum:bergner-model-structure}) and therefore preserves homotopy pullbacks and weak equivalences between fibrant objects, it follows that the Segal maps $\ncBordInf(d)^{\nonunital}_{[p]}\ra \ncBordInf(d)^{\nonunital}_{[1]}\times_{\ncBordInf(d)^{\nonunital}_{[0]}}\ldots \times_{\ncBordInf(d)^{\nonunital}_{[0]}}\ncBordInf(d)^{\nonunital}_{[1]}$ in $\Cat_\infty$ are equivalences.
\end{proof}

\subsection{The monoidal category of manifolds and embeddings}\label{step:mand}
We construct the monoidal $\infty$-category of (possibly noncompact) $d$-manifolds and embeddings between them as a cocartesian fibration $\gls*{maninf}\ra \Delta^{\op}\cong \Gap$ obtained as the coherent nerve of a functor 
\begin{equation}\label{equ:man-fibration-simplicial}
	\gls*{mand}\lra \Gap
\end{equation}
of $\Kan$-enriched categories. Objects of $\Man_d^{\otimes}$ are pairs $(\lbr p\rbr,W)$ of $\lbr p\rbr\in\Gap$ and a smooth submanifold $W\subset \lbr \mathring{p}\rbr\times\bfR\times\bfR^\infty$ without boundary; the distinguished $\bfR$-coordinate is not necessary but comes in handy later. To define the space of morphisms, given $A\subset \lbr \mathring{p}\rbr$, we write \[W|^A\coloneqq W\cap (A\times\bfR\times \bfR^\infty)\] to distinguish it from the notation $W|_A$ for $A\subset \bfR$ from \cref{conv:epsilon-conventions}. Using this, we set
\begin{equation}\label{equ:morphisms-man}
	\textstyle{\Map_{\Man_d^\otimes}\big((\lbr p\rbr,W),(\lbr p'\rbr,W')\big)\coloneqq\bigsqcup_ {\varphi\in \Map_\Gap(\lbr p\rbr,\lbr p'\rbr)} \Emb\big(W|^{\varphi^{-1}\lbr\mathring{p}'\rbr}, W')_\varphi}
\end{equation}
where the subscript $(-)_\varphi$ indicates that we restrict to embeddings that cover $\varphi$, i.e.\,that make 
\begin{center}
	\begin{tikzcd}
		\varphi^{-1}\lbr \mathring{p}'\rbr \times \bfR \times \bfR^\infty \supset W|^{\varphi^{-1}\lbr\mathring{p}'\rbr}\arrow[r,hookrightarrow]\dar[swap]{\pr_1}&[10pt] W' \subset \lbr \mathring{p}'\rbr \times \bfR \times \bfR^\infty \dar{\pr_1}\\
		\varphi^{-1}\lbr\mathring{p}'\rbr\rar{\varphi}&\lbr\mathring{p}'\rbr
	\end{tikzcd}
\end{center}
commute. The composition in $\Man_d^\otimes$ is induced by the composition in $\Gap$ and composition of embeddings. The functor \eqref{equ:man-fibration-simplicial} sends $(\lbr p \rbr,W)$ to $\lbr p \rbr$. Taking coherent nerves defines $\ManInf_d^{\otimes}\ra \Gap\cong\Delta^\op$ which one easily checks to be a monoidal $\infty$-category using the description in terms of cocartesian fibrations from \cref{sec:monoidal-cats}.

\begin{rem}By construction, the underlying category $\ManInf_d$ of the monoidal $\infty$-category $\ManInf_d^{\otimes}\ra \Delta^\op$ (the fibre over $[1]\in \Delta^\op$) agrees with the coherent nerve of the $\Kan$-enriched category whose objects are smooth submanifolds $W\subset \bfR^\infty$ without boundary and spaces of embeddings between them. Informally speaking, the monoidal structure is given by taking disjoint unions. Note that this monoidal structure is \emph{not} cocartesian, since typically $\Emb(M\sqcup N,W)\not\simeq \Emb(M,W)\times \Emb(N,W)$.
\end{rem}

\subsubsection{Cocartesian pushforward along active maps}\label{sec:pushforward-for-man}
For later reference, we spell out a model of the cocartesian pushforward from \cref{sec:composite-multisimplicial} in the case $\cC^{\otimes}=\ManInf_d^{\otimes}$
\[
(-)_!\colon \ManInf_d^{\otimes,\act}\lra(\ManInf_d^{\otimes})_{[1]}\eqcolon\ManInf_d.
\]
It is the coherent nerve of a simplicially enriched functor \begin{equation}\label{equ:active-pushforward}\Man_d^{\otimes,\act}\lra(\Man_d^{\otimes})_{[1]}\eqcolon\Man_d,\end{equation} 
defined on the pullback of $\Man^{\otimes}_d$ along the inclusion $\Gap^{\act}\ra\Gap$ of the wide subcategory of active maps as in \cref{section:delta-gap}. The functor \eqref{equ:active-pushforward} is given by taking ``taking disjoint unions'', using that the restriction to active maps in $\Man_d^{\otimes,\act}$ means precisely that the embeddings appearing in \eqref{equ:morphisms-man} are defined on the whole manifold $W$, not just on a subset depending on the maps $\varphi$. As a point-set implementation, one can model this ``disjoint unions''-functor induced by viewing a submanifold $W\subset \lbr \mathring{p}\rbr \times\bfR\times\bfR^\infty$ as a submanifold of $\bfR\times\bfR\times\bfR^\infty$ using the inclusion $\lbr \mathring{p}\rbr=\{1,\ldots,p\}\subset\bfR$ and sending a submanifold $W\subset \lbr \mathring{p}\rbr\times\bfR\times\bfR^\infty\subset \bfR\times \bfR\times\bfR^\infty$ to its image under the diffeomorphism
\begin{equation}\label{equ:flip-shift-functor}s\colon 
\bfR\times\bfR\times\bfR^\infty\xra{\mathrm{flip}\times\id_{\bfR^\infty}}\bfR\times \bfR\times \bfR^\infty\xra{\id_\bfR\times\mathrm{shift}}\bfR\times \bfR^\infty
\end{equation}
with $\mathrm{flip}(x,z)\coloneq (z,x)$ and $\mathrm{shift}(z,(z_1,z_2,\ldots))\coloneq (z,z_1,z_2,\ldots)$. Said differently, the functor \eqref{equ:active-pushforward} is a composition of functors of $\Kan$-enriched categories
\[\smash{\Man_d^{\otimes,\act}\lra \widetilde{\Man_d^{\otimes}}\lra (\Man_d^{\otimes})_{[1]}}\]
where $\smash{\widetilde{\Man_d^{\otimes}}}$ has submanifolds $W\subset \bfR\times\bfR\times\bfR^\infty$ without boundary as objects and all embeddings between them as morphisms. The second functor sends $W$ to $s(W)$ on objects and is on morphisms induced by conjugating embeddings $W\hookrightarrow W'$ with the diffeomorphisms $W\cong s(W)$ and $W'\cong s(W')$ induced by $s$. The first functor sends an object $(\lbr p\rbr,W)$ to $\smash{W\subset \lbr \mathring{p}\rbr\times\bfR\times\bfR^\infty\subset \bfR\times \bfR\times\bfR^\infty}$ and is on morphism spaces \eqref{equ:morphisms-man} induced by the inclusion $\Emb(W, W')_\varphi\subset \Emb(W, W')$ of components.

\subsection{From $\ncBordInf(d)^{\nonunital}$ to the pre-Morita category of manifolds}\label{step:functor-to-premorita}
The construction of the morphism $E^{\geo} \colon \ncBordInf(d)^{\nonunital}\ra\overline{\ALG}(\ManInf_d)$ goes via the following substeps:
\begin{enumerate}[label=(\alph*)]
	\item \label{step:functor-to-premorita-language} Set up preparatory language.
	\item \label{step:functor-to-premorita-thickening} Replace the undercategories $\Gap_{\lbr \bullet \rbr/}\ra \Gap$ by a simplicial thickening $\smash{\underline{\Gap}_{ \lbr \bullet \rbr/}\ra \Gap}$.
	\item \label{step:functor-to-premorita-simplicial}Construct a functor of semisimplicial objects in $\Kan$-enriched categories
	\begin{equation}\label{equ:functor-to-premorita-simplicial}\ncBord(d)^\nonunital_{[\bullet]}\lra \Fun_{\Gap}(\underline{\Gap}_{\lbr \bullet \rbr/ },\Man_d^{\otimes})\end{equation}
	\item\label{step:functor-to-premorita-non-unital} Argue that the resulting functor of semisimplicial $\infty$-categories
	\begin{equation}\label{equ:functor-to-premorita-non-unital}\ncBordInf(d)^{\nonunital}\lra \Fun_{\Gap}(\Gap_{\lbr \bullet \rbr/ },\ManInf_d^{\otimes})\end{equation} lands in the levelwise full subcategory $\overline{\ALG}(\ManInf_d)\subset \Fun_{\Gap}(\Gap_{\lbr  \bullet \rbr/},\ManInf_d^{\otimes})$.
\end{enumerate}

\begin{figure}
	\begin{tikzpicture}
		\draw [dotted] (-6,0) -- (6,0);
		\node at (-4.5,-.3) {\tiny $L$};
		\node at (4,-.3) {\tiny $R$};
		\node at (-.25,-.3) {\tiny $1 \in \lbr \mathring{1} \rbr$};
		\draw (-5,0) -- (5,0);
		\node at (-2.5,0) {$\bullet$};
		\draw [thick,|-|] (-3.5,0) -- (-1.5,0);
		\draw [dashed] (-2.5,4.5) -- (-2.5,-1);
		\node at (-2.5,-.3) [fill=white] {\tiny $0 \in [1]$};
		\node at (2,0) {$\bullet$};
		\draw [thick,|-|] (1,0) -- (3,0);
		\draw [dashed] (2,4.5) -- (2,-1);
		\node at (2,-.3) [fill=white] {\tiny $1 \in [1]$};
		
		\draw [dotted,Mahogany] (-5.5,3.5) -- (-5,3);
		\draw [dotted,Mahogany] (-5.5,2.5) -- (-5,2);
		\draw [dotted,Mahogany] (-5.5,4.5) -- (-5,4);
		\draw [dotted,Mahogany] (5,1) -- (5.5,1);
		\draw [Mahogany](-5,3) to[out=-45,in=180] (-3.5,2) to[out=0,in=180] (-1.5,2) to[out=0,in=90] (-1,1.5) to[out=-90,in=0] (-1.5,1) to[out=180,in=0] (-3.5,1) to[out=180,in=-45] (-5,2);
		\draw [Mahogany] (-5,4) to[out=-45,in=180] (-3.5,3) to[out=0,in=180] (-1.25,3) to[out=0,in=180] (1,1) -- (5,1);
		\draw [Mahogany] (4,2.5) circle (.75cm);
		
		\draw [very thick,densely dotted] (-2,2) -- (-1.5,2);
		\draw [very thick,densely dotted] (-2,1) -- (-1.5,1);
		\draw [very thick,densely dotted] (-2,3) -- (-1.5,3);
		\draw [very thick,densely dotted] (1,1) -- (1.5,1);
		\draw [very thick,Mahogany] (-1.5,2) to[out=0,in=90] (-1,1.5) to[out=-90,in=0] (-1.5,1);
		\draw [very thick,Mahogany] (-1.5,3) to[out=0,in=180] (-1.25,3) to[out=0,in=180] (1,1);
		\node [Mahogany] at (-0.35,.9) {$\ch(W,\mu)$};
		
		\node [Periwinkle] at (-2.5,1) {$\blacksquare$};
		\node [Periwinkle] at (-2.5,2) {$\blacksquare$};
		\node [Periwinkle] at (-2.5,3) {$\blacksquare$};
		\node [Periwinkle] at (2,1) {$\blacksquare$};
		
		\node [Periwinkle,fill=white] at (-3.5,4.5) {$\wall(W,\mu)$};
		\draw [-,white,line width=3pt,shorten >=.4cm,shorten <=.4cm] (-3.5,4.5) -- (-2.5,3);
		\draw [->,Periwinkle,shorten >=.4cm,shorten <=.4cm] (-3.5,4.5) -- (-2.5,3);
		\draw [-,white,line width=3pt,shorten >=.4cm,shorten <=.4cm] (-3,4.5) to[bend left=30] (2,1);
		\draw [->,Periwinkle,shorten >=.4cm,shorten <=.4cm] (-3,4.5) to[bend left=30] (2,1);
		
		\node [fill=white] at (2.5,4.5) {$\coll(W,\mu)$};
		\draw [-,white,line width=3pt,shorten >=.4cm,shorten <=.4cm] (2.1,4.5) to[bend right=15] (-2,3);
		\draw [->,shorten >=.5cm,shorten <=.5cm] (2.1,4.5) to[bend right=15] (-2,3);
		\draw [-,white,line width=3pt,shorten >=.4cm,shorten <=.4cm] (2.25,4.4) -- (1.25,1);
		\draw [->,shorten >=.3cm,shorten <=.3cm] (2.25,4.4) -- (1.25,1);
	\end{tikzpicture}
	\caption{A $[1]$-walled $1$-manifold. Its walls $\wall(W,\mu)$ are the $0$-manifold indicated by the squares, its chambers $\ch(W,\mu)$ are the thick region, its collars $\coll(W,\mu)$ are the dotted regions, and its thickened chambers $\tch(W,\mu)$ are the union of the chambers and the collars.}
	\label{fig:wallsetc}
\end{figure}
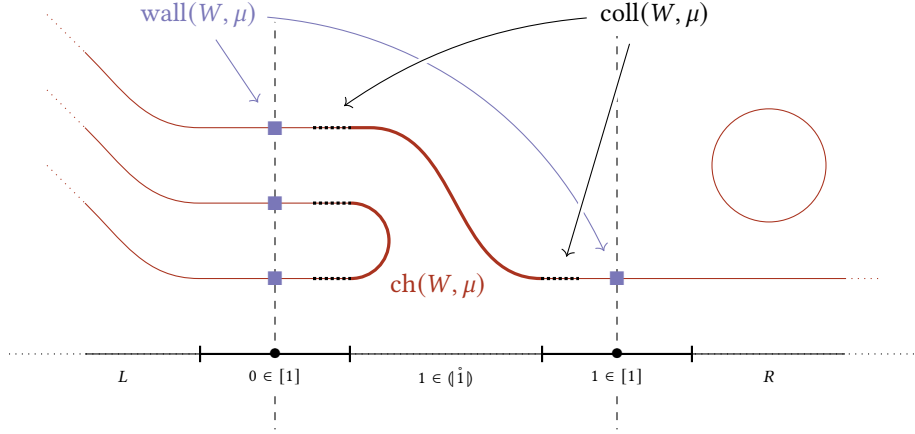

\subsubsection*{Substep \ref{step:functor-to-premorita-language} I: walls and chambers}
For a $[p]$-walled $d$-manifold $(W,\mu)$ as in \ref{step:bordismcat}, we define \[
	\gls*{wall} \subset [p]\times\bfR^\infty, \quad 
	\gls*{chamber} \subset \lbr\mathring{p}\rbr \times\bfR\times\bfR^\infty,\quad\text{and}\quad
	\gls*{thickchamber}\subset \lbr\mathring{p}\rbr \times\bfR\times\bfR^\infty,
\]
 the submanifolds of \emph{walls}, \emph{chambers}, and \emph{thickened chambers} of $(W,\mu)$, as \begin{align*}\wall(W,\mu)&\coloneqq \textstyle{\bigcup_{i\in[p]}\big(\{i\}\times W|_{\mu(i)}}\big),\\ \ch(W,\mu)&\coloneq\textstyle{ \bigcup_{i\in\lbr\mathring{p}\rbr}\big(\{i\}\times W|_{[\mu(i-1)+\epsilon,\mu(i)-\epsilon]}}\big),\\
	\tch(W,\mu)&\coloneqq \textstyle{\bigcup_{i\in\lbr\mathring{p}\rbr}\big(\{i\}\times W|_{(\mu(i-1)+\tfrac{\epsilon}{2},\mu(i)-\tfrac{\epsilon}{2})}}\big).
\end{align*} 
There is an inclusion $\ch(W,\mu)\subset \tch(W,\mu)$ whose complement of the interior we abbreviate as
\[
	\gls*{collar}\coloneqq \tch(W,\mu)\backslash\interior(\ch(W,\mu))\subset \lbr\mathring{p}\rbr \times\bfR\times\bfR^\infty.
\]
We call this the \emph{collars} of $(W,\mu)$. Informally, $\mu$ prescribes hyperplanes $\{\mu(i)\} \times \bfR^\infty$ intersecting $W$ in the walls, the (thickened) chambers are (thickened) regions between the walls, and the collars are collar neighbourhoods in the thickened chambers; see \cref{fig:wallsetc} for an example.

Given in addition a morphism $\alpha\in\Map_\Gap(\lbr p\rbr,\lbr q\rbr)$, we define the submanifold 
\[
	\gls*{lab}\subset \lbr \mathring{q}\rbr\times\bfR\times\bfR^\infty
\]
of \emph{pieces labelled by $\alpha$} as the union $\textstyle{\lab_\alpha(W,\mu)\coloneqq \bigcup_{i\in \lbr \mathring{q}\rbr}\{i\}\times W|_{(\mu(t_{i-1}^\alpha)-\epsilon,\mu(t_i^\alpha)+\epsilon)}}$
where we set $t_i^\alpha\coloneqq c^{-1}(\alpha)(i)$ using the isomorphism \eqref{equ:gap-iso} and thinking of $\lbr \mathring{q} \rbr=\{1<\ldots<q\}$ as a subset of $[q]=\{0<\ldots<q\}$. Informally, $\lab_\alpha(W,\mu)$ is the set $\lbr \mathring{q}\rbr$ labelled by chambers and thickened walls of $W$ as prescribed by $\alpha$; see \cref{fig:lab} for an example.

\begin{figure}
	\begin{tikzpicture}
			\begin{scope}[scale=.7,yshift=12cm,xshift=2cm]
				\node at (-2.5,0) {$\lbr 3 \rbr$};
				\node at (-2.5,-2) {$\lbr 5 \rbr$};
				\draw[->,shorten >=.3cm,shorten <=.3cm] (-2.5,0) -- (-2.5,-2);
				\node at (-2.8,-1) {$\alpha$};
				
				\node [Mahogany] at (0,0) {$\ast$};
				\node at (0,.4) {\tiny $L$};
				\node [Mahogany] at (4,0) {$\ast$};
				\node at (4,.4) {\tiny $R$};
				\foreach \i in {1,...,3}
				{
					\draw[Mahogany,very thick,shorten >=.1cm,shorten <=.1cm] ({\i-.5},0) -- ({\i+.5},0);
					\node at ({\i-0},.4) {\tiny $\i$};
				}
				
				\node [Mahogany] at (-1,-2) {$\ast$};
				\node at (-1,-2.4) {\tiny $L$};
				\node [Mahogany] at (5,-2) {$\ast$};
				\node at (5,-2.4) {\tiny $R$};
				\foreach \i in {1,...,5}
				{
					\draw[Mahogany,very thick,shorten >=.1cm,shorten <=.1cm] ({\i-1.5},-2) -- ({\i-.5},-2);
					\node at ({\i-1},-2.4) {\tiny $\i$};
				}
				\draw[Mahogany,->,shorten >=.15cm,shorten <=.15cm] (1,0) -- (1,-2);
				\draw[dotted,shorten >=.2cm,shorten <=.2cm] (0.5,0) -- (0.5,-2);
				\draw[Mahogany,->,shorten >=.15cm,shorten <=.15cm] (2,0) -- (1,-2);
				\draw[dotted,shorten >=.2cm,shorten <=.2cm] (2.5,0) -- (1.5,-2);
				\draw[dotted,shorten >=.2cm,shorten <=.2cm] (2.5,0) -- (2.5,-2);
				\draw[Mahogany,->,shorten >=.15cm,shorten <=.15cm] (3,0) -- (3,-2);
				\draw[dotted,shorten >=.2cm,shorten <=.2cm] (3.5,0) -- (3.5,-2);
			\end{scope}
			
			\begin{scope}[scale=.9,yshift=2cm]
			\begin{scope} 
				\clip (-4,-1) rectangle (-2,5);
				
				\draw [dotted] (-6,0) -- (6,0);
				\node at (5.5,-.3) {\tiny $\bfR$};
				\draw (-5,0) -- (5,0);
				\node at (-3,0) {$\bullet$};
				\draw [thick,|-|] (-3.5,0) -- (-2.5,0);
				\draw [dashed] (-3,3.5) -- (-3,-1);
				\node at (-3,-.3) [fill=white] {\tiny $\mu(0)$};
				\node at (-1,0) {$\bullet$};
				\draw [thick,|-|] (-1.5,0) -- (-0.5,0);
				\draw [dashed] (-1,3.5) -- (-1,-1);
				\node at (-1,-.3) [fill=white] {\tiny $\mu(1)$};
				\node at (2,0) {$\bullet$};
				\draw [thick,|-|] (1.5,0) -- (2.5,0);
				\draw [dashed] (2,3.5) -- (2,-1);
				\node at (2,-.3) [fill=white] {\tiny $\mu(2)$};
				
				\draw [(-),Mahogany,thick] (-3.5,3) -- (-2.5,3);
				\draw [(-),Mahogany,thick] (-3.5,2) -- (-2.5,2);
				\draw [(-),Mahogany,thick] (-3.5,1) -- (-2.5,1);
				
				\node [fill=white] at (-3,4.2) {$\lab_\alpha(W,\mu)|^{1}$};
			\end{scope}
			
			\begin{scope}[xshift=2.5cm]
				\clip (-3.75,-1) rectangle (2.75,5);
				
				\draw [dotted] (-6,0) -- (6,0);
				\node at (5.5,-.3) {\tiny $\bfR$};
				\draw (-5,0) -- (5,0);
				\node at (-3,0) {$\bullet$};
				\draw [thick,|-|] (-3.5,0) -- (-2.5,0);
				\draw [dashed] (-3,3.5) -- (-3,-1);
				\node at (-3,-.3) [fill=white] {\tiny $\mu(0)$};
				\node at (-1,0) {$\bullet$};
				\draw [thick,|-|] (-1.5,0) -- (-0.5,0);
				\draw [dashed] (-1,3.5) -- (-1,-1);
				\node at (-1,-.3) [fill=white] {\tiny $\mu(1)$};
				\node at (2,0) {$\bullet$};
				\draw [thick,|-|] (1.5,0) -- (2.5,0);
				\draw [dashed] (2,3.5) -- (2,-1);
				\node at (2,-.3) [fill=white] {\tiny $\mu(2)$};
				
				\draw [(-),Mahogany,thick] (-3.5,2) -- (-2.5,2) to[out=0,in=90] (-2,1.5) to[out=-90,in=0] (-2.5,1) -- (-3.5,1);
				\draw [(-),Mahogany,thick] (-3.5,3) to[out=0,in=180] (-.25,3) to[out=0,in=180] (1.5,1) -- (2.5,1);
				
				\node [fill=white] at (-.5,4.2) {$\lab_\alpha(W,\mu)|^{2}$};
			\end{scope}
			
			\begin{scope}[xshift=5cm]
				\clip (1,-1) rectangle (3,5);
				
				\draw [dotted] (-6,0) -- (6,0);
				\node at (5.5,-.3) {\tiny $\bfR$};
				\draw (-5,0) -- (5,0);
				\node at (-3,0) {$\bullet$};
				\draw [thick,|-|] (-3.5,0) -- (-2.5,0);
				\draw [dashed] (-3,3.5) -- (-3,-1);
				\node at (-3,-.3) [fill=white] {\tiny $\mu(0)$};
				\node at (-1,0) {$\bullet$};
				\draw [thick,|-|] (-1.5,0) -- (-0.5,0);
				\draw [dashed] (-1,3.5) -- (-1,-1);
				\node at (-1,-.3) [fill=white] {\tiny $\mu(1)$};
				\node at (2,0) {$\bullet$};
				\draw [thick,|-|] (1.5,0) -- (2.5,0);
				\draw [dashed] (2,3.5) -- (2,-1);
				\node at (2,-.3) [fill=white] {\tiny $\mu(2)$};
				
				\draw [(-),Mahogany,thick] (1.5,1) -- (2.5,1);
				
				\node [fill=white] at (2,4.2) {$\lab_\alpha(W,\mu)|^{3}$};
			\end{scope}
			
			\begin{scope}[xshift=-3cm,yshift=-6cm]
				\clip (1.25,-1) rectangle (5.75,5);
				
				\draw [dotted] (-6,0) -- (7,0);
				\node at (6.5,-.3) {\tiny $\bfR$};
				\draw (-5,0) -- (6,0);
				\node at (2,0) {$\bullet$};
				\draw [thick,|-|] (1.5,0) -- (2.5,0);
				\draw [dashed] (2,3.5) -- (2,-1);
				\node at (2,-.3) [fill=white] {\tiny $\mu(2)$};
				\node at (5,0) {$\bullet$};
				\draw [thick,|-|] (4.5,0) -- (5.5,0);
				\draw [dashed] (5,3.5) -- (5,-1);
				\node at (5,-.3) [fill=white] {\tiny $\mu(3)$};
				
				\draw [Mahogany,thick,(-)] (1.5,1) -- (5.5,1);
				\draw [Mahogany,thick] (3.5,2.5) circle (.75cm);
				\node [Mahogany] at (0,3.5) {$W$};
				
				\node [fill=white] at (3.5,4.2) {$\lab_\alpha(W,\mu)|^{4}$};
			\end{scope}
			
			\begin{scope}[xshift=2.5cm,yshift=-6cm]
				\clip (1,-1) rectangle (3,5);
				
				\draw [dotted] (-6,0) -- (6,0);
				\node at (5.5,-.3) {\tiny $\bfR$};
				\draw (-5,0) -- (5,0);
				\node at (2,0) {$\bullet$};
				\draw [thick,|-|] (1.5,0) -- (2.5,0);
				\draw [dashed] (2,3.5) -- (2,-1);
				\node at (2,-.3) [fill=white] {\tiny $\mu(3)$};
				
				\draw [(-),Mahogany,thick] (1.5,1) -- (2.5,1);
				
				\node [fill=white] at (2,4.2) {$\lab_\alpha(W,\mu)|^{5}$};
			\end{scope}
			\end{scope}
	\end{tikzpicture}
	\caption{Given the $[3]$-walled $1$-manifold $(W,\mu)$ of \cref{fig:mfd-with-walls} and the indicated morphism $\alpha \colon \lbr 3 \rbr \to \lbr 5 \rbr$, this shows the resulting $\lab_\alpha(W,\mu)$. The following informal description may help: $\alpha$ tells  which ``parts'' of $\lbr 3 \rbr$ to put in which ``box'' of $\lbr 5 \rbr$, and if a box is not hit by $\alpha$ it contains a ``connecting part''.}
	\label{fig:lab}
\end{figure}
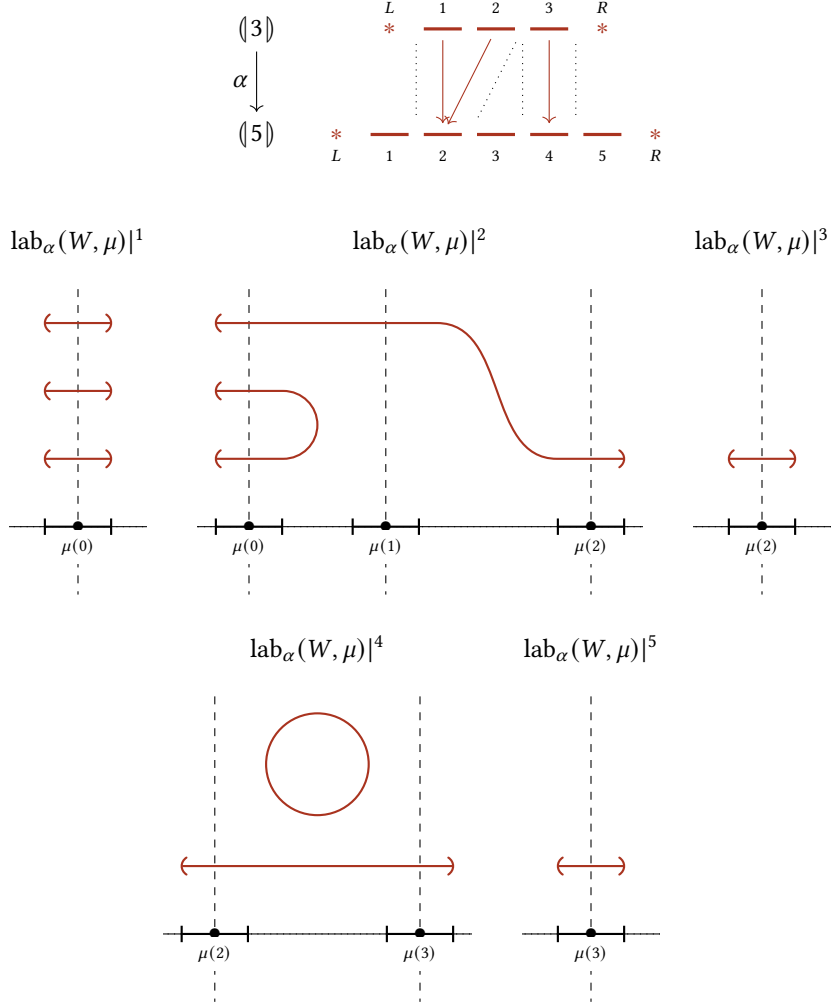

Constructing the functor \eqref{equ:functor-to-premorita-simplicial} will require us to describe embeddings out of $\lab_\alpha(W,\mu)$, for which it is helpful to decompose this manifold into two parts as follows: the map $(\alpha\times\id_{\bfR\times\bfR^\infty})\colon \lbr p\rbr\times  \bfR\times\bfR^\infty\ra\lbr q\rbr\times  \bfR\times\bfR^\infty$ restricts to an embedding 
\begin{equation}\label{equ:ch-to-lab}
	\tch(W,\mu)|^{\alpha^{-1}\lbr\mathring{q}\rbr}\longhookrightarrow \lab_\alpha(W,\mu)
\end{equation}
using which we define a submanifold
\[
	\gls*{wlab} \coloneqq \lab_\alpha(W,\mu)\backslash \interior(\ch(W)|^{\alpha^{-1}\lbr\mathring{q}\rbr})\subset \lbr\mathring{q}\rbr\times\bfR\times\bfR^\infty,
\]
of \emph{thickened walls labelled by $\alpha$}; see \cref{fig:wlab} for an example. We have a preferred decomposition
\begin{equation}\label{eq:decompose-lab}
	\lab_\alpha(W,\mu)\cong \ch(W)|^{\alpha^{-1}\lbr\mathring{q}\rbr} \cup_\partial \wlab_\alpha(W,\mu)
\end{equation}
where the gluing uses the identification $\partial(\ch(W,\mu)|^{\alpha^{-1}\lbr\mathring{q}\rbr})\cong\partial (\wlab_\alpha(W,\mu))$ induced by the restriction of \eqref{equ:ch-to-lab} to the boundary $\partial(\ch(W,\mu)|^{\alpha^{-1}\lbr\mathring{q}\rbr})$. The restriction 
\begin{equation}\label{equ:collar}
	c^\alpha_{(W,\mu)}\colon \coll(W,\mu)|^{\alpha^{-1}\lbr\mathring{q}\rbr}\longhookrightarrow \wlab_\alpha(W,\mu)
\end{equation}
of \eqref{equ:ch-to-lab} to $\coll(W,\mu)|^{\alpha^{-1}\lbr\mathring{q}\rbr}$ provides a collar of this boundary. 

\begin{figure}
	\begin{tikzpicture}[scale=.9]
		\begin{scope} 
			\clip (-4.3,-1) rectangle (-1.7,5);
			
			\draw [dotted] (-6,0) -- (6,0);
			\node at (5.5,-.3) {\tiny $\bfR$};
			\draw (-5,0) -- (5,0);
			\node at (-3,0) {$\bullet$};
			\draw [thick,|-|] (-3.5,0) -- (-2.5,0);
			\draw [dashed] (-3,3.5) -- (-3,-1);
			\node at (-3,-.3) [fill=white] {\tiny $\mu(0)$};
			\node at (-1,0) {$\bullet$};
			\draw [thick,|-|] (-1.5,0) -- (-0.5,0);
			\draw [dashed] (-1,3.5) -- (-1,-1);
			\node at (-1,-.3) [fill=white] {\tiny $\mu(1)$};
			\node at (2,0) {$\bullet$};
			\draw [thick,|-|] (1.5,0) -- (2.5,0);
			\draw [dashed] (2,3.5) -- (2,-1);
			\node at (2,-.3) [fill=white] {\tiny $\mu(2)$};
			
			\draw [(-),Mahogany,thick] (-3.5,3) -- (-2.5,3);
			\draw [(-),Mahogany,thick] (-3.5,2) -- (-2.5,2);
			\draw [(-),Mahogany,thick] (-3.5,1) -- (-2.5,1);
			
			\node [fill=white] at (-3,4.2) {$\wlab_\alpha(W,\mu)|^{1}$};
		\end{scope}
		
		\begin{scope}[xshift=2.5cm]
			\clip (-3.75,-1) rectangle (2.75,5);
			
			\draw [dotted] (-6,0) -- (6,0);
			\node at (5.5,-.3) {\tiny $\bfR$};
			\draw (-5,0) -- (5,0);
			\node at (-3,0) {$\bullet$};
			\draw [thick,|-|] (-3.5,0) -- (-2.5,0);
			\draw [dashed] (-3,3.5) -- (-3,-1);
			\node at (-3,-.3) [fill=white] {\tiny $\mu(0)$};
			\node at (-1,0) {$\bullet$};
			\draw [thick,|-|] (-1.5,0) -- (-0.5,0);
			\draw [dashed] (-1,3.5) -- (-1,-1);
			\node at (-1,-.3) [fill=white] {\tiny $\mu(1)$};
			\node at (2,0) {$\bullet$};
			\draw [thick,|-|] (1.5,0) -- (2.5,0);
			\draw [dashed] (2,3.5) -- (2,-1);
			\node at (2,-.3) [fill=white] {\tiny $\mu(2)$};
			
			\draw [(-|,Mahogany,thick] (-3.5,2) -- (-2.5,2);
			\draw [(-|,Mahogany,thick] (-3.5,1) -- (-2.5,1);
			\draw [(-|,Mahogany,thick] (-3.5,3) -- (-2.5,3);
			\draw [|-|,Mahogany,thick] (-1.5,3) -- (-.5,3);
			\draw [|-),Mahogany,thick] (1.5,1) -- (2.5,1);
			
			\node [fill=white] at (-.5,4.2) {$\wlab_\alpha(W,\mu)|^{2}$};
		\end{scope}
		
		\begin{scope}[xshift=5cm]
			\clip (.7,-1) rectangle (3.3,5);
			
			\draw [dotted] (-6,0) -- (6,0);
			\node at (5.5,-.3) {\tiny $\bfR$};
			\draw (-5,0) -- (5,0);
			\node at (-3,0) {$\bullet$};
			\draw [thick,|-|] (-3.5,0) -- (-2.5,0);
			\draw [dashed] (-3,3.5) -- (-3,-1);
			\node at (-3,-.3) [fill=white] {\tiny $\mu(0)$};
			\node at (-1,0) {$\bullet$};
			\draw [thick,|-|] (-1.5,0) -- (-0.5,0);
			\draw [dashed] (-1,3.5) -- (-1,-1);
			\node at (-1,-.3) [fill=white] {\tiny $\mu(1)$};
			\node at (2,0) {$\bullet$};
			\draw [thick,|-|] (1.5,0) -- (2.5,0);
			\draw [dashed] (2,3.5) -- (2,-1);
			\node at (2,-.3) [fill=white] {\tiny $\mu(2)$};
			
			\draw [(-),Mahogany,thick] (1.5,1) -- (2.5,1);
			
			\node [fill=white] at (2,4.2) {$\wlab_\alpha(W,\mu)|^{3}$};
		\end{scope}
		
		\begin{scope}[xshift=-3cm,yshift=-6cm]
			\clip (1.25,-1) rectangle (5.75,5);
			
			\draw [dotted] (-6,0) -- (7,0);
			\node at (6.5,-.3) {\tiny $\bfR$};
			\draw (-5,0) -- (6,0);
			\node at (2,0) {$\bullet$};
			\draw [thick,|-|] (1.5,0) -- (2.5,0);
			\draw [dashed] (2,3.5) -- (2,-1);
			\node at (2,-.3) [fill=white] {\tiny $\mu(2)$};
			\node at (5,0) {$\bullet$};
			\draw [thick,|-|] (4.5,0) -- (5.5,0);
			\draw [dashed] (5,3.5) -- (5,-1);
			\node at (5,-.3) [fill=white] {\tiny $\mu(3)$};
			
			\draw [Mahogany,thick,(-|] (1.5,1) -- (2.5,1);
			\draw [Mahogany,thick,|-)] (4.5,1) -- (5.5,1);
			
			\node [fill=white] at (3.5,4.2) {$\wlab_\alpha(W,\mu)|^{4}$};
		\end{scope}
		
		\begin{scope}[xshift=2.5cm,yshift=-6cm]
			\clip (.7,-1) rectangle (3.3,5);
			
			\draw [dotted] (-6,0) -- (6,0);
			\node at (5.5,-.3) {\tiny $\bfR$};
			\draw (-5,0) -- (5,0);
			\node at (2,0) {$\bullet$};
			\draw [thick,|-|] (1.5,0) -- (2.5,0);
			\draw [dashed] (2,3.5) -- (2,-1);
			\node at (2,-.3) [fill=white] {\tiny $\mu(3)$};
			
			\draw [(-),Mahogany,thick] (1.5,1) -- (2.5,1);
			
			\node [fill=white] at (2,4.2) {$\wlab_\alpha(W,\mu)|^{5}$};
		\end{scope}
	\end{tikzpicture}
	\caption{The submanifold $\wlab_\alpha(W,\mu)$ for $\lab_\alpha(W,\mu)$ as in \cref{fig:lab}.}	\label{fig:wlab}\end{figure}
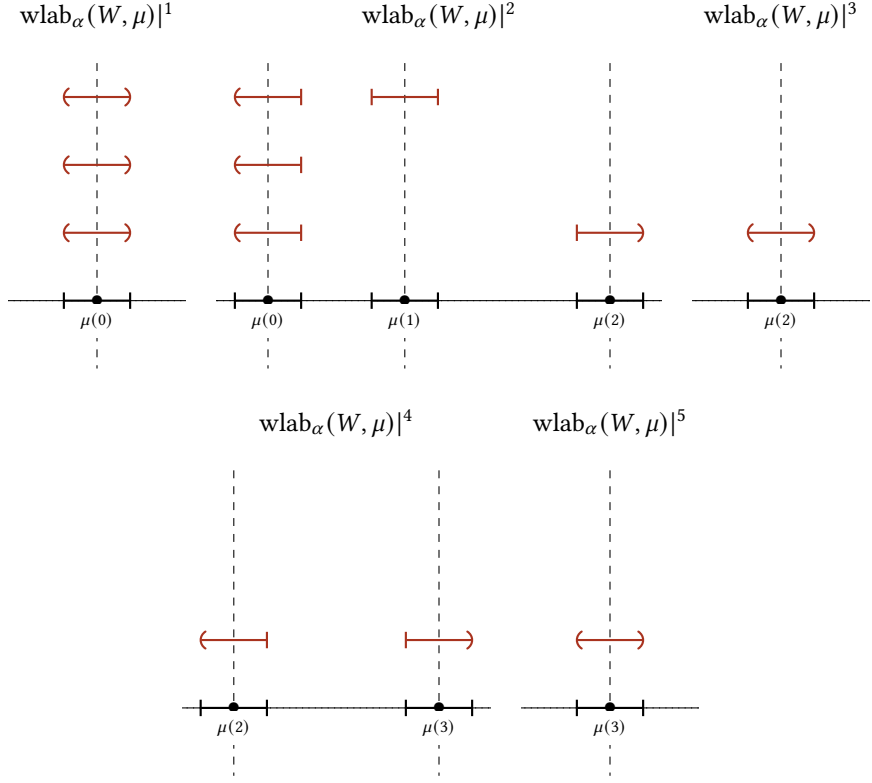

\subsubsection*{Substep \ref{step:functor-to-premorita-language} II: $\wlab_\alpha(-)$ as a pullback}
Unwrapping the definitions, one sees that $\wlab_\alpha(W,\mu)\subset \lbr \mathring{q}\rbr\times\bfR\times\bfR^\infty$ is a disjoint union of products of $W|_{\mu(i)}$ for some $i$ with a (open, half-open, or closed) interval of length $2\cdot \epsilon$. More precisely, for $i\in\lbr\mathring{q}\rbr$ the components $\wlab_\alpha(W,\mu)|^{\{i\}}$ of $\wlab_\alpha(W,\mu)$ lying over $i$ are
$\tr_{\mu(t^\alpha_i)}(-\epsilon,+\epsilon)\times W|_{\mu(t_i^\alpha)}$ for $i\not\in\im(\alpha)$ and 
\[
	\textstyle{\Big(\tr_{\mu(t^\alpha_{i-1})}(-\epsilon,+\epsilon]\times W|_{\mu(t_{i-1}^\alpha)}\Big)\cup \Big(\bigcup_{t_{i-1}^\alpha<j<t_{i}^\alpha}(\tr_{\mu(j)}[-\epsilon,\epsilon]\times W|_{\mu(j)})\Big)\cup \Big(\tr_{\mu(t^\alpha_i) }[-\epsilon,+\epsilon)\times W|_{\mu(t_{i}^\alpha)}\Big)}
\] 
for $i\in\im(\alpha)$. From this description we see in particular that there are preferred maps \[\wlab_\alpha(W,\mu)\ra [p],\quad \wlab_\alpha(W,\mu)\ra \wall(W,\mu),\quad\text{and}\quad \wlab_\alpha(W,\mu)\ra \wlab_\alpha(\bfR,\mu)\]
where we view $(\bfR,\mu)$ as a $[p]$-walled $1$-manifold. Indeed, the first map is given by sending the components of $\wlab_\alpha(W,\mu)$ whose first factor is an interval around $\mu(t_i^{\alpha})\in \bfR$ to $t_i^{\alpha}\in [p]$, the second map is induced by the first map and the projection to $\bfR^\infty$, and the final map is given by the projection to $\lbr \mathring{q}\rbr\times\bfR$. In particular, this exhibits $\wlab_\alpha(W,\mu)$ as the pullback 
\begin{equation}\label{equ:wlab-is-pullback}
	\wlab_\alpha(W,\mu)=\wlab_\alpha(\bfR,\mu)\times_{[p]}\wall(W,\mu),
\end{equation} 
which will be useful to construct embeddings out of $\wlab_\alpha(W,\mu)$; see \cref{fig:wall-as-pullback} for an example. 

It will also be useful to observe that $\wlab_\alpha(\bfR,\mu)$ is related to $\wlab_\alpha(\bfR,\mu')$ for possibly different $\mu'\colon [p]\ra\bfR$ by a preferred diffeomorphism
\begin{equation}\label{equ:wlab-independent-of-walls}
	\wlab_\alpha(\bfR,\mu)\cong \wlab_\alpha(\bfR,\mu'),
\end{equation}
uniquely characterised by requiring it to (i) preserve the order induced by the lexicographical order on $\lbr \mathring{q}\rbr\times\bfR$ and (ii) agree with translation on each component. For convenience we fix a particular choice of $\mu$, namely the inclusion $[p]=\{0,1,\ldots,p\}\subset\bfR$ in which case we omit $\mu$ from the notation, so for instance we abbreviate $\wlab_\alpha(\bfR)=\wlab_\alpha(\bfR,\inc)$.

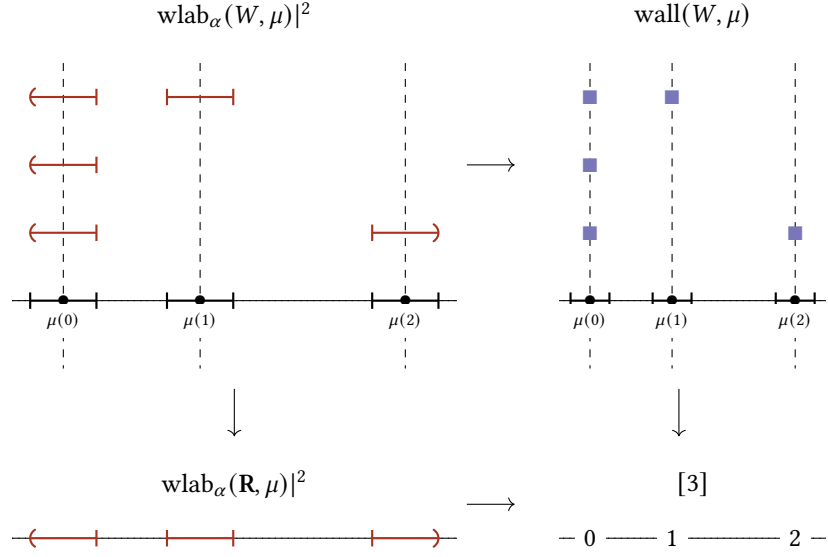
\begin{figure}
	\begin{tikzpicture}[scale=.9]
		\begin{scope}[xshift=2.5cm]
			\clip (-3.75,-1) rectangle (2.75,5);
			
			\draw [dotted] (-6,0) -- (6,0);
			\node at (5.5,-.3) {\tiny $\bfR$};
			\draw (-5,0) -- (5,0);
			\node at (-3,0) {$\bullet$};
			\draw [thick,|-|] (-3.5,0) -- (-2.5,0);
			\draw [dashed] (-3,3.5) -- (-3,-1);
			\node at (-3,-.3) [fill=white] {\tiny $\mu(0)$};
			\node at (-1,0) {$\bullet$};
			\draw [thick,|-|] (-1.5,0) -- (-0.5,0);
			\draw [dashed] (-1,3.5) -- (-1,-1);
			\node at (-1,-.3) [fill=white] {\tiny $\mu(1)$};
			\node at (2,0) {$\bullet$};
			\draw [thick,|-|] (1.5,0) -- (2.5,0);
			\draw [dashed] (2,3.5) -- (2,-1);
			\node at (2,-.3) [fill=white] {\tiny $\mu(2)$};
			
			\draw [(-|,Mahogany,thick] (-3.5,2) -- (-2.5,2);
			\draw [(-|,Mahogany,thick] (-3.5,1) -- (-2.5,1);
			\draw [(-|,Mahogany,thick] (-3.5,3) -- (-2.5,3);
			\draw [|-|,Mahogany,thick] (-1.5,3) -- (-.5,3);
			\draw [|-),Mahogany,thick] (1.5,1) -- (2.5,1);
			
			\node [fill=white] at (-.5,4.2) {$\wlab_\alpha(W,\mu)|^{2}$};
		\end{scope}
		
		\draw [->] (2,-1.3) -- (2,-2);
		\draw [->] (8.5,-1.3) -- (8.5,-2);
		\draw [->] (5.4,2) -- (6.1,2);
		\draw [->] (5.4,-3) -- (6.1,-3);
		
		\begin{scope}[xshift=2.5cm,yshift=-3.5cm]
			\clip (-3.75,-1) rectangle (2.75,2);
			
			\draw [dotted] (-6,0) -- (6,0);
			\node at (5.5,-.3) {\tiny $\bfR$};
			\draw (-5,0) -- (5,0);
			
			\draw [(-|,Mahogany,thick] (-3.5,0) -- (-2.5,0);
			\draw [|-|,Mahogany,thick] (-1.5,0) -- (-.5,0);
			\draw [|-),Mahogany,thick] (1.5,0) -- (2.5,0);
			
			\node [fill=white] at (-.5,.8) {$\wlab_\alpha(\bfR,\mu)|^{2}$};
		\end{scope}
		
		\begin{scope}[xshift=9cm,yshift=-3.5cm,xscale=.6]
			\clip (-3.75,-1) rectangle (2.75,2);
			
			\draw [dotted] (-6,0) -- (6,0);
			\node at (5.5,-.3) {\tiny $\bfR$};
			\draw (-5,0) -- (5,0);
			
			\node at (-3,0) [fill=white] {$0$};
			\node at (-1,0) [fill=white] {$1$};
			\node at (2,0) [fill=white] {$2$};
			
			\node [fill=white] at (-.5,.8) {$[3]$};
		\end{scope}
		
		\begin{scope}[xshift=9cm,xscale=.6]
			\clip (-3.75,-1) rectangle (2.75,5);
			
			\draw [dotted] (-6,0) -- (6,0);
			\node at (5.5,-.3) {\tiny $\bfR$};
			\draw (-5,0) -- (5,0);
			\node at (-3,0) {$\bullet$};
			\draw [thick,|-|] (-3.5,0) -- (-2.5,0);
			\draw [dashed] (-3,3.5) -- (-3,-1);
			\node at (-3,-.3) [fill=white] {\tiny $\mu(0)$};
			\node at (-1,0) {$\bullet$};
			\draw [thick,|-|] (-1.5,0) -- (-0.5,0);
			\draw [dashed] (-1,3.5) -- (-1,-1);
			\node at (-1,-.3) [fill=white] {\tiny $\mu(1)$};
			\node at (2,0) {$\bullet$};
			\draw [thick,|-|] (1.5,0) -- (2.5,0);
			\draw [dashed] (2,3.5) -- (2,-1);
			\node at (2,-.3) [fill=white] {\tiny $\mu(2)$};
			
			\node at (-3,2) [Periwinkle] {$\blacksquare$};
			\node at (-3,1) [Periwinkle] {$\blacksquare$};
			\node at (-3,3) [Periwinkle]{$\blacksquare$};
			\node at (-1,3) [Periwinkle]{$\blacksquare$};
			\node at (2,1) [Periwinkle]{$\blacksquare$};
			
			\node [fill=white] at (-.5,4.2) {$\wall(W,\mu)$};
		\end{scope}
	\end{tikzpicture}
	\caption{The pullback decomposition \eqref{equ:wlab-is-pullback} for one of the part of $\wlab_\alpha(W,\mu)$ from \cref{fig:wlab}. Note that $\wall(W,\mu)$ and $[3]$ are larger than pictured; we have only included the parts that are relevant for this pullback.}
	\label{fig:wall-as-pullback}
\end{figure}

\subsubsection*{Substep \ref{step:functor-to-premorita-thickening}: Thickening}
As a next step, we replace the undercategory functor
\begin{equation}\label{overcat}
	\Gap_{\lbr \bullet \rbr/}\colon \Gap^{\op}\lra \sCat_{/ \Gap}
\end{equation}
by a simplicial thickening after precomposition with the inclusion $\Gap_\sur^{\op}\rightarrow \Gap^{\op}$ of (the opposite of) the wide subcategory of surjective morphisms. By ``simplicial thickening'', we mean a functor whose values need no longer be discrete categories and which comes with a natural transformation to $\Gap_{\lbr \bullet \rbr/}$ that is a levelwise Dwyer--Kan equivalence. 

We first define a $\Kan$-enriched category $\smash{\gls*{gapkan}}$ that is Dwyer--Kan equivalent to $\Gap_{\lbr p \rbr/}$. Its objects are the same as those of $\Gap_{\lbr p \rbr/}$, that is, morphisms $\alpha \colon \lbr p \rbr \to \lbr q \rbr$ in $\Gap$. The space of morphisms from the object $\alpha\colon \lbr p\rbr\ra \lbr q\rbr$ to the object $\alpha'\colon \lbr p\rbr\ra \lbr q'\rbr$ is 
\[
	\textstyle{\Map_{\underline{\Gap}_{\lbr p \rbr/}}(\alpha,\alpha')\coloneqq \bigsqcup_{\gamma\in \Map_{\Gap_{\lbr p \rbr/}}(\alpha,\alpha')}  \Emb\big(\wlab_\alpha(\bfR)|^{{\gamma^{-1}\lbr\mathring{q'}\rbr}},\wlab_{\alpha'}(\bfR)\big)_\gamma}
\]
where the subscript $\gamma$ indicates that we restrict to embeddings $\overline{\gamma}$ that 
\begin{enumerate}
	\item make the diagrams 
	\[\qquad \begin{tikzcd}[column sep=0.4cm,]
		\wlab_\alpha(\bfR)|^{\gamma^{-1}\lbr\mathring{q'}\rbr}\dar\arrow[r,hook,"\overline{\gamma}"]& \wlab_{\alpha'}(\bfR)\dar\\
		\gamma^{-1}\lbr\mathring{q'}\rbr\rar{\gamma}&\lbr \mathring{q'}\rbr
	\end{tikzcd}\quad \begin{tikzcd}[column sep=-0.4cm]
		\wlab_\alpha(\bfR)|^{\gamma^{-1}\lbr\mathring{q'}\rbr}\arrow[rr,"\overline{\gamma}",hook]&& \wlab_{\alpha'}(\bfR)\\
		&\coll(\bfR)|^{\alpha'^{-1}\lbr\mathring{q}\rbr}\arrow[ur,"c^{\alpha'}_{\bfR}",swap,hook']\arrow[ul,"c^\alpha_{\bfR}",hook]&
	\end{tikzcd}
	\]
	commute, i.e.\,they cover $\gamma$ and preserve the collars \eqref{equ:collar}, and
	\item are order-preserving with respect to lexicographical order on $\lbr \mathring{q}\rbr\times\bfR$ and $\lbr \mathring{q'}\rbr\times\bfR$.
\end{enumerate}
The composition in $\underline{\Gap}_{\lbr p \rbr/}$ is induced by the composition in $\Gap_{\lbr p \rbr/}$, forgetting components, and composition of embeddings. By construction, there is a forgetful functor $\underline{\Gap}_{\lbr p \rbr/}\ra\Gap_{\lbr p \rbr/}$ which is a Dwyer--Kan equivalence as a result of the contractibility of the space of monotonous embeddings between connected intervals. Postcomposing this functor with the projection $\Gap_{\lbr p \rbr/}\ra \Gap$ and varying $p$, we obtain a functor \[\underline{\Gap}_{\lbr \bullet \rbr/} \colon \Gap_\sur^{\op}\lra \sCat_{/ \Gap}\] with a natural transformation to \eqref{overcat} that consists of the Dwyer--Kan equivalences just discussed.

\subsubsection*{Substep \ref{step:functor-to-premorita-simplicial}: $E^{\geo}$ on the level of $\Kan$-enriched categories}

We now turn towards the construction of a functor of semisimplicial $\Kan$-enriched categories
\begin{equation}\label{equ:psi-simplicially}
	E^{\geo}_{[\bullet]}\colon \ncBord(d)^\nonunital_{[\bullet]}\lra\Fun_{\Gap}\big(\underline{\Gap}_{\lbr \bullet \rbr/},\Man_d^{\otimes}\big).
\end{equation}
The value of $E^{\geo}_{[p]}$ at $(W,\mu)\in(\ncBord(d)^\nonunital)_{[p]}$ is the functor
\begin{equation}\label{equ:psi-single-manifold}
	E^{\geo}_{[p]}(W,\mu)\colon \underline{\Gap}_{\lbr p \rbr/}\lra \Man_d^{\otimes}
\end{equation}
over $\Gap$ defined as follows: on objects, it maps $(\alpha\colon \lbr p\rbr \to \lbr q\rbr)$ to $(\lbr q\rbr,\lab_\alpha(W,\mu))$. On a morphism given by a pair $(\gamma,\overline{\gamma})$ of a morphism $\gamma \colon \lbr q \rbr \to \lbr q' \rbr$ under $\lbr p \rbr$ in $\Gap$ and an embedding $\overline{\gamma}\in\Emb(\wlab_\alpha(\bfR)|^{{\gamma^{-1}\lbr\mathring{q'}\rbr}},\wlab_{\alpha'}(\bfR))_\gamma$, it is given by the embedding
\[
	E^{\geo}_{[p]}(W,\mu)(\overline{\gamma})\colon\lab_\alpha(W,\mu)|^{{\gamma^{-1}\lbr\mathring{q'}\rbr}}\longhookrightarrow \lab_{\alpha'}(W,\mu)
\] 
over $\gamma$ constructed via the following recipe: using the decomposition \eqref{eq:decompose-lab} and $\alpha^{-1}(\gamma^{-1}\lbr\mathring{q'}\rbr) = \alpha'^{-1}\lbr\mathring{q'}\rbr$, the embedding $E^{\geo}_{[p]}(W,\mu)(\overline{\gamma})$ is of the form
\[
	\ch(W,\mu)|^{\alpha'^{-1}\lbr\mathring{q'}\rbr} \cup_\partial \wlab_\alpha(W,\mu)|^{\gamma^{-1}\lbr \mathring{q'}\rbr}\longhookrightarrow \ch(W,\mu)|^{\alpha'^{-1}\lbr\mathring{q'}\rbr} \cup_\partial \wlab_{\alpha'}(W,\mu).
\]
On $\ch(W,\mu)|^{\alpha'^{-1}\lbr\mathring{q'}\rbr}$ we declare it to be the identity and on the complement we use the pullback description \eqref{equ:wlab-is-pullback} and the translations \eqref{equ:wlab-independent-of-walls} to define it via the commutative diagram
\[\begin{tikzcd}[row sep=0.2cm,column sep=2cm,ar symbol/.style = {draw=none,"\textstyle#1" description,sloped},
	iso/.style = {ar symbol={\cong}}]
	\wlab_\alpha(W,\mu)|^{\gamma^{-1}\lbr \mathring{q'}\rbr}\arrow[r,hookrightarrow,"{E^{\geo}_{[p]}(W,\mu)(\overline{\gamma})}"]\arrow[d,equal]&\wlab_{\alpha'}(W,\mu)\arrow[d,equal]\\
	\wlab_{\alpha}(\bfR,\mu)|^{\gamma^{-1}\lbr \mathring{q'}\rbr}\times_{[p]}\wall(W,\mu)\arrow[d,iso]&\wlab_{\alpha'}(\bfR,\mu)\times_{[p]}\wall(W,\mu)\arrow[d,iso]\\
	\wlab_{\alpha}(\bfR)|^{\gamma^{-1}\lbr \mathring{q'}\rbr}\times_{[p]}\wall(W,\mu)\arrow[r,hookrightarrow,"{(\overline{\gamma},\id)}"]&\wlab_{\alpha'}(\bfR)\times_{[p]}\wall(W,\mu).
\end{tikzcd}\]
This finishes the construction of the functor $\smash{E^{\geo}_{[p]}(W,\mu)\colon \underline{\Gap}_{\lbr p \rbr/}\ra \Man_d^{\otimes}}$. Note that it commutes with the functors to $\Gap$ by construction. 

Having defined $E^{\geo}_{[p]}$ on objects, defining it on morphisms amounts to specifying maps
\[
	\begin{tikzcd}[row sep=0.4cm, column sep=-0.2cm] \Emb\big((W,\mu),(W',\mu')\big) \dar & \\\Nat_{\Gap}(E^{\geo}_{[p]}(W,\mu),E^{\geo}_{[p]}(W',\mu'))&\subset 	\bigsqcup_{q,\alpha\in\Map_{\Gap}(\lbr p\rbr,\lbr q\rbr)}
	\Emb\big(\lab_\alpha(W,\mu),\lab_\alpha(W',\mu')\big)\end{tikzcd}
\]
where $\Nat_\Gap(-,-)$ is the hom-functor in the $\Kan$-enriched category $\Fun_{\Gap}\big(\underline{\Gap}_{\lbr p \rbr/},\Man_d^{\otimes}\big)$, i.e.\,the space of natural transformations covering the identity on $\Gap$. These maps are induced by the evident naturality of the $\lab_\alpha(-)$-construction in embeddings of $[p]$-walled $d$-manifolds. 

To finish the construction of \eqref{equ:psi-simplicially}, we have to argue that the $E^{\geo}_{[p]}$'s assemble to a morphism of semisimplicial objects in $\Kan$-enriched categories as in \eqref{equ:psi-simplicially}. But this is merely a case of going through the definitions; ultimately it amounts to the identity $\lab_{\beta\circ c(\delta)}(W,\mu)=\lab_\beta(W,\mu\circ\delta)$.

\subsubsection*{Substep \ref{step:functor-to-premorita-non-unital}: $E^{\geo}$ on the level of $\infty$-categories}
Taking coherent nerves, we obtain 
\[\gls*{gapinf} \coloneqq N_{\coh}(\underline{\Gap}_{\lbr \bullet \rbr/}) \in \Fun(\Delta^\op_\inj,\CatInf),\]
which comes with an equivalence to $\Gap_{\lbr \bullet \rbr/}\cong \Delta_{/[\bullet]}^\op$ induced by the equivalence $\underline{\Gap}_{\lbr \bullet \rbr/}\simeq \Gap_{\lbr \bullet \rbr/}$ from Substep \ref{step:functor-to-premorita-thickening}. From $E^{\geo} _{[\bullet]}$ we obtain a morphism of semisimplicial objects in $\CatInf$ 
\begin{equation}\label{equ:bord-to-pre-morita-nonalg}
	\ncBordInf(d)^{\nonunital}\lra \Fun_{\Gap}(\underline{\GapInf}_{\lbr \bullet \rbr/},\ManInf_d^{\otimes})\simeq \Fun_{\Delta^\op}(\Delta_{/[\bullet]}^\op,\ManInf_d^{\otimes})
\end{equation}
given by postcomposing the coherent nerve applied to \eqref{equ:psi-simplicially} with the canonical map
\[N_{\coh}\big(\Fun_{\Gap}(\underline{\Gap}_{\lbr \bullet \rbr/},\Man_d^{\otimes})\big)\lra \Fun_{\Gap}(\underline{\GapInf}_{\lbr \bullet \rbr/},\ManInf_d^{\otimes})\simeq \Fun_{\Delta^\op}(\Delta_{/[\bullet]}^\op,\ManInf_d^{\otimes}),\]
from Property \ref{enum:coherent-functor} of \cref{sec:coherent-nerve-props}.

\begin{lem}\label{lem:image-in-premorita}The image of the functor \eqref{equ:bord-to-pre-morita-nonalg} lies in the levelwise full subcategory $\overline{\ALG}(\ManInf_d) \subset \Fun_{\Delta^\op}(\Delta_{/[\bullet]}^\op,\ManInf_d^{\otimes})$ from \cref{sec:pre-morita}.
\end{lem}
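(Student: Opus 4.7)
The plan is to unpack the definition and reduce the claim to an explicit check. By definition, $\overline{\ALG}(\ManInf_d)_{[p]} = \Alg_{\Delta^{\op}_{/[p]}}(\ManInf_d^{\sqcup})$ is the full subcategory of $\Fun_{\Delta^{\op}}(\Delta^{\op}_{/[p]},\ManInf_d^{\sqcup})$ on maps of generalised nonsymmetric $\infty$-operads, i.e.\,functors over $\Delta^{\op}$ preserving cocartesian lifts of inert morphisms. Since this subcategory is levelwise full, I only need to check the condition on objects. Via the equivalence of generalised nonsymmetric $\infty$-operads $\underline{\CutInf}_{\lbr p \rbr/} \simeq \Cut_{\lbr p \rbr/} \cong \Delta^{\op}_{/[p]}$ obtained by taking coherent nerves of the Dwyer--Kan equivalence from Substep \ref{step:functor-to-premorita-thickening}, it suffices to show that each $E^{\geo}_{[p]}(W,\mu) \colon \underline{\CutInf}_{\lbr p \rbr/} \to \ManInf_d^{\sqcup}$ preserves cocartesian lifts over inert morphisms of $\Cut$.

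Next I identify the cocartesian morphisms. Both $\ManInf_d^{\sqcup} \to \Cut$ and $\underline{\CutInf}_{\lbr p \rbr/} \to \Cut$ arise as coherent nerves of $\Kan$-enriched functors to a $1$-category, so by \cref{fact:cocart-from-s-cat} a morphism is cocartesian iff the underlying morphism in the $\Kan$-enriched category is cocartesian in the $\Kan$-enriched sense. Unpacking this and using the explicit models: a morphism $(\varphi,\psi)$ in $\Man_d^{\sqcup}$ with $\psi \colon W|^{\varphi^{-1}\lbr \mathring{p}'\rbr} \hookrightarrow W'$ is cocartesian iff $\psi$ is a diffeomorphism, and a morphism $(\gamma,\bar\gamma)$ in $\underline{\Cut}_{\lbr p \rbr/}$ is cocartesian iff $\bar\gamma \colon \wlab_\alpha(\bfR)|^{\gamma^{-1}\lbr \mathring{q'}\rbr} \to \wlab_{\alpha'}(\bfR)$ is a diffeomorphism. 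Cocartesian lifts in either case are constructed by restricting to the preimage of the interior and identifying labels via $\gamma$.

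The claim then reduces to the following: for an inert morphism $\gamma \colon \lbr q \rbr \to \lbr q' \rbr$ in $\Cut$ with $\alpha' = \gamma \circ \alpha$ and a diffeomorphism $\bar\gamma$, the embedding
\[
E^{\geo}_{[p]}(W,\mu)(\gamma,\bar\gamma) \colon \lab_\alpha(W,\mu)|^{\gamma^{-1}\lbr \mathring{q'}\rbr} \longhookrightarrow \lab_{\alpha'}(W,\mu)
\]
is a diffeomorphism. I verify this via the decomposition \eqref{eq:decompose-lab}. On the $\ch$-part, the map is the identity, and the identity $\alpha'^{-1}\lbr \mathring{q'}\rbr = \alpha^{-1}(\gamma^{-1}\lbr \mathring{q'}\rbr)$ shows that $\ch(W,\mu)|^{\alpha^{-1}\lbr \mathring{q}\rbr}$ restricted to $\gamma^{-1}\lbr \mathring{q'}\rbr$ coincides with $\ch(W,\mu)|^{\alpha'^{-1}\lbr \mathring{q'}\rbr}$. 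On the $\wlab$-part, the construction of $E^{\geo}_{[p]}(W,\mu)(\bar\gamma)$ via the pullback formula \eqref{equ:wlab-is-pullback} exhibits it as $(\bar\gamma, \id_{\wall(W,\mu)})$, which is a diffeomorphism precisely because $\bar\gamma$ is. Since the gluings in \eqref{eq:decompose-lab} are preserved by construction, the two diffeomorphisms assemble to a diffeomorphism of $\lab$'s.

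The main obstacle is purely bookkeeping: carefully tracking the boundary identifications via the collars $c^\alpha_{(W,\mu)}$ and ensuring that the translations \eqref{equ:wlab-independent-of-walls} used in the definition of $E^{\geo}_{[p]}(W,\mu)(\bar\gamma)$ are compatible with the gluings in \eqref{eq:decompose-lab}. Once these identifications are pinned down, the verification is mechanical and involves no further geometric input.
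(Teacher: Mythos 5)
Your overall strategy agrees with the paper's: reduce to the $\Kan$-enriched level via \cref{fact:cocart-from-s-cat}, restrict to morphisms covering inert $\gamma$, and verify the cocartesian condition using the $\ch/\wlab$ decomposition \eqref{eq:decompose-lab} and the pullback formula \eqref{equ:wlab-is-pullback}. This is the right geometric setup.

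However, your characterization of cocartesian morphisms is incorrect, and this is a genuine gap. You claim a morphism in $\Man_d^{\sqcup}$ over $\varphi$ is cocartesian if and only if the underlying embedding $\psi$ is a diffeomorphism, and likewise for $\underline{\Cut}_{\lbr p \rbr/}$. This is false: the cocartesian condition is a homotopy-theoretic one (the square in \cref{dfn:cocartesian-fibration} is required to be homotopy cartesian), and it is satisfied precisely when $\psi$ is an \emph{isotopy equivalence} over $\varphi$, not a diffeomorphism. In $\underline{\Cut}_{\lbr p \rbr/}$ this gap is especially visible: for a fixed $\gamma$, the space $\Emb(\wlab_\alpha(\bfR)|^{\gamma^{-1}\lbr\mathring{q'}\rbr},\wlab_{\alpha'}(\bfR))_\gamma$ is contractible, so \emph{every} morphism $\bar\gamma$ covering an inert $\gamma$ is cocartesian, including the many that are not surjective and hence not diffeomorphisms. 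For such a $\bar\gamma$, the embedding $E^{\geo}_{[p]}(W,\mu)(\bar\gamma)$ is likewise not a diffeomorphism, only an isotopy equivalence, so your reduced claim (``the embedding is a diffeomorphism'') fails.

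The correct verification proceeds as in the paper: one checks the homotopy-cartesian square directly by taking vertical fibres, which reduces the claim to showing that restriction along $E^{\geo}_{[p]}(W,\mu)(\bar\gamma)$ gives a weak equivalence on embedding spaces into an arbitrary $(\lbr q''\rbr, Z)$, i.e.\ that $E^{\geo}_{[p]}(W,\mu)(\bar\gamma)\colon\lab_\alpha(W,\mu)|^{\gamma^{-1}\lbr\mathring{q'}\rbr}\hookrightarrow\lab_{\alpha'}(W,\mu)$ is an \emph{isotopy equivalence over} $\gamma$. This in turn follows, via the pullback description you cite, from the fact that $\bar\gamma\colon\wlab_\alpha(\bfR)|^{\gamma^{-1}\lbr\mathring{q'}\rbr}\hookrightarrow\wlab_{\alpha'}(\bfR)$ is an isotopy equivalence over $\gamma$ and under the collars $\coll(\bfR)|^{\alpha'^{-1}\lbr\mathring{q'}\rbr}$ (using the contractibility of the embedding spaces in $\underline{\Cut}_{\lbr p \rbr/}$), which is weaker than and need not imply that it is a diffeomorphism. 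Your geometric decomposition is fine; what needs repair is the criterion: replace ``diffeomorphism'' with ``isotopy equivalence over the base, compatible with the collars,'' and verify it by passing to fibres of the relevant mapping spaces rather than by set-theoretic bijectivity.
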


\begin{proof}
In view of \cref{fact:cocart-from-s-cat}, it suffices to show that for a $[p]$-walled manifold $(W,\mu)$, and objects $\alpha\colon \lbr p\rbr \ra \lbr q\rbr$ and $\alpha'\colon \lbr p\rbr \ra \lbr q'\rbr$, the functor
$
	E^{\geo}_{[p]}(W,\mu)\colon\underline{\Gap}_{\lbr p \rbr/}\ra \Man_d^{\otimes}
$  of $\Kan$-enriched categories sends embeddings $\overline{\gamma}\in\Map_{\underline{\Gap}_{\lbr p \rbr/}}(\alpha,\alpha')$ whose underlying map $\gamma\colon \lbr q\rbr \ra \lbr q'\rbr$ is inert to cocartesian morphisms in $\Man_d^{\otimes}$ with respect to  the projection $\Man_d^{\otimes}\ra \Gap$. In other words, for objects $( Z,\lbr q''\rbr)\in\Man_d^{\otimes}$, we need to check that the square of Kan-complexes
\[
\hspace{-0.3cm}
\begin{tikzcd}[column sep=0.6cm]
	\Map_{\Man_d^{\otimes}}\Big((\lbr q'\rbr,\lab_{\alpha'}(W,\mu)),(\lbr q''\rbr,Z)\Big)\rar{E^{\geo}_{[p]}(W,\mu)(\overline{\gamma})^*}\arrow[d,equal]&\Map_{\Man_d^{\otimes}}\Big((\lbr q\rbr,\lab_{\alpha}(W,\mu)), (\lbr q''\rbr,Z)\Big)\arrow[d,equal]\\[-0.45cm]
	\underset{\varphi\in \Map_\Gap(\lbr q'\rbr,\lbr q''\rbr)}\bigsqcup \Emb\big(\lab_{\alpha'}(W,\mu)|^{\varphi^{-1}\lbr\mathring{q}''\rbr}, Z)_\varphi\dar& \underset{\psi\in \Map_\Gap(\lbr q\rbr,\lbr q''\rbr)}\bigsqcup \Emb\big(\lab_{\alpha}(W,\mu)|^{\psi^{-1}\lbr\mathring{q}''\rbr}, Z)_\psi\dar\\
	\Map_\Gap(\lbr q'\rbr,\lbr q''\rbr)\rar{\gamma^*}& \Map_\Gap(\lbr q\rbr,\lbr q''\rbr)
	\end{tikzcd}
\]
is homotopy cartesian. Taking vertical homotopy fibres, it suffices to show that the maps
\[
	E^{\geo}_p(W,\mu)(\overline{\gamma})^*\colon \Emb\big(\lab_{\alpha'}(W,\mu)|^{\varphi^{-1}\lbr\mathring{q}''\rbr}, Z)_\varphi\lra\Emb\big(\lab_{\alpha}(W,\mu)|^{(\varphi\circ\gamma)^{-1}\lbr\mathring{q}''\rbr}, Z)_{\varphi\circ\gamma}
\]
are weak equivalences. Since $\gamma$ is inert, the restricted map $\gamma^{-1}\lbr \mathring{q}'\rbr\ra \lbr \mathring{q}'\rbr$ is bijective, so it suffices to show that the embedding $E^{\geo}_{[p]}(W,\mu)(\overline{\gamma})^* \colon \lab_{\alpha}(W,\mu)|^{\gamma^{-1} \lbr\mathring{q}'\rbr}\hookrightarrow \lab_{\alpha'}(W,\mu)$ is an isotopy equivalence over $\gamma$. To see this, note that since $\gamma^{-1}\lbr \mathring{q}'\rbr\ra \lbr \mathring{q}'\rbr$ is bijective, the embedding $\overline{\gamma}\colon \wlab_\alpha(\bfR)|^{\gamma^{-1}\lbr \mathring{q}'\rbr}\hookrightarrow \wlab_{\alpha'}(\bfR)$ is an isotopy equivalence over $\gamma$ and under $\coll(\bfR)|^{\alpha'^{-1}\lbr \mathring{q}'\rbr}$, from which it follows that $E^{\geo}_{[p]}(W,\mu)(\overline{\gamma})$ is an isotopy equivalence over $\gamma$ as claimed.
\end{proof}

By the previous lemma, \eqref{equ:bord-to-pre-morita-nonalg} restricts to a morphism $\gls*{psi} \colon \ncBordInf(d)^{\nonunital}\ra\overline{\ALG}(\ManInf_d)$ of semisimplicial $\infty$-categories. This completes \ref{step:functor-to-premorita}.

\subsection{Composite algebras}\label{step:composite}
We now consider the composition
\begin{equation}
	\label{equ:non-unital-composition}\gls*{overlinee} \colon \ncBordInf(d)^{\nonunital}\xrightarrow{E^{\geo}}\overline{\ALG}(\ManInf_d)\xlra{\yon_*}\overline{\ALG}(\PSh(\ManInf_d))\xlra{\iota^*}\overline{\ALG}(\PSh(\DiscInf_d)).
\end{equation}
Here $E^{\geo}$ is the functor from the previous step, $\yon_*$ is induced by the (monoidal) Yoneda embedding $\yon\colon \ManInf_d\ra \PSh(\ManInf_d)$ (see \cref{sec:presheaf-category}), and $\iota^*$ is the functor induced by the lax monoidal functor $\PSh(\ManInf_d)\ra \PSh(\DiscInf_d)$ which is itself induced by the inclusion $\iota\colon \DiscInf_d\hookrightarrow \ManInf_d$ of the full subcategory spanned by manifolds diffeomorphic to $T \times \bfR^d$ for finite sets $T$ with monoidal structure inherited from $\ManInf$. By the properties of presheaf categories discussed in \cref{sec:presheaf-category}, the monoidal category $\PSh(\DiscInf_d)$ has good relative tensor products in the sense of  \cref{sec:composite-algebras}, so it makes sense to ask whether \eqref{equ:non-unital-composition} lands in the levelwise full subcategory $\ALG(\PSh(\DiscInf_d))\subset \overline{\ALG}(\PSh(\DiscInf_d))$
of \cref{sec:composite-algebras}. This section serves to prove this:

\begin{prop}\label{prop:image-in-Morita}
The functor $\smash{\overline{E}}$ from \eqref{equ:non-unital-composition} factors through $\ALG(\PSh(\DiscInf_d))\subset \overline{\ALG}(\PSh(\DiscInf_d))$.
\end{prop}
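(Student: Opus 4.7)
The plan is to verify the criterion of \cref{cor:composite-algebras-multisimplicial} by reducing it to a pointwise statement in $\cS$ and recognising the resulting simplicial diagram as a bar construction. For a $[p]$-walled $d$-manifold $(W,\mu)$ and $\alpha\colon [q]\to [p]$ in $\Delta^{\op}_{/[p]}$, I would first unravel Substeps \ref{step:functor-to-premorita-simplicial} and \ref{step:composite}, the explicit description of the active pushforward from \ref{sec:pushforward-for-man}, and the fact that $\iota^*\yon\colon\ManInf_d\to\PSh(\DiscInf_d)$ is symmetric monoidal (because any embedding into a disjoint union decomposes its source uniquely) to identify the diagram
\[
	(\Delta_{\inj}^{\op})^{\rhd}\xra{\eta^\alpha}\Delta^{\act,\op}_{/[p]}\xra{\overline{E}_{[p]}(W,\mu)}\PSh(\DiscInf_d)^{\otimes,\act}\xra{(-)_!}\PSh(\DiscInf_d)
\]
with the augmented simplicial presheaf sending $[a]\in\Delta^\op_\inj$ to $\iota^*\yon(\lab_{\alpha_1^{\vec{a}}}(W,\mu))$ and the cone point to $\iota^*\yon(\lab_\alpha(W,\mu))$, where each $\lab$-manifold is interpreted as a single $d$-manifold (the disjoint union of its components). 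Since colimits in $\PSh(\DiscInf_d)$ are pointwise and $\iota^*\yon(M)(T\times\bfR^d)=\Emb(T\times\bfR^d,M)$, this reduces the claim to showing that for every finite set $T$, the augmented simplicial space $[a]\mapsto\Emb(T\times\bfR^d,\lab_{\alpha_1^{\vec{a}}}(W,\mu))$ with cone $\Emb(T\times\bfR^d,\lab_\alpha(W,\mu))$ is a colimit diagram in $\cS$.

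Next I would reduce by induction on the number of gaps $k_\alpha$ to the basic case $k_\alpha=1$, where $\lab_\alpha(W,\mu)$ is the gluing of two chambers $C_-,C_+$ of $W$ along a collared thickened wall $V\times(-\epsilon,\epsilon)$ with $V=W|_{\mu(m_1)}$, and $\lab_{\alpha_1^{\vec{a}}}(W,\mu)$ is the disjoint union $C_-\sqcup(V\times(-\epsilon,\epsilon))^{\sqcup a}\sqcup C_+$ with face maps induced by the collar inclusions \eqref{equ:collar}. Under $\iota^*\yon$ this exhibits the augmented simplicial presheaf as the two-sided bar construction computing the relative Day-convolution tensor product $\iota^*\yon(C_-)\otimes_{\iota^*\yon(V\times(-\epsilon,\epsilon))}\iota^*\yon(C_+)$ in $\PSh(\DiscInf_d)$, augmented to $\iota^*\yon(C_-\cup_V C_+)$.

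The main obstacle is showing that this relative tensor product equals $\iota^*\yon(C_-\cup_V C_+)$. Pointwise at $T$ this is the assertion that $\Emb(T\times\bfR^d,-)$ turns the codimension-zero gluing into a bar-construction colimit, which is a well-known instance of $\otimes$-excision for factorisation homology with coefficients in the monoidal unit. Its geometric content is that any configuration of finitely many $d$-discs in $C_-\cup_V C_+$ can be coherently isotoped to be disjoint from the wall $V$, and hence lies in one of the two open sets $C_-\cup(V\times(-\epsilon,\epsilon))$ or $(V\times(-\epsilon,\epsilon))\cup C_+$ covering the gluing. I would extract this either from the Weiss-cover sheaf property of embedding presheaves proved by Boavida de Brito--Weiss \cite{BdBWSheaf}, applied to this two-element isotopy cover, or equivalently verify it directly using parametrised isotopy extension in the spirit of embedding calculus.
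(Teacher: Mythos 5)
Your high-level strategy matches the paper up to a point: you reduce to the criterion of \cref{cor:composite-algebras-multisimplicial}, observe that colimits in $\PSh(\DiscInf_d)$ are computed pointwise, and thus reduce to a statement about augmented simplicial configuration spaces. The paper does the same. What you do differently is propose an induction on $k_\alpha$ to reduce to a single-wall excision statement, whereas the paper treats all $k_\alpha$ walls simultaneously via the thickened category $\underline{\Cut_\sur^\rhd}$. Your reduction is plausible (one passes between the diagonal of $(\Delta^\op)^{k_\alpha}$ and iterated realisations using cofinality and Fubini) but you would need to check that the simplicial object the paper produces via $\eta^\alpha$ really refactors as an iterated two-sided bar construction compatibly with $\iota^*\yon$; this is clean on the level of manifolds but requires care once you pass to Day convolution in $\PSh(\DiscInf_d)$.

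The genuine gap is in the base case, which is also where all the real work of the paper's proof lies. Your assertion that the two-sided bar construction realises to $\iota^*\yon(C_-\cup_V C_+)$ is exactly the content of \cref{prop:cutting-hocolim}, and the two tools you propose to prove it do not close the gap as stated. First, the two-element cover $\{C_-\cup(V\times(-\epsilon,\epsilon)),\ (V\times(-\epsilon,\epsilon))\cup C_+\}$ is \emph{not} a Weiss $\infty$-cover: a configuration with at least one point on each side of the wall, away from the bicollar, lies in neither element. So the descent theorem of \cite{BdBWSheaf} (and the variant \cref{prop:descent}) does not apply to this cover. Your remark that any configuration can be ``coherently isotoped to be disjoint from $V$'' is correct and is the right geometric insight, but after such an isotopy the configuration is \emph{split} between the two pieces, not contained in one of them --- which is what feeds the Day-convolution tensor product, not a sheaf condition. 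Second, ``verify it directly using parametrised isotopy extension'' is a gesture, not an argument: making it precise is exactly what the paper does via the auxiliary semisimplicial space $\wall^V_\bullet$ (positions of walls disjoint from a given configuration), the comparison maps of \eqref{equ:bar-to-wall}, and the microfibration argument of \cref{lem:microfibration-lemma}, which is substantially more than isotopy extension. So the plan identifies the correct target statement but leaves the essential step unproved.
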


We will first explain how \cref{prop:image-in-Morita} follows from a seemingly different result, and then prove that other result.  The argument involves a simplicial thickening
\[
	\underline{\Gap_{\sur}^\rhd}\xlra{\simeq}\Gap_\sur^{\rhd}.
\]
of the right-cone $\Gap_\sur^{\rhd}$ of the category $\Gap_\sur$ (the category obtained by freely adding a terminal object $\infty\in\Gap_\sur^{\rhd}$) in terms of the manifolds 
\[\lbr a\rbr^*\coloneqq L \times [-\epsilon,\epsilon) \cup \lbr \mathring{a} \rbr \times (-\epsilon,\epsilon) \cup R \times (-\epsilon,\epsilon] \subset \lbr a \rbr \times \bfR \quad\text{and} \quad	\lbr \infty\rbr^*\coloneqq [-\epsilon,\epsilon]\subset\bfR\]
where $a\ge0$. The objects of $\underline{\Gap_\sur^{\rhd}}$ are the same as those of $\Gap_\sur^{\rhd}$. The space of morphisms $\lbr a\rbr\ra \lbr b\rbr$ between objects of $\Gap_\sur\subset \Gap_\sur^{\rhd}$ is defined as
\[\textstyle{\Map_{\underline{\Gap_{\sur}^\rhd}}(\lbr a\rbr, \lbr b\rbr)\coloneqq \bigsqcup_{\gamma\in \Map_{\Gap_\sur}(\lbr a\rbr,\lbr b\rbr)} \Emb\big(\lbr a \rbr^\ast,\lbr b \rbr^\ast\big)_{\gamma}}
\]
where the subscript $(-)_{\gamma}$ indicates we restrict to embeddings $\overline{\gamma}$ that cover $\gamma$, are the identity on $L \times [-\epsilon,-\tfrac{\epsilon}{2}) \cup R \times (\tfrac{\epsilon}{2},\epsilon]$ and preserve the lexicographic order inherited from $\lbr a \rbr \times \bfR$ and $\lbr b \rbr \times \bfR$. Finally, the space of morphisms $\lbr a\rbr \ra \lbr \infty\rbr$ is defined as
\[
	\Map_{\underline{\Gap_{\sur}^\rhd}}(\lbr a\rbr, \lbr \infty\rbr)\coloneq
\Emb\big(\lbr a \rbr^\ast,\lbr \infty\rbr^*\big)_{\infty}
\]
where the subscript $(-)_\infty$ indicates that we restrict to embeddings $\overline{\gamma}$ that agree on $L \times [-\epsilon,-\tfrac{\epsilon}{2}) \cup R \times (\tfrac{\epsilon}{2},\epsilon]$ with the projection to the second coordinate and preserve the lexicographical order inherited from $\lbr a\rbr\times\bfR$ and $\bfR$. The space of morphisms $\lbr\infty\rbr \ra \lbr\infty\rbr$ is the space of self-embeddings of $\lbr\infty\rbr^\ast=[-\epsilon,\epsilon]$ that agree with the identity on the complement of $[-\tfrac{\epsilon}{2},\tfrac{\epsilon}{2}]$. This category admits an evident functor to $\Gap_\sur^{\rhd}$ which is an equivalence as a result of the contractibility of the space of order-preserving embeddings between intervals.

\begin{nconvention}In what follows, we occasionally omit the choices of embeddings of manifolds into Euclidean spaces for brevity. For instance, we treat $\Man_d=(\Man_d^{\otimes})_{[1]}$ from \cref{step:mand} as the $\Kan$-enriched category of abstract smooth $d$-manifolds and codimension $0$ embeddings.
\end{nconvention}

Given a (possibly noncompact) $d$-manifold without boundary $V$  equipped with $k$ disjoint codimension $1$ submanifolds $V_i\subset V$ that are topologically closed in $V$ as a subspace, equipped with disjoint bicollars $[-\epsilon,\epsilon]\times V_i\subset V$, we construct a simplicially enriched functor
\[
	V{\lbr -\rbr}\colon \underline{\Gap_\sur^\rhd}\lra \Man_d
\]
which on objects sends $\lbr \infty\rbr $ to $V{\lbr \infty\rbr}\coloneqq V$ and $ \lbr a\rbr\in \underline{\Gap_{\sur}}$ to
\[
\textstyle{V{\lbr a\rbr}\coloneqq V^\ast\sqcup\big( \bigsqcup_{i=1}^k\lbr \mathring{a}\rbr\times(-\epsilon,\epsilon)\times V_i}\big),
\]
where $V^\ast$ is the manifold obtained from $V$ by cutting out $\cup_{i=1}^k[-\tfrac{\epsilon}{2},\tfrac{\epsilon}{2}]\times V_i$ and extending the resulting collars $[-\epsilon,-\tfrac{\epsilon}{2})\times V_i\sqcup (\tfrac{\epsilon}{2},\epsilon]\times V_i$ to collars $[-\epsilon,\epsilon)\times V_i\sqcup (-\epsilon,\epsilon]\times V_i$. Given a morphism $\overline{\gamma} \colon \lbr a\rbr \to \lbr b \rbr$ there is an embedding $V{\lbr a\rbr} \hookrightarrow V{\lbr b\rbr}$ that is the identity of $V^\ast$ outside the extended collars, and agrees on the remaining part with $\overline{\gamma} \times \id_{V_i}$. Finally, for $\lbr a \rbr \to \lbr\infty\rbr$ or $\lbr \infty \rbr \to \lbr\infty\rbr$ one defines embeddings $V{\lbr a\rbr} \hookrightarrow V{\lbr \infty \rbr}$ or  $V{\lbr \infty\rbr} \hookrightarrow V{\lbr \infty \rbr}$ in the same manner. 

Writing $\underline{\Gap_{\sur}}\subset \underline{\Gap^\rhd_\sur}$ for the full subcategory covering the inclusion $\Gap_{\sur}\subset\Gap_{\sur}^{\rhd}$, \cref{prop:image-in-Morita} will be a consequence of the following proposition involving homotopy colimits in the Kan--Quillen model structure on $\cat{S}$.

\begin{prop}\label{prop:cutting-hocolim}For a manifold $D$ diffeomorphic to $T\times \bfR^d$ for a finite set $T$, the map
\[
	\hocolim_{\underline{\Gap_\sur}}\,\Emb(D,V{\lbr-\rbr})\lra \hocolim_{\underline{\Gap_\sur^\rhd}}\,\Emb(D,V{\lbr-\rbr})\simeq \Emb(D,V{\lbr\infty\rbr})
\]
induced by the inclusion $\underline{\Gap_{\sur}} \subset \underline{\Gap_\sur^\rhd}$ is an equivalence.
\end{prop}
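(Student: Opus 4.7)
The plan is first to dispose of the second equivalence and then to analyse the first. Since $\lbr \infty \rbr$ is a terminal object of $\underline{\Cut_\sur^\rhd}$, the inclusion $\{\lbr\infty\rbr\}\hookrightarrow \underline{\Cut_\sur^\rhd}$ is final, and hence $\hocolim_{\underline{\Cut_\sur^\rhd}} \Emb(D,V{\lbr-\rbr}) \simeq \Emb(D, V{\lbr\infty\rbr})=\Emb(D,V)$. It remains to show that the augmentation
\[
\psi\colon \hocolim_{\underline{\Cut_\sur}}\,\Emb(D,V{\lbr-\rbr}) \lra \Emb(D, V)
\]
is a weak equivalence. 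Using the Dwyer--Kan equivalence $\underline{\Cut_\sur}\simeq \Cut_\sur\cong \Delta_\inj^{\op}$, I would identify the source with the geometric realisation of the semisimplicial space $[a]\mapsto \Emb(D,V{\lbr a\rbr})$, so that $\psi$ becomes the canonical map from this realisation to $\Emb(D,V)$.

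My strategy is to show that $\psi$ is a Serre microfibration with weakly contractible strict fibres. The key geometric input is that the target $V{\lbr a\rbr} = V^{*} \sqcup \bigsqcup_{i,j} (-\epsilon,\epsilon)\times V_{i}$ is a disjoint union of $V^{*}$ and $a\cdot k$ wall-strips, so an embedding $\hat f \colon D\hookrightarrow V{\lbr a\rbr}$ is equivalent to a distribution of the components of $D=T\times\bfR^{d}$ among $V^{*}$ and the strips together with disjoint embeddings of each component. Composition with $V{\lbr a\rbr}\hookrightarrow V$ inserts the strips into the bicollars of the walls in an order-preserving fashion. The microfibration property of $\psi$ then reduces to the statement that families of such distributions can be lifted across the collapsing morphisms of $\underline{\Cut_\sur}$, which should follow from a parametrised isotopy extension argument together with the contractibility of the space of order-preserving embeddings of thinner subintervals into $(-\epsilon,\epsilon)$.

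For the contractibility of the strict fibre $\psi^{-1}(f)$ at a point $f\in\Emb(D,V)$, I would exploit the equivalence $\Emb(\bfR^{d},-)\simeq \Fr(-)$ given by restricting to the origin and taking the derivative. After shrinking, each $\bfR^{d}$-component of $D$ can be isotoped to lie in an arbitrarily small ball in $V$, and a generic such ball meets at most one wall transversely. Each component is then either contained in $V^{*}$ or sits in the bicollar of a unique wall, and in the latter case may be pushed into a chosen strip of $V{\lbr a\rbr}$ provided $a$ is at least the total wall-crossing count. Any two such choices $\lbr a_1\rbr$ and $\lbr a_2\rbr$ admit a common refinement $\lbr a'\rbr$ with $a'\geq \max(a_1,a_2)$, related to the given choices by the collapsing morphisms $\lbr a'\rbr\to\lbr a_i\rbr$ of $\underline{\Cut_\sur}$; the resulting semisimplicial space of lifts has contractible realisation.

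The main obstacle I foresee is establishing the microfibration property of $\psi$ precisely, since morphisms in $\underline{\Cut_\sur}$ go from $\lbr a'\rbr$ to $\lbr a\rbr$ only when $a'\geq a$, so ``raising'' the level is not realised by a morphism of $\underline{\Cut_\sur}$ but must be obtained indirectly via the higher-dimensional simplices of the geometric realisation. Making this rigorous and ensuring full compatibility with the semisimplicial identifications is the technical crux, but should follow from the contractibility of the relevant spaces of order-preserving insertions of subintervals in $(-\epsilon,\epsilon)$ and standard parametrised transversality.
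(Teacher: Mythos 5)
Your general strategy (reduce to a microfibration whose fibres are weakly contractible) is the same as the paper's, and several of your geometric intuitions are also the paper's: the reduction of $\Emb(\bfR^d,-)$ to frame-bundle/configuration data, the observation that components of $D$ distribute among $V^\ast$ and the wall-strips, and the "common refinement" idea for contractibility. However, the "main obstacle" you flag in your last paragraph is not just a technical wrinkle—it is the genuine gap, and the paper fills it with several intermediate constructions that are not present in your sketch.

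The first issue is that the augmentation $\psi$ is not well-defined on the model you propose. Identifying the source with the thick realization of $[a]\mapsto\Emb(D,V\lbr a\rbr)$ is correct as a model of the homotopy colimit over $\underline{\Cut_\sur}$, but there is no canonical embedding $V\lbr a\rbr\hookrightarrow V$: each one depends on a choice of morphism $\lbr a\rbr \to \lbr\infty\rbr$ in $\underline{\Cut_\sur^\rhd}$, i.e.~a choice of where in $(-\epsilon,\epsilon)$ to place the slices. The paper sidesteps this by modelling the homotopy colimit as a bar construction $B_\bullet(C_n^{\fr},\underline{\Cut_\sur},\Map_{\underline{\Cut_\sur^\rhd}}(-,\lbr\infty\rbr))$ whose right-hand resolution functor explicitly carries the morphism-to-$\lbr\infty\rbr$ data, so that an augmentation to $C_n^{\fr}(V)$ is defined by composition. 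Without some such device, the strict fibre $\psi^{-1}(f)$ is not a meaningful object.

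Second, even after setting this up, the fibre is not the simple space your sketch suggests. The paper introduces a further resolution: it replaces the morphism spaces $\Map_{\underline{\Cut_\sur^\rhd}}(\lbr a\rbr,\lbr\infty\rbr)$ by an augmented semisimplicial space that records an auxiliary function $\tau\colon[p]\to(-\epsilon,\epsilon)$ of wall positions disjoint from the image of the embedding. After realizing out the bar direction and checking that this auxiliary resolution is a levelwise equivalence, the whole map is reduced to $\|\wall_\bullet^V\|\to C_n^{\fr}(V)$, where $\wall_p^V$ is the space of a configuration together with $p+1$ wall positions disjoint from it. It is \emph{this} map that is a microfibration, and its strict fibre over a configuration $\vec x$ is the nerve of a totally ordered topological poset (of admissible $\tau\in(-\epsilon,\epsilon)$), whose weak contractibility is then elementary via the $\max(x_0,-)$ retraction. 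Your common-refinement heuristic is essentially the weak contractibility of that nerve, but as written it runs up against exactly the problem you identify—that raising the level is not a morphism of $\underline{\Cut_\sur}$—and you have not produced the reparametrisation ($\wall_\bullet^V$) that resolves it. In short: right compass, but the map is unfinished; the $\wall_\bullet$-resolution is the missing key ingredient.
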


We postpone the proof to the next subsection and first explain how it implies  \cref{prop:image-in-Morita}.

\begin{proof}[Proof of \cref{prop:image-in-Morita}]
Consulting the definition of the Morita category, we have to show that the image of any object $\smash{(W,\mu)\in\ncBordInf(d)^{\nonunital}_{[p]}}$ in $\smash{\overline{\ALG}(\PSh(\DiscInf_d))_{[p]}}$ is composite in the sense of \cref{sec:composite-algebras}. By \cref{cor:composite-algebras-multisimplicial} this is equivalent to proving that for each  $\alpha\in\Delta^{\op}_{/[p]}$ 
\vspace{-0.1cm}
\begin{equation}\label{equ:composition-test-composite}
	(\Delta^\op_\inj)^\rhd\xra{\eta^\alpha} \Delta^{\act,\op}_{/[p]} \xra{E^{\geo}_{[p]}(W,\mu)} \ManInf_d^{\otimes,\act}\xra{(-)_!}\ManInf_d
\end{equation}
becomes a colimit diagram when postcomposed with $(\iota^*\circ \yon)\colon \ManInf_d\ra\PSh(\DiscInf_d)$. We first make the composition \eqref{equ:composition-test-composite} more explicit. Recall from \ref{step:functor-to-premorita} \ref{step:functor-to-premorita-simplicial} that \[E^{\geo}_{[p]}(W,\mu)\in\overline{\ALG}(\ManInf_d)_{[p]}\subset \Fun_{\Delta^{\op}}(\Delta^{\op}_{/[p]},\ManInf^{\otimes}_d)\] was obtained from a functor between simplicially enriched categories \begin{equation}\label{equ:psiW-simplicial}
	E^{\geo}_{[p]}(W,\mu)\colon \underline{\Gap}_{\lbr p\rbr/}\lra \Man^{\otimes}_d
\end{equation} 
by taking coherent nerves and using the equivalence $\underline{\Gap}_{\lbr p\rbr/}\simeq \Gap_{\lbr p\rbr/}\cong \Delta^{\op}_{/[p]}$ from \ref{step:functor-to-premorita} \ref{step:functor-to-premorita-thickening}. We now give a similar description of the composition \eqref{equ:composition-test-composite} as a simplicially enriched functor using a simplicial functor to the full subcategory $\underline{\Gap}^\act_{\lbr p \rbr/}\subset \underline{\Gap}_{\lbr p \rbr/}$ covering $\Gap^\act_{\lbr p \rbr/}\subset \Gap_{\lbr p \rbr/}$
 \[\underline{\eta^\alpha}\colon \underline{\Gap_\sur^\rhd}\lra\underline{\Gap}^\act_{\lbr p \rbr/}\] to the pullback $\underline{\Gap}^\act_{\lbr p \rbr/}$ of $\underline{\Gap}_{\lbr p \rbr/}$ along $\Gap^\act_{\lbr p \rbr/}\subset \Gap_{\lbr p \rbr/}$. The functor $\underline{\eta^\alpha}$ will make 
\[\begin{tikzcd}
	\underline{\Gap_\sur^\rhd}\dar[swap]{\simeq}\arrow[r,"\underline{\eta^\alpha}"]&\underline{\Gap}^\act_{\lbr p \rbr/}\dar{\simeq}\\[-2pt]
	\Gap_\sur^\rhd\rar{\eta^\alpha}&\Gap^\act_{\lbr p \rbr/}.
\end{tikzcd}\]
commutative where $\eta^\alpha$ is the functor from \cref{sec:composite-multisimplicial}. The construction involves the notation of \cref{const:rhoalpha-gap} ($k_\alpha$, $\alpha_1^{\vec{a}}$, $n_i$, etc.) and the discussion preceding \cref{cor:composite-algebras-multisimplicial}. On objects, $\smash{\underline{\eta^\alpha}}$ is determined by $\eta^\alpha$. On morphisms it sends $\overline{\gamma}\colon \lbr a\rbr^\ast\hookrightarrow  \lbr b\rbr^\ast$ to the right-hand embedding in a commutative square of embeddings (here $\vec{a}=(a,\ldots,a)$ and $\vec{b}=(b,\ldots,b)$)
\[\begin{tikzcd}[ar symbol/.style = {draw=none,"\textstyle#1" description,sloped},	subset/.style = {ar symbol={\subset}}, supset/.style = {ar symbol={\supset}}]
	\sqcup^{k_\alpha} \lbr a\rbr\times \bfR&[-15pt] \arrow[l,supset]\sqcup^{k_\alpha}\lbr a\rbr^\ast\rar[r,hookrightarrow]\arrow[d,hookrightarrow,"{\sqcup^{k_\alpha}\overline{\gamma}}",swap]& \wlab_{\alpha_1^{\vec{a}}}(\bfR)\arrow[d,hookrightarrow]\arrow[r,subset]& [-15pt]\lbr k_{\alpha}^{\vec{a}}\rbr\times\bfR\\
	\sqcup^{k_\alpha} \lbr b\rbr\times \bfR& \arrow[l,supset] \sqcup^{k_\alpha}\lbr b\rbr^\ast\arrow[r,hookrightarrow]& \wlab_{\alpha_1^{\vec{b}}}(\bfR)\arrow[r,subset]&\lbr k_{\alpha}^{\vec{b}}\rbr\times\bfR.
\end{tikzcd}\]
The $i$th component of the upper horizontal map is the embedding \[\lbr a\rbr\times\bfR \supset \lbr a\rbr^*\longhookrightarrow \wlab_{\alpha_1^{\vec{a}}}(\bfR)\subset\lbr k_{\alpha}^{\vec{a}}\rbr\times\bfR\] that is the unique inclusion of components that preserves the lexicographic order inherited from $\lbr a\rbr\times\bfR$ and $\lbr k_{\alpha}^{\vec{a}}\rbr\times\bfR$ and covers the map $\lbr a\rbr\ra \lbr k_{\alpha}^{\vec{a}}\rbr$ given by the sequence $\alpha_1^{\vec{a}}(n_i)<\alpha_1^{\vec{a}}(n_i)+1<\ldots <\alpha_1^{\vec{a}}(n_i)+a<\alpha_1^{\vec{a}}(n_i+1)$ (note that this is \emph{not} a morphism in $\Gap$ as it does not preserve the endpoints). The bottom horizontal embedding is defined in the same way, and the right hand embedding is defined to agree with $\overline{\gamma}$ on the components hit by the horizontal embedding and on the complement as the unique inclusion of components that covers the map $\eta^\alpha(\gamma)\colon \lbr k_{\alpha}^{\vec{a}}\rbr \ra \lbr k_{\alpha}^{\vec{b}}\rbr $ and preserves the lexicographic order. Similarly, $\underline{\eta}^\alpha$ sends a morphism in $\underline{\Gap_\sur^\rhd}$ given by an embedding $\overline{\gamma}\colon \lbr a\rbr^\ast\hookrightarrow \lbr \infty\rbr^\ast$ to the right-hand embedding in the square
\[\begin{tikzcd}[ar symbol/.style = {draw=none,"\textstyle#1" description,sloped},
		subset/.style = {ar symbol={\subset}}, supset/.style = {ar symbol={\supset}}]
		\sqcup^{k_\alpha} \lbr a\rbr\times \bfR&[-15pt]\arrow[l,supset]\sqcup^{k_\alpha}\lbr a\rbr^\ast\rar[r,hookrightarrow]\arrow[d,hookrightarrow,"{\sqcup^{k_\alpha}\overline{\gamma}}",swap]&\wlab_{\alpha_1^{\vec{a}}}(\bfR)\arrow[d,hookrightarrow]\arrow[r,subset]&[-15pt]\lbr k_{\alpha}^{\vec{a}}\rbr\times\bfR\\
		& \sqcup^{k_\alpha}\lbr \infty\rbr^\ast \arrow[r,hookrightarrow]&\wlab_{\alpha}(\bfR)\arrow[r,subset]&\lbr q\rbr\times\bfR
\end{tikzcd}\]
where the top horizontal embedding is the same as before, the bottom embedding includes the $i$th copy of $\lbr \infty\rbr^\ast=[-\epsilon,\epsilon]$ as the unique $[-\epsilon,\epsilon]$-component in $\wlab_{\alpha}(\bfR)$ that maps to $\alpha(n_i)\in \lbr q\rbr$ under the projection (using the notation from \cref{const:rhoalpha-gap}) and to $n_i\in\lbr \mathring{p-1}\rbr=\{1,\ldots,p-1\}\subset[p]$ under the map $\wlab_{\alpha}(\bfR)\ra [p]$ from \ref{step:functor-to-premorita} \ref{step:functor-to-premorita-language} II. The right vertical embedding is defined via the left vertical one on the components hit by the horizontal map and as the unique inclusion of components that cover the map $\gamma_{\vec{a}}\colon \lbr k_{\alpha}^{\vec{a}}\rbr\ra \lbr q\rbr$ and preserve the lexicographic order on $\lbr k_{\alpha}^{\vec{a}}\rbr\times\bfR$ and $\lbr q\rbr\times\bfR$.
	
By construction the composition \eqref{equ:composition-test-composite} is equivalent to the coherent nerve of the composition
\vspace{-0.1cm}
\begin{equation}\label{equ:colimit-diagram-psiW-simplicial-n}
	\underline{\Gap_\sur^\rhd}\xlra{\underline{\eta^\alpha}} \underline{\Gap}^{\act}_{\lbr p\rbr/} \xrightarrow{E^{\geo}_p(W,\mu)} \Man_d^{\otimes,\act}\xlra{(-)_!}\Man_d
\end{equation}
where $(-)_!$ is the simplicial ``disjoint unions''-functor of \eqref{equ:active-pushforward}. Tracing through the definitions, one checks that this functor agrees up to equivalence with the functor $V\lbr -\rbr$ for the manifold $V=\lab_\alpha(W,\mu)_!$ with the $k_\alpha$ different bicollared submanifolds $[-\epsilon,\epsilon]\times W_{\mu(j)}\cong W_{[\mu(j)-\epsilon,\mu(j)+\epsilon]}\subset \lab_\alpha(W,\mu)_!$ for $j\in\lbr \mathring{p-1}\rbr$ with $\alpha(j)\in\lbr \mathring{q}\rbr$ and $ \alpha(j)=\alpha(j+1)$. Using that a diagram $A\colon K^{\rhd}\ra \cC$ is a colimit diagram if and only if the natural map $\colim_{K}A\ra \colim_{K^{\rhd}}A$ is an equivalence, this implies that it suffices to show that the colimit
\[
	\colim_{N_{\coh}(\underline{\Gap_\sur^\rhd})}\Big(N_{\coh}(\underline{\Gap_\sur^\rhd})) \xra{N_{\coh} (V\lbr-\rbr)}N_{\coh}(\Man_d) \xra{\yon} \PSh(\ManInf_d)\xra{\iota^*}\PSh(\DiscInf_d)\Big)
\]
is unaffected by precomposing the diagram with the functor $N_{\coh}(\underline{\Gap_\sur})\ra N_{\coh}(\underline{\Gap_\sur^\rhd})$ induced by inclusion. Using that (i) equivalences in functor categories are detected objectwise, (ii) colimits in functor categories commute with evaluation at a fixed object $D\in\DiscInf_d$ \cite[5.1.2.3]{LurieHTT}, and (iii) the compatibility of the simplicial and $\infty$-categorical Yoneda embedding (see \cref{fact:yoneda-comparison}), we see that it is enough to show that the colimit
\[
	\colim_{N_{\coh}(\underline{\Gap_\sur^\rhd})}\Big(N_{\coh}(\underline{\Gap_\sur^\rhd})\xra{N_{\coh}(\ev_D\circ \yon_s\circ V\lbr -\rbr)}N_{\coh}(\Kan)\Big)
\]
is unaffected by precomposing the diagram with $N_{\coh}(\underline{\Gap_\sur})\ra N_{\coh}(\underline{\Gap_\sur^\rhd})$ for each object $D\in\DiscInf_d$ where $\yon_s\colon \Man_d\ra\Fun(\Man_d,\Kan)$ is the simplicial Yoneda embedding of the $\Kan$-enriched category $\Man_d$.  Using that model category-theoretic homotopy colimits are compatible with $\infty$-categorical colimits \cite[4.2.4.1]{LurieHTT}, the claim reduces to showing that the natural map between homotopy colimits in the Kan--Quillen model structure
\[
	\hocolim_{\underline{\Gap_\sur}}\big(\underline{\Gap_\sur}\xra{\ev_D\circ \yon_s\circ V\lbr -\rbr}\catS\big)\lra
	\hocolim_{\underline{\Gap_\sur^\rhd}}\big(\underline{\Gap_\sur^\rhd}\xra{\ev_D\circ \yon_s\circ V\lbr -\rbr}\catS\big)
\]
is an equivalence. This is \cref{prop:cutting-hocolim}.
\end{proof}

\begin{proof}[Proof of \cref{prop:cutting-hocolim}]This proof will eventually rely on a microfibration argument, which is why we phrase the argument in the category of topological spaces $\catTop$ as opposed to simplicial sets $\catS$. Relying on the usual Quillen equivalence between the category of simplicial sets $\catS$ and that of topological spaces $\catTop$, the claim has an evident reformulation in terms of homotopy colimits of $\catTop$-enriched $\catTop$-valued functors and it is this reformulation that we shall prove.
	
To begin with, we note that it suffices to show the claim for $D=\underline{n}\times\bfR^d$ for $n\ge0$. Next, we simplify the functor $\Emb(\underline{n}\times\bfR^d,-)\colon \Man_d\ra\catTop$ in terms of the functor $C_n^{\fr}\colon\Man_d\ra\catTop$ given by taking framed configurations, i.e.\,the pullback of functors
\[\begin{tikzcd} 
	C^\fr_n(-) \rar \dar &[-2pt] \Map(\unl{n},\Fr(-)) \dar \\[-2pt]
	\Emb(\unl{n},-) \rar{\subset} & \Map(\unl{n},-)
\end{tikzcd}\]
whose right vertical map is induced by the projection $\Fr(W)\ra W$ of the frame bundle of manifolds $W\in\Man_d$. Taking derivatives at the centres $\underline{n}\times\{0\}\subset \underline{n}\times\bfR^d$ gives a natural transformation $\Emb(\underline{n}\times\bfR,-)\ra C^\fr_n(-)$ which is a componentwise weak equivalence, so we conclude that in order to prove \cref{prop:cutting-hocolim} it suffices to show that the map
\[
	\hocolim_{\underline{\Gap_\sur}}\big(C_n^{\fr}(V\lbr-\rbr)\big)\lra
	\hocolim_{\underline{\Gap_\sur^\rhd}}\big(C_n^{\fr}(V\lbr-\rbr)\big)
\]
is a weak equivalence. This is a map between homotopy colimits in spaces, which we model by a bar construction. In general, given a $\catTop$-enriched category $\cat{C}$ and $\catTop$-enriched functors $F \colon \cat{C} \to \catTop$ and $G \colon \cat{C}^\op \to \catTop$, the \emph{bar-construction} $B_\bullet(F,\cat{C},G) \colon \Delta^\op \ra \catTop$ is the simplicial space $\textstyle{[r] \longmapsto \bigsqcup_{(c_0,\ldots,c_r)} F(c_0) \times \cat{C}(c_0,c_1) \times \cdots \times \cat{C}(c_{r-1},c_r) \times G(c_r)}$ where $(c_0,\ldots,c_r)$ tuns through ordered sequences of $(r+1)$ objects in $\cat{C}$. If $G$ has weakly contractible values, the thick geometric realisation $B(F,\cat{C},G)\coloneqq \|B_\bullet(F,\cat{C},G)\|$ is a model for $\hocolim_\cat{C} F$ (see e.g.\,\cite[Corollary 9.2.7]{Riehl}; since we take thick geometric realisations we do not need to worry about cofibrancy issues). Choosing $\cat{C}=\underline{\Gap_\sur^\rhd}$ and $G=\Map_{\underline{\Gap_\sur^\rhd}}(-,\lbr \infty \rbr)$, it therefore suffices to that
\begin{equation}\label{equ:simplicial-composite-version}
	B_\bullet\big(C_n^{\fr}(V\lbr-\rbr),\underline{\Gap_\sur},\Map_{\underline{\Gap_\sur^\rhd}}(-,\lbr \infty \rbr)\big) \lra B_\bullet\big(C_n^{\fr}(V\lbr-\rbr),\underline{\Gap_\sur^\rhd},\Map_{\underline{\Gap_\sur^\rhd}}(-,\lbr \infty \rbr)\big) 
\end{equation}
induced by $\underline{\Gap_\sur}\subset \underline{\Gap_\sur^\rhd}$ is a weak equivalence on thick realisations. There is an augmentation
\begin{equation}\label{equ:bar-resolution}
	B_\bullet\big(C_n^{\fr}(V\lbr-\rbr),\underline{\Gap_\sur^\rhd},\Map_{\underline{\Gap_\sur^\rhd}}(-,\lbr \infty \rbr)\big) \lra C^\fr_n(V)
\end{equation} 
induced by composition of embeddings and evaluation of $C_n^{\fr}(-)$. This admits an \emph{extra degeneracy} so it induces an equivalence on (thick) realisation (see e.g.\,\cite[Example 4.5.7]{Riehl}). This leaves us with showing that the composition of \eqref{equ:simplicial-composite-version} and \eqref{equ:bar-resolution}
\begin{equation}\label{equ:augmentation-of-bar}
	B_\bullet\big(C_n^{\fr}(V\lbr-\rbr),\underline{\Gap_{\sur}},\Map_{\underline{\Gap_\sur^\rhd}}(-,\lbr \infty \rbr)\big) \lra C^\fr_n(V)
\end{equation}  is an equivalence on thick realisations. To prove this, we consider a semisimplicial space $\wall_\bullet$ whose space of $p$-simplices is the space of order-preserving functions $\tau \colon [p] \to (-\epsilon,\epsilon)$ with simplicial structure by precomposition, and we define an augmented semisimplicial space
\begin{equation}\label{equ:resolve-embeddings-into-infty}
	\Map_{\underline{\Gap_\sur^\rhd}}(\lbr a\rbr,\lbr \infty \rbr)_\smallsquare\lra \Map_{\underline{\Gap_\sur^\rhd}}(\lbr a\rbr,\lbr \infty \rbr)
\end{equation}
 for $a\ge0$ whose space of $p$-simplices 
\begin{equation}\label{equ:thicken-maps-to-infty}
	\Map_{\underline{\Gap_\sur^\rhd}}(\lbr a\rbr,\lbr \infty \rbr)_p\subset \Map_{\underline{\Gap_\sur^\rhd}}(\lbr a\rbr,\lbr \infty \rbr)\times \wall_p
\end{equation} 
is the subspace of pairs of a function $\tau\colon[p]\ra (\epsilon,\epsilon)$ and an embedding $\lbr a\rbr^* \hookrightarrow\lbr \infty \rbr^*$ that is disjoint from the image of $\tau$. Varying $a$, this defines a functor $(\underline{\Gap_\sur})^\op\times\Delta_\inj^{\op}\lra \catTop$ that is compatible with \eqref{equ:resolve-embeddings-into-infty}, so we obtain an augmentation of semisimplicial spaces
\begin{equation}\label{equ:map-on-bar}
	B\big(C_n^{\fr}(V\lbr-\rbr),\underline{\Gap_{\sur}},\Map_{\underline{\Gap_\sur^\rhd}}(-,\lbr \infty \rbr)_\smallsquare\big)\lra B\big(C^\fr_n,\underline{\Gap_{\sur}},\Map_{\underline{\Gap_\sur^\rhd}}(-,\lbr \infty \rbr)\big).
\end{equation}
where we have geometrically realised the semisimplicial direction of the bar-construction. In \cref{lem:microfibration-lemma} \ref{enum:microfibration-lemma-i} below, we will show that \eqref{equ:resolve-embeddings-into-infty} realises to a weak equivalence. Together with the fact that, up to weak equivalence, it does not matter in which direction one realises a bisemisimplicial space first (so we may realise the $\smallsquare$-direction before the bar-direction) and that the geometric realisation of a levelwise weak equivalence is a weak equivalence, this implies that the map in \eqref{equ:map-on-bar} realises to a weak equivalence. It thus remains to show that the augmented semisimplicial space 
\[
	B\big(C_n^{\fr}(V\lbr-\rbr),\underline{\Gap_{\sur}},\Map_{\underline{\Gap_\sur^\rhd}}(-,\lbr \infty \rbr)_\smallsquare\big)\lra C_n^\fr(V)
\]
obtained by combining \eqref{equ:map-on-bar} and \eqref{equ:augmentation-of-bar} realises to a weak equivalence. To prove this remaining claim, we consider the sub-simplicial space $\wall^V_\smallsquare\subset \wall_\smallsquare\times C_n^{\fr}(V)$ consisting of pairs of an order-preserving function $\tau\colon [p]\ra(-\epsilon,\epsilon)$ and a framed configurations $\vec{x}\in C_n(V)$ that is disjoint from the submanifolds $\{\tau(j)\}\times V_i\subset V$ for all $j=0,\ldots,p$ and $i=1,\ldots,k$ (here we used the collars $[-\epsilon,\epsilon]\times V_i\subset V$; see \cref{fig:wall-semi} for an example). The projection to $\wall_p$ in \eqref{equ:thicken-maps-to-infty} and the augmentation to $C_n^\fr(V)$ assemble to a semisimplicial map over $C_n^\fr(V)$
\begin{equation}\label{equ:bar-to-wall}
	B\big(C_n^{\fr}(V\lbr-\rbr),\underline{\Gap_{\sur}},\Map_{\underline{\Gap_\sur^\rhd}}(-,\lbr \infty \rbr)_\smallsquare\big)\lra \wall^V_\smallsquare
\end{equation}
which we show to be a levelwise weak equivalence in \cref{lem:microfibration-lemma} \ref{enum:microfibration-lemma-ii} (levelwise with respect to the $\smallsquare$-direction, in which we did not realise yet). This leaves us with showing that the augmentation $\wall_\smallsquare^V\ra C_n^{\fr}(V)$ realises to a weak equivalence. This is \cref{lem:microfibration-lemma} \ref{enum:microfibration-lemma-iii}.
\end{proof}

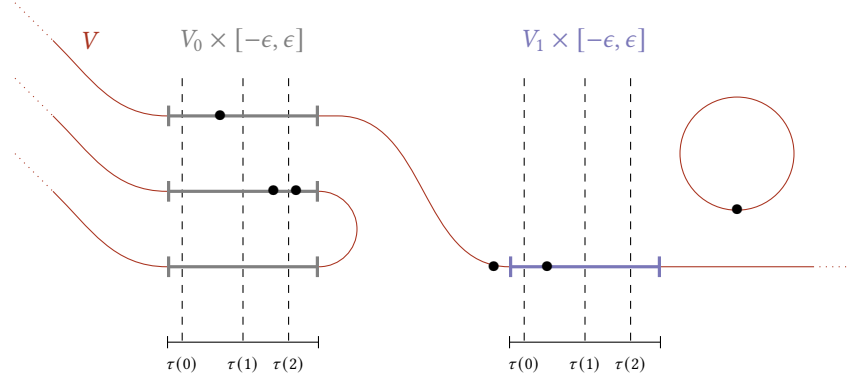
\begin{figure}
		\begin{tikzpicture}
		\draw [dashed] (-3.3,3.5) -- (-3.3,0);
		\draw [dashed] (-2.5,3.5) -- (-2.5,0);
		\draw [dashed] (-1.9,3.5) -- (-1.9,0);
		\draw [|-|] (-3.5,0) -- (-1.5,0);
		\node at (-3.3,-.3) {\tiny $\tau(0)$};
		\node at (-2.5,-.3) {\tiny $\tau(1)$};
		\node at (-1.9,-.3) {\tiny $\tau(2)$};
		
		\draw [dashed] (1.2,3.5) -- (1.2,0);
		\draw [dashed] (2,3.5) -- (2,0);
		\draw [dashed] (2.6,3.5) -- (2.6,0);
		\draw [|-|] (1,0) -- (3,0);
		\node at (1.2,-.3) {\tiny $\tau(0)$};
		\node at (2,-.3) {\tiny $\tau(1)$};
		\node at (2.6,-.3) {\tiny $\tau(2)$};
		
		\draw [dotted,Mahogany] (-5.5,3.5) -- (-5,3);
		\draw [dotted,Mahogany] (-5.5,2.5) -- (-5,2);
		\draw [dotted,Mahogany] (-5.5,4.5) -- (-5,4);
		\draw [dotted,Mahogany] (5,1) -- (5.5,1);
		\draw [Mahogany](-5,3) to[out=-45,in=180] (-3.5,2) to[out=0,in=180] (-1.5,2) to[out=0,in=90] (-1,1.5) to[out=-90,in=0] (-1.5,1) to[out=180,in=0] (-3.5,1) to[out=180,in=-45] (-5,2);
		\draw [Mahogany] (-5,4) to[out=-45,in=180] (-3.5,3) to[out=0,in=180] (-1.25,3) to[out=0,in=180] (1,1) -- (5,1);
		\draw [Mahogany] (4,2.5) circle (.75cm);
		\node at (-4.5,4) [Mahogany] {$V$};
		
		\draw [|-|,very thick,gray] (-3.5,1) -- (-1.5,1);
		\draw [|-|,very thick,gray] (-3.5,2) -- (-1.5,2);
		\draw [|-|,very thick,gray] (-3.5,3) -- (-1.5,3);
		\node at (-2.5,4) [gray,fill=white] {$V_0 \times [-\epsilon,\epsilon]$};
		
		\draw [|-|,very thick,Periwinkle] (1,1) -- (3,1);
		\node at (2,4) [Periwinkle,fill=white] {$V_1 \times [-\epsilon,\epsilon]$};
		
		\node at (-2.8,3) {$\bullet$};
		\node at (-2.1,2) {$\bullet$};
		\node at (-1.8,2) {$\bullet$};
		\node at (4,1.75) {$\bullet$};
		\node at (.8,1) {$\bullet$};
		\node at (1.5,1) {$\bullet$};
	\end{tikzpicture}
	\caption{An element of $\wall_2^V$. We suppressed the framings at the points in the configuration indicated by the black points.}
	\label{fig:wall-semi}
\end{figure}

We now supply the postponed ingredients to the proof of \cref{prop:cutting-hocolim}. This finishes the proof of that proposition and thus also that of \cref{prop:image-in-Morita}.

\begin{lem}\label{lem:microfibration-lemma}\ 
\begin{enumerate}
	\item \label{enum:microfibration-lemma-i} The thick realisation of the map \eqref{equ:resolve-embeddings-into-infty} is a weak equivalence.
	\item\label{enum:microfibration-lemma-ii} The map \eqref{equ:bar-to-wall} is a levelwise weak equivalence.
	\item\label{enum:microfibration-lemma-iii} The augmentation $\varepsilon\colon \wall_\smallsquare^V\ra C_n^{\fr}(V)$ realises to a weak equivalence.
	\end{enumerate}
\end{lem}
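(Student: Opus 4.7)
Each of the three parts asserts that the (thick) realisation of an augmented semisimplicial space is a weak equivalence, and the plan in each case is a microfibration argument: a continuous surjection which is a microfibration and has weakly contractible point-set fibres is a quasi-fibration, hence a weak equivalence. The key geometric fact underpinning fibre-contractibility in all three parts will be the following standard statement, which I would prove by hand as a preliminary lemma: for any non-empty ordered disjoint union $U=I_1\sqcup\ldots\sqcup I_k\subset\bfR$ of connected intervals, the thick realisation of the semisimplicial space $[p]\mapsto\{\tau\colon[p]\to U\mid\tau\text{ order-preserving}\}$ is the iterated topological join $I_1\ast\ldots\ast I_k$, and in particular is contractible.

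\textbf{Parts (i) and (iii).} These are structurally parallel. For Part (i), the augmentation $(\varphi,\tau)\mapsto\varphi$ should be a microfibration because disjointness of $\im(\varphi)$ and $\im(\tau)$ is an open condition: a sufficiently small perturbation of $\tau$ preserves disjointness. The fibre over $\varphi$ is the semisimplicial space of order-preserving $[p]\to(-\epsilon,\epsilon)\setminus\im(\varphi)$; since $\varphi$ agrees with the projection on $L\times[-\epsilon,-\epsilon/2)\cup R\times(\epsilon/2,\epsilon]$, the complement $(-\epsilon,\epsilon)\setminus\im(\varphi)$ is a non-empty disjoint union of at most $a+1$ (possibly degenerate) intervals, so the fibre is contractible by the preliminary lemma. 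For Part (iii) the argument is identical: the fibre of $\varepsilon$ over $\vec{x}\in C^{\fr}_n(V)$ is the semisimplicial space of order-preserving $[p]\to(-\epsilon,\epsilon)\setminus S_{\vec{x}}$, where
\[
	S_{\vec{x}}\coloneqq \{t\in(-\epsilon,\epsilon)\mid\exists\,i\text{ and some }x\in\vec{x}\text{ with }x\in\{t\}\times V_i\}
\]
is a finite set because $\vec{x}$ is a finite configuration and the bicollars $[-\epsilon,\epsilon]\times V_i\subset V$ are pairwise disjoint.

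\textbf{Part (ii).} Here I would argue levelwise in the $\bullet$-direction. Both the source and the target admit natural projections to $\wall_p$, so it suffices to establish a fibrewise weak equivalence over each $\tau\in\wall_p$. Writing $V_\tau\coloneqq V\setminus\bigsqcup_{j,i}\{\tau(j)\}\times V_i$, the fibre of $\wall^V_p\to\wall_p$ over $\tau$ is $C^{\fr}_n(V_\tau)$. The fibre of the source over $\tau$ is the thick realisation of the bar construction $B_\bullet\big(C^{\fr}_n\circ V\lbr-\rbr,\,\underline{\Cut_\sur},\,\Map^\tau(-,\lbr\infty\rbr)\big)$, where $\Map^\tau$ denotes embeddings $\lbr a\rbr^*\hookrightarrow\lbr\infty\rbr^*$ disjoint from $\tau([p])$. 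The $\tau$-avoiding analogue of the right-cone used in \eqref{equ:bar-resolution} supplies an extra degeneracy on this bar construction, identifying its realisation with $C^{\fr}_n(V_\tau)$ by the evident comparison. Running this identification continuously in $\tau$ and invoking a microfibration-type input on the base $\wall_p$ upgrades the pointwise statement to a levelwise weak equivalence of semisimplicial spaces.

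\textbf{Main obstacle.} The main technical point will be Part (ii), specifically verifying that the extra-degeneracy argument of \eqref{equ:bar-resolution} goes through in the presence of the $\tau$-avoidance constraint, and checking that the resulting pointwise equivalence assembles into a genuine levelwise weak equivalence (which is why the microfibration formalism intervenes again in the base). The arguments for Parts (i) and (iii) should be essentially routine once the join-contractibility preliminary lemma is in place.
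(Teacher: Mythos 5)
Your overall strategy for parts (i) and (iii) matches the paper's in spirit: exhibit each map as a (quasi-)fibration with weakly contractible fibres. The minor differences are (a) for part (i) the paper argues that the augmentation is a levelwise Serre fibration via parametrised isotopy extension after replacing half-open intervals by closed ones, rather than a microfibration via openness of disjointness — both are legitimate; and (b) for contractibility of the fibres the paper uses a small observation, namely that the nerve of a nonempty totally ordered topological poset $X$ is contractible whenever $\max(x_0,-)\colon X\to X$ is continuous for some $x_0\in X$, via the zig-zag $x\le\max(x_0,x)\ge x_0$. This is more elementary and robust than your proposed join-decomposition lemma, which would need to be verified with some care (and is more than is needed, since only contractibility is used). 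Your part (iii) is essentially identical to the paper's.

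Part (ii) is where your approach genuinely diverges, and you correctly identify the two gaps. The first gap is real: the extra-degeneracy used for \eqref{equ:bar-resolution} inserts the cone morphism $\lbr a\rbr\to\lbr\infty\rbr$, and this morphism need not avoid $\tau([p])$, so the extra-degeneracy argument does \emph{not} go through on the nose for the $\tau$-constrained bar construction $B(C^\fr_n,\underline{\Cut_\sur},\Map^\tau(-,\lbr\infty\rbr))$. The second gap (upgrading a fibrewise statement over $\wall_p$ to a levelwise weak equivalence) would require showing that the thick realisation of the bar construction is a quasi-fibration over $\wall_p$, which is not automatic for thick realisations and would need its own argument. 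The paper avoids both issues by not arguing fibrewise: it fixes, for each $p$, a single $\tau\colon[p]\to(-\epsilon,\epsilon)$ and an embedding $e\colon\lbr p\rbr^\ast\hookrightarrow\lbr\infty\rbr^\ast$ such that $\tau$ hits every complementary component of $e$. The pair $(e\circ(-),\tau)$ then defines a weak equivalence of simplicial functors $\Map_{\underline{\Cut_\sur}}(-,\lbr p\rbr)\xrightarrow{\simeq}\Map_{\underline{\Cut_\sur^\rhd}}(-,\lbr\infty\rbr)_p$, which gives a commutative square
\[\begin{tikzcd}[column sep=1.3cm]
	B\big(C^\fr_n,\underline{\Cut_\sur},\Map_{\underline{\Cut_\sur}}(-,\lbr p\rbr)\big)\arrow[r,"\simeq"]\arrow[d,"\simeq",swap]&C^\fr_n(V\lbr p\rbr)\arrow[d]\\
	B\big(C^\fr_n,\underline{\Cut_\sur},\Map_{\underline{\Cut_\sur^\rhd}}(-,\lbr\infty\rbr)_p\big)\arrow[r]&\wall^V_p
\end{tikzcd}\]
whose top map \emph{does} have an evident extra degeneracy (insert $\id_{\lbr p\rbr}$), and whose right vertical map is an equivalence since $e$ is an isotopy equivalence onto its image, which avoids the walls $\{\tau(j)\}\times V_i$ by the choice of $\tau$. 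This sidesteps both the constrained extra-degeneracy and any fibrewise argument, and is the route I would recommend adopting.
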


\begin{proof}
We begin with a general observation. Let $X$ be a nonempty totally ordered topological poset (by which we mean topological space $X$ with a total order on its underlying set). If the function $\max(x_0,-)\colon X\ra X$ is continuous for some $x_0\in X$, then the nerve of $X$ is weakly contractible, since the sequence of inequalities $x\le \max(x_0,x)\ge x_0$ induces a zig-zag of natural transformations from the identity on $X$ to the constant functor with values $x_0$, so we obtain a homotopy between the identity map of the nerve of $X$ and the constant map.
	
Replacing the (half-)open intervals in the definition of $\lbr a \rbr^\ast$ with closed intervals, we get a weakly equivalent semisimplicial space. Doing so, it follows from a version of the parametrised isotopy extension theorem on restricting embeddings to compact submanifolds (c.f.\,\cite{Palais}) that the augmentation \eqref{equ:resolve-embeddings-into-infty} is a levelwise fibration. Hence to prove \ref{enum:microfibration-lemma-i} it suffices to show that the semisimplicial space given by the fibres over an embedding $e\colon \lbr a\rbr^\ast\hookrightarrow \lbr\infty\rbr^\ast=[-\epsilon,\epsilon]$ realises to a weakly contractible space. This agrees with the nerve of the nonempty totally ordered poset of real numbers $t\in(-\epsilon,\epsilon)$ disjoint from the image of $e$, so the claim follows from the observation.
	
To show part \ref{enum:microfibration-lemma-ii}, we choose for all $p\ge0$ an order-preserving function $\tau\colon [p]\ra(\epsilon,\epsilon)$ and an embedding $\smash{e\in\Map_{\underline{\Gap_\sur^\rhd}}(\lbr p\rbr,\lbr \infty \rbr)}$ such that $\tau$ hits every component of the complement of $e$.  This induces an equivalence
\vspace{-0.2cm}
\[
	(e\circ(-),\tau)\colon \Map_{\underline{\Gap_\sur}}(-,\lbr p\rbr)\xlra{\simeq} \Map_{\underline{\Gap_\sur^\rhd}}(-,\lbr \infty \rbr)_p
\] 
which in turn induces the left vertical equivalence in the commutative diagram
\[\begin{tikzcd}
	B\big(C_n^{\fr}(V\lbr-\rbr),\underline{\Gap_{\sur}},\Map_{\underline{\Gap_\sur}}(-,\lbr p \rbr)\big)\rar{\simeq}\dar{\simeq}&C_n^{\fr}(V\lbr p\rbr)\dar\\
	B\big(C_n^{\fr}(V\lbr-\rbr),\underline{\Gap_{\sur}},\Map_{\underline{\Gap_\sur^\rhd}}(-,\lbr \infty \rbr)_p\big)\rar&\wall_p^V
\end{tikzcd}\]
whose top horizontal map is induced by composition and evaluation. The latter is a weak equivalence for the same reason as \eqref{equ:bar-resolution}. The right vertical map is induced by the function $\tau\colon [p]\ra (-\epsilon,\epsilon)$ and the embedding $e$, and is easily seen to be an equivalence as well: use that it lands in the deformation retract of $\wall_p^V\subset \wall_p\times C_n^\fr(V)$ given by those pairs whose first coordinate agrees with $\tau$ (i.e.\,the space of framed configurations in the complement $V\backslash \cup_{j,i}\tau(j)\times V_i$) and that the vertical map is induced by the embedding $V\lbr p \rbr\hookrightarrow V\backslash \cup_{j,i}\tau(j)\times V_i$ obtained from $e$ which is an isotopy equivalence, so induces an equivalence on framed configuration spaces. It follows that the bottom horizontal map is an equivalence as well, as claimed.
	
To show that $\|\varepsilon\|$ is a weak equivalence, note that its fibre at a framed configuration $\vec{x}\in C_n^\fr(X)$ is the realisation of the nerve of the nonempty totally ordered topological poset of real numbers $t\in (\epsilon,\epsilon)$ such that $\{t\}\times V_i\subset V$ is disjoint from $\vec{x}$ for all $i=1,\ldots k$, so it is weakly contractible by the above observation. We now show that $\|\varepsilon\|$ is a microfibration, which will finish the proof because any microfibration with weakly contractible fibres is a weak equivalence by \cite[Lemma 2]{WeissClassifying}. The remaining task is thus to show that given commutative solid arrows as in
\[\begin{tikzcd}[ar symbol/.style = {draw=none,"\textstyle#1" description,sloped},	subset/.style = {ar symbol={\subset}},row sep=1cm,column sep=0.3cm]
	&D^i\times{\{0\}} \dar \rar{f} &[5pt] {\|\wall_\smallsquare^V\|} \dar{\parallel\varepsilon\parallel}\arrow[r,subset]&{\|\wall_\bullet\|\times C^\fr_n(V)} \\
	D^i \times \left[0,\delta\right]\arrow[urr,dashed,crossing over, end anchor={south west},"\widetilde{\psi}" pos=0.3]\arrow[r,subset] &D^i \times [0,1] \arrow[swap,r,"\psi"] & C^\fr_n(V)&
\end{tikzcd}\]
there is an $0<\delta\le 1$ for which a dashed lift as indicated exists. For this, we note that the necessary data to lift a framed configuration $\vec{x}\in C^\fr_n(V)$ to $\|\wall_\smallsquare^V\| \subset \|\wall_\smallsquare\|\times C^\fr_n(V)$ is a point $z\in\interior(\Delta^p)$ for some number $p\ge0$, a function $\tau\colon [p]\ra (-\epsilon,\epsilon)$ such that $\vec{x}$ is disjoint from $\{\tau(i)\}\times V_j\subset V$ for all $i$ and $j$. For any $\vec{x}'$ close enough to $\vec{x}$ the same data works, so for each $x\in D^i$ we get lifts $\psi(x,t)$ for $t\in[0,\delta_x]$ for some $0<\delta_x\le 1$, uses that the subspaces $V_i\subset V$ are closed. By compactness, we can find a uniform choice of $\delta_x$ for $x\in D^i$. This gives the lift.
\end{proof}

\subsection{Unitality}\label{step:functor-to-morita}The goal of this step is to prove the following proposition, which uses the terminology of \cref{sec:quasi-unital} and its variation from \cref{rem:quasi-unital-into-simplicial} \ref{enum:quasi-unital-into-simplicial-i}.

\begin{prop}\label{prop:quasi-unitality}The non-unital bordism category $\ncBordInf(d)^{\nonunital}\in\Cat_{\nonunital}(\CatInf)$ is quasi-unital and the following morphism of semisimplicial objects in $\CatInf$ is quasi-unital
\[
	E^{\geo}\colon \ncBordInf(d)^{\nonunital}\lra\Fun_{\Delta^\op}(\Delta^\op_{/[\bullet]},\ManInf_d^{\otimes}).
\]
\end{prop}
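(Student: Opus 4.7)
The plan is to construct the quasi-unit at the level of the $\Kan$-enriched semisimplicial category $\ncBord(d)^\nonunital$ of \ref{step:bordismcat}, verify its axioms by concrete geometric manipulations of manifolds, and then pass to coherent nerves. For the semisimplicial map $E^{\geo}$, the target $Y\coloneqq\Fun_{\Delta^\op}(\Delta^\op_{/[\bullet]},\ManInf_d^\sqcup)$ is a genuine simplicial object in $\CatInf$, so by \cref{rem:quasi-unital-into-simplicial}\,\ref{enum:quasi-unital-into-simplicial-i} it suffices to exhibit a commutative diagram as in \eqref{equ:morphism-quasiunital} with $u_Y$ replaced by the zeroth degeneracy $s_0$ of $Y$.

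To construct the quasi-unit $u\colon\ncBordInf(d)^\nonunital_{[0]}\to\ncBordInf(d)^\nonunital_{[1]}$, I would fix some $L>2\epsilon$ and define a simplicial functor by \emph{stretching}: a $[0]$-walled $d$-manifold $(W,\mu)$ with $a\coloneqq\mu(0)$ is sent to the $[1]$-walled manifold $(\tilde W,\tilde\mu)$ obtained by cutting $W$ at $a$ using the product structure $W|_{[a-\epsilon,a+\epsilon]}=\tr_a[-\epsilon,\epsilon]\times W|_a$, inserting a cylinder $[a,a+L]\times W|_a$, shifting the right half of $W$ by $L$, and placing walls at $\tilde\mu(0)=a$ and $\tilde\mu(1)=a+L$; morphisms are stretched by the identity on the inserted cylinder. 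Because embeddings between $[0]$-walled manifolds only see the $\epsilon$-collar of the wall (which is determined by the wall fibre $W|_a$ up to translation), the identity on $W|_a$ provides canonical equivalences $d_0u(W,\mu)=(\tilde W,a+L)\simeq(W,a)$ and $d_1u(W,\mu)=(\tilde W,a)\simeq(W,a)$, giving the triangle $(d_0,d_1)\circ u\simeq\diag$. The two compositions in \eqref{equ:composition-with-qunit} geometrically amount to gluing an inserted cylinder onto the left or right collar of a $[1]$-walled manifold, each of which is naturally equivalent to the identity via the isotopy that linearly compresses the inserted cylinder back to zero length.

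For the compatibility of $u$ with $E^{\geo}$, unwinding \ref{step:functor-to-premorita}\,\ref{step:functor-to-premorita-simplicial} shows that for $\beta\colon\lbr 1\rbr\to\lbr q\rbr$ the manifold $\lab_\beta(\tilde W,\tilde\mu)$ is the disjoint union over $i\in\lbr\mathring q\rbr$ of pieces, each of which is either a small collar on $W|_a$ of length $2\epsilon$ (at indices where $c^{-1}(\beta)$ is constant) or an extended cylinder on $W|_a$ of length $2\epsilon+L$ containing the inserted piece together with its two bounding collars (at indices where $c^{-1}(\beta)$ strictly increases). In contrast, using the simplicial identity $s_0\circ c^{-1}(\beta)=c^{-1}(\alpha_q)$ with $\alpha_q\colon\lbr 0\rbr\to\lbr q\rbr$ the unique such morphism, one finds that $(s_0\circ E^{\geo}_{[0]}(W,\mu))(\beta)$ is simply a disjoint union of $|\lbr\mathring q\rbr|$ small collars on $W|_a$ of length $2\epsilon$. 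The linear compression isotopy, combined with the translation equivalences \eqref{equ:wlab-independent-of-walls} already used in the definition of $E^{\geo}$, produces a natural equivalence $E^{\geo}_{[1]}\circ u\simeq s_0\circ E^{\geo}_{[0]}$ intertwining the outer face maps, which is the content of \eqref{equ:morphism-quasiunital} in the variant of \cref{rem:quasi-unital-into-simplicial}\,\ref{enum:quasi-unital-into-simplicial-i}.

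The main obstacle will be promoting these pointwise geometric isotopies to honest commutative diagrams in $\CatInf$. To avoid manually constructing higher simplices, I would realise both stretching and compression as simplicially enriched natural transformations between functors of $\Kan$-enriched categories, where the required identities hold either strictly or up to strict simplicial homotopy, and then invoke that the coherent nerve sends such data to commutative diagrams in $\CatInf$. Since the constructions of \ref{step:functor-to-premorita} are built from strictly simplicial data and the translations \eqref{equ:wlab-independent-of-walls} are by design compatible with stretching at all levels, this should reduce the coherence problem to a finite list of explicit identities between embeddings of submanifolds of $\bfR\times\bfR^\infty$.
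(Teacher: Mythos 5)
Your overall strategy matches the paper's: construct the quasi-unit on the $\Kan$-enriched semisimplicial category $\ncBord(d)^\nonunital$, verify the axioms by geometric manipulations, and pass to coherent nerves; and for the map $E^{\geo}$ use \cref{rem:quasi-unital-into-simplicial}\,\ref{enum:quasi-unital-into-simplicial-i}. The two proofs differ in the choice of quasi-unit, though. You use a \emph{stretching} construction that modifies $W$ itself by inserting a length-$L$ cylinder and shifting the right half, which forces you to verify the axioms \eqref{equ:composition-with-qunit} via a compression isotopy. The paper instead sends $(W,\mu)$ to the genuine product $(\bfR\times W|_{\mu(0)},\mu')$ with $\mu'(0)=\mu(0),\ \mu'(1)=\mu(0)+1$, discarding everything about $W$ except the wall fibre. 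Because $[0]$-walled embeddings only see the $\epsilon$-collar and the product structure condition (iii) on walled manifolds forces $W|_{[\mu(0)-\epsilon,\mu(0)+\epsilon]}=\tr_{\mu(0)}[-\epsilon,\epsilon]\times W|_{\mu(0)}$, the paper's $u$ agrees with $W$ \emph{on the nose} near the wall, so both faces $d_0u$, $d_1u$ coincide with the collar of $(W,\mu)$ literally rather than up to isotopy. This buys a cleaner verification of \eqref{equ:composition-with-qunit}, as no compression homotopy needs to be produced. Conversely, your stretching variant preserves more of $W$ and may be conceptually closer to what one intuitively expects from a unit bordism.

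For compatibility with $E^{\geo}$, the paper takes a more explicit route than your compression isotopy: since $E^{\geo}$ is built from the thickened overcategories $\underline{\Cut}_{\lbr p\rbr/}$, one needs a model of the $0$-th degeneracy at the thickened level, and the paper constructs a concrete simplicial functor $\underline{\Cut}_{\lbr 1\rbr/}\to\underline{\Cut}_{\lbr 0\rbr/}$ covering the non-thickened degeneracy, making precise exactly how the $\wlab$-data is rescaled (the formula $\tr_{-\epsilon}\sqcup\tr_{-(1-\epsilon)}$ followed by scaling by $1/4$). Your proposal skirts this by invoking \eqref{equ:wlab-independent-of-walls} and a compression isotopy, and acknowledges that the remaining work is to lift pointwise isotopies to a $\Kan$-enriched natural transformation. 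This is a genuine gap relative to the paper's level of detail: the coherence of the compression with the simplicial structure of $\underline{\Cut}_{\lbr\bullet\rbr/}$ and the pushforwards $(-)_!$ on $\Man_d^\sqcup$ is precisely the nontrivial content, and you would eventually need to produce a diagram playing the role of the paper's \eqref{equ:0th-degeneracy}. Nothing in your outline is wrong, but the hard part you defer is exactly what the paper supplies a recipe for.
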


By the equivalence \eqref{equ:qu-is-good}, the non-unital double $\infty$-category $\ncBordInf(d)^{\nonunital}$ thus extends to a (unital) double $\infty$-category \[\ncBordInf(d)\in\Cat(\CatInf).\] The second part of the proposition together with \cref{rem:quasi-unital-into-simplicial} \ref{enum:quasi-unital-into-simplicial-ii} and \cref{lem:image-in-premorita} implies that the composition \eqref{equ:non-unital-composition} is quasi-unital in the sense of \cref{rem:quasi-unital-into-simplicial} \ref{enum:quasi-unital-into-simplicial-i}, so using the second part of this remark once more, together with \cref{prop:image-in-Morita}, we conclude that the functor of double $\infty$-categories $\ncBordInf(d)^{\nonunital}\ra \ALG(\PSh(\DiscInf_d))$ is quasi-unital and thus extends by the equivalence \eqref{equ:qu-is-good} essentially uniquely to a functor of double $\infty$-categories
\[
	E\colon \ncBordInf(d) \lra \ALG(\PSh(\DiscInf_d)).
\]

\begin{proof}[Proof of \cref{prop:quasi-unitality}]
This is tedious but straight-forward, so we avoid spelling out all details.  Recalling that $\ncBordInf(d)^{\nonunital}$ is the levelwise coherent nerve of a semisimplicial $\Kan$-enriched category $\ncBord(d)^\nonunital$, the quasi-unit is given by the coherent nerve of the simplicial functor $u\colon \ncBord(d)^\nonunital_{[0]}\ra \ncBord(d)^\nonunital)_{[1]}$ which sends a $[0]$-walled $d$-manifold $(W,\mu)$ to $(\bfR\times W|_{\mu(0)},\mu')$ with $\mu'(0)=\mu(0)$ and $\mu'(1)=\mu(0)+1$. On morphisms, it is induced by sending $\varphi\colon W|_{[\mu(0)-\epsilon,\mu(0)+\epsilon]}\ra W'|_{[\mu'(0)-\epsilon,\mu'(0)+\epsilon]}$ to $\id_\bfR\times \varphi_{0}$. 

To prove that the functor $E^{\geo}\colon \ncBordInf(d)^{\nonunital}\ra\Fun_{\Delta^\op}({\Delta^\op}_{/[\bullet]},\ManInf_d^{\otimes})$ is quasi-unital, recall that it was constructed as the coherent nerve of the zig-zag
\vspace{-0.1cm}
\[
\ncBord(d)^\nonunital_{[\bullet]} \xra{E^{\geo}_{[\bullet]}}\Fun_{\Gap}(\underline{\Gap}_{\lbr \bullet\rbr/},\Man_d^\otimes) 	\xlla{\simeq}\Fun_{\Gap}(\Gap_{\lbr\bullet\rbr/},\Man_d^\otimes) \cong \
	\Fun_{\Delta^{\op}}(\Delta^{\op}_{/[\bullet]},\Man_d^\otimes) 
\]
of semisimplicial objects in $\Kan$-enriched categories. We first construct the top horizontal functor in a commutative diagram of $\Kan$-enriched categories
\begin{equation}\label{equ:0th-degeneracy}
\begin{tikzcd}
	\underline{\Gap}_{\lbr 1\rbr/}\rar\dar[swap]{\simeq}& \underline{\Gap}_{\lbr 0\rbr/}\dar{\simeq}\\[-2pt]
	\Gap_{\lbr 1\rbr/}\rar{\iota^*}& \Gap_{\lbr 0\rbr/}
\end{tikzcd}
\end{equation}
where  $\iota\colon \lbr 0\rbr\ra\lbr 1\rbr$ is the unique morphism. On objects, the top arrow agrees with the bottom one. On morphisms, the top arrow is given by sending an embedding $\overline{\gamma}\colon \wlab_{\alpha}(\bfR)|^{\gamma^{-1}\lbr \mathring{q'}\rbr}\hookrightarrow \wlab_{\alpha'}(\bfR)$ to the unique dashed embedding that makes the diagram
\[\begin{tikzcd}[ar symbol/.style = {draw=none,"\textstyle#1" description,sloped},	subset/.style = {ar symbol={\supset}}]
	{\gamma^{-1}\lbr \mathring{q'}\rbr}\times\bfR&[-15pt]\arrow[l,subset]\wlab_{\alpha}(\bfR)|^{\gamma^{-1}\lbr \mathring{q'}\rbr}\arrow[d,hookrightarrow]\arrow[r,two heads]&\wlab_{\alpha\circ\iota}(\bfR)|^{\gamma^{-1}\lbr \mathring{q'}\rbr}\arrow[d,hookrightarrow,dashed]\arrow[r,equal]&[-15pt]{\gamma^{-1}\lbr \mathring{q'}\rbr}\times(-\epsilon,\epsilon)\\
	\lbr \mathring{q'}\rbr\times\bfR&\arrow[l,subset]\wlab_{\alpha'}(\bfR)\arrow[r,two heads]&\wlab_{\alpha'\circ\iota}(\bfR)\arrow[r,equal] &\lbr \mathring{q'}\rbr\times(-\epsilon,\epsilon)
\end{tikzcd}\]
commute where the bottom surjection is the identity if $\alpha'(1)\in\{L,R\}$ and otherwise the union of the identity over $\lbr \mathring{q'}\rbr\backslash\alpha'(1)$ with the map
\[
	\wlab_{\alpha'}(\bfR)|^{\alpha'(1)}=(-\epsilon,\epsilon]\sqcup [1-\epsilon,1+\epsilon)\xra{\tr_{-\epsilon}\sqcup \tr_{-(1-\epsilon)}}(-2\epsilon,2\epsilon)\xra{1/4}(-\epsilon,\epsilon)=\wlab_{\alpha'\circ\iota}(\bfR)|^{\alpha'(1)}
\]
over $\alpha'(1)$; the top arrow is defined in the same way by replacing $\alpha'$ by $\alpha$. 
Applying $\Fun_{\Gap}(-,\Man_d^\otimes)$ to \eqref{equ:0th-degeneracy} results in a commutative diagram of $\Kan$-enriched categories
\[\begin{tikzcd}
	\Fun_{\Gap}(\underline{\Gap}_{\lbr 0\rbr/},\Man_d^\otimes)\rar\dar[swap]{\simeq}& \Fun_{\Gap}(\underline{\Gap}_{\lbr 1\rbr/},\Man_d^\otimes)\dar{\simeq}\\[-2pt]
	\Fun_{\Gap}(\Gap_{\lbr 0\rbr/},\Man_d^\otimes)\rar& \Fun_{\Gap}(\Gap_{\lbr 1\rbr/},\Man_d^\otimes),
\end{tikzcd}\]
so $N_\coh(-)$ applied to the top arrow models the $0$th degeneracy map of $\Fun_{\Delta^{\op}}(\Delta^{\op}_{/[\bullet]},\ManInf_d^\sqcup)$. Using this model for the degeneracy and the above quasi-unit for $\ncBordInf(d)^{\nonunital}=N_{\coh}(\ncBord(d)^{\nonunital})$, it is tedious but straightforward to check that $N_\coh(E^{\geo}_\bullet)$ and thus $E^{\geo}$ is quasi-unital.
\end{proof}

\subsection{Symmetric monoidal structure}\label{step:symmetric-monoidal-structure}In this step we promote the functor of double $\infty$-categories $E\colon \ncBordInf(d) \ra \ALG(\PSh(\DiscInf_d))$ to a functor of \emph{symmetric monoidal} double $\infty$-categories (modelled as commutative monoid objects in $\Cat(\CatInf)$, see \cref{sec:monoidal-cats}). This is not difficult and essentially amounts to adding an index by a finite pointed set $\langle s\rangle\in\Fin_*$ to the previous steps. To avoid being too repetitive, we will not spell out all details.

\begin{nconvention}Given a space $X$, a map $\lambda\colon X\ra \langle s\rangle$ to $\langle s\rangle$, and a subset $A\subset \langle s\rangle$, we denote the preimage of $A$ by ${}^A|X\coloneqq \lambda^{-1}(A)$ to distinguish it from the notation $X|_A$ and $X|^A$ introduced in \cref{conv:epsilon-conventions} and \ref{step:mand}.
\end{nconvention}

\subsubsection*{\ref{step:bordismcat}': the bordism category}
We first extend $\ncBordInf(d)\in\Cat(\CatInf)$ to a symmetric monoidal non-unital double $\infty$-category $\ncBordInf(d)^{\nonunital}\in \CMon(\Cat_\nonunital(\CatInf))$ as follows: firstly, we extend the semisimplicial object $\ncBord(d)^{\nonunital}\in\Fun(\Delta^\op_\inj,\sCat)$ in $\Kan$-enriched categories to an object $\ncBord(d)^{\nonunital}\in\Fun(\Fin_*,\Fun(\Delta_\inj^\op,\sCat))=\Fun(\Fin_*\times \Delta_\inj^\op,\sCat)$; evaluation at $\langle 1\rangle\in\Fin_*$ recovers the previous construction. The value of $\ncBord(d)^{\nonunital}$ at $([p],\langle s\rangle)$ for $\langle s\rangle\in\Fin_*$ is the $\Kan$-enriched category ${\ncBord(d)^{\nonunital}}_{[p],\langle s\rangle}$ whose objects are $[p]$-walled $d$-manifolds $(W,\mu)$ together with a map $\lambda\colon W\ra \langle\mathring{s}\rangle$, which we think of as a way to decompose $W$ into disjoint summands indexed by $\langle\mathring{s}\rangle$. Morphisms from $(W,\mu,\lambda)$ to $(W',\mu',\lambda')$ are embeddings of $[p]$-walled manifolds that are additionally assumed to commute with the maps to $\langle \mathring{s}\rangle$. The functoriality of ${\ncBord(d)^{\nonunital}}_{[p],\langle s\rangle}$ in $p$ is defined as for ${\ncBord(d)^{\nonunital}}_{[p]}$, and that in $\langle s\rangle$ is for $\varphi\in \Fin_*(\langle s\rangle, \langle s'\rangle)$ on objects given by $\smash{(W,\mu,\lambda)\mapsto ({}^{\varphi^{-1}\langle \mathring{s}\rangle}|W,\mu,\varphi \circ \lambda)}$ and on morphisms by restricting embeddings. A mild extension of the proof of \cref{lem:bord-is-category-object} then shows that taking taking coherent nerves yields a commutative monoid object  in double $\infty$-categories, as wished.

\subsubsection*{\ref{step:mand}': the manifold category}
Next, we extend the monoidal $\infty$-category $\ManInf_d$ (thought of as a cocartesian fibration $\ManInf^{\otimes}_d\ra \Gap$) to an \emph{symmetric} monoidal $\infty$-category. It will be convenient to view it as a commutative monoid object in monoidal $\infty$-categories $\ManInf_d\in\CMon(\Mon(\CatInf))\subset \Fun(\Fin_*,\Fun(\Gap,\CatInf))$. To this end, we extend the construction of the functor $\Man^{\otimes}_d\ra \Gap$ of $\Kan$-enriched categories to yield $\Kan$-enriched functors ${\Man_d}^{\sqcup,\langle s\rangle}\ra \Gap$, one for each pointed set $\langle s\rangle\in\Fin_*$. Objects of $\smash{\Man^{\otimes,\langle s\rangle}_d}$ are now triples $(W,\lbr p\rbr,\lambda)$ of $\lbr p\rbr\in\Gap$, a smooth submanifold $W\subset\lbr\mathring{p}\rbr\times\bfR\times\bfR^\infty$ and a map $\lambda\colon W\ra \langle\mathring{s}\rangle$. The space of morphisms is defined as before, with the additional requirement that the embeddings have to commute with the reference maps to $\langle\mathring{s}\rangle$. Given a map $\varphi\colon \langle s\rangle\ra \langle s'\rangle$ in $\Fin_*$, there is a functor ${\Man_d}^{\sqcup,\langle s\rangle}\ra {\Man_d}^{\sqcup,\langle s'\rangle}$ over $\Gap$ which on objects is given by $(W,\lbr p\rbr,\lambda)\mapsto(^{\varphi^{-1}\langle \mathring{s'}\rangle}|W,\lbr p\rbr,\varphi\circ \lambda)$
 and on morphisms is induced by restriction. This yields a functor from $\Fin_*$ to cocartesian fibrations over $\Gap$. Using straightening and taking coherent nerves then gives the desired commutative monoid object in monoidal $\infty$-categories.

\subsubsection*{\ref{step:functor-to-premorita}': from the bordism category to the pre-Morita category of manifolds}
By the discussion in \cref{sec:morita-functoriality}, taking pre-Morita categories of $\ManInf_d\in\CMon(\Mon(\CatInf))$ yields a commutative monoid object $\smash{\overline{\ALG}(\ManInf_d)\in\CMon(\Fun(\Delta^{\op},\CatInf))}$, and our next task is to upgrade the morphism $E^{\geo}\colon \ncBordInf(d)^{\nonunital}\ra \overline{\ALG}(\ManInf_d)$ in $\smash{\Fun(\Delta^{\op}_\inj,\CatInf)}$ from \ref{step:functor-to-premorita} to a morphism in $\CMon(\Fun(\Delta_\inj^{\op},\CatInf))$. To do this, we first define for each $\langle s\rangle\in\Fin_*$ a variant $\smash{E^{\geo}_{[\bullet],\langle s\rangle}\colon (\ncBord(d)^{\nonunital})_{[\bullet],\langle s\rangle}\ra \Fun_{\Gap}(\underline{\Gap}_{\lbr\bullet\rbr/},\Man_d^{\otimes,\langle s\rangle})}$ in $\Fun(\Delta_\inj^\op,\sCat)$ of \eqref{equ:psi-simplicially}. For this, note that in the notation of Substep \ref{step:functor-to-premorita-language} I, projection on $\bfR\times \bfR^\infty$ gives a map $\lab_\alpha(W,\mu)\ra W$ for any $[p]$-walled manifold, so if $W$ comes with a map to $\langle\mathring{s}\rangle$, then so does $\lab_\alpha(W,\mu)$. Based on this observation, the construction of $\smash{E^{\geo}_{[\bullet]}}$ from \eqref{equ:psi-simplicially} directly generalises to a functor $\smash{E^{\geo}_{[\bullet],\langle s\rangle}}$ as desired by incorporating the maps to $\langle s\rangle$. Varying $s$, the maps $\smash{E^{\geo}_{[\bullet],\langle s\rangle}}$ define a morphism in $\Fun(\Fin_*,\Fun(\Delta_\inj^{\op},\sCat))$. Taking coherent nerves gives desired extension of $E^{\geo}$ to a morphism in the full subcategory $\CMon(\Fun(\Delta_\inj^{\op},\CatInf))\subset \Fun(\Fin_*,\Fun(\Delta_\inj^{\op},\CatInf))$,
\begin{equation}\label{equ:psi-monoidal}
	E^{\geo}\colon \ncBordInf(d)^{\nonunital}\lra \overline{\ALG}(\ManInf_d).
\end{equation}

\subsubsection*{\ref{step:composite}': composite algebras}
We claim that the two functors \begin{equation}\label{equ:extend-functor-monoidal}\ManInf_d\lra \PSh(\ManInf_d)\lra\PSh(\DiscInf_d)\end{equation} extend to  morphisms in $\CMon(\CatInf)\simeq \CMon(\Mon(\CatInf))$ (see \cref{rem:comm-mon-iterative}). For the first map, we discussed this in \cref{sec:presheaf-category}. A restriction map on presheaves such as the second map in \eqref{equ:extend-functor-monoidal} is only lax symmetric monoidal in general, but turns out to be actually monoidal in our case:

\begin{lem}\label{lem:iota-monoidal} The lax symmetric monoidal functor $\PSh(\ManInf_d) \to \PSh(\DiscInf_d)$ induced by restriction along the inclusion $\iota^* \colon \DiscInf_d\hookrightarrow \ManInf_d$ is strong monoidal.
\end{lem}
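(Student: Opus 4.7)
The plan is to reduce the statement to representables, and then to a combinatorial description of embeddings out of $T\times\bfR^d$. First, both bifunctors $(F,G)\mapsto \iota^* F\otimes \iota^* G$ and $(F,G)\mapsto \iota^*(F\otimes G)$ from $\PSh(\ManInf_d)^2$ to $\PSh(\DiscInf_d)$ preserve colimits in each variable separately: Day convolution does by \cref{sec:yoneda-day}, colimits of presheaves are computed pointwise, and restriction along $\iota$ therefore preserves them. The lax structure map is natural in $F$ and $G$, so since the Yoneda image generates $\PSh(\ManInf_d)$ under colimits, it suffices to show that $\iota^* y_M\otimes \iota^* y_N\to \iota^*(y_M\otimes y_N)$ is an equivalence for all $M,N\in\ManInf_d$. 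Using that the Yoneda embedding is symmetric monoidal (also \cref{sec:yoneda-day}), so that $y_M\otimes y_N\simeq y_{M\sqcup N}$, the task becomes to show that
\[
\iota^* y_M \otimes \iota^* y_N\lra \iota^* y_{M\sqcup N}
\]
is an equivalence of presheaves on $\DiscInf_d$.

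Evaluating at $D\in\DiscInf_d$, the right-hand side is $\Emb(D,M\sqcup N)$ and, by the explicit Day convolution formula, the left-hand side is the colimit of $\Emb(D_1,M)\times \Emb(D_2,N)$ over triples $(D_1,D_2,u\colon D\to D_1\sqcup D_2)$ with $D_i\in\DiscInf_d$. The key geometric input is: writing $D=T\times\bfR^d$ for a finite set $T$, every embedding $e\colon D\hookrightarrow M\sqcup N$ induces a canonical partition $T=T_1^e\sqcup T_2^e$ by recording in which summand each centre $e(t,0)$ lies, and $e$ then factors canonically through $T_1^e\times\bfR^d\sqcup T_2^e\times\bfR^d$. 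I would use this to argue that the indexing $\infty$-category of the colimit decomposes as a disjoint union over partitions $T=T_1\sqcup T_2$ (depending on which points of $T$ land in $D_1$ versus $D_2$ after $u$), and that within each piece the object $(D_1,D_2,u)=(T_1\times\bfR^d,\,T_2\times\bfR^d,\,\cong)$ together with the canonical decomposition is initial. The colimit then computes to
\[
\bigsqcup_{T=T_1\sqcup T_2} \Emb(T_1\times\bfR^d,M)\times \Emb(T_2\times\bfR^d,N)\;\simeq\; \Emb(D,M\sqcup N),
\]
the last equivalence being the standard splitting of embeddings out of disjoint unions of Euclidean spaces.

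The main obstacle is carrying out the cofinality argument rigorously at the $\infty$-categorical level: the morphism spaces of the Day-convolution indexing $\infty$-category are themselves spaces of embeddings, so the ``initiality'' of the canonical factorisation must be interpreted as the contractibility of suitable slice $\infty$-categories. This ultimately reduces to showing that, once the combinatorial data of which centre goes into which summand of some $D_1'\sqcup D_2'$ is fixed, the space of refinements of the canonical factorisation $D\to T_1^e\times\bfR^d\sqcup T_2^e\times\bfR^d$ through $D_1'\sqcup D_2'$ is contractible, which in turn is a standard contractibility statement for spaces of embeddings of disjoint unions of Euclidean spaces into a fixed Euclidean space with prescribed centre data.
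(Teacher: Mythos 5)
Your proposal is correct in outline and identifies the same geometric core as the paper's proof—the canonical partition of $T$ determined by an embedding $T\times\bfR^d\hookrightarrow M\sqcup M'$—but the scaffolding is different. The paper skips the reduction to representables entirely: it applies the Day convolution formula $(F\otimes G)(D) = \colim_{(c,c',u\colon D\to c\sqcup c')}(F(c)\times G(c'))$ directly for arbitrary $F,G\in\PSh(\ManInf_d)$, observes that the comparison map in question is induced by the inclusion of indexing $\infty$-categories $(\DiscInf_d\times\DiscInf_d)^{\op}_{D/}\subset(\ManInf_d\times\ManInf_d)^{\op}_{D/}$, and verifies cofinality of this inclusion via Lurie's criterion (HTT 4.1.3.1) by exhibiting a terminal object in each slice. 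That terminal object is precisely your canonical factorisation $D = T_1\times\bfR^d\sqcup T_2\times\bfR^d\xrightarrow{u}M\sqcup M'$. Your detour through representables (using that both sides preserve colimits in each variable and that the Yoneda embedding is symmetric monoidal) is valid, but costs an extra layer of bookkeeping—you still need to establish natural equivalences after the colimit computation—and the ``obstacle'' you correctly flag at the end, namely making the initiality/cofinality argument precise at the $\infty$-categorical level, is the step that the paper addresses head-on with HTT 4.1.3.1, and it is no easier in the representable case. So the two proofs share the one nontrivial idea, but the paper's framing lets that idea do all the work in a single cofinality check, whereas yours spreads it over several reduction steps plus an explicit colimit identification.

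If you wanted to complete your argument rigorously, the cleanest route would in fact be to drop the reduction to representables and state the cofinality claim directly, as the paper does: the slice $((\DiscInf_d\times\DiscInf_d)^\op_{D/})_{/u}$ has a terminal object because any object $(D_1,D_2,v,w_1,w_2)$ with $(w_1\sqcup w_2)\circ v\simeq u$ admits an essentially unique map to the canonical factorisation—this is exactly your observation that $v$ must respect the partition of $D$ induced by $u$, together with the contractibility statement you gesture at.
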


\begin{proof}By the formula for Day convolution it suffices to verify that for finite sets $S$ the inclusion $\smash{(\DiscInf_d \times \DiscInf_d)^\op_{S \times \bfR^d/} \subset (\ManInf_d \times \ManInf_d)^\op_{S \times \bfR^d/}}$ is cofinal (recall the convention to take slices before opposition). By \cite[4.1.3.1]{LurieHTT} it suffices to prove that  $\smash{((\DiscInf_d \times \DiscInf_d)^\op_{S \times \bfR^d/})_{/u}}$ has a terminal object for all triples $(M,M',u)$ of $M,M' \in \ManInf_d$ and $u \colon S \times \bfR^d \hookrightarrow M \sqcup M'$. Such a terminal object is given by the factorisation $\smash{S \times \bfR^d=T \times \bfR^d\sqcup T' \times \bfR^d\xrightarrow{u} M}$ where the decomposition $S=T\sqcup T'$ is so that $T \times \bfR^d = u^{-1}(M)$ and $T' \times \bfR^d = u^{-1}(M')$.
\end{proof}

After applying $\overline{\ALG}(-)$ to \eqref{equ:extend-functor-monoidal} this gives a composition of morphisms in $\CMon(\Fun(\Delta^{\op}_\inj,\CatInf))$ (see \cref{sec:morita-functoriality}) which we may precompose with \eqref{equ:psi-monoidal} to arrive at an enhancement of \eqref{equ:non-unital-composition} to \begin{equation}\label{equ:non-unital-composition-monoidal} 
	\overline{E} \colon \ncBordInf(d)^{\nonunital}\xlra{E^{\geo}}\overline{\ALG}(\ManInf_d)\xlra{\yon_*}\overline{\ALG}(\PSh(\ManInf_d))\xlra{\iota^*}\overline{\ALG}(\PSh(\DiscInf_d))
\end{equation}
in $\CMon(\Fun(\Delta^{\op}_\inj,\CatInf))\subset \Fun(\Fin_*\times \Delta^{\op}_\inj,\CatInf))$. To show that this composition lands in the levelwise full subcategory $\ALG(\PSh(\DiscInf_d))\subset \overline{\ALG}(\PSh(\DiscInf_d))$ (which lies in the full subcategory $\CMon(\Cat(\CatInf))\subset \CMon(\smash{\Fun(\Delta^{\op}_\inj},\CatInf))$, see \cref{sec:morita-functoriality}), by the Segal property it suffices to show this after evaluation at $\langle 1\rangle \in\Fin_*$ where it agrees with the previously variant without symmetric monoidal structures for which we have already checked this property in \ref{step:composite}, so we obtain a map $\ncBordInf(d)^{\nonunital} \ra \ALG(\PSh(\DiscInf_d))$ in $\CMon(\Cat_{\nonunital}(\CatInf))$. Finally, a minor extension of the arguments of \ref{step:functor-to-morita} to incorporate indexing maps to finite sets enhances this to a functor of symmetric monoidal double $\infty$-categories	$E\colon \ncBordInf(d)\ra  \ALG(\PSh(\DiscInf_d))$.

\subsection{Variants}\label{step:variants}
We now define several variants of $\ncBordInf(d)$, related by a diagram 
\begin{equation}\label{equ:variants-of-ncbord}
\begin{tikzcd}[row sep=0.3cm]
	\gls*{borddcat}\dar\rar& \gls*{borddcatbdy}\dar\rar&\BordInf(d-1)\dar \\
	\ncBordInf(d)\rar &\gls*{borddcatbdync}\rar&\ncBordInf(d-1)
 \end{tikzcd}
\end{equation}
of symmetric monoidal double $\infty$-categories. Informally speaking, $\BordInf(d)$ is obtained from $\ncBordInf(d)$ by restricting to compact bordisms between closed manifolds and diffeomorphisms between them, the versions with a $(-)^{\partial}$-subscript allow manifolds to have boundary, all vertical maps and the left horizontal maps are induced by inclusion, and the right horizontal maps are induced by taking boundaries.

\subsubsection{Compact variant}
To define the compact variant $\BordInf(d)$, we say that a $[p]$-walled $d$-manifold $(W,\mu)$ is of \emph{of compact type} if the subspace $W|_{[\mu(0)-\epsilon,\mu(p)+\epsilon]} \subseteq W$ is compact. Restricting to $[p]$-walled $d$-manifolds of compact type in the construction of $\ncBordInf(d)$ and to spaces of diffeomorphisms instead of embeddings defines the symmetric monoidal double $\infty$-category $\BordInf(d)$. By construction, it comes with a levelwise subcategory inclusion into $\ncBordInf(d)$. This is the leftmost vertical map in \eqref{equ:variants-of-ncbord}.

\subsubsection{Variants with boundary}
To define the variant $\ncBordInf(d)^{\partial}$ of $\ncBordInf(d)$ involving manifolds with boundary, we replace $[p]$-walled $d$-manifolds $(W,\mu)$ where $W\subset \bfR\times \bfR^{\infty}$ is required to have no boundary, by \emph{$[p]$-walled $d$-manifolds with boundary}: these are pairs $(W,\mu)$ of a smooth submanifold $W\subset \bfR\times[0,\infty)\times\bfR^{\infty}$, possibly with boundary, together with an order-perserving function $\mu\colon [p]\ra\bfR$ such that
\begin{enumerate}
	\item $(W,\mu)$ satisfies the conditions in the definition of $[p]$-walled $d$-manifolds (see \ref{step:bordismcat}),
	\item\label{enum:boundary-condition} $\partial W=W\cap (\bfR\times\{0\}\times\bfR^{\infty})$ such that $W\cap (\bfR\times[0,\epsilon]\times\bfR^{\infty})=\partial W\times [0,\epsilon]$ under the appropriate identifications.
\end{enumerate}
The space $\Emb((W,\mu),(W',\mu'))$ of embeddings of $[p]$-walled $d$-manifolds with boundary is defined in the same way as in the case without boundary, except that we demand in addition that the embedding $\varphi\colon W|_{[\mu(0)-\epsilon,\mu(p)+\epsilon]}\hookrightarrow W'|_{[\mu'(0)-\epsilon,\mu'(p)+\epsilon]}$ also satisfies
\begin{enumerate}
	\item $\varphi^{-1}(\bfR\times [0,\epsilon]\times\bfR^{\infty})=(W|_{[\mu(0)-\epsilon,\mu(p)+\epsilon]})\cap (\bfR\times [0,\epsilon]\times\bfR^{\infty})$,
	\item\label{enum:emb-collared} under the appropriate identifications, $\varphi$ restricts to an embedding of the form 
	\[
		(\partial \phi\times \id_{[0,\epsilon]})\colon \partial W|_{[\mu(0)-\epsilon,\mu(p)+\epsilon]}\times [0,\epsilon]\longhookrightarrow  \partial W'|_{[\mu'(0)-\epsilon,\mu'(p)+\epsilon]}\times [0,\epsilon]
	\] 
	for some embedding $\partial \varphi\colon \partial W|_{[\mu(0)-\epsilon,\mu(p)+\epsilon]}\hookrightarrow \partial W'|_{[\mu(0)-\epsilon,\mu(p)+\epsilon]}$.
\end{enumerate}
Replacing the $[p]$-walled $d$-manifolds in the construction of $\ncBordInf(d)$ by $[p]$-walled $d$-manifolds with boundaries in the sense just described gives rise to a symmetric monoidal double $\infty$-category $\ncBordInf(d)^{\partial}$ which receives a levelwise full subcategory inclusion from $\ncBordInf(d)$, induced by the inclusion $\bfR\times\bfR^{\infty} \cong \bfR\times\{1\}\times \bfR^{\infty}\subset \bfR\times[0,\infty)\times \bfR^{\infty}$. This inclusion restricts to a functor $\BordInf(d)\ra \BordInf(d)^{\partial}$ where $\BordInf(d)^{\partial}$ is the symmetric monoidal double $\infty$-category given as the levelwise  subcategory of $\ncBordInf(d)^{\partial}$ obtained by restricting to $[p]$-walled $d$-manifolds with boundary of compact type, defined by the same condition as for the variant without boundary, and to diffeomorphisms between them instead of embeddings. This explains \eqref{equ:variants-of-ncbord}, except for the horizontal functor of the right square which are induced by sending a $[p]$-walled $d$-manifold with boundary $W\subset \bfR\times[0,\infty)\times \bfR^\infty$ to its boundary $\partial W=W\cap(\bfR\times\{0\}\times \bfR^\infty)$, with the same walls, and restricting embeddings to the boundary. 

\subsubsection{Tangential structures without boundary}\label{sec:tangential-no-bdy}
Associating to a smooth manifold $M$ its frame bundle $\Fr(M)$ with its canonical right $\GL_d(\bfR)$-action, induces a functor of $\Kan$-enriched categories 
\begin{equation}\label{equ:simplicial-frame-bundle}
	(\Man_d^{\otimes})_{[1]}\lra \Fun(\GL^{\op}_d,\cat{S})^{\circ},
\end{equation}
where $\Man_d^{\otimes}$ is the symmetric monoidal $\infty$-category from \ref{step:mand} and \ref{step:symmetric-monoidal-structure}, and $\GL_d$ is the (singular simplicial set of) the topological group $\GL_d(\bfR)$ viewed as a $\Kan$-enriched groupoid with one object. The superscript $(-)^\circ$ indicates that we pass to the full subcategory on the fibrant-cofibrant objects in the projective model structure on $\Fun(\GL^\op_d,\cat{S})$, as in \cite[A.3.3.2]{LurieHA}. Let us explain why functor $\Fr(-)$ takes values in this subcategory. Firstly $\Fr(M)$ is fibrant: in the projective model structure an object is fibrant if its underlying simplicial set is a Kan complex, and this is the case for $\Fr(M)$ as a singular simplicial set of a topological space. Secondly $\Fr(M)$ is cofibrant: because the map $\Fun(\GL_d^\op,\cat{S}) \to \cat{S}$ that forgets the action is the right adjoint in a Quillen adjunction, each map $\GL_d(\bfR) \times S \to \GL_d(\bfR) \times S'$ with canonical right $\GL_d(\bfR)$-action and $S \to S'$ a monomorphism is a cofibration, and $\Fr(M)$---being locally trivial---is isomorphic to a (possibly transfinite) composition of pushouts against such maps.

Applying coherent nerves to the map \eqref{equ:simplicial-frame-bundle} and viewing $\GL_d$ as an $\infty$-category via the coherent nerve, gives a functor of $\infty$-categories
\[
	(\ManInf_d^{\otimes})_{[1]}\simeq N_{\coh}((\Man_d^{\otimes})_{[1]})\lra N_{\coh}(\Fun(\GL^{\op}_d,\cat{S})^{\circ})\simeq \Fun(N_{\coh}(\GL^{\op}_d),\cS)=\PSh(\GL_d)
\]
where the second equivalence is an instance of \cite[4.2.4.4]{LurieHTT}. Since the unit $\varnothing\in (\ManInf_d^{\otimes})_{[1]}$ is initial and so $\ManInf_d^\sqcup$ is unital as an $\infty$-operad  \cite[2.3.1.1]{LurieHA}, this functor extends uniquely to a lax symmetric monoidal functor $\Fr(-)\colon \ManInf^{\otimes}\ra \PSh(\GL_d)^\sqcup$ where $\PSh(\GL_d)$ carries the cocartesian symmetric monoidal structure \cite[2.4.3.9]{LurieHA}. Note that $\Fr(M)\sqcup \Fr(N)\ra \Fr(M\sqcup N)$ is an equivalence for manifolds $M$ and $N$, so this is actually (strong) symmetric monoidal.

By an easier version of the argument in \ref{step:composite}, the composition
\[
	\ncBordInf(d)\xlra{E^{\geo}}\overline{\ALG}(\ManInf_d)\xrightarrow{\overline{\ALG}(\Fr(-))}\overline{\ALG}(\PSh(\GL_d))
\]
lands in the Morita double $\infty$-category $\ALG(\PSh(\GL_d))\subset \overline{\ALG}(\PSh(\GL_d))$, which is equivalent to $\COSPAN^+(\PSh(\GL_d))$ (see \cref{sec:alg-cospans}). We thus arrive at a functor of symmetric monoidal double $\infty$-categories $\Fr(-)\colon \ncBordInf(d)\ra\COSPAN^+(\PSh(\GL_d))$.
Informally, this is given by sending a bordism $W\colon P \leadsto Q$ to the cospan $\Fr(c(P))\ra \Fr(W)\la\Fr(c(Q))$, where $c(P),c(Q)\subset W$ are collar neighbourhoods of the boundary components. 

\begin{dfn}\label{dfn:bord-tang-structures} Given a \emph{tangential structure} $\gls*{theta}\in \PSh(\GL_d)$, we define $\ncBordInf^\theta(d)$ and $\BordInf^\theta(d)$ by the following pullbacks in symmetric monoidal double $\infty$-categories
\[\begin{tikzcd} 
	\gls*{borddcattheta} \dar\arrow[r,hookrightarrow]& \gls*{ncborddcattheta} \rar \dar &  \COSPAN^+(\PSh(\GL_d)_{/\theta}) \dar\\
	\BordInf(d)\arrow[r,hookrightarrow]&\ncBordInf(d) \rar{\Fr(-)} & \COSPAN^+(\PSh(\GL_d));
\end{tikzcd}\]
here the rightmost vertical map is induced by the forgetful functor $\PSh(\GL_d)_{/\theta}\ra \PSh(\GL_d)$ which preserves colimits \cite[1.2.13.8]{LurieHA} and thus induces a functor on cospan categories.
\end{dfn}

Varying $\theta$ induces functors $\ncBordInf^{(-)}(d),\BordInf^{(-)}(d)\colon  \PSh(\GL_d)\ra \CMon(\Cat(\CatInf))$. In particular, for a map $\theta\ra\theta'$ in $\PSh(\GL_d)$, we have functors
\begin{equation}\label{eqn:bord-theta-naturality}
	\BordInf^\theta(d) \lra \BordInf^{\theta'}(d)\quad\text{and}\quad\ncBordInf^\theta(d) \lra \ncBordInf^{\theta'}(d).
\end{equation}

\subsubsection{Tangential structures with boundary}
To define the version $\ncBordInf(d)^\partial$ that includes tangential structure, one uses a variant 
\begin{equation}\label{equ:egeo-with-boundary}
	E^{\geo}\colon \ncBordInf(d)^\partial\lra \overline{\ALG}(\ManInf^\partial_d)
\end{equation} 
of the map $E^{\geo}\colon \ncBordInf(d)\ra \overline{\ALG}(\ManInf_d)$ between commutative monoid objects in simplicial $\infty$-categories. The symmetric monoidal $\infty$-category $\ManInf^\partial_d$ is defined in the same way as $\ManInf_d$ except that we use submanifolds $W\subset \lbr \mathring{p}\rbr\times\bfR\times[0,\infty)\times\bfR^\infty$ that may have boundary, but have to satisfy the evident analogue of \ref{enum:boundary-condition} in the definition of a $[p]$-walled $d$-manifold with boundary. With this modification, the construction in \ref{step:functor-to-premorita} and its extensions in \ref{step:functor-to-morita} and \ref{step:symmetric-monoidal-structure} extends almost verbatim to give the map \eqref{equ:egeo-with-boundary} in $\smash{\CMon(\Fun(\Delta^{\op},\CatInf))}$.

Assigning to a manifold $W\in (\ManInf^{\partial,\otimes}_d)_{[1]}$ the map $\Fr(\partial W\times[0,\epsilon])\ra \Fr(W)$ induced by the inclusion induces an extension of the functor $(\ManInf_d^\sqcup)_{[1]}\ra\PSh(\GL_d)$ to a functor of $\infty$-categories 
\[
	(\ManInf^{\partial,\otimes}_d)_{[1]}\lra \Fun([1]\times N_{\coh}(\GL_d^\op),\cS)\eqcolon\PSh([1]\times \GL_d)
\] 
which, by the same argument as in the case without boundary, extends to a symmetric monoidal functor $\Fr(-)\colon \ManInf^{\partial,\otimes}_d\ra \PSh([1]\times \GL_d)^\sqcup$ where the target is equipped with the cocartesian symmetric monoidal structure. This functor allows us to extend \cref{dfn:bord-tang-structures} to define symmetric monoidal double $\infty$-categories 
\[
	\ncBordInf^{\theta}(d)^\partial \quad\text{ and }\quad\ncBordInf^{\theta}(d)^\partial
\]
for any \emph{tangential structure $\theta$ with boundary} by which mean a map $\theta=(\theta^\partial{\ra}\theta^\circ)\in \PSh([1]\times \GL_d)$.
 
\subsubsection{Taking boundaries with tangential structures}\label{sec:boudnary-functor-tangential-structure}
Next, we extend the ``taking-boundaries functors'' $\ncBordInf(d)^{\partial}\ra \ncBordInf(d-1)$ and $\BordInf(d)^{\partial}\ra \BordInf(d-1)$ from \eqref{equ:variants-of-ncbord} to include tangential structures. This involves the commutative diagram of $\infty$-categories
\begin{equation}\label{equ:res-ind-tang-structures}
\begin{tikzcd}
	(\ManInf_d^{\partial,\otimes})_{[1]}\dar\rar&\PSh([1]\times \GL_d)\arrow[r,equal]&\PSh([1]\times \GL_d)\dar{\res}\\
	(\ManInf^\otimes_{d-1})_{[1]}\rar&\PSh(\GL_{d-1})\rar{\ind_{d-1}^d}&\PSh(\GL_{d})
\end{tikzcd}
\end{equation}
where the leftmost vertical map is induced by sending a submanifold $W\subset \lbr \mathring{p}\rbr\times\bfR\times[0,\infty)\times\bfR^\infty$ to its boundary, i.e.\,the intersection with $\lbr\mathring{p}\rbr\times\bfR\times\{0\}\times\bfR^\infty$. The arrow labelled $\res$ is induced by precomposition with the inclusion $\{1\}\times\GL_d\subset [1]\times\GL_d$ and arrow labelled $\ind_{d-1}^d$ is the left adjoint to the functor $\res_{d-1}^d\colon \PSh(\GL_{d})\ra \PSh(\GL_{d-1})$ induced by precomposition with the inclusion $\GL_{d-1}(\bfR)\subset\GL_{d}(\bfR)$ using the first $(d-1)$-coordinates. One way to provide the commutativity of \eqref{equ:res-ind-tang-structures} is to recognise this diagram as the coherent nerve of a diagram of $\Kan$-enriched categories (using \cite[5.2.4.6]{LurieHTT} for $\ind_{d-1}^d$) and then use the fact that the extension $\ind_{d-1}^d(\Fr(\partial W))\ra \Fr(\partial W\times[0,\epsilon])$ of the $\GL_{d-1}(\bfR)$-equivariant map $\Fr(\partial W)\ra\Fr(\partial W\times[0,\epsilon])$ induced by the inclusion $\partial W\times\{0\}\subset \partial W\times[0,\epsilon]$ and the canonical non-zero vector field on $[0,\epsilon]$ is an equivalence of $\GL_d(\bfR)$-spaces natural in $W$.

Equipping all categories of presheaves with the cocartesian symmetric monoidal structure and using the universality property as in \ref{sec:tangential-no-bdy}, we can extend \eqref{equ:res-ind-tang-structures} to a commutative diagram of symmetric monoidal $\infty$-categories. Applying $\overline{\ALG}(-)$, using the $E^\geo$-functors, and the equivalence $\ALG(\cC)\simeq\COSPAN^+(\cC)$ for cocartesian $\cC$, this leads to a commutative diagram of symmetric monoidal double $\infty$-categories
\[
	\begin{tikzcd}[column sep=0.3cm]
	\BordInf(d)^\partial\arrow[r,hookrightarrow]\dar&\ncBordInf(d)^\partial\dar\rar&\COSPAN^+(\PSh([1]\times \GL_d))\arrow[r,equal]&\COSPAN^+(\PSh([1]\times \GL_d))\dar{(\res)_*}\\
	\BordInf(d-1)^\partial\arrow[r,hookrightarrow]&\ncBordInf(d-1)\rar&\COSPAN^+(\PSh(\GL_{d-1}))\rar{(\ind_{d-1}^d)_*}&\COSPAN^+(\PSh(\GL_{d}))
	\end{tikzcd}
\]
For a tangential structure with boundary $\theta=(\theta^\partial{\ra}\theta^\circ)\in \PSh([1]\times \GL_d)$, this induces extensions
\[
	\BordInf^{\theta}(d)^\partial\lra \BordInf^{\res_{d-1}^d(\theta^\partial )}(d-1)\quad\text{and}\quad \ncBordInf^{\theta}(d)^\partial\lra \ncBordInf^{\res_{d-1}^d(\theta^\partial )}(d-1)
\]
of the ``taking boundaries'' functors from \eqref{equ:variants-of-ncbord}.

\begin{ex}\label{ex:framing-to-oneframing}
The tangential structure with boundary encoding framings is $\fr\coloneq (\id\colon \GL_d(\bfR)\ra\GL_d(\bfR))$, so the above in particular gives a functor of symmetric monoidal double $\infty$-categories
$\BordInf^\fr(d)^{\partial}\ra \BordInf^{\onefr}(d-1)^{\partial}$
from the compact framed $d$-dimensional bordism category with boundary to the $d$-dimensional bordism category with boundary and the tangential structure $\onefr\coloneq \res_{d-1}^d(\GL_{d}(\bfR))$ encodes framings of the once-stablised tangent bundle. 
\end{ex}

\subsection{Product functors}\label{step:product}
Given a smooth $p$-manifold $P$, possibly with boundary, we now explain the construction of a ``taking products'' functor of symmetric monoidal double $\infty$-categories
\begin{equation}\label{equ:product-functor-noncompact-bord}
	(P\times-)\colon \ncBordInf(d)^\partial\lra \ncBordInf(d+p)^\partial,
\end{equation}
which restricts to product functors of the form 
\[
	\ncBordInf(d)\ra \ncBordInf(d+p),\quad\BordInf(d)^\partial\ra \BordInf(d+p)^\partial,\quad \text{and} \quad\BordInf(d)\ra \BordInf(d+p)
\]
if $P$ has no boundary, is compact, or is closed, respectively. This will involve smoothing corners.

We fix an embedding $P\subset [0,\infty)\times\bfR^N$ for some $N\ge0$ which satisfies the condition \ref{enum:boundary-condition} in the definition of a $[p]$-walled $d$-manifolds with boundary (ignoring the first $\bfR$-factor). Furthermore, we fix once and for all a homeomorphism $\psi\colon [0,\infty)\times[0,\infty)\ra[0,\infty)\times\bfR$ such that
\begin{enumerate}
	\item \label{enum:psi-boundary} $\psi$ agrees with the identity on $[0,\infty)\times \{0\}$ and with the clockwise rotation by $\pi/2$ on $\{0\}\times [0,\infty)$. In particular, it fixes the origin.
	\item \label{enum:psi-diff} $\psi$ is a diffeomorphism away from the origin.
	\item \label{enum:psi-collar} $\psi^{-1}([0,\epsilon]\times\bfR)\subset \big([0,\epsilon]\times[0,\infty)\cup [0,\infty)\times [0,\epsilon]\big)$,
	\item \label{enum:psi-delta} $\psi([0,\delta]\times[0,\infty)\cup [0,\infty)\times [0,\delta])\subset [0,\epsilon]\times \bfR$ for some fixed $0<\delta\le \epsilon$
	\item \label{enum:psi-1-1} $\psi$ fixes the point $(1,1)$.
\end{enumerate}
Using $\psi$ and its properties \ref{enum:psi-boundary}--\ref{enum:psi-collar}, given a $[p]$-walled $d$-manifold with boundary $(W,\mu)$, we obtain a $[p]$-walled $(d+p)$-manifold with boundary $(\Psi(P\times W),\mu)$ with $\Psi(P\times W)$ the image of $P\times W$ under the composition
\begin{align*}
	&[0,\infty)\times\bfR^N\times \bfR\times[0,\infty)\times\bfR^\infty  \xrightarrow{\text{swap}}\bfR\times [0,\infty)\times [0,\infty)\times\bfR^N\times \bfR^\infty\\
	&\qquad \xrightarrow{\id_\bfR\times \psi\times \id_{\bfR^N \times \bfR^\infty}} \bfR\times [0,\infty)\times \bfR \times\bfR^N\times \bfR^\infty \xrightarrow{\id_{\bfR \times [0,\infty)} \times \text{shift}}\bfR\times [0,\infty)\times\bfR^\infty
\end{align*}
where the first map swaps the right $[0,\infty) \times \bfR^N$-factor with the middle $\bfR\times[0,\infty)$-factor. For instance, condition \ref{enum:psi-collar} is used to ensure the condition $\Psi(P\times W)\cap (\bfR\times[0,\epsilon]\times\bfR^\infty)=\partial(\Psi(P\times W))\times[0,\epsilon]$ in the definition of a $[p]$-walled manifold with boundary. Note that $\Psi(P\times W)$ comes with a preferred homeomorphism $P\times W\cong\Psi(P\times W)$ which is a diffeomorphism away from $\partial P\times\partial W$ as a consequence of condition \ref{enum:psi-diff}. Taking products with $P$ and conjugating with $\psi$ induces a map 
\[
	\Emb\big((W,\mu),(W',\mu')\big)\lra \Emb\big((\Psi(P\times W),\mu),(\Psi(P\times W'),\mu')\big),
\]
which is well-defined due to the collaring condition \ref{enum:emb-collared} on embeddings between $[p]$-walled $d$-manifolds with boundaries. Going through the construction of $\ncBordInf(d)^\partial$, one checks that the assignment $(W,\mu)\mapsto (\Psi(P\times W),\mu)$ together with the maps between embedding spaces just discussed leads to functors as desired.

These product functors can be extended to include tangential structures. To this end, one notes that there is a functor of symmetric monoidal categories $(P\times-)\colon \ManInf_d^\partial\ra\ManInf_{p+d}^\partial$ defined as for \eqref{equ:product-functor-noncompact-bord}. On underlying $\infty$-categories, this participates in a diagram of $\infty$-categories
\begin{equation}\label{equ:product-with-tang-structures}
\begin{tikzcd}
	(\ManInf_d^{\partial,\otimes})_{[1]}\arrow[dd,"{(P\times-)}"]\rar&\PSh([1]\times\GL_d)\dar\\[-10pt]
	&\PSh([1]\times\GL_p\times\GL_d)\dar{\ind_{p,d}^{p+d}}\\
	(\ManInf_{p+d}^{\partial,\otimes})_{[1]}\rar&\PSh([1]\times\GL_{p+d})
\end{tikzcd}
\end{equation}
where the upper right vertical arrow is the functor that sends a map $X\ra Y$ of $\GL_d(\bfR)$-spaces to 
\[(\Fr(P)\times X)\cup_{\Fr(\partial P\times [0,\epsilon])\times X}(\Fr(\partial P\times [0,\epsilon])\times Y)\ra \Fr(P)\times Y\] viewed as a map of $(\GL_{p}(\bfR)\times\GL_d(\bfR))$-spaces, and the functor $\ind_{p,d}^{p+d}$ is the left adjoint to the restriction along the inclusion $\GL_p(\bfR)\times\GL_d(\bfR)\subset \GL_{p+d}(\bfR)$. \eqref{equ:product-with-tang-structures} can be extended to a \emph{commutative} square of $\infty$-categories in a way similar to what we did for \eqref{equ:res-ind-tang-structures}: recognise it as the coherent nerve of a diagram of $\Kan$-enriched categories and then use that the two compositions are related by a zig-zag of natural equivalences. In this case, the zig-zag is provided by the commutative diagram
\[\hspace{-.3cm} \begin{tikzcd}[column sep=-0.2cm, row sep=0.3cm]
	\res^{p+d}_{p,d}\Fr(\partial \Psi(P\times W)\times[0,\epsilon])\rar&\res^{p+d}_{p,d}\Fr(\Psi(P\times W))\\
	(\Fr(\interior(P))\times \Fr(c(W))))\cup_{\Fr(c(P))\times \Fr(c(W))}(\Fr(c(P))\times \Fr(\interior(W))\uar\rar\dar&\Fr(\interior(P))\times\Fr(\interior(W))\uar\dar\\
	(\Fr(P)\times \Fr(\partial W\times[0,\epsilon]))\cup_{\Fr(\partial P\times [0,\epsilon]))\times \Fr(W\times [0,\epsilon])}(\Fr(\partial P\times[0,\epsilon])\times \Fr(W)\rar&\Fr(P)\times\Fr(W)
\end{tikzcd}\]
of $(\GL_{p}(\bfR)\times\GL_{d}(\bfR))$-spaces which is natural in $W$ and consists of vertical equivalences when taking adjoints with respect to the $(\smash{\ind_{p,d}^{p+d}},\smash{\res_{p,d}^{p+d}})$-adjunction. Here $c(P)\coloneq \partial P\times(0,\delta)\subset \interior(P)$ and $c(W)\coloneq \partial W\times(0,\delta)\subset \interior(P)$, the lower vertical arrows are induced by the inclusions $\interior(P)\subset P$ and $\interior(W)\subset W$, and the upper vertical arrows by the preferred embedding $\interior(P)\times\interior(W)\hookrightarrow \Psi(P\times W)$ induced by $\psi$; this uses property \ref{enum:psi-delta} of $\psi$. Similarly to the final paragraph of \ref{sec:boudnary-functor-tangential-structure}, \eqref{equ:product-with-tang-structures} yields a commutative diagram of symmetric monoidal double $\infty$-categories
\begin{equation}\label{equ:products-with-tang-structure-bord}
\begin{tikzcd}
	\ncBordInf(d)^{\partial}\arrow[d,"P\times(-)",swap]\rar&\COSPAN^+(\PSh([1]\times\GL_d))\dar\\
	\ncBordInf(p+d)^{\partial}\rar&\COSPAN^+(\PSh([1]\times\GL_{p+d})).
\end{tikzcd}
\end{equation}
Now given a tangential structure with boundary $\lambda=(\lambda^\partial\ra \lambda^\circ)\in \PSh([1]\times\GL_p)$, and a $\lambda$-structure on $P$ in the form of a map $\ell_P\colon (\Fr(P),\Fr(\partial P\times [0,\epsilon]))\ra (\lambda^\circ,\lambda^{\partial})$ in $\PSh([1]\times\GL_p)$, then \eqref{equ:products-with-tang-structure-bord} induces a functor of symmetric monoidal double $\infty$-categories
\[
	((P,\ell_P)\times(-))\colon \ncBordInf^{\theta}(d)^\partial\lra\ncBordInf^{\glue(\theta,\lambda)}(p+d)^\partial
\]
where $\glue(\theta,\lambda)\coloneq \ind_{p,d}^{p+d}\big(\lambda^\circ\times\theta^\partial\cup_{\lambda^\partial\times\theta^\partial}\lambda^\partial\times\theta^\circ \ra \lambda^\circ\times\theta^\circ\big)\in \PSh([1]\times\GL_{p+d})$. This also extends the variants of the product functors mentioned below \eqref{equ:product-functor-noncompact-bord}, where property \ref{enum:psi-1-1} of $\psi$ is used for the variants without boundary.

\begin{ex}\label{ex:product-functor-with-framing}
In the case of framings $\fr_p=\lambda=(\id\colon \GL_p(\bfR)\ra \GL_p(\bfR))$ and $\fr_d=\theta=(\id\colon \GL_d(\bfR)\ra \GL_d(\bfR))$, we have $\glue(\fr_p,\fr_d)\simeq \fr_{d+p}$, so omitting the subscripts, we have a product functor of symmetric monoidal double $\infty$-categories $((P,\ell_P)\times(-))\colon\ncBordInf^\fr(d)^\partial\ra\ncBordInf^\fr(p+d)^\partial$
for framed $p$-manifolds $P$, and similarly for the compact variants.
\end{ex}
 
% restore the default '\thesubsection' macro
\renewcommand\thesubsection{\thesection.\arabic{subsection}}

\section{Properties of $E$, embedding calculus, and $\DiscInf$-structure spaces}\label{sec:functor-e-disc-structure}  
The main outcome of the previous section is the construction of a functor 
\[
	E\colon \ncBordInf(d)\lra \gls*{modd} \coloneqq \ALG(\PSh(\DiscInf_d))
\]
of symmetric monoidal double $\infty$-categories, in the sense of \cref{sec:monoidal-cats}, from a bordism category of (possible noncompact) $(d-1)$-manifolds to a Morita category on the category $\PSh(\DiscInf_d)$ of presheaves on a category $\DiscInf_d$ of finite disjoint unions of $d$-dimensional Euclidean spaces and codimension $0$ embeddings between them. We also constructed variants $\BordInf(d)$, $\BordInf(d)^\partial$, and $\ncBordInf(d)^\partial$ of $\ncBordInf(d)$, related by a diagram of symmetric monoidal double $\infty$-categories \eqref{equ:variants-of-ncbord}, as well as enhancements with tangential structures of all of these bordism categories.

This section has several purposes: firstly in \cref{sec:bordism-mapping-oo-cats}, we give more practical descriptions of these double $\infty$-categories by describing their objects and mapping $\infty$-categories in a model-independent and more intuitive manner, and we explain the functor $E$ in these terms. For most of the arguments in the later sections, this discussion is sufficient and there is no need to know the specifics of the construction in \cref{sec:the-functor}. Secondly, we establish three properties of the functor $E$: 
\begin{itemize}
	\item a descent property in \cref{sec:descent}, 
	\item a close relationship to Goodwillie--Weiss' embedding calculus in \cref{sec:embedding-calculus}, and 
	\item an isotopy extension property in \cref{sec:isotopy-extension}.
\end{itemize} 
Finally, in \cref{sec:disc-structure-spaces}, we give the precise definition of the $\DiscInf$-structure spaces.

\subsection{Mapping $\infty$-categories}\label{sec:bordism-mapping-oo-cats}
Recall from \cref{sec:mapping-infinity-category} that a double $\infty$-category $\cC$ has mapping $\infty$-categories $\cC_{A,B}$ for objects $A,B\in\cC_{[0]}$, and these feature in composition functors $\cC_{A,B}\times\cC_{B,C}\ra\cC_{A,C}$. We now spell these out for some of the double $\infty$-categories of the previous section.

\subsubsection{$\ncBordInf(d)$} \label{sec:details-ncbord} 
In short: objects of $\ncBordInf(d)$ are (possibly noncompact) $(d-1)$-manifolds $P$ without boundary, and given two such manifolds $P$ and $Q$, the objects of the mapping $\infty$-category $\ncBordInf(d)_{P,Q}$ are bordisms $W\colon P\leadsto Q$ and the mapping spaces in $\ncBordInf(d)_{P,Q}$ are given by embedding spaces relative to the boundary. The composition in these mapping $\infty$-categories is by composing embeddings, the composition functor
$\ncBordInf(d)_{P,Q} \times \ncBordInf(d)_{Q,R} \ra \ncBordInf(d)_{P,R}$ by gluing bordisms, and the symmetric monoidal structure by disjoint union.

More precisely, given a $(d-1)$-manifold $P$, we may use the weak Whitney embedding theorem to choose an embedding $P\subset\bfR^{\infty}$ and can thus view $P$ as a $[0]$-walled manifold $(\bfR\times P,\mu)$ in the sense of \ref{step:bordismcat} of \cref{sec:the-functor} (and hence as an object in $\ncBordInf(d)_{[0]}$) by setting $\mu(0)=0$. Moreover, it is easy to see that each object in $\ncBordInf(d)_{[0]}$ is equivalent to one of this form, so we will no longer distinguish between abstract $(d-1)$-manifolds and objects in $\ncBordInf(d)_{[0]}$. Similarly, given a bordism $W\coloneq P\leadsto Q$ between $(d-1)$-manifolds, we may embed it suitably collared in $[0,1]\times \bfR^\infty$ so that $((\infty,0]\times P\cup W\cup[1,\infty)\times Q,\mu)$ with $\mu(i)=i$ for $i=0,1$ is a $[1]$-walled manifold and thus an object in the mapping $\infty$-category $\ncBordInf(d)_{P,Q}$. Again, any object is equivalent to one of this form, so we will also no longer distinguish between abstract bordisms $P\leadsto Q$ and objects in $\ncBordInf(d)_{P,Q}$. The identification of the mapping spaces in $\ncBordInf(d)_{P,Q}$ is justified by:

\begin{lem}\label{lem:mapping-space-emb}
Given possibly noncompact bordisms $W,W'\colon P\leadsto Q$ between $(d-1)$-manifolds $P,Q$ without boundary, there is a natural equivalence $	\Map_{\ncBordInf(d)_{P,Q}}(W,W')\simeq\Emb_\partial(W,W')$ in $\cS$.
\end{lem}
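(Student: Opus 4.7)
The plan is to unwind the definition of the mapping $\infty$-category $\ncBordInf(d)_{P,Q}$ from \cref{sec:mapping-infinity-category} as a homotopy fibre in $\CatInf$, translate the resulting question via the coherent nerve into one about embedding spaces of walled manifolds, and then identify what remains with $\Emb_\partial(W,W')$ by direct inspection of the walled-manifold model from \ref{step:bordismcat}.

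Concretely, the first step is to observe that $\ncBordInf(d)_{P,Q}$ is by definition the homotopy fibre of $(d_0,d_1)\colon \ncBordInf(d)_{[1]} \to \ncBordInf(d)_{[0]} \times \ncBordInf(d)_{[0]}$ at $(P,Q)$, and that mapping spaces commute with homotopy pullbacks of $\infty$-categories, so
\[
	\Map_{\ncBordInf(d)_{P,Q}}(W,W')\simeq \hofib_{(\id_P,\id_Q)}\Big(\Map_{\ncBordInf(d)_{[1]}}(W,W')\lra \Map(P,P)\times \Map(Q,Q)\Big).
\]
Using the definition $\ncBordInf(d)_{[\bullet]} = N_{\coh}(\ncBord(d)^{\nonunital}_{[\bullet]})$ together with the fact that the coherent nerve preserves mapping spaces (see \cref{sec:coherent-nerve-props} \ref{enum:simplicial-mapping-spaces}), the homotopy fibre above is in turn equivalent to
\[
	\hofib_{(\id_P,\id_Q)}\Big(\Emb((W,\mu),(W',\mu'))\lra \Emb(P,P)\times \Emb(Q,Q)\Big),
\]
where $\Emb((W,\mu),(W',\mu'))$ denotes the space of embeddings of $[1]$-walled manifolds between the walled thickenings of $W$ and $W'$ chosen in \cref{sec:details-ncbord}.

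I would then unpack the definition of a walled embedding from \ref{step:bordismcat}: such an embedding is determined by a codimension-$0$ embedding $\widetilde{W}|_{[-\epsilon, 1+\epsilon]} \hookrightarrow \widetilde{W}'|_{[-\epsilon, 1+\epsilon]}$ that on each bicollar $\tr_{\mu(i)}[-\epsilon,\epsilon]\times W|_{\mu(i)}$ has the form $\tr\times \varphi_i$ for some embedding $\varphi_i$ of the wall, and the two face maps read off precisely $\varphi_0$ and $\varphi_1$. The homotopy fibre at $(\id_P,\id_Q)$ is therefore the space of walled embeddings that restrict to the identity on both bicollars. Under the identification of $W$ with its walled thickening $((-\infty,0]\times P)\cup W \cup ([1,\infty)\times Q)$ explained in \cref{sec:details-ncbord}, this coincides with the space of embeddings $W\hookrightarrow W'$ that fix a collar neighbourhood of the boundary, i.e.\,with $\Emb_\partial(W,W')$. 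Naturality in $W$ and $W'$ is automatic, since every step above is natural in the walled manifolds.

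The only point requiring care is the last identification: one must verify that ``identity on the bicollars of the walls'', the condition coming from the definition of walled embeddings, produces the same space up to weak equivalence as ``identity near $\partial W$'', the convention implicit in $\Emb_\partial$. This is a direct consequence of the product structure imposed on the walled thickening together with standard tameness of collars, so I do not expect any serious obstacle.
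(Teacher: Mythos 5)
Your proposal takes essentially the same route as the paper: unwind $\ncBordInf(d)_{P,Q}$ as the fibre of the face maps, use that mapping spaces commute with pullbacks of $\infty$-categories and that the coherent nerve preserves mapping spaces, and then identify the resulting homotopy fibre of the restriction $\Emb((W,\mu),(W',\mu')) \to \Emb(P,P) \times \Emb(Q,Q)$ with $\Emb_\partial(W,W')$. The one step you pass over silently is the justification that the homotopy fibre of this restriction map agrees with its strict (point-set) fibre over $(\id_P,\id_Q)$: the paper records explicitly that the restriction map is a Kan fibration, which is what makes this identification legitimate. Your closing paragraph about collars and tameness is adjacent to this issue but does not quite name it; adding a one-line observation that restriction to the bicollars is a Kan fibration (by parametrised isotopy extension for the product structure on the collars) would close that small gap and align your argument exactly with the paper's.
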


\begin{proof}
Using that mapping spaces in a pullback of $\infty$-categories are pullbacks of the mapping spaces, and that coherent nerves of $\Kan$-enriched categories preserve mapping spaces, we see $\Map_{\ncBordInf(d)_{P,Q}}(W,W')$ is the fibre (i.e.\,pullback along the indicated inclusion of a point) in $\cS$ 
\vspace{-0.2cm}
\[
	\Map_{\ncBordInf(d)_{P,Q}}(W,W')=\fib_{\id}\big(\Emb(W,W')_w\xlra{\res} \Emb(P,P)\times \Emb(Q,Q) \big)
\]
where the subscript $(-)_w$ indicates that we restrict to the subspace of embeddings $e$ that in some fixed collars $P \times [0,1] \hookrightarrow W$ and $Q \times [-1,0] \hookrightarrow W$ have the form $\id\times e_P$ and $\id\times e_Q$ for self-embeddings $e_P$ and $e_Q$ of $P$ and $Q$ respectively. The map $\res$ is induced by restriction to $e_P$ and $e_Q$. It is not hard to see that this is a Kan fibration, so the fibre in $\cS$ agrees with the point-set fibre over $(\id,\id)$. The latter is $\Emb_{\partial}(W,W')$, so we obtain an equivalence as claimed.
\end{proof} 

\subsubsection{$\BordInf(d)$}\label{sec:details-bord}
Under the identification of the objects in $\ncBordInf(d)_{[0]}$ as $(d-1)$-manifolds $P$ without boundary, those in the subcategory $\BordInf(d)_{[0]}$ correspond to $(d-1)$-manifolds $P$ without boundary that are also compact. Similarly, the objects in the mapping $\infty$-categories of the levelwise subcategory $\BordInf(d)\subset \ncBordInf(d)$ correspond to \emph{compact} bordisms between closed manifolds. The morphism spaces in the mapping $\infty$-categories are given by spaces of diffeomorphisms fixing the boundary. Hence, unlike for $\ncBordInf(d)$, all mapping $\infty$-categories of $\BordInf(d)$ are $\infty$-groupoids and can be regarded as spaces. Thus, by the discussion of \cref{sec:double-vs-infty2}, not much is lost by applying $(-)^{(\infty,1)}$ and considering the symmetric monoidal $\infty$-category $\BordInf(d)^{(\infty,1)}$ with objects closed $(d-1)$-manifolds and mapping spaces
\[
	\textstyle{\BordInf(d)_{P,Q}\simeq\Map_{\BordInf^{(\infty,1)}(d)}(P,Q) \simeq \bigsqcup_{[W]} \BDiff_\partial(W)},
\]
where $[W]$ ranges over compact bordisms $W \colon P \leadsto Q$ up to diffeomorphism relative to the ends. Composition is by gluing bordisms and the symmetric monoidal structure by disjoint union.

\subsubsection{$\ncBordInf^\theta(d)$}\label{sec:details-tangential-bord} 
In short: given a tangential structure $\theta$ in the form of a $\GL_d(\bfR)$-space $\theta$, the objects of $\ncBordInf^\theta(d)$ are (possibly noncompact) $(d-1)$-manifolds $P$ with a $\theta$-structure on their once-stabilised tangent bundle, i.e.\,a $\GL_d(\bfR)$-equivariant map $\theta_P\colon \Fr(I\times N) \ra \theta$ where $\Fr(-)$ denotes the frame bundle and $I=[0,1]$. The objects of the mapping category $\smash{\ncBordInf^\theta}(d)_{(P,\theta_P),(Q,\theta_Q)}$ are bordisms with $\theta$-structures and the morphisms are $\theta$-embeddings, fixed on the boundary. The composition and monoidal structure is as in $\ncBordInf(d)$, but with the addition of $\theta$-structures.

To justify this, recall from \cref{sec:tangential-no-bdy} that the noncompact bordism category with $\theta$-structures is defined as the pullback of symmetric monoidal double $\infty$-categories
\[
	\ncBordInf^\theta(d) =\ncBordInf(d) \times_{\COSPAN^+(\PSh(\GL_d))} \COSPAN^+(\PSh(\GL_d)_{/\theta}),
\]
so the claimed description of the objects follows by using that forgetting symmetric monoidal structures preserves pullbacks and that pullbacks of double $\infty$-categories are computed levelwise. This also shows that the mapping $\infty$-categories are given by pullbacks of $\infty$-categories
\[\begin{tikzcd}
	\ncBordInf^\theta(d)_{(P,\theta_P),(Q,\theta_Q)} \rar \dar & \COSPAN^+(\PSh(\GL_d)_{/\theta})_{\theta_P,\theta_Q} \dar \\
	\ncBordInf(d)_{P,Q} \rar & \COSPAN^+(\PSh(\GL_d))_{\Fr(I \times P),\Fr(I \times Q)}
\end{tikzcd}\]
which justifies the description of the objects in $\ncBordInf^\theta(d)_{(P,\theta_P),(Q,\theta_Q)}$ when combined with the equivalence $\COSPAN^+(\cC)_{A,B}\simeq \cC_{A\sqcup B/}$ mentioned in \cref{sec:span-cospan-cats}. Combining this discussion with the fact that mapping spaces in a pullback of $\infty$-categories agree with the pullback of the mapping spaces, we arrive at the following precise version of the description of the mapping spaces in the mapping $\infty$-category $\ncBordInf^\theta(d)_{(P,\theta_P),(Q,\theta_Q)}$.

\begin{lem}\label{lem:mapping-space-theta}
Given $(d-1)$-manifolds $(P,\theta_P),(Q,\theta_Q)$ without boundary together with $\theta$-structures on their once-stabilised tangent bundle, and $\theta$-bordisms $(W,\theta_W),(W',\theta_{W'})\colon (P,\theta_P)\leadsto (Q,\theta_Q)$, there is a natural pullback diagram in $\cS$.
\[\hspace{-.25cm}\begin{tikzcd}
	\Map_{\ncBordInf^\theta(d)_{(P,\theta_P),(Q,\theta_Q)}}((W,\theta_W),(W',\theta_{W'})) \dar \rar &[-15pt] \Map_{(\PSh(\GL_d)_{/\theta})_{\theta_P \sqcup \theta_Q/}}(\theta_W,\theta_{W'}) \dar\\
	\Emb_\partial(W,W') \rar & \Map_{\PSh(\GL_d)_{\Fr(I \times P)\sqcup \Fr(I \times Q)/}}(\Fr(W),\Fr(W'))).
\end{tikzcd}\]
\end{lem}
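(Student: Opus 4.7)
The plan is to derive the pullback square by unwrapping the pullback definition of $\ncBordInf^{\theta}(d)$ in stages and applying the preceding lemma. Recall from \cref{dfn:bord-tang-structures} that
\[
	\ncBordInf^{\theta}(d) \simeq \ncBordInf(d) \times_{\COSPAN^+(\PSh(\GL_d))} \COSPAN^+(\PSh(\GL_d)_{/\theta})
\]
as a pullback in $\CMon(\Cat(\CatInf))$. Forgetting the commutative monoid structure commutes with limits, and pullbacks in $\Cat(\CatInf)\subset \Fun(\Delta^{\op},\CatInf)$ are computed levelwise (limits in functor categories are pointwise), so the underlying simplicial $\infty$-category of $\ncBordInf^{\theta}(d)$ is the levelwise pullback of the underlying simplicial $\infty$-categories.

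From this, I would first extract the mapping $\infty$-categories. By definition (see \cref{sec:mapping-infinity-category}), $\cC_{A,B}$ is the fibre of $\cC_{[1]}\to \cC_{[0]}\times \cC_{[0]}$ over $(A,B)$, and taking fibres in $\CatInf$ commutes with the levelwise pullback above. This yields a pullback square in $\CatInf$
\[
\begin{tikzcd}
	\ncBordInf^{\theta}(d)_{(P,\theta_P),(Q,\theta_Q)} \rar \dar & \COSPAN^+(\PSh(\GL_d)_{/\theta})_{\theta_P,\theta_Q} \dar \\
	\ncBordInf(d)_{P,Q} \rar & \COSPAN^+(\PSh(\GL_d))_{\Fr(I\times P),\Fr(I\times Q)}.
\end{tikzcd}
\]

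Next, I would pass from mapping $\infty$-categories to mapping spaces between $(W,\theta_W)$ and $(W',\theta_{W'})$. Since taking mapping spaces at a fixed pair of objects preserves pullbacks in $\CatInf$, applying $\Map(-,-)$ with the appropriate source and target to the square above produces a pullback square in $\cS$. It remains to identify the three other corners. The bottom-left corner is $\Emb_\partial(W,W')$ by \cref{lem:mapping-space-emb}. For the right-hand corners, I invoke the identification $\COSPAN^+(\cC)_{A,B}\simeq \cC_{A\sqcup B/}$ from \cref{sec:span-cospan-cats}, which is natural in $\cC$ and sends the object classifying a cospan $A\sqcup B\to X$ to itself viewed in the undercategory. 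Under this equivalence, the object corresponding to a bordism $W\colon P\leadsto Q$ is the canonical map $\Fr(I\times P)\sqcup \Fr(I\times Q)\to \Fr(W)$ obtained from the collar inclusions, and analogously for the $\theta$-structured variant, yielding the mapping spaces on the right-hand side of the displayed square in the statement.

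The only potentially delicate point is ensuring that the identification $\COSPAN^+(\cC)_{A,B}\simeq \cC_{A\sqcup B/}$ is natural in $\cC$ along the forgetful functor $\PSh(\GL_d)_{/\theta}\to \PSh(\GL_d)$, so that the square of identifications commutes; this is immediate from the construction of the equivalence in \cite[Section 8]{HaugsengSpans} (cited in \cref{sec:span-cospan-cats}), since that equivalence is obtained from the universal property of cospan categories and is functorial in colimit-preserving functors, which the forgetful functor from the slice is. Assembling these identifications into the pullback square produced above gives exactly the square in the statement, and the naturality of every step gives naturality of the resulting pullback square.
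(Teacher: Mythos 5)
Your proof follows essentially the same route as the paper: unwind the pullback definition of $\ncBordInf^\theta(d)$ levelwise to obtain the pullback square of mapping $\infty$-categories, then pass to mapping spaces (which preserve pullbacks), and identify the corners via \cref{lem:mapping-space-emb} and the equivalence $\COSPAN^+(\cC)_{A,B}\simeq\cC_{A\sqcup B/}$. The argument is correct and matches the paper's.
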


\subsubsection{$\BordInf^\theta(d)$}\label{sec:details-tangential-bord-compact}
The previous discussion of $\ncBordInf^\theta(d)$  applies also to the levelwise subcategory $\BordInf^\theta(d)$ when restricting to compact manifolds throughout. By a minor enhancement of the argument in \cref{sec:details-bord}, the mapping $\infty$-categories $\BordInf^\theta(d)$ are again $\infty$-groupoids, so as for $\BordInf(d)$ not much is lost by applying $(-)^{(\infty,1)}$ and consider the symmetric monoidal $\infty$-category $\BordInf^\theta(d)^{(\infty,1)}$ with closed $(d-1)$-manifolds with $\theta$-structure on their once-stabilised tangent bundle as objects and mapping spaces given by 
\begin{equation}\label{equ:mapping-cat-theta-bord-compact}
	\textstyle{\BordInf^\theta(d)_{(P,\theta_P),(Q,\theta_Q)}\simeq \Map_{\BordInf^\theta(d)^{(\infty,1)}}((P,\theta_P),(Q,\theta_Q))\simeq \bigsqcup_{[W]} \BDiff^\theta_\partial(W,\theta_P \sqcup \theta_Q)}
\end{equation}
where $[W]$ ranges over compact bordisms $W \colon P \leadsto Q$ up to diffeomorphism relative to the ends and $\BDiff^\theta_\partial(W,\theta_P \sqcup \theta_Q)$ is the quotient $\Map_{\PSh(\GL_d)_{\Fr(I \times P) \sqcup \Fr(I \sqcup Q)/}}(\Fr(W),\theta) / \Diff_\partial(W)$ where the action is induced by precomposition (by standard bundle theory, this agrees with other definitions of $\BDiff^\theta_\partial(-)$ in the literature such as that in \cite[Definition 1.5]{GRWstable}). Composition is given by gluing $\theta$-bordisms and the symmetric monoidal structure by disjoint union.

\subsubsection{Variants with boundary}The discussion for the variants $\ncBordInf(d)^\partial$ and $\BordInf(d)^\partial$ with boundary and their enhancements with tangential structures $\ncBordInf^\theta(d)^\partial$ and $\BordInf^\theta(d)^\partial$ is the same as that for the version without boundary, except that we allow the $(d-1)$-manifolds that appear as objects to have boundary and the bordisms $W\colon P\leadsto Q$ to be bordisms of manifolds with boundary. The bordisms thus come with a decomposition $\partial W= \partial_0W \cup \partial^hW\cup\partial_1W$ into codimension $0$ submanifolds where the \emph{ends} $\partial_iW$s are disjoint and come with identifications $P\cong \partial_0W$ and $Q\cong  \partial_1W$, and the \emph{horizontal boundary} $\partial^hW$ meets the ends in a corner. Embeddings between such manifolds are required to preserve this decomposition, map the interior to the interior, and be the identity near the ends, but they are allowed to move the horizontal boundary. The discussion for the variants $\ncBordInf^\theta(d)^\partial$ and $\BordInf^\theta(d)^\partial$ with tangential structures is similar; on the ends the tangential structures are fixed, but not on the horizontal boundary.

\subsubsection{$\DiscInf_d$ and $\ModInf(d)$}\label{sec:details-bimod} 
The objects of the symmetric monoidal $\infty$-category $\DiscInf_d$ can be identified with $d$-manifolds without boundary that are diffeomorphic to a finite disjoint union of $\bfR^d$'s. The mapping spaces are given by codimension $0$ embeddings and the symmetric monoidal structure by disjoint union. Day convolution equips the $\infty$-category $\PSh(\DiscInf_d)$ of $\cS$-valued presheaves with a symmetric monoidal structure, and the objects of $\ModInf(d)=\ALG(\PSh(\DiscInf_d))$ are associative algebras in $\PSh(\DiscInf_d)$ (see \cref{sec:haugseng-morita}). The mapping $\infty$-category between two associative algebras $A,B \in \ModInf(d)$ is the $\infty$-category $\ModInf(d)_{A,B}$ of $(A,B)$-bimodules and bimodule maps between these (see \cref{sec:assalg-bimodules} where this category is denoted $\Bimod_{A,B}(\PSh(\DiscInf_d)$). The composition functors $\ModInf(d)_{A,B} \times \ModInf(d)_{B,C} \ra \ModInf(d)_{A,C}$ are given taking tensor products over $B$, which we denote by $(-)\cup_{B}(-)$ to emphasise the similarity with the bordism category. The symmetric monoidal structure is given by external tensor product.

\subsubsection{The functor $E$}\label{sec:functor-e-summary}
In terms of the identifications of the objects and mapping categories of source and target explained in Sections~\ref{sec:details-ncbord} and \ref{sec:details-bimod}, the functor $E \colon \ncBordInf(d) \ra \Mod(d)$ of symmetric monoidal double $\infty$-categories is on objects given by sending a $(d-1)$-manifold $P$ to the presheaf $E_{P\times I}=\Emb(-,P\times I)$ where $I=[0,1]$, equipped with the algebra structure induced by ``stacking''. On mapping $\infty$-categories, it is given by the functor $\ncBordInf(d)_{P,Q} \ra \ModInf(d)_{E_{P \times I},E_{Q \times I}}$ which sends a bordism $W \colon M \leadsto N$ to the presheaf $E_W=\Emb(-,W)$ with its $(E_{P \times I},E_{Q \times I})$-bimodule structure by ``stacking'', using fixed collars $P\times I\hookrightarrow W$ and $Q\times I\hookrightarrow W$ of both ends, where the convention is that the canonical vector field on $P \times I$ is inwards pointing and that of $Q\times I$ is outwards pointing. On morphisms, it sends an embedding $W \hookrightarrow W'$ that is fixed on the boundary to the map $E_W\ra E_{W'}$ induced by postcomposition. That $E$ is a functor of double $\infty$-categories in particular says that, given bordisms $W\colon P\leadsto Q$ and $W'\colon Q\leadsto R$, we have a preferred equivalence $E_{W\cup_QW'}\simeq E_W\cup_{E_{Q\times I}}E_{W'}$ of $(E_{P\times I},E_{R\times I})$-bimodules.

We will often restrict the functor $E$ to the levelwise subcategory $\BordInf(d)$ of $\ncBordInf(d)$ and pass to underlying symmetric monoidal $\infty$-categories (i.e.\,apply the functor $(-)^{(\infty,1)}$ from \cref{sec:double-vs-infty2}, which has little effect on $\BordInf(d)$; see \cref{sec:details-bord}) to obtain a functor of symmetric monoidal $\infty$-categories $E\colon \BordInf(d)^{(\infty,1)}\ra \Mod(d)^{(\infty,1)}$ . Recall from \cref{sec:double-vs-infty2} that the mapping spaces of $\Mod(d)^{(\infty,1)}$ are given as $\Map_{\Mod(d)^{(\infty,1)}}(A,B)\simeq\Mod(d)^{\simeq}_{A,B}$.

\subsection{Descent with respect to Weiss $\infty$-covers}\label{sec:descent}
We now prove a descent property for the mapping spaces in $\Mod(d)_{E_{P\times I},E_{Q\times I}}$ for (possibly noncompact) $(d-1)$-manifolds $P$ and $Q$ without boundary. To state it, given a bordism $W\colon P\leadsto Q$, we write $\cO(W)$ for the poset of open subsets of $W$ containing a neighbourhood of the boundary, ordered by inclusion. A subposet $\cU\subset\cO(M)$ is a \emph{Weiss $\infty$-cover} of $M$ if any finite subset of $M$ is contained in some $O\in\cU$. Such a cover is \emph{complete} if it contains a Weiss $\infty$-cover for $\bigcap_{O\in \cU'} O$ for any finite subset $\cU'\subset\cU$. A functor $F\colon\cO(W)^\op\ra \cC$ to an $\infty$-category $\cC$ satisfies \emph{descent for Weiss $\infty$-covers} if for every nonempty $O\in\cO(W)$ and every complete Weiss $\infty$-cover $\cU\subset \cO(O)$ the diagram $F(O)\ra\{F(U)\}_{U\in \cU}$ is a limit diagram.

\begin{prop}\label{prop:descent}For a nonempty bordism $W\in \ncBordInf(d)_{P,Q}$ and a bimodule $X\in\ModInf(d)_{E_{P \times I},E_{Q \times I}}$, the functor $\Map_{\ModInf(d)_{E_{P \times I},E_{Q \times I}}}(E_{(-)},X)\colon\cO(W)^\op\ra \cS$ satisfies descent for Weiss $\infty$-covers.
\end{prop}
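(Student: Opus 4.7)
The plan is to use the Yoneda-style duality to translate the descent statement into a colimit in the bimodule $\infty$-category, to reduce this colimit to one in $\PSh(\DiscInf_d)$ via the conservative forgetful functor of \cref{lemma:free-modules}, and to conclude by invoking the classical Weiss cosheaf property for spaces of disc embeddings.

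Since $\Map_{\Mod(d)_{E_{P\times I},E_{Q\times I}}}(-,X)$ sends colimits to limits, the descent condition is equivalent to asking that for every $O\in\cO(W)$ and every complete Weiss $\infty$-cover $\cU\subset\cO(O)$ the natural map
\[
	\colim_{U\in\cU}E_U\lra E_O
\]
is an equivalence in $\Mod(d)_{E_{P\times I},E_{Q\times I}}$. By \cref{lemma:free-modules}\ref{enum:free-modules-iii} the forgetful functor $U_{E_{P\times I},E_{Q\times I}}$ to $\PSh(\DiscInf_d)$ reflects equivalences, so it suffices to verify this after passing to underlying presheaves, provided one can compute the colimit downstairs as well. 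To do the latter, I would use \cref{lemma:free-modules}\ref{enum:free-modules-ii}, which applies once the indexing category $\cU$ has weakly contractible nerve. This last property follows from completeness: given any two $U_1,U_2\in\cU$ with $U_1\cap U_2\neq\varnothing$ (which is automatic here, as all elements of $\cO(W)$ share a common boundary neighbourhood), the assumed Weiss $\infty$-cover of $U_1\cap U_2$ inside $\cU$ produces some $V\in\cU$ with $V\subset U_1\cap U_2$, exhibiting $\cU$ as a cofiltered poset, whose nerve is then weakly contractible.

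After these two reductions, it remains to show that the natural map $\colim_{U\in\cU}E_U\ra E_O$ is an equivalence in $\PSh(\DiscInf_d)$. Since colimits of presheaves are computed objectwise and $E_U(D)=\Emb(D,U)$, this amounts to verifying that for every $D=T\times\bfR^d\in\DiscInf_d$ the comparison map
\[
	\colim_{U\in\cU}\Emb(D,U)\lra \Emb(D,O)
\]
is an equivalence in $\cS$. This is the Weiss cosheaf property for spaces of disc embeddings, in the coherent form established by Boavida de Brito--Weiss \cite{BdBWSheaf}: any embedding $e\colon T\times\bfR^d\hookrightarrow O$ can be rescaled isotopically to have image inside an arbitrarily small neighbourhood of the finite set $e(T\times\{0\})$, which by the Weiss property lies inside some $U\in\cU$, and a coherent refinement of this argument produces the claimed equivalence.

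The main obstacle I anticipate is in the last step: our presheaves $E_U=\Emb(-,U)$ are indexed over open subsets containing a fixed neighbourhood of $\partial W$, and the embeddings in $\ncBord(d)_{P,Q}$ are required to be standard near the ends. One therefore needs either a relative version of the Boavida de Brito--Weiss cosheaf theorem adapted to this boundary situation, or a reduction to the standard result by restricting to a cofinal subclass of Weiss covers respecting the chosen boundary collars.
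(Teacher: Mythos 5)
Your proposal follows the same route as the paper's proof: reduce descent to the cocolimit statement $\colim_{U\in\cU}E_U\xrightarrow{\ \simeq\ }E_O$ in the bimodule category, transport it along the forgetful functor of \cref{lemma:free-modules} using weak contractibility of the nerve of the cofiltered poset $\cU$, compute colimits of presheaves objectwise, and conclude from the Weiss cosheaf property of $\Emb(T\times\bfR^d,-)$. (The paper cites \cite[Lemma~6.7]{KnudsenKupers} for this last step rather than \cite{BdBWSheaf}, but the content is the same.) One small simplification: \cref{lemma:free-modules}\ref{enum:free-modules-ii} (detects colimits over weakly contractible diagrams) already gives what you need by itself; invoking \ref{enum:free-modules-iii} in addition is harmless but redundant, since ``detects'' means a cocone upstairs is a colimit cocone if and only if its image downstairs is.

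However, the ``main obstacle'' you flag at the end is not actually there, and you should convince yourself of this rather than leaving it as a worry. You are conflating the mapping spaces of $\ncBordInf(d)_{P,Q}$ (which indeed involve embeddings fixed near the ends) with the values of the presheaves $E_U$ on $\DiscInf_d$. The latter are simply $E_U(T\times\bfR^d)=\Emb(T\times\bfR^d,U)$, where $T\times\bfR^d$ is a disjoint union of open Euclidean spaces with no boundary and no boundary condition on the embedding. After the objectwise reduction, the claim to verify is precisely that $\{\Emb(T\times\bfR^d,U)\}_{U\in\cU}\ra\Emb(T\times\bfR^d,O)$ is a colimit in $\cS$, which is the ordinary (absolute) Weiss $\infty$-cosheaf property for configuration spaces of framed points in $O$ — no relative version is needed, and the collar constraints built into $\cO(W)$ play no role once the presheaves are unpacked.
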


\begin{proof}
It suffices to show that for a given complete Weiss $\infty$-cover $\cU\subset\cO(O)$ of nonempty open subset $O\in\cO(W)$, the diagram $\{E_U\}_{U\in\cU}\ra E_O$ is a colimit diagram in $\ModInf(d)_{E_{P \times I},E_{Q \times I}}=\Bimod_{E_{P \times I},E_{Q \times I}}(\PSh(\DiscInf_d))$. Since $\cU$ is cofiltered, its nerve is weakly contractible so by \cref{lemma:free-modules} \ref{enum:free-modules-ii}, it suffices to show that the diagram is a colimit diagram after applying the forgetful functor to $\PSh(\DiscInf_d)$. The result is is the diagram $\{\Emb(-,U)\}_{U\in \cU}\ra \Emb(-,O)$ in $\PSh(\DiscInf_d)$, so as colimits in functor categories are computed objectwise \cite[5.1.2.3]{LurieHTT}, it suffices to show  that $\{\Emb(T\times \bfR^d,U)\}_{U\in \cU}\ra \Emb(T\times \bfR^d ,O)$ is a colimit diagram in $\cS$ for all finite sets $T$, or equivalently, that it is a homotopy colimit diagram in the Kan--Quillen model structure on simplicial sets. This holds by a well-known argument; see the proof of \cite[Lemma 6.4]{KnudsenKupers}.
\end{proof}

\begin{rem}The assumption that $W$ is nonempty is necessary: for $W=\varnothing$, the empty cover $\cU = \varnothing$ is a cover of $W$ but $E_W$ is not the colimit of the empty diagram as $E_W(\varnothing) \simeq \ast$.
\end{rem}

\subsection{Relationship to embedding calculus}  \label{sec:embedding-calculus} Using \cref{prop:descent}, we now relate the functor $\ncBordInf_{P,Q}\ra \Mod(d)_{P\times I,Q\times I}$ induced by $E$ on mapping $\infty$-categories to the map $\Emb_\partial(W,W')\ra T_\infty\Emb_\partial(W,W')$ provided by \emph{embedding calculus} as introduced in \cite{WeissImmersion,WeissImmersionErrata}.

\begin{thm}\label{thm:emb-calc}
Given bordisms $W,W'\in\ncBordInf(d)_{P,Q}$, the map
\begin{equation}\label{equ:emb-calc}
	\Map_{\ncBordInf(d)_{P,Q}}(W,W')\lra \Map_{\Mod(d)_{E_{P \times I},E_{Q \times I}}}(E_W,E_{W'})
\end{equation}
agrees up to equivalence with the map $\Emb_\partial(W,W') \to T_\infty\Emb_\partial(W,W')$ from \cite{WeissImmersion}.
\end{thm}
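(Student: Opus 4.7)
The plan is to identify both the source and target of \eqref{equ:emb-calc} with the two endpoints of the embedding calculus tower, and then to see that the map between them is induced by postcomposition with embeddings on both sides. By \cref{lem:mapping-space-emb} the source is already $\Emb_\partial(W,W')$, so the substantive work is in the target.

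First, I would rewrite the bimodule mapping space $\Map_{\Mod(d)_{E_{P\times I},E_{Q\times I}}}(E_W,E_{W'})$ as a limit. Let $\cU\subset \cO(W)$ be a complete Weiss $\infty$-cover by opens $U$ of the special form $U = c(P)\sqcup T_U\times\bfR^d\sqcup c(Q)$, where $c(P),c(Q)$ are fixed collars of the boundary and $T_U$ is a finite set (such covers exist and are cofinal in all complete Weiss covers). By \cref{prop:descent}, the left-hand presheaf sends this cover to a limit diagram, so
\[
\Map_{\Mod(d)_{E_{P\times I},E_{Q\times I}}}(E_W,E_{W'})\;\simeq\;\lim_{U\in\cU^{\op}}\Map_{\Mod(d)_{E_{P\times I},E_{Q\times I}}}(E_U,E_{W'}).
\]
For each such $U$ the bimodule $E_U$ is freely generated by the ``interior piece'' $E_{T_U\times\bfR^d}\in\PSh(\DiscInf_d)$: there is a canonical equivalence $E_U \simeq F_{E_{P\times I},E_{Q\times I}}(E_{T_U\times\bfR^d})$ induced by gluing on the collars. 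Applying the free--forgetful adjunction of \cref{lemma:free-modules} \ref{enum:free-modules-i} identifies each term in the limit with $\Map_{\PSh(\DiscInf_d)}(E_{T_U\times\bfR^d},E_{W'})$. Using the Yoneda lemma and unpacking $E_{W'}$, this is in turn $\Emb(T_U\times\bfR^d,W')$, but with the appropriate boundary condition recorded by the $\cU$-indexing (i.e.\,the collared pieces of $U$ mapping to fixed collars of $W'$).

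Second, I would match the resulting limit with Goodwillie--Weiss' $T_\infty\Emb_\partial(W,W')$ via the presheaf/sheaf model of Boavida de Brito--Weiss \cite{BdBWSheaf}. In their model, $T_\infty\Emb_\partial(W,W')$ is precisely the limit, over the Weiss poset of opens of $W$ that are (rel.\,collar) disjoint unions of discs, of the boundary-preserving embedding spaces into $W'$. The indexing posets are naturally equivalent (both are the opposite of a cofinal subposet of $\cO(W)$ cut out by a Weiss condition), and the diagrams agree under Yoneda. This extends the Boavida--Weiss identification to the case with boundary, with the boundary condition being enforced algebraically by the $(E_{P\times I},E_{Q\times I})$-bimodule structure rather than geometrically as in \cite{BdBWSheaf}, which is why one has to go through the free bimodule reduction above.

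Third, I would check that the map \eqref{equ:emb-calc} corresponds under these identifications to the canonical map $\Emb_\partial(W,W')\to T_\infty\Emb_\partial(W,W')$. By construction of $E$ in \cref{sec:the-functor} (in particular the description in \cref{sec:functor-e-summary}), the map \eqref{equ:emb-calc} is induced by postcomposition of embeddings, and the unit of the embedding calculus tower is the tautological map that records the restrictions of an embedding to open discs; both are given by the evident natural transformation from the constant diagram at $\Emb_\partial(W,W')$ to $\{\Emb(T_U\times\bfR^d,W')\}_U$.

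The principal obstacle is the second step: making the comparison with \cite{BdBWSheaf} precise for manifolds with nonempty boundary, where their theorem does not directly apply (as flagged in \cref{rem:BdPW-other-boundary}). The technical heart is therefore verifying that, after the free bimodule reduction in the first step, the Weiss cover $\cU$ of $W$ restricted to the subposet of opens of the form (collars of $P,Q$) $\sqcup$ (disjoint union of discs) is cofinal in the indexing poset used by Boavida--Weiss to model $T_\infty\Emb_\partial$, and that the natural transformation of diagrams is compatible with the collar identifications on both ends.
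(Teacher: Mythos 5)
Your main computational steps coincide with the paper's own proof: the paper also works with the poset $\cU$ of opens $U=c(P)\cup D\cup c(Q)$ with $D\cong T\times\bfR^d$, uses \cref{prop:descent} to identify $\Map_{\Mod(d)_{E_{P\times I},E_{Q\times I}}}(E_W,E_{W'})$ with the limit over $\cU$ of the corresponding mapping spaces, identifies each term with $\Map_{\PSh(\DiscInf_d)}(E_D,E_{W'})\simeq \Emb(D,W')$ via the freeness of $E_U$ as an $(E_{P\times I},E_{Q\times I})$-bimodule together with the Yoneda lemma, and observes that under these identifications the comparison map is restriction of embeddings. Two small caveats: the freeness $E_U\simeq F_{E_{P\times I},E_{Q\times I}}(E_D)$ is not automatic from ``gluing on collars''---the paper verifies it from the explicit construction of $E^{\geo}$, using parts \ref{enum:free-modules-iii}, \ref{enum:free-modules-i} and \ref{enum:free-modules-iv} of \cref{lemma:free-modules}---and after the free--forgetful adjunction the term is just $\Emb(D,W')$ with no residual boundary condition; the boundary data is absorbed into the bimodule structure and only reappears through the functoriality in $U$.

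Where you diverge is the identification of $\lim_{U\in\cU}\Emb_\partial(U,W')$ with $T_\infty\Emb_\partial(W,W')$. The paper does not route this through \cite{BdBWSheaf}: it invokes Weiss's original description of $T_\infty\Emb_\partial$ as the homotopy limit of $\Emb_\partial(U,W')$ over exactly such a poset of special opens, citing \cite[Sections 5, 10]{WeissImmersion}---which is also the model the theorem statement refers to---so this step is a citation rather than the ``principal obstacle'' you describe. Your detour through the Boavida de Brito--Weiss presheaf model is precisely what \cref{rem:BdPW-other-boundary} warns does not apply directly in the presence of boundary, and it is genuinely nontrivial: the paper only compares with that model afterwards, in \cref{prop:comparison-to-pedromichael}, using the present theorem (via \cref{rem:initial-among-rep-presheaves} and descent) as input. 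So your outline is sound, but the comparison you single out as the technical heart is an avoidable difficulty, left unresolved in your proposal; replacing it by Weiss's characterisation of the tower closes the argument and recovers the paper's proof.
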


\begin{proof}
We consider the poset $\cU$ of open subsets $U\subset W$ that are unions $U=c(P) \cup D\cup c(Q)$ of three disjoint open subsets of $W$ where $c(P)$ and $c(Q)$ are open collars of the boundary components $P$ and $Q$ and $D$ is diffeomorphic to $T\times\bfR^d$ for some finite set $T$, ordered by inclusion. Considering $U\in\cU$ as an object in $\ncBordInf(d)_{P,Q}$, we obtain a commutative square in $\cS$
\[
	\begin{tikzcd}
	\Map_{\ncBordInf(d)_{P,Q}}(W,W')\dar[swap]{\circled{1}}\rar&\Map_{\Mod(d)_{E_{P \times I},E_{Q \times I}}}(E_W,E_{W'}))\dar{\circled{3}}\\
	\lim_{U\in\cU} \big(\Map_{\ncBordInf(d)_{P,Q}}(U,W')\big)\rar{\circled{2}}& \lim_{U\in\cU} \big(\Map_{\Mod(d)_{E_{P \times I},E_{Q \times I}}}(E_{U},E_{W'})\big).
	\end{tikzcd}
\]
whose vertical arrows are induced by restriction. By \cref{lem:mapping-space-emb} the map $\circled{1}$ agrees with the restriction map $\Emb_\partial(W,W')\ra \lim_{U\in\cU}\Emb_\partial(U,W')$ which in turn agrees with the map $\Emb_\partial(W,W')\ra T_\infty\Emb_\partial(W,W')$ by the discussion in \cite[Sections 5, 10]{WeissImmersion}, so the claim follows once we show that $\circled{2}$ and $\circled{3}$ are equivalences. As $\cU\subset\cO(W)$ is a complete Weiss $\infty$-cover, the map $\circled{3}$ is an equivalence by \cref{prop:descent}. To prove that $\circled{2}$ is an equivalence, we show that for all $U\in\cU$ the individual maps before taking limits
\begin{equation}\label{equ:emb-to-bimodule-map}
E\colon \Emb_\partial(U,W')\simeq \Map_{\ncBordInf(d)_{P,Q}}(U,W')\lra \Map_{\Mod(d)_{E_{P \times I},E_{Q\times I}}}(E_U,E_{W'})
\end{equation}
are equivalences. To give a convincing proof of this, we rely on the specific construction of $E$ from \cref{sec:the-functor} and refer to that section for the notation. Recall that the functor $E$ arose from restricting the codomain of the composition of simplicial objects in $\infty$-categories
\begin{equation}\label{equ:egeo-composition-embcalcproof}\ncBordInf(d) \xlra{E^{\geo}} \overline{\ALG}(\ManInf_d)\xrightarrow{(\iota^*\circ \yon)_*} \overline{\ALG}(\PSh(\DiscInf_d))\end{equation}
where $(\iota^*\circ \yon)\colon \ManInf_d\ra \PSh(\DiscInf_d)$ is the Yoneda embedding followed by restriction along the inclusion $\iota\colon \DiscInf_d \hookrightarrow \ManInf_d$. This factorisation induces a commutative diagram
\[\begin{tikzcd}
	\ncBordInf(d)_{P,Q}\dar{\inc}\rar{E^{\geo}}&[10pt]\Bimod_{E^{\geo}(P),E^{\geo}(Q)}(\ManInf_d)\dar{U_{E^{\geo}(P),E^{\geo}(Q)}}\rar{(\iota^*\circ \yon)_*}&[10pt] \Mod(d)_{E_{P \times I},E_{Q \times I}}\dar{U_{E_P,E_Q}}\\
	\ncBordInf(d)_{[1]}\rar &\ManInf_d\rar{\iota^*\circ \yon}&\PSh(\DiscInf_d).
\end{tikzcd}\]
where the top composition is obtained from \eqref{equ:egeo-composition-embcalcproof} by evaluation at $[1]$ and taking fibres of the face maps $(d_0,d_1)$, the middle and rightmost vertical map are the forgetful maps from \cref{lemma:free-modules} and the bottom left horizontal map is the coherent nerve of the functor $\ncBord(d)_{[1]}\ra \Man_d$ of $\Kan$-enriched categories that sends a $[1]$-walled manifold $(W,\mu)$ to $W|_{(\mu(0)-\epsilon,\mu(1)+\epsilon)}$. In particular, for $U=c(P)\cup D \cup c(Q)\in\cU$ considered as an object in $\ncBordInf(d)_{P,Q}$, the inclusion $D\subset U$ viewed as a morphism in $\ManInf_d$ gives a morphism $D\ra U_{E^{\geo}(P),E^{\geo}(Q)}(E^{\geo}(U))$ in $\ManInf_d$, so by adjunction a morphism $F_{E^{\geo}(P),E^{\geo}(Q)}(D)\ra E^{\geo}(U)$ in $\Bimod_{E^{\geo}(P),E^{\geo}(Q)}(\ManInf_d)$ which we claim to be an equivalence. By \cref{lemma:free-modules} \ref{enum:free-modules-iii}, it suffices to show that the image 
\[U_{E^{\geo}(P),E^{\geo}(Q)}(F_{E^{\geo}(P),E^{\geo}(Q)}(D))\lra U_{E^{\geo}(P),E^{\geo}(Q)}(E^{\geo}(U))=U=c(P)\cup D\cup c(Q)\]
under $U_{E^{\geo}(M),E^{\geo}(N)}$ is an equivalence. This is a consequence of the second part of \cref{lemma:free-modules} \ref{enum:free-modules-i}. Applying $(\iota^*\circ y)$ and using \cref{lemma:free-modules} \ref{enum:free-modules-iv}, it follows that the natural map $F_{E_{P \times I},E_{Q \times I}}(E_D)\ra E_U$ in $\Mod(d)_{E_{P \times I},E_{Q \times I}}$ is an equivalence. As $F_{E_{P \times I},E_{Q \times I}}$ is left-adjoint to the forgetful functor $U_{E_{P \times I},E_{Q \times I}}$, the map \eqref{equ:emb-to-bimodule-map} thus has the form
\[
	\Emb_\partial(c(M)\cup D\cup c(N),W')=\Emb_\partial(U,W')\lra \Map_{\PSh(\DiscInf_d)}(E_D,E_{W'}).
\]
and is given by the restriction map $\Emb_\partial(c(M)\cup D\cup c(N),W')\ra \Emb_\partial(D,W')$ followed by the map induced by the Yoneda embedding. The former is an equivalence by the contractibility of the space of collars and the latter is an equivalence by the Yoneda lemma since $D$ lies in the full subcategory $\DiscInf_d\subset \ManInf_d$, so the composition is an equivalence.
\end{proof}

\begin{rem}\label{rem:initial-among-rep-presheaves}
The first part of the previous proof in particular shows that for bordisms $W\in\ncBordInf_{P,Q}$ that are diffeomorphic, relative to the ends, to $[0,1)\times P\sqcup T\times\bfR^d \sqcup (-1,0]\times Q$ for some finite set $T$, the map \eqref{equ:emb-calc} is an equivalence for all bordisms $W'\in\ncBordInf_{P,Q}$. In particular, for $T=\varnothing$, we see from the contractibility of the space of collars that both the source and target of this map are both contractible.  
\end{rem}

Combining \cref{thm:emb-calc} with the convergence of embedding calculus in handle codimension $\ge3$ due to Goodwillie, Klein, and Weiss (see \cite[Fact 5.1]{GoodwillieWeiss} and \cite{GoodwillieKlein}), we conclude:

\begin{cor}\label{cor:convergence} Fix bordisms $W,W'\in\ncBordInf(d)_{P,Q}$. If $W$ can be obtained from a closed collar of $P\sqcup Q\cong \partial W$ by attaching handles of index $\le d-3$, then the map
\[
	\Emb_\partial(W,W')\simeq\Map_{\ncBordInf(d)_{P,Q}}(W,W')\lra \Map_{\Mod(d)_{E_{P \times I},E_{Q \times I}}}(E_W,E_{W'})\simeq T_{\infty}\Emb_\partial(W,W')
\]
induced by $E$ is an equivalence.
\end{cor}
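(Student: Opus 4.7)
The proof will be short since essentially all the work has been done: both ingredients are already available, and the corollary is just the combination.

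The plan is as follows. First, I would invoke Theorem~\ref{thm:emb-calc} to identify the map induced by $E$ on mapping spaces with the embedding calculus approximation map
\[
\Emb_\partial(W,W')\lra T_\infty\Emb_\partial(W,W')
\]
of Weiss. This reduces the claim to showing that the latter map is an equivalence under the stated handle-index hypothesis on $W$.

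Second, I would apply the Goodwillie--Klein convergence theorem for embedding calculus (cited in the excerpt as \cite[Fact 5.1]{GoodwillieWeiss} together with \cite{GoodwillieKlein}), which states that the map $\Emb_\partial(W,W')\to T_\infty\Emb_\partial(W,W')$ is an equivalence whenever the handle codimension, defined as $\dim(W')$ minus the maximum handle index of $W$ (relative to a collar of $\partial W$), is at least $3$. In our situation both $W$ and $W'$ are $d$-dimensional, and by assumption $W$ is built from a closed collar of $P\sqcup Q$ by attaching handles of index at most $d-3$, so the handle codimension is at least $d-(d-3)=3$. Convergence therefore applies and yields the desired equivalence.

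There is essentially no obstacle here: the content is packaged into Theorem~\ref{thm:emb-calc} (which supplies the comparison between the bordism/Morita map and the Taylor tower map) and the classical Goodwillie--Klein theorem (which supplies convergence). The only minor point of care is to check that the handle decomposition hypothesis used in this paper matches the handle codimension convention in \cite{GoodwillieKlein}, i.e.\ that handles are measured relative to the full boundary $\partial W=P\sqcup Q$ (not, say, only relative to $P$), which is precisely how the statement has been formulated.
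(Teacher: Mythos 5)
Your proposal is correct and is exactly the paper's argument: the corollary is stated immediately after the sentence ``Combining \cref{thm:emb-calc} with the convergence of embedding calculus in handle codimension $\ge3$ due to Goodwillie, Klein, and Weiss\ldots'' with no further proof given. Nothing to add.
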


\subsubsection{Comparison with the model of Boavida de Brito--Weiss'}\label{sec:comparison-to-pedromichael}\cref{thm:emb-calc} shows that the map \eqref{equ:emb-calc} is a model for embedding calculus, so agrees up to weak equivalence with any other model Among the previously established models, that of Boavida de Brito--Weiss \cite{BdBWSheaf} is closest to ours. Like ours, their model enhances the embedding calculus approximation $\Emb_\partial(W,W')\ra T_\infty\Emb_\partial(W,W')$ to a functor on $\ncBordInf(d)_{P,Q}$. This section serves to extend \cref{thm:emb-calc} to a comparison of the \emph{functors} as opposed to just the individual maps on mapping spaces. This will in particular show that the monoid structures on $T_\infty\Emb_\partial(W,W)$ induced by composition in our and their model agree, which we will use in \cref{sec:conf-cats}.

For this, we write $(\DiscInf_d)_{P,Q}\subset \ncBordInf(d)_{P,Q}$ for the full subcategory of those bordisms that are diffeomorphic relative to the boundary to $P\times[0,1)\sqcup T\times\bfR^d\sqcup (-1,0]\times Q$ for some finite set $T$. When translated from $\Kan$-enriched categories to $\infty$-categories, Boavida de Brito's model for the embedding calculus approximation $\Emb_\partial(W,W')\ra T_\infty\Emb_\partial(W,W')$ is the map on mapping spaces between $W$ and $W'$ induced by the composition
\[
	\ncBordInf(d)_{P,Q}\xlra{\yon} \PSh(\ncBordInf(d)_{P,Q})\xlra{\iota^*}\PSh((\DiscInf_d)_{P,Q})
\] 
of the Yoneda embedding with the inclusion  $\iota\colon (\DiscInf_d)_{P,Q}\hookrightarrow \ncBordInf(d)_{P,Q}$ (cf.\,Section 9 loc.cit.).

\begin{prop}\label{prop:comparison-to-pedromichael}
There is an equivalence of $\infty$-categories 
\[
	\varphi\colon\Mod(d)_{E_{P \times I},E_{Q \times I}}\xlra{\simeq} \PSh((\DiscInf_d)_{P,Q})
\] 
which fits into a commutative diagram of $\infty$-categories
\[\begin{tikzcd}[row sep=0.2cm]
	&\ncBordInf(d)_{P,Q}\arrow[dl,"E",swap]\arrow[dr,"\iota^*\circ \yon"]&\\
	\Mod(d)_{E_{P \times I},E_{Q \times I}}\arrow[rr,"\varphi","\simeq"',swap]&& \PSh((\DiscInf_d)_{P,Q})
\end{tikzcd}.\]
\end{prop}

\begin{proof}The functor $\varphi$ is defined as the  composition
\vspace{-0.1cm}
\[\Mod(d)_{E_{P \times I},E_{Q \times I}} \xlra{\yon} \PSh(\Mod(d)_{E_{P \times I},E_{Q \times I}})\xlra{E^*} \PSh(\ncBordInf(d)_{P,Q})\xlra{\iota^*}
	\PSh((\DiscInf_d)_{P,Q}) \]
With this choice, the canonical natural transformation $\yon\rightarrow E^*\circ \yon \circ E$ induces a natural transformation from the right-hand diagonal functor $(\iota^*\circ \yon)$ in the claimed triangle to $(\varphi\circ E)$, and we will first show that this is an equivalence to obtain commutativity of the triangle. On a bordism $W\in \ncBordInf(d)_{P,Q}$ this natural transformation is the map in $\PSh((\DiscInf_d)_{P,Q})$ induced by $E$ 
\[\Emb_\partial(-,W)\simeq \Map_{\ncBordInf(d)_{P,Q}}(-,W)\lra\Map_{\Mod(d)_{E_{P \times I},E_{Q \times I}}}(E_{(-)},E_W),\]
which is an equivalence by \cref{rem:initial-among-rep-presheaves}. Note that this in particular shows that $E\circ \iota$ is fully faithful. 

It remains to show that $\varphi$ is an equivalence. We will use that for $U\in (\DiscInf_d)_{P,Q}$, we have $E_{U}\simeq F_{E_{P \times I},E_{Q \times I}}(E_{T_U\times\bfR^d})\in \Mod(d)_{E_{P \times I}}$ as a result of the final part of the proof of \cref{thm:emb-calc}; here $T_U$ is the finite set such that $U\cong  P\times[0,1)\sqcup T_U\times\bfR^d\sqcup (-1,0]\times Q$. This property in particular implies that $\varphi$ is conservative, using that $U_{E_{P \times I},E_{Q \times I}}$ is conservative by \cref{lemma:free-modules} \ref{enum:free-modules-iii}, and that a map of presheaves is an equivalence if it is one objectwise. To show that $\varphi$ is an equivalence, it thus suffices to prove that it has a fully faithful left adjoint. This left adjoint is given by the colimit preserving extension $\smash{\overline{E\circ \iota}\colon \PSh(\DiscInf_d)_{P,Q})\ra \Mod(d)_{E_{P \times I},E_{Q \times I}}}$ along the Yoneda embedding of the fully faithful functor  $E\circ \iota$, using that $\Mod(d)_{E_{P \times I},E_{Q \times I}}\simeq \BMod_{E_{P \times I},E_{Q \times I}}(\PSh(\DiscInf_d))$ has colimits as a result of \cref{lemma:free-modules}. By \cite[5.1.6.10]{LurieHTT}, this left adjoint $\overline{E\circ \iota}$ is fully faithful if  $E_{U}\in \Mod(d)_{E_{P \times I},E_{Q \times I}}$ is completely compact for all $U\in (\DiscInf_d)_{P,Q}$, i.e.\,if ${\Map_{\Mod(d)_{E_{P \times I},E_{Q \times I}}}(E_U,-)\colon \Mod(d)_{E_{P \times I},E_{Q \times I}}\ra\cS}$ preserves small colimits. Since $E_{U}\simeq F_{E_{P \times I},E_{Q \times I}}(E_{T_U\times\bfR^d})$, this condition is by adjunction equivalent to $\smash{\Map_{\PSh(\DiscInf_d)}(E_{T_U\times\bfR^d},U_{E_{P \times I},E_{Q \times I}}(-))}$
preserving small colimits which indeed holds, by the Yoneda lemma and the fact that $U_{E_{P \times I},E_{Q \times I}}$ preserves colimits by \cref{lemma:free-modules} \ref{enum:free-modules-ii}.
\end{proof}

\begin{rem}
Considering bordisms $W\colon P\leadsto Q$ as bordisms $\varnothing \leadsto P\sqcup Q$ or $P\sqcup Q\leadsto \varnothing$ leads to equivalences between $\ncBordInf(d)_{P,Q}$, $\ncBordInf(d)_{\varnothing,P\sqcup Q},$ and $\ncBordInf(d)_{P\sqcup Q,\varnothing}$, and similarly for $(\DiscInf_d)_{P\sqcup Q}$---compatible with the functor $(\iota^*\circ y)$. It thus follows from \cref{prop:comparison-to-pedromichael} that $E\colon \ncBordInf(d)_{P,Q}\ra  \Mod(d)_{E_{P \times I},E_{Q \times I}}$ agrees up to equivalences with the analogous functors involving $\Mod(d)_{E_{\varnothing},E_{P\times I\sqcup Q \times I}}$ or $\Mod(d)_{E_{P\times I\sqcup Q \times I,E_{\varnothing}}}$. That the latter two categories are equivalent can also be deduced from \cref{rem:lurie-bimodules} and \cite[4.6.3.11]{LurieHA} (no $(-)^\rev$ appears since we implicitly used the anti-homomorphism of $E_{P \times I}$ or $E_{Q \times I}$ by reflection in $I$).
\end{rem}

\subsection{Isotopy extension for $E$}\label{sec:isotopy-extension}
A key input in the proof of \cref{bigthm:2-type-invariance} in \cref{sec:2-type-invariance-sdisc} will be a version of the isotopy extension theorem for the mapping spaces in $\ModInf(d)_{P,Q}$.  In view of \cref{thm:emb-calc}, this amounts to an isotopy extension theorem for embedding calculus. Such a theorem has been proved by Knudsen--Kupers \cite[Theorem 6.1]{KnudsenKupers}, but instead of reducing the version we need from theirs, it is more convenient to give a direct proof based on their strategy.

The setting is as follows. We fix two compact bordisms $W\colon P\leadsto Q$,  $W'\colon R\leadsto S$, two possibly noncompact bordisms $M,N\colon Q\leadsto R$, and an open collar neighbourhood $c(M)\subset M$ viewed as a bordism $Q\leadsto R$. 
Writing $c$ for the inclusion $c(M)\subset M$ viewed as a morphism in $\ncBordInf(d)_{Q,R}$, we have a commutative diagram
\[
\begin{tikzcd}[column sep=1.5cm,ar symbol/.style = {draw=none,"\textstyle#1" description,sloped},
	equivalent/.style = {ar symbol={\simeq}}]
	\Map_{\ncBordInf(d)_{Q,R}}\big(M,N\big)\dar{(-)\circ c}\rar{W\cup_Q(-)\cup_{R}W'}\dar\arrow[d, phantom, shift left=3cm, "\circled{$1$}"]& \Map_{\ncBordInf(d)_{P,S}}\big(W\cup_QM\cup_RW',W\cup_QN\cup_RW'\big)\dar{(-)\circ {(W\cup_Qc\cup_R{W'})}}\\
	\Map_{\ncBordInf(d)_{Q,R}}\big(c(M),N\big)\arrow[d,equivalent]\rar{W\cup_Q(-)\cup_{R}W'} &\Map_{\ncBordInf(d)_{P,S}}\big(W\cup_Qc(M)\cup_RW',W\cup_QN\cup_RW'\big)\\[-15pt]
	\ast&&
\end{tikzcd}\vspace{-0.2cm}
\]
which maps via the functor $E\colon \ncBordInf(d)\ra \Mod(d)$ to the corresponding square for $\Mod(d)$ 
\[\hspace{-0.3cm}
\begin{tikzcd}[column sep=2.5cm,ar symbol/.style = {draw=none,"\textstyle#1" description,sloped},
	equivalent/.style = {ar symbol={\simeq}}]
	\Map_{\Mod(d)_{E_{Q\times I},E_{R\times I}}}\big(E_M,E_N\big)\dar{(-)\circ E_c}\arrow[d, phantom, shift left=3cm, "\circled{$2$}"]\rar{E_{W}\cup_{E_{Q\times I}}(-)\cup_{E_{R\times I}}E_{W'}}\dar& \Map_{\Mod(d)_{E_{P\times I},E_{S\times I}}}\big(E_{W\cup_QM\cup_RW'},E_{W\cup_QN\cup_RW'}\big)\dar{(-)\circ {E_{W\cup_Qc\cup_R{W'}}}}\\
	\Map_{\Mod(d)_{E_{Q\times I},E_{R\times I}}}\big(E_{c(M)},E_N\big)\arrow[d,equivalent]\rar{E_{W}\cup_{E_{Q\times I }}(-)\cup_{E_{R\times I}}E_{W'}} &\Map_{\Mod(d)_{E_{P\times I},E_{S\times I}}}\big(E_{W\cup_Qc(M)\cup_RW'},E_{W\cup_QN\cup_RW'}\big)\\[-15pt]
	\ast&
\end{tikzcd}\vspace{-0.2cm}
\]
Note that the bottom left corners in both squares are contractible by \cref{rem:initial-among-rep-presheaves}. Moreover, in view of \cref{lem:mapping-space-emb} the square $\circled{1}$ has up to equivalence the form
\[\begin{tikzcd}[ar symbol/.style = {draw=none,"\textstyle#1" description,sloped},
	equivalent/.style = {ar symbol={\simeq}}, column sep=1.5cm]
	\Emb_\partial\big(M,N\big)\dar{(-)\circ c}\rar{W\cup_Q(-)\cup_{R}W'}\dar&[10pt]\Emb_\partial\big(W\cup_QM\cup_RW',W\cup_QN\cup_RW'\big)\dar{(-)\circ {(W\cup_Qc\cup_R{W'})}}\\
	\Emb_\partial\big( c(M),N\big)\arrow[d,equivalent]\rar{W\cup_Q(-)\cup_{R}W'} &\Emb_\partial\big(W\cup_Qc(M)\cup_RW',W\cup_QN\cup_RW'\big)\\[-15pt]
	\ast&
\end{tikzcd}\vspace{-0.2cm}\]
As the restriction map $\Emb_\partial(W\cup_Qc(M)\cup_RW',W \cup_Q N\cup_R W')\ra \Emb_{P\sqcup S}(W\sqcup W',W\cup_Q N\cup_R W')$ is an equivalence and $W\sqcup W'$ is compact, it follows from the parametrised isotopy extension theorem that this square is cartesian, so the same holds for the square $\circled{1}$. 

The isotopy extension result we will prove says that the same holds for $\circled{2}$ under a certain condition on the convergence of embedding calculus, namely that the map from the bottom right of $\circled{1}$ to the bottom right corner of $\circled{2}$ is an equivalence if $M$ is replaced by $C_k\coloneqq c(M)\sqcup \unl{k} \times \bfR^d\in \ncBordInf(d)_{Q,R}$ for $\unl{k} = \{1,2,\ldots,k\}$ and all $k\ge0$. We denote by $\circled{2}^\simeq$ the square obtained from $\circled{2}$ by replacing the categories $\Mod(d)_{E_{Q \times I},E_{R \times I}}$ and $\Mod(d)_{E_{P \times I},E_{S\times I}}$ in the top row by their cores. 

\begin{thm}\label{thm:isotopy-extension}
If the map induced by $E$
\[\hspace{-0.1cm}
	\Map_{\ncBordInf(d)_{P,S}}\big(W\cup_QC_k\cup_RW',W\cup_QN\cup_RW'\big)\ra\Map_{\Mod(d)_{E_{P \times I},E_{S\times I}}}\big(E_{W\cup_QC_k\cup_RW'},E_{W\cup_QN\cup_RW'}\big)
\]
is an equivalence for all $k\ge0$, then the square $\circled{2}$ is cartesian. If this assumption in addition holds for $M$ in place of $N$, then the square $\circled{2}^\simeq$ is also cartesian.\end{thm}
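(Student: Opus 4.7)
The plan is to exhibit $E$ as a levelwise equivalence from $\circled{1}$ to $\circled{2}$ (for the first claim) and from $\circled{1}^{\simeq}$ to $\circled{2}^{\simeq}$ (for the second), and then transfer cartesianness from the left-hand squares (which hold by the parametrised isotopy extension discussed above) to the right-hand ones. The key ingredient will be Weiss descent on both sides: \cref{prop:descent} for $\Mod(d)$ and the classical cosheaf property of embedding spaces for $\ncBordInf(d)$, applied to the complete Weiss $\infty$-cover of $W\cup_Q M\cup_R W'$ (respectively of $M$) by the open subsets $W\cup_Q C_k\cup_R W'$ (respectively $C_k$) obtained by placing $k$ disjoint open discs inside $M\setminus c(M)$.

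For the first claim I would verify that all four vertical components of the map of squares $E\colon \circled{1}\to\circled{2}$ are equivalences. The bottom-left map $\ast\to\ast$ is trivial, and the bottom-right is exactly the hypothesis specialised to $k=0$, since $C_0 = c(M)$. For the top-right corner, Weiss descent on both sides presents the source and target as the respective limits over $k\geq 0$ of $\Emb_\partial(W\cup_Q C_k\cup_R W', W\cup_Q N\cup_R W')$ and $\Map_{\Mod(d)_{E_{P\times I}, E_{S\times I}}}(E_{W\cup_Q C_k\cup_R W'}, E_{W\cup_Q N\cup_R W'})$, and the hypothesis identifies these termwise via $E$. The top-left is analogous, covering $M$ by the $C_k$ directly; here no extra hypothesis is needed, since each $C_k\in(\DiscInf_d)_{Q,R}$ and the termwise equivalence follows from \cref{rem:initial-among-rep-presheaves}. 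With all four vertical components equivalences, the cartesianness of $\circled{2}$ is inherited from that of $\circled{1}$.

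For the second claim I would first observe that $\circled{1}^{\simeq}$ is cartesian. Given that $\circled{1}$ is, this will reduce to the standard fact that $\phi\in\Emb_\partial(M,N)$ is an isotopy equivalence if and only if $W\cup_Q\phi\cup_R W'$ is one: the forward direction is clear, and the reverse follows from parametrised isotopy extension applied to the compact piece $W\sqcup W'$, together with the observation that restriction to the complement recovers $\phi$. The map $E\colon \circled{1}^{\simeq}\to\circled{2}^{\simeq}$ shares its bottom corners with the situation in the first claim, so it remains to check that the top identifications restrict to equivalences on the subspaces of equivalences. Equivalently, one must show that for $\phi$ in the relevant embedding space, $E_\phi$ is an equivalence in $\Mod(d)$ if and only if $\phi$ is an isotopy equivalence of bordisms.

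The forward direction is immediate from the functoriality of $E$. For the reverse direction, I would use the first part's identifications symmetrically: combining the hypothesis for $N$ with the additional hypothesis for $M$, one obtains equivalences $\Emb_\partial(W\cup_Q A\cup_R W', W\cup_Q B\cup_R W')\simeq \Map_{\Mod(d)_{E_{P\times I},E_{S\times I}}}(E_{W\cup_Q A\cup_R W'}, E_{W\cup_Q B\cup_R W'})$ for all $A,B\in\{M,N\}$, and these will be compatible with composition since $E$ is a functor of double $\infty$-categories. An inverse to $E_\phi$ in $\Mod(d)$ will therefore transport to a two-sided inverse up to isotopy of $W\cup_Q\phi\cup_R W'$, exhibiting the latter---and hence $\phi$---as an isotopy equivalence. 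The main obstacle I expect is making this composition-compatibility precise; it is the reason the statement for $\circled{2}^{\simeq}$ requires the two-sided version of the hypothesis.
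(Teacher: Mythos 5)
Your proposal hinges on a claim that is false and is in fact the heart of what the theorem is careful not to assume: that embedding spaces satisfy Weiss $\infty$-cover descent. The top-left vertical component of your map of squares $E\colon\circled{1}\to\circled{2}$ is $\Emb_\partial(M,N)\to\Map_{\Mod(d)_{E_{Q\times I},E_{R\times I}}}(E_M,E_N)$, which by \cref{thm:emb-calc} is precisely the embedding calculus approximation $\Emb_\partial(M,N)\to T_\infty\Emb_\partial(M,N)$. The theorem does \emph{not} assume this is an equivalence---indeed the entire paper is about measuring its failure. Your appeal to ``Weiss descent on both sides'' to identify $\Emb_\partial(M,N)$ with $\lim_k\Emb_\partial(C_k,N)$ (and $\Emb_\partial(W\cup_Q M\cup_R W',\,\cdot\,)$ with $\lim_k\Emb_\partial(W\cup_Q C_k\cup_R W',\,\cdot\,)$) is exactly the assertion that embedding calculus converges for $M$ and for $W\cup_Q M\cup_R W'$, which is what \cref{prop:descent} does \emph{not} give: that proposition establishes descent only for the mapping spaces in $\Mod(d)$, not for $\Emb_\partial$. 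If your argument worked, it would show $\circled{1}$ and $\circled{2}$ are equivalent as squares, a much stronger statement than the theorem claims and one that is generally false.

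The paper's proof sidesteps this by never comparing $\circled{1}$ and $\circled{2}$ directly. It applies descent only to $\circled{2}$, producing an equivalence $\circled{2}\simeq\lim_{U\in\cU}\circled{2}_U$; then separately shows $\lim_U\circled{2}_U\simeq\lim_U\circled{1}_U$ (the comparison is at the level of the \emph{limit} squares only, with the top corners handled by the hypothesis and the bottom-left by \cref{rem:initial-among-rep-presheaves}); and finally observes that $\lim_U\circled{1}_U$ is cartesian because each $\circled{1}_U$ is, with \emph{no} claim that $\circled{1}\simeq\lim_U\circled{1}_U$. So the two sides of the comparison play asymmetric roles in an essential way.

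The second claim inherits the same gap: you again try to transfer from $\circled{1}^\simeq$ to $\circled{2}^\simeq$ via a levelwise comparison, which requires identifying the top corners, and you also assert identifications of the form ``$\Emb_\partial(W\cup_Q A\cup_R W',\,\cdot\,)\simeq\Map_{\Mod(d)}(E_{W\cup_Q A\cup_R W'},\,\cdot\,)$ for $A\in\{M,N\}$'' which the hypothesis gives only for $A=C_k$, not for $M$ or $N$. Moreover, what is actually needed to pass from $\circled{2}$ cartesian to $\circled{2}^\simeq$ cartesian is an internal statement about $\Mod(d)$, not a $\circled{1}$-vs-$\circled{2}$ comparison: namely, that $\varphi\colon E_M\to E_N$ is an equivalence if and only if $\varphi'=\id_{E_W}\cup_{E_{Q\times I}}\varphi\cup_{E_{R\times I}}\id_{E_{W'}}$ is. The paper proves this via a monoid argument for the case $M=N$ (grouplike components of a fibre sequence of monoids) and then bootstraps to the general case by chasing an inverse $\psi'$ of $\varphi'$ through the already-established cartesian square $\circled{2}$ with the roles of $M$ and $N$ reversed, which is where the two-sided hypothesis is genuinely used.
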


\begin{proof}
We first show the claim for $\circled{2}$. We write $\cU$ for the poset of open subsets of $M$ that are unions $U=D\cup c'(M)$ such that $c(M)\subset M$ is an open collar of the boundary that contains the chosen collar $c(M)\subset M$ and $D\subset M$ is diffeomorphic to $T\times \bfR^d$ for some finite set $T$. Considering $U$ as an object in $\ncBordInf(d)_{Q,R}$ we have a functor $\cU\ra \ncBordInf(d)_{Q,R}$. Since the square $\circled{2}$ is natural in $M$, it maps to the limit of the same squares for $M$ replaced by $U\in\cU$
\[\begin{tikzcd}
	\underset{U\in\cU}{\lim\ }\Map_{\Mod(d)_{E_{Q \times I,R\times I}}}\big(E_U,E_{N}\big)\dar\rar\dar&\underset{U\in\cU}{\lim\ }\Map_{\Mod(d)_{E_{P \times I,S\times I}}}\big(E_{W\cup_QU\cup_{R}W'},E_{W\cup_QN\cup_{R}W'}\big)\dar\\
	\underset{U\in\cU}{\lim\ }\Map_{\Mod(d)_{E_{Q \times I,R\times I}}}\big(E_{c(M)},E_{N}\big)\rar &\underset{U\in\cU}{\lim\ }\Map_{\Mod(d)_{E_{P \times I,S\times I}}}\big(E_{W\cup_Qc(M)\cup_RW'},E_{W\cup_QN\cup_RW'}\big).
\end{tikzcd}\]
We claim that it suffices to show this square of limits is cartesian. To justify this, we show that the maps from $\circled{2}$ to the square of limits are all equivalences. For the maps between the bottom left corners and between the bottom right corners, this follows from the fact that the diagram is constant and the category $\cU$ is weakly contractible since it is cofiltered. For the top-right corner and top-left corner it follows from \cref{prop:descent} since the posets $\cU$ and $\{W\cup_QU\cup_RW'\,|\,U\in\cU\}$ are complete Weiss $\infty$-covers of $M$ and $W\cup_QM\cup_RW'$. 
	
To show that the previous square of limits is cartesian, note that it receives a map from the analogous square using $\ncBordInf(d)$ instead of $\ModInf(d)$, and this map of squares consists of equivalences: for the top right and bottom right corner it holds by assumption and for the top left and bottom left corner it holds by \cref{rem:initial-among-rep-presheaves}. The square using $\ncBordInf(d)$ is a limit of squares of the form $\circled{1}$, with $M$ replaced by $U\in\cU$, so it is cartesian since we have already explained that $\circled{1}$ is cartesian and limits of cartesian squares remain cartesian.
	
To show the claim for ${\circled{2}}^\simeq$, we first assume $M=N$ in which case, the claim follows from the following fact: given a monoid $A$ in $\cS$ acting on a space $X$ and $x\in X$, consider the fibre sequence
\[
	\hofib_{x}(A\xlra{(-)\cdot x}X)\lra A\xlra{(-)\cdot x}X 
\]
whose fibre inherits the structure of a monoid in $\cS$ from that of $A$ and the $A$-action on $X$. Then one can check that the sequence obtained by passing to group-like components in fibre and total space is again a fibre sequence.

To deduce the general case of ${\circled{2}}^\simeq$ from that of ${\circled{2}}$, it suffices to prove that if $\varphi \colon E_M \to E_N$ has the property that $\varphi' \coloneqq \id_{E_W} \cup_{E_{Q \times I}}  \varphi \cup_{E_{R \times I}} \id_{E_{W'}}$ is an equivalence, then $\varphi$ is also an equivalence. To prove this, we pick an inverse $\psi' \colon E_{W \cup_Q N \cup_R W'} \to E_{W \cup_Q M \cup_R W'}$ to $\varphi'$ and claim that the image of $\psi'$ under the right-vertical map in the square ${\circled{2}}$ with the role of $M$ and $N$ reversed lies in the component of the bottom horizontal map. To see this, we extend this square to the bottom by
\[\begin{tikzcd} \Map_{\Mod(d)_{E_{Q\times I},E_{R\times I}}}\big(E_{c(M)},E_N\big) \rar \dar{\varphi\circ(-)}[swap]{\simeq} & \Map_{\Mod(d)_{E_{P\times I},E_{S\times I}}}\big(E_{W\cup_Q c(M)\cup_RW'},E_{W\cup_QN\cup_RW'}\big) \dar{\varphi' \circ -}[swap]{\simeq} \\
\Map_{\Mod(d)_{E_{Q\times I},E_{R\times I}}}\big(E_{c(M)},E_M\big) \rar & \Map_{\Mod(d)_{E_{P\times I},E_{S\times I}}}\big(E_{W\cup_Q c(M) \cup_RW'},E_{W\cup_Q M\cup_RW'}\big)\end{tikzcd}\]
where the left vertical map is an equivalence as both source and target are contractible, and the right vertical map is an equivalence because $\varphi'$ is one by assumption. To see whether the image of $\psi'$ in the upper right corner is in the component hit by the upper horizontal map, it thus suffices to show that the image of $\psi'$ in the bottom horizontal corner is in the component hit by the bottom horizontal map. But this follows from the relation $[\varphi'\circ\psi']=[\id]$ in the set of components, which holds by the choice of $\psi'$. Using that the square ${\circled{2}}$ with the role of $M$ and $N$ reverse is a pullback (this is where we use the additional hypothesis for $M$), we conclude that there exists $\psi \colon E_N \to E_M$ such that $[\psi']= [W\cup_Q\psi\cup_RW']$. To finish the proof, it suffices to show that $\varphi\circ\psi$ and $\psi\circ\varphi$ are both equivalences, since then $\varphi$ has to be an equivalence. But this follows from the case $M=N$ treated above, using that both compositions become equivalences after applying $W\cup_Q(-)\cup_RW'$ since this even holds for $\psi$ and $\varphi$ individually.
\end{proof}

\begin{rem}The proof of \cref{thm:isotopy-extension} in particular shows that if the assumption in the statement holds for $M$ and $N$, then the following map detects equivalences:
\vspace{-0.05cm}
\[
	\Map_{\Mod(d)_{E_{Q\times I},E_{R\times I}}}\big(E_M,E_N\big) \xrightarrow{E_{W}\cup_{E_{Q\times I}}(-)\cup_{E_{R\times I}}E_{W'}} \Map_{\Mod(d)_{E_{P\times I},E_{S\times I}}}\big(E_{W\cup_QM\cup_RW'},E_{W\cup_QN\cup_RW'}\big)
\]
\end{rem}

\subsection{$\DiscInf$-structure spaces}\label{sec:disc-structure-spaces} We conclude this section with the definition of the $\DiscInf$-structure spaces and a discussion some of their functoriality. Given objects $P\in \BordInf(d)$ and $A\in\Mod(d)$, i.e.\,a closed $(d-1)$-manifold $P$ and an associative algebra $A$ in $\PSh(\DiscInf_d)$, we abbreviate the $\infty$-category of nullbordisms of $P$ and the analogue for right $A$-modules by
\begin{equation}\label{equ:abbreviate-right-modules}
	\gls*{nullbordism} \coloneqq \BordInf(d)_{\varnothing,P} \qquad \text{and} \qquad \gls*{rightbordism} \coloneqq \ModInf(d)_{E_\varnothing,A}.
\end{equation}

\begin{rem}Note that $E_\varnothing$ is the monoidal unit in $\PSh(\DiscInf_d)$, so $\ModInf(d)_{E_\varnothing,A}$ may be viewed as an $\infty$-category of right-$A$-modules. Using \cref{rem:lurie-bimodules} and \cite[4.3.2.8]{LurieHA} one sees that this agrees with Lurie's model of the $\infty$-category of right-$A$-modules, but we will not use this.\end{rem}

\subsubsection{$\DiscInf$-structure spaces of modules}For $A=E_{P\times I}$ for a closed $(d-1)$-manifold $P$, the functor $E$ induces a functor $\BordInf(d)_{P}\ra \ModInf(d)_{P\times I}$. As the source is an $\infty$-groupoid by the discussion \cref{sec:details-bord}, it lands in the core $\ModInf(d)^\simeq_{P\times I}\subset \ModInf(d)_{P\times I}$. The $\DiscInf$-structure spaces are the fibres of this functor of $\infty$-groupoids:

\begin{dfn}\label{def:disc-structure-space} The \emph{$\DiscInf$-structure space} of a right-$E_{P \times I}$-module $X\in\Mod(d)_{E_{P\times I}}$ is the fibre \[\gls*{sdisc}\coloneqq \fib_{X}(\BordInf(d)_{P}\ra \ModInf(d)^{\simeq}_{E_{P \times I}})\in \cS\]
\end{dfn}
From the description of the object and mapping spaces of $\BordInf(d)$ and $\ModInf(d)$ in \cref{sec:mapping-infinity-category}, we see that the path components of $S^{\DiscInf}_P(X)$ are given by
\[
	\pi_0\,S^{\DiscInf}_P(X) = \frac{\left\{\text{\parbox{11.5cm}{\centering pairs $(M,\varphi)$ of a compact smooth $d$-manifold $M$ with identified boundary $\partial M\cong P$ \newline and an equivalence of right $E_{P \times I}$-modules $\varphi \colon E_M \to X$}}\right\}}{\parbox{9.5cm}{\centering $(M,\varphi) \sim (M',\varphi')\Leftrightarrow$ there exists a diffeomorphism $\alpha \colon M \to M'$ relative to $P$ with $[\varphi' \circ E_\alpha]=[\varphi]\in\pi_0\,\Map_{\Mod(d)^{\simeq}_{E_{P \times I}}}(E_M,X)$}}
\] 
and that the component of a pair $(M,\varphi)$ agrees with the identity component
\[
	S^{\DiscInf}_P(X)_{(M,\varphi)} \simeq \big(\Aut_{\Mod(d)_{P\times I}}(E_M)/\Diff_\partial(M)\big)_{\id}.
\]
of the fibre $\Aut_{\Mod(d)_P}(E_M)/\Diff_\partial(M)$ of the map $\BDiff_\partial(M)\ra \BAut_{\Mod(d)_P}(E_M)$ induced by $E$. This can also be rephrased in the form of an equivalence
\begin{equation}\label{equ:disjoint-union-description-sdisc}
	 \textstyle{S^{\DiscInf}_P(X)\simeq \bigsqcup_{[M]}\Aut_{\Mod(d)_{P\times I}}(E_M)/\Diff_\partial(M)}
\end{equation}
where $[M]$ runs through diffeomorphism classes of compact manifolds $M$ with identified boundary $\partial M\cong P$ for which there exists an equivalence $E_M\ra X$ of right $E_{P\times I}$-modules.

\subsubsection{$\DiscInf$-structure spaces of manifolds}\label{sec:sdisc-for-manifolds}
Given a compact $d$-manifold $W$ with identified boundary $\partial W\cong P$, considered as an object in $\BordInf(d)_{P}$, we abbreviate \[\gls*{sdiscpartial} \coloneqq S^{\DiscInf}_{P}(E_W).\] 
This is natural in $\smash{W \in \BordInf(d)^{(\infty,1)}_{\varnothing/}}$ in that it gives a functor $\smash{S^{\DiscInf}_\partial(-) \colon \BordInf(d)^{(\infty,1)}_{\varnothing/} \ra \cS}$ from the $\infty$-category of nullbordisms to the $\infty$-category of spaces. In particular, for bordisms $W\colon\varnothing\leadsto P$ and $W' \in\BordInf(d)_{P,Q}$ we have a \emph{gluing map} $(- \cup_P W) \colon S^{\DiscInf}_\partial(W) \ra S^{\DiscInf}_\partial(W \cup_P W')$.

\section{\cref{bigthm:2-type-invariance}: $2$-type invariance} \label{sec:2-type-invariance}
The goal of this section is to prove \cref{bigthm:2-type-invariance}, which says that the $\DiscInf$-structure space of a compact $d$-manifold depends for $d\ge5$ only on the tangential 2-type, a notion that we recall in \cref{sec:tangential-k-types}. As outlined in \cref{sec:intr-2-type-invariance}, this will be an application of a general tangential $k$-type invariance result, proved in \cref{sec:k-type-invariance}, about the values of certain functors on a  category of compact null bordisms. That $S^{\DiscInf}_\partial(-)$ satisfies its hypotheses is verified in \cref{sec:2-type-invariance-sdisc}.

\begin{nconvention}\
\begin{enumerate}
\item In contrast to the previous sections, all manifolds---which were already assumed to be smooth---are now also assumed to be compact. Non-empty boundaries are allowed. 
\item In this section we adopt the point of view on $\theta$-structures in terms of bundle maps (always required to be fibrewise injective), which is different but by basic bundle theory equivalent to that in terms of $\GL_d(\bfR)$-spaces from \cref{sec:details-tangential-bord}. For the convenience of the reader, we recall the necessary definitions from scratch in \cref{sec:theta-manifolds}.
\end{enumerate}\end{nconvention}

\subsection{Tangential $k$-types} \label{sec:tangential-k-types} We start with some manifold-theoretic preliminaries.

\subsubsection{$\theta$-manifolds and tangential $k$-types} 
\label{sec:theta-manifolds}
Given a map $\theta\colon B\ra \BO$, a \emph{$\theta$-manifold} $M$ is a manifold with a \emph{$\theta$-structure} on its stable tangent bundle, by which we mean in this section a stable bundle map $\ell_M\colon \tau_M^s\ra \theta^*\gamma$ from the stable tangent bundle of $M$ to the pullback of the universal stable vector bundle $\gamma$ over $\BO$ along $\theta$. A tangential structure is \emph{$k$-connected} if the underlying map $\bar{\ell}_M\colon M\ra B$ is $k$-connected in the usual sense. 

Given a codimension $0$ embedding $e\colon M\hookrightarrow N$ and a $\theta$-structure $\ell_N$ on $N$, we obtain a $\theta$-structure $e^*\ell_N$ on $M$ by precomposition with the stable derivative of $e$. Two $\theta$-manifolds $M$ and $N$ are \emph{$\theta$-diffeomorphic} if there exists a diffeomorphism $\phi\colon M\ra N$ of the underlying manifolds such that $\phi^*\ell_N$ and $\ell_M$ are homotopic as bundle maps. A codimension $0$ embedding $e\colon M\hookrightarrow N$ is an \emph{equivalence on tangential $k$-types} if $N$ admits a $k$-connected $\theta$-structure $\ell_N$ for some $\theta$ such that $e^*\ell_N$ is again $k$-connected. Two manifolds $M$ and $N$ have the \emph{same tangential $k$-type} if there is a $\theta\colon B\ra \BO$ such that  $M$ and $N$ admit $k$-connected $\theta$-structures $\ell_M$ and $\ell_N$ (for the same $\theta$).

\begin{ex}\label{rem:2-connected-maps}
Any codimension $0$ embedding $M\hookrightarrow N$ that is $k$-connected is an equivalence on tangential $k$-types. This is clear from the definition as long as $N$ admits  a $k$-connected $\theta$-structure with respect to \emph{some} $\theta$, and there is indeed always such a choice: pick a Moore-Postnikov factorisation $N\ra B\ra\BO$ of a classifying map for the stable tangent bundle of $N$ into a $k$-connected map followed by a $k$-coconnected map $\theta\colon B\ra\BO$.
\end{ex}

\begin{ex}\label{rem:classification-2-types}The case of most interest to us is $k=2$, where there is a simple recipe to decide whether two $d$-manifolds $M_0$ and $M_1$ have the same tangential $k$-types. If the $M_i$ are disconnected, then they have the same tangential $2$-type if and only if there exists a bijection between their components such that the corresponding components have the same tangential $2$-type. For connected manifolds $M_0$ and $M_1$, one can decide whether they have the same tangential $2$-type as follows (cf.\,\cite[p.\,712--713]{Kreck}; Kreck deals with \emph{normal} $k$-types as opposed to \emph{tangential} $k$-types and has a different indexing convention, but neither of this makes a difference):
	\begin{enumerate}
		\item The functionals $w_2(M_i) \colon \pi_2(M_i)\ra \bfZ/2$ for $i=0,1$ induced by the second Stiefel--Whitney classes need to be both trivial or nontrivial.
		\item If they are both nontrivial, then $M_0$ and $M_1$ have the same tangential $2$-type if and only if there exists an abstract isomorphism $\varphi\colon \pi_1(M_0)\ra\pi_1(M_1)$ such that $\varphi ^*w_1(M_1)=w_1(M_0)$, where $w_1(M_i) \in \oH^1(M_i;\bfZ/2)\cong \oH^1(K(\pi_1M_i,1);\bfZ/2)$ is the first Stiefel--Whitney class.
		\item If they are both trivial, then there are unique classes $w_2(M_i) \in \oH^2(K(\pi_1(M_i),1);\bfZ/2)$ that pull back to the second Stiefel--Whitney classes along the canonical maps $M_i\ra K(\pi_1(M_i),1)$. Then $M_0$ and $M_1$ have the same tangential $2$-type if and only if there exists an abstract isomorphism $\varphi\colon \pi_1(M_0)\ra\pi_1(M_1)$ with $\varphi ^*w_j(M_1)=w_j(M_0)$ for $j=1,2$.
	\end{enumerate}
In particular, if $M_0$ and $M_1$ are spin, $w_i(M)$ and $w_i(N)$ vanish for $i\le2$, so the recipe shows that they have the same tangential $2$-types if and only if their fundamental groupoids are equivalent. It also implies that the tangential $2$-type of a smooth manifold does not depend on the smooth structure, since Stiefel--Whitney classes are defined for topological manifolds. \end{ex}

\begin{lem}\label{lem:nice-representative-k-type} 
Let $M$ be an $m$-manifold and $k\ge 0$ a number. For any $d\ge4$ with $k\le \lfloor \tfrac{d}{2}\rfloor$, there exists a closed $d$-manifold $P$ with the same tangential $k$-type as $M$.
\end{lem}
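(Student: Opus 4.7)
The plan is to realise the tangential $k$-type of $M$ as the boundary of a $(d+1)$-dimensional handlebody. Let $\theta\colon B\to\BO$ be a tangential $k$-type of $M$, obtained as in \cref{rem:2-connected-maps} from a Moore--Postnikov factorisation of a stable tangent classifier of $M$, equipped with its $k$-connected lift $\bar{\ell}_M\colon M\to B$. I construct a compact $(d+1)$-manifold $H$ with boundary, carrying a $\theta$-structure $\ell_H$ whose underlying map $\bar{\ell}_H\colon H\to B$ is $k$-connected, and then take $P\coloneqq\partial H$.

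The manifold $H$ is built by a standard inductive handlebody construction, starting from $H_0=D^{d+1}$ with any $\theta$-structure (which exists because $D^{d+1}$ is contractible) and attaching handles of indices $j=1,\dots,k$ in turn. At stage $j$, attach finitely many $j$-handles so that on the resulting $H_j$ the map $\bar{\ell}_{H_j}\colon H_j\to B$ becomes $j$-connected; this requires killing the kernel on $\pi_{j-1}$ and realising any generators of $\pi_j(B)$ not yet in the image. The $\theta$-structure extends across each handle because $\theta$ is $k$-coconnected, so $\pi_i(\hofib(\theta))=0$ for $i\ge k$, and the remaining obstructions in degrees $<k$ can be absorbed by twisting the framing of the attaching sphere by an element of $\pi_\ast(B)$. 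Set $H\coloneqq H_k$.

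Define $P\coloneqq\partial H$. This is a closed $d$-manifold, and the $\theta$-structure on $H$ restricts to one on $P$ via the stable isomorphism $\tau_P^s\cong\tau_H^s|_{\partial H}$ induced by the trivial collar normal line. Since every handle of $H$ has index $\le k$, the dual handle decomposition viewed from the boundary has handles only of index $\ge d+1-k$, so $\partial H\hookrightarrow H$ is $(d-k)$-connected. The hypothesis $k\le\lfloor d/2\rfloor$ gives $d-k\ge k$, so the composite $P\hookrightarrow H\xra{\bar{\ell}_H}B$ is $k$-connected, exhibiting $P$ as having the same tangential $k$-type as $M$. The main technical point is the inductive construction in the preceding paragraph: one must simultaneously modify $\pi_\ast$ in a controlled way and extend the $\theta$-structure across each handle. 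This is standard surgery theory below the middle dimension in the presence of a tangential structure, and I would invoke Kreck's formalism of normal $k$-types (cf.\ \cref{rem:classification-2-types}) rather than reprove it from scratch.
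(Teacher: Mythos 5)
Your proof is correct, and it takes a recognisably different route from the paper's (which is also correct but more terse). The paper's proof works entirely with closed $d$-manifolds: it puts a $\theta$-structure on $S^d$ (possible since $d\ge k+1$ kills the relevant obstruction in $\pi_{d-1}$ of the fibre of $\theta$) and then directly cites Kreck's Proposition~4 to perform $\theta$-compatible surgeries on $S^d$ below the middle dimension, producing a $P$ with $k$-connected $\theta$-structure. You instead build a compact $(d+1)$-manifold $H$ on $D^{d+1}$ with handles of index $\le k$, arrange $\bar\ell_H\colon H\to B$ to be $k$-connected, and pass to $P=\partial H$ by observing that the dual handle decomposition makes $P\hookrightarrow H$ $(d-k)$-connected, which together with $d-k\ge k$ pushes the $k$-connectivity of $\bar\ell_H$ down to $\bar\ell_P$. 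The two constructions are closely related---your $H$ is exactly $D^{d+1}$ glued along $S^d$ to the trace of the paper's surgeries---and both ultimately defer the delicate point (that each handle can be attached $\theta$-compatibly) to Kreck's formalism. What your phrasing buys is a transparent explanation of where the constraint $k\le\lfloor d/2\rfloor$ is really used, namely to make $d-k\ge k$; the paper only surfaces the weaker consequence $d\ge k+1$ and buries the rest inside the citation. One small point worth tightening: ``twisting the framing of the attaching sphere by an element of $\pi_*(B)$'' is not quite the right mechanism---framing changes live in $\pi_{j-1}(\oO(d-j+1))$, and what makes the $\theta$-structure extend is that the attaching sphere is chosen in the kernel of $\bar\ell_*$ (or is nullhomotopic, in the case of new generators), which is exactly the hypothesis in Kreck's Lemma~2. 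Since you explicitly defer to Kreck anyway this does not affect correctness, but the heuristic as stated is misleading.
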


\begin{proof}
We may assume $k\ge1$ and that $M$ is connected; apply the claim to each connected component otherwise. Choose a Moore--Postnikov $k$-factorisation $M\ra B\ra \BO$ of the stable tangent bundle into a $k$-connected map followed by a $k$-coconnected map $\theta\colon B\ra \BO$. The condition $k\le \lfloor \tfrac{d}{2}\rfloor$ in particular implies that $d \ge k+1$, so the $d$-sphere $S^d$ admits a $\theta$-structure by obstruction theory. Doing surgeries compatible with the $\theta$-structure (see \cite[Proposition\,4]{Kreck}), we obtain a closed $d$-manifold $P$ with a $k$-connected $\theta$-structure.
\end{proof}

\subsubsection{$\theta$-bordism}\label{sec:theta-manifolds}
Given a $\theta$-manifold $M$, a choice of inwards pointing vector field induces a $\theta$-structure on the boundary $\partial M$. Using the canonical vector field $\smash{\frac{\partial}{\partial x}}$ on $[0,1]$, we moreover obtain a $\theta$-structure on $M\times [0,1]$, which restricts to a $\theta$-structure on the \emph{double} $M\cup_{\partial M}\overline{M} \cong \partial(M\times [0,1])$ of $M$. Here $\overline{M}$ is the $\theta$-manifold whose underlying manifold is $M$ but which is equipped with the \emph{opposite $\theta$-structure} obtained by restricting the induced $\theta$-structure on $M\times [0,1]$ to $M\times\{1\}\subset \partial(M\times [0,1])$. A \emph{$\theta$-bordism} from a $d$-dimensional $\theta$-manifold $P$ to another $d$-dimensional $\theta$-manifold $Q$ is a $(d+1)$-dimensional $\theta$-manifold $W$ together with a $\theta$-diffeomorphism $\partial W\cong P\sqcup \smash{\overline{Q}}$; we denote this $W \colon P \gls*{bordism} Q$. A $\theta$-manifold $P$ is \emph{$\theta$-null bordant} if there is a $\theta$-bordism $P \leadsto \varnothing$. Note that, by construction, the double $M\cup_{\partial M} \overline{M}$ of any $\theta$-manifold $M$ is $\theta$-nullbordant.

\subsubsection{Handle decompositions}\label{sec:handle-dec}
Given a compact $d$-dimensional bordism $W\colon P\leadsto Q$ between closed $(d-1)$-manifolds, a \emph{handle decomposition of the bordism} $W$ is a decomposition
\[
	P=W_{-1}\overset{W(-1,0]}{\leadsto}W_{0}\overset{W(0,1]}{\leadsto}\cdots \overset{W(d-2,d-1]}{\leadsto}W_{d-1}\overset{W(d-1,d]}{\leadsto}W_{d}=Q
\]
of $W$ as a union of bordisms between closed $(d-1)$-manifolds $W_i$ such that $W(k-1,k]$ is obtained from a collar on $W_{k-1}$ by attaching finitely many handles of index $k$. Such a decomposition always exists, for instance by choosing a self-indexing Morse function. By construction, $W_{k+1}$ is obtained from $W_{k}$ by finitely many $k$-surgeries. We abbreviate 
\[
	\gls*{whalf} \coloneqq \cup_{m\le i \le k-1}W(i,i+1] \quad \text{and} \quad \gls*{wfull} \coloneqq \cup_{m-1\le i \le k-1}W(i,i+1]
\]
and consider these manifolds as bordisms from $W_m$ to $W_k$ and from $W_{m-1}$ to $W_k$, respectively. The idea behind the notation is that the half-open or closed interval indicates which handles the submanifold contains. Given $m\le k$, we say that $W$ has \emph{handle type} $[m,k]$ if there is a handle decomposition with $W=W[m,k]$. A $d$-manifold $M$ \emph{handle type $[m,k]$} if it has that property when viewed as a bordism $M\colon\varnothing \leadsto \partial M$. It is said to have \emph{handle dimension $\le k$} if it has handle type $[0,k]$. A codimension $0$ submanifold inclusion $N \subset \interior(M)$ has \emph{relative handle type $[m,k]$} if the bordism $M\backslash \interior(N)\colon \partial N\leadsto \partial M$ has handle type $[m,k]$, and $N\subset \interior(M)$ has \emph{relative handle dimension $\le k$} if this bordism has handle type $[0,k]$.

\subsubsection{Handle trading and connectivity}

The following two lemmas are certainly standard, but we could not find references for them in the generality we needed.
\begin{lem}\label{lem:handle-trading} Let $W\colon P\leadsto Q$ be a bordism between closed $d$-manifolds $P$ and $Q$ with $d\ge4$. If both boundary inclusions $P\subset W\supset Q$ are $k$-connected for some $k\ge0$, then the following holds.
\begin{enumerate}
	\item If $2k<d-1$, then $W\colon P\leadsto Q$ has handle type $[k+1,d-k]$ and 
	\item If $2k=d-1$, then $W\sharp (S^{k+1}\times S^{k+1})^{\sharp r}\colon P\leadsto Q$ has type $[k+1,d-k]$ for some $r\ge0$.
\end{enumerate}
\end{lem}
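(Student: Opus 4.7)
The plan is a classical Smale--Wall handle-trading argument applied from both ends of the bordism, with a Wall-style stabilisation handling the symmetric case. I would start by choosing any handle decomposition of $W$ relative to $P$ (e.g.\ via a self-indexing Morse function), giving a filtration $P = W_{-1} \subset W_0 \subset W_1 \subset \cdots \subset W$ in which $W_i$ is obtained from $W_{i-1}$ by attaching finitely many handles of index $i$.

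For part (i), the main step is to trade all low-index handles using that $\pi_i(W,P)=0$ for $i\le k$: working inductively from the bottom, I would replace each handle of index $i\le k$ by a handle of index $i+2$ via the standard creation-and-cancellation trick (introduce a cancelling pair of handles of indices $i+1$ and $i+2$, then cancel the new $(i+1)$-handle against the original $i$-handle). The key geometric input is that the attaching sphere of the original $i$-handle is not only null-homotopic by the connectivity hypothesis but null-isotopic, by a Whitney-trick argument that applies since the codimension of the attaching sphere in the relevant boundary component is at least three (using $d\ge 4$ and $i\le k\le (d-1)/2$). Iterating, all handle indices become $\ge k+1$. Applying the same argument to the bordism viewed in reverse and using that $Q\subset W$ is $k$-connected dualises---a handle of index $j$ becomes one of index $\dim W-j$---and so eliminates all handles of index strictly greater than $d-k$. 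Combining the two steps produces a handle decomposition of type $[k+1,d-k]$.

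For part (ii), where $2k=d-1$ and the target interval $[k+1,d-k]$ collapses to the single middle index $k+1$, the two trading procedures may conflict there: the obstruction is captured by an intersection-form summand on the middle-dimensional relative homology that cannot be resolved by handle moves alone. Following the classical Wall stabilisation, I would take interior connected sum of $W$ with sufficiently many copies of $S^{k+1}\times S^{k+1}$ (which has the correct dimension $2k+2$ and carries a standard handle decomposition contributing hyperbolic pairs of $(k+1)$-handles); for $r$ large enough, the added hyperbolic summands absorb the obstructing classes, and the trading arguments from both ends then complete to yield the required handle type. The principal difficulty throughout is the usual one in handle trading---converting the purely homotopical connectivity hypotheses into geometric handle moves via the Whitney trick, which uses the dimensional hypothesis $d\ge 4$ precisely to guarantee codimension at least three---together with the middle-dimensional intersection-form obstruction in the symmetric case, which is why the stabilisation by $S^{k+1}\times S^{k+1}$ summands is unavoidable and the number $r$ cannot be controlled effectively.
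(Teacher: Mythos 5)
Your approach is essentially the one the paper takes: handle-trading from both ends via Wall's method for part (i), and stabilisation by hyperbolic summands for part (ii). The paper cites Wall's Theorem 3 \cite{WallConnectivity} for the trading and is more explicit about the crucial bookkeeping that you assert without verification, namely that the second (dual) trading pass does not undo the first. Wall's theorem controls exactly which indices are unaffected: trading from $P$ yields type $[k+1,d+1]$ while preserving the count of $i$-handles for $i \ge k+3$, and trading from $Q$ yields type $[0,d-k]$ while preserving the count of $i$-handles for $i \le d-k-2$; the hypothesis $2k<d-1$ (equivalently $k \le d-k-2$) is exactly what makes the two passes compatible, so the low indices cleared in the first pass remain cleared. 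Your sketch leaves this to the reader, which is the one place where I would want you to spell out the inequality. For part (ii) both you and the paper appeal to the same stabilisation idea; the paper notes there is no classical reference and points to the proof of \cite[Lemma~6.21]{GRWstable}, whereas your account of the middle-dimensional intersection-form obstruction is only heuristic. This is acceptable at the level of a sketch, but if you want a complete proof you would need to reproduce the argument from that reference rather than gesture at Wall's stabilisation lemma, since the setting here (a bordism with non-trivial ends, not a cobordism from a sphere) is not literally covered by the classical statement.
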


\begin{proof}We begin with the first case. Starting from a handle decomposition of the bordism $W\colon P\leadsto Q$, we obtain a potentially different handle decomposition of $W$ of type $[k+1,d+1]$ by handle trading, without changing the number of $i$-handles for $i\ge k+3$ (see e.g.\,the proof of \cite[Theorem 3]{WallConnectivity}). Now we apply the same procedure to the dual of this new handle decomposition to obtain yet another handle decomposition, this time of type $[0,d-k]$ and with the same number of $i$-handles for $i\le d-k-2$. Since $k\le d-k-2$ and we previously arranged that there are no $i$-handles for $i\le k$, the resulting decomposition has type $[k+1,d-k]$.
	
In the case $2k=d-1$, we may reindex so that the claim reads as follows (set $n=k+1$): given a $2n$-dimensional bordism $W\colon P\leadsto Q$ with $2n\ge6$ such that the inclusions $P\subset W\supset Q$ are $(n-1)$-connected, there exists an $r\ge0$ such that the bordism $(W\sharp (S^n\times S^n)^{\sharp r})\colon P\leadsto Q$ admits a handle decomposition with only $n$-handles. We are not aware of a classical reference for this fact; we learnt it from the proof of \cite[Lemma 6.21]{GRWstable}.
\end{proof}

\begin{lem}\label{lem:bordism-connectivity}
Let $\theta\colon B\ra \BO$ be a map and $(P,\ell_P)$ and $(Q,\ell_Q)$ two closed $d$-dimensional $\theta$-manifolds that are $\theta$-bordant. Assume $d\ge4$ and fix $k\ge0$ with $2k<d$.
\begin{enumerate}
	\item If $\ell_P$ is $k$-connected, then there is a $\theta$-bordism $W\colon P\leadsto Q$ such that $P\subset W$ is $k$-connected.
	\item If also $\ell_Q$ is $k$-connected, then we may assume that $W\supset Q$ is $k$-connected as well.
\end{enumerate}
\end{lem}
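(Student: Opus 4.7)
Plan: My plan is to start with any $\theta$-bordism $W_0\colon P\leadsto Q$, which exists by the hypothesis that $P$ and $Q$ are $\theta$-bordant, and then modify $W_0$ by $\theta$-surgery in its interior to boost the connectivity of the classifying map $\bar\ell_W\colon W\to B$. Since interior surgery does not change the boundary, the resulting manifold $W$ remains a $\theta$-bordism from $P$ to $Q$, and the connectivity of the inclusions $P\subset W$ and $Q\subset W$ can then be read off from the connectivity of $\bar\ell_W$ together with the hypotheses on $\ell_P$ and $\ell_Q$.

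The key step will be to invoke Proposition 4 of Kreck's \emph{Surgery and duality} applied to $W_0$, viewed as a compact $(d+1)$-dimensional $\theta$-manifold (possibly with boundary: the cited argument is phrased for closed manifolds but goes through identically in the bounded case since all surgeries are performed in the interior). This produces a $\theta$-bordism $W\colon P\leadsto Q$ for which $\bar\ell_W$ is a $(\lfloor(d+1)/2\rfloor+1)$-equivalence. The assumption $2k<d$ translates exactly to $k\le \lfloor(d+1)/2\rfloor$, so $\bar\ell_W$ will in particular be $(k+1)$-connected. The main technical obstacle here is making the individual surgeries compatible with the $\theta$-structure; this is precisely what Kreck's proposition handles, by representing an element of $\pi_i(\bar\ell_W)$ (for $i\le \lfloor(d+1)/2\rfloor$) by an embedded framed $i$-sphere in $\interior(W_0)$, the framing being inherited from a null-homotopy of the underlying map to $B$.

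Finally, I will conclude via an elementary homotopy group comparison. For part (1), consider the composition $P\xrightarrow{\inc}W\xrightarrow{\bar\ell_W}B$, whose total composite is $\bar\ell_P$ and is therefore $k$-connected by hypothesis. For $i<k$, both $\pi_i(P)\to\pi_i(B)$ and $\pi_i(W)\to\pi_i(B)$ are isomorphisms, so $\pi_i(P)\to\pi_i(W)$ will be too. For $i=k$, any $w\in\pi_k(W)$ maps to some $\beta\in\pi_k(B)$ which is hit by a $p\in\pi_k(P)$ by surjectivity of $\pi_k(\bar\ell_P)$; since $\pi_k(\bar\ell_W)$ is injective (using that $\bar\ell_W$ is $(k+1)$-connected), $w$ must agree with the image of $p$, giving surjectivity $\pi_k(P)\to\pi_k(W)$. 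Thus $P\hookrightarrow W$ is $k$-connected. Part (2) will follow from the same argument applied to $Q\subset W$ using the additional hypothesis on $\ell_Q$; no further modification of $W$ is required, since the single surgery procedure makes $\bar\ell_W$ sufficiently connected to handle both boundary inclusions simultaneously.
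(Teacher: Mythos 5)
There is a genuine gap in your use of Kreck's Proposition~4.

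You claim that interior $\theta$-surgery yields $W$ with $\bar\ell_W\colon W\to B$ a $(\lfloor(d+1)/2\rfloor+1)$-equivalence. Kreck's proposition is stated under the extra hypothesis that $(B,\theta)$ is $\ell$-universal, i.e.\ that $\theta\colon B\to\BO$ is suitably coconnected; no such hypothesis is present here, and without it the proposition does not apply. The difficulty is substantive: a $(k+1)$-equivalence requires $\pi_{k+1}(W)\to\pi_{k+1}(B)$ to be \emph{surjective}, and interior surgery on its own cannot produce classes in $W$ hitting more of $\pi_{k+1}(B)$---it only kills kernel classes. The surjectivity $\pi_i(W)\twoheadrightarrow\pi_i(B)$ that comes for free by factoring through $\pi_i(P)$ and using the $k$-connectedness of $\ell_P$ holds only for $i\le k$. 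What your closing computation actually needs is the weaker statement that $\pi_k(\bar\ell_W)$ is an \emph{isomorphism}, but achieving even that by surgery requires killing $\ker(\pi_k(W)\to\pi_k(B))$ by finitely many surgeries, and with $B$ an arbitrary space there is no obvious reason for this kernel to be finitely generated as a $\bfZ[\pi_1(W)]$-module. The paper cites the Hebestreit--Joachim correction to Kreck's proposition precisely for pitfalls of this kind.

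The paper's argument avoids them by design: it surgers $W$ only until $\bar\ell_W$ is $k$-connected (which gives isomorphisms on $\pi_{\le k-1}$ for the boundary inclusions), and then, to control degree $k$, surgers finitely many classes in $\pi_k(W)$ that map trivially to $\pi_k(B)$ and whose images generate $\pi_k(W,P)$ and $\pi_k(W,Q)$. The finiteness needed there comes from the relative Hurewicz theorem applied to the \emph{compact} pairs $(W,P)$ and $(W,Q)$, not from any finiteness property of $B$, and the triviality in $\pi_k(B)$ is arranged by a diagram chase using $\pi_k(P)\twoheadrightarrow\pi_k(B)$. Your concluding homotopy-group comparison is essentially the computation the paper runs at the end, but the preparatory surgery step needs this additional care to justify the behaviour at $\pi_k$.
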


\begin{proof}For part (i), we refer to the proof of the correction \cite[Proposition p.\,48]{HebestreitJoachim} to a part of \cite[Proposition 4]{Kreck}. The proof of part (ii) is a minor extension of their argument which we spell out for the convenience of the reader in the case $k\ge1$, leaving $k=0$ as an easy exercise.
	
Starting from a $\theta$-bordism $(W,\ell_W)\colon (P,\ell_P)\leadsto (Q,\ell_Q)$ we can assume that $\ell_W$ is $k$-connected by performing surgery in the interior of $W$. As $\ell_P$ and $\ell_Q$ are $k$-connected, this ensures that the inclusions $P\subset W\supset Q$ induce an isomorphism on homotopy groups at all basepoints in degrees $\le k-1$. By considering each component in $\pi_0(P)\cong\pi_0(W)\cong \pi_0(Q)\cong\pi_0(B)$ separately we may assume that each of $P,W,Q,B$ is connected. We now consider the long exact sequences 
\[\begin{tikzcd}[column sep=0.5cm, row sep=0.1cm]
	\ldots \rar&\begin{cases}\pi_k(P)\\\pi_k(Q)\end{cases}\rar[shorten <=-9pt]\arrow[dr,two heads,shorten <=-4pt]& \pi_k(W)\arrow[d,two heads]\arrow[r,two heads]&\begin{cases}\pi_k(W,P)\\\pi_k(W,Q)\end{cases}\rar[shorten <=-9pt]{0}&\begin{cases}\pi_{k-1}(P)\\\pi_{k-1}(Q)\end{cases}\rar[shorten <=-9pt]{\cong}\arrow[dr,"\cong",swap,shorten <=-4pt]&\pi_{k-1}(W)\dar{\cong}\rar& \ldots \\
	&&\pi_k(B)&&&\pi_{k-1}(B)&
\end{tikzcd}\]
of the pairs $(W,P)$ and $(W,Q)$. We first assume $k\ge2$. By the relative Hurewicz theorem, we have $\pi_k(W,P)\cong \oH_k(\widetilde{W},\widetilde{P})$ and similarly for $\pi_k(W,Q)$ where $\smash{\widetilde{(-)}}$ denotes the universal covers, so these groups are in particular finitely generated as $\pi_1(W)$-modules. Contemplating the diagram shows that there are finite sets of elements $\{p_i\}$ and $\{q_i\}$ of $\pi_k(W)$ that (i) map trivially to $\pi_k(B)$ (and thus trivially to $\pi_k(\BO)$) and (ii) map to sets of generators of $\pi_k(W,P)$ and $\pi_k(W,Q)$ as $\pi_1(W)$-modules respectively. As $2k<d$ we may represent these elements by two disjoint embeddings $\overline{p}\colon {\sqcup^i} S^k\times D^{d+1-k}\hookrightarrow \interior(W)$ and $\overline{q}\colon {\sqcup^i} S^k\times D^{d+1-k}\hookrightarrow \interior(W)$. Doing $\theta$-surgery on these embeddings (see \cite[Lemma 2]{Kreck}) yields a $\theta$-bordism $W'\colon P\leadsto Q$ which we claim to satisfy the requirements of the statement, that is $\pi_i(W',P)=0$ and  $\pi_i(W',Q)=0$ for $i\le k$. The reason being that (i) $\pi_i(W',P)$ vanishes for $i\le k-1$ since it is isomorphic to  $\pi_i(W,P)=0$ and (ii) $\pi_k(W',Q)$ vanishes since it is a quotient of $\pi_k(W,Q)$ by a subgroup that contains the $\pi_1(W)$-orbit of the images of $\{p_i\}$ and $\{q_i\}$ and we chose the $\{p_i\}$ so that their images generate $\pi_k(W',P)$ as $\pi_1(W)$-modules. The same argument applies to the groups $\pi_i(W',Q)$, so the claim in the case $k\ge2$ follows. For $k=1$, the same argument applies even though $\pi_1(W,P)$ and $\pi_1(W,Q)$ need no longer be groups: instead of the relative Hurewicz theorem, one uses that $\pi_1(W)$ is finitely generated, being the fundamental group of a compact manifold.
\end{proof}

Combining the previous two lemmas we get:

\begin{cor}\label{cor:theta-bordism-surgery}
	Let $\theta\colon B\ra \BO$ be a map and $(P,\ell_P)$ and $(Q,\ell_Q)$ two closed $\theta$-manifolds of dimension $d\ge4$ that are $\theta$-bordant. If $\ell_P$ and $\ell_Q$ are $k$-connected for some $k\ge0$ with $2k<d$, then $Q$ can be obtained from $P$ by a finite sequence of $p$-surgeries with $k\le p\le d-k-1$.
\end{cor}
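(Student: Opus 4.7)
The plan is to combine Lemmas~\ref{lem:bordism-connectivity} and~\ref{lem:handle-trading} directly: the first produces a $\theta$-bordism whose boundary inclusions are sufficiently highly connected, and the second feeds this connectivity into handle trading to restrict the indices of the handles appearing in the bordism; those handle attachments then translate into surgeries of the stated range of indices.

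First I would apply Lemma~\ref{lem:bordism-connectivity}(ii) to the given $\theta$-null bordism between $(P,\ell_P)$ and $(Q,\ell_Q)$. The standing assumptions $d\ge4$ and $2k<d$ are exactly that lemma's hypotheses, so it produces a $\theta$-bordism $W\colon P\leadsto Q$ for which both inclusions $P\subset W$ and $Q\subset W$ are $k$-connected. Note that $W$ then has dimension $d+1$, while $P$ and $Q$ are $d$-dimensional.

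Next I would feed $W$ into Lemma~\ref{lem:handle-trading}, with that lemma's ``$d$'' playing the role of our $d$. The hypothesis $2k<d$ covers both its cases: when $2k<d-1$ we obtain a handle decomposition of $W$ itself of type $[k+1,d-k]$, while in the boundary case $2k=d-1$ we obtain the same conclusion for the interior connected sum $W\sharp(S^{k+1}\times S^{k+1})^{\sharp r}$ for some $r\ge0$; since this connected sum is performed in the interior, the result is still a bordism from $P$ to $Q$. In either case we arrive at a bordism $W'\colon P\leadsto Q$ equipped with a handle decomposition
\[
P=W'_k\leadsto W'_{k+1}\leadsto\cdots\leadsto W'_{d-k}=Q,
\]
where the step $W'_{i-1}\leadsto W'_i$ consists of attaching finitely many $i$-handles.

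Finally, attaching an $i$-handle to a $d$-manifold corresponds to performing an $(i-1)$-surgery on it, so the sequence of handle attachments above exhibits $Q$ as obtained from $P$ by a finite sequence of $p$-surgeries with $p=i-1\in\{k,\ldots,d-k-1\}$, as required. The only real bookkeeping issue is keeping the dimension conventions aligned—$W$ is $(d+1)$-dimensional while $P,Q$ are $d$-dimensional, so that the handle-index range $[k+1,d-k]$ produced by Lemma~\ref{lem:handle-trading} correctly shifts to the surgery-index range $[k,d-k-1]$ appearing in the statement; beyond this, the argument is a direct assembly of the two preceding lemmas.
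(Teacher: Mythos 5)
Your proof is correct and takes the same route as the paper's: apply Lemma~\ref{lem:bordism-connectivity}(ii) to get a $\theta$-bordism with both boundary inclusions $k$-connected, feed that into Lemma~\ref{lem:handle-trading} (handling the $2k<d-1$ and $2k=d-1$ cases) to obtain a bordism of handle type $[k+1,d-k]$, and read off the surgery statement from the handle decomposition. The paper compresses this into one sentence; your write-up spells out the dimension bookkeeping (the bordism is $(d+1)$-dimensional, handle index $i$ yields an $(i-1)$-surgery on the $d$-dimensional boundary) and correctly notes that the interior connected sum in Lemma~\ref{lem:handle-trading}(ii) does not change the bordism's ends, which the paper leaves implicit.
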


\begin{proof}Lemmas~\ref{lem:handle-trading} and \ref{lem:bordism-connectivity} ensure that there is a bordism $W\colon P\leadsto Q$ of handle type $[k+1,d-k]$, which implies the statement.
\end{proof}

\subsection{$k$-type invariance}\label{sec:k-type-invariance} 
To state the announced tangential $k$-type result, we denote by $\gls*{hmanc}$ the $1$-category whose objects are smooth compact $d$-manifolds (potentially with boundary) and whose morphisms are isotopy classes of codimension $0$ embeddings. Fixing another $1$-category $\catC$, we prove the following result for functors of the form $F\colon h\Manc_d\ra \catC$.

\begin{thm}\label{thm:abstract-k-type-invariance}
Let $d\ge4$ and $F\colon h\Manc_d\ra \catC$ a functor such that $F$ maps codimension $0$ submanifold inclusions of relative handle type $[k+1,d]$ to isomorphisms for some fixed $0\le k<d/2$. Then for any compact $d$-manifolds $M$ and $N$ of the same tangential $k$-type, the following holds 
\begin{enumerate}
	\item\label{k-type-i} There exists an isomorphism $F(M)\cong F(N)$.
	\item\label{k-type-ii} For any codimension $0$ embedding $e\colon L\hookrightarrow M$ where $L$ has handle dimension $\le k$, there is an embedding $e'\colon L\hookrightarrow N$ for which the isomorphism \ref{k-type-i} can be chosen so that the diagram
	\[\begin{tikzcd}[row sep=0.2cm]
		& F(L) \arrow{ld}[swap]{F(e)} \arrow{rd}{F(e')} & \\
		F(M) \arrow{rr}{\cong} & & F(N)
	\end{tikzcd}\]
	is commutative.
	\item\label{k-type-iii} Any embedding $e\colon M\hookrightarrow N$ that is an equivalence on tangential $k$-types induces an isomorphism $F(M)\cong F(N)$ as in \ref{k-type-i}.
\end{enumerate}
\end{thm}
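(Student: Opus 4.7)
The hypothesis that $F$ sends inclusions of relative handle type $[k+1,d]$ to isomorphisms has two immediate consequences I would extract first: (a) $F$ is unchanged by attaching handles of index $\ge k+1$ to the boundary, and (b) $F$ is unchanged by a $p$-surgery in the interior for each $k\le p\le d-k-1$. For (b), such a surgery sits inside a bordism $W$ consisting of a cylinder plus a single $(p+1)$-handle, and both the manifold and the surgery result embed in $W$ with complementary handle type contained in $[k+1,d]$---for the post-surgery side, one flips the handle decomposition so the $(p+1)$-handle becomes a $(d-p)$-handle, and $d-p\geq k+1$.

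For part (i), the plan is to build a common $d$-manifold $X$ receiving codimension-$0$ embeddings of $M$ and $N$ with disjoint images such that $X\setminus\interior(M)$ and $X\setminus\interior(N)$ both have handle type $[k+1,d]$; applying $F$ will then yield $F(M)\cong F(X)\cong F(N)$. I would construct $X$ by first using (a) to modify the boundaries of $M$ and $N$ via high-index handle attachments, arranging them to be suitably connected and matched, then invoking \cref{cor:theta-bordism-surgery}: having $k$-connected $\theta$-structures $\ell_M,\ell_N$ over a common $\theta\colon B\to \BO$, both doubles $DM=M\cup_{\partial M}\overline{M}$ and $DN$ are $\theta$-nullbordant (via $M\times I$ and $N\times I$) and hence $\theta$-bordant to each other, so \cref{cor:theta-bordism-surgery} provides a sequence of $p$-surgeries with $k\le p\le d-k-1$ connecting them. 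Each such surgery is $F$-preserving by (b), and chaining the resulting isomorphisms gives $F(M)\cong F(N)$ after descending from the doubles by a general-position argument that confines the surgeries to one half of each double.

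For part (ii), I would carry out the construction of (i) \emph{rel} the embedded $L$. Since $L$ has handle dimension at most $k$ and the surgery spheres appearing in \cref{cor:theta-bordism-surgery} have dimension $p\le d-k-1$, general position inside the ambient $d$-manifold (applicable since $k+(d-k-1)=d-1<d$) allows every handle core to be isotoped off $L$ throughout the sequence. The embedding of $L$ therefore persists, producing an embedding $e'\colon L\hookrightarrow N$ for which the isomorphism from (i) fits into the required commutative triangle with $F(e)$ and $F(e')$.

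For part (iii), the bounds $2\le k<\tfrac{d-1}{2}$ are tailored to invoke \cref{lem:handle-trading}. Given $e\colon M\hookrightarrow N$ an equivalence on tangential $k$-types, consider the complementary bordism $W=N\setminus\interior(e(M))\colon\partial M\leadsto \partial N$. Chasing the long exact sequences associated to the common $k$-connected $\theta$-structures on $e(M)$ and $N$ (and, if necessary, applying the connect-sum stabilisation from the boundary case of \cref{lem:handle-trading}), one shows that the inclusions $\partial M\subset W\supset\partial N$ are $k$-connected. Then \cref{lem:handle-trading} yields a handle decomposition of $W$ of type $[k+1,d-k]\subset[k+1,d]$, so $e(M)\subset N$ has the required relative handle type and $F(e)$ is the isomorphism already constructed in (i). The main obstacle throughout is the bridging step in (i), namely arranging the surgery moves to respect the original boundary and submanifold structure and to descend from doubles back to $M$ and $N$ themselves; this is where the low-dimensional hypothesis $2k<d$ repeatedly does the heavy lifting via general-position arguments.
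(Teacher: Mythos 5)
Your overall strategy—reduce to doubles, compare the doubles via $\theta$-bordism and surgery invariance (your step (b), which is exactly \cref{lem:surgery-invariance}), and then descend—is in the right spirit and tracks the paper's proof at a high level. However, there is a concrete gap that the paper closes and your proposal does not.

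\textbf{The missing truncation step.} The paper does \emph{not} double the full manifold $M$. Instead it picks a handle decomposition of $M$ as a nullbordism and first passes, via the zig-zag
\[
F(M)\xla{\;\cong\;}F\big(M[0,k]\big)\xra{\;\cong\;}F\big(M[0,k]\cup_{M_k}\overline{M[0,k]}\big),
\]
to the double of the \emph{truncation} $M[0,k]$, which by construction has handle dimension $\le k$. This truncation does two jobs simultaneously: (a) the complement $M[k+1,d]\subset M$ has handle type $[k+1,d]$, so the left arrow is an isomorphism; (b) $\overline{M[0,k]}$ has handle type $[d-k,d]\subset[k+1,d]$, so the right arrow is an isomorphism; and (c) since the inclusion $M[0,k]\subset M[0,k]\cup_{M_k}\overline{M[0,k]}$ is obtained by attaching handles of index $\ge k+1$ it is $k$-connected, so the $\theta$-structure on the double inherits $k$-connectivity from $\ell_M$, making \cref{cor:theta-bordism-surgery} applicable. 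Your proposal uses the full doubles $DM=M\cup_{\partial M}\overline{M}$ and $DN$, but neither of these properties holds in general: the inclusion $M\subset DM$ has complementary relative handle type $[d-h,d]$ where $h$ is the handle dimension of $M$, which need not lie in $[k+1,d]$; and for the same reason the induced $\theta$-structure on $DM$ need not be $k$-connected, so \cref{cor:theta-bordism-surgery} may simply not apply. The phrase ``modify the boundaries via high-index handle attachments'' and ``descend from the doubles by a general-position argument'' do not substitute for the truncation; attaching handles of index $\ge k+1$ enlarges the manifold and does not lower its handle dimension to $\le k$, and a general-position argument cannot rescue the claim that $F(M)\cong F(DM)$ when $M\subset DM$ has the wrong relative handle type. (Your ``common $X$'' framing is also not what the paper does; it uses a zig-zag rather than a cospan, and I do not see that such an $X$ exists in general.)

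\textbf{Parts (ii) and (iii).} Part (ii) in the paper hinges on first pushing $L$ into $M[0,k]$ by transversality (this uses handle dimension $\le k$ and $2k<d$), then keeping $L$ disjoint from the surgery loci, and finally pushing it into $N[0,k]$. Your general-position remark is the right instinct, but it inherits the gap in (i). For part (iii), your plan to show $\partial M\subset W\supset\partial N$ are $k$-connected and invoke \cref{lem:handle-trading} directly does not go through: the $\theta$-structures on $\partial M$ and $\partial N$ have no reason to be $k$-connected, and neither do those boundary inclusions. The paper instead fixes a handle decomposition and works with the level set $M_k$ and its bicollar $c(M_k)$, first observing that both vertical inclusions in the square
\[
\begin{tikzcd}
c(M_k)\arrow[d,hook]\arrow[r,hook] & c(M_k)\cup_{M_k}M[k+1,d]\cup_{\partial M}W\arrow[d,hook]\\
M\arrow[r,hook] & N
\end{tikzcd}
\]
have relative handle type $[k+1,d]$, and then performing $k$-surgeries on the top-right manifold (as in the proof of \cref{lem:bordism-connectivity}) to make $M_k\subset c(M_k)\cup_{M_k}V$ $k$-connected before applying handle trading, with the surgery step mediated by \cref{lem:surgery-invariance}. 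The hypothesis $2\le k<\tfrac{d-1}{2}$ is then what makes Wall's handle trading applicable, and it is also what ensures $k\le d-4$; this bookkeeping is not visible in your sketch.

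In short: your approach is genuinely close in spirit—$\theta$-bordism of doubles plus surgery invariance—but the proof does not stand as written. The essential missing idea is to truncate $M$ and $N$ to $M[0,k]$ and $N[0,k]$ \emph{before} doubling, which is what makes all three needed properties (the two isomorphisms in the zig-zag and the $k$-connectivity of the double's $\theta$-structure) hold simultaneously. Part (iii) also requires passing to the level set $M_k$ rather than to $\partial M$.
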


\begin{rem}\label{rem:psc-relation}
\cref{thm:abstract-k-type-invariance} is based on arguments we learned from the literature on the space of positive scalar curvature metrics on a manifold $M$, in particular \cite{EbertRWbordism,EbertWiemeler}. This space shares strong formal properties with the $\DiscInf$-structure space: it is often an infinite loop space \cite[Theorems A-B]{EbertRWbordism}, depends conjecturally only on the tangential $2$-type (see \cite[Conjecture C]{EbertWiemeler} and \cite[Section 9]{EbertRWbordism}), and is often non-trivial (see e.g.~\cite[Remark 1.1.1]{EbertRWbordism}).
\end{rem}

\begin{rem}\label{rem:nulbbordism-cat}
Taking complements, $h\Manc_d$ can be viewed equivalently as the ``homotopy category of null bordisms'' by which we mean the undercategory $\smash{h\BordInf(d)^{(\infty,1)}_{\varnothing/}}$ of the empty manifold $\varnothing$ viewed as an object in the homotopy category $h\BordInf(d)^{(\infty,1)}$, whose objects are closed $(d-1)$-manifolds and whose morphisms are diffeomorphism classes of compact bordisms.
\end{rem}

As preparation to the proof of \cref{thm:abstract-k-type-invariance}, we show that the values of the functor are invariant under certain surgeries.

\begin{lem}\label{lem:surgery-invariance}Let $F$ be as in  \cref{thm:abstract-k-type-invariance}. If two compact $d$-manifolds $M$ and $N$ differ by $p$-surgeries in the interior with $k\le p\le d-k-1$, then there exists an isomorphism $F(M)\cong F(N)$.
\end{lem}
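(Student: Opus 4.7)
The plan is to realise both $M$ and $N$ as obtained by attaching complementary handle pieces to a common codimension $0$ submanifold $M_0$, namely the shared complement of the surgery locus, and then to apply the hypothesis on $F$ to the two inclusions $M_0 \hookrightarrow M$ and $M_0 \hookrightarrow N$ separately. The composition of the two isomorphisms these produce will be the desired isomorphism $F(M) \cong F(N)$.

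More concretely, I would fix an embedding $\varphi \colon S^p \times D^{d-p} \hookrightarrow \interior(M)$ along which the surgery defining $N$ is performed, set $M_0 \coloneqq M \setminus \varphi(S^p \times \interior(D^{d-p}))$, and note that $N = M_0 \cup_{S^p \times S^{d-p-1}} (D^{p+1} \times S^{d-p-1})$, so that $M_0$ embeds canonically into both $M$ and $N$. To fit the definition of relative handle type, which requires the inclusion to land in the interior, I would push $M_0$ in along a collar of $\partial M$ without changing anything essential. Then the bordism-complement of $M_0$ in $M$ (resp.\ in $N$) decomposes as the disjoint union of a product bordism on $\partial M$, which admits the trivial handle decomposition with no handles at all, and the surgery handlebody $S^p \times D^{d-p}$ (resp.\ $D^{p+1} \times S^{d-p-1}$), each regarded as a bordism $S^p \times S^{d-p-1} \leadsto \varnothing$.

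The core computation is to determine the relative handle types of these two handlebody bordisms by dualising the obvious handle decompositions. Using the CW structure $S^p = e^0 \cup e^p$, the handlebody $S^p \times D^{d-p}$ has a decomposition built from $\varnothing$ with one $0$-handle and one $p$-handle, so after turning the bordism upside down it has relative handle type $[d-p, d]$ as a bordism $S^p \times S^{d-p-1} \leadsto \varnothing$. The same argument using $S^{d-p-1} = e^0 \cup e^{d-p-1}$ shows that $D^{p+1} \times S^{d-p-1}$ has relative handle type $[p+1, d]$. Under the hypothesis $k \le p \le d-k-1$ we have both $d-p \ge k+1$ and $p+1 \ge k+1$, so both relative handle types are contained in $[k+1,d]$, and the assumption on $F$ supplies isomorphisms $F(M_0) \xrightarrow{\cong} F(M)$ and $F(M_0) \xrightarrow{\cong} F(N)$ whose composition yields the claim. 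The only step that requires a modicum of care is the handle-theoretic dualisation identifying the two surgery handlebodies, and the boundary collar housekeeping needed to place $M_0$ in the interior; all other ingredients follow immediately from the hypothesis.
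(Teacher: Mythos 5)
Your proof is correct and takes essentially the same approach as the paper: you form the zig-zag $F(M) \leftarrow F(M_0) \rightarrow F(N)$ over the surgery complement, compute the two relative handle types to be $[d-p,d]$ and $[p+1,d]$, and invoke the hypothesis on $F$. The paper's proof simply states the two handle types without spelling out the dualisation of the obvious handle decompositions of $S^p \times D^{d-p}$ and $D^{p+1}\times S^{d-p-1}$, or the collar push-in; your added detail is accurate and fills in exactly what the paper leaves implicit.
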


\begin{proof}
	It suffices to show the claim in the case where $N$ is obtained from $M$ by a single $p$-surgery along an embedding $S^p\times D^{d-p}\hookrightarrow \interior(M)$. We consider the zig-zag 
	\[
		F(M)\longleftarrow F(M\backslash \interior(S^p\times D^{d-p}))\lra F(N)
	\]
	induced by the inclusions $M\backslash \interior(S^p\times D^{d-p})\subset M$ and $M\backslash \interior(S^p\times D^{d-p}) \subset N$. The former has relative handle type $[d-p,d]$ and the latter has relative handle type $[p+1,d]$. As $d-p\ge k+1$ and $p+1\ge k+1$ and $F$ sends inclusions of submanifolds of relative handle type $[k+1,d]$ to isomorphisms by assumption, we conclude the claim.
\end{proof}

\begin{proof}[Proof of \cref{thm:abstract-k-type-invariance}]Recall that two $d$-manifolds $M$ and $N$ have the same tangential $k$-type if there exists a map $\theta\colon B\ra \BO$ and $k$-connected $\theta$-structures $\ell_M$ and $\ell_N$ on $M$ and $N$. 
	
Part \ref{k-type-i} of the claim asserts an isomorphism $F(M)\cong F(N)$. In the case that $(M,\ell_M)$ and $(N,\ell_N)$ are closed manifolds that are $\theta$-bordant, this follows directly from \cref{cor:theta-bordism-surgery} and \cref{lem:surgery-invariance}. To show the general case, we pick a handle decomposition of $M$ viewed as a bordism $M\colon \varnothing\leadsto \partial M$ and consider the zig-zag (using the notation from \cref{sec:handle-dec})
\begin{equation}\label{equ:reduction-to-closed-case}
	F\big(M\big)\lla F\big(M{[0,k]}\big)\lra F\big(M[0,k]\cup_{M_k}\overline{M[0,k]}\big)
\end{equation}
whose arrows are induced by the bordisms $M[k+1,d]\colon M_k\leadsto \partial M$ and $\overline{M{[0,k]}}\colon M_k\leadsto \varnothing$. The former is of handle type $[k+1,d]$ and the latter of handle type $[d-k,d]$, so using that $d-k>k$ the two submanifold inclusions inducing the maps in the zig-zag have relative handle type $[k+1,d]$, so the zig-zag consists of isomorphisms. Applying the same reasoning for $N$, we see that the claim follows once we provide an isomorphism between the values of $F$ at the two doubles $M[0,k]\cup_{M_k}\overline{M[0,k]}$ and $N[0,k]\cup_{N_k}\overline{N[0,k]}$. Both of these doubles are closed manifolds that are $\theta$-nullbordant (see \cref{sec:theta-manifolds}), so they are in particular $\theta$-bordant to each other. This implies the claim by the first part as long as we make sure that the induced $\theta$-structures on these doubles are $k$-connected. But this is case, since it holds for $M$ and $N$ by assumptions and the above handle considerations in particular imply that all inclusions in $M\supset M[0,k]\subset M[0,k]\cup_{M_k}\overline{M[0,k]}$ and $N\supset N[0,k]\subset N[0,k]\cup_{N_k}\overline{N[0,k]}$ are $k$-connected. 
	
To prove part~\ref{k-type-ii}, we fix an embedding $L\hookrightarrow M$ as in the claim which we may assume by transversality to be contained in $M[0,k]\subset M$ as the complement $M[k+1,d]\supset \partial M$ has relative handle dimension $\le d-(k+1)$ and $L$ has handle dimension $\le k$ by assumption. The zig-zag \eqref{equ:reduction-to-closed-case} is then compatible with the maps from $F(L)$ induced by inclusion. Now $M[0,k]\cup_{M_k}\overline{M[0,k]}$ differs from $N[0,k]\cup_{N_k}\overline{N[0,k]}$ by surgeries of index $k\le p\le d-k-1$, which we may assume (again by transversality) to be done away from $L$, so there is an embedding $L\hookrightarrow N[0,k]\cup_{N_k}\overline{N[0,k]}$, such that the induced isomorphism $F(M[0,k]\cup_{M_k}\overline{M[0,k]})\cong F(N[0,k]\cup_{N_k}\overline{N[0,k]})$ is compatible with the maps from $F(L)$. Using transversality one last time, we see that we may isotope the embedding $L\hookrightarrow N[0,k]\cup_{N_k}\overline{N[0,k]}$ to land in $N[0,k]$ since $\overline{N[0,k]}\subset $ has handle dimension $\le k$ and $2k< d$. With respect to the isotoped embedding, the zig-zag of equivalences \eqref{equ:reduction-to-closed-case} is compatible with the maps from $F(N)$ and this concludes the proof.
	
For part~\ref{k-type-iii}, we may assume without loss of generality that the embedding is a submanifold inclusion of the form $M\subset M\cup_{\partial M} W$ for $W\colon \partial M\leadsto \partial N$ a bordism. We now consider the commutative square of codimension $0$ submanifold inclusions
\begin{equation}\label{equ:k-type-iii}\begin{tikzcd}
	c(M_k)\arrow[d,hook]\arrow[r,hook]& c(M_k)\cup_{M_k}M[k+1,d]\cup_{\partial M}W\arrow[d,hook]\\
	M\arrow[r,hook]&M\cup_{\partial M}W=N
\end{tikzcd}\end{equation}
where $c(M_k)\subset M$ is a closed bicollar of $M_k\subset M$. The vertical inclusions are of relative handle type $[k+1,d]$ (this uses $d-k\ge k+1$), so we conclude that they map to isomorphisms under $F$. It thus suffices to show that $F$ maps the top horizontal inclusion to an isomorphism. Since the vertical inclusions and the $\theta$-structures $\ell_M$ and $\ell_N$ are $k$-connected, it follows that the top horizontal inclusion is an equivalence on tangential $k$-type equivalence. Since $d\ge4$ we have $d/2\le d-2$, so $k<\min(d/2,d-2)$. Abbreviating $V=M[k,d] \cup_{\partial M} W$, an application of \cite[Lemma 6.10]{KrannichKupersOperadic} shows that we can factor the top horizontal inclusion as a composition of the form $c(M_k)\subset c(M_k)\cup_{M_k}V[k,k]\subset c(M_k)\cup_{M_k}V$ where the first inclusion is obtained by attaching \emph{trivial} $k$-handles and the second by attaching $\ge k+1$-handles. By assumption $F$ sends the second inclusion to an isomorphism, so it suffices to show that the same holds for the first inclusion. By attaching cancelling $(k+1)$-handles, the first inclusion fits into a sequence of inclusions $c(M_k)\subset c(M_k)\cup_{M_k}V[k,k]\subset c'(M_k)$ whose composition is given by attaching a collar (so is an isotopy equivalence) and the second inclusion is obtained by attaching $(k+1)$-handles. Now $F$ sends the second inclusion and the composition to isomorphisms, so also the first.
\end{proof}

%\begin{rem}\label{rem:improvement-handles}
%With some more effort, the hypotheses in \cref{thm:abstract-k-type-invariance} \ref{k-type-iii} can be weakened from $2 \leq k < \frac{d-1}{2}$ to $2 \leq k < \frac{d}{2}$, which is an improvement if $d = 2n+1$ is odd and $k=n$. To do this, one needs to show that under this weaker condition, the top horizontal inclusion in \eqref{equ:k-type-iii} is still sent to an equivalence by $F$. The argument in the proof shows that this inclusion is an equivalence on tangential $n$-types, so it follows from \cite[Lemma 6.10]{KrannichKupersOperadic} that writing $V =  M[n+1,2n+1] \cup_{\partial M} W$, we can factor $c(M_n) \subset c(M_n) \cup_{M_n} V$ up to diffeomorphism as a composition $c(M_n) \subset c(M_n) \cup_{M_n} V[n+1,n+1] \subset c(M_n) \cup_{M_n} V$ whose first inclusion is given by attaching \emph{trivial} $n$-handles and the second inclusion has relative handle type $[n+1,2n+1]$. By assumption, $F$ sends the second inclusion to an isomorphism. By attaching cancelling $(n+1)$-handles, the first inclusion fits into a sequence of inclusions  $c(M_n) \subset c(M_n) \cup_{M_n} V[n+1,n+1]\subset c(M_n)$ whose composition is given by attaching a collar and the second inclusion has relative handle type $[n+1,n+1]$, so $F$ sends both the second and the composed inclusion to isomorphisms, so also the first one.
%\end{rem}

\subsection{$2$-type invariance of the $\DiscInf$-structure space}\label{sec:2-type-invariance-sdisc} 
By \cref{sec:sdisc-for-manifolds}, the $\DiscInf$-structure spaces of compact manifolds form the values of a functor $S^{\DiscInf}_\partial(-)\colon \BordInf(d)^{(\infty,1)}_{\varnothing/}\ra \cS$ of $\infty$-categories, which induces on homotopy categories in view of \cref{rem:nulbbordism-cat} a functor
\[
	S^{\DiscInf}_\partial(-)\colon h\Manc_d\simeq  h\BordInf(d)^{(\infty,1)}_{\varnothing/}\lra h\cS.
\]

The goal of this section is to show that this functor satisfies the assumptions of \cref{thm:abstract-k-type-invariance} for $k=2$. This can be rephrased as follows:
 
\begin{prop}\label{prop:invariance-handles}
	Let $M\coloneqq \varnothing \leadsto P$ and $W\colon P \leadsto Q$ be $d$-dimensional bordisms. If $W$ is of handle type $[3,d]$, then the gluing map
	$(-\cup_P W)\colon S^{\DiscInf}_\partial(M)\ra S^{\DiscInf}_\partial(M\cup_PW)$
	is an equivalence. 
\end{prop}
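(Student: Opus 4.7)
The plan is to show that the commutative square of $\infty$-groupoids
\[
\begin{tikzcd}
\BordInf(d)_P \arrow[r, "-\cup_P W"] \arrow[d, "E"] & \BordInf(d)_Q \arrow[d, "E"] \\
\Mod(d)^\simeq_{E_{P \times I}} \arrow[r, "-\cup_{E_{P\times I}} E_W"] & \Mod(d)^\simeq_{E_{Q \times I}}
\end{tikzcd}
\]
is cartesian above the component of $E_M$, so that the induced map on vertical fibres at $E_M$ and $E_{M\cup_P W}$---the gluing map---is an equivalence. The central manifold-theoretic input is that $W$ has handle type $[3,d]$ from the $P$-side, hence dually a handle decomposition from the $Q$-side with all handles of index at most $d-3$. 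Consequently, for any compact nullbordism $N$ of $P$ and any $k\ge 0$, the bordism $C_k\cup_P W = (c(N)\sqcup \ul{k}\times\bfR^d)\cup_P W$, viewed as $\varnothing \leadsto Q$, is obtained from a collar of $Q$ by attaching handles of index at most $d-3$: the $\ul{k}\times\bfR^d$-summand contributes only $0$-handles, and $c(N)\cup_P W$ has the same handle structure as $W$. By \cref{thm:emb-calc} and \cref{cor:convergence}, the convergence hypothesis of \cref{thm:isotopy-extension} is therefore satisfied for its $W=\varnothing$, its $W'=$ our $W$, and arbitrary nullbordisms of $P$ in the roles of its $M,N$.

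Granted this, I would analyse the map component by component using the disjoint-union description \eqref{equ:disjoint-union-description-sdisc}. For the higher homotopy at a basepoint $(N,\psi)\in\pi_0 S^{\DiscInf}_\partial(M)$, \cref{thm:isotopy-extension} (applied with both of its $M,N$ equal to our $N$) gives that $\Aut_{\Mod(d)_{E_{P\times I}}}(E_N) \to \Aut_{\Mod(d)_{E_{Q\times I}}}(E_{N\cup_P W})$ is an equivalence between the components hit by the gluing map. Combined with the classical parametric isotopy extension theorem for the compact bordism $W$---together with the observation that the embedding space $\Emb_Q(W, N\cup W)$ is, by embedding calculus convergence applied with $W$ viewed from the $Q$-side of handle type $[0,d-3]$, controlled via $E$ by the analogous module-map space---a five-lemma comparison on the long exact sequences of the fibrations $\Diff\to\Aut\to\Aut/\Diff$ on both sides yields that the induced map on identity components
\[
(\Aut_{\Mod(d)_{E_{P\times I}}}(E_N)/\Diff_\partial(N))_\id \lra (\Aut_{\Mod(d)_{E_{Q\times I}}}(E_{N\cup W})/\Diff_\partial(N\cup W))_\id
\]
of each summand in \eqref{equ:disjoint-union-description-sdisc} is an equivalence.

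It then remains to verify bijectivity on $\pi_0 S^{\DiscInf}_\partial$. Injectivity follows from isotopy extension: a diffeomorphism $\alpha\colon N_1\cup W\cong N_2\cup W$ rel $Q$ witnessing an equivalence of glued classes may be isotoped rel $Q$ to preserve the embedded $W$---the module compatibility forces $\alpha|_W\colon W\hookrightarrow N_2\cup W$ into the identity component of $\Emb_Q(W,N_2\cup W)$---and so restricts to $N_1\cong N_2$ rel $P$ with the required compatibility. For surjectivity, given $(N',\beta)\in\pi_0 S^{\DiscInf}_\partial(M\cup_P W)$, I would inductively construct an embedding $W\hookrightarrow N'$ rel $Q$ using the dual handle decomposition of $W$ from the $Q$-side: at each step one realises a single handle core---of dimension at most $d-3$---inside the appropriate open region of $N'$ by invoking \cref{cor:convergence} on the homotopy class of its embedding determined by the restriction of $\beta$ to the corresponding sub-presheaf, and then thickens to a full handle. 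The complement $N \coloneqq N'\setminus \interior(W)$ is then a nullbordism of $P$ with $N\cup W\cong N'$ and $E_N\simeq E_M$ (the latter equivalence being extracted from $\beta$ by a cancellation, itself an instance of \cref{thm:isotopy-extension} with the roles of source and target swapped).

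The principal obstacle is the $\pi_0$-surjectivity: realising the full bordism $W$ inside $N'$ as an honest embedded submanifold requires a delicate inductive handle-by-handle construction from the $Q$-side, where at each step one uses \cref{cor:convergence} on a single low-dimensional handle core and must keep track of how the equivalence $\beta$ restricts to the partial sub-presheaves. The higher-homotopy equivalence, by contrast, is a direct consequence of \cref{thm:isotopy-extension} combined with classical parametric isotopy extension.
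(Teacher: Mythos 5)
Your overall framework---recast the proposition as the cartesianness of a commutative square with $E$ down the sides and gluing along $W$ across the top and bottom, then feed $W$'s dual handle type $[0,d-3]$ into embedding-calculus convergence to verify the hypothesis of the isotopy-extension theorem---is the same as the paper's, and the higher-homotopy analysis via \cref{thm:isotopy-extension} is correct in both. The genuine divergence, and a genuine gap, is in your $\pi_0$-surjectivity step. You flag it yourself as ``the principal obstacle'' and propose an inductive handle-by-handle realisation of $W$ inside $N'$, followed by a ``cancellation'' producing $E_N\simeq E_M$ via ``\cref{thm:isotopy-extension} with the roles of source and target swapped.'' Two problems here. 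First, realising $W\hookrightarrow N'$ need not be done handle by handle: $W$ viewed from its $Q$-end, i.e.\ $P\times(-1,0]\cup_P W$ as an object of $\ncBordInf(d)_Q$, has handle dimension $\le d-3$ relative to $Q$, so a single application of \cref{cor:convergence} already identifies $\Emb_Q(W,N')$ with a module mapping space and the sub-presheaf $E_W\to E_{M\cup_P W}\xrightarrow{\beta}E_{N'}$ gives the required point. Second and more seriously, even granting the embedding, setting $N\coloneqq N'\setminus\interior(W)$ does not by itself produce the data of a point in $S^{\DiscInf}_\partial(M)$ mapping to $(N',\beta)$: you must also show $E_N$ is equivalent to $E_M$ \emph{in a manner compatible with $\beta$ through the gluing map}, which is a homotopy-coherent cancellation of $E_W$. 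Your sketch conflates the ``detecting equivalences'' consequence of \cref{thm:isotopy-extension} (which is about morphisms $\varphi\colon E_N\to E_M$, given that such a $\varphi$ exists and becomes invertible after gluing) with the harder task of \emph{producing} such a $\varphi$ compatible with $\beta$ in the first place.

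The paper's proof sidesteps the $\pi_0$-surjectivity issue entirely by two devices you don't use. (i) It replaces the bottom row of the square by the \emph{representable} cores $\Mod(d)^{\rep,\simeq}_{E_{P\times I}}$ and $\Mod(d)^{\rep,\simeq}_{E_{Q\times I}}$---the subcategories of modules in the image of $E$. Since $E_M$ and $E_{M\cup_P W}$ are representable by hypothesis, the vertical fibres are unchanged, so it suffices to show \emph{this} restricted square is cartesian. (ii) It then proves cartesianness by comparing \emph{horizontal} fibres rather than vertical ones, using the formula $\fib_b(f)\simeq\colim_E\Map_B(f(-),b)$ for the fibre of a map of $\infty$-groupoids. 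For each compact nullbordism $N\in\BordInf(d)_Q$ this reduces the claim to showing that the map between the two colimits
\[
\colim_{(\BordInf(d)_{P})^\op}\Map_{\BordInf(d)_{Q}}\bigl((-)\cup_P W,N\bigr)\;\lra\;\colim_{(\Mod(d)^{\rep,\simeq}_{E_{P \times I}})^\op}\Map_{\Mod(d)^{\simeq}_{E_{Q \times I}}}\bigl((-)\cup_{E_{P \times I}}E_W,E_N\bigr)
\]
is an equivalence, and this is established by factoring through the noncompact bordism and module categories, where $P\times(-1,0]$ is initial, so both colimits contract onto mapping spaces out of $P\times(-1,0]\cup_P W$; the final horizontal map is then the convergence equivalence of \cref{cor:convergence}. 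The two vertical maps in this reduction are handled by classical parametric isotopy extension and by \cref{thm:isotopy-extension}, respectively. The payoff is that $\pi_0$-surjectivity of the gluing map falls out automatically from the cartesianness, and no separate realisation or cancellation argument is ever needed. I'd suggest adopting the colimit-over-representables reformulation; it turns your principal obstacle into a non-issue.
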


Once this is proved, \cref{thm:abstract-k-type-invariance} implies the following refined version of \cref{bigthm:2-type-invariance}.

\begin{thm}\label{thm:2-type-invariance-detailed} Let $d\ge5$, and $M$, $N$ be two compact $d$-manifolds of the same tangential $2$-type.
\begin{enumerate}
	\item\label{enum:2-type-i} There exists an equivalence $S^{\DiscInf}_\partial(M)\simeq S^{\DiscInf}_\partial(N)$.
	\item\label{enum:2-type-ii} For any embedding $e\colon L\hookrightarrow M$ of a $d$-manifold $L$ with handle dimension $\le2$ there is an embedding $e'\colon L\hookrightarrow N$ so that the equivalence of \ref{enum:2-type-i} can be chosen to be compatible with
	\[
		e_*\colon S^{\DiscInf}_\partial(L)\ra S^{\DiscInf}_\partial(M)\quad\text{and}\quad e'_*\colon S^{\DiscInf}_\partial(L)\ra S^{\DiscInf}_\partial(N).
	\]
	\item\label{enum:2-type-iii} Any embedding $e\colon M\hookrightarrow N$ that induces an equivalence on tangential $2$-types induces an equivalence $S^{\DiscInf}_\partial(M)\simeq S^{\DiscInf}_\partial(N)$ as in \ref{enum:2-type-i}.
\end{enumerate}
\end{thm}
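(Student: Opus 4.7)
The plan is to derive \cref{thm:2-type-invariance-detailed} from \cref{thm:abstract-k-type-invariance} applied with $k=2$ to the functor
\[
F \coloneqq S^{\DiscInf}_\partial(-) \colon h\Manc_d \lra h\cS
\]
induced by the $\infty$-functor constructed in \cref{sec:sdisc-for-manifolds}. The dimensional hypotheses of \cref{thm:abstract-k-type-invariance} at $k=2$ translate exactly to $d\ge 5$ for parts \ref{enum:2-type-i}-\ref{enum:2-type-ii} and $d\ge 6$ for part \ref{enum:2-type-iii}, matching the statement. To invoke \cref{thm:abstract-k-type-invariance} I must verify that $F$ sends codimension zero submanifold inclusions $L\subset K$ of relative handle type $[3,d]$ to equivalences in $h\cS$. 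Such an inclusion exhibits $K$ as $L\cup_{\partial L}W$ for the bordism $W\colon \partial L\leadsto \partial K$ given by the complement, which by definition is of handle type $[3,d]$; under this identification $F$ applied to the inclusion is the gluing map $S^{\DiscInf}_\partial(L)\to S^{\DiscInf}_\partial(L\cup_{\partial L}W)$, so it suffices to prove \cref{prop:invariance-handles}.

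For \cref{prop:invariance-handles}, write $M\colon \varnothing\leadsto P$ and $W\colon P\leadsto Q$ as in its statement. The gluing map fits in a commutative square
\[
\begin{tikzcd}
\BordInf(d)_{P} \dar[swap]{E} \rar{(-)\cup_P W} & \BordInf(d)_{Q} \dar{E} \\
\Mod(d)^\simeq_{E_{P\times I}} \rar{(-)\cup_{E_{P\times I}} E_W} & \Mod(d)^\simeq_{E_{Q\times I}}
\end{tikzcd}
\]
and $S^{\DiscInf}_\partial(M)$ and $S^{\DiscInf}_\partial(M\cup_PW)$ are the fibres of the vertical maps at $E_M$ and $E_{M\cup_PW}\simeq E_M\cup_{E_{P\times I}}E_W$ respectively. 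It therefore suffices to show this square is homotopy cartesian on the relevant path components.

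I would extract this from the isotopy-extension result \cref{thm:isotopy-extension} as follows. In its notation, take the left compact bordism to be $\varnothing$, the right compact bordism to be our $W$, and let the two middle noncompact bordisms range over null bordisms of $P$. The convergence hypothesis then becomes the assertion that the map induced by $E$
\[
\Map_{\ncBordInf(d)}\big(C_k\cup_PW,\ N\cup_PW\big) \lra \Map_{\Mod(d)}\big(E_{C_k\cup_PW},\ E_{N\cup_PW}\big)
\]
is an equivalence for all $k\ge 0$, with $C_k=c(M)\sqcup \ul{k}\times \bfR^d$. Since $W$ has handle type $[3,d]$, dually it has handle type $[0,d-3]$ as a bordism $Q\leadsto P$, so $C_k\cup_P W$ can be built from a closed collar of $Q$ by attaching only handles of index $\le d-3$: the $W$-part contributes handles of index $\le d-3$ when viewed from the $Q$-side, the half-open collar $c(M)$ contributes none, and $\ul{k}\times\bfR^d$ contributes only $0$-handles. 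Hence \cref{cor:convergence} supplies the required convergence, and \cref{thm:isotopy-extension} yields that $\circled{2}^\simeq$ is cartesian.

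Combining the cartesianness of $\circled{2}^\simeq$ with the analogous square $\circled{1}$ for $\ncBordInf(d)$ (cartesian by the parametrised isotopy extension theorem) and the description \eqref{equ:disjoint-union-description-sdisc} of $S^{\DiscInf}_\partial(-)$ as a disjoint union $\bigsqcup_{[M']}\Aut_{\Mod(d)}(E_{M'})/\Diff_\partial(M')$ over diffeomorphism classes, \cref{prop:invariance-handles} reduces to showing that gluing with $W$ induces a bijection on these sets of components and an equivalence on each individual component, both consequences of the combined cartesianness of $\circled{1}$ and $\circled{2}^\simeq$. The main obstacle I anticipate is bookkeeping-level rather than conceptual: matching the mapping-space formulation given by \cref{thm:isotopy-extension} to the fibre-of-$E$ formulation defining the $\DiscInf$-structure space, and carefully distinguishing $\Mod(d)$ from its core $\Mod(d)^\simeq$ and $\BordInf(d)$ from $\ncBordInf(d)$ when identifying the relevant path components.
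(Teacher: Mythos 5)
Your high-level route matches the paper's exactly: derive the theorem from \cref{thm:abstract-k-type-invariance} at $k=2$, which reduces to \cref{prop:invariance-handles}, and prove \cref{prop:invariance-handles} using the isotopy-extension result \cref{thm:isotopy-extension} together with the convergence result \cref{cor:convergence}. Your specialisation of \cref{thm:isotopy-extension} (with the compact bordisms taken to be $\varnothing$ and $W$, and the noncompact ones ranging over nullbordisms of $P$) is also the one the paper uses, and your verification that $C_k\cup_P W$ has relative handle dimension $\le d-3$ from $Q$'s side is correct.

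Where you are glossing is the last paragraph, and I would not call it bookkeeping. Passing from ``$\circled{1}$ and $\circled{2}^\simeq$ are cartesian'' to ``the square defining the $\DiscInf$-structure space is cartesian on the relevant components'' is where the bulk of the work lies, and the disjoint-union description \eqref{equ:disjoint-union-description-sdisc} by itself does not finish it. The paper's mechanism is to rewrite each fibre of $E$ as a colimit $\colim_{E}\Map_B(f(-),b)$ over the source $\infty$-groupoid, compare it to the analogous colimit over the noncompact bordism category $\ncBordInf(d)_P$ (and over $\Mod(d)^{\rep}_{E_{P\times I}}$ on the other side), and then collapse each noncompact colimit to a single mapping space because the open collar $P\times(-1,0]$ is initial there (\cref{rem:initial-among-rep-presheaves}). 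Only after this reduction does \cref{cor:convergence} give that the bottom map is an equivalence, and the comparison maps $\circled{1},\circled{2}$ between compact and noncompact colimits are then handled by a three-part argument ($\pi_0$-surjectivity, $\pi_0$-injectivity, equivalence on components). Moreover, the $\pi_0$-surjectivity of $\circled{2}$ in the paper is \emph{not} a consequence of \cref{thm:isotopy-extension}: it is deduced from the commutativity of the big diagram together with the facts that $\circled{1}$ and the bottom horizontal map are already known to be equivalences. So ``both consequences of the combined cartesianness of $\circled{1}$ and $\circled{2}^\simeq$'' is not quite how the implications run. You should spell out the colimit reformulation and the initial-object reduction, and separate the $\pi_0$-surjectivity argument from the fibre-sequence argument supplied by \cref{thm:isotopy-extension}.
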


\begin{proof}[Proof of \cref{prop:invariance-handles}]Unravelling the statement using \cref{def:disc-structure-space}, the task is to show that
\[\begin{tikzcd}
	\BordInf(d)_{P}\dar[swap]{E}\rar{(-)\cup_PW}&[30pt] \BordInf(d)_{Q}\dar{E}\\
	\ModInf(d)^{\rep,\simeq}_{E_{P \times I}}\rar{(-)\cup_{E_{P \times I}}E_W}& \ModInf(d)^{\rep,\simeq}_{E_{Q \times I}}
\end{tikzcd}\]
is a pullback in $\cS$, where $\smash{\ModInf(d)^{\rep,\simeq}_{E_{P \times I}}\subset \ModInf(d)^{\simeq}_{E_{P \times I}}}$ and $\smash{\ModInf(d)^{\rep,\simeq}_{E_{Q \times I}}\subset \ModInf(d)^{\simeq}_{E_{Q \times I}}}$ are the $\infty$-groupoids given as the full subcategories of those objects in the image of the functor $E\colon \BordInf(d)_{P}\ra \ModInf(d)^{\simeq}_{E_{P \times I}}$ and in the image of its analogue for $P$ replaced by $Q$, respectively. We prove that it is a pullback by showing that the map on horizontal fibres are equivalences, for which we use that for any map $f\colon E\ra B$ in $\cS$ (thought of as a full subcategory of $\CatInf$) and a point $b\in B$, the fibre of $f$ over $b$ agrees with the colimit $\colim_{E}\,\Map_B(f(-),b)$. This follows from \cite[3.3.4.6]{LurieHTT} combined with the fact that the fibre over $b$ is the total space of the unstraightening of the functor $\Map_B(f(-),b)\colon E\ra \cS$ which in turn follows from \cite[3.3.2.8]{LurieHTT}. 

Applying this to the situation at hand and using the description of $E$ on mapping spaces from \cref{sec:functor-e-summary}, the claim follows once we show that for each nullbordism $N\in \BordInf(d)_{Q}$, the map
\vspace{-0.05cm}
\[
	\underset{(\BordInf(d)_{P})^\op}{\colim}\big[\Map_{\BordInf(d)_{Q}}\big((-)\cup_P W,N \big)\big]\xlra{E} \underset{(\ModInf(d)^{\rep,\simeq}_{E_{P \times I}})^\op}{\colim}\big[\Map_{\ModInf(d)^{\simeq}_{E_{Q \times I}}}\big((-)\cup_{E_{P \times I}}E_W,E_N\big)\big]
\]
is an equivalence. Using the factorisation $E\colon \BordInf(d)\to \ModInf(d)$ through the noncompact version of the bordism double $\infty$-category $\ncBordInf(d)$, this map fits into a commutative diagram
\[\hspace{-0.3cm}\begin{tikzcd}[column sep=0.2cm,row sep=0.5cm,ar symbol/.style = {draw=none,"\textstyle#1" description,sloped},
	equ/.style = {ar symbol={\simeq}}]
	\underset{(\BordInf(d)_{P})^\op}{\colim}\big[\Map_{\BordInf(d)_{Q}}\big((-)\cup_P W,N\big)\big]\dar{\circled{1}}\rar{E}& \underset{(\ModInf(d)^{\rep,\simeq}_{E_{P \times I}})^\op}{\colim}\big[\Map_{\ModInf(d)^{\simeq}_{E_{Q \times I}}}\big((-)\cup_{E_{P \times I}}E_W,E_N\big)\big]\dar{\circled{2}}\\
	\underset{(\ncBordInf(d)_{P})^\op}{\colim}\big[\Map_{\ncBordInf(d)_{Q}}\big((-)\cup_P W,N\big)\big]\rar{E} & \underset{(\ModInf(d)^{\rep}_{E_{P \times I}})^\op}{\colim}\big[\Map_{\ModInf(d)_{Q}}\big((-)\cup_{E_P}E_W,E_N\big)\big] \\[-0.1cm]
	\Map_{\ncBordInf(d)_{Q}}\big(P\times(-1,0]\cup_P W,N\big)\rar{E}\arrow[u,"\simeq",shorten >=-6pt] &\Map_{\ModInf(d)_{E_{Q \times I}}}\big(E_{P\times(-1,0]\cup_P W},E_N\big)\arrow[u,"\simeq",swap,shorten >=-8pt]
\end{tikzcd}\]
where the bottom vertical equivalences result from the fact that the bordism $(P\times (-1,0])\colon \varnothing\leadsto P$ is initial in $\ncBordInf(d)_{P}$ and its image under $E$ is initial in $\Mod(d)^{\rep}_{E_{P \times I}}$, by \cref{rem:initial-among-rep-presheaves}. By \cref{cor:convergence} the bottom map is an equivalence as the handle dimension of $P\times(-1,0]\cup_P W$ relative to $Q$ is $\le d-3$ by assumption. It thus suffices to show that $\circled{1}$ and $\circled{2}$ are equivalences.

We begin with $\circled{1}$. Since the mapping spaces in $\ncBordInf(d)_{P}$ are given by spaces of embeddings fixing the boundary and composition is given by composition of embeddings (see \cref{sec:details-ncbord}) and the same holds for $\BordInf(d)_{P}$ with embeddings replaced by diffeomorphisms (see \cref{sec:details-bord}), the map $\circled{1}$ is the map induced by restriction 
\[
	\underset{(\BordInf(d)_{P})^\op}\colim \, \Diff_\partial((-)\cup_P W,N)\lra \Emb_Q(W,N).
\]
Using the decomposition $\BordInf(d)_{P}=\bigsqcup_{M\in \pi_0\,\BordInf(d)_{P}}\BDiff_\partial(M)$ into path components (see \cref{sec:details-bord}), this can further be simplified as
\begin{equation}\label{equ:reduction-of-circled-1}
	\underset{M\in \pi_0\,\BordInf(d)_{P}}\bigsqcup \Diff_\partial(M\cup_P W,N)/ \Diff_\partial(M) \lra \Emb_Q(W,N).
%	\begin{tikzcd}[row sep=0.33cm] 
%	\underset{M\in \pi_0\,\BordInf(d)_{P}}\bigsqcup \Diff_\partial(M\cup_P W,N)/ \Diff_\partial(M)  \dar[shorten <=-9pt] \\[-8pt] 
%	\Emb_Q(W,N).
%	\end{tikzcd}
\end{equation}
To show that the map \eqref{equ:reduction-of-circled-1} is an equivalence, we show  separately that it induces a bijection on components and that it is an equivalence on each component. To see that it is surjective on components, pick an embedding $e\in\Emb_Q(W,N)$. Up to changing $e$ within its isotopy class, we can assume that $P\subset W$ is mapped to the interior of $N$ and that the complement of $e(W\backslash P)\subset N$ defines a bordism $(N\backslash e(W\backslash P))\colon P\leadsto Q$. In this case the class in $\pi_0\,\Diff_\partial(M\cup_P W,N)/ \pi_0\,\Diff_\partial(M)=\pi_0(\Diff_\partial(M\cup_P W,N)/ \Diff_\partial(M))$ of the diffeomorphism $(N\backslash e(W\backslash P))\cup_P W\cong N$ obtained by extending $e$ by the identity provides a preimage of $[e]\in\pi_0\,\Emb_Q(W,N)$. Injectivity of \eqref{equ:reduction-of-circled-1} on $\pi_0$ follows from the isotopy extension theorem in the form of the homotopy fibre sequence
\vspace{-0.2cm}
\[
	\Diff_\partial(M)\xrightarrow{\phi\circ ((-)\cup_P{\id_W})} \Diff_\partial(M\cup_PW,N)\xra{\res} \Emb_Q(W,N)
\]
with fibre taken over the image of a diffeomorphism $\phi\colon M\cup_PW\cong N$. This sequence also implies that \eqref{equ:reduction-of-circled-1} is an equivalence on components, which finishes the proof for $\circled{1}$.

The argument for $\circled{2}$ is similar. Using Sections~\ref{sec:details-bimod} and~\ref{sec:functor-e-summary}, the reduction to showing that \eqref{equ:reduction-of-circled-1} is an equivalence applies also to the map $\circled{2}$ and shows that it agrees with the map
\[\begin{tikzcd}[row sep=0.33cm] 
	\underset{E_M\in \pi_0\,\Mod(d)^{\rep,\simeq}_{E_{P \times I}}}\bigsqcup \Map_{\Mod(d)_{E_{Q \times I}}^{\simeq}}(E_{M\cup_PW},E_N)/ \Aut_{\Mod(d)^{\simeq}_{E_{P \times I}}}(E_M) \dar[shorten <=-9pt] \\[-8pt] 
	\Map_{\Mod(d)_{E_{Q \times I}}^{\simeq}}(E_{P\times (-1,0]\cup_PW},E_N);
\end{tikzcd}\]
induced by the inclusion $P\times (-1,0]\cup_PW\subset M\cup_PW$. From the commutativity of the big diagram above and the fact that $\circled{1}$ and the bottom horizontal map are equivalences, we see that $\circled{2}$ is surjective on $\pi_0(-)$, so we must show it is injective on $\pi_0(-)$ and induces an equivalence on components. This follows as for $\circled{1}$ once we show that for $E_M\in\Mod(d)^{\rep,\simeq}_{E_{P \times I}}$ and $\phi\in \Map_{\Mod(d)_{E_{Q \times I}}^{\simeq}}(E_M\cup_{E_P} E_W,E_N)$ the sequence
\vspace{-0.2cm}
\[
	\Aut_{\Mod(d)^{\simeq}_{E_{P \times I}}}(E_M)\lra \Map_{\Mod(d)^{\simeq}_{E_{Q \times I}}}(E_{M\cup_PW},E_{N})\xra{\res} \Map_{\Mod(d)_{E_{Q \times I}}}(E_{P\times(-1,0]\cup_PW},E_{N}),
\]
whose left map is given by $\phi\circ ((-)\cup_{E_{P \times I}}{\id_{E_W}})$, is a homotopy fibre sequence when taking homotopy fibres over the image of $\phi$. By postcomposition with an inverse of $\phi$ it suffices to show this in the case $\phi=\id$. This follows from the second part of \cref{thm:isotopy-extension} (set $P=\varnothing$, $Q=P$, $R=Q$, $W=\varnothing$, $W'=W$, $M=M$, and $N=M$). The hypothesis to apply this result holds by \cref{cor:convergence}, since it follows from the assumption that $\underline{k}\times\bfR^d\sqcup P\times(-1,0]\cup_PW$ is the interior of a manifold obtained from a closed collar on $Q$ by attaching $(\le d-3)$-handles for all $k$.
\end{proof}

We conclude this section with a first application of the tangential $2$-type invariance. We will later use it to reduce the proof of the nontriviality result for $S^{\DiscInf}_\partial(M)$ to the case of $M=D^d$.

\begin{cor}\label{cor:homotopy-retract}For a compact spin $d$-manifold $M\neq\varnothing$ with $d\ge5$, the space $S^{\DiscInf}_\partial(M)$ contains $S^{\DiscInf}_\partial(D^d)$ as a homotopy retract. 
\end{cor}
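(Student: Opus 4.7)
Fix an embedding $e \colon D^d \hookrightarrow \interior M$. The plan is to enlarge $M$ to a closed, simply connected, spin $d$-manifold $N \supset M$ into which the given disc still embeds, and then apply \cref{thm:2-type-invariance-detailed}(ii) to extract the retraction using that $N$ and $D^d$ share the tangential $2$-type (recall that for spin manifolds the tangential $2$-type is the fundamental groupoid).

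First, use \cref{prop:invariance-handles} to reduce to the case where $M$ has non-empty boundary. Concretely, pick a handle decomposition of $M$ in which $e(D^d)$ lies inside a $0$-handle, and let $M[0,2]$ be the sub-manifold consisting of handles of index $\leq 2$. The complement $M[3,d]\colon \partial M[0,2] \leadsto \partial M$ has handle type $[3,d]$, so gluing it induces an equivalence $S^{\DiscInf}_\partial(M[0,2]) \xrightarrow{\simeq} S^{\DiscInf}_\partial(M)$ compatible with $e$. Replacing $M$ by $M[0,2]$, we may assume $\partial M \neq \varnothing$ and, moreover, that $\pi_1(\partial M) \to \pi_1(M)$ is surjective: every generator of $\pi_1(M)$ comes from a $1$-handle and can be represented by a loop running through a boundary tube $D^1 \times S^{d-2}$.

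Next, form the double $DM = M \cup_{\partial M} \overline{M}$. The Van Kampen theorem, combined with the surjectivity above, identifies the two copies of $\pi_1(M)$ inside $\pi_1(DM)$, so every element of $\pi_1(DM)$ is represented by a loop in $\interior \overline{M}$. Choose a finite generating set, realise it by disjoint embedded circles in $\interior \overline{M}$ (using general position, valid since $d \geq 3$), and perform spin-compatible $1$-surgeries on them; this is possible because each embedded circle in a spin manifold admits a framing making the surgery spin-compatible. Call the resulting manifold $N$: it is closed, spin, simply connected, and contains $M$ as a codimension-$0$ submanifold (since all surgeries took place in $\interior \overline{M}$). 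In particular, $e_N \coloneqq (\iota \circ e) \colon D^d \hookrightarrow M \hookrightarrow N$ is a codimension-$0$ embedding into the interior, where $\iota\colon M \hookrightarrow N$ denotes the inclusion.

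Since $N$ and $D^d$ are both simply connected and spin, they share the tangential $2$-type. Applying \cref{thm:2-type-invariance-detailed}(ii) with $L = D^d$, source $N$, target $D^d$, and embedding $e_N$ yields an equivalence $\phi \colon S^{\DiscInf}_\partial(N) \simeq S^{\DiscInf}_\partial(D^d)$ and an embedding $e'\colon D^d \hookrightarrow D^d$ with $\phi \circ (e_N)_* \simeq (e')_*$. Any self-embedding of $D^d$ is isotopic to the identity, so $(e')_* \simeq \id$, and therefore $r \coloneqq \phi \circ \iota_* \colon S^{\DiscInf}_\partial(M) \to S^{\DiscInf}_\partial(D^d)$ satisfies $r \circ e_* \simeq \id$, exhibiting $S^{\DiscInf}_\partial(D^d)$ as a homotopy retract. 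The main obstacle in this argument is the construction of $N$: one must verify that generators of $\pi_1(DM)$ can be chosen to lie in $\interior \overline{M}$ (so that the surgeries do not disturb $M$) and that they can be killed spin-compatibly. Both points hinge on the reduction to $M[0,2]$, without which $\pi_1(\partial M) \to \pi_1(M)$ may fail to be surjective and generators of $\pi_1(DM)$ from the $M$-side would not be representable in $\overline{M}$.
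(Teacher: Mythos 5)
Your proof is correct in essence but takes a genuinely different route from the paper's. The paper replaces $M$ by a compact codimension-$0$ submanifold $N\subset D^d$ with the same fundamental groupoid (using the fact that every finitely presented group is $\pi_1$ of such a submanifold when $d\ge 5$), which is automatically spin and hence of the same tangential $2$-type as $M$; the retraction is then visible from $D^d\subset N\subset D^d$ and only part \ref{enum:2-type-i} of \cref{thm:2-type-invariance-detailed} is needed, since being a retract transfers across equivalences. You instead enlarge $M$ to a closed simply-connected spin $N\supset M$ by doubling and surgering, and then need the finer part \ref{enum:2-type-ii} of \cref{thm:2-type-invariance-detailed} to track the disc through the equivalence $S^{\DiscInf}_\partial(N)\simeq S^{\DiscInf}_\partial(D^d)$. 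Your reduction to $M[0,2]$ via \cref{prop:invariance-handles} is not needed in the paper's proof, but it is essential for your route: it guarantees $\pi_1(\partial M)\ra\pi_1(M)$ is onto (in fact an isomorphism for $d\ge5$ by turning the handle decomposition upside down, so $\partial M\subset M$ is $(d-3)$-connected), which is what ensures the generators of $\pi_1(DM)$ live in $\interior\overline{M}$. The paper's argument is shorter and avoids invoking \cref{thm:2-type-invariance-detailed} \ref{enum:2-type-ii}; yours buys an explicit description of the inclusion $S^{\DiscInf}_\partial(D^d)\hookrightarrow S^{\DiscInf}_\partial(M)$ as $e_*$ for a chosen disc $e\colon D^d\hookrightarrow M$.

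Two small gaps to patch. First, the claim ``any self-embedding of $D^d$ is isotopic to the identity'' is false: $\pi_0\,\Emb(D^d,\interior D^d)\cong \pi_0\,\OO(d)\cong\bfZ/2$, distinguished by orientation. But this is harmless: if $e'$ is orientation-reversing, it is isotopic to some orientation-reversing diffeomorphism $\rho$ of $D^d$, and replacing $r$ by $\rho_*^{-1}\circ\phi\circ\iota_*$ still gives $r\circ e_*\simeq\id$. Second, your construction of $N$ assumes $M$ connected; for disconnected $M$ you would also need to perform $0$-surgeries in $\bigsqcup_i\interior\overline{M_i}$ to connect the components of $DM$ before killing $\pi_1$ (these are spin-compatible since $D^1\times S^{d-1}$ has a unique spin structure for $d\ge 3$), so that $N$ is connected and therefore of the same tangential $2$-type as $D^d$. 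Neither issue is serious, but both need saying.
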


\begin{proof}This essentially follows from the fact that any finitely presented group arises as the fundamental group of a compact connected codimension $0$-submanifold $N\subset D^k$ as long as $k\ge 5$ (in fact $k \geq 4$ is known to suffice, but we will not need this harder result). Indeed, apply this to $k=d$ and the fundamental group of each path component of $M$, to obtain a compact $d$-manifold $N\subset D^d$ whose fundamental groupoid is equivalent to that of $M$. Since $N$ admits an embedding into $D^d$, it is in particular spin, so the final discussion in \cref{rem:classification-2-types} shows that $M$ and $N$ have the same tangential $2$-type. Using the tangential $2$-type invariance of $S^{\DiscInf}_\partial(-)$ from \cref{thm:2-type-invariance-detailed}, it thus suffices to show the claim for $N$. The latter follows by choosing an embedded disc $D^d\subset N$ so that the composition $D^d\subset N\subset D^d$ is isotopic to the identity and applying $S_\partial^{\DiscInf}(-)$.
\end{proof}

\section{\cref{bigthm:infinite-loop-space}: infinite loop space} \label{sec:infinite-loop-space} 
The goal of this section is the proof of \cref{bigthm:infinite-loop-space}, or rather the following strengthening of it:

\begin{thm}\label{thm:oo-loop-general}For a compact manifold $M$ of dimension $d \geq 8$, $S^{\DiscInf}_\partial(M)$ admits the structure of an infinite loop space. If $M$ is $1$-connected spin, then the bound $d\ge8$ can be improved to $d\ge6$.\end{thm}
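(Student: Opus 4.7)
The plan is to follow the outline sketched in \S\ref{sec:intr-infinite-loop-space}, keeping careful track of the dimensional constraints to sharpen the bound from $d\ge 8$ in general to $d\ge 6$ in the $1$-connected spin case. The first step is to use Theorem~\ref{thm:2-type-invariance-detailed} to replace $M$ by a product $M'=P\times D^{2n}$ with $P$ closed of dimension $d-2n$ and $2n\ge 4$, which is possible provided $P\times D^{2n}$ can be chosen to have the same tangential $2$-type as $M$. Since the stable tangent bundle of $P\times D^{2n}$ is (stably) that of $P$, this reduces to finding a closed $(d-2n)$-manifold $P$ with the tangential $2$-type of $M$, and by Lemma~\ref{lem:nice-representative-k-type} such a $P$ exists whenever $d-2n\ge 4$; taking $2n=4$ forces $d\ge 8$. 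In the $1$-connected spin case $M$ has the same tangential $2$-type as $S^{d-2n}\times D^{2n}$ for any $2n$ with $d-2n\ge 2$, so choosing $2n=4$ and $P=S^{d-4}$ suffices and gives the sharper bound $d\ge 6$.

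With $M'=P\times D^{2n}$ the defining fibre sequence is
\[
	S^{\DiscInf}_\partial(M')\;=\;\fib_{E_{M'}}\Big(\BordInf(d)_{P\times S^{2n-1}}\xra{E}\Mod(d)^{\simeq}_{E_{P\times S^{2n-1}\times I}}\Big),
\]
and the aim is to promote this to a fibre sequence of infinite loop spaces. First restrict both sides to the unions of components represented by the manifolds $P\times W_{g,1}$ for $g\ge 0$ (respectively by bimodules equivalent to $E_{P\times W_{g,1}}$); this does not affect the fibre over $E_{M'}=E_{P\times W_{0,1}}$. Combining the symmetric monoidal double $\infty$-category functor $E\colon \ncBordInf(d)\to\Mod(d)$ of \S\ref{sec:the-functor} with the product functor $(P\times -)$ of \S\ref{step:product}, the bordisms between disjoint unions of $S^{2n-1}$'s obtained by puncturing, gluing, and taking disjoint unions of the $W_{g,1}$'s assemble into a higher-dimensional analogue $\cW$ of Tillmann's surface operad; both restricted sides acquire the structure of $\cW$-algebras, and $E$ becomes a map of such.

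By the variant of the main theorem of Basterra--Bobkova--Ponto--Tillmann--Yaekel on operads with homological stability, $\cW$-algebras are $E_1$-spaces whose group-completion is an infinite loop space, functorially in maps of $\cW$-algebras. The hypothesis needed is the stable homological stability theorem of Galatius--Randal-Williams for $\BDiff_\partial(W_{g,1})$, which applies because $2n\ge 4$. Thus the group completions of the source and target of our restricted map become infinite loop spaces and the induced map between them a map of infinite loop spaces. The remaining issue is to verify that group-completion commutes with taking the fibre over $E_{M'}$: the pair-of-pants stabilisation acts on $S^{\DiscInf}_\partial(M')$ by gluing in a fixed bordism whose tangential $2$-type is unchanged under iteration, so Theorem~\ref{thm:2-type-invariance-detailed} shows that this action is through self-equivalences of the fibre. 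This suffices to interchange group completion with the fibre.

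The combined output of the previous paragraphs endows the group completion $S^{\DiscInf}_\partial(M')^{+}$ with an infinite loop space structure; the final step is to show that $S^{\DiscInf}_\partial(M')\to S^{\DiscInf}_\partial(M')^{+}$ is already an equivalence, i.e.\ that the induced monoid structure on $\pi_0\,S^{\DiscInf}_\partial(M')$ is a group. Given a class $(N,\varphi)$ in the description of $\pi_0$ from \S\ref{sec:disc-structure-spaces}, one produces an inverse by first using tangential $2$-type invariance to find a suitable candidate manifold and then applying the $s$-cobordism theorem (valid in the dimensional range $d\ge 6$) to trivialise the resulting $h$-cobordism relative to the boundary, producing the required inverse class. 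I expect the principal technical obstacle to be the interchange of fibres and group-completion in the third paragraph---controlling the behaviour of the group-completion on the potentially many components of the target that are \emph{not} hit by manifolds---which is exactly where the tangential $2$-type invariance from Theorem~\ref{thm:2-type-invariance-detailed} plays its most crucial role.
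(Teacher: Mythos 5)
Your proposal follows essentially the same route as the paper: reduce to $M'=P\times D^{2n}$ via tangential $2$-type invariance (\cref{thm:2-type-invariance-detailed} and \cref{lem:nice-representative-k-type}, with $P=S^{d-2n}$ in the $1$-connected spin case), endow the restricted map $E$ with a $\cW$-algebra structure where $\cW$ is the BBPTY-type operad built from the $W_{g,k+l}$'s, invoke their homological-stability theorem (via Galatius--Randal-Williams, using $2n\ge 4$) to deloop after group completion, commute group completion with the fibre using $2$-type invariance, and finally show the monoid $\pi_0\,S^{\DiscInf}_\partial(M')$ is already a group via the $s$-cobordism theorem. The one place your argument is lighter than the paper's is the step ``this suffices to interchange group completion with the fibre'': the paper makes this precise via a criterion of Steimle for when group completion of $E_1$-spaces preserves pullbacks, with the translation squares checked by showing the gluing maps $S^{\DiscInf}_\partial(P\times W_{h,1})\to S^{\DiscInf}_\partial(P\times W_{h+g,1})$ are equivalences (which uses $2$-type invariance together with $2n\ge 4$); your informal phrase ``action through self-equivalences of the fibre'' is the right observation but needs that criterion to close the gap.
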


In \cref{sec:intr-infinite-loop-space}, we already gave an informal overview of the proof. We now make it precise.

\subsection{Operads with homological stability}
The proof of \cref{thm:oo-loop-general} relies on work of Basterra--Bobkova--Ponto--Tillmann--Yaekel \cite{BBPTY} on \emph{operads with homological stability} which generalises earlier work of Tillmann \cite{Tillmann}. We summarise their main result in this subsection.

\begin{rem}
\cite{BBPTY} is written in the setting of classical operads in topological spaces and algebras over them. To make it fit in our framework, we will rephrase their result in terms of (symmetric) $\infty$-operads and algebras over them (see \cref{sec:gen-infty-operads}). This translation is justified by the fact that there is an equivalence of $\infty$-categories between the $\infty$-category $\OpdInf$ of $\infty$-operads and the $\infty$-category underlying the model category $\sOp$ of classical coloured operads in simplicial sets (see \cite[p.\,858]{ChuHaugsengHeuts}) which is in turn Quillen equivalent to that of classical coloured operads in topological spaces; these equivalences do not affect the induced operad in the homotopy category. These equivalences extend to corresponding equivalences between categories of algebras over operads: \cite[Theorem 7.11]{PavlovScholbach} shows that (under mild conditions) the comparison functor from the $\infty$-category underlying the model category of algebras over a simplicial operad to the $\infty$-category of algebras over the associated $\infty$-operad is an equivalence, and applying $\Sing(-)$ induces (under mild conditions) a Quillen equivalence from the model category of algebras over a topological operad to the model category of algebras over the corresponding simplicial operad. The ``mild conditions'' in both steps are satisfied for all operads appearing in this section.
\end{rem}

Let $\bfN_0$ denote the set of non-negative integers. To state the main result of \cite{BBPTY}, we consider \emph{$\bfN_0$-graded $\infty$-operads} by which we mean (symmetric) $\infty$-operads $\cP$, together with a map of $\infty$-operads $\cP^{\otimes}\ra \gls*{finn}$ to the $\infty$-operad $\Fin^{\bfN_0}_\ast$ that is induced (via the operadic nerve \cite[2.1.1.27]{LurieHA}) by $\bfN_0$ under addition, considered as a symmetric monoidal category with a single object. Unpacking the definition, this amounts to an $\bfN_0$-indexed disjoint union decomposition $\Mul_\cP(x_1,\ldots,x_n;y)=\sqcup_{g\ge0}\Mul_\cP(x_1,\ldots,x_n;y)_g$ of all spaces of multi-operations that is additive under operadic composition. Every $\infty$-operad $\cO$ can be viewed as an $\bfN_0$-graded operad in grading $0$; formally this amounts to considering the composition $\cO^{\otimes}\ra\Fin_\ast\ra \Fin^{\bfN_0}_\ast$ where the second arrow is induced by the inclusion $\{0\}\subset \bfN_0$.

\begin{dfn}\label{dfn:operad-w-homstab}An \emph{operad with homological stability} is an $\bfN_0$-graded $\infty$-operad $\cP$ with a single colour (whose space of $k$-ary operations we write as $\Mul_{\cP}(\ast,\ldots,\ast;\ast)=\cP(k)=\sqcup_{g\ge0}\cP_g(k)$), together with
\begin{enumerate}
	\item\label{enum:data-homstab-operad-i} a map of $\bfN_0$-graded $\infty$-operads $\Assoc \ra \cP$ from the associative operad $\Assoc$ (see \cref{ex:associative-operad}) concentrated in degree $0$, and
	\item\label{enum:data-homstab-operad-ii} a distinguished element $s\in \cP_1(1)$, called the \emph{stabilising element},
\end{enumerate}
such that
\begin{enumerate}[(a)]
	\item\label{enum:cond-homstab-operad-i} the map on $2$-ary operations $\Assoc(2)\ra \cP_0(2)$ lands in a single path component, and
	\item\label{enum:cond-homstab-operad-ii} the map $\cP_\infty(k)\coloneqq \colim_g\,\cP_g(k)\ra\colim_g\,\cP_g(0)\eqcolon\cP_\infty(0)$ induced by taking horizontal colimits in the commutative diagram in $\cS$
	\[\begin{tikzcd}
		\cdots\rar&\cP_{g-1}(k)\rar{\circ_P(s;-)} \arrow[d,"{\circ_P(-;*,\ldots,*)}"]&[10pt]\cP_g(k)\rar{\circ_P(s;-)} \arrow[d,"{\circ_P(-;*,\ldots,*)}"]&[10pt]\cP_{g+1}(k)\dar{\circ_P(-;*,\ldots,*)}\rar&\cdots\\
		\cdots\rar&\cP_{g-1}(0)\rar{\circ_P(s;-)}&\cP_g(0)\rar{\circ_P(s;-)}\rar&\cP_{g+1}(0)\rar&\cdots
	\end{tikzcd}\]
	is an integral homology isomorphism for all $k\ge0$; here $\circ_P(-;-)$ is the operadic composition and $*\in \cP_0(0)$ is the image of $\ast \simeq\Assoc(0) \ra \cP_0(0)$.
\end{enumerate}
\end{dfn}

Given $\cP$ as in \cref{dfn:operad-w-homstab}, we may forget the grading and consider the composition 
\begin{equation}\label{equ:forgetfulgroup-completion-functor}
\Alg_\cP(\cS) \lra \Alg_{\Assoc}(\cS)\underset{\simeq}{\xrightarrow{\text{\cite[p\,465]{LurieHA}}}} \Mon(\cS)\xlra{\Omega B} \Mon^\grp(\cS)\xlra{U}\cS
\end{equation}
where the first arrow is the functor between $\infty$-categories of algebras in $\cS$ with its cartesian symmetric monoidal structure, induced by the morphism $\Assoc \ra \cP$ of $\infty$-operads (see \cref{sec:gen-infty-operads}), the second arrow is given by \emph{group-completion}, i.e.\,the left adjoint of the full subcategory inclusion $\Mon^\grp(\cS)\subset \Mon(\cS)$ of \emph{group-like objects}, i.e.\,those monoid objects $M\in\Mon(\cS)\subset \Fun(\Delta^{\op},\cS)$ in the sense of \cref{sec:cat-objects} for which the induced monoid of path components $\pi_0(M_{[1]})$ is a group, and the final arrow is the forgetful functor, given by evaluation at $[1]\in\Delta$. Recall (see e.g.\cite[5.2.6]{LurieHA}) that the composition of the final two arrows sends $M\in\Mon(\cS)$ to the pullback in $\cS$
\[\begin{tikzcd} 
	\Omega BM \rar \dar & M_{[0]} \simeq \ast \dar \\
	\ast\simeq M_{[0]} \rar & BM,
\end{tikzcd} 
\quad \text{with} \quad 
BM = \underset{\Delta^\op}\colim\,M.
\]
Writing $\Alg^\grp_{E_\infty}(\cS)\subset \Alg_{E_\infty}(\cS)$ for the full subcategory of group-like algebras in $\cS$ over the $E_\infty$-operad \cite[5.1.1.6]{LurieHA}, the main result of \cite{BBPTY} reads as follows:

\begin{thm}[Basterra--Bobkova--Ponto--Tillmann--Yeakel]\label{thm:stability-operads}
	For an operad with homological stability $\cP$, there exists a dashed functor fitting  into a commutative diagram of $\infty$-categories
	\[\begin{tikzcd}
		&\Alg^\grp_{E_\infty}(\cS)\dar{U}\\
		\Alg_{\cP}(\cS)\rar{\eqref{equ:forgetfulgroup-completion-functor}}\arrow[ur,dashed]&\cS
	\end{tikzcd}\]
	where $U$ is the forgetful functor.
\end{thm}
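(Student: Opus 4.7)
The plan is to construct, functorially in $X \in \Alg_\cP(\cS)$, a group-like $E_\infty$-algebra whose underlying space is the group completion $\Omega BX$ from \eqref{equ:forgetfulgroup-completion-functor}; equivalently, via the equivalence $\Alg_{E_\infty}(\cS) \simeq \CMon(\cS)$, a commutative monoid object in $\cS$ with this underlying space.

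First I would form the stabilised $\cP$-algebra $X_\infty = \hocolim(X \xrightarrow{s\cdot} X \xrightarrow{s\cdot} \cdots)$ obtained by iterating multiplication by the stabilising element $s \in \cP_1(1)$. It carries an action of the stabilised operad $\cP_\infty$ with $\cP_\infty(k) = \colim_g \cP_g(k)$. Conditions (a) and (b) of \cref{dfn:operad-w-homstab} imply that the underlying $E_1$-product on $\pi_0$ (coming from $\Assoc \to \cP$) becomes commutative on $H_\ast(X_\infty)$, so the group completion theorem of McDuff--Segal applies to give an equivalence $\Omega BX \simeq \Omega BX_\infty$ of $E_1$-spaces.

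The central observation is then that condition (b) forces $\cP_\infty$ to be homologically an $E_\infty$-operad: since $\cP_\infty(k) \to \cP_\infty(0)$ is a homology equivalence for each $k$, the $\Sigma_k$-actions on $\cP_\infty(k)$ become homologically trivial and the Segal projections $\cP_\infty(k) \to \cP_\infty(1)^{k}$ become homology equivalences. Because group completion turns homology equivalences between nilpotent $H$-spaces into weak equivalences, the $\cP_\infty$-action on $X_\infty$ refines, after passing to $\Omega BX_\infty$, to a genuine $E_\infty$-action on $\Omega BX$. This $E_\infty$-algebra is group-like because $\Omega BX$ has homotopy inverses by construction.

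The main obstacle is packaging all of this functorially and $\infty$-categorically. Following the strategy of \cite{BBPTY}, a convenient route is to assemble from the stabilised operations of $\cP$ and the algebra $X$ an explicit $\Gamma$-space (equivalently, a commutative monoid object in $\cS$) $F(X)$ whose value at $S^0$ is $\Omega BX$; condition (b) is precisely what makes $F(X)$ very special, and the evident naturality of the construction in $X$ produces the dashed functor $\Alg_\cP(\cS) \to \Alg^\grp_{E_\infty}(\cS)$ together with the commutativity of the triangle.
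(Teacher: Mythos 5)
The paper does not prove this theorem; it is cited verbatim as the main result of \cite{BBPTY} and used as a black box, so there is no internal proof to compare your attempt against. I can, however, comment on your sketch as a reconstruction of the BBPTY argument.

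Your overall strategy — stabilise, invoke the group completion theorem, package the stable operations into a $\Gamma$-space whose Segal maps become equivalences after group completion — is indeed the shape of the argument in \cite{BBPTY} (and in Tillmann's original paper). However, the middle paragraph contains a genuine error. You assert that condition~(b) forces the Segal projections $\cP_\infty(k) \to \cP_\infty(1)^k$ of the stabilised operad to be homology equivalences. This cannot be right: condition~(b) says that \emph{each} $\cP_\infty(k) \to \cP_\infty(0)$ is a homology isomorphism, so all the $\cP_\infty(k)$ have the homology of the single space $\cP_\infty(0)$, whereas $\cP_\infty(1)^k$ has the homology of a $k$-fold product $\cP_\infty(0)^k$. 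Unless $\cP_\infty(0)$ is acyclic, these disagree. So $\cP_\infty$ is not "homologically an $E_\infty$-operad" in the sense your paragraph suggests, and the claimed refinement of the $\cP_\infty$-action to an $E_\infty$-action on $\Omega BX_\infty$ does not follow from this reasoning. What actually happens in \cite{BBPTY} is that condition~(b) is applied to the Segal maps of the $\Gamma$-space built from the algebra (roughly, comparison maps between two-sided bar constructions $B(\ast, \cP, X^{\times n})$ and $B(\ast, \cP, X)^{\times n}$); those are the maps that turn out to be homology equivalences and then weak equivalences after group completion. Your final paragraph gestures at this, and it is the correct mechanism, but it is the step that carries all the weight and it is not supplied by the incorrect intermediate claim about $\cP_\infty$. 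So as a reconstruction of the external proof, the outline has the right ingredients and endpoint but a wrong explanation for the key step.
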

In other words, the underlying space of the group completion of an algebra over an operad with homological stability (considered as an ungraded operad) admits functorially the structure of a group-like $E_\infty$-algebra, or equivalently---by the recognition principle \cite[5.2.6.26]{LurieHA}---that of an infinite loop space.

\subsection{A manifold operad with homological stability}
The main example of an operad with homological stability considered in  \cite{BBPTY} is constructed out of the manifolds
\[
	W^{2n}_{g,k+l} \coloneqq W^{2n}_{0,k+l} \sharp (S^n \times S^n)^{\sharp g}\quad\text{with}\quad W^{2n}_{0,k+l}\coloneqq S^{2n}\backslash\interior((\sqcup^{k}D^{2n})\sqcup (\sqcup^{l}D^{2n}))
\]
for $k,l\ge0$ and $n\ge1$, considered as bordisms of the form $\sqcup^kS^{2n-1}\leadsto \sqcup^lS^{2n-1}$. Here $\smash{\sharp}$ denotes the connected sum operation. This is also the operad that is relevant for the proof of \cref{thm:oo-loop-general}, so we recall its construction in our setting. We omit the $2n$-superscripts for brevity.

Consider the tangential structure $\theta=\tau^*\Fr(\gamma)$ in the sense of \cref{sec:details-tangential-bord} given as the $\GL_{2n}(\bfR)$-space which is the pullback of the frame bundle of the universal bundle $\gamma\ra\BO(2n)$ along the $n$-connected cover map $\tau\colon \tau_{> n}\BO(2n)\ra \BO(2n)$. Since $S^{2n-1}$ is stably parallelisable, its once-stabilised tangent bundle admits a $\theta$-structure $\ell_0$ compatible its canonical orientation, unique up to equivalence of $\theta$-structures. We consider the symmetric monoidal $\infty$-category $\BordInf^{\theta}(2n)^{(\infty,1)}$ from \cref{sec:details-tangential-bord} and write $\BordInf^{\theta}(2n)^{(\infty,1),W}$ for the sub symmetric monoidal $\infty$-category (see \cref{ex:sub-sym-monoidal}) obtained by restricting objects to those equivalent to $\sqcup^k(S^{2n-1},\ell_0)$ for $k\ge0$ and restricting morphisms to those $\theta$-bordisms whose underlying bordism without $\theta$-structure is equivalent to a disjoint union of $W_{g,k+1}$'s for some $g,k\ge0$. Up to issues with components and different models, \cite[Theorem 1.3]{BBPTY} shows that the endomorphism operad
\[
	\cW\coloneq \End_{\BordInf^{\theta}(2n)^{(\infty,1),W}}(S^{2n-1},\ell_0)
\]
of $(S^{2n-1},\ell_0)$ in this category (see \cref{sec:end-operads}) can be enhanced to an operad with homological stability for all $2n\ge2$. For completeness and to deal with these issues, we give a proof in our setting by adapting their argument. As in \cite{BBPTY}, the main ingredient is a stable homological stability result of Galatius--Randal-Williams \cite{GRWII} (for the case $2n=2$ one can use \cite{Harer}). 

\begin{prop}\label{prop:manifold-operad-w-homstab}$\cW$ admits the structure of an operad with homological stability for all $2n\ge2$.\end{prop}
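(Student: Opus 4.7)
The plan is to adapt the construction in \cite{BBPTY} to our $\infty$-categorical setting and verify the four ingredients of \cref{dfn:operad-w-homstab}. By \eqref{equ:mapping-cat-theta-bord-compact} and the definition of $\BordInf^\theta(2n)^{(\infty,1),W}$, the space of $k$-ary operations decomposes as
\[
	\cW(k)\simeq \bigsqcup_{g\ge 0} \BDiff^\theta_\partial(W_{g,k+1},\ell_0^{\sqcup(k+1)}),
\]
and operadic composition is by gluing bordisms. Since gluing boundary spheres between distinct connected components is the connected-sum operation, composing a $W_{g,k+1}$-operation with $W_{g_i,k_i+1}$-operations yields a $W_{g+\sum g_i,(\sum k_i)+1}$-operation. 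This provides the additive $\bfN_0$-grading $\cW=\bigsqcup_g \cW_g$ by genus.

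Second, I would produce the map $\Assoc \to \cW$ and the stabilising element. By \cref{sec:end-operads}, giving a map of $\bfN_0$-graded $\infty$-operads $\Assoc\to \cW$ concentrated in grading $0$ amounts to equipping the colour $(S^{2n-1},\ell_0)$ with an associative algebra structure in the symmetric monoidal $\infty$-category $\BordInf^\theta(2n)^{(\infty,1),W}$ whose multiplication lies in the genus-$0$ part. The underlying multiplication is the pair-of-pants bordism $W_{0,3}$ equipped with a $\theta$-structure extending $\ell_0$ on each boundary sphere. Such a $\theta$-structure exists by obstruction theory: $W_{0,k+1}$ is homotopy equivalent to a wedge of $k$ copies of $S^{2n-1}$ and hence $(2n-2)$-connected, while $\theta$ is an $n$-connected cover of $\BO(2n)$, so the obstructions to lifting the tangent classifying map vanish. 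The higher coherences required for an $\infty$-operad map are constructed inductively out of $\theta$-structures on the higher $W_{0,k+1}$'s, as in \cite[Sections 2--3]{BBPTY}. The stabilising element $s\in\cW_1(1)$ is chosen to be a $\theta$-structure on $W_{1,2}$ extending $\ell_0$ on both boundary spheres, which again exists by obstruction theory since $W_{1,2}$ is $(n-1)$-connected of dimension $2n$.

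Condition (a) of \cref{dfn:operad-w-homstab} follows from the existence of a self-diffeomorphism of $W_{0,3}$ fixing the output boundary and swapping the two input boundaries that preserves the chosen $\theta$-structure up to homotopy; this is immediate from the symmetry of $W_{0,3}$ together with the high connectivity of $\theta$. For condition (b), the stabilisation map $\circ_\cW(s;-)\colon \cW_g(k)\to\cW_{g+1}(k)$ corresponds to gluing $W_{1,2}$ onto the output sphere of $W_{g,k+1}$ to obtain $W_{g+1,k+1}$, which is the standard genus-stabilisation map. The capping-off map $\circ_\cW(-;*,\ldots,*)\colon\cW_g(k)\to\cW_g(0)$ caps off the $k$ input boundary spheres with standard $\theta$-discs, yielding the map $\BDiff^\theta_\partial(W_{g,k+1})\to\BDiff^\theta_\partial(W_{g,1})$ by extending diffeomorphisms by the identity on the discs. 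By the stable homological stability theorem of Galatius--Randal-Williams \cite{GRWII} for $2n\ge 4$ (respectively Harer's theorem for $2n=2$), both types of maps become integral homology isomorphisms in the limit $g\to\infty$; more precisely, the four classifying spaces involved all converge in homology to a common infinite loop space, and the compatibilities are natural, so a commuting-square argument gives that the composition $\cW_\infty(k)\to\cW_\infty(0)$ is a homology isomorphism, as required.

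The main technical obstacle will be the coherent lift of the associative algebra structure from the homotopy operad $h\cW$ to the $\infty$-operad $\cW$; this requires controlling the higher simplices of the relevant spaces of $\theta$-structures, not only their components. This step is essentially classical in the cobordism-theoretic literature and can be addressed either by a direct inductive construction using the connectivity of the relevant spaces, or by invoking the Quillen equivalence between simplicial operads and $\infty$-operads to import the construction of \cite{BBPTY} wholesale.
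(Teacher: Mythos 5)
Your overall architecture—grading by genus, stabilising element $W_{1,1}$, conditions (a) via a swap-symmetry of $W_{0,3}$ and (b) via Galatius--Randal-Williams—matches the paper's, and your bookkeeping of the gluing formulas is correct. The genuine divergence, and the genuine gap, is in the construction of the map of $\infty$-operads $\Assoc\to\cW$.

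You explicitly flag this as ``the main technical obstacle'' and offer two fixes, but neither is adequate. Your first suggestion (``a direct inductive construction using the connectivity of the relevant spaces'') is not justified: the space of $\theta$-structures on $W_{0,k+1}$ relative to the fixed $\ell_0$'s on the $k+1$ boundary spheres is \emph{not} contractible. By obstruction theory and Lefschetz duality, existence is fine (the only potential obstruction lives in a degree where the fibre of $\theta\colon\tau_{>n}\BO(2n)\to\BO(2n)$ is already acyclic), but $\pi_0$ of that space of structures is generically nontrivial—there is a $\oH^1(W_{0,k+1},\partial;\pi_1(\mathrm{fib}\,\theta))$-torsor's worth of homotopy classes, nonzero as soon as $2n\ge4$. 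So the ``higher coherences'' cannot simply be read off from contractibility of these spaces; one has to track actual choices. Your second suggestion (import \cite{BBPTY} wholesale via the Quillen equivalence $\sOp\simeq\OpdInf$) is precisely the ``issues with components and different models'' that the authors flag as their motivation for not doing this: to invoke the group-completion machinery later, the $\cW$-algebra $\Map_{\BordInf(d)^{(\infty,1),\overline{W}}}(\varnothing,P\times S^{2n-1})$ has to be an algebra over \emph{this} particular $\infty$-operad $\cW$, built out of the bordism categories of \cref{sec:the-functor}, not over a transported model.

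The paper's proof resolves this with a construction you do not have. Instead of building the $\Assoc\to\cW$ map cell by cell, it identifies $\Assoc$ with a sub-$\infty$-operad of $\End_{\BordInf^\fr(2)^{\partial,(\infty,1)}}(D^1,\st)$—the endomorphism operad of the framed $1$-disc in the $2$-dimensional framed bordism category with boundary. Here the space of $k$-ary operations is homotopy discrete with $\pi_0\cong\Sigma_k$ (this uses $\Diff_\partial(D^1)\simeq\ast$, Smale's $\Diff_\partial(D^2)\simeq\ast$, and homotopy discreteness of the space of relative framings of $D^2$), so the identification with $\Assoc$ is a one-line check on homotopy operads. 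The map into $\cW$ is then obtained for free by pushing forward through the chain of symmetric monoidal functors of double $\infty$-categories constructed in \cref{step:product} and \cref{sec:boudnary-functor-tangential-structure}:
\[
\BordInf^\fr(2)^{\partial,(\infty,1)}\xra{(-)\times(D^{2n-1},\st)}\BordInf^\fr(2n+1)^{\partial,(\infty,1)}\xra{\partial}\BordInf^{\onefr}(2n)^{(\infty,1)}\lra\BordInf^\theta(2n)^{(\infty,1)},
\]
sending $D^1$ to $S^{2n-1}$. All higher coherence of the resulting $\Assoc$-algebra structure on $(S^{2n-1},\ell_0)$ is inherited from the monoidal functoriality already established; no ad hoc choices of $\theta$-structures on the $W_{0,k+1}$'s are needed. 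Condition (a) is then verified already one stage earlier, in $\End_{\BordInf^\fr(2n+1)^{\partial,(\infty,1)}}(D^{2n},\st)$, where it reduces to connectivity of the space of framed embeddings $\sqcup^2 D^d\hookrightarrow D^d$. This indirect route is the step you would need to supply to close the gap.

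A smaller point: your verification of condition (b) via ``the four classifying spaces converge to a common infinite loop space'' is more roundabout than necessary and a bit loose. The paper applies \cite[Theorem 1.3]{GRWII} in one shot to the single bordism $(D^{2n})^{\sqcup k}\sqcup(S^{2n-1}\times[0,1])\colon (S^{2n-1})^{\sqcup k}\sqcup S^{2n-1}\leadsto S^{2n-1}$, which is $(n-1)$-connected relative to its source and hence satisfies the hypothesis of that theorem directly; this gives the homology equivalence $\cW_\infty(k)\to\cW_\infty(0)$ without any comparison-of-limits argument.
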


\begin{proof} By definition and \eqref{equ:mapping-cat-theta-bord-compact}, the space of $k$-ary operations
\[
	\cW(k)=\Map_{\BordInf^\theta(2n)^{(\infty,1),W}}\big({\sqcup^k} (S^{2n-1},\ell_0),(S^{2n-1},\ell_0)\big)
\] 
is the $\infty$-groupoid of $\theta$-bordisms $\sqcup^k (S^{2n-1},\ell_0)\leadsto (S^{2n-1},\ell_0)$ that are, after forgetting $\theta$-structures, equivalent to $W_{g,k+1}$ for some $g \geq 0$. As the manifolds $W_{g,k+1}$ are pairwise non-diffeomorphic for $g\ge0$, this induces a decomposition $\cW(k)=\sqcup_{g\ge0}\cW(k)_g$ which is compatible with the operad structure given by gluing bordisms with $\theta$-structures, so it gives rise to an $\bfN_0$-grading on $\cW$.

To construct a map $\Assoc\ra \cW$ from the associative operad (put in degree $0$), we first use \cref{ex:associative-operad} to recognise $\Assoc$ as a sub-operad in the sense of \cref{sec:suboperad} of the endomorphism operad $\End_{\BordInf^{\fr}(2)^{\partial,(\infty,1)}}(D^1,\stst)$ of the $1$-disc with the standard $1$-framing (that is, framing of its once-stabilised tangent bundle) considered as an object of the $2$-dimensional framed bordism category with boundary from \cref{sec:details-tangential-bord} (formally, the tangential structure involved is $\fr=(\id\colon \GL_{2}(\bfR)\ra \GL_{2}(\bfR)$). Namely, we restrict to those bordisms $(N,\ell)\colon {\sqcup^k}(D^1,\stst)\leadsto (D^1,\stst)$ for which $(N,\ell)$ is diffeomorphic (after smoothing corners) to $D^2$ with its standard framing such that ${\sqcup^k}(D^1,\stst)\subset \partial D^2$ is orientation preserving, $(D^1,\stst)\subset \partial D^2$ is orientation-reversing (see \cref{fig:bord-int} for an example). From \cref{ex:associative-operad}, one sees that this suboperad is equivalent to $\Assoc$ since its space of $k$-ary operations is homotopy discrete with components $\Sigma_k$ (with the regular $\Sigma_k$-action) as a consequence of the facts that (i) the diffeomorphism group of $D^2$ fixing some boundary intervals is contractible as a result of the equivalences $\Diff_\partial(D^1)\simeq\ast$ and $\Diff_\partial(D^2)\simeq\ast$ (the first is folklore, the latter is \cite[Theorem B]{Smale}) and that (ii) the space of framings of $D^2$ relative to fixed $1$-framings on collared intervals in the boundary is homotopy discrete (as $\Omega \GL_2(\bfR)$ is).

Now we consider the composition of symmetric monoidal $\infty$-categories
\vspace{-0.1cm}
\[
	\BordInf^{\fr}(2)^{\partial,(\infty,1)}\xra{(-)\times(D^{2n-1},\stst)} \BordInf^{\fr}(2n+1)^{\partial,(\infty,1)}\xlra{\partial} \BordInf^{\onefr}(2n)^{(\infty,1)}\lra\BordInf^\theta(2n)^{(\infty,1)}
\]
where the first arrow takes the product with $D^{2n-1}$ equipped with the standard framing and smooths corners (see \cref{ex:product-functor-with-framing}), the second arrow takes boundaries and lands in the bordism category with $1$-stabilised framings (see \cref{ex:framing-to-oneframing}), and the final arrow is induced by the naturality \eqref{eqn:bord-theta-naturality} in the tangential structure and the fact that there is a map of tangential structures $(\onefr) \to \theta$ since $\tau\colon \tau_{>n}\BO(2n)\ra\BO(2n)$ arises as the pullback of $\tau_{>n}\BO(2n+1)\ra \BO(2n+1)$ along $\BO(2n)\ra\BO(2n+1)$ and thus receives a map from the pullback of $* \to \BO(2n+1)$. Taking endomorphism operads and precomposing with the map from $\Assoc$, we have a composition \vspace{-0.1cm} 
\[
	\Assoc \overset{\subset}\lra \End_{\BordInf^{\fr}(2)^{\partial,(\infty,1)}}(D^1,\stst)\lra\End_{\BordInf^\theta(2n)^{(\infty,1)}}(S^{2n-1},\ell_0)
\]
which lands in the suboperad of $\smash{\End_{\BordInf^{\theta}(2n)^{(\infty,1)}}(S^{2n-1},\ell_0)}$ whose underlying bordisms are equivalent to $W_{0,k+1}$, using that $\partial(D^2\times D^{2n-1})\backslash \interior(\sqcup^{k+1}D^1\times D^{2n-1})\cong W_{0,k+1}$ after smoothing corners. In other words, it lands in the degree $0$-part of the operad $\cW$ and thus gives a map $\Assoc \ra \cW$ as in part \ref{enum:data-homstab-operad-i} of \cref{dfn:operad-w-homstab}. As $s\in \cW_1(1)$ in part \ref{enum:data-homstab-operad-ii}, we choose the bordism $W_{1,1}\colon S^{2n-1}\leadsto S^{2n-1}$ with an admissible $\theta$-structure as in \cite[p.\,130]{GRWII} that extends $\ell_0$ on the boundary spheres. 

It remains to check conditions \ref{enum:cond-homstab-operad-i} and \ref{enum:cond-homstab-operad-ii} of \cref{dfn:operad-w-homstab}. For \ref{enum:cond-homstab-operad-i}, one observes that already the composition $\smash{\Assoc(2)\ra\End_{\BordInf^\fr(2)^{\partial,(\infty,1)}}(D^1,\stst)(2) \ra \End_{\BordInf^\fr(2n+1)^{\partial,(\infty,1)}}(D^{2n},\stst)(2)}$
lands in a single path component, since the bordism $(D^{2n+1},\stst)\colon{\sqcup^2}(D^{2n},\stst)\leadsto (D^{2n},\stst)$ is for $n\ge1$ framed diffeomorphic to the same bordism with the two source components permuted using the isotopy extension theorem and the fact that the space of framed embeddings $\sqcup^2 D^{d}\hookrightarrow D^{d}$ is connected for $d\ge2$.
  
Finally, to verify \ref{enum:cond-homstab-operad-ii} we note that the image of $\ast\simeq\Assoc(0)\ra\cW$ is the bordism $D^{2n}\colon S^{2n-1}\leadsto \varnothing$, equipped with some $\theta$-structure, so the map $\cW_\infty(k)\ra \cW_{\infty}(0)$ is a homology equivalence as a result of applying \cite[Theorem 1.3]{GRWII} to the bordism (with some $\theta$-structure) 
\[
	(D^{2n})^{\sqcup k}\sqcup (S^{2n-1}\times [0,1])\colon (S^{2n-1})^{\sqcup k}\sqcup S^{2n-1}\leadsto S^{2n-1},
\]
which, being $(n-1)$-connected relative to its source, satisfies the condition of that theorem.
\end{proof}

\begin{figure}
	\begin{tikzpicture}[scale=.9]
		\foreach \i in {1,...,4} 
		{
			\draw[->,thick,Mahogany] (0,{.2+\i}) -- (0,{.8+\i});
			\draw (0,{-.2+\i}) to[out=0,in=0,looseness=2](0,{.2+\i});
		}
		\draw[->,thick,Mahogany] (0,.2) -- (0,.8);
		\draw[->,thick,Mahogany] (3,{.2+2}) -- (3,{.8+2});
		\draw (0,0.2) to[out=0,in=180] (3,2.2);
		\draw (0,4.8) to[out=0,in=180] (3,2.8);
	\end{tikzpicture}
	\caption{A $5$-ary operation in the $\infty$-operad $\cW$.}
	\label{fig:bord-int}
\end{figure}
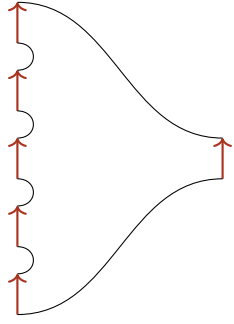

\subsection{Group completion and $\DiscInf$-structure spaces}\label{sec:map-of-w-algebras}
Fixing numbers $2\le 2n\le d$ and a closed $(d-2n)$-manifold $P$, we consider the sequence of symmetric monoidal $\infty$-categories
\begin{equation}\label{equ:theta-bord-to-morita}
	\BordInf^\theta(2n)^{(\infty,1),W}\subset \BordInf^\theta(2n)^{(\infty,1)}\xrightarrow{U}\BordInf(2n)^{(\infty,1)} \overset{P\times -}\lra \BordInf(d)^{(\infty,1)} \overset{E}\lra \Mod(d)^{(\infty,1)}
\end{equation}
where $U$ forgets tangential structures, $P\times(-)$ takes products (see \ref{step:product} in \cref{sec:the-functor}), and the final functor is discussed in \cref{sec:functor-e-summary}. \eqref{equ:theta-bord-to-morita} lands in the sub symmetric monoidal $\infty$-category 
\[
	\Mod(d)^{(\infty,1),W}\subset \Mod(d)^{(\infty,1)},
\] 
which is obtained by the restricting the objects to those equivalent to $E_{\sqcup^k P\times S^{2n-1}\times I}$ for $k\ge0$ and the morphisms to those bimodules equivalent to $E_{\sqcup^m P\times W_{g,k+1}}$ for some $m,k,g\ge0$. We write 
\[
	\BordInf(2n)^{(\infty,1),\overline{W}}\subset \BordInf(2n)^{(\infty,1)}
\] 
for the symmetric monoidal sub $\infty$-category obtained by restricting objects and morphisms to those that land in $\Mod(d)^{(\infty,1),W}\subset \Mod(d)^{(\infty,1)}$. Taking endomorphism operads, we obtain a composition of maps of $\infty$-operads
\[\hspace{-0.4cm}\Assoc \ra \cW=\End_{\BordInf^\theta(2n)^{(\infty,1),W}}(S^{2n-1},\ell_0) \ra \End_{\BordInf(d)^{(\infty,1),\overline{W}}}(P\times S^{2n-1})\ra \End_{\Mod(d)^{(\infty,1),W}}(P\times S^{2n-1}). 
\]
On $0$-ary operations, this in particular induces a map of $\cW$-algebras (see \cref{sec:map-as-algebra})
\begin{equation}\label{equ:map-of-endo-monoids}
	\Map_{\BordInf(2n)^{(\infty,1),\overline{W}}}(\varnothing, P\times S^{2n-1})\lra \Map_{\Mod(d)^{(\infty,1),W}}(E_\varnothing, E_{P\times S^{2n-1}\times I})
\end{equation}
which we can also view as a map of $\Assoc$-algebras in $\cS$, or equivalently, one of monoid objects in $\cS$. Going through the construction, the unit in $\Map_{\Mod(d)^{(\infty,1),W}}(\varnothing, E_{P\times S^{2n-1}\times I})$ is given by the bimodule $E_{P\times D^{2n}}$ and the fibre at that object of \eqref{equ:map-of-endo-monoids}, viewed as a map in $\cS$, is exactly $S^{\DiscInf}(P\times D^{2n})$ from \cref{sec:sdisc-for-manifolds}. Since the forgetful functor $\Mon(\cS)\ra \cS$ preserves limits, $S^{\DiscInf}(P\times D^{2n})$ inherits a monoid structure which fits into a pullback diagram in $\Mon(\cS)$
\begin{equation}\label{equ:sdisc-as-pullback}
	\begin{tikzcd} S^{\DiscInf}_\partial(P \times D^{2n}) \rar \dar & \Map_{\BordInf(d)^{(\infty,1),\overline{W}}}(\varnothing,P \times S^{2n-1}) \dar \\[-2pt]
	\ast \rar{E_{P \times D^{2n}}} & \Map_{\Mod(d)^{(\infty,1),W}}(E_\varnothing,E_{P \times S^{2n-1} \times I}).\end{tikzcd}
\end{equation}	
Under mild conditions, this square remains a pullback after group-completion. We show this as the first part of the following proposition.

\begin{prop}\label{prop:group-completion}Fix $2\le 2n\le d$ with $d\ge6$ and a closed $(d-2n)$-manifold $P$.
\begin{enumerate}
	\item \label{enum:group-completion-i}If also $2n\ge4$, then the pullback \eqref{equ:sdisc-as-pullback} in $\Mon(\cS)$ remains a pullback after group-completion.
	\item \label{enum:group-completion-ii}$S_\partial^{\DiscInf}(P\times D^{2n})$ is group-like when considered as a monoid object in $\cS$. 
\end{enumerate}
\end{prop}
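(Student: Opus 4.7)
The plan is to address the two parts separately, establishing part~\ref{enum:group-completion-ii} first since it feeds into the argument for part~\ref{enum:group-completion-i}.

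For part~\ref{enum:group-completion-ii}, I would use the explicit description \eqref{equ:disjoint-union-description-sdisc} of $\pi_0\,S^{\DiscInf}_\partial(P\times D^{2n})$ as the set of equivalence classes of pairs $(M,\phi)$ with $\partial M\cong P\times S^{2n-1}$ and $\phi\colon E_M\xra{\simeq} E_{P\times D^{2n}}$. The monoid structure is inherited from the pair-of-pants product in $\cW$ via the map \eqref{equ:map-of-endo-monoids}; on underlying manifolds it is boundary connected sum along $P\times D^{2n-1}$, with unit $(P\times D^{2n},\id)$. Evaluating $\phi$ at $\bfR^d\in\DiscInf_d$ yields a homotopy equivalence $M\to P\times D^{2n}$ relative to the boundary, and evaluation at disjoint unions of discs supplies compatible tangential data. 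With $d\ge 6$, the $s$-cobordism theorem applies to produce an $h$-cobordism $W$ rel $\partial$ between $M$ and $P\times D^{2n}\natural_\partial \Sigma$ for some homotopy $d$-disc $\Sigma$, and by tracking the induced $\DiscInf$-structure on $W$ one identifies an inverse $(M',\phi')$ to $(M,\phi)$ under the boundary-connected-sum operation.

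For part~\ref{enum:group-completion-i}, given part~\ref{enum:group-completion-ii} I would invoke the following principle: if $f\colon A\to B$ is a map of monoid objects in $\cS$ with fibre $F$ group-like, then group-completion preserves the fibre sequence $F\to A\to B$ provided the translation action of $\pi_0 A$ on $F$ is through equivalences. By part~\ref{enum:group-completion-ii}, $F=S^{\DiscInf}_\partial(P\times D^{2n})$ is group-like. The generators of $\pi_0 A$ correspond to the stabilising bordisms $P\times W_{g,1}$, whose action on $F$ is by gluing the bordisms $P\times W_{g,2}\colon P\times S^{2n-1}\leadsto P\times S^{2n-1}$ obtained from them by puncturing. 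Relative to the incoming boundary sphere, $W_{g,2}$ admits a handle decomposition with handles only of index $n$ and $n+1$, so under the hypothesis $2n\ge 4$ (and comfortably for $2n\ge 6$) the tangential $2$-type invariance of \cref{bigthm:2-type-invariance}, in the refined form of \cref{prop:invariance-handles}, ensures that the gluing maps are equivalences; the low-dimensional case $2n=4$ needs a separate stabilisation argument taking advantage of the extra room afforded by the $P$-factor.

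The main obstacle is two-fold. First, in part~\ref{enum:group-completion-ii} one must carefully match not only the underlying diffeomorphism type but also the $\DiscInf$-module structure in the inverse construction, which requires tracking coherence throughout the $s$-cobordism argument rather than relying solely on the underlying geometric conclusion. Second, in part~\ref{enum:group-completion-i} the group-completion principle for monoid objects in $\cS$ with non-trivial components needs a precise $\infty$-categorical formulation (of McDuff--Segal type), which must then be combined with the delicate verification that the stabilising bordisms are built from handles of index $\ge 3$ in all the cases covered by the statement.
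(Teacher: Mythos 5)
Your proposal captures the right ingredients but has some imprecisions and one structural mismatch with the paper's argument.

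For part~\ref{enum:group-completion-ii}, you are on the same track as the paper: the group-likeness is indeed established via the $s$-cobordism theorem after extracting a homotopy equivalence rel boundary from the $\DiscInf$-module equivalence $\phi$. The paper's implementation is more precise than your sketch, though. It first sets up an exact sequence of monoids $\pi_0\,\Aut_{\Mod(d)^{\simeq}_{E_{P\times S^{2n-1}\times I}}}(E_{P\times D^{2n}})\to\pi_0\,S^{\DiscInf}_\partial(P\times D^{2n})\to M_\partial(P\times D^{2n})\to 0$, uses Eckmann--Hilton to identify the two monoid structures on the leftmost term (which is already a group), and reduces to $M_\partial$ being a group. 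That in turn is done by regarding an element $W\in M_\partial$ as a \emph{strongly inertial relative $h$-cobordism} $P\times D^{2n-1}\leadsto P\times D^{2n-1}$, using that such cobordisms form a subgroup under Whitehead torsion. Your invocation of a ``homotopy $d$-disc $\Sigma$'' doesn't appear in the argument and isn't quite the right object; the inverse to $W$ under $\Ydown$ is the relative $h$-cobordism with inverse torsion, and the main work is supplying the equivalence $E_{W'}\simeq E_{P\times D^{2n}}$ via the identity $E_{W'}\simeq E_{P\times D^{2n}}\Ydown E_{W'}\to E_W\Ydown E_{W'}\simeq E_{W\Ydown W'}\to E_{P\times D^{2n}}$.

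For part~\ref{enum:group-completion-i}, there is a genuine gap. Your ``McDuff--Segal type'' criterion is not stated precisely enough to verify, and as worded it does not make sense: $F=S^{\DiscInf}_\partial(P\times D^{2n})$ is the fibre over the unit, and translation by an element $a\in A$ sends $F$ to a \emph{different} fibre, not to $F$ itself, so ``translation action of $\pi_0 A$ on $F$ through equivalences'' is not a well-formed condition. The paper instead invokes a specific criterion of Steimle (his Theorem 2.11): if $\varphi\colon X\to Y$ is a map of monoid objects in $\cS$ such that for every $y\in Y$ there is $x\in X$ with $\varphi(x)=y$ and the left- and right-translation squares
\[
\begin{tikzcd}
X\arrow[r,"(-)\cdot x"]\arrow[d]&X\arrow[d]\\ Y\arrow[r,"(-)\cdot y"]&Y
\end{tikzcd}
\]
are pullbacks, then group completion preserves pullbacks along $\varphi$. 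This condition does not require the fibre to be group-like, so the paper's part~\ref{enum:group-completion-i} is in fact independent of part~\ref{enum:group-completion-ii}; your logical dependence of \ref{enum:group-completion-i} on \ref{enum:group-completion-ii} is thus not needed (though it also isn't harmful). Verifying Steimle's condition reduces, after taking vertical fibres, to showing that the gluing maps $(-)\cup_{P\times S^{2n-1}}(P\times W_{g,1+1})\colon S^{\DiscInf}_\partial(P\times W_{h,1})\to S^{\DiscInf}_\partial(P\times W_{h+g,1})$ are equivalences — the point you correctly identified.

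Finally, your worry about $2n=4$ is a red herring, but it exposes an inaccuracy in your citation. You want to invoke \cref{prop:invariance-handles}, which requires a bordism of handle type $[3,d]$, and the bordisms $P\times W_{g,1+1}$ have relative handles of index $n=2$ when $2n=4$, so that lemma does not apply. But the paper does not use \cref{prop:invariance-handles} here: it uses \cref{thm:2-type-invariance-detailed}\ref{enum:2-type-iii}, whose hypothesis is that the inclusion is an equivalence on \emph{tangential $2$-types}, a strictly weaker condition than handle type $[3,d]$. One can verify directly that $P\times W_{h,1}\hookrightarrow P\times W_{h+g,1}$ is an equivalence on tangential $2$-types even for $2n=4$: both sides have the same $\pi_1$ (namely $\pi_1 P$), and the $\pi_2$-factor $\bfZ^{2(h+g)}$ of $W_{h+g,1}$ maps trivially to $\pi_2(\BO)$ since $W_{h+g,1}$ is spin and $1$-connected, so the image of $\pi_2$ in the second Moore--Postnikov stage of the tangent classifier is already hit by $\pi_2(P)$. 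Thus no separate stabilisation argument is needed, and the hypothesis $2n\ge4$ is exactly what makes the handles have index $\ge2$ and the boundary spheres simply connected, rather than what makes the handle index $\ge3$.
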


\begin{proof}
The first part is an application of the following fact, which can be deduced from \cite[Theorem 2.11]{Steimle}: if a map $\varphi\colon X\ra Y$ of monoid objects in $\cS$ has the property that for all $y\in Y$ there is an $x\in X$ such that $\varphi(x)=y$ and the following squares are pullbacks in $\cS$
\[\begin{tikzcd}
	X\dar\rar{(-)\cdot x}&[10pt]X\dar\\[-2pt]
	Y\rar{(-)\cdot y}&Y
\end{tikzcd}
\quad \text{and}\quad
\begin{tikzcd}
	X\dar\rar{ x\cdot (-)}&[10pt]X\dar\\[-2pt]
	Y\rar{y\cdot(-)}&Y,
\end{tikzcd}\]
then group completion preserves pullbacks of monoid objects in $\cS$ along the map $\varphi\colon X\ra Y$.

To conclude \ref{enum:group-completion-i}, it thus suffices to check the condition for the right vertical map in \eqref{equ:sdisc-as-pullback} which amounts to showing that the square in $\cS$
\[
\begin{tikzcd}[column sep=2cm]
\Map_{\BordInf(d)^{(\infty,1),\overline{W}}}(\varnothing,P \times S^{2n-1}) \rar{(-)\Ydown (P\times W_{g,1})} \dar & \Map_{\BordInf(d)^{(\infty,1),\overline{W}}}(\varnothing,P \times S^{2n-1}) \dar \\
	\Map_{\Mod(d)^{(\infty,1),W}}(E_\varnothing,E_{P \times S^{2n-1} \times I})\rar{(-)\Ydown E_{P\times W_{g,1}}} & \Map_{\Mod(d)^{(\infty,1),W}}(E_\varnothing,E_{P \times S^{2n-1} \times I})
\end{tikzcd}
\]
is cartesian for all $g\ge0$, where $(-)\gls*{ppproduct}(-)$ denotes the monoid structure of the monoid objects in \eqref{equ:sdisc-as-pullback}, and that the same holds for the square where we take products from the left. We focus on the former; the latter is proved in the same way.

Going through the construction of the map $\Assoc\ra\cW$ in the proof of \cref{prop:manifold-operad-w-homstab}, we see that $(-)\Ydown (P\times W_{g,1})$ is given a ``pair of pants-product'': it sends a bordism $M\colon \varnothing \leadsto P\times S^{2n-1}$ to the disjoint union $(M\sqcup P\times W_{g,1})\colon \varnothing \leadsto \sqcup^2P\times S^{2n-1}$ and then takes composition with $(P\times W_{0,2+1})\colon \sqcup^2P\times S^{2n-1}\leadsto P\times S^{2n-1}$. By monoidality, this agrees with the map that sends $M\colon \varnothing\leadsto  P\times S^{2n-1}$ first to its composition with $([0,1]\times P\times S^{2n-1}\sqcup P\times W_{g,1})\colon P\times S^{2n-1}\leadsto \sqcup^2 P\times S^{2n-1}$ and then takes composition with $P\times W_{0,2+1}\colon \sqcup^2P\times S^{2n-1}\leadsto P\times S^{2n-1}$. The composition of the latter two bordisms is diffeomorphic, as a self-bordism of $P\times S^{2n-1}$, to $ P\times W_{g,1+1}$. The same argument applies to $(-)\Ydown E_{P\times W_{g,1}}$, so using monoidality of the functor $E\colon  \BordInf(d)^{(\infty,1)}\ra \Mod(d)^{(\infty,1)}$, we may replace the top and bottom maps in the previous square by the gluing maps $(-)\cup_{P\times S^{2n-1}}(P\times W_{g,1+1})$ and $(-)\cup_{E_{P\times S^{2n-1}\times I}}E_{P\times W_{g,1+1}}$ respectively. Taking vertical homotopy fibres, it thus suffices to show that for $h\ge0$ the gluing map
\[\big((-)\cup_{P\times S^{2n-1}}(P\times W_{g,1+1})\big)\colon S_\partial(P\times W_{h,1})\ra S_\partial(P\times W_{h+g,1})\]
is an equivalence. In the setting of \cref{thm:2-type-invariance-detailed}, this map is induced by the inclusion $(\id_P\times \inc)\colon P\times W_{h,1}\hookrightarrow P\times W_{h+g,1}$, so it is an equivalence by part \ref{enum:2-type-iii} of the theorem because the latter inclusion is an equivalence on tangential $2$-types as a result of $W_{h,1}$ for all $h\ge0$ being parallelisable and $1$-connected since we assumed $2n\ge4$.

To show \ref{enum:group-completion-ii}, we first recall from \cref{sec:disc-structure-spaces} that $\pi_0\, S_\partial^{\DiscInf}(P\times D^{2n})$ is the set of equivalence classes of pairs $(W,\phi)$ of a compact manifold $W$ whose boundary is identified with $P\times S^{2n-1}$, together with an equivalence $\phi\colon E_M\ra E_{P\times D^{2n}}$ in $\Mod(d)^{\simeq}_{P\times S^{2n-1}}$, and two such pairs are equivalent if there exists a diffeomorphism between the manifolds that makes the evident triangle in $\Mod(d)^{\simeq}_{P\times S^{2n-1}}$ homotopy commute. Forgetting $\phi$ induces an exact sequence of pointed sets
\begin{equation}\label{eqn:two-homomorphisms}
		\pi_0\,\Aut_{\Mod(d)^{\simeq}_{E_{P \times S^{2n-1} \times I}}}(E_{P \times D^{2n}}) \lra \pi_0\,S^{\DiscInf}_\partial(P \times D^{2n}) \lra M_\partial(P \times D^{2n}) \lra 0,
\end{equation}
where $M_\partial(P \times D^{2n})$ is the pointed set of compact $d$-manifolds $W$ with boundary identified with $P \times S^{2n-1}$ such that there exists an unspecified equivalence $E_W \simeq E_{P \times D^{2n}}$ in $\Mod(d)_{E_{P \times D^{2n}}}$, up to diffeomorphism relative to the boundary, and based at $P \times D^{2n}$. The monoid structure on $\pi_0\,S^{\DiscInf}_\partial(P \times D^{2n})$ given by the ``pair of pants''-product induces a compatible monoid structure on $M_\partial(P \times D^{2n})$, concretely given by
\begin{equation}\label{equ:pair-of-pants}
	W \Ydown W' \coloneqq (W \sqcup W') \cup_{P \times S^{2n-1} \sqcup P \times S^{2n-1}} P \times W_{0,2+1}.
\end{equation}
A priori, the leftmost pointed set in \eqref{eqn:two-homomorphisms} carries \emph{two} monoid structures---one induced by the ``pair of pants''-product $(-)\Ydown(-)$ and one by composition---but these agree by the Eckmann--Hilton argument. Thus \eqref{eqn:two-homomorphisms} is an exact sequence of monoids whose leftmost term is a group. Monoid-extensions of groups are groups, so it suffices to show that $M_\partial(P \times D^{2n})$ is a group. We do so by showing that every element has a right- and a left-inverse; since the two constructions are essentially identical, we will only explain the right-inverse. 
	
For this it is convenient to use the notion of \emph{relative bordism} $V \colon N_0 \leadsto N_1$ between two compact manifolds $N_0$ and $N_1$ with identified boundary $\partial N_0\cong \partial N_1$, by which we mean a compact manifold $V$ with a division of its boundary into three codimension zero submanifolds $\partial V = N_0 \cup (\partial N_0 \times I) \cup N_1$ that intersect at corners. Up to creating some corners, we can regard an element $W\in M_\partial(P\times D^{2n})$ as a relative bordism of the form $P \times D^{2n-1} \leadsto P \times D^{2n-1}$ by dividing the identified boundary $\partial W\cong P\times S^{2n-1}$ into $(P \times D^{2n-1}\times\{0\}) \cup (P \times \partial D^{2n-1}\times [0,1]) \cup (P \times D^{2n-1}\times\{1\})$. In these terms, the monoid structure is given by composition of relative bordisms. By definition of $M_\partial(P\times D^{2n})$, the manifold $W$ admits an equivalence $\phi \colon E_W \to E_{P \times D^{2n}}$ in $\Mod(d)_{E_{P\times S^{2n-1}\times I}}$. In general, for manifold $N$ viewed as a nullbordism $N\colon \varnothing\leadsto \partial N$, we can consider the composition  \[E_{\partial N\times I}\simeq E_{\varnothing}\otimes E_{\partial N\times I}\xra{E_{\iota}\otimes \id} E_{N}\otimes E_{\partial N\times I}\xra{\text{act}} E_N\]
in $\PSh(\DiscInf_d)$ using the Day convolution product $\otimes$, the unique embedding $\varnothing\ra N$ and the fact that $E_{\varnothing}$ is the monoidal unit. Evaluating this composition at $\bfR^d$ and taking quotients by the $\Diff(\bfR^d)\simeq\Emb(\bfR^d,\bfR^d)$-action by functoriality recovers the homotopy class of the boundary inclusion $\partial N\subset N$. Applying this principle to the equivalence $\phi$ above, we obtain a homotopy equivalence $W \simeq P \times D^{2n}$ under $P \times S^{2n-1}$. In terms of relative bordisms, this says that $W$ is a \emph{strongly inertial} relative $h$-cobordism: that is, not only are the inclusions of the incoming and outgoing boundary homotopy equivalences, but the induced homotopy equivalence between them is homotopic to a diffeomorphism relative to the boundary. Since we assumed $d \geq 6$, relative $h$-cobordisms $W \colon W_0 = P \times D^{2n-1} \leadsto W_1$, up to diffeomorphism relative to the incoming boundary $P \times D^{2n-1}$, are classified by their Whitehead torsion $\tau(W) \in \Wh_1(\pi_1\,P)$, and the Whitehead torsions of strongly inertial relative $h$-cobordisms form a subgroup (cf.\,the discussion in \cite[Section 3]{JahrenKwasik}). Thus we may find another strongly relative $h$-cobordism $W' \colon P \times D^{2n-1} \leadsto P \times D^{2n-1}$ with a diffeomorphism $W \cup_{P \times D^{2n-1}} W' \cong P \times D^{2n}$ that respects part of the boundary identification, namely $P \times D^{2n-1} \{0\}\cup (P \times \partial D^{2n-1} \times [0,1])$. By changing the identification of the outgoing boundary of $W'$ if necessary, we may assume that this diffeomorphism respects the full boundary identification. Smoothing corners, this gives a diffeomorphism $\psi \colon W \Ydown W' \to P \times D^{2n}$ relative to $P \times S^{2n-1}$. To show that $W'$ is a right inverse to $W$ in $M_\partial(P\times D^{2n})$ it thus suffices to produce an equivalence $E_{W'}\simeq E_{P \times D^{2n}}$ in $\Mod_{E_{P \times S^{2n-1} \times I}}$. This is given by: \vspace{-0.2cm}
\[
	E_{W'} \simeq E_{P \times D^{2n}} \Ydown E_{W'} \xrightarrow{\phi^{-1} \Ydown \id} E_W \Ydown E_{W'} \simeq E_{W \Ydown W'} \xrightarrow{E_\psi} E_{P \times D^{2n}}.\qedhere
\]
\end{proof}

\begin{cor}\label{cor:infinite-loop-space-for-products}For $4\le 2n\le d$ with $d\ge6$ and a closed $(d-2n)$-manifold $P$, the $\DiscInf$-structure space $S^{\DiscInf}_\partial(P\times D^{2n})$ admits the structure of an infinite loop space.
\end{cor}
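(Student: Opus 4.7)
The plan is to combine the pullback square \eqref{equ:sdisc-as-pullback} with the operadic machinery of Theorem~\ref{thm:stability-operads} applied to the manifold operad $\cW$ from Proposition~\ref{prop:manifold-operad-w-homstab}. First I would observe that the map \eqref{equ:map-of-endo-monoids} was constructed in \cref{sec:map-of-w-algebras} as a map of $\cW$-algebras in $\cS$, so in particular as a map of associative algebras, equivalently a map of monoid objects in $\cS$. The square \eqref{equ:sdisc-as-pullback} is therefore a pullback in $\Mon(\cS)$ among objects equipped with $\cW$-algebra structures, with the bottom-left corner $\ast$ and the bottom-right corner the target of \eqref{equ:map-of-endo-monoids}.

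Next I would apply Proposition~\ref{prop:group-completion}\ref{enum:group-completion-i} (whose hypotheses $2n\ge 4$ and $d\ge 6$ are exactly those of the corollary): the square remains cartesian after applying the group-completion functor $\Omega B\colon \Mon(\cS)\to\Mon^{\grp}(\cS)$. By Proposition~\ref{prop:group-completion}\ref{enum:group-completion-ii}, the monoid $S^{\DiscInf}_\partial(P\times D^{2n})$ is already group-like, so the group-completion map on the top-left corner is an equivalence. Thus $S^{\DiscInf}_\partial(P\times D^{2n})$ is equivalent, as a monoid object in $\cS$, to the fibre at the basepoint of $\Omega B$ applied to the map of monoid objects \eqref{equ:map-of-endo-monoids}.

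Now I would invoke Theorem~\ref{thm:stability-operads}: since \eqref{equ:map-of-endo-monoids} is a map of $\cW$-algebras, its post-composition with the functor \eqref{equ:forgetfulgroup-completion-functor} lifts to a map in $\Alg^{\grp}_{E_\infty}(\cS)$, whose underlying map in $\cS$ is the group completion we just considered. The forgetful functor $\Alg^{\grp}_{E_\infty}(\cS)\to \cS$ preserves limits (being a right adjoint in the recognition-principle adjunction \cite[5.2.6.26]{LurieHA}), so fibres of maps of group-like $E_\infty$-algebras computed in $\cS$ inherit a canonical group-like $E_\infty$-algebra structure. Applying this to the fibre at $E_{P\times D^{2n}}$ endows $S^{\DiscInf}_\partial(P\times D^{2n})$ with the structure of a group-like $E_\infty$-algebra, i.e.\,by the recognition principle with the structure of an infinite loop space.

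The main obstacle is really bookkeeping rather than mathematical: one has to make sure that the $\cW$-algebra structures on source and target of \eqref{equ:map-of-endo-monoids} are the ones that feed into Theorem~\ref{thm:stability-operads} and that the basepoint $E_{P\times D^{2n}}$ used to take the fibre is indeed the unit of the $\cW$-algebra (so that fibre is taken in $\Alg^{\grp}_{E_\infty}(\cS)_{\ast/}$ and computes the honest fibre in $\cS$). Both facts are built into the construction in \cref{sec:map-of-w-algebras}, where the unit corresponds to the bimodule $E_{P\times D^{2n}}$, so no additional work is required beyond recording this identification.
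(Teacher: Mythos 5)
Your proposal is correct and follows essentially the same route as the paper: combine both parts of \cref{prop:group-completion} to identify $S^{\DiscInf}_\partial(P\times D^{2n})$ with the fibre of the group completion of the right vertical map in \eqref{equ:sdisc-as-pullback}, then use \cref{thm:stability-operads} together with \cref{prop:manifold-operad-w-homstab} to upgrade that group-completed map of $\cW$-algebras to a map of infinite loop spaces, whose fibre (taken at the unit $E_{P\times D^{2n}}$) inherits an infinite loop space structure. The extra bookkeeping you flag about the basepoint being the unit is indeed built into the construction in \cref{sec:map-of-w-algebras} and requires no further argument.
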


\begin{proof}Combining both parts of \cref{prop:group-completion}, $S^{\DiscInf}_\partial(P\times D^{2n})$ agrees with the fibre of the group completion of the right vertical map in \eqref{equ:sdisc-as-pullback}. As the group completion of a map of $\cW$-algebras, this map can be enhanced to a map of infinite loop spaces by \cref{thm:stability-operads} and \cref{prop:manifold-operad-w-homstab}. Fibres of infinite loop maps carry infinite loop space structures, so the claim follows. 
\end{proof}

Combining \cref{cor:infinite-loop-space-for-products} with the invariance under the tangential $2$-type from \cref{thm:2-type-invariance-detailed}, we can complete the goal of this section: 

\begin{proof}[Proof of \cref{thm:oo-loop-general}] For $M$ a compact $d$-manifold with $d\ge8$, we pick an $2n\ge4$ such that $2n-d\ge4$ (the choice $2n=4$ always works) and use the case $k=2$ of \cref{lem:nice-representative-k-type} to pick a closed $(d-2n)$-manifold $P$ of the same tangential $2$-type as $M$. Both $P\times D^{2n}$ and $M$ are $d$-dimensional and have the same tangential $2$-type, so $S^{\DiscInf}_\partial(P\times D^{2n})\simeq S^{\DiscInf}_\partial(M)$ by \cref{thm:2-type-invariance-detailed} \ref{enum:2-type-i}. As $S^{\DiscInf}_\partial(P\times D^{2n})$ admits the structure of an infinite loop space by \cref{cor:infinite-loop-space-for-products}, the first part  follows. For the claimed improvement, one can replace the role of $P$ in the argument with $P=S^{d-2n}$ for any $2n\ge4$ with $d-2n\ge2$, using that any two $1$-connected spin manifolds have the same tangential $2$-type (see \cref{rem:classification-2-types}).
\end{proof}

\begin{rem}\label{rem:drawbacks-loop-space-structure}The construction of the infinite loop space structure on $\cS_\partial^{\DiscInf}(M)$ as presented in this section comes with several drawbacks:
\begin{enumerate}
	\item It depends on several choices, most notably: (a) the choice of $2n\ge4$ with $2n-d\ge4$ and (b) the choice of a closed $(d-2n)$-manifold $P$ of the same tangential $2$-type as $M$. In particular, the construction does not lift the functor $\smash{S_\partial^{\DiscInf}(-)\colon \BordInf(d)_{/\varnothing}^{(\infty,1)}\ra \cS}$ to a functor with values in $\smash{\Alg_{E_\infty}^\grp(\cS)}$, but it does enhances the $\Diff(P)$-action on the space $\cS_\partial^{\DiscInf}(D^{2n}\times P)$ for fixed $2n\ge4$ and a closed manifold $P$ to an action in $\smash{\Alg_{E_\infty}^\grp(\cS)}$.
	\item The restrictions on the dimension are likely not optimal.
	\item The space $S^{\DiscInf}_\partial(P\times D^{2n})$ ought to carry the structure of an $E_{2n}$-algebra and the infinite loop space structure we give ought to extend this $E_{2n}$-structure.
\end{enumerate}
We expect that there is a better construction of the infinite loop space structure on $\cS_\partial^{\DiscInf}(M)$ that does not suffer from these shortcomings.
\end{rem}

\section{Localisations of mapping spaces between operads} \label{sec:ed-operads} 
This section serves to prove general results on mapping spaces between (truncated) operads and their localisations at collections of primes. In particular, given $\infty$-operads $\cO$ and $\cP$, we rely on work of Göppl--Weiss \cite{Goppl} to study the effect on homotopy groups of a map
\begin{equation}\label{equ:t-localisation-mapping-spaces}
	\Map(\cO,\cP)_\bfQ\ra \Map(\cO_\bfQ,\cP_\bfQ)
\end{equation}
from the rationalisation of the mapping space between $\cO$ and $\cP$ to the mapping space between the respective rationalisations. In \cref{sec:nontrivial}, we use these results to prove Theorems~\ref{bigthm:nontrivial} and \ref{bigcor:top-vs-auted}. 

\begin{convention}\label{conv:no-more-infty}
Up to this point, we phrased our results and arguments in the language of $\infty$-categories. In this and the following section, we will use several intermediate results from various sources, none of which are written in this language. To stay close to these sources, we switch language for the remainder of this paper and work in the category of simplicial sets or the category of compactly generated weak Hausdorff spaces. We denote either of these categories by $\cat{S}$ and leave the necessary transitions based on the usual Quillen equivalence between the standard model structures on these categories to the reader. As a result of not working $\infty$-categorically, we have to derive all mapping spaces in various categories that appear (spaces, operads, etc.) with respect to a class of weak equivalences, e.g.\,using Dwyer--Kan's functorial simplicial localisation \cite{DwyerKanFunction,DwyerKanSimplicial}. We indicate various derived mapping spaces by adding an $h$-subscript, so write $\Map^h(-,-)$, and we will mention the class of weak equivalences with respect we derive whenever a new type of derived mapping space is considered.
\end{convention}

\subsection{Localisation of spaces and groups at a set of primes}\label{sec:localisation}
We first recall some facts about $T$-localisations of spaces for a set of primes $T$. Recall that a space $Z$ is \emph{$T$-local} if the map $(- \circ g) \colon \Map^h_\cat{S}(Y,Z) \to \Map^h_\cat{S}(X,Z)$ is a weak equivalence for any map $g \colon X \to Y$ that is an isomorphism on $\oH_*(-;\bfZ_T)$. Here the mapping spaces are derived with respect to  weak homotopy equivalences, and $\bfZ_T$ is the localisation of $\bfZ$ obtained by inverting all primes in $T$. Immediately from the definition, we see that the class of $T$-local spaces is closed under
\begin{enumerate}[(i)]
	\item taking homotopy limits,
	\item passing to collections of path components,
	\item applying $\Map^h_\cat{S}(X,-)$ for any space $X$.
\end{enumerate}
A map $f \colon X \to Y$ is a \emph{$T$-localisation} if $Y$ is $T$-local and $f$ is a $\bfZ_T$-homology isomorphism. Any space admits a $T$-localisation and suitably modelled, this yields an $\cat{S}$-enriched functor 
\begin{equation}
	\label{equ:localisation} \gls*{tloc} \colon \cat{S} \lra \cat{S}
\end{equation}
together with a natural transformation $r_T \colon \id \to (-)_T$ which enjoys the following properties (see e.g.\ \cite[1.A.3, 1.A.8, 1.B.2, 1.B.7, 1.C.9,  1.C.13, 1.E.4]{Farjoun}):
\begin{enumerate}[(a)]
	\item \label{enum:T-localisation} the map $r_T \colon X \to X_T$ is a $T$-localisation, so a weak equivalence if $X$ is $T$-local,
	\item \label{enum:weak-equ-T-localisation}$(-)_T$ preserves weak equivalences,
	\item \label{enum:product-T-localisation} the canonical map $(X\times Y)_T\ra X_T\times Y_T$ is a weak equivalence,
	\item \label{enum:precomp-T-localisation}the map $(-)\circ r_T\colon \Map^h_\cat{S}(X_T,Y)\ra \Map^h_\cat{S}(X,Y)$
is a weak equivalence if $Y$ is $T$-local.
\end{enumerate}
If $T$ is the set of all primes, $T$-localisation is \emph{rationalisation} which we denote as $\gls*{ratloc}$.

\subsubsection{Localisation of groups}\label{sec:localisation-groups}
Recall that a group $G$ is \emph{$T$-local} if the map $(- \circ g) \colon \Hom(H,G) \to \Hom(K,G)$ is an isomorphism for all $g \colon K \to H$ such that $\oH_1(g;\bfZ_T)$ is an isomorphism and $\oH_2(g;\bfZ_T)$ is surjective. The homotopy groups of a $T$-local space at any basepoint are $T$-local groups \cite[Theorem 5.5]{Bousfield}. A morphism of groups $f\colon H\ra G$ is a $T$-localisation if $G$ is $T$-local and $f$ has the property on $\oH_*(-;\bfZ_T)$ for $*=1,2$ just described. One way to construct $T$-localisations of groups is as follows: the functor \eqref{equ:localisation} has an analogue $(-)_T \colon \cat{S}_* \ra \cat{S}_*$ in the pointed setting, which agrees with \eqref{equ:localisation} on connected spaces \cite[A.7]{Farjoun}. Defining $G_T\coloneqq \pi_1((BG)_T)$, we obtain a functor $(-)_T \colon \cat{Grp} \ra \cat{Grp}$ on the category of groups with a natural transformation $\id\ra (-)_T$ which is a $T$-localisation \cite[Lemma 7.3]{Bousfield}. Note that we have $(G)^{\ab}\otimes\bfZ_T\cong (G_T)^{\ab}\otimes\bfZ_T$ by construction and the Hurewicz theorem. On nilpotent groups, $(-)_T$ agrees with the usual $T$-localisation of nilpotent groups in the algebraic sense.

\subsubsection{Localisation of nilpotent spaces}
Recall that a space $X$ is \emph{nilpotent} if it is connected, has nilpotent fundamental group, and its $\pi_1(X)$-action on $\pi_i(X)$ for $i\ge2$ is nilpotent. $T$-localisation preserves nilpotent spaces and can be characterised as follows  (see e.g.\,\cite[6.1.2]{MayPonto}):

\begin{lem}Let $f \colon X \to Y$ be a map from a nilpotent space $X$ to a $T$-local space $Y$. Then the following are equivalent:
	\begin{enumerate}
		\item $f \colon X \to Y$ is a $T$-localisation of spaces,
		\item $f_* \colon \widetilde{\oH}_k(X;\bfZ) \to \widetilde{\oH}_k(Y;\bfZ)$ is a $T$-localisation of abelian groups for all $k\ge1$,
		\item $f_* \colon \pi_k(X) \to \pi_k(Y)$ is a $T$-localisation of abelian and nilpotent groups for all $k\ge1$.
	\end{enumerate} 
\end{lem}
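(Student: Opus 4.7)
The plan is to prove the equivalences by reducing all three conditions to a single condition on a comparison map with the $T$-localisation $r_T\colon X \to X_T$ from \eqref{equ:localisation}. Throughout, the essential input is the standard structure theorem for the $T$-localisation of nilpotent spaces: namely, that $X_T$ is again nilpotent, that $(r_T)_*\colon \pi_k(X)\to\pi_k(X_T)$ is the algebraic $T$-localisation of (nilpotent for $k=1$, abelian for $k\geq 2$) groups, and that $(r_T)_*\colon H_k(X;\bfZ)\to H_k(X_T;\bfZ)$ realises the abelian $T$-localisation for all $k\geq 1$; see e.g.\ \cite[Chapter 6]{MayPonto}. In particular, the homotopy and integral homology groups of any $T$-local nilpotent space are themselves $T$-local.

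First I would use property \ref{enum:precomp-T-localisation} of the $T$-localisation functor to produce, from a map $f\colon X\to Y$ with $Y$ a $T$-local space, a factorisation $f\simeq \tilde f\circ r_T$ for some $\tilde f\colon X_T\to Y$ which is unique up to homotopy. Observe that both source and target of $\tilde f$ are $T$-local nilpotent spaces, and that $\tilde f$ is a weak equivalence if and only if $\tilde f$ induces isomorphisms on all $\pi_k$, which by the Whitehead theorem for nilpotent spaces is equivalent to $\tilde f$ inducing isomorphisms on all $H_k(-;\bfZ)$. This is the single condition through which all three statements in the lemma will be compared.

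The implication (1)$\Leftrightarrow$(2) then follows: by definition (1) says that $f$ is a $\bfZ_T$-homology isomorphism, which (using flatness of $\bfZ_T$ and the universal coefficient theorem) is equivalent to $f_*\colon H_k(X;\bfZ)\otimes\bfZ_T\to H_k(Y;\bfZ)\otimes\bfZ_T$ being an isomorphism for all $k\geq 1$. Since $H_k(Y;\bfZ)$ is already $T$-local, the right-hand side equals $H_k(Y;\bfZ)$, and the condition is equivalent to $f_*$ being the abelian $T$-localisation on each $H_k$, which is (2). Alternatively, in terms of the factorisation, both (1) and (2) are equivalent to $\tilde f_*\colon H_k(X_T;\bfZ)\to H_k(Y;\bfZ)$ being an isomorphism for all $k\geq 1$ by the structure theorem above.

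For (1)$\Leftrightarrow$(3), one uses the analogous translation on the homotopy group side: since $(r_T)_*\colon \pi_k(X)\to\pi_k(X_T)$ is the algebraic $T$-localisation, the condition (3) is equivalent to $\tilde f_*\colon \pi_k(X_T)\to\pi_k(Y)$ being an isomorphism for all $k\geq 1$, which by the Whitehead theorem for nilpotent spaces is equivalent to $\tilde f$ being a weak equivalence, and hence to (1). The main (but standard) obstacle is invoking the structure theorem for the $T$-localisation of nilpotent spaces, together with the nilpotent Whitehead theorem identifying weak equivalences with homology isomorphisms; the rest of the argument is essentially formal manipulation of universal properties.
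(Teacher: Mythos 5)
The paper does not give a proof of this lemma; it simply cites \cite[6.1.2]{MayPonto}. Your argument is essentially the standard one found there: factor $f$ through $r_T \colon X \to X_T$ using property \ref{enum:precomp-T-localisation}, and reduce each of the three conditions to the assertion that $\tilde f \colon X_T \to Y$ is a weak equivalence, using the structure theorem for localisations of nilpotent spaces together with the classical and nilpotent Whitehead theorems. That is the right route, and the reductions you give for (1)$\Leftrightarrow$(2) and (1)$\Leftrightarrow$(3) are correct.

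One imprecision worth flagging: you write "Observe that both source and target of $\tilde f$ are $T$-local nilpotent spaces," but the lemma only assumes $Y$ is $T$-local, not nilpotent — and you then lean on the nilpotent Whitehead theorem (equating $\pi_*$-isomorphisms and integral $H_*$-isomorphisms) and on $H_*(Y;\bfZ)$ being $T$-local, both of which implicitly use that $Y$ is nilpotent. This hypothesis is present in May--Ponto's formulation and appears to have been dropped from the paper's statement. The gap is fixable within your framework: (3) implies that $\tilde f$ is a $\pi_*$-isomorphism, hence a weak equivalence by the \emph{classical} Whitehead theorem, after which $Y \simeq X_T$ is nilpotent and the rest follows; and (2) implies $\tilde f$ is an integral $H_*$-isomorphism, and one can argue directly that $f$ is then a $\bfZ_T$-homology isomorphism without first knowing $Y$ is nilpotent. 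So either establish nilpotency of $Y$ as a consequence before invoking the nilpotent Whitehead theorem, or add it to the hypotheses as the cited reference does.
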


Localisations of nilpotent spaces behave well with respect to many constructions, such as:

\begin{lem}\label{lem:lem-hopb}Let $f \colon X \to A$ and $g \colon Y \to A$ be based maps between spaces with nilpotent basepoint component. Then
	\begin{enumerate}
		\item \label{enum:loc-hopb-nilpotent} the basepoint component $(X\times^h_AY)_0\subseteq X\times^h_AY$ of the homotopy pullback is nilpotent,
		\item \label{enum:loc-hopb-loc} the natural map $(X\times^h_AY)_0 \to (X_T\times^h_{A_T}Y_T)_0$ is a $T$-localisation of nilpotent spaces, and
		\item \label{enum:loc-hopb-fg} if $X_0$, $Y_0$, and $A_0$ have finitely generated homotopy groups then so does $(X\times^h_AY)_0$.
	\end{enumerate}
\end{lem}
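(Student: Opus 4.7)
The plan is to reduce to the connected nilpotent case and then apply the classical theory of nilpotent fibrations. Since all three assertions concern only the basepoint components of the relevant homotopy pullbacks, and since the $T$-localisation functor of \S\ref{sec:localisation} commutes with passage to path components, one may replace $X$, $Y$, $A$ by their basepoint components and assume throughout that they are connected and nilpotent; under this reduction, $X_T$, $Y_T$, $A_T$ are again connected nilpotent by standard properties of $T$-localisation of nilpotent spaces. The key tool is the fibration
\[
	F\lra X\times^h_AY\lra Y
\]
obtained by projection onto the second factor, whose fiber over the basepoint is $F\coloneqq \hofib(f)$.

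For \ref{enum:loc-hopb-nilpotent}, the classical fiber lemma for maps between connected nilpotent spaces (cf.\ \cite[Ch.\,4]{MayPonto}) guarantees that $F$ is nilpotent, and the action of $\pi_1(Y)$ on the homotopy groups of $F$ is nilpotent since it factors through the nilpotent action of $\pi_1(A)$; the total space of such a nilpotent fibration is then nilpotent by the usual criterion. For \ref{enum:loc-hopb-loc}, I would invoke the $T$-local fiber lemma (cf.\ \cite[Ch.\,6]{MayPonto}), which says that the $T$-localisation of a nilpotent fibration remains a fibration: applied to the fibration above this yields $F_T\lra (X\times^h_AY)_T\lra Y_T$, and applied to $F\lra X\lra A$ it identifies $F_T\simeq \hofib(f_T)$. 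The analogous fibration $\hofib(f_T)\lra X_T\times^h_{A_T}Y_T\lra Y_T$ then receives a comparison map from the first which is the identity on base and an equivalence on fibers, and hence an equivalence on basepoint components of the total spaces.

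For \ref{enum:loc-hopb-fg}, the long exact sequence of homotopy groups
\[
	\cdots\lra \pi_{n+1}(A)\lra \pi_n(X\times^h_AY)\lra \pi_n(X)\times\pi_n(Y)\lra \pi_n(A)\lra\cdots
\]
(at the basepoint) exhibits each $\pi_n(X\times^h_AY)$ as an extension of a subgroup of $\pi_n(X)\times\pi_n(Y)$ by a quotient of $\pi_{n+1}(A)$; both the class of finitely generated abelian groups (for $n\ge 2$) and that of finitely generated nilpotent groups (for $n=1$) are closed under extensions and subquotients, so the hypothesis transfers to $(X\times^h_AY)_0$.

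The only nontrivial bookkeeping lies in \ref{enum:loc-hopb-loc}: one must check that the comparison of fibrations is compatible with the choice of basepoints and that it descends to an equivalence on basepoint components of the total spaces, which in turn is handled by the initial reduction to connected nilpotent spaces combined with \ref{enum:loc-hopb-nilpotent}.
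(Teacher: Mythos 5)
Your overall strategy is sound and essentially amounts to reconstructing the theorem that the paper simply cites: after the reduction to connected nilpotent $X$, $Y$, $A$ (which the paper does too), the paper dispatches parts (i) and (ii) by invoking \cite[6.2.5]{MayPonto} directly, and proves (iii) via the long exact sequence of the homotopy pullback exactly as you do. So the comparison is really between your unpacking of \cite[6.2.5]{MayPonto} and the citation itself.

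There is, however, a step in your argument for (i) that is stated too loosely to be accepted as is. You assert that ``the action of $\pi_1(Y)$ on the homotopy groups of $F$ is nilpotent since it factors through the nilpotent action of $\pi_1(A)$''. But there is no a priori ``nilpotent action of $\pi_1(A)$'' on $\pi_n(\hofib f)$: the space $\hofib(f)$ need not be connected even after passing to basepoint components of $X$, $Y$, $A$, and the holonomy of $\pi_1(A)$ permutes its components, so only the stabiliser of the basepoint component acts on $\pi_n(\hofib(f)_0)$. What the (strong form of the) fibre lemma applied to $\hofib(f)\to X\to A$ actually yields is that the image of $\pi_1(X)$ in $\pi_1(A)$ --- which is precisely this stabiliser --- acts nilpotently on $\pi_n(\hofib(f)_0)$. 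To conclude that $(X\times^h_A Y)_0$ is nilpotent from the fibration over $Y$, what is needed is nilpotency of the $\pi_1((X\times^h_A Y)_0)$-action on $\pi_n(\hofib(f)_0)$; this does go through, because for connected total space that group lands (after applying the projection to $\pi_1(Y)$ and then $g_*$) exactly in the stabiliser just described, but the reduction to the image of $\pi_1(X)$ is the content and should be spelled out rather than subsumed in the phrase ``factors through the $\pi_1(A)$-action''. The analogous bookkeeping affects (ii) as well: $\hofib(f)$ and $\hofib(f_T)$ can have different $\pi_0$, so the ``equivalence on fibres'' must be formulated as an equivalence $(\hofib(f)_0)_T\xrightarrow{\ \simeq\ }\hofib(f_T)_0$ of basepoint components, which is again a consequence of the local fibre lemma but deserves to be stated precisely. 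You gesture at this bookkeeping only for (ii); it is equally present in (i). None of this is fatal --- your argument can be completed --- but as written it elides exactly the point that makes the fibre-lemma route nontrivial, which is presumably why the paper preferred to cite \cite[6.2.5]{MayPonto} rather than reprove it.
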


\begin{proof}Since $(X_0\times^h_{A_0}Y_0)_0=(X\times^h_AY)_0$ and similarly for the localised version, we may assume that $X$, $Y$, and $A$ are connected. In this case, \ref{enum:loc-hopb-nilpotent} and \ref{enum:loc-hopb-loc} are \cite[6.2.5]{MayPonto}. For \ref{enum:loc-hopb-fg}, we use the long exact sequence for the homotopy groups of a homotopy pullback which exhibits $\pi_i(X\times^h_AY)$ for $i\ge1$ as a central extension of subquotients of finitely generated nilpotent groups. As the latter are closed under taking subgroups, quotients, and extensions, the statement follows.
\end{proof}

The next lemma involves equivariant mapping spaces $\Map_{G}(-,-) \coloneqq \Map_{\cat{S}^G}(-,-)$ between $G$-spaces for finite groups $G$ which we derive with respect to the $G$-equivariant maps whose underlying maps of spaces are weak homotopy equivalences.

\begin{lem}\label{lem:equivariant-mapping-spaces-nilpotent}Let $X$ and $Y$ be $G$-spaces for $G$ a finite group. If
\begin{itemize}
\item $X_{hG}$ is weakly equivalent to a finite CW complex and 
\item $Y$ has nilpotent path components,
\end{itemize}
then for any $f\in\Map^h_G(X,Y)$ the following holds:
\begin{enumerate}
\item the path component $\Map^h_G(X,Y)_f\subseteq \Map^h_G(X,Y) $ is nilpotent,
\item the postcomposition map $(r_T \circ (-))\colon \Map^h_G(X,Y)_f\ra\Map^h_G(X,Y_T)_{(r_T\circ f)}$ is a $T$-localisation,
\item\label{enum:Y-finitely-gen-homotopy} if $Y$ has finitely generated homotopy groups at all basepoints, then so does $\Map^h_G(X,Y)_f$.
\end{enumerate}
\end{lem}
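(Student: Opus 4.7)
The strategy is to identify the equivariant mapping space with a space of sections of a fibration with nilpotent fibre over a finite CW base, and then apply a cellular induction relying on \cref{lem:lem-hopb}.

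First, I would replace $X$ by a $G$-CW model of $X\times EG$ so that $\Map^h_G(X,Y)$ is modelled by the honest equivariant mapping space $\Map_G(X\times EG,Y)$. Adjoining across the $EG\times_G(-)$ construction, this is equivalent to the space $\Gamma(X_{hG},\cE)$ of sections of the fibration $\cE\coloneqq Y_{hG}\times_{BG} X_{hG}\to X_{hG}$, whose fibre is $Y$ and whose base is weakly equivalent to a finite CW complex by hypothesis. Picking a finite CW structure on $X_{hG}$ and filtering by skeleta $X_{hG}^{(n)}$, we obtain a finite tower whose $n$th stage is a homotopy pullback square
\[
\begin{tikzcd}[column sep=0.5cm,row sep=0.4cm]
\Gamma(X_{hG}^{(n)},\cE)\rar\dar & \prod_{\alpha}\Map^h(D^n,\cE|_{e_\alpha})\dar\\[-2pt]
\Gamma(X_{hG}^{(n-1)},\cE)\rar & \prod_\alpha\Map^h(S^{n-1},\cE|_{e_\alpha})
\end{tikzcd}
\]
where the products are over the finitely many $n$-cells $e_\alpha$ of $X_{hG}$ and the restriction of $\cE$ to each cell is trivial. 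In particular the right-hand corners have path components weakly equivalent to products of iterated (loop) spaces of path-components of $Y$, which are nilpotent, have finitely generated homotopy groups when $Y$ does, and whose $T$-localisations are computed componentwise by properties (a)--(d) of \eqref{equ:localisation} and the discussion in \cref{sec:localisation}.

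Having set this up, all three claims follow by induction on $n$ from \cref{lem:lem-hopb}. For the base case $n=-1$, the section space is a point. At the induction step one applies \cref{lem:lem-hopb}(i) to conclude that each path component of $\Gamma(X_{hG}^{(n)},\cE)$ is nilpotent, \cref{lem:lem-hopb}(ii) to conclude that the postcomposition map to the analogous section space with $\cE$ replaced by its fibrewise $T$-localisation $\cE_T\coloneqq (Y_T)_{hG}\times_{BG} X_{hG}$ is a $T$-localisation, and \cref{lem:lem-hopb}(iii) for the finite generation of homotopy groups. Since fibrewise $T$-localisation induces $T$-localisation on fibres and the fibre of $\cE_T\to X_{hG}$ is $Y_T$, the section space $\Gamma(X_{hG},\cE_T)$ is equivalent to $\Map^h_G(X,Y_T)$ after the same adjunction, delivering (ii).

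The main technical point is to verify that the right-hand vertical map in the square above is compatible with $T$-localisation in the required componentwise sense, i.e.\,that $T$-localising a mapping space out of a finite CW complex $D^n/S^{n-1}$ into a space with nilpotent components agrees up to weak equivalence with postcomposition with the componentwise $T$-localisation. This is standard for nilpotent targets (a consequence of property (d) in \cref{sec:localisation} together with the nilpotent localisation theorem), and it is the one step that genuinely uses nilpotency of the path components of $Y$ rather than just of mapping spaces. Assembling these, all three claims follow from the inductive conclusion by taking $n$ to be the dimension of $X_{hG}$.
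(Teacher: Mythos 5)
Your argument is essentially the same as the paper's, presented through the equivalent lens of section spaces over $X_{hG}$ rather than directly via a finite free $G$-CW model for $X$. The paper attaches free $G$-cells $D^d \times G$ to $X'$ and observes that $\Map_G^h(-,Y)$ turns the attaching pushout into a homotopy pullback with $\Map^h(D^d,Y)$ and $\Map^h(S^{d-1},Y)$ in the corners, which is precisely your skeletal filtration of $X_{hG}$ under the adjunction $\Map_G(X\times EG,Y)\simeq\Gamma(X_{hG},\cE)$. Both proofs then close the induction step with \cref{lem:lem-hopb}.

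Two small remarks. First, the components of $\Map^h(S^{n-1},Y)$ are \emph{not} in general products of iterated loop spaces of components of $Y$: the evaluation fibration $\Omega^{n-1}Y\to\Map(S^{n-1},Y)\to Y$ does not split (e.g.\,the free loop space $LS^2$ is not $S^2\times\Omega S^2$). This does not affect the conclusion, but you should either run a separate induction over a principal Postnikov tower of the components of $Y$ — which is what the paper does — or, equivalently, note that $\Map(S^{n-1},Y)$ is itself a section space over a finite complex and so falls under a strengthened form of your inductive hypothesis; the second option is implicit in your ``standard for nilpotent targets'' citation but would be cleaner if made explicit, to avoid even the appearance of circularity. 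Second, the passage from ``$X_{hG}$ weakly finite CW'' to the existence of a finite free $G$-CW model deserves a word: one replaces $X$ by $X\times EG$ and observes that a finite CW model for $X_{hG}$ over $BG$ pulls back along $EG\to BG$ to a finite free $G$-CW complex $G$-weakly equivalent to $X\times EG$. The paper takes this as immediate as well.
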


\begin{proof}
By the assumption on $X_{hG}$, we may assume that $X$ is a finite $G$-CW complex consisting of free $G$-cells. This allows us to argue by induction on the number of cells: if $X$ is obtained from $X'$ by attaching a single free $G$-cell, there are commutative squares
\[\begin{tikzcd} 
	S^{d-1} \times G \rar \dar & X' \dar \\
	D^{d-1} \times G \rar & X\end{tikzcd}\qquad\text{and}\qquad\begin{tikzcd} \Map ^h_G(X,Y) \rar \dar & \Map ^h(D^d,Y) \dar \\
	\Map ^h_G(X',Y) \rar & \Map ^h(S^{d-1},Y).
\end{tikzcd}\]
The left square is a homotopy pushout of $G$-spaces and the right square is obtained from it by applying $\Map ^h_G(-,Y)$, so it is a homotopy pullback. By an induction over a principal Postnikov tower of the path components of $Y$, one sees that the conclusions hold for all components of the right-hand terms of the right-hand diagram. By induction, we may assume they hold for the bottom-left corner in the right diagram, so using \cref{lem:lem-hopb} and that subgroups of (finitely generated) nilpotent groups are (finitely generated) nilpotent, they also hold for all components of the top-left corner in the right diagram.
\end{proof}

Recall that a \emph{$\unl{k}$-cubical diagram} is a functor on the poset of subsets of $\unl{k}\coloneqq \{1,\ldots,k\}$.

\begin{lem}\label{lem:nilpotent-cube}
Let $X$ be a $\unl{k}$-cubical diagram of spaces with nilpotent path components.
\begin{enumerate}
	\item $\holim_{\varnothing\neq I\subseteq \unl{r}}X(I)$ has nilpotent components and the map $\holim_{\varnothing\neq I\subseteq \unl{k}}X(I)\ra \holim_{\varnothing\neq I\subseteq \unl{k}}(X(I)_T)$ induced by the $T$-localisations of the $X(I)$'s, is a $T$-localisation when restricted to any component of the source and the corresponding component of the target.
	\item If $X(I)$ has finitely generated homotopy groups at all basepoints for all $\varnothing \neq I\subseteq \unl{k}$, then $\holim_{\varnothing\neq I\subseteq \unl{k}}X(I)$ has finitely generated homotopy groups at all basepoints.
\end{enumerate}
\end{lem}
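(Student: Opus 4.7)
The plan is an induction on $k$. For $k=1$ the punctured cube consists of the single space $X(\ul{1})$, and both assertions reduce directly to the hypotheses on $X(\ul{1})$ (using trivial applications of \cref{lem:lem-hopb}).

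For the inductive step, write $P\coloneqq \{\varnothing\neq I\subseteq \ul{k}\}$ and decompose $P$ as the disjoint union $P_k\sqcup P'$, where $P_k\coloneqq \{I\in P : k\in I\}$ and $P'\coloneqq \{I\in P : k\notin I\}$, so that $P'$ is precisely the punctured $(k-1)$-cube on $\ul{k-1}$. Any morphism $J\to I$ in $P$ with $J\in P'$ and $I\in P_k$ satisfies $J\cup\{k\}\subseteq I$ and factors uniquely as $J\subseteq J\cup\{k\}\subseteq I$, which exhibits $P$ as the mapping cylinder in posets of the order-preserving map $\pi\colon P'\to P_k$, $J\mapsto J\cup\{k\}$; equivalently, $P$ is the pushout $(P'\times[1])\sqcup_{P'\times\{1\}}P_k$ of posets along $\pi$. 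Since homotopy limits convert homotopy pushouts of indexing categories into homotopy pullbacks of spaces, and since $\{k\}$ is initial in $P_k$ so $\holim_{P_k}X\simeq X(\{k\})$, this decomposition yields a natural equivalence
\[
\holim_P X\;\simeq\;X(\{k\})\times^h_{M_{k-1}} L_{k-1},
\]
with $L_{k-1}\coloneqq \holim_{\varnothing\neq J\subseteq \ul{k-1}}X(J)$ and $M_{k-1}\coloneqq \holim_{\varnothing\neq J\subseteq \ul{k-1}}X(J\cup\{k\})$, where the map $L_{k-1}\to M_{k-1}$ is induced by the inclusion-natural transformation $X(J)\to X(J\cup\{k\})$ and the map $X(\{k\})\to M_{k-1}$ by the canonical cone.

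By the inductive hypothesis applied to the $(k-1)$-punctured cubes $J\mapsto X(J)$ and $J\mapsto X(J\cup\{k\})$, both $L_{k-1}$ and $M_{k-1}$ satisfy the three conclusions of the lemma: they have nilpotent path components, their canonical maps to the corresponding punctured-cube limits of the termwise $T$-localisations are $T$-localisations on every component, and they have finitely generated homotopy groups at all basepoints provided the $X(I)$ do. Since $X(\{k\})$ enjoys these properties by the hypotheses of the lemma, applying \cref{lem:lem-hopb} to the pullback square above (componentwise) propagates them to $\holim_P X$. Naturality of the decomposition in maps of $k$-cubes, applied to the levelwise localisation map, gives the required compatibility with the corresponding pullback description of $\holim_P((-)_T\circ X)$, completing the induction.

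The principal technical input is the mapping-cylinder formula for homotopy limits over a pushout of indexing posets; this is a standard fact following from the right-Quillen property of $\holim$ applied to the pushout presentation of $N(P)$. Modulo this input, the induction is routine, and nothing new is required beyond \cref{lem:lem-hopb}.
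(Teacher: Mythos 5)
Your proof is correct and is essentially the same inductive argument as the paper's: decompose the punctured $\ul{k}$-cube so that $\holim$ becomes a homotopy pullback over the punctured $\ul{k-1}$-cube, the shifted cube $J\mapsto X(J\cup\{k\})$, and a single corner, then apply \cref{lem:lem-hopb} for the inductive step. One point worth flagging: your corner is $X(\{k\})$, whereas the paper's displayed square has $X(\ul{k})$ in the corresponding position. Yours is the correct one: $\{k\}$ is \emph{initial} in the subposet $\{I\subseteq\ul{k}:k\in I\}$, so $\holim$ over that subposet is $X(\{k\})$ and the canonical cone supplies the required map $X(\{k\})\ra\holim_{\varnothing\neq J\subseteq\ul{k-1}}X(J\cup\{k\})$, while no such map exists from $X(\ul{k})$ (which is terminal). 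One can also see directly that for $k=2$ the square with $X(\ul{2})$ in the corner is not cartesian, since the pullback of $X(\{1\})\ra X(\{1,2\})\la X(\{1,2\})$ would be $X(\{1\})$ rather than $X(\{1\})\times^h_{X(\{1,2\})}X(\{2\})$. The technical step you wave at—that the poset isomorphism $P\cong(P'\times[1])\sqcup_{P'\times\{1\}}P_k$ passes to a homotopy pushout of nerves, so that $\holim$ produces a homotopy pullback—does hold here (each chain in $P$ lifts uniquely along the pushout, using that $P'\times\{1\}\subset P'\times[1]$ is a cosieve), though a sentence saying why would tighten the argument; the paper simply asserts the square is homotopy cartesian without proof, so your write-up is if anything the more explicit of the two.
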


\begin{proof}
We prove the claim by induction on $k$. For $k=1$  the claim is vacuous as  $\holim_{\varnothing\neq I\subseteq \unl{1}}X(I)\simeq X(\underline{1})$. For larger $k$, we use that the homotopy limit fits into a homotopy cartesian square
\[\begin{tikzcd}
	\underset{\varnothing\neq I\subseteq \unl{k}}\holim\, X(I)\rar\dar&X(\unl{k})\dar\\
	\underset{\varnothing\neq I\subseteq \unl{k-1}}\holim\,X(I)\rar& \underset{\varnothing\neq I\subseteq \unl{k-1}}\holim\,X(I\cup\{k\}).
\end{tikzcd}\]
 By induction, the conclusion of the statement holds for two diagrams defining the bottom row, and by assumption also for $X(\unl{k})$, so \cref{lem:lem-hopb} gives the induction step.
\end{proof}

\subsection{Operads and dendroidal spaces}\label{section:operads}In this and the following sections, \emph{operads} $\cat{O}, \cat{P},\ldots$ are understood as single-coloured operads in $\bfS$ in the classical sense. Declaring a weak equivalence to be a levelwise weak equivalence gives rise to derived mapping spaces $\smash{\Map_{\cat{Opd}}^h(\cat{O},\cat{P})}$ between such operads. We will be mostly interested in \emph{$1$-reduced operads} which are operads $\cat{O}$ whose space of $0$- and $1$-ary operations $\cat{O}(0)$ and $\cat{O}(1)$ are weakly contractible. For such operads, there are equivalent point of views on their mapping spaces that we will make us of, related by natural maps
\begin{equation}\label{equ:different-models-mapping-spaces}
	\Map_{\cat{Opd}}^h(\cat{O},\cat{P})\xlra{\circled{1}}\Map^h_{\PSh(\Dend)}(N_d \cat{O},N_d \cat{P})\xlra{\circled{2}}\Map^h_{\PSh(\rDend)}(N_d \cat{O},N_d \cat{P})
\end{equation}
which we explain in the following two subsections. Part of our discussion in this and the following subsection is similar to that in \cite[Section 3.4]{WeissDalian}.

\subsubsection{Dendroidal spaces and the map $\circled{1}$}
The two alternative points of view stem from Moerdijk--Weiss' \emph{dendroidal spaces}. Briefly (see \cite{HeutsMoerdijk} for details), the category of \emph{dendroidal spaces} is the category of presheaves $\cO \colon {\Dend}^{\op} \to \cat{S}$ on a certain category $\gls*{dend}$ of finite rooted trees with specified subsets of leaves. More formally, an object $(t,\le,\ell(t))$ in $\Dend$ is a finite partially ordered set $(t,\le)$ of \emph{edges} together with a specified subset $\ell(t)\subset t$ of maximal elements (the \emph{leaves}) such that (a) $\{v\in t\,|\,w\le v\}$ is totally ordered for all $w\in T$ and (b) there is a unique maximal element $v\in T$ with respect to the partial order, the \emph{root} (see Section 3.2 loc.cit.). The subset $\nu(t)\coloneqq t\backslash \ell(T)\subset t$ is the set of \emph{vertices} of the tree. The \emph{incoming edges} $\mathrm{in}(v)\subset t$ of a vertex $v$ is the set of maximal elements in $\{w\in t\,|\,w<v\}$.  We refer to Section 3.2-3.3 loc.cit.\,for a description of the morphisms in $\Omega$. There is a functor $N_d(-)$ from operads to dendroidal spaces, the \emph{dendroidal nerve} (see Example 12.11 loc.cit.), given by
$\smash{\textstyle{\gls*{dendn} \cat{O}(t)\coloneqq  \bigsqcap_{v\in \nu(t)} \cat{O}(|\mathrm{in}(v)|)}}$. Declaring weak equivalences between dendroidal spaces to be levelwise weak equivalences gives rise to derived mapping spaces $\gls*{mapdend}$ of dendroidal spaces and as $N_d(-)$ preserves weak equivalence, we obtain the map $\circled{1}$. 

\subsubsection{($1$-reduced) dendroidal Segal spaces and the map $\circled{2}$}
There is a convenient class of dendroidal spaces that includes dendroidal nerves of $1$-reduced operads but is homotopically more flexible. To define it, we consider the \emph{$k$-corolla} which is the unique (up to isomorphism) tree in ${\Dend}$ with one vertex and $k$ leaves, denoted by $t_k$. The unique (up to isomorphism) tree in ${\Dend}$ with no vertices is denoted $\eta$. For each vertex $v$ in a tree $t$, there is a morphism $t_k\ra t$ (unique up to automorphism of $t_k$) that takes the root to $v$ and the leaves to $\mathrm{in}(v)$. Given a dendroidal space $\cO$ and a tree $t$, these morphisms assemble to a map
\begin{equation}\label{equ:dendroidal-segal-maps}
	\textstyle{\cO(t)\lra\bigsqcap_{v\in \nu(t)}\cO(t_{|\mathrm{in}(v)|})}.
\end{equation}
The following definition mimics the definition of a $1$-reduced operad on the level of dendroidal Segal spaces. The examples to keep in mind are dendroidal nerves $N_{d}\cat{O}$ of $1$-reduced operads.

\begin{dfn}
A \emph{$1$-reduced dendroidal Segal space} $\cO$ is a dendroidal space such that the values $\cO(t_0)$ and $\cO(t_1)$ at the $0$- and $1$-corollas are weakly contractible and such that \eqref{equ:dendroidal-segal-maps} is a weak equivalences for all trees $t\in \Dend$ (this says in particular that $\cO(\eta)$ is weakly contractible).
\end{dfn}

The full subcategory $\gls*{rdend} \subset {\Dend}$ of \emph{closed trees}, i.e.\ trees $t$ with $\ell(v)=\varnothing$ (see \cite[p.\,92, 97]{HeutsMoerdijk}, is often easier to work with. Presheaves $\cO \colon {\rDend}^{\op} \to \cat{S}$ are called \emph{closed dendroidal spaces}. Morphisms between those are still natural transformations and weak equivalences are levelwise; we denote the resulting derived mapping spaces by $\gls*{maprdend}$. Restriction along ${\rDend}\subset {\Dend}$ induces a map \[\smash{\Map^h_{\PSh(\Dend)}(\cO,\cP)\ra \Map^h_{\PSh(\rDend)}(\cO,\cP)}\] of which $\circled{2}$ is a special case. For $1$-reduced operads $\cat{O}$ and $\cat{P}$, both maps $\circled{1}$ and  $\circled{2}$ turn out to be weak equivalences (see \cite[Corollary 14.42]{HeutsMoerdijk} for $\circled{1}$ and \cite[Lemma 3.2.4]{Goppl} for $\circled{2}$):

\begin{prop}\label{prop:comparison-of-models}For $1$-reduced operads $\cat{O}$ and $\cat{P}$, the maps $\circled{1}$ and $\circled{2}$ are weak equivalences.\end{prop}

\subsection{A tower of derived mapping spaces}\label{sec:goppl-tower}
The category ${\rDend}$ of closed trees admits a filtration
\[
	{\rDend}_{\leq 0} \subset {\rDend}_{\leq 1} \subset \cdots \subset {\rDend},
\]
by the full subcategories $\gls*{rdendk}$ on those trees whose vertices $v$ have at most $k$ incoming edges. Denoting the restriction of a closed dendroidal space $\cO$ along ${\rDend}_{\leq k}\subset {\rDend}$ by the same symbol, we obtain a natural tower of derived mapping spaces
\begin{equation}\label{equ:tower}
	\begin{tikzcd}[row sep=0.6cm,column sep=2cm]
		&\ldots \dar\\
		&\Map^h_{\PSh(\rDend_{\le1})}(\cO,\cP)\dar\\
		\Map^h_{\PSh(\rDend)}(\cO,\cP)\arrow[ur]\arrow[uur]\rar&\Map^h_{\PSh(\rDend_{\le0})}(\cO,\cP),
	\end{tikzcd}
\end{equation}
all derived with respect to the levelwise weak equivalences. For simplicity we write
\begin{equation}\label{eqn:abbrevations-op-maps}
	\Map^h(\cO,\cP)\coloneqq \Map^h_{\PSh(\rDend)}(\cO,\cP)\quad\text{and}\quad \Map^h_{\leq k}(\cO,\cP)\coloneqq \Map^h_{\PSh(\rDend_{\le k})}(\cO,\cP).
\end{equation}
This tower was studied by Göppl and Weiss \cite{Goppl}. In Lemma 3.1.1 loc.cit.\,they note that it \emph{converges}, that is, we have a weak equivalence
\begin{equation}\label{equ:goppl-convergence}
	\Map^h(\cO,\cP)\xlra{\simeq} \underset{k}\holim\,\Map^h_{\leq k}(\cO,\cP).
\end{equation} 
To identify its \emph{layers}, i.e.\ the homotopy fibres of the vertical maps, they consider the $k$th \emph{matching} and \emph{latching} object of a $1$-reduced dendroidal space $\cO$
\[
	\gls*{latchk} \coloneqq \underset{(\overline{t}_k\ra t) \in ({\rDend}_{\leq k-1})_{\overline{t}_k/}}\hocolim \, \cO(t),\quad \text{and} \quad
	\gls*{matchk} \coloneqq \underset{{( t\ra \overline{t}_k) \in ({\rDend}_{\leq k-1})_{/\overline{t}_k}}}\holim\,  \cP(t).
\]
Here $\overline{t}_k\in \rDend$ is the \emph{closed $k$-corolla}, the unique (up to isomorphism) closed tree with $k+1$ vertices of which one has $k$ incoming edges and the others have none. Permuting incoming edges defines an action of the symmetric group $\Sigma_k$ on $\overline{t}_k$ in $\rDend$ which induces a natural $\Sigma_k$-action on $\Match_k(\cP)$ and $\Latch_k(\cP)$. These are related by $\Sigma_k$-equivariant maps 
\begin{equation}\label{equation:latch-match}
	\Latch_k(\cO)\lra \big(\cO(\overline{t}_k)\eqcolon \cO(k)\big)\lra \Match_k(\cO).
\end{equation} 
Göppl and Weiss used these maps to identify the vertical homotopy fibres in the above tower in terms of the matching and latching objects and spaces of derived maps between $\Sigma_k$-spaces, see Theorem 3.2.7 and Remark 3.2.15 loc.cit.:

\begin{thm}[Göppl--Weiss]\label{thm:goppl}
For $k\ge1$ and $1$-reduced dendroidal Segal spaces $\cO$ and $\cP$, there is a natural homotopy cartesian square whose left and top arrow is induced by restriction
\[\begin{tikzcd} 
	\Map^h_{\le k}(\cO,\cP) \dar \rar &  \Map^h_{\Sigma_k}(\cO(k),\cP(k)) \dar \\
	\Map^h_{\le k-1}(\cO,\cP) \rar & P_k(\cO,\cP).
\end{tikzcd}\]
The corner $P_k(\cO,\cP)$ fits into a natural homotopy cartesian square
\[\begin{tikzcd} 
	P_k(\cO,\cP) \rar \dar & \Map^h_{\Sigma_k}(\cO(k),\Match_k(\cP)) \dar \\
	\Map^h_{\Sigma_k}(\Latch_k(\cO),\cP(k)) \rar & \Map^h_{\Sigma_k}(\Latch_k(\cO),\Match_k(\cP))
\end{tikzcd}\]
	whose bottom and right maps are induced by \eqref{equation:latch-match}.
\end{thm}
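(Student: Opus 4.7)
The plan is to identify the step from $\rDend_{\leq k-1}$ to $\rDend_{\leq k}$ as an equivariant cell attachment and to analyze the fibres of the restriction $\Map^h_{\leq k}(\cO,\cP) \to \Map^h_{\leq k-1}(\cO,\cP)$ via a Reedy-style matching/latching calculation. The key observation is that, up to the $\Sigma_k$-action permuting leaves, the reduced $k$-corolla $\overline{t}_k$ is the only new object of $\rDend_{\leq k}$ whose value really matters: by the dendroidal Segal hypothesis and $1$-reducedness of $\cO$ and $\cP$, the value on any other tree in $\rDend_{\leq k}$ is determined up to weak equivalence by a product of values on its corollas, each of which either lies in $\rDend_{\leq k-1}$ or is a copy of $\overline{t}_k$.

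First I would make precise a presentation of $\rDend_{\leq k}$ as a homotopy pushout of (enriched) categories that glues the $\Sigma_k$-orbit of $\overline{t}_k$ onto $\rDend_{\leq k-1}$ along its slice categories $(\rDend_{\leq k-1})_{/\overline{t}_k}$ and $(\rDend_{\leq k-1})_{\overline{t}_k/}$. Applying the right Quillen functor $\Map^h_{\PSh(-)}(\cO,\cP)$ converts this into a homotopy pullback square
\[
\begin{tikzcd}
\Map^h_{\leq k}(\cO,\cP) \dar \rar &  \Map^h_{\Sigma_k}(\cO(\overline{t}_k),\cP(\overline{t}_k)) \dar \\
\Map^h_{\leq k-1}(\cO,\cP) \rar & Q,
\end{tikzcd}
\]
whose corner $Q$ parameterises $\Sigma_k$-equivariant compatibilities between the data already specified on $\rDend_{\leq k-1}$ and the value at $\overline{t}_k$, mediated precisely by the canonical maps out of and into $\overline{t}_k$. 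Reading these maps off from the definitions of $\Latch_k(\cO)$ and $\Match_k(\cP)$ as the appropriate (co)limits indexed by non-identity morphisms from, respectively to, $\overline{t}_k$ in $\rDend_{\leq k-1}$, identifies $Q$ with $P_k(\cO,\cP)$ and produces the first square.

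For the second square I would argue directly from the definition: $P_k(\cO,\cP)$ is the space of compatible equivariant latching data into $\cP(\overline{t}_k)$ and matching data out of $\cO(\overline{t}_k)$, which by the universal property of homotopy limits is precisely the fibre product over $\Map^h_{\Sigma_k}(\Latch_k(\cO),\Match_k(\cP))$ of the two equivariant mapping spaces obtained by post- and precomposing along the latching and matching maps \eqref{equation:latch-match}. This step is essentially formal once $Q$ has been identified as above.

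The main obstacle, I expect, is the first step: setting up the equivariant cell-attachment presentation rigorously and verifying that it really does realise $\rDend_{\leq k}$ as obtained from $\rDend_{\leq k-1}$ by attaching only the $\Sigma_k$-orbit of $\overline{t}_k$. The subtlety is that many new (non-corolla) trees enter $\rDend_{\leq k}$ — all those containing at least one vertex with exactly $k$ inputs — and one must invoke the Segal and $1$-reducedness conditions on $\cO$ and $\cP$ to argue that their contributions to the space of presheaf maps are already encoded by the single corolla attachment together with the latching/matching data. Checking this cleanly, and compatibly with $\Sigma_k$-actions, is the technical heart of the argument.
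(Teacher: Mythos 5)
The paper does not supply a proof of this theorem: it is taken on faith from G\"oppl's thesis (Theorem 2.1.14 and Remark 2.1.22 of \cite{Goppl}), so the comparison is to what G\"oppl actually does rather than to anything in this paper. Your reconstruction is in the right spirit --- a generalized Reedy-style analysis of the restriction $\Map^h_{\leq k}(\cO,\cP) \to \Map^h_{\leq k-1}(\cO,\cP)$, driven by latching and matching objects at the reduced corolla $\overline{t}_k$ --- and this is indeed the shape of G\"oppl's argument.

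However, as written your proposal has a real gap at exactly the place you flag. The phrase ``make precise a presentation of $\rDend_{\leq k}$ as a homotopy pushout of (enriched) categories that glues the $\Sigma_k$-orbit of $\overline{t}_k$ onto $\rDend_{\leq k-1}$'' is not literally achievable: $\rDend_{\leq k}$ contains infinitely many objects not in $\rDend_{\leq k-1}$ (every reduced tree with at least one $k$-valent vertex), so it is \emph{not} a pushout of categories attaching one orbit. The correct intermediate object is the full subcategory $\rDend_{\leq k-1}\cup\{\text{orbit of }\overline{t}_k\}\subset\rDend_{\leq k}$, which \emph{is} such a pushout. One then needs a separate lemma: for $1$-reduced dendroidal Segal spaces $\cO$ and $\cP$, restriction along this full subcategory inclusion induces a weak equivalence on derived mapping spaces. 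This is the genuine content --- it is where the Segal condition enters non-formally, showing that the values of a Segal presheaf on all the other $k$-valent trees are already determined (up to contractible choice) by the data on $\rDend_{\leq k-1}$ and on $\overline{t}_k$. Your sketch names this as the ``technical heart'' but does not actually do it, and the rest of the argument (the pushout-to-pullback step, the identification of the corner with $P_k(\cO,\cP)$) is formal only once this lemma is in place. Two further items that need care and are not addressed: (a) the Reedy structure on $\rDend$ is a \emph{generalized} Reedy structure in the Berger--Moerdijk sense because of the $\Sigma_k$-automorphisms, and the passage from the categorical pushout to a homotopy pullback of equivariant mapping spaces uses that framework, not the strict Reedy one; (b) $1$-reducedness is used specifically so that the bottom of the tower is trivial and so that the latching and matching objects behave as stated, and the proposal never isolates where that hypothesis is invoked.
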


\subsection{Localisations of dendroidal spaces}\label{sec:localisation-dendroidal-spaces}
\label{section:localisation-dendroidal-spaces}
Given a dendroidal space $\cO$, its \emph{$T$-localisation} $\cO_T$ for a set of primes $T$ is the dendroidal  space given as the composition of $\cO\colon {\Dend}^{\op} \ra \cat{S}$ with the localisation functor $(-)_T\colon \cat{S}\ra\cat{S}$. The natural transformation $\id_{\cat{S}}\ra (-)_T$ induces a map $r_T\colon \cO\ra \cO_T$ of dendroidal Segal spaces. It follows from properties \ref{enum:weak-equ-T-localisation} and \ref{enum:product-T-localisation} from \cref{sec:localisation} that if $\cO$ is a $1$-reduced dendroidal Segal space, then so is $\cO_T$.

\begin{lem}\label{lem:loc-dendr-map} For dendroidal spaces $\cO$ and $\cP$ such that $\cP$ is levelwise $T$-local,  $\Map^h_{\PSh(\Dend)}(\cO,\cP)$ is $T$-local and the natural zig-zag 
\[
	\Map^h_{\PSh(\Dend)}(\cO,\cP) \xrightarrow{(-)_T} \Map^h_{\PSh(\Dend)}(\cO_T,\cP_T) \xleftarrow{r_T\circ (-)}\Map^h_{\PSh(\Dend)}(\cO_T,\cP).
\]
consists of weak equivalences. The same holds when replacing $\Dend$ by $\rDend$ or $\rDend_{\le k}$. 
\end{lem}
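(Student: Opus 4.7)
The plan is to prove this by identifying derived mapping spaces in the presheaf category with homotopy ends of mapping spaces of values, and then applying the properties \ref{enum:T-localisation}--\ref{enum:precomp-T-localisation} of $T$-localisation of spaces recalled in \cref{sec:localisation} levelwise.

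More precisely, for any small category $\cat{D}$ (in our application $\cat{D} = \Dend$, $\rDend$, or $\rDend_{\leq k}$) and presheaves $F, G \colon \cat{D}^{\op} \to \cat{S}$, the derived mapping space $\Map^h_{\PSh(\cat{D})}(F,G)$ (with respect to the levelwise weak equivalences) is a homotopy end of the functor $\cat{D}^{\op} \times \cat{D} \to \cat{S}$ given by $(t,t') \mapsto \Map^h_{\cat{S}}(F(t), G(t'))$, in particular a homotopy limit of such mapping spaces. Specialising to $F = \cO$ and $G = \cP$, each space $\Map^h_{\cat{S}}(\cO(t), \cP(t'))$ is $T$-local, since $\cP(t')$ is $T$-local by hypothesis and mapping into a $T$-local space yields a $T$-local space (see property \ref{enum:precomp-T-localisation} and its consequence that $T$-local spaces are closed under $\Map^h_\cat{S}(X,-)$). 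Since $T$-local spaces are closed under homotopy limits, $\Map^h_{\PSh(\cat{D})}(\cO,\cP)$ is $T$-local.

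For the zig-zag, first observe that postcomposition with $r_T \colon \cP \to \cP_T$ is a levelwise weak equivalence because $\cP$ is already levelwise $T$-local (so $r_T$ is levelwise a $T$-localisation between $T$-local spaces, hence a weak equivalence by property \ref{enum:T-localisation}). The induced map on homotopy ends
$\Map^h_{\PSh(\cat{D})}(\cO_T, \cP) \xrightarrow{r_T \circ (-)} \Map^h_{\PSh(\cat{D})}(\cO_T, \cP_T)$
is therefore a weak equivalence, which handles the right-hand arrow of the zig-zag. To handle the left-hand arrow $(-)_T$, factor it through $\Map^h_{\PSh(\cat{D})}(\cO, \cP_T)$ using the commutative square $r_T \circ f = f_T \circ r_T$ for $f \colon \cO \to \cP$, giving
\[
\Map^h_{\PSh(\cat{D})}(\cO,\cP) \xra{r_T \circ (-)} \Map^h_{\PSh(\cat{D})}(\cO, \cP_T) \xla{(-) \circ r_T} \Map^h_{\PSh(\cat{D})}(\cO_T, \cP_T).
\]
The first map is a weak equivalence as above. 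The second map is induced levelwise by precomposition with the $T$-localisations $r_T \colon \cO(t) \to \cO_T(t)$ into the $T$-local space $\cP_T(t)$, which is a weak equivalence by property \ref{enum:precomp-T-localisation}; passing to homotopy ends, it is a weak equivalence of mapping spaces. By two-out-of-three, $(-)_T$ is a weak equivalence.

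The argument is uniform in $\cat{D}$, so it applies verbatim to $\Dend$, $\rDend$, and $\rDend_{\leq k}$. No genuine obstacle arises: the only subtle point is the presheaf-level analogue of property \ref{enum:precomp-T-localisation}, but this reduces immediately to the pointwise statement via the homotopy end formula for mapping spaces in $\PSh(\cat{D})$.
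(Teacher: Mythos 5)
Your proof is correct and follows essentially the same route as the paper: both identify the derived mapping spaces as homotopy limits of levelwise mapping spaces to get $T$-locality, handle the right-hand arrow via postcomposition with the levelwise equivalence $r_T\colon \cP\to\cP_T$, and prove the left-hand arrow is an equivalence by composing with precomposition along $r_T\colon\cO\to\cO_T$ (property \ref{enum:precomp-T-localisation}), identifying the composite with postcomposition by $r_T$ via naturality, and applying two-out-of-three. The only difference is presentational: you keep the argument at the presheaf level via the homotopy end formula, whereas the paper first reduces to the levelwise zig-zag and argues tree by tree.
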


\begin{proof}The derived mapping spaces appearing in the statement are formed in a category of space-valued presheaves with levelwise weak equivalences, so they can be computed as homotopy limits of a diagram of levelwise mapping spaces. We saw in \cref{sec:localisation} that $T$-local spaces are closed under taking homotopy limits and applying $\Map^h_\cat{S}(X,-)$ for any space $X$, so this implies the first part of the claim. Moreover, this argument reduces the second part to proving that the zigzag of derived mapping spaces in the category of spaces
\[
	\Map^h_\cat{S}(\cO(t),\cP(t)) \xrightarrow{(-)_T} \Map^h_\cat{S}(\cO(t)_T,\cP(t)_T) \xleftarrow{ r_T\circ (-)} \Map^h_\cat{S}(\cO(t)_T,\cP(t))
\]
consists of weak equivalences for all trees $t\in\Dend$. For the second map, this follows follows the fact that $r_T\colon \cP(t)\ra \cP(t)_T$ is a weak equivalence by property \ref{enum:T-localisation} of $T$-localisation. For the first map, we note
\[
	\Map^h_\cat{S}(\cO(t),\cP(t))\xra{(-)_T} \Map^h_\cat{S}(\cO(t)_T,\cP(t)_T)\xra{(-)\circ r_T} \Map^h_\cat{S}(\cO(t),\cP(t)_T)
\]
agrees with postcomposition with $r_T\colon \cP(t)\ra \cP(t)_T$, so is a weak equivalence. The second map is a weak equivalence by property \ref{enum:precomp-T-localisation}, so the first map is one too.
\end{proof}

\subsubsection{Localisations of derived mapping spaces}
Recalling the abbreviations of \eqref{eqn:abbrevations-op-maps}, denoting the path component of a derived map $f\in \Map^h_{\leq k}(\cO,\cP)$ by 
\[
	\Map^h_{\leq k}(\cO,\cP)_f\subseteq \Map^h_{\leq k}(\cO,\cP),
\]
and abbreviating $\cO(\overline{t}_k)$ to $\cO(k)$, we can now state the following result:

\begin{thm}\label{thm:truncated-operad-maps} Let $\cP$ and $\cO$ be $1$-reduced dendroidal Segal spaces such that for all $k\ge0$
\begin{itemize}
	\item $\cP(k)$ has nilpotent path components and
	\item $\cO(k)_{h\Sigma_k}$ and $\Latch_k(\cO)_{h\Sigma_k}$ are weakly equivalent to finite CW complexes,
\end{itemize}
then the following holds for all $k\ge0$ and any map $f\in \Map^h_{\le k}(\cO,\cP)$:
\begin{enumerate}[(i)]		
	\item \label{enum:truncated-nilp} the path component $\Map^h_{\le k}(\cO,\cP)_f$ is nilpotent,
	\item \label{enum:truncated-loc} for a set of primes $T$, the natural map induced by $T$-localisation $r_T\colon \cP\ra \cP_T$
	\[
		\Map^h_{\leq k}(\cO,\cP)_f \lra \Map^h_{\leq k}(\cO,\cP_{T})_{r_T\circ f}
	\]
	is a $T$-localisation of nilpotent spaces, and
	\item \label{enum:truncated-fg} if the spaces $\cP(k')$ have finitely generated homotopy groups at all basepoints for all $k'\ge0$, then so does $\Map^h_{\leq k}(\cO,\cP)_f$.
\end{enumerate}
\end{thm}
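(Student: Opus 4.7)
I would prove (i)--(iii) simultaneously by induction on $k\ge0$, using Göppl's tower (\cref{thm:goppl}) together with Lemmas~\ref{lem:lem-hopb}, \ref{lem:equivariant-mapping-spaces-nilpotent}, and \ref{lem:nilpotent-cube}.

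\emph{Base case.} For $k=0$, the category $\rDend_{\le 0}$ is equivalent to the full subcategory on the reduced $0$-corolla $\overline{t}_0$: any reduced tree all of whose vertices have $0$ incoming edges reduces to a single root vertex. By the Segal condition and $1$-reducedness, the restrictions of $\cO$ and $\cP$ to $\rDend_{\le 0}$ are levelwise weakly contractible, so $\Map^h_{\le 0}(\cO,\cP)$ is weakly contractible and (i)--(iii) hold trivially.

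\emph{Inductive step.} Assuming (i)--(iii) at level $k-1$, the preliminary task is to verify that $\Match_k(\cP)$ has nilpotent path components, has finitely generated homotopy groups at every basepoint whenever that holds for all $\cP(k')$, and that the comparison map $\Match_k(\cP)\to\Match_k(\cP_T)$ is a componentwise $T$-localisation. Since $\Match_k(\cP)$ is a finite homotopy limit over $(\rDend_{\le k-1})_{/\overline{t}_k}$ of spaces $\cP(t)\simeq\prod_v \cP(|\mathrm{in}(v)|)$ (by the Segal property), each of which is a finite product of nilpotent spaces whose path components have finitely generated homotopy groups in the setting of (iii), I would deduce these properties from an iterated application of \cref{lem:lem-hopb}, building the finite limit from successive homotopy pullbacks; equivalently, by covering the indexing category by a finite cube and invoking \cref{lem:nilpotent-cube} directly.

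Granted this, \cref{lem:equivariant-mapping-spaces-nilpotent} applies to each of the four equivariant mapping spaces entering Göppl's two homotopy cartesian squares, since $\cO(\overline{t}_k)_{h\Sigma_k}$ and $\Latch_k(\cO)_{h\Sigma_k}$ are weakly equivalent to finite CW complexes by hypothesis, while $\cP(\overline{t}_k)$ and $\Match_k(\cP)$ have nilpotent path components. Hence each of these mapping spaces satisfies analogues of (i)--(iii). Claims (i) and (iii) at level $k$ then follow from two successive applications of \cref{lem:lem-hopb}: first recovering $P_k(\cO,\cP)$ from its defining pullback square, and then recovering $\Map^h_{\le k}(\cO,\cP)_f$ from Göppl's main square, feeding in the inductive hypothesis for the bottom-left corner $\Map^h_{\le k-1}(\cO,\cP)$. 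For (ii), naturality of Göppl's construction in $\cP$ produces a map of cartesian squares comparing $(\cO,\cP)$ with $(\cO,\cP_T)$; by \cref{lem:equivariant-mapping-spaces-nilpotent} and the preliminary step the vertical map on each of the four equivariant mapping spaces is a componentwise $T$-localisation, and two applications of \cref{lem:lem-hopb}(ii), combined with the inductive hypothesis, yield (ii) at level $k$.

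\emph{Main obstacle.} The decisive difficulty is the analysis of the matching object $\Match_k(\cP)$: controlling its path components, the finiteness of its homotopy groups, and its compatibility with $T$-localisation. The indexing category $(\rDend_{\le k-1})_{/\overline{t}_k}$ is neither cubical nor Reedy in an immediately usable way, so some care is needed to reduce the computation to a situation where Lemmas~\ref{lem:lem-hopb} and~\ref{lem:nilpotent-cube} apply directly. Once that reduction is in hand, the remaining bookkeeping with the two homotopy cartesian squares is essentially formal.
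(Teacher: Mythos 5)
Your argument is correct and coincides with the paper's proof: induction on $k$ with the trivial base case coming from $1$-reducedness, Göppl's two homotopy cartesian squares, \cref{lem:equivariant-mapping-spaces-nilpotent} applied to the four $\Sigma_k$-equivariant mapping spaces, and two applications of \cref{lem:lem-hopb} together with the inductive hypothesis, both for the localisation statement and for finite generation. The matching-object step you single out as the main obstacle is resolved in the paper exactly by the cube reduction you propose (its \cref{lem:match-nilpotent}): identifying the $k$ leafless vertices of $\overline{t}_k$ with $\ul{k}$, the subcorollas $\overline{t}_{\ul{k}\setminus I}\subseteq\overline{t}_k$ form a $\ul{k}$-cubical diagram and there is a natural equivalence $\Match_k(\cP)\simeq\holim_{\varnothing\neq I\subseteq\ul{k}}\cP(\overline{t}_{\ul{k}\setminus I})$ (an identification the paper imports from Weiss's lecture notes), after which \cref{lem:nilpotent-cube} gives nilpotence, compatibility with $T$-localisation, and finite generation as you intended.
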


The first part of this result (and the strategy of proof) is similar to \cite[Proposition 5.2.4]{WeissDalian}. We start the proof with an auxiliary lemma:

\begin{lem}\label{lem:match-nilpotent}Let $T$ be a set of primes and $\cP$ a $1$-reduced dendroidal Segal space such that $\cP(k)$ has nilpotent path components for all $k\ge0$. The following holds for all $k\ge0$:
\begin{enumerate}
	\item $\Match_k(\cP)$ has nilpotent path components,
	\item the natural map $\Match_k(\cP)\ra \Match_k(\cP_T)$ is a $T$-localisation when restricted to a path component of the source and the corresponding path component of the target,
	\item if the spaces $\cP(k')$ has finitely generated homotopy groups at all basepoints for $0 \leq k' \leq k$, then so does $\Match_k(\cP)$.
\end{enumerate}
\end{lem}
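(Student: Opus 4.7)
My plan is to deduce all three statements simultaneously by reducing $\Match_k(\cP)$ to a homotopy limit over a finite cubical diagram whose vertices are spaces of the form $\cP(k')$ with $k' \le k-1$, and then applying \cref{lem:nilpotent-cube} together with property \ref{enum:product-T-localisation} of $T$-localisation.

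First I would exploit the Segal property of $\cP$: for any reduced tree $t$ all of whose vertices have arity at most $k-1$, the Segal maps yield a weak equivalence
\[
\cP(t) \simeq \prod_{v \in \nu(t)} \cP(|\mathrm{in}(v)|).
\]
Every factor has nilpotent path components by hypothesis, and finite products of nilpotent spaces are again nilpotent, so $\cP(t)$ has nilpotent path components. If in addition each $\cP(k')$ has finitely generated homotopy groups at each basepoint for $0\le k' \le k-1$, then so does $\cP(t)$, since finitely generated nilpotent groups are closed under finite products. Finally, property \ref{enum:product-T-localisation} of \cref{sec:localisation} gives an identification $\cP_T(t) \simeq \cP(t)_T$ under which the natural map $\cP(t) \to \cP_T(t)$ becomes the componentwise $T$-localisation. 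In particular, this applies to every tree in $\rDend_{\le k-1}$.

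Next I would identify $\Match_k(\cP)$ with a homotopy limit over a cofinal finite cubical subdiagram of $(\rDend_{\le k-1})_{/\overline{t}_k}$. Up to automorphisms of the source, a morphism $t \to \overline{t}_k$ in $\rDend$ with $t \in \rDend_{\le k-1}$ is determined by the proper nonempty subset $\varnothing\ne S\subsetneq\underline{k}$ of incoming edges of the internal vertex of $\overline{t}_k$ that it hits, and every such morphism factors through the reduced sub-corolla $\overline{t}_S$. This should produce a cofinal functor from an appropriate punctured-cube poset into $(\rDend_{\le k-1})_{/\overline{t}_k}$, giving a weak equivalence
\[
\Match_k(\cP) \simeq \underset{\varnothing \ne S \subsetneq \underline{k}}{\holim}\, \cP(\overline{t}_S),
\]
in which each $\cP(\overline{t}_S) \simeq \cP(|S|)$ has the three favourable properties of the previous paragraph. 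The three conclusions then follow directly from \cref{lem:nilpotent-cube} (or its mild variant for doubly-punctured cubes, which follows by the same induction), using in particular that $T$-localisation commutes with finite homotopy limits of nilpotent spaces.

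The hard part will be establishing the cofinal cubical description of $(\rDend_{\le k-1})_{/\overline{t}_k}$: this requires carefully unpacking the combinatorics of morphisms in $\rDend$ with target the reduced $k$-corolla, and checking that automorphisms of source trees contribute nothing homotopically. This is essentially the same type of combinatorial reduction that underlies the matching-object description in \cref{thm:goppl}, and I expect it to be available in G\"oppl's thesis or else easily extracted from the analysis there.
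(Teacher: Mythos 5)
There is a genuine gap in the key identification. Your cubical model for the matching object omits the vertex corresponding to $S=\varnothing$: the slice category $(\rDend_{\le k-1})_{/\overline{t}_k}$ contains morphisms $t \to \overline{t}_k$ that hit \emph{no} incoming edge of the central vertex, the minimal one being $\overline{t}_0 \to \overline{t}_k$, which classifies the fully plugged composite (the $k$-ary vertex composed with all $k$ stumps). Such maps exist because the nullary operations obtained by plugging inputs with stumps are legitimate operations of the free operad on $\overline{t}_k$, and they are exactly what forces the value $\cP(0)=\cP(\overline{t}_0)$ into the matching object, with the subcorolla restrictions $\cP(\overline{t}_S)\to\cP(\overline{t}_\varnothing)$ imposing compatibility over it. Consequently your claimed cofinality of the doubly punctured cube fails: already for $k=2$ the matching object is the homotopy pullback $\cP(1)\times^h_{\cP(0)}\cP(1)$, not the product $\cP(1)\times\cP(1)$ that your formula $\holim_{\varnothing\neq S\subsetneq \ul{k}}\cP(\overline{t}_S)$ would give. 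The two agree only when $\cP(0)$ is weakly contractible, which is \emph{not} among the hypotheses of the lemma (it is only true in the eventual application to $1$-reduced objects); note also that the hypothesis in part (iii) deliberately includes $k'=0$, a sign that $\cP(0)$ must appear in the matching object, whereas in your cube neither $\cP(0)$ nor any map into it ever occurs.

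The repair is to index over \emph{all} proper subsets, i.e.\ over nonempty $I\subseteq\ul{k}$ with value $\cP(\overline{t}_{\ul{k}\setminus I})$, so that the terminal vertex of the punctured cube is $\cP(0)$. This is precisely the route the paper takes: it quotes the argument above Theorem 3.4.7 in Weiss's Dalian notes for the natural equivalence $\Match_k(\cP)\simeq \holim_{\varnothing\neq I\subseteq \ul{k}}\cP(\overline{t}_{\ul{k}\setminus I})$ (an identification that uses the Segal property, not a bare cofinality statement about the slice), and then concludes by a single application of \cref{lem:nilpotent-cube} — with this indexing no ``doubly punctured'' variant of that lemma is needed. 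A secondary caution: your opening Segal decomposition $\cP(t)\simeq\prod_{v}\cP(|\mathrm{in}(v)|)$ with $\cP(j)=\cP(\overline{t}_j)$ is not correct in the reduced setting (it would read $\cP(\overline{t}_k)\simeq\cP(k)\times\cP(0)^k$, which is circular); the reduced Segal decomposition involves fibre products over the plugging maps to $\cP(0)$, which is again the same phenomenon your argument is suppressing.
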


\begin{proof}
Identifying the vertices of $\overline{t}_k$ with no incoming edges with $\unl{k} = \{1,2,\ldots,k\}$, every subset $I\subseteq \unl{k}$ defines a closed subcorolla $\overline{t}_I\subseteq \overline{t}_k$. This gives rise to an $\unl{k}$-cubical diagram $\underline{k} \supseteq I \mapsto \cP(\overline{t}_{\unl{k} \setminus I})$.
By the argument above Theorem 3.4.7 in \cite{WeissDalian}, there is a natural equivalence $\Match_k(\cP)\simeq \holim_{\varnothing\neq I\subseteq \unl{k}}\cP(\overline{t}_{\unl{k} \setminus I})$, so the claim follows from an application of \cref{lem:nilpotent-cube}.
\end{proof}

\begin{proof}[Proof of \cref{thm:truncated-operad-maps}]
We prove the claim by induction on $k$. The initial case $k=0$ is trivial since $\cP$ is assumed to be $1$-reduced, so the mapping spaces appearing in the statement are contractible. For the induction step we assume the claim for $k-1$ and prove it for $k$. To do so, we consider the homotopy cartesian squares of \cref{thm:goppl}. A choice of $f\in \Map^h_{\le k}(\cO,\cP)$ induces basepoints in all spaces participating in these squares; we denote these also by $f$. Now consider the maps
\begin{equation}\label{equ:match-latch-localisations}
	\begin{aligned}\Map^h_{\Sigma_k}(\cO(k),\cP(k))_f &\lra  \Map^h_{\Sigma_k}(\cO(k),\cP(k)_T)_{f} \\
	\Map^h_{\Sigma_k}(\cO(k),\Match_k(\cP))_f &\lra  \Map^h_{\Sigma_k}(\cO(k),\Match_k(\cP)_T)_{f}\\ 
	\Map^h_{\Sigma_k}(\Latch_k(\cO),\cP(k))_f &\lra  \Map^h_{\Sigma_k}(\Latch_k(\cO),\cP(k)_T)_{ f} \\
	\Map^h_{\Sigma_k}(\Latch_k(\cO),\Match_k(\cP))_f &\lra  \Map^h_{\Sigma_k}(\Latch_k(\cO),\Match_k(\cP)_T)_{f} \end{aligned}
\end{equation}
induced by postcomposition with the $T$-localisations of the codomains. Combining \cref{lem:equivariant-mapping-spaces-nilpotent} with \cref{lem:match-nilpotent}, all four maps are $T$-localisations of nilpotent spaces. Moreover, by the first part of \cref{lem:match-nilpotent}, we may replace $\Match_k(\cP)_T$ in the codomain of the second and fourth map by $\Match_k(\cP_T)$. An application of \cref{lem:lem-hopb} to the second square in \cref{thm:goppl} shows that the map $P_k(\cO,\cP)_f\ra P_k(\cO,\cP_T)_{f}$ between the components induced by $f$ is a $T$-localisation of nilpotent spaces. Combining this with the induction hypothesis, another application of \cref{lem:lem-hopb}---this time to the first square---shows that the natural map $\smash{\Map^h_{\leq k}}(\cO,\cP)_f \ra \smash{\Map^h_{\leq k}}(\cO,\cP_{T})_{f}$ is a $T$-localisation between nilpotent spaces, so \ref{enum:truncated-nilp} and \ref{enum:truncated-loc} hold. 
		
We argue similarly for \ref{enum:truncated-fg}: if the spaces $\cP(k)$ have finitely generated homotopy groups at all basepoints, then so does $\Match_k(\cP)$ by the second part of \cref{lem:match-nilpotent}. By the second part of \cref{lem:equivariant-mapping-spaces-nilpotent}, we conclude that the domains of the four maps have finitely generated homotopy groups, so the same holds for $P_k(\cO,\cP)_f$ by an application of the final part of  \cref{lem:lem-hopb} and thus also for $\smash{\Map^h_{\le k}}(\cO,\cP)_f$ by another application of that lemma and the induction hypothesis.
\end{proof}

This finishes the first part of this section as outlined in \cref{sec:intr-nontriviality} after \ref{enum:rationalisation-operads-intro}.

\subsection{Inverse limits and countability}The second part of this section begins with general results on the behaviour of homotopy groups of homotopy limits of towers of spaces.

\subsubsection{Towers of groups}
Following \cite[IX.2]{BousfieldKan}, we call a sequence of groups 
\[
	G_0\leftarrow G_1\leftarrow G_2\leftarrow  \cdots
\] 
a \emph{tower of groups} and abbreviate it by $\gls*{tower}$. We can assign to such a tower a limit \emph{group} $\lim_kG_k$ and a \emph{pointed $\lim^1$-set} $\lim^1_kG_k$ \cite[IX.2.1]{BousfieldKan}. If the tower consists of abelian groups, then $\lim^1_k G_k$ inherits an abelian group structure. A short exact sequence of towers of groups induces a long exact sequence as follows \cite[IX.2.3]{BousfieldKan}:

\begin{lem}\label{lem:les} A levelwise short exact sequence of towers of groups 
\[
	0\ra\{G_k\} \ra\{H_k\}\ra\{K_k\}\lra0
\]
induces a natural exact sequence of groups and pointed sets
\[
	\textstyle{0\ra \lim_kG_k\ra\lim_kH_k\ra\lim_kK_k\ra \lim^1_kG_k\ra\lim^1_kH_k\ra\lim^1_kK_k\ra0.}
\]
\end{lem}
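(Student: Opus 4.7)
The plan is to deduce the sequence from a chosen explicit model for $\lim_k$ and $\lim^1_k$, following the strategy of \cite[IX.2]{BousfieldKan}. Given a tower $\{G_k\}$ with transition maps $\phi_{k+1}\colon G_{k+1}\to G_k$, I would consider the pointed-set map (not a group homomorphism in general)
\[
	d_G\colon \textstyle\prod_k G_k \lra \prod_k G_k, \qquad (x_k)_k\longmapsto \bigl(x_k\cdot \phi_{k+1}(x_{k+1})^{-1}\bigr)_k,
\]
equipped with the left action of the group $\prod_k G_k$ on its underlying pointed set by $(g_k)\cdot(x_k) = (g_k\, x_k\, \phi_{k+1}(g_{k+1})^{-1})$. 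Then $\lim_k G_k$ is the stabiliser $d_G^{-1}(e)$ (a subgroup, diagonally embedded), and $\lim^1_k G_k$ is the orbit set of this action with basepoint $[e]$; equivalently, $\lim^1_k G_k$ is the pointed set of connected components of $d_G$. When the $G_k$ are abelian, $d_G$ is a homomorphism and both $\lim$ and $\lim^1$ acquire abelian group structures as its kernel and cokernel. All constructions are manifestly functorial in the tower.

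With these models, the lemma becomes a diagram chase. First, taking products term by term turns the short exact sequence $0\to\{G_k\}\to\{H_k\}\to\{K_k\}\to0$ into a commutative diagram
\[
\begin{tikzcd}[column sep=0.7cm]
0 \rar & \prod_k G_k \dar{d_G}\rar & \prod_k H_k \rar\dar{d_H} & \prod_k K_k \dar{d_K}\rar & 0 \\
0 \rar & \prod_k G_k \rar & \prod_k H_k \rar & \prod_k K_k \rar & 0
\end{tikzcd}
\]
with short exact rows (products of exact sequences of groups are exact). Exactness of the claimed six-term sequence at $\lim_k G_k$, $\lim_k H_k$, and $\lim_k K_k$ follows directly by inspection, since $\lim_k$ is $d_{(-)}^{-1}(e)$ and subgroups of kernels of compatible maps form an exact sequence.

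Next I would construct the connecting map $\partial\colon \lim_k K_k\to \lim^1_k G_k$ by the usual recipe: given $(z_k)\in\lim_k K_k$, choose lifts $y_k\in H_k$; then $d_H(y_k)$ projects to $d_K(z_k)=(e)$ in $\prod_k K_k$, so it lies in $\prod_k G_k$, and its class in $\lim^1_k G_k$ is $\partial(z_k)$. One checks independence of the lifts by verifying that any two choices differ by an element $(g_k)\in \prod_k G_k$, which acts on $d_H(y_k)$ precisely by the orbit action defining $\lim^1_k G_k$.

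The remaining steps are routine diagram chases verifying exactness at $\lim_k K_k$, $\lim^1_k G_k$, $\lim^1_k H_k$, and finally surjectivity at $\lim^1_k K_k$; the last follows from surjectivity of $\prod_k H_k\to\prod_k K_k$ together with the orbit description of $\lim^1$. Naturality of the sequence is immediate from the functoriality of $d_{(-)}$. The only genuinely delicate point, and the main thing to be careful about, is that in the non-abelian case $\lim^1$ is only a pointed set: exactness at $\lim^1_k H_k$ must be stated as equality of the fibre of the next map over the basepoint with the image of the previous map, and verified as such. I would follow the conventions of \cite[IX.2]{BousfieldKan} to avoid ambiguity here.
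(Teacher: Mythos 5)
Your proposal is correct and reproduces the standard argument from \cite[IX.2]{BousfieldKan}, which is exactly what the paper appeals to: the lemma is stated with a citation to \cite[IX.2.3]{BousfieldKan} rather than proved. The model of $\lim$ as a stabiliser and $\lim^1$ as an orbit set for the twisted $\prod_k G_k$-action, the definition of the connecting map via lifts, and the check that it is independent of the lifts all match the Bousfield--Kan treatment; the only (cosmetic) slip is that you list exactness at $\lim_k K_k$ among the things that follow ``by inspection'' before $\partial$ has been defined, and then correctly list it again among the diagram chases afterwards.
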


Recall that a map $\{f_k\} \colon \{G_k\} \to \{H_k\}$ of towers of groups is called a \emph{pro-isomorphism} if for all $s \geq 0$ there exists a $t \geq s$ and a homomorphism $H_{t} \to G_s$ such that the diagram
\[\begin{tikzcd} 
	G_s \dar[swap]{f_s} & \lar G_{t} \dar{f_{t}} \\
	H_s & \lar H_{t} \arrow{lu} 
\end{tikzcd}\]
commutes. Pro-isomorphisms have the following property \cite[Proposition III.2.6]{BousfieldKan}:

\begin{lem}\label{lem:pro-iso-lim} For a pro-isomorphism $\{f_k\} \colon \{G_k\} \to \{H_k\}$ the induced map $\lim_kG_k \to \lim_kH_k$ is an isomorphism and the induced map $\lim^1_kG_k \to \lim^1_kH_k$ is a pointed bijection.
\end{lem}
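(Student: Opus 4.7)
The plan is to unpack the definition of pro-isomorphism to produce ``partial inverses'' $\sigma_s \colon H_{t(s)} \to G_s$ for each $s\ge 0$, and then show these assemble, after passing to a cofinal subtower, into a well-defined map of towers inducing the desired inverse on $\lim$ and $\lim^1$. Since cofinal inclusions of subtowers induce isomorphisms on both $\lim$ and $\lim^1$ (a standard fact, cf.~\cite[III.2.6]{BousfieldKan}), we may freely replace $\{G_k\}$ and $\{H_k\}$ by cofinal subtowers without changing the statement; this is the main bookkeeping device that makes the argument go through cleanly.

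First I would choose the function $t \colon \bfN \to \bfN$ and the maps $\sigma_s$ from the definition, arranging (by replacing with subsequences) that $t$ is strictly monotone increasing with $t(s) \ge s+1$. The two triangles in the definition of a pro-isomorphism then say precisely that (a) $\sigma_s \circ f_{t(s)}$ equals the bonding map $G_{t(s)} \to G_s$, and (b) $f_s \circ \sigma_s$ equals the bonding map $H_{t(s)} \to H_s$. Relation~(a) already forces compatibility of the $\sigma_s$ with the $G$-bonding maps after one further composition, so after replacing $\{G_k\}$ by the cofinal subtower $\{G_{t(s)}\}_s$ and $\{H_k\}$ by $\{H_{t(s)}\}_s$, the maps $\sigma_s$ fit into strictly commuting ladders and hence give a genuine map of towers $\sigma_\bullet \colon \{H_{t(s)}\}_s \to \{G_{t(s-1)}\}_s$ (or a similar index shift). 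This is really the only nontrivial step.

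For the $\lim$ statement, I would then just write down the induced map $\lim_s\sigma_s \colon \lim_s H_{t(s)} \to \lim_s G_{t(s-1)}$, identify both sides with $\lim_k H_k$ and $\lim_k G_k$ via the cofinal subsequences, and verify using (a) and (b) that this map is two-sided inverse to $\lim_k f_k$. For the $\lim^1$ statement, the plan is to apply the same construction on $\lim^1$: a pointed map of towers induces a pointed map on $\lim^1$, and the same identities (a) and (b) imply the two compositions $\lim^1\sigma \circ \lim^1 f$ and $\lim^1 f \circ \lim^1 \sigma$ are induced by pro-equivalent maps of towers (namely bonding maps), which act as the identity on $\lim^1$ since bonding maps are absorbed into the equivalence relation defining $\lim^1$.

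The hard part, which is really just bookkeeping, is checking that the $\sigma_s$ can be chosen so as to strictly commute with the bonding maps after passing to a cofinal subtower; everything else is formal. Once this is done both assertions follow simultaneously, and from \cref{lem:les} one also gets the standard consequence that a pro-isomorphism induces isomorphisms termwise in any natural six-term $\lim$/$\lim^1$ exact sequence derived from it.
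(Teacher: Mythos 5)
The paper gives no proof of this lemma; it simply cites \cite[Proposition~III.2.6]{BousfieldKan}, so there is no ``paper proof'' to measure your argument against. Your strategy---extract the partial inverses $\sigma_s$ from the definition, strictify them into a genuine map of towers, and invert $\lim$ and $\lim^1$ directly---can be made to work, but the step you describe as ``really just bookkeeping'' has a real gap as written, in two places.

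First, relation (a) alone does not force the ladder square to commute after one further composition: the two composites $\sigma_{s-1}\circ\mathrm{bond}^H_{t(s)\to t(s-1)}$ and $\mathrm{bond}^G_{s\to s-1}\circ\sigma_s$ from $H_{t(s)}$ to $G_{s-1}$ only become equal after precomposing with the bonding $\mathrm{bond}^H_{t(t(s))\to t(s)}$, and verifying this uses (b) as well as (a). Concretely, one rewrites that extra bonding as $f_{t(s)}\circ\sigma_{t(s)}$ via (b), slides $f$ past the $H$-bondings by naturality, and then applies (a) at indices $s$ and $s{-}1$ to reduce both sides to $\mathrm{bond}^G_{t(s)\to s-1}\circ\sigma_{t(s)}$. (The alternative of postcomposing with a $G$-bonding and using the inclusion $\ker f_{t(s)}\subseteq\ker\,\mathrm{bond}^G_{t(s)\to s}$---which is a consequence of (a), not (a) itself---also needs (b) to see the discrepancy lands in $\ker f_{s-1}$.) Second, and as a consequence of the above, the cofinal subtower $\{G_{t(s)}\}_s$ you propose is the wrong one: the required extra bonding is from index $t(t(s))$, so you must pass to the subtower indexed by the iterates $u_0 = 0$, $u_{n+1}=t(u_n)$; only then does $\hat\sigma_n\coloneqq\sigma_{u_n}\circ\mathrm{bond}^H_{u_{n+2}\to u_{n+1}}$ satisfy the ladder condition and give a genuine map of towers $\{H_{u_{n+2}}\}_n\to\{G_{u_n}\}_n$ (note also that your stated target $\{G_{t(s-1)}\}_s$ has the indices going the wrong way relative to $\sigma_s$). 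With these two corrections the remainder of your plan does go through as you outline.
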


For a tower of groups $\{G_k\}$ and $r\ge1$, the \emph{$r$th derived tower} $(G^{(r)}_k)$ is defined by \[G_k^{(r)}\coloneqq \im\big(G_{k+r}\ra G_k\big).\]
For each fixed $k$, this defines a tower $\{G^{(r)}_k\}_{r \in \bfN}$ of inclusions of subgroups. The tower $(G_k)$ is called \emph{Mittag--Leffler} if for each $k$ there is an $m<\infty$ so that $\lim_{m' \geq m} \smash{G_k^{(m')}}\ra \smash{G_k^{(m)}}$ is an isomorphism. Examples of Mittag--Leffler towers include towers of finite groups or finite dimensional vector spaces. Mittag--Leffler towers have the following property \cite[Corollary IX.3.5]{BousfieldKan}:

\begin{lem}\label{lem:ML-no-lim1} If a tower of groups $\{G_k\}$ is Mittag--Leffler, then $\lim^1_kG_k=*$.
\end{lem}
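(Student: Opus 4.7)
The plan is to reduce to the case of a tower with surjective transition homomorphisms, where the vanishing of $\lim^1$ follows from an inductive diagonal argument, and then transfer this back using the pro-isomorphism invariance of $\lim^1$ established in Lemma \ref{lem:pro-iso-lim}.

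First I would form the \emph{stable} subtower $\{G_k^\infty\}$, with $G_k^\infty \coloneqq \bigcap_{r \geq 0} G_k^{(r)}$. The Mittag--Leffler hypothesis guarantees that for each $k$ there is an $m(k)$ with $G_k^\infty = G_k^{(m(k))}$, so this intersection stabilizes; the transition maps of $\{G_k\}$ restrict to the $G_k^\infty$. I would then verify that (a) the transition maps $G_{k+1}^\infty \to G_k^\infty$ are surjective, and (b) the inclusion $\{G_k^\infty\} \hookrightarrow \{G_k\}$ is a pro-isomorphism. For (b), given $s$ set $t \coloneqq s + m(s)$; by definition of $G_s^{(m(s))}$ the image of $G_t \to G_s$ is exactly $G_s^{(m(s))} = G_s^\infty$, so the transition factors uniquely as $G_t \twoheadrightarrow G_s^\infty \hookrightarrow G_s$, which supplies the backward map required in the definition of pro-isomorphism, and both triangles commute by construction. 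A similar argument applied one level up gives (a).

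By Lemma \ref{lem:pro-iso-lim} the inclusion induces a bijection on $\lim^1$, so it remains to show that $\lim^1_k H_k = \ast$ for a tower with surjective transition maps $f_k \colon H_{k+1} \twoheadrightarrow H_k$. Using the standard cocycle/coboundary description of $\lim^1$ as $\prod_k H_k / {\sim}$, where $(h_k) \sim (y_k\, h_k\, f_k(y_{k+1})^{-1})$ for arbitrary $(y_k) \in \prod_k H_k$, I would trivialize a given class $(h_k)$ inductively: set $y_0 \coloneqq h_0^{-1}$ and, having defined $y_k$, pick $y_{k+1} \in f_k^{-1}(y_k\, h_k)$, which is nonempty by surjectivity of $f_k$. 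The resulting $(y_k)$ witnesses $(h_k)$ as equivalent to the trivial cocycle.

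The main (mild) obstacle is handling the nonabelian twist in the equivalence relation defining $\lim^1$; in the nonabelian setting $\lim^1$ is only a pointed set, not a group, and one must take care that the inductive construction is compatible with the twisted translation action. Fortunately the diagonal argument above is formally identical to the abelian case and goes through unchanged. Alternatively, the entire lemma is \cite[Corollary IX.3.5]{BousfieldKan} and can simply be cited.
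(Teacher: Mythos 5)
The paper does not give its own argument but simply cites \cite[Corollary IX.3.5]{BousfieldKan}, which is precisely the last option you offer, so your proposal matches the paper's treatment. Your direct argument is also correct and supplies the details the citation elides: the reduction via the stable subtower $\{G_k^\infty\}$ and the pro-isomorphism invariance of $\lim^1$ from \cref{lem:pro-iso-lim} is valid (with $t=s+m(s)$ the required backward map $G_t \twoheadrightarrow G_s^\infty$ exists exactly because $\im(G_t\to G_s)=G_s^{(m(s))}=G_s^\infty$), and the inductive choice $y_{k+1}\in f_k^{-1}(y_k h_k)$ trivialises the cocycle without ever using commutativity, so the nonabelian twist genuinely poses no obstacle.
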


To recognise Mittag--Leffler towers, we use the following result from \cite[Theorem 2]{McGibbonMoller}:

\begin{lem}[McGibbon--Møller]\label{lem:McGibbon-Moller}
	For a tower $\{G_k\}$ of countable groups, the following statements are equivalent:
	\begin{enumerate}
		\item $\lim^1_k G_k$ is countable,
		\item $\lim^1_k G_k$ vanishes,
		\item the tower $\{G_k\}$ is Mittag--Leffler.
	\end{enumerate}
\end{lem}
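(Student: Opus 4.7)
The plan is to establish the cycle $(3) \Rightarrow (2) \Rightarrow (1) \Rightarrow (3)$. The implication $(3) \Rightarrow (2)$ is Lemma \ref{lem:ML-no-lim1}, and $(2) \Rightarrow (1)$ is trivial; the only non-trivial content is the contrapositive of $(1) \Rightarrow (3)$: assuming the tower fails to be Mittag--Leffler, I would produce $2^{\aleph_0}$ distinct elements of $\lim^1_k G_k$.

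First I would recall the explicit description of $\lim^1_k G_k$ as the set of orbits of the (left) action of $P \coloneqq \prod_k G_k$ on itself defined by $(h_k)\cdot(g_k) = (h_k g_k f_k(h_{k+1})^{-1})$, where $f_k \colon G_{k+1} \to G_k$ are the structure maps. Since $\lim^1$ is invariant under passing to a cofinal subtower, and since both the countability hypothesis and the failure of Mittag--Leffler are preserved under this operation, I may reindex so that the failure occurs already at level $0$: $G_0^{(r+1)} \subsetneq G_0^{(r)}$ for every $r \geq 0$. For each $r \geq 1$ I would then pick $a_r \in G_0^{(r-1)} \setminus G_0^{(r)}$ and a lift $\tilde a_r \in G_{r-1}$ whose image in $G_0$ equals $a_r$; such a lift exists by definition of $G_0^{(r-1)}$, and no element of $G_r$ has image $a_r$ in $G_0$, by the defining property of $G_0^{(r)}$.

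With these choices in hand, I would associate to each binary sequence $\varepsilon \in \{0,1\}^{\bfN}$ an element $x^\varepsilon \in P$ whose $k$-th coordinate is a finite product of the lifts $\tilde a_r$ with $r > k$, combined according to the bits $\varepsilon_r$; the convention is arranged so that the coordinate in $G_0$ records the product $\prod_{r \leq R} a_r^{\varepsilon_r}$ modulo $G_0^{(R)}$ for every truncation level $R$. The heart of the argument---and the step I expect to be the main obstacle---is to verify that distinct sequences $\varepsilon \neq \varepsilon'$ yield distinct orbits: if they first disagree at index $r_0$, one assumes an orbit equivalence $(h_k)\cdot x^\varepsilon = x^{\varepsilon'}$, projects the identity in the $0$-th coordinate to $G_0 / G_0^{(r_0)}$, and unravels the action to derive that $a_{r_0} \in G_0^{(r_0)}$, contradicting the choice of $a_{r_0}$. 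The non-abelian manipulation here requires care and is the technical core of the proof. Once established, the injection $\varepsilon \mapsto [x^\varepsilon]$ exhibits $\lim^1_k G_k$ as uncountable, which violates (1) and completes the argument.
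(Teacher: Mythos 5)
The paper does not prove this lemma; it is cited directly from \cite[Theorem 2]{McGibbonMoller}. Your overall plan is the standard one and is sound: reduce to the contrapositive of $(1)\Rightarrow(3)$, pass to a cofinal subtower so that $G_0^{(r+1)}\subsetneq G_0^{(r)}$ for all $r$, pick $a_r\in G_0^{(r-1)}\setminus G_0^{(r)}$ with lifts $\tilde a_r\in G_{r-1}$, and produce a continuum of orbits. But the step you yourself flag as the technical core contains a genuine error, and it already fails in the abelian case, so it is not a matter of non-abelian care. The map $\varepsilon\mapsto[x^\varepsilon]$ is in general \emph{not} injective, and the claimed contradiction $a_{r_0}\in G_0^{(r_0)}$ does not follow. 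Take $G_k=\bfZ$ with every transition map multiplication by $2$: then $G_0^{(r)}=2^r\bfZ$, and one may take $a_r=2^{r-1}$, $\tilde a_r=1\in G_{r-1}$, so that the natural instance of your construction is $x^\varepsilon=(\varepsilon_1,\varepsilon_2,\ldots)$. For $\varepsilon=(0,0,0,\ldots)$ and $\varepsilon'=(0,1,1,1,\ldots)$ (first disagreement at $r_0=2$), the element $h=(-2,-1,-1,\ldots)\in\prod_k G_k$ satisfies $h\cdot x^\varepsilon=x^{\varepsilon'}$, so these give the \emph{same} orbit. The reason no contradiction materialises is that unravelling the orbit equation through levels $0,\ldots,r_0-1$, pushing to $G_0$, and reducing modulo $G_0^{(r_0)}$ does not give $a_{r_0}\in G_0^{(r_0)}$, but rather a relation of the shape $a_{r_0}\equiv w^{-1}h_0 w\pmod{G_0^{(r_0)}}$, where $h_0\in G_0$ is the zeroth coordinate of the witnessing element and $w$ depends on the common prefix of $\varepsilon,\varepsilon'$. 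Since $h_0$ is a free parameter, there is no contradiction; in the example $h_0=-2\equiv 2=a_2\pmod 4$, exactly as required.

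The repair is well known and not hard: one does not need injectivity, only that the fibres of $\varepsilon\mapsto[x^\varepsilon]$ are countable. Writing $b_k=\pi_k(h_k)\in G_0^{(k)}$ for the pushforwards to $G_0$, the orbit equation telescopes to $b_N=a_N^{-\varepsilon'_N}\cdots a_1^{-\varepsilon'_1}\,b_0\,a_1^{\varepsilon_1}\cdots a_N^{\varepsilon_N}\in G_0^{(N)}$, and by induction on $N$ this determines $\varepsilon'$ uniquely from $\varepsilon$ and $h_0=b_0$, using at each stage that $a_N\notin G_0^{(N)}$; this works verbatim non-abelianly. As $h_0$ ranges over the countable group $G_0$, each fibre has at most $\aleph_0$ elements, so the image of $\varepsilon\mapsto[x^\varepsilon]$ still has cardinality $2^{\aleph_0}$, which is all you need. (A smaller separate issue: your description of $x^\varepsilon_0$ as a single element of $G_0$ recording $\prod_{r\le R}a_r^{\varepsilon_r}$ modulo $G_0^{(R)}$ simultaneously for all $R$ is over-optimistic, as no such element of $G_0$ exists in general---in the example it would have to be the $2$-adic integer $\sum_r\varepsilon_r 2^{r-1}$. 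What records those cosets is the telescoped product $\pi_0(x^\varepsilon_0)\pi_1(x^\varepsilon_1)\cdots\pi_{R-1}(x^\varepsilon_{R-1})$, and the one-factor choice $x^\varepsilon_k=\tilde a_{k+1}^{\varepsilon_{k+1}}$ is what makes this work.)
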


The following lemma appears in \cite[Corollary 6.1.9]{DydakSegal}, but we include a proof for the convenience of the reader. For a group $G$ we denote the constant tower with value $G$ by $\{c\,G\}$.

\begin{lem}[Dydak--Segal] \label{lem:dydak-segal} If a tower of groups $\{G_k\}$ is Mittag--Leffler and $\lim_k G_k$ is countable, then the canonical map $\{c\lim_k G_k\} \to \{G_k\}$ is a pro-isomorphism.
\end{lem}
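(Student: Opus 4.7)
The plan is to reduce to the case where the tower has surjective transition maps, and then apply the Baire category theorem to a canonical topology on the limit $L \coloneqq \lim_k G_k$.

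First I would observe that the inclusion $\{G_k^{(\infty)}\} \hookrightarrow \{G_k\}$ of the subtower of stable images $G_k^{(\infty)} \coloneqq \bigcap_{r \geq 1} \im(G_{k+r} \to G_k)$ is always a pro-isomorphism: this follows directly from the Mittag--Leffler condition, which guarantees that each stable image is already realised at a finite stage. The same condition gives that $\{G_k^{(\infty)}\}$ has surjective transition maps, and a standard telescoping then shows that its inverse limit---which is still $L$---surjects onto each $G_k^{(\infty)}$. Since pro-isomorphisms are closed under composition, it suffices to prove the lemma for $\{G_k^{(\infty)}\}$ in place of $\{G_k\}$, so I shall assume henceforth that all transition maps $G_{k+1} \to G_k$ and all projections $f_k \colon L \to G_k$ are surjective.

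Next I would equip $L$ with the topology whose basis of neighbourhoods of the identity is given by the descending chain of normal subgroups $K_k \coloneqq \ker f_k$. Since $\bigcap_k K_k = \{e\}$ (as $L$ injects into $\prod_k G_k$), this makes $L$ a first-countable Hausdorff topological group, hence metrisable by Birkhoff--Kakutani, and moreover complete as a closed subspace of the product of the discrete groups $G_k$. If $L$ is countable, the Baire category theorem forbids $L$ from being a countable union of nowhere-dense closed subsets of itself, so some singleton $\{\ell\}$ has non-empty interior. By translation invariance the topology is then discrete, which forces $K_t = \{e\}$ for some $t$, i.e.\ $f_t$ is an isomorphism; consequently $f_s$ is an isomorphism for every $s \geq t$.

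Finally, to verify the pro-isomorphism condition for a given index $s$, I would take $t' \coloneqq \max(s,t)$ and let $\phi \coloneqq f_{t'}^{-1} \colon G_{t'} \to L$; then $\phi \circ f_{t'} = \id_L$ is automatic and $f_s \circ \phi = \pi_{s,t'}$ (the tower map $G_{t'} \to G_s$) follows from the bijectivity of $f_{t'}$ together with the commutativity $\pi_{s,t'} \circ f_{t'} = f_s$. The only substantive input is the Baire category step in the second paragraph; the rest is routine bookkeeping about pro-isomorphisms and inverse limits.
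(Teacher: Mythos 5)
Your proof is correct, and it follows a genuinely different route from the paper's once past the shared first reduction. Both arguments start by replacing $\{G_k\}$ with the tower of stable images, which has surjective transition maps, and both observe that this makes every $G_k$ countable. From there the paper stays inside the Bousfield--Kan $\lim$/$\lim^1$ formalism: it runs the six-term exact sequence (\cref{lem:les}) on $1 \to \{\ker f_k\} \to \{G_k\} \to \{H_k\} \to 1$ to kill $\lim_k \ker f_k$ and $\lim^1_k \ker f_k$, then invokes the McGibbon--M{\o}ller dichotomy (\cref{lem:McGibbon-Moller}) to conclude that $\{\ker f_k\}$ is Mittag--Leffler and hence, by the degenerate case $\lim = 0$, pro-trivial. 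You instead put the filtration topology on $L = \lim_k G_k$ (neighbourhood basis $K_k = \ker f_k$ at the identity), verify it is a completely metrisable Hausdorff topological group, and use the Baire category theorem together with countability of $L$ to force the topology to be discrete, i.e.\ $K_t = \{e\}$ for some $t$; so $f_t$ and all later projections are isomorphisms, and the pro-isomorphism diagram can be written down explicitly with $\phi = f_{t'}^{-1}$. Your approach is more self-contained: it avoids both the $\lim^1$ exact sequence and the McGibbon--M{\o}ller theorem (the latter is itself proved by a Baire-type argument, so in effect you have unwound that black box), and it incidentally yields the stronger conclusion that the projections $f_s$ are \emph{isomorphisms} for $s$ large, not merely that the map of towers is a pro-isomorphism. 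The paper's approach has the practical advantage of reusing machinery (\cref{lem:les} and \cref{lem:McGibbon-Moller}) that is already set up and needed elsewhere in \cref{sec:ed-operads}, so it is shorter given the surrounding context; but your argument would work equally well as a drop-in replacement.
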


\begin{proof}Any Mittag--Leffler tower of groups $\{G_k\}$ is pro-isomorphic to one with surjective transition maps (consider the tower $\{G_k'\}$ of stable images $G_k'\subset G_k$, i.e. $G_k'=\im(G_{k+m}\rightarrow G_k)$ for $m\gg0$), so we may assume this is the case. This in particular ensures that the maps $\lim_k G_k\ra G_k$ are surjective, so $G_k$ is countable for all $k\ge0$, and it also shows that the claim is true if $\lim_k G_k = 0$ and thus $G_k=0$ for all $k$. We use this special case to prove the following claim, which implies the general statement when applied to the map $\{c\lim_k G_k\} \to \{G_k\}$:

\medskip

\noindent \textbf{Claim. } Let $\{G_k\}$ be a Mittag--Leffler tower of countable groups and $\{f_k\} \colon \{G_k\} \to \{H_k\}$ a levelwise surjective map of towers of groups. If $\lim_k f_k \colon \lim_k G_k \to \lim_k H_k$ is an isomorphism, then $\{f_k\}$ is a pro-isomorphism.

\medskip

\noindent \emph{Proof of claim.} Consider the short exact sequence of towers $1 \ra \{\ker(f_k)\} \ra \{G_k\} \ra \{H_k\} \ra 1$
and the associated long exact sequence of \cref{lem:les}. Since (a) the map $\lim_k f_k \colon \lim_k G_k \to \lim_k H_k$ is an isomorphism, (b) $\{G_k\}$ is Mittag--Leffler, and (c) \cref{lem:ML-no-lim1}, it follows that $\lim_k \ker(f_k)$ and $\lim^1_k \ker(f_k) $ both vanish. Invoking \cref{lem:McGibbon-Moller}, we see that $\{\ker(f_k)\}$ is Mittag--Leffler, so by the first part of the proof $\{\ker(f_k)\}$ is pro-isomorphic to $\{c\,0\}$. The result follows since a levelwise surjective map of towers of groups is a pro-isomorphism if its towers of levelwise kernels are pro-isomorphic to $\{c\,0\}$ \cite[Proposition III.2.2]{BousfieldKan}.\end{proof}

Mittag--Leffler towers often behave well with $T$-localisation in the sense of \cref{sec:localisation-groups}:

\begin{lem}\label{lem:lim-loc} Let $\{G_k\}$ be Mittag--Leffler. If $\lim_k G_k$ is countable, then the canonical map 
\[
	\textstyle{\big({\lim}_k\, G_k\big)_T \lra \lim_k\big((G_k)_T)}
\] is an isomorphism for any set of primes $T$.\end{lem}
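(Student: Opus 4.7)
The plan is to combine the preceding Dydak--Segal lemma with the functoriality of $T$-localisation of groups from Section~\ref{sec:localisation-groups}. First, I would invoke \cref{lem:dydak-segal}, whose hypotheses are exactly those of the statement, to conclude that the canonical map of towers $\{c\lim_k G_k\} \to \{G_k\}$ is a pro-isomorphism.

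Next, I would apply the functor $(-)_T \colon \cat{Grp} \to \cat{Grp}$ levelwise to this map. Since the definition of a pro-isomorphism involves only the existence of backward maps making certain diagrams commute, and functors preserve commutative diagrams, the resulting map of towers is again a pro-isomorphism. Using that $(-)_T$ commutes with constant towers, this map has the form $\{c((\lim_k G_k)_T)\} \to \{(G_k)_T\}$. Applying \cref{lem:pro-iso-lim} then yields an isomorphism
\[
	(\lim_k G_k)_T = \lim_k c((\lim_k G_k)_T) \xlra{\cong} \lim_k (G_k)_T.
\]

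Finally, I would check that this isomorphism agrees with the canonical comparison map in the statement. The canonical map arises via the universal property of $T$-localisation applied to the composition $\lim_k G_k \to G_k \to (G_k)_T$, which factors through $(\lim_k G_k)_T$ provided $\lim_k (G_k)_T$ is $T$-local; this last fact follows since $T$-local groups form a reflective subcategory of $\cat{Grp}$ and are thus closed under limits. Both the canonical map and the one produced above fit into the same commutative diagram involving the unit transformation $\id \to (-)_T$, so uniqueness in the universal property identifies them.

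There is no serious obstacle: the argument is a mechanical concatenation of the Dydak--Segal lemma, functoriality, and the pro-iso lemma, with the only mild subtlety being the identification of the abstract isomorphism with the canonical comparison map, which is a routine universal-property chase.
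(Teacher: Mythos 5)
Your proof is correct and follows essentially the same route as the paper: apply the Dydak--Segal lemma to get a pro-isomorphism from the constant tower, push it through $(-)_T$ using that functors preserve pro-isomorphisms and that $(-)_T$ preserves constant towers, then apply \cref{lem:pro-iso-lim}. The paper leaves the final identification with the canonical comparison map implicit, whereas you spell it out, but this is the only difference.
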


\begin{proof}By \cref{lem:dydak-segal} the canonical map of towers $\{c\, \lim_kG_k\} \ra \{G_k\}$ is a pro-isomorphism, as $\lim_kG_k$ is countable. As $(-)_T$ preserves pro-isomorphisms and limits of constant towers, the canonical map from the constant tower on $\{\lim_kG_k\}_T$ to $\{(G_k)_T\}$ is a pro-isomorphism and the result follows from \cref{lem:pro-iso-lim}.\end{proof}

\begin{rem}We stated \cref{lem:lim-loc} in terms of $T$-localisation since this is what we will use, but the same proof applies to $(-)_T$ replaces by any endofunctor on the category of groups.
\end{rem}

\subsubsection{Towers of spaces}
Given a tower $X_0 \leftarrow X_1 \leftarrow \cdots$ of based spaces, taking homotopy groups results in a tower of pointed sets $\{\pi_i(X_k)\}$ (of groups for $i \geq 1$). The limits of these towers fit into the following \emph{Milnor exact sequence} \cite[Theorem IX.3.1]{BousfieldKan}.

\begin{lem}\label{lem:milnor-sequence}
For a tower $X_0 \leftarrow X_1 \leftarrow \cdots$ of based spaces and $i\ge0$, there is a natural short exact sequence of pointed sets (of groups for $i\ge1$)
\[
	\textstyle{0 \ra \lim^1_k\pi_{i+1}(X_k)\ra \pi_i(\holim_kX_k)\ra\lim_k\pi_i(X_k)\lra 0}.
\]
\end{lem}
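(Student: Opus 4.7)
The plan is to realise $\holim_k X_k$ as a homotopy fibre of an explicit map between products and then read off the sequence from the associated long exact sequence of homotopy groups. Concretely, I would model the homotopy limit of a tower as the homotopy equaliser of the two maps $\id, s \colon \prod_{k \ge 0} X_k \to \prod_{k \ge 0} X_k$, where $s$ is the ``shift'' that sends $(x_k)_{k \geq 0}$ to $(f_k(x_{k+1}))_{k \ge 0}$ with $f_k \colon X_{k+1} \to X_k$ the bonding maps. Equivalently, there is a homotopy fibre sequence
\[
\holim_k X_k \lra \prod_k X_k \xlra{\id/s} \prod_k X_k,
\]
where the second map may be constructed by using the path-space model of the homotopy equaliser; its homotopy class of basepoints is canonically trivial.

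From this fibre sequence, the long exact sequence of homotopy groups, together with the identification $\pi_i(\prod_k X_k) \cong \prod_k \pi_i(X_k)$ (which holds as a product of pointed sets for $i=0$ and of groups for $i \ge 1$), yields an exact sequence
\[
\textstyle \prod_k \pi_{i+1}(X_k) \xra{\id/s_*} \prod_k \pi_{i+1}(X_k) \ra \pi_i(\holim_k X_k) \ra \prod_k \pi_i(X_k) \xra{\id/s_*} \prod_k \pi_i(X_k).
\]
It remains to identify the kernel and cokernel of $\id/s_*$: unwinding the shift map, its kernel on $\prod_k \pi_j(X_k)$ consists of compatible sequences and is therefore $\lim_k \pi_j(X_k)$, while its cokernel is by definition $\lim^1_k \pi_j(X_k)$ (this is the standard cokernel-of-shift model for $\lim^1$, which works for towers of groups; for $j = 0$ there is only the pointed-set version, but $\pi_1$ lands there so in the range we need $j = i{+}1 \geq 1$). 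Splicing these identifications into the above exact sequence produces precisely the desired short exact sequence
\[
\textstyle 0 \to \lim^1_k \pi_{i+1}(X_k) \to \pi_i(\holim_k X_k) \to \lim_k \pi_i(X_k) \to 0.
\]

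The main technical subtlety is handling the low-dimensional case $i = 0$, where $\pi_0$ is only a pointed set and one has to interpret ``exactness'' at pointed sets; this is where one uses that the tower $\{\pi_1(X_k)\}$ acts on $\{\pi_0(X_k)\}$ and defines $\lim^1$ for towers of groups via orbits of a shift action. This is all contained in the cited reference \cite[Theorem IX.3.1]{BousfieldKan}, so in the paper I would simply cite this result rather than reproduce the argument.
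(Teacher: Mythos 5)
Your proposal is correct and matches the paper exactly: the paper simply cites \cite[Theorem IX.3.1]{BousfieldKan} without reproducing an argument, and the sketch you give (homotopy equaliser of $\id$ and shift on the product, then reading off $\lim$ and $\lim^1$ as kernel and cokernel in the long exact sequence) is precisely the proof in Bousfield--Kan.
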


Together with \cref{lem:McGibbon-Moller}, this has the following consequence.

\begin{prop}\label{prop:holim-tower}
	Fix $i\ge1$. For a tower of based spaces $X_0 \leftarrow X_1 \leftarrow \cdots$ such that $\pi_i(X_k)$ is countable for all $k\ge0$, at least one of the following statements holds:
	\begin{enumerate}[(i)]
		\item \label{enum:holim-tower-i-1-uncountable} \label{enum:holim-tower-i-uncountable} $\pi_*(\holim_k X_k)$  is uncountable in degree  $i-1$ or $i$,
		\item \label{enum:holim-tower-rat-iso} $({\lim}_k \pi_i(X_k))_T \ra {\lim}_k(\pi_i(X_k)_T)$ is an isomorphism for all sets of primes $T$.
	\end{enumerate}
	Moreover, if $\pi_{i+1}(X_k)$ is countable for all $k\ge0$, then at least one of the following is the case:
	\begin{enumerate}[(i')]	
		\item \label{enum:holim-tower-i-uncountable-prime} $\pi_i(\holim_k X_k)$ is uncountable,
		\item \label{enum-holim-tower-rat-iso-prime} the natural surjection $\pi_i(\holim_k X_k) \ra {\lim}_k \pi_i(X_k)$ is an isomorphism.
	\end{enumerate}
	\end{prop}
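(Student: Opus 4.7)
The plan is to deduce both parts directly from the Milnor short exact sequence of \cref{lem:milnor-sequence}, McGibbon--Møller's criterion (\cref{lem:McGibbon-Moller}), and the behaviour of localisation on Mittag--Leffler towers (\cref{lem:lim-loc}). The structure of the argument is a contrapositive: assume the first alternative in each part fails, and derive the second.

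For the dichotomy \ref{enum:holim-tower-i-uncountable}--\ref{enum:holim-tower-rat-iso}, I would begin by supposing that $\pi_{i-1}(\holim_k X_k)$ and $\pi_i(\holim_k X_k)$ are both countable. The Milnor sequence of \cref{lem:milnor-sequence} in degree $i-1$ exhibits $\lim^1_k \pi_i(X_k)$ as a subset of $\pi_{i-1}(\holim_k X_k)$, hence countable. Since $\{\pi_i(X_k)\}$ is by assumption a tower of countable groups, \cref{lem:McGibbon-Moller} then forces this tower to be Mittag--Leffler. Next, the Milnor sequence in degree $i$ presents $\lim_k \pi_i(X_k)$ as a quotient of the countable group $\pi_i(\holim_k X_k)$, so it is countable as well. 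The hypotheses of \cref{lem:lim-loc} are therefore met by $\{\pi_i(X_k)\}$, and we conclude that $(\lim_k \pi_i(X_k))_T \to \lim_k(\pi_i(X_k)_T)$ is an isomorphism for every set of primes $T$, giving \ref{enum:holim-tower-rat-iso}.

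For the dichotomy \ref{enum:holim-tower-i-uncountable-prime}--\ref{enum-holim-tower-rat-iso-prime}, assume that $\pi_i(\holim_k X_k)$ is countable. The Milnor sequence then shows that $\lim^1_k \pi_{i+1}(X_k)$ is countable, being a subset of $\pi_i(\holim_k X_k)$. Since $\{\pi_{i+1}(X_k)\}$ is by hypothesis a tower of countable groups, a second application of \cref{lem:McGibbon-Moller} shows that $\lim^1_k \pi_{i+1}(X_k)$ actually vanishes, so the surjection $\pi_i(\holim_k X_k) \to \lim_k \pi_i(X_k)$ from the Milnor sequence is an isomorphism, which is \ref{enum-holim-tower-rat-iso-prime}.

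There is no substantive obstacle: all the heavy lifting has already been done in the preceding lemmas, and the proof is essentially a careful bookkeeping exercise threading countability through the Milnor sequence. The only minor subtlety to be aware of is the case $i=1$ of the first part, where $\pi_{i-1}=\pi_0$ is only a pointed set rather than a group; this causes no trouble because \cref{lem:milnor-sequence} is stated in the generality of pointed sets and the conclusion we extract about $\lim^1_k \pi_i(X_k)$ only uses that it is a subset of a countable set.
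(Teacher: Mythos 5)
Your proof is correct and uses exactly the same ingredients in exactly the same way as the paper's (Milnor exact sequence, McGibbon--Møller, and \cref{lem:lim-loc}); the only difference is that you argue by contrapositive while the paper runs the case split in the forward direction, which is a purely cosmetic difference. Your remark about $i=1$ and pointed sets is also correct and matches the paper's implicit use of the Milnor sequence in degree zero.
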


\begin{proof}By \cref{lem:McGibbon-Moller}, the assumption that $\pi_i(X_k)$ is countable for $k\ge0$ implies that either (a) $\lim^1_k(\pi_i(X_k))$ is uncountable or (b) the tower is Mittag--Leffler. In case (a), we apply \cref{lem:milnor-sequence} in degree $i-1$ to conclude that $\pi_{i-1}(\holim_k X_k)$ is uncountable, so \ref{enum:holim-tower-i-1-uncountable} holds. In case (b), either $\pi_i(\holim_k X_k)$ is uncountable and \ref{enum:holim-tower-i-uncountable} holds, or it is countable and \cref{lem:milnor-sequence} in degree $i$ implies that ${\lim_k}\, \pi_{i}(X_k)$ is countable, so \ref{enum:holim-tower-rat-iso} holds by \cref{lem:lim-loc}. Similarly if $\pi_{i+1}(X_k)$ is countable  for $k\ge0$, then either $\pi_i(\holim_k X_k)$ is uncountable and \ref{enum:holim-tower-i-uncountable-prime} holds, or this group is countable and so $\lim_k^1\pi_{i+1}(X_k)$ is countable by \cref{lem:milnor-sequence} and thus vanishes by \cref{lem:McGibbon-Moller}, so the claim follows from the Milnor exact sequence.
\end{proof}

\subsection{Applications to maps between operads}
Together with \cref{thm:truncated-operad-maps}, we use \cref{prop:holim-tower} to prove the following result about the map
\[
	\Map^h(\cO,\cP)\lra \Map^h(\cO_\bfQ,\cP_\bfQ)
\]
for $1$-reduced dendroidal Segal spaces $\cO$ and $\cP$ in the sense of \cref{section:operads}.

\begin{thm}\label{thm:uncountability-for-general-operads}Let $\cO$ and $\cP$ be $1$-reduced dendroidal Segal spaces such that for all $k\ge0$
\begin{itemize}
\item all components of $\cP(k)$ are nilpotent and have finitely generated homotopy groups,
\item $\cO(k)$ and $\Latch_k(\cO)_{h\Sigma_k}$ are weakly equivalent to finite CW complexes,
\end{itemize}
then for all $i\ge1$ and all basepoints $f\in\Map^h(\cO,\cP)$, at least one of the following is the case:
	\begin{enumerate}
		\item $\pi_{*}(\Map^h(\cO,\cP))$ is uncountable in degree $i-1$ or $i$,
		\item the canonical map $\pi_{i}(\Map^h(\cO,\cP))_\bfQ \to \pi_i(\Map^h(\cO_\bfQ,\cP_\bfQ))$
		is an isomorphism.
	\end{enumerate}
\end{thm}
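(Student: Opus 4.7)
The plan is to apply \cref{thm:truncated-operad-maps} levelwise along Göppl's tower, and then use a Mittag--Leffler argument driven by \cref{lem:McGibbon-Moller} and \cref{lem:lim-loc} to transfer control from the source tower to the target one.

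For $f\in\Map^h(\cO,\cP)$, let $f_k$ denote its image in $\Map^h_{\le k}(\cO,\cP)$, and set $X_k\coloneqq\Map^h_{\le k}(\cO,\cP)_{f_k}$ and $Y_k\coloneqq\Map^h_{\le k}(\cO,\cP_{\bfQ})_{r_\bfQ\circ f_k}$. First, using the convergence \eqref{equ:goppl-convergence} together with \cref{lem:loc-dendr-map} (applied both to $\rDend$ and to the truncations $\rDend_{\le k}$), I would identify the relevant components of $\Map^h(\cO,\cP)$ and $\Map^h(\cO_\bfQ,\cP_\bfQ)$ with $\holim_k X_k$ and $\holim_k Y_k$, respectively, in such a way that the map of towers $\{X_k\}\to\{Y_k\}$ induced by $r_\bfQ\colon\cP\to\cP_\bfQ$ models the rationalisation map $\Map^h(\cO,\cP)\to\Map^h(\cO_\bfQ,\cP_\bfQ)$ of the theorem. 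By \cref{thm:truncated-operad-maps}, each $X_k$ is nilpotent with finitely generated homotopy groups and $X_k\to Y_k$ is a $\bfQ$-localisation, so $\pi_j(Y_k)\cong\pi_j(X_k)_\bfQ$ is a finite-dimensional $\bfQ$-vector space for $j\ge 2$ (and a uniquely divisible finitely generated nilpotent group for $j=1$); in particular $\pi_j(X_k)$ and $\pi_j(Y_k)$ are countable for all $j,k$.

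Next I would assume that conclusion (1) fails, so that both $\pi_{i-1}(\Map^h(\cO,\cP))$ and $\pi_i(\Map^h(\cO,\cP))$ are countable. The Milnor exact sequences of \cref{lem:milnor-sequence} in degrees $i-1$ and $i$ embed $\lim^1_k\pi_i(X_k)$ and $\lim^1_k\pi_{i+1}(X_k)$ into these two groups respectively, so both $\lim^1$-terms are countable. By \cref{lem:McGibbon-Moller} they therefore vanish, and the towers $\{\pi_i(X_k)\}$ and $\{\pi_{i+1}(X_k)\}$ are Mittag--Leffler. Because rationalisation of finitely generated nilpotent groups is exact and commutes with forming images of homomorphisms, the rationalised towers $\{\pi_i(Y_k)\}$ and $\{\pi_{i+1}(Y_k)\}$ remain Mittag--Leffler, and hence $\lim^1_k\pi_{i+1}(Y_k)=0$ as well.

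Conclusion (2) then falls out of a short diagram chase. The two collapsed Milnor sequences yield isomorphisms $\pi_i(\Map^h(\cO,\cP))\cong\lim_k\pi_i(X_k)$ and $\pi_i(\Map^h(\cO_\bfQ,\cP_\bfQ))\cong\lim_k\pi_i(Y_k)$. Since the tower $\{\pi_i(X_k)\}$ is Mittag--Leffler with countable limit $\pi_i(\Map^h(\cO,\cP))$, \cref{lem:lim-loc} supplies a further isomorphism $(\lim_k\pi_i(X_k))_\bfQ\cong\lim_k\pi_i(X_k)_\bfQ=\lim_k\pi_i(Y_k)$. Composing the three isomorphisms and invoking naturality of the Milnor sequence and of rationalisation identifies the result with the canonical map of conclusion (2). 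The main obstacle is controlling the $\lim^1$-term for the target tower $\{Y_k\}$: overcoming this is precisely why I extract Mittag--Leffler for both $\{\pi_i(X_k)\}$ and $\{\pi_{i+1}(X_k)\}$ from the countability hypothesis on only two homotopy groups of $\Map^h(\cO,\cP)$ and then propagate the property across rationalisation.
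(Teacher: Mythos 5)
Your proof is correct and follows essentially the same route as the paper: pass to G\"oppl's tower, apply \cref{thm:truncated-operad-maps} at each finite stage, and then control the homotopy groups of the homotopy limits via the Milnor sequence together with the McGibbon--M{\o}ller criterion (which the paper packages into \cref{prop:holim-tower}). The only stylistic difference worth noting is that your propagation of the Mittag--Leffler property across rationalisation is avoidable: by \cref{thm:truncated-operad-maps} (ii) and (iii) the target tower $\{\pi_{i+1}(Y_k)\}$ already consists of finite-dimensional $\bfQ$-vector spaces, which is automatically Mittag--Leffler, so no transfer argument from $\{X_k\}$ is needed.
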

\begin{proof}During the proof, we implicitly use the equivalence $\Map^h(\cO,\cP_\bfQ)\simeq \Map^h(\cO_\bfQ,\cP_\bfQ)$ and its truncated analogue (see \cref{lem:loc-dendr-map}). Then \eqref{equ:goppl-convergence} gives
\[
	\Map^h(\cO,\cP)\simeq \holim_k\,\Map^h_{\le k}(\cO,\cP) \quad \text{and} \quad \Map^h(\cO_\bfQ,\cP_\bfQ)\simeq \holim_k\,\Map^h_{\le k}(\cO_\bfQ,\cP_\bfQ),
\] so from the two Milnor sequences (see \cref{lem:milnor-sequence}) together with the fact that $\pi_i(\Map^h(\cO_\bfQ,\cP_\bfQ))$ is $\bfQ$-local as the homotopy group of a $\bfQ$-local space (see \cref{lem:loc-dendr-map}), we obtain a square	\[\begin{tikzcd}
	\pi_i(\Map^h(\cO,\cP))_\bfQ\rar{\circled{1}}\dar & \big({\lim}_k\pi_i(\Map^h_{\le k}(\cO,\cP))\big)_\bfQ \dar{\circled{2}}\\
	\pi_i(\Map^h(\cO_\bfQ,\cP_\bfQ))\rar{\circled{3}} & \lim_k\pi_i(\Map^h_{\le k}(\cO_\bfQ,\cP_\bfQ))
\end{tikzcd}\]
Assuming that $\pi_*(\Map^h(\cO,\cP))$ is countable in degrees $i-1$ and $i$, we need to show that the left vertical map is an isomorphism, which we do proving that this holds for the three circled maps. By \cref{thm:truncated-operad-maps} \ref{enum:truncated-fg}, the homotopy groups of  $\Map^h_{\le k}(\cO,\cP)$ are finitely generated for all $k$, so they are in particular countable. By  \cref{prop:holim-tower} \ref{enum-holim-tower-rat-iso-prime}, this implies that  $\circled{$1$}$ is an isomorphism, even before rationalisation. By \cref{thm:truncated-operad-maps} \ref{enum:truncated-loc}, we have $\pi_{k}(\Map^h_{\le k}(\cO,\cP))_\bfQ \cong \pi_{k}(\Map^h_{\le k}(\cO_\bfQ,\cP_\bfQ))$ for all $k\ge1$, so  $\circled{$2$}$ is an isomorphism by \cref{prop:holim-tower} \ref{enum:holim-tower-rat-iso}. Finally, by the Milnor sequence (see \cref{lem:milnor-sequence}), $\circled{$3$}$ is surjective and its kernel agrees with $\lim^1_k\pi_{i+1}(\Map^h_{\le k}(\cP_\bfQ,\cO_\bfQ))$, so is an isomorphism if $\{\pi_{i+1}(\Map^h_{\le k}(\cP_\bfQ,\cO_\bfQ))\}$ is Mittag--Leffler. For this it suffices that it is a tower of finite-dimensional vector spaces, which is indeed the case by \cref{thm:truncated-operad-maps} \ref{enum:truncated-loc} and \ref{enum:truncated-fg}.\end{proof}

\subsubsection{Applications to maps between $E_n$-operads}\label{sec:maps-between-En-operads}
Here and henceforth, we write $E_n$ for any operad weakly equivalent to the operad of little $n$-discs (the unital version, so $E_n(0)\simeq*$). As $E_n(1)\simeq\ast$, we can consider $E_n$ via the dendroidal nerve as a $1$-reduced dendroidal Segal space, denoted by the same symbol, and abbreviate its $T$-localisation (see \cref{sec:localisation-dendroidal-spaces}) by $E_n^T$. \cref{prop:comparison-of-models} says that the derived mapping spaces between $E_n$-operads do not depend on whether we consider them as operads or as $1$-reduced dendroidal Segal spaces. Keeping this in mind, we use Theorems~\ref{thm:truncated-operad-maps} and~\ref{thm:uncountability-for-general-operads} to prove the following two results:

\begin{thm}\label{thm:truncated-ed}Fix $n\ge1$ and $m\ge 3$, and a set of primes $T$. For $f\in\Map^h_{\le r}(E_n,E_{m})$ and $k\ge0$, the following holds:
\begin{enumerate}
\item the path component $\Map^h_{\le k}(E_n,E_{m})_f$ is nilpotent,
\item the map $(r_T\circ(-))\colon \Map^h_{\le k}(E_n,E_{m})_f\ra \Map^h_{\le k}(E_n,E_{m}^T)_{r_T\circ f}$
is a $T$-localisation,
\item the homotopy groups of $\Map^h_{\le k}(E_n,E_{m})_f$ are finitely generated.
\end{enumerate}
\end{thm}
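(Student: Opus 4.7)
The plan is to apply \cref{thm:truncated-operad-maps} directly with $\cO = E_n$ and $\cP = E_m$, from which all three conclusions follow immediately. The bulk of the work is verifying the hypotheses: that $E_n$ and $E_m$ are $1$-reduced dendroidal Segal spaces, that each space $E_m(k)$ has nilpotent path components with finitely generated homotopy groups at every basepoint, and that both $E_n(k)_{h\Sigma_k}$ and $\Latch_k(E_n)_{h\Sigma_k}$ are weakly equivalent to finite CW complexes.

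First I would check that, viewed via their dendroidal nerves, $E_n$ and $E_m$ are $1$-reduced dendroidal Segal spaces. The Segal condition is automatic from the operadic nerve construction (the product $\prod_v \cO(\lvert \mathrm{in}(v)\rvert)$ in the definition of $N_d \cO$ realises the Segal maps as isomorphisms), and $1$-reducedness follows from $E_j(0) \simeq * \simeq E_j(1)$ for $j \in \{n,m\}$ in the unital model of little discs. Next, for $\cP = E_m$ with $m \geq 3$, there is a weak equivalence $E_m(k) \simeq F(k, \bfR^m)$ onto the ordered configuration space. Since $m \geq 3$ the space $F(k, \bfR^m)$ is simply connected, hence its path components are nilpotent; since it is the complement of a linear subspace arrangement in $\bfR^{mk}$ it has the homotopy type of a finite CW complex, and combined with simple connectivity this yields finitely generated homotopy groups in every degree at every basepoint.

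For $\cO = E_n$, the space $E_n(k)_{h\Sigma_k}$ is weakly equivalent to the unordered configuration space $C_k(\bfR^n) = F(k, \bfR^n)/\Sigma_k$, which is an open manifold of finite type and therefore has the homotopy type of a finite CW complex. The main obstacle is the remaining hypothesis, that $\Latch_k(E_n)_{h\Sigma_k}$ is weakly equivalent to a finite CW complex. Unpacking the definition, $\Latch_k(E_n)$ is a homotopy colimit indexed by the category of reduced trees with at most $k-1$ incoming edges at each vertex that receive a map from the reduced $k$-corolla $\overline{t}_k$; heuristically this is the space of "decomposable" $k$-ary operations in $E_n$, built by composing operations of arities $< k$. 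I would treat this using Göppl's analysis of the tower \eqref{equ:tower} applied to $E_n$: in \cite{Goppl} the latching objects of the little discs operads are described by hocolimits over finite $\Sigma_k$-categories of trees, with values that are themselves products of lower configuration spaces $F(j, \bfR^n)$ for $j < k$; since each such value has the $\Sigma_k$-equivariant homotopy type of a finite CW complex and the indexing category is finite, $\Latch_k(E_n)_{h\Sigma_k}$ does as well.

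Once these hypotheses are in place, conclusion (i) is part (i) of \cref{thm:truncated-operad-maps}, conclusion (iii) is part (iii), and conclusion (ii) is part (ii), using that the dendroidal space $E_m^T$ is by definition the levelwise $T$-localisation of $E_m$ (see \cref{sec:localisation-dendroidal-spaces}), so $\cP_T = E_m^T$ and the theorem's localisation map is exactly the one in the statement. The hard part, as indicated, will be writing down a clean description of $\Latch_k(E_n)$ sufficient to verify the finite CW type hypothesis; a pedestrian alternative is to filter $(\rDend_{\leq k-1})_{\overline{t}_k/}$ by the number of vertices of the target tree and argue inductively, using that each filtration quotient is built from configuration spaces $F(j, \bfR^n)_{h\Sigma_j}$ with $j < k$, each of which is of finite CW type.
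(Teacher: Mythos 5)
Your outline is the right one and matches the paper's strategy: invoke \cref{thm:truncated-operad-maps} with $\cO = E_n$, $\cP = E_m$, verify the hypotheses, and read off the three conclusions, using that $E_m^T$ is the levelwise $T$-localisation for part (ii). Verifying $1$-reducedness, the dendroidal Segal condition, and the conditions on $E_m$ for $m\ge 3$ (simple connectivity of $F_k(\bfR^m)$, hence nilpotence, plus finite generation of homotopy groups) is fine as you describe it.

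The gap, which you correctly flag as the hard part, is in your treatment of $\Latch_k(E_n)_{h\Sigma_k}$. Your first pass cites Göppl as describing latching objects as hocolims over a finite $\Sigma_k$-category of products of lower configuration spaces and then concludes finite CW type, and your fallback proposes filtering $(\rDend_{\leq k-1})_{\overline{t}_k/}$ by number of vertices and arguing inductively. Neither version is complete as stated. The issue is freeness: you need the $\Sigma_k$-action on $\Latch_k(E_n)$ (or on the hocolim you build) to be free, since otherwise the homotopy orbits will pick up copies of $B\Sigma_j$ for stabiliser subgroups $\Sigma_j$ and will not be finite CW. Your filtration argument would also have to track this carefully through the layers, and would have to justify that the filtration quotients are what you say they are. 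The paper short-circuits all of this with a single geometric fact from Göppl (also Weiss): the latching map $\Latch_k(E_n) \to E_n(k)$ is $\Sigma_k$-equivariantly weakly equivalent to the boundary inclusion $\partial\mathrm{FM}_n(k) \subset \mathrm{FM}_n(k)$ of the Fulton--MacPherson compactification of $F_k(\bfR^n)$. Since $\mathrm{FM}_n(k)$ is a compact manifold with corners on which $\Sigma_k$ acts freely, both $\mathrm{FM}_n(k)/\Sigma_k$ and $\partial\mathrm{FM}_n(k)/\Sigma_k$ are compact manifolds with corners, hence finite CW complexes, which gives the finite CW type for $E_n(k)_{h\Sigma_k}$ and $\Latch_k(E_n)_{h\Sigma_k}$ simultaneously; and compactness of $\mathrm{FM}_n(k)$ also gives degreewise finite generation of $\oH_*(E_n(k))$. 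This is the ingredient your proposal is missing. Without it, you have identified the right obstacle but not surmounted it.
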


The case of \cref{thm:truncated-ed} that will be relevant to the proof of \cref{bigthm:nontrivial} and \cref{bigcor:top-vs-auted} in the next section is $n=m$. For $m-n\ge2$ and $(-)_T$ being rationalisation, this result appears also in Section 10.2 of \cite{FTW} (see Remark 10.9 and Proposition 10.10 loc.\.cit.).

\begin{thm}\label{thm:haut-uncountable-or-iso} Fix $n\ge1$ and $m\ge 3$. For all $i\ge1$ and any basepoint in $\Map^h(E_n,E_m)$, at least one of the following statements holds:
\begin{enumerate}
	\item \label{enum:haut-u-o-i-unc} $\pi_{*}(\Map^h(E_n,E_m))$ is uncountable in degrees $i-1$ or $i$,
	\item \label{enum:haut-u-o-i-iso} the canonical map $\pi_{i}(\Map^h(E_n,E_m))_\bfQ \to \pi_{i}(\Map^h(E_n^\bfQ,E^\bfQ_m))$
	is an isomorphism.
\end{enumerate}
\end{thm}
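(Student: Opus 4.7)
The plan is to deduce Theorem~\ref{thm:haut-uncountable-or-iso} directly from the general dichotomy of Theorem~\ref{thm:uncountability-for-general-operads}, applied with $\cO = E_n$ and $\cP = E_m$ regarded as dendroidal Segal spaces via the dendroidal nerve. By Proposition~\ref{prop:comparison-of-models} the mapping spaces $\Map^h(E_n,E_m)$ of operads and of the associated dendroidal Segal spaces agree up to weak equivalence (and similarly after rationalisation, using that rationalisation of dendroidal Segal spaces is levelwise), so the conclusion of Theorem~\ref{thm:uncountability-for-general-operads} translates directly into the conclusion of the theorem at hand.

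The first task is to verify the hypotheses on the target $\cP = E_m$. The unital little $m$-discs operad satisfies $E_m(0)\simeq E_m(1)\simeq \ast$, so $E_m$ is $1$-reduced. The spaces $E_m(k)$ are homotopy equivalent to the ordered configuration spaces $F_k(\bfR^m)$, and for $m\ge3$ these are simply connected, hence nilpotent. Iterating the Fadell--Neuwirth fibrations $F_k(\bfR^m)\to F_{k-1}(\bfR^m)$ with fibre $\bigvee^{k-1} S^{m-1}$ exhibits $F_k(\bfR^m)$ as the total space of a tower of fibrations with finite CW base and fibre, so $F_k(\bfR^m)$ has the homotopy type of a finite CW complex. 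Serre's theorem then implies that its homotopy groups are finitely generated.

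Next I would verify the two hypotheses on the source $\cO = E_n$. The $1$-reducedness and finite CW-type of $E_n(k)\simeq F_k(\bfR^n)$ follow from the same Fadell--Neuwirth argument as above, valid for all $n\ge1$. The remaining condition is that $\Latch_k(E_n)_{h\Sigma_k}$ has the homotopy type of a finite CW complex. For this I would invoke the analysis of the dendroidal latching object of the little discs operads carried out by Göppl \cite{Goppl}: $E_n(k)$ admits a $\Sigma_k$-CW structure with finitely many cells on which $\Sigma_k$ acts freely (since $\Sigma_k$ acts freely on ordered configurations), and $\Latch_k(E_n)$ is realised as a $\Sigma_k$-subcomplex of $E_n(k)$ corresponding to the ``decomposable'' operations; the colimit description $\Latch_k(E_n)=\hocolim_{(\overline{t}_k\to t)}E_n(t)$ is indexed by a finite category, and each $E_n(t)$ is a product of finite CW complexes. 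Hence $\Latch_k(E_n)_{h\Sigma_k}\simeq \Latch_k(E_n)/\Sigma_k$ is a finite CW complex.

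With all hypotheses of Theorem~\ref{thm:uncountability-for-general-operads} verified, the dichotomy in Theorem~\ref{thm:haut-uncountable-or-iso} follows immediately. The principal obstacle is purely bookkeeping: one must pin down the $\Sigma_k$-equivariant finite CW structure on $E_n(k)$ together with a matching such structure on $\Latch_k(E_n)$ as a sub $\Sigma_k$-complex, which is the technical core of Göppl's analysis of the little discs tower and can simply be cited rather than reproved here.
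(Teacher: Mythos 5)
Your proposal follows the same route as the paper: apply Theorem~\ref{thm:uncountability-for-general-operads} with $\cO=E_n$, $\cP=E_m$, and translate through Proposition~\ref{prop:comparison-of-models} and Lemma~\ref{lem:loc-dendr-map}. The only divergence is in how you check the finiteness hypotheses. The paper identifies the latching map $\Latch_k(E_n)\to E_n(\overline t_k)$, up to $\Sigma_k$-equivariant equivalence, with the boundary inclusion $\partial\,\mathrm{FM}_n(k)\subset\mathrm{FM}_n(k)$ of the Fulton--MacPherson compactification (citing G\"oppl, and Sinha for the manifold-with-corners structure); since $\mathrm{FM}_n(k)$ is a compact manifold with corners carrying a free $\Sigma_k$-action, this yields in one stroke that $E_n(k)$, $E_n(k)_{h\Sigma_k}$, and $\Latch_k(E_n)_{h\Sigma_k}$ are finite CW and that $E_m(k)$ has finitely generated homology. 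Your Fadell--Neuwirth argument is shakier on the specific point of finite CW type: a fibration with finite-CW base and fibre need not have finite-CW total space (Wall's finiteness obstruction); for $m\ge 3$ simple-connectivity saves you, but for general $n\ge1$ (which you also need, on the $\cO$ side) one has to supply another argument, and the compactification is the clean way. Likewise, your ``finite hocolim of finite CW complexes is finite CW'' step for $\Latch_k(E_n)$ needs the undercategory $(\rDend_{\le k-1})_{\overline t_k/}$ to be a finite poset and some care with the bar model of the homotopy colimit; again the $\partial\,\mathrm{FM}_n(k)$ identification sidesteps this. None of this is a conceptual gap---you correctly reduce to the general theorem and cite the right sources---but the finiteness verification should be replaced by the Fulton--MacPherson argument.
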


\begin{proof}[Proof of Theorems~\ref{thm:truncated-ed} and \ref{thm:haut-uncountable-or-iso}]
This follows from Theorems~\ref{thm:truncated-operad-maps} and ~\ref{thm:uncountability-for-general-operads} once we checked the hypothesis. The space of $k$-ary operations $E_n(k)$ is weakly equivalent to the space of ordered configurations $F_k(\bfR^n)$, so $E_n$ is $1$-reduced for all $n\ge1$. Moreover, by transversality $E_n(k)$ is $1$-connected (so in particular nilpotent) as long as $n\ge3$, so its homotopy groups are finitely generated if its homology groups are. We are thus left to show that $E_n(k)_{h\Sigma_k}$ and $\Latch_k(E_n)_{h\Sigma_k}$ have the weak homotopy type of finite CW complexes for all $n\ge1$ and that $E_n(k)$ has degreewise finitely generated homology groups for $n\ge3$. By \cite[Examples 1.1.6, 2.1.13]{Goppl} (see also \cite[Proposition 3.4.6]{WeissDalian}), the map $\Latch_n(E_n)\ra E_n(k)$ agrees up to weak equivalence of $\Sigma_k$-spaces with the boundary inclusion $\partial \mathrm{FM}_n(k)\subset \mathrm{FM}_n(k)$ of the Fulton--MacPherson compactification of $F_k(\bfR^n)$. This is a compact manifold with corners and free $\Sigma_k$-action \cite{Sinha}, so we conclude (i) that $(E_n(k))_{h\Sigma_k}\simeq FM_n/\Sigma_k$ and $(\Latch_k(E_n))_{h\Sigma_k}\simeq \partial FM_n/\Sigma_k$ have the weak homotopy type of smooth compact manifolds with corners so are weakly equivalent to finite CW complexes, and (ii) that $E_n(k)\simeq \mathrm{FM}_n(k)$ has degreewise finitely generated homology.
\end{proof}

\section{\cref{bigthm:nontrivial}: nontriviality}\label{sec:nontrivial}
In this section we prove results on the homotopy groups of the homotopy fibre $\Aut^h(E_d)/\TOP(d)$ of the map $\BTOP(d)\ra\BAut^h(E_d)$ mentioned as \eqref{equ:topd-to-ed-intro} in the introduction (and explained below), and deduce results on the homotopy groups of $S^{\DiscInf}_\partial(D^d)$; \cref{bigthm:nontrivial} and \cref{bigcor:top-vs-auted} will follow as special cases. As explained in the outline in \cref{sec:intr-nontriviality}, the main ingredient besides \cref{thm:haut-uncountable-or-iso} is work of Boavida de Brito--Weiss' \cite{BdBWConf} and work of Fresse--Turchin--Willwacher \cite{FTW}. We also make use of results of Krannich, Kupers, Randal-Williams, and Watanabe \cite{KrRW,K-RWdiscs,WatanabeII}, though this can be avoided in most cases (see \cref{rem:classical-proof}).

\subsection{A theorem of Boavida de Brito--Weiss}\label{sec:conf-cats} 
We first extract the relevant parts of \cite{BdBWConf}. By Theorem 1.2 loc.cit., the space $\Map^h(E_d,E_d) = \smash{\Map^h_{\cat{Opd}}}(E_d,E_d)$ of derived operad maps is equivalent to a mapping space between certain $\infty$-categories of configurations spaces associated to $\bfR^d$. These configuration categories only depend on the underlying \emph{topological} manifold, so this in particular shows that the standard action of $\oO(d)$ on $E_d$ factors through an action of $\TOP(d)$ and thus gives a map
\begin{equation}\label{equ:top-to-auted-config}
	\BTOP(d)\lra \BAut^h(E_d).
\end{equation} 
We will explain below how a reformulation of further results of Boavida de Brito--Weiss relates the homotopy fibre $\Aut^h(E_d)/\TOP(d)$ of this map to the $\DiscInf$-structure space $S_\partial^{\DiscInf}(D^d)$ of a disc. To state the precise result, we denote by 
\[
	\Omega^{d+1}_{\oO(d)} \big({\Aut^h(E_d)}/{\TOP(d)}\big)\subseteq \Omega^{d+1}\big({\Aut^h(E_d)}/{\TOP(d)}\big)
\] 
the collection of those path components that are sent to classes in the image of the map $\pi_{d+1}(\BO(d))\ra \pi_{d+1}(\BTOP(d))$ under the map
\[
	\pi_0(\Omega^{d+1}({\Aut^h(E_d)}/{\TOP(d)}))=\pi_{d+1}({\Aut^h(E_d)}/{\TOP(d)})\lra\pi_{d+1}(\BTOP(d)).
\]

\begin{thm}[Boavida de Brito--Weiss]\label{thm:sdisc-auted-topd}
For $d \neq 4$ there exists a $0$-coconnected map of the form 
\[
	\Omega^{d+1}_{\oO(d)} ({\Aut^h(E_d)}/{\TOP(d)})\lra S^{\DiscInf}_\partial(D^d).
\]
\end{thm}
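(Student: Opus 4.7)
The plan is to deduce the claimed map from two classical identifications applied to the defining fibre sequence of $S^{\DiscInf}_\partial(D^d)$: Morlet's disjunction theorem for $\Diff_\partial(D^d)$ on one side, and an $\OO(d)$-equivariant enhancement of the main theorem of Boavida de Brito--Weiss \cite{BdBWConf} on the other. First I would fix the basepoint $[D^d] \in \pi_0\,S^{\DiscInf}_\partial(D^d)$ and use \eqref{equ:disjoint-union-description-sdisc} to identify
\[
S^{\DiscInf}_\partial(D^d)_{[D^d]} \simeq \Big(\Aut_{\Mod(d)_{E_{S^{d-1}\times I}}}(E_{D^d})/\Diff_\partial(D^d)\Big)_{\id},
\]
reducing the problem to displaying numerator and denominator as compatible $(d+1)$-fold loop spaces.

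For the denominator, Morlet's theorem (valid for $d \neq 4$) combined with smoothing theory yields an equivalence of group-like $A_\infty$-spaces $\Diff_\partial(D^d) \simeq \Omega_0^{d+1}(\TOP(d)/\OO(d))$. For the numerator, \cref{thm:emb-calc} together with \cref{prop:comparison-to-pedromichael} identifies $\Aut_{\Mod(d)_{E_{S^{d-1}\times I}}}(E_{D^d})$, on the components of interest, with the monoid of invertible derived maps $E_d \to E_d$, i.e.\,$\Aut^h(E_d)$. An $\OO(d)$-equivariant refinement of the configuration-category argument of \cite{BdBWConf}, using the standard framing of $D^d$ to produce the $\OO(d)$-action, would then upgrade this to
\[
\Aut_{\Mod(d)_{E_{S^{d-1}\times I}}}(E_{D^d}) \simeq \Omega_0^{d+1}(\Aut^h(E_d)/\OO(d)),
\]
in a way that intertwines the map induced by the functor $E$ with the $(d+1)$-fold loop of the natural map $\BTOP(d)/\OO(d) \to \BAut^h(E_d)/\OO(d)$.

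Taking the quotient then yields an equivalence $S^{\DiscInf}_\partial(D^d)_{[D^d]} \simeq \Omega_0^{d+1}(\Aut^h(E_d)/\TOP(d))$, and the same analysis applied to other classes $[W] \in \pi_0\,S^{\DiscInf}_\partial(D^d)$ realises each such component as the image of some component of $\Omega^{d+1}(\Aut^h(E_d)/\TOP(d))$, with the image constraint on $\pi_{d+1}(\Aut^h(E_d)/\TOP(d)) \to \pi_{d+1}(\BTOP(d))$ landing in $\im(\pi_{d+1}(\BO(d)) \to \pi_{d+1}(\BTOP(d)))$ being exactly the content of the subscript $\oO(d)$. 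The resulting map is an equivalence onto its image, hence $0$-coconnected.

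The hard part will be the $\OO(d)$-equivariant refinement above. While the non-equivariant version is essentially the content of \cite{BdBWConf} and is captured in \cref{prop:comparison-to-pedromichael}, providing a full $\OO(d)$-equivariant enhancement --- compatibly with the standard $\OO(d)$-actions on $D^d$ and on $E_d$ --- requires a careful revisiting of their configuration-category framework and is the technical heart of the statement, which is why the theorem is attributed to Boavida de Brito--Weiss.
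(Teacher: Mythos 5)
Your proposed route is close in spirit to the paper's argument but differs in how the pieces are assembled, and it slightly misidentifies where the genuine technical input from \cite{BdBWConf} sits. The paper does not separately deloop $\Diff_\partial(D^d)$ and $\Aut_{\Mod(d)_{E_{\partial D^d \times I}}}(E_{D^d})$ and then form a quotient; instead it exhibits a single cube of spaces whose back face is the smoothing-theory (Morlet) cartesian square and whose front face is the configuration-category cartesian square of \cite[Theorems 1.2, 1.4]{BdBWConf}, with the factorisation of the bottom map through topological microbundle maps (their Theorem 1.6) connecting the two. After using the contractibility of $\Homeo_\partial(D^d)$ (Alexander trick) and of the configuration-category mapping space $\oC(D^d,D^d)$, this cube collapses to a map of horizontal fibre sequences whose common middle term is $\Omega^d\oO(d)$, and the identification you are after is obtained by delooping that map of fibre sequences once and taking vertical fibres. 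Your proposal to produce the equivalence $\Aut_{\Mod(d)_{E_{\partial D^d \times I}}}(E_{D^d}) \simeq \Omega^{d+1}_0(\Aut^h(E_d)/\OO(d))$ via an ``$\OO(d)$-equivariant refinement of the configuration-category argument'' names a harder-sounding task than what is actually required: no equivariant enhancement is invoked. What is required, and what you should flag as the crux, is threefold: (i) the BdBW cartesian square identifying $T_\infty\Emb_\partial(D^d,D^d)$ as a homotopy pullback of $\Bun_\partial(TD^d,TD^d) \to \Omega^d\Map^h(E_d,E_d) \leftarrow \oC(D^d,D^d)$; (ii) the factorisation of the lower-left horizontal map through topological microbundle maps $\Bun_\partial(TD^d,TD^d)^{\TOP} \simeq \Omega^d\TOP(d)$; and (iii) the compatibility of the $A_\infty$-structure on $T_\infty\Emb_\partial(D^d,D^d)$ induced by composition in our model with the one in BdBW's model, which is \cref{prop:comparison-to-pedromichael} and is needed to make the diagram a diagram of $A_\infty$-spaces so that it can be delooped.

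There is also a small gap in your account of the group-likeness needed for the delooping: you implicitly deloop $\Aut_{\Mod(d)_{E_{\partial D^d \times I}}}(E_{D^d})$, but that this $A_\infty$-monoid is group-like is not a priori obvious. In the paper this drops out of the fibre-sequence diagram, since all the other entries are visibly group-like; in your formulation you should either identify it directly as a homotopy fibre of a map of loop spaces, or invoke the same diagram chase. With those two corrections --- replacing ``$\OO(d)$-equivariant refinement'' by the $\TOP$-factorisation and $A_\infty$-compatibility, and justifying group-likeness --- your route is essentially equivalent to the paper's and arrives at the same $0$-coconnected comparison map.
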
 

Recall that being $0$-coconnected amounts to being an ``inclusion of path components'', meaning a map that induces an injection on $\pi_0(-)$ and an isomorphism on $\pi_i(-)$ for $i\ge1$.  After taking loop spaces, \cref{thm:sdisc-auted-topd} can also be deduced from work of Ducoulombier--Turchin \cite[(13)]{DucoulombierTurchin}.

\begin{rem}\label{rem:different-delooping-proof} After this work was finalised, a different proof of \cref{thm:sdisc-auted-topd} was given in \cite[Section 5.9.4]{KrannichKupersOperadic}. This proof is independent of \cite{BdBWConf} and \cite{DucoulombierTurchin} and shows the slightly stronger statement $\Omega^{d+1} ({\Aut^h(E_d)}/{\TOP(d)})\simeq S^{\DiscInf}_\partial(D^d)$.
\end{rem}

\begin{proof}[Proof of \cref{thm:sdisc-auted-topd}]This can be deduced from \cite{BdBWConf} as follows: combining their Theorems 1.2 and 1.4 with their Section 6 (see also Equation (1.3)), there is contractible space $\oC(D^d,D^d)$ (a certain mapping space of configuration categories) which fits into a homotopy cartesian square
\[\begin{tikzcd} T_\infty \Emb_\partial(D^d,D^d) \rar \dar & \oC(D^d,D^d) \dar \\[-2pt]
	\Bun_\partial(TD^d,TD^d) \rar & \Omega^d \Map^h(E_d,E_d) \end{tikzcd} \]
where $\Bun_\partial(TD^d,TD^d)$ is the space of vector bundle maps of $TD^d$ that are fixed on the boundary and $T_\infty \Emb_\partial(D^d,D^d)$ is the embedding calculus approximation to $\Emb_\partial(D^d,D^d)$. The bottom horizontal map admits a factorisation 
\begin{equation}
	\label{equ:factorisation-looped}\Bun_\partial(TD^d,TD^d)\ra \Bun_\partial(TD^d,TD^d)^{\TOP}\ra \Omega^d \Map^h(E_d,E_d)
\end{equation} 
through the space of topological microbundle maps (compare the proof of Theorem 1.6 loc.cit.). Under the equivalences $\Bun_\partial(TD^d,TD^d)\simeq \Omega^d\OO(d)$ and $\Bun_\partial(TD^d,TD^d)^{\TOP}\simeq \Omega^d\TOP(d)$, this agrees with the $d$-fold looping of the composition $\oO(d)\ra\TOP(d)\ra\Aut^h(E_d)$. 

Now the factorisation \eqref{equ:factorisation-looped} allows us to form the commutative diagram
\[\begin{tikzcd}[row sep=0.3cm, column sep=0.3cm] 
	\Emb_\partial(D^d,D^d) \arrow{dd}\arrow{rr} \arrow{rd} &[-15pt] &[-15pt] \Emb_\partial(D^d,D^d)^{\TOP} \arrow{dd} \arrow{rd} &[-15pt] \\[-5pt]
	& T_\infty \Emb_\partial(D^d,D^d) \arrow[crossing over]{rr} & & \oC(D^d,D^d) \arrow{dd} \\[-5pt]
	\Bun_\partial(TD^d,TD^d) \arrow{rr} \arrow[equal]{rd} & & \Bun_\partial(TD^d,TD^d)^{\TOP}\arrow{rd} & \\[-5pt]
	& \Bun_\partial(TD^d,TD^d) \arrow[from=uu,crossing over] \arrow{rr} & & \Omega^d \Map^h(E_d,E_d)
\end{tikzcd}\]
whose front and back face are homotopy cartesian; the former by what was said above and the latter by smoothing theory (see \cite[Essay V]{KirbySiebenmann}; this uses $d\neq 4$). Note that $\Emb_\partial(D^d,D^d) = \Diff_\partial(D^d)$ and $\Emb_\partial(D^d,D^d)^\TOP = \Homeo_\partial(D^d)$. From the constructions in \cite{BdBWConf}, one sees that this diagram is in fact a diagram of $A_\infty$-spaces if one uses the $A_\infty$-structure on $T_\infty \Emb_\partial(D^d,D^d)$ by composition induced by the model for embedding calculus with boundary condition from \cite[p.~379]{BdBWSheaf} which agrees with the $A_\infty$-structure provided by our model as a result of \cref{prop:comparison-to-pedromichael} (see the discussion at the beginning of \cref{sec:comparison-to-pedromichael}). Using contractibility of $\oC(D^d,D^d)$ and of $\Homeo_\partial(D^d)$ (using the Alexander trick) and \cref{thm:emb-calc}, the diagram becomes a map of homotopy fibre sequences
\[\begin{tikzcd}
	\Diff_\partial(D^d) \dar{E}\rar&\Omega^d\oO(d)\rar\arrow[d,equal]&\Omega^d\TOP(d)\dar\\[-2pt]
	\Map_{\Mod(d)_{E_{\partial D^d\times I}}}(E_{D^d},E_{D^d}) \rar&\Omega^d\oO(d)\rar&\Omega^d\Aut^h(E_d).
\end{tikzcd}\]
of $A_\infty$-spaces. Here we used the abbreviation from \eqref{equ:abbreviate-right-modules}. Aside from the bottom left fibre, all spaces in this diagram are visibly group-like, so this fibre is as well, i.e.
\[\Map_{\Mod(d)_{E_{\partial D^d\times I}}}(E_{D^d},E_{D^d})=\Aut_{\Mod(d)_{E_{\partial D^d\times I}}}(E_{D^d}),\]
using the notation from \cref{sec:disc-structure-spaces}. We may thus deloop the diagram once (after restricting the components of the rightmost spaces to those in the image of the maps from $\Omega^d\oO(d)$) and take vertical homotopy fibres to get
\[
	\Omega^{d+1}_{\oO(d)}\Aut(E_d)/\TOP(d) \simeq \Aut_{\Mod(d)_{E_{\partial D^d\times I}}}(E_{D^d})/\Diff_\partial(D^d).
\]
The right-hand space is a collection of components of $S^{\DiscInf}_\partial(D^d)$ by \eqref{equ:disjoint-union-description-sdisc}, so the claim follows.
\end{proof}

\subsection{Some results of Fresse--Turchin--Willwacher}Next, we recall part of work of Fresse--Turchin--Willwacher \cite{FTW}, who gave a complete description of the rational homotopy groups of $\Map^h(E_n,\smash{E_{m}^\bfQ})$ in terms of certain \emph{graph complexes}. We collect the parts of their results that are relevant to us below, after explaining why they are applicable in our setting.

\subsubsection{A comparison}The derived mapping spaces $\Map^h(E_n,E^\bfQ_{m})$ considered in \cite{FTW} differ a priori from those we considered in \cref{sec:maps-between-En-operads} in two ways:

%\subsubsection*{$\mathrm{(i)}$}
Firstly, the derived mapping spaces between operads considered in \cite{FTW} are formed not in the usual category $\mathrm{s}\mathrm{Op}$ of simplicial operads as we did in \cref{section:operads}, but instead in a certain category $\mathrm{s}\Lambda\mathrm{Op}_{\varnothing*}$ of connected simplicial $\Lambda$-operads, equipped with levelwise weak equivalences. This category is isomorphic to the full subcategory $\mathrm{s}\mathrm{Op}_{*1}\subset \mathrm{s}\mathrm{Op}$ of the category of simplicial operads such that $\cat{P}(0)$ and $\cat{P}(1)$ are singletons (see the discussion following Proposition 4.4. loc.cit.). The inclusion functor $\mathrm{s}\Lambda\mathrm{Op}_{\varnothing*}\cong \mathrm{s}\mathrm{Op}_{*1}\to \mathrm{s}\mathrm{Op}$ induces weak equivalences on derived mapping spaces: by \cite[Theorem 1]{FTWSub} the inclusion $\mathrm{s}\mathrm{Op}_{*} \to \mathrm{s}\mathrm{Op}$ induces weak equivalences on derived mapping spaces, and the same holds for $\mathrm{s}\mathrm{Op}_{*1} \to \mathrm{s}\mathrm{Op}_*$ since this full subcategory inclusion preserves fibrant and cofibrant objects in suitable model categories on these categories with levelwise weak equivalences (see \cite[p.~369]{Fresse2} where this is explained in terms of the isomorphic categories $\mathrm{s}\Lambda\mathrm{Op}_{\varnothing*}\subset \mathrm{s}\Lambda\mathrm{Op}_{\varnothing}$). Hence the mapping spaces $\Map^h(E_n,E_m)$ considered in \cite{FTW} agree with any of the variants of mapping space we discussed in \eqref{equ:different-models-mapping-spaces} as a result of \cref{prop:comparison-of-models}, using that the space of $0$- and $1$-operations of $E_n$ are weakly contractible.

%\subsubsection*{$\mathrm{(ii)}$}
Secondly, the authors in \cite{FTW} rationalise operads differently than we do, namely via a rationalisation functor of Fresse \cite[Section 12.2]{Fresse2} (therein denoted $\smash{LG_\bullet\Omega_\sharp^*}(-)$ and phrased in terms of the isomorphism $\mathrm{s}\Lambda\mathrm{Op}_{\varnothing*}\cong\mathrm{s}\mathrm{Op}_{*1}$ mentioned above) that we denote
\[
	(-)_{\mathrm{F}\bfQ}\colon \mathrm{s}\mathrm{Op}_{*1}\lra \mathrm{s}\mathrm{Op}_{*1}.
\]
This functor comes with a natural transformation $r_{\mathrm{F}\bfQ}\colon \id\ra (-)_{\mathrm{F}\bfQ}$ and has the property that any operad $\cat{P}\in \mathrm{s}\mathrm{Op}_{*1}$, the induced maps $r_{\mathrm{F}\bfQ}\colon \cat{P}(k)\ra \cat{P}_{\mathrm{F}\bfQ}(k)$ agree up to weak equivalence with the Sullivan rationalisation as long as $\oH^*(\cat{P}(k);\bfQ)$ is degreewise finite dimensional for all $k\ge1$ (see Theorem 2.2.1 loc.cit.). We can compare this to the rationalisation $(-)_\bfQ$ we use (that is, levelwise $T$-localisation for $T$ the set of all primes) as follows:

\begin{lem}If $\cat{P}\in \mathrm{s}\mathrm{Op}_{*1}$ is levelwise connected nilpotent such that $\oH^*(\cat{P}(k);\bfQ)$ is degreewise finite dimensional for all $k\ge1$, then there exists a natural zig-zag of weak equivalences 
\[
	N_d(\cat{P}_{\mathrm{F}\bfQ})\simeq (N_d(\cat{P}))_{\bfQ}
\] 
between the dendroidal nerve of Fresse's rationalisation and the rationalisation of the dendroidal nerve in the sense of \cref{section:localisation-dendroidal-spaces}.
\end{lem}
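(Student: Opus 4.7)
My plan is to reduce the statement, which concerns a comparison of functors, to the fact that Fresse's rationalization and Bousfield rationalization coincide up to weak equivalence on each component of the dendroidal nerve, and then to promote this levelwise comparison to a comparison of dendroidal spaces using the universal property of $\bfQ$-localization.

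First I would unpack the dendroidal nerve: for any simplicial operad $\cat{Q} \in \mathrm{s}\mathrm{Op}_{*1}$ and any tree $t \in \Dend$ one has $N_d(\cat{Q})(t) = \prod_{v \in \nu(t)} \cat{Q}(|\mathrm{in}(v)|)$, a \emph{finite} product. For $\cat{Q} = \cat{P}$, each factor $\cat{P}(|\mathrm{in}(v)|)$ is nilpotent with degreewise finite-dimensional rational cohomology by hypothesis, hence so is the finite product $N_d(\cat{P})(t)$. By the cited theorem of Fresse, $r_{F\bfQ}\colon \cat{P}(k) \to \cat{P}_{F\bfQ}(k)$ is weakly equivalent to a Sullivan rationalization for each $k \geq 1$; since Sullivan and Bousfield rationalization coincide on nilpotent spaces with degreewise finite-dimensional rational cohomology, $\cat{P}_{F\bfQ}(k)$ is also weakly equivalent, compatibly under $r_{F\bfQ}$ and $r_\bfQ$, to the levelwise Bousfield localization $\cat{P}(k)_\bfQ$.

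Next I would apply $N_d$ to the natural morphism of operads $r_{F\bfQ}\colon \cat{P} \to \cat{P}_{F\bfQ}$ to obtain a natural morphism of dendroidal spaces $N_d(r_{F\bfQ})\colon N_d(\cat{P}) \to N_d(\cat{P}_{F\bfQ})$. At each tree $t$ this is a finite product of the maps $r_{F\bfQ}$ on individual factors, hence by the previous paragraph it is levelwise weakly equivalent to a finite product of Bousfield rationalizations. By property \ref{enum:product-T-localisation} of $T$-localization (applied iteratively to finite products, together with property \ref{enum:weak-equ-T-localisation} of preserving weak equivalences), the finite product of rationalizations is itself a rationalization. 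Consequently $N_d(\cat{P}_{F\bfQ})$ is levelwise $\bfQ$-local and $N_d(r_{F\bfQ})$ is a levelwise $\bfQ$-equivalence.

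Finally, to compare with $(N_d(\cat{P}))_\bfQ$, I would invoke \cref{lem:loc-dendr-map} in the form that since $N_d(\cat{P}_{F\bfQ})$ is levelwise $\bfQ$-local, precomposition with $r_\bfQ\colon N_d(\cat{P}) \to (N_d(\cat{P}))_\bfQ$ induces a weak equivalence on derived mapping spaces of dendroidal spaces into $N_d(\cat{P}_{F\bfQ})$. Thus the morphism $N_d(r_{F\bfQ})$ factors through $r_\bfQ$ up to homotopy, giving a natural map $(N_d(\cat{P}))_\bfQ \to N_d(\cat{P}_{F\bfQ})$ under $N_d(\cat{P})$ which, being a levelwise morphism of $\bfQ$-local spaces that is a $\bfQ$-equivalence, is a levelwise weak equivalence, hence a weak equivalence of dendroidal spaces. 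Naturality in $\cat{P}$ follows because both constructions ($(-)_{F\bfQ}$ post-composed with $N_d$, and $N_d$ post-composed with $(-)_\bfQ$) are functorial, and the comparison map is determined up to contractible choice by its defining universal property.

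The main obstacle I anticipate is the bookkeeping in the last step: the map $(N_d(\cat{P}))_\bfQ \to N_d(\cat{P}_{F\bfQ})$ is only determined up to homotopy by the universal property, so strict naturality in $\cat{P}$ requires passing to a zig-zag through an intermediate cofibrant-fibrant replacement (or equivalently, working in the homotopy category of dendroidal spaces where the zig-zag becomes a single natural isomorphism). The existence of a natural zig-zag, as opposed to a natural transformation in the model-category sense, is exactly what the lemma claims, so this is acceptable; care is needed to check that Fresse's construction and the Bousfield localization functor $(-)_\bfQ$ admit a natural zig-zag at the level of individual spaces, but this is standard once nilpotence and finite type are in hand.
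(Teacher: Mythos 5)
Your argument is correct in substance and rests on the same key fact as the paper's: under the nilpotence and finite-type hypotheses, Fresse's rationalisation $r_{F\bfQ}\colon \cat{P}(k)\to \cat{P}_{F\bfQ}(k)$ agrees up to weak equivalence with the rationalisation $(-)_\bfQ$, and the levelwise comparison reduces to corollas because $N_d$ takes values in dendroidal Segal spaces and $(-)_\bfQ$ commutes with finite products. Where you diverge is in producing the zig-zag: you try to build a \emph{direct} map $(N_d(\cat{P}))_\bfQ \to N_d(\cat{P}_{F\bfQ})$ by the universal property (via \cref{lem:loc-dendr-map}), which is what forces the naturality bookkeeping you flag as the main obstacle. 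The paper avoids this entirely by simply writing down the explicit natural zig-zag
\[
	N_d(\cat{P}_{\mathrm{F}\bfQ}) \xrightarrow{\;r_\bfQ\;} \bigl(N_d(\cat{P}_{\mathrm{F}\bfQ})\bigr)_\bfQ \xleftarrow{\;N_d(r_{\mathrm{F}\bfQ})_\bfQ\;} \bigl(N_d(\cat{P})\bigr)_\bfQ,
\]
whose two arrows are literal natural transformations in $\cat{P}$ (post-composing with the natural maps $r_\bfQ$ and $N_d(r_{F\bfQ})$), and then checks levelwise on corollas that both are weak equivalences between $\bfQ$-local spaces using exactly your comparison of the two rationalisations. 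This buys strict naturality for free and dissolves the caveat in your last paragraph; substituting the explicit zig-zag for the universal-property step would make your proof match the paper's and close cleanly.
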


\begin{proof}Consider the zigzag $N_d(\cat{P}_{\mathrm{F}\bfQ})\xlra{r_\bfQ} (N_d (\cat{P}_{\mathrm{F}\bfQ}))_\bfQ \xleftarrow{N_d(r_{\mathrm{F}\bfQ})_\bfQ} (N_d (\cat{P}))_\bfQ$. To check both these maps are weak equivalences, it suffice to do so levelwise. Using the dendroidal Segal condition and the fact that rationalisation commutes with products of connected nilpotent spaces by \cref{lem:lem-hopb} \ref{enum:loc-hopb-loc}, we may verify this on corollas. For those, the zig-zag becomes
\[
	\cat{P}_{\mathrm{F}\bfQ}(k)\xlra{r_\bfQ} \cat{P}_{\mathrm{F}\bfQ}(k)_\bfQ \xlla{r_{\mathrm{F}\bfQ}}\cat{P}(k)_\bfQ.
\]
Under the assumption on $\cat{P}(k)$, Sullivan rationalisation agrees with the rationalisation in \cref{sec:localisation}, so all three spaces in the zig-zag are $\bfQ$-local and the two maps are weak equivalences.
\end{proof}

\subsubsection{Homotopy groups of spaces of maps between rationalised $E_n$-operads}\label{sec:FTW}
The ingredient from \cite{FTW} required for the proofs of \cref{bigthm:nontrivial} and \cref{bigcor:top-vs-auted} is a computation of the homotopy groups of the derived mapping space $\smash{\Map^h(E_d,E^\bfQ_d)}$ based at the rationalisation map $\smash{r_\bfQ\colon E_d\ra E_d^\bfQ}$ in a range of degrees, which we summarise as the first two items in the following theorem. In its statement, we write $\bfQ[k]$ for the $\bfZ$-graded $1$-dimensional vector space concentrated in degree $k$ and we write $\iota\colon E_d\ra E_{d+k}$ for the standard inclusion.

\begin{thm}[Fresse--Turchin--Willwacher]\label{thm:homotopy-Aut-En}\
	\begin{enumerate}
		\item \label{enum:fwt-list-even} For $2n\ge4$, we have an inclusion of graded rational vector spaces
		\begin{align*}
			\pi_{*>0}(\Map^h(E_{2n},E^\bfQ_{2n}),r_\bfQ)\supset&\textstyle{\Big(\bigoplus_{i\ge0}\bfQ[2n-4i-1]\Big)}\oplus\\
			&\bfQ[6n-6]\oplus\bfQ[10n-10]\oplus\bfQ[12n-15]\oplus\\
			&\bfQ[14n-14]\oplus \bfQ[16n-16]\oplus \bfQ[16n-19]\oplus  \\
			& \bfQ[18n-18]\oplus\bfQ[18n-21].
		\end{align*}
		 This inclusion is an equality in degrees $*\le20n-28$.
		\item \label{enum:fwt-list-odd} For $2n+1\ge3$, we have an inclusion of graded rational vector spaces
		\begin{align*}
			\pi_{*>0}(\Map^h(E_{2n+1},E^\bfQ_{2n+1}),r_\bfQ)\supset&\textstyle{\Big(\bigoplus_{i\ge0}\bfQ[2n-4i-2]\Big)}\oplus\\
			&\bfQ[4n-1]\oplus\bfQ[6n-3]\oplus\bfQ[8n-5]\oplus\\
			&\bfQ^2[10n-7]\oplus\bfQ^2[12n-9]\oplus \bfQ[12n-6]\oplus \\
			& \bfQ^3[14n-11]\oplus\bfQ[14n-8].
		\end{align*}
		 This inclusion is an equality in degrees $*\le16n-14$.
			\item \label{enum:fwt-cerf-lemma} For $d \ge 2$, $(-)\circ\iota \colon \Map^h(E_d,E^\bfQ_{d})_{r_\bfQ}\ra \Map^h(E_{d-1},E^\bfQ_{d})_{r_\bfQ\circ \iota}$ is a weak equivalence.
	\item \label{enum:homotopy-codim-2} For $d \ge 1$, $\pi_{d+1}(\Map^h(E_d,E^\bfQ_{d+2}),r_\bfQ \circ \iota)$ is an infinite-dimensional $\bfQ$-vector space.
	\end{enumerate}
\end{thm}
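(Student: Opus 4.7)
The plan is to deduce all four items from the work of Fresse--Turchin--Willwacher \cite{FTW}, whose main theorem identifies the positive rational homotopy groups of $\Map^h(E_n,E^\bfQ_m)$ based at a suitable map $\phi$ with the cohomology of a hairy graph complex $\HGC^\phi_{m,n}$. The key input I would use is that in the diagonal case $m=n=d$ with basepoint $r_\bfQ$ one has $\pi_k(\Map^h(E_d,E^\bfQ_d),r_\bfQ)\cong H^{-k}(\HGC_{d,d})$ for $k>0$, with an analogous identification in mixed dimensions.

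For (i) and (ii), I would simply quote the explicit enumeration of low-degree classes in $\HGC_{d,d}$ carried out in \cite{FTW}. The infinite families $\bigoplus_{i\ge 0}\bfQ[2n-4i-1]$ (respectively $\bigoplus_{i\ge 0}\bfQ[2n-4i-2]$) arise from a canonical tower of loop graphs in the graph complex, while the sporadic classes correspond to small trivalent graphs whose cohomological contribution is worked out explicitly in \cite{FTW}. That the given inclusions are equalities in the stated ranges then follows from FTW's vanishing results for $\HGC_{d,d}$ in low total degree.

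For (iii), I would use the cofibre sequence in $\infty$-operads induced by $\iota\colon E_{d-1}\hookrightarrow E_d$: precomposition with $\iota$ fits in a fibre sequence of mapping spaces whose fibre is controlled by a hairy graph complex that, after the change-of-basepoint at $r_\bfQ$, is concentrated in non-positive degrees, yielding the desired equivalence on positive homotopy groups. This is the operadic analogue of a Cerf-type delooping statement, and is in fact how FTW organise their diagonal computations.

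For (iv), one uses the identification $\pi_{d+1}(\Map^h(E_d,E^\bfQ_{d+2}),r_\bfQ\circ\iota)\cong H^{-d-1}(\HGC_{d+2,d})$ and exhibits infinitely many linearly independent classes in the relevant degree. A concrete family is given by ``hedgehog''-type graphs with a single central vertex and an arbitrary (odd) number of univalent hairs, whose nontriviality in cohomology is visible from the explicit differential on $\HGC_{d+2,d}$ in \cite{FTW}. The hardest step in this plan is the low-degree enumeration underlying (i) and (ii): one must exhibit every listed generator, verify its nontriviality, and rule out further classes by a careful case analysis of hairy graphs with bounded loop order and vertex count. This is the technical heart of \cite{FTW}; once it is in hand, parts (iii) and (iv) follow with comparatively little additional work.
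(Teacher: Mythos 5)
Your overall strategy---delegate everything to the graph-complex computations of \cite{FTW}---is exactly what the paper does, and the statement really is a repackaging of their results. But several of the specific mechanisms you sketch do not match what is actually in \cite{FTW}, and two of them would not work as described.

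For parts (i) and (ii) you invoke the hairy graph complex $\HGC_{d,d}$; the paper instead cites \cite[Corollary 5]{FTW}, which identifies $\pi_{*>0}(\Map^h(E_d,E_d^\bfQ),r_\bfQ)$ with the homology of Kontsevich's graph complex $\GC_d^2$ for $d\ge 3$. Both descriptions appear in \cite{FTW} and are related, but the explicit low-degree table the theorem quotes (the split by loop order, the ranges $20n-28$ and $16n-14$) is stated for $\GC_d^2$, specifically from Equation~(4) and Corollary~6 of that paper. If you want to argue via $\HGC$ you would first need to pass through FTW's comparison to $\GC_d^2$ anyway, so the route via $\GC_d^2$ is the more direct one to cite.

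For part (iii), the mechanism you propose---a ``cofibre sequence in $\infty$-operads'' induced by $\iota\colon E_{d-1}\hookrightarrow E_d$ whose fibre is ``concentrated in non-positive degrees''---does not exist in the form you describe. The inclusion $E_{d-1}\to E_d$ is not part of a cofibre sequence of operads that would convert precomposition with $\iota$ into a fibre sequence with a computable fibre of the kind you want. What \cite{FTW} actually prove (their Equation~(12), which is what the paper cites) is that the hairy graph complex controlling $\Map^h(E_{d-1},E_d^\bfQ)_{r_\bfQ\circ\iota}$ agrees with the one controlling $\Map^h(E_d,E_d^\bfQ)_{r_\bfQ}$; this is a statement about the complexes, not about a fibre sequence of spaces or operads. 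As written your argument for (iii) has a gap.

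For part (iv), the family you describe---graphs ``with a single central vertex and an arbitrary (odd) number of univalent hairs''---is a family of trees (zero loops), whereas the classes the paper uses come from the $1$-loop part of $\HGC$: the graphs $H_k$ in \cite[Equation~(2)]{FTW} are circles with $k$ hairs attached, not stars. This matters because it is precisely the $1$-loop family that lands in a single degree ($d$, hence $\pi_{d+1}$) for all $k$, giving infinite-dimensionality there. The $0$-loop (tree) contributions do not do this. You should also double-check the indexing conventions on $\HGC$: the paper cites $H_{k-1}(\HGC_{d,d+2})$, whereas you wrote $H^{-d-1}(\HGC_{d+2,d})$; getting the order of the two indices wrong would send you to a different complex.
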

\begin{proof}By \cite[Corollary 5]{FTW}, there is an isomorphism of graded vector spaces of the form $\pi_{*>0}(\Map^h(E_{d},E^\bfQ_{d}),r_\bfQ)\cong H_{*>0}(\GC_d^2)$ for $d\ge3$ where $\GC_d^2$ is a certain graph complex introduced by Kontsevich (see loc.cit.\,for details). This complex splits into subcomplexes according to the number of loops of the graphs. The subspaces in \ref{enum:fwt-list-even} and \ref{enum:fwt-list-odd} are the homologies of the subcomplexes of loop order $\le 9$ and $\le 7$ depending on the parity of $d$ (see Equation (4) loc.cit.). The fact that this subspace spans the full homology in the claimed ranges appears as Corollary 6 loc.cit., which proves \ref{enum:fwt-list-even} and \ref{enum:fwt-list-odd} (see also \cite{BrunWillwacher} for more computations in this direction). Part \ref{enum:fwt-cerf-lemma} is \cite[Equation (12)]{FTW}. Part \ref{enum:homotopy-codim-2} follows from the isomorphism $\smash{\pi_k(\Map^h(E_d,E^\bfQ_{d+2}),r_\bfQ \circ \inc)\cong H_{k-1}(\HCG_{d,d+2})}$ of Corollary 3 loc.cit.\,by considering the $1$-loop contribution to degree $n$ of $\HCG_{d,d+2}$ explained in Equation (2) loc.cit.\,and noting that the graph $H_k$ in that equation has degree $d$ for all $k$.
\end{proof}

\begin{rem}\label{rem:rationalising-source-does-not-matter}Note that we have $\Map^h(E_{n},E^\bfQ_{m})_{r_\bfQ\circ \iota}\simeq \Map^h(E^\bfQ_{n},E^\bfQ_{m})_{\iota^\bfQ}$ by \cref{lem:loc-dendr-map}.
\end{rem}

\subsection{Homotopy groups of $\Aut^h(E_d)/\TOP(d)$ }\label{sec:homotopy-auted-topd}
We now state our main technical result on the homotopy groups of the fibre $\Aut^h(E_d)/\TOP(d)$ of the map $\BTOP(d)\ra \BAut^h(E_d)$ from \eqref{equ:top-to-auted-config}. We phrase the result in terms of the following statement that we will refer to as $(\operatorname{\mathbf{H}}^d_{k,m})$. It depends on a choice of dimension $d\ge1$ and degrees $k,m\ge 2$.

\medskip

\begin{center} 
\quad\quad\quad\begin{minipage}{11.5cm}At least one of the following two scenarios is the case:
\begin{enumerate}
\item $\pi_*(\Aut^h(E_d)/\TOP(d))$ is uncountable in degree $k-2$ or $k-1$, or
\item $\pi_{m}(\Aut^h(E_d)/\TOP(d))_\bfQ$ is nontrivial.
\end{enumerate}
\end{minipage} \hfill $(\operatorname{\mathbf{H}}^d_{k,m})$\end{center}

\begin{thm}\label{thm:nontriviality-general}The statement $(\operatorname{\mathbf{H}}^d_{k,m})$ holds in the following cases:
\begin{enumerate}
\item\label{nontriviality-general:ii} dimension $d=3$ and degrees $k=7$ and $m=6$,
\item\label{nontriviality-general:iii} dimension $d=4$ and degrees $k=4$ and $m=4$,
\item\label{nontriviality-general:iv} dimension $d=2n+1\ge5$, degrees $k\le 8n-12$ with $k\equiv0\modulo{4}$ and $k\neq 6n-2$, and $m=k$. For $2n+1=5$, the bound $k\le 8n-12$ can be weakened to $k\le 8n-8$,
\item\label{nontriviality-general:v} dimension $d=2n\ge6$, degrees $2n\le k\le 8n-12$ with $k\equiv0\modulo{4}$, and $m=k$. If $n$ is odd then the condition $2n\le k$ can be removed.
\end{enumerate}
\end{thm}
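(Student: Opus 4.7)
The plan is to exploit the homotopy fibre sequence
\[ \TOP(d) \lra \Aut^h(E_d) \lra \Aut^h(E_d)/\TOP(d) \]
obtained by looping \eqref{equ:top-to-auted-config}, and to apply the dichotomy of \cref{thm:haut-uncountable-or-iso} (with $n = m = d$) to its middle term: for each $i \geq 1$, either $\pi_*(\Map^h(E_d,E_d))$ is uncountable in degree $i-1$ or $i$, or the rationalisation map induces an isomorphism $\pi_i(\Map^h(E_d,E_d))_\bfQ \xra{\cong} \pi_i(\Map^h(E_d^\bfQ, E_d^\bfQ))$. Since the mapping space based at the identity is $\Aut^h(E_d)$, and since $\TOP(d)$ has countable homotopy groups in every degree (it has the homotopy type of a separable CW complex), I can feed each alternative into the long exact sequence of the fibre sequence. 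In the uncountable alternative, the cokernel of $\pi_i(\TOP(d)) \to \pi_i(\Aut^h(E_d))$ injects into $\pi_i(\Aut^h(E_d)/\TOP(d))$, and since a quotient of an uncountable group by a countable subgroup is uncountable, one obtains uncountability of $\pi_i(\Aut^h(E_d)/\TOP(d))$ in degree $i-1$ or $i$, verifying scenario~(1) of $(\operatorname{\mathbf{H}}^d_{k,m})$ with the appropriate $k$.

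In the rational-isomorphism alternative, I would combine the long exact sequence rationally with the Fresse--Turchin--Willwacher description in \cref{thm:homotopy-Aut-En} and with (partial) rational computations of $\pi_*(\TOP(d))$ available in the literature. Concretely, for the odd-dimensional case \ref{nontriviality-general:iv}, the $\bigoplus_{i \geq 0} \bfQ[2n-4i-2]$-family in \cref{thm:homotopy-Aut-En}\ref{enum:fwt-list-odd} contributes a class in every degree $k \equiv 0 \pmod 4$ with $k \leq 8n-12$ (the range in which the FTW inclusion is an equality up to the extra $\bfQ[6n-2]$ class one must avoid), while the corresponding rational homotopy groups of $\TOP(2n+1)$ are known to be much smaller in this range, e.g.\ via smoothing theory, Farrell--Hsiang-type results, and the rational computations of $\pi_*(\BDiff_\partial(D^{2n+1}))_\bfQ$ due to Krannich--Randal-Williams \cite{KrRW} and Kupers--Randal-Williams \cite{K-RWdiscs}. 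A dimension count on the rational long exact sequence then shows $\pi_k(\Aut^h(E_d)/\TOP(d))_\bfQ \neq 0$. The even-dimensional case \ref{nontriviality-general:v} is analogous, using the $\bigoplus_{i\geq 0} \bfQ[2n-4i-1]$-family in \cref{thm:homotopy-Aut-En}\ref{enum:fwt-list-even} and the constraint $k \equiv 0 \pmod 4$ (so $k-1 \equiv 3 \pmod 4$, matching $2n-4i-1$ for $n$ even, and an additional argument for $n$ odd removing the lower bound).

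For the low-dimensional cases \ref{nontriviality-general:ii} and \ref{nontriviality-general:iii}, the FTW stable range is too narrow to conclude directly from the graph-complex tables, so the proof would instead combine the dichotomy above with direct input: in dimension $4$ one can use Watanabe's \cite{WatanabeII} nontrivial classes in $\pi_*(\BDiff_\partial(D^4))_\bfQ$ together with the classical smoothing-theory identification of $\Diff_\partial(D^d)$ up to $\TOP(d)$-issues to produce the required class in $\pi_4(\Aut^h(E_4)/\TOP(4))_\bfQ$; in dimension $3$ one can exploit the smallness of $\TOP(3)$ (via the Hatcher/Smale conjecture $\TOP(3) \simeq \OO(3)$) so that any nontrivial class in $\pi_6(\Aut^h(E_3))_\bfQ$ coming from FTW's $\bfQ[4n-1] \oplus \bfQ[6n-3]$ contribution at $n=1$ immediately gives a nontrivial class in $\pi_6(\Aut^h(E_3)/\TOP(3))_\bfQ$.

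The main obstacle I expect is precisely the low-dimensional part: the FTW theorem does not cover these ranges, and the rational homotopy of $\TOP(d)$ is only partially understood in low degrees, so one cannot rely on the clean dichotomy and must do ad hoc rank counts invoking several different deep inputs. A secondary subtlety is tracking the degree shift in the long exact sequence carefully, ensuring that the "uncountable in degree $i-1$ or $i$" conclusion of \cref{thm:haut-uncountable-or-iso} translates into uncountability in degree $k-2$ or $k-1$ of $\Aut^h(E_d)/\TOP(d)$ and not some adjacent degree, which requires that the connecting map into $\pi_{i-1}(\TOP(d))$ have countable image (automatic from countability of the target).
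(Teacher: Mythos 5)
Your overall framework matches the paper's: apply the dichotomy of \cref{thm:haut-uncountable-or-iso} with $n=m=d$, feed each alternative into the long exact sequence of the fibre sequence $\Aut^h(E_d)/\TOP(d)\to\BTOP(d)\to\BAut^h(E_d)$, and use that $\BTOP(d)$ has countable homotopy groups (via \cref{cor:btopd-countable}) to dispose of the uncountable alternative. The degree bookkeeping in that part is correct.

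However, your treatment of the rational alternative for $d\ge4$ is conceptually backwards, and this is a genuine gap. You claim that the family $\bigoplus_{i\ge0}\bfQ[2n-4i-2]$ in \cref{thm:homotopy-Aut-En}\ref{enum:fwt-list-odd} ``contributes a class in every degree $k\equiv0\pmod 4$'' to $\pi_k(\BAut^h(E_d^\bfQ))$ and that one then compares against the smaller rational homotopy of $\TOP(d)$. But $\pi_k(\BAut^h(E_d^\bfQ))\cong\pi_{k-1}(\Map^h(E_d^\bfQ,E_d^\bfQ))$, and the degrees $2n-4i-2$ are even, so they could only contribute when $k-1$ is even, i.e.\ $k$ odd---incompatible with $k\equiv0\pmod4$. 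In fact, checking the full FTW list for $k\equiv0\pmod4$, $k\ne6n-2$, $k\le 8n-12$, one finds that $\pi_k(\BAut^h(E_{2n+1}^\bfQ))$ is \emph{zero} except in degree $k=4n$, where it is $1$-dimensional; similarly for the even case. Meanwhile \cref{thm:topd-surjectivity} shows $\pi_k(\BTOP(d))_\bfQ$ is at least $1$-dimensional (and $2$-dimensional at $k=4n$, resp.\ $k=2n$). So the correct mechanism for $d\ge4$ is that $\pi_k(\BTOP(d))_\bfQ\to\pi_k(\BAut^h(E_d))_\bfQ$ fails to be \emph{injective}, forcing a nonzero contribution to $\pi_k(\Aut^h(E_d)/\TOP(d))_\bfQ$ via the exact sequence; it is the homotopy of $\TOP(d)$ that is ``big'', not that of $\Aut^h(E_d)$. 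The bound $k\le 8n-12$ is precisely the range in which the required surjectivity of $\pi_k(\BTOP(d))_\bfQ$ onto $\pi_k(K(\bfQ,\cdot)\times\BTOP)_\bfQ$ (\cref{thm:topd-surjectivity}) is available, using \cite{K-RWdiscs,KrRW}. Your proposal for dimension $3$ does have the orientation right (there $\pi_7(\BTOP(3))_\bfQ=0$ while $\pi_7(\BAut^h(E_3^\bfQ))\ne0$), but it is an anomaly among the cases. A minor further inaccuracy: for $d=4$ you invoke Watanabe's $D^4$ classes, but the proof only needs classical surjectivity of $\BO(4)\to K(\bfQ,4)\times\BO$ in degree $4\le 4n-1$; Watanabe's input in \cref{thm:topd-surjectivity} concerns $\BTOP(5)$ in degree $8$, which is used in case~\ref{nontriviality-general:iv} for $n=2$.
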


This in particular shows that the map $\BTOP(d)\ra \BAut^h(E_d)$ is not a weak equivalence for $d\ge3$, so proves the first part of \cref{bigcor:top-vs-auted} in these cases (the second part follows by combining Theorems~\ref{thm:oo-loop-general} and~\ref{thm:sdisc-auted-topd}). In the low-dimensional case $d\le2$, the map is an equivalence which one can see by combining the facts that in these dimensions $\BO(d)\ra \BTOP(d)$ and $\BO(d)\ra\BAut^h(E_d)$ are weak equivalences, the first by \cite[Essay V.\S 5.0(7)]{KirbySiebenmann} and the latter by work of Horel for $d=2$ \cite[Theorem 8.5]{Horel} and a folklore result for $d=1$.

\medskip
 
To prepare the proof of \cref{thm:nontriviality-general}, we extract two results on the homotopy groups of the space $\BTOP(d)$ from the literature. The first says they are countable, and its proof requires the following lemma which is likely known to experts but for which we do not know a reference.

\begin{lem}\label{lem:homeo-countable}For a compact topological manifold $M$, possible with boundary, or the interior of such a manifold, the homotopy groups of $\BHomeo_\partial(M)$ are countable.\end{lem}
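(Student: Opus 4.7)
Since $\BHomeo_\partial(M)$ is path-connected with $\pi_n(\BHomeo_\partial(M))\cong\pi_{n-1}(\Homeo_\partial(M))$ for $n\ge 1$, my plan is to prove the equivalent assertion that the topological group $G\coloneqq \Homeo_\partial(M)$, equipped with the compact-open topology, has countable homotopy groups. First I will verify that $G$ is a Polish group, that is, separable and completely metrizable. In both settings the underlying manifold $M$ is metrizable, separable, and $\sigma$-compact (when $M=\interior(N)$ via an exhaustion by codimension-zero compact submanifolds of $N$), which makes the compact-open mapping space $C(M,M)$ a Polish space; the subspace $G$ is cut out by $G_\delta$ conditions (being a homeomorphism, and in the compact case being the identity on a neighborhood of $\partial M$) and is closed under the group operations.

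Next I will invoke the local contractibility theorem of \v{C}ernavski\u{\i} and Edwards--Kirby (see in particular Edwards--Kirby, \emph{Deformations of spaces of imbeddings}, Ann.\ of Math.\ (2) \textbf{93} (1971), 63--88), which asserts that for a topological manifold $M$, possibly with boundary and possibly noncompact, and any closed collared subset $C\subseteq M$, the group of self-homeomorphisms of $M$ fixing a neighborhood of $C$ is locally contractible at the identity. Applying this with $C=\partial M$ in the compact case, and with $C=\varnothing$ when $M=\interior(N)$, I will conclude that $G$ is a locally contractible, separable, metrizable space. By a classical theorem of Hanner, such a space is an ANR.

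Finally, I will appeal to Milnor's theorem (\emph{On spaces having the homotopy type of a CW-complex}, Trans.\ AMS \textbf{90} (1959), 272--280) that every separable metric ANR has the homotopy type of a countable CW complex. A countable CW complex has countable homotopy groups in every degree, so $\pi_*(G)$ is countable, and hence so is $\pi_*(\BHomeo_\partial(M))$. The step requiring the most care is verifying that the local contractibility results in the literature apply cleanly to our two settings; the open-manifold case $M=\interior(N)$ is contained in the Edwards--Kirby formulation, but if necessary it can be bootstrapped from the compact-absolute statement via parametrized isotopy extension and the collar neighborhood theorem.
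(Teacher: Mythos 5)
There is a genuine gap at the step ``By a classical theorem of Hanner, such a space is an ANR.'' Hanner's theorem says that a metrizable space which is \emph{locally an ANR} is an ANR; it does not say that a locally contractible separable metric space is an ANR. That stronger statement is false in infinite dimensions: Borsuk constructed a locally contractible compact metric space that is not an ANR, and the implication ``locally contractible $\Rightarrow$ ANR'' is only valid for finite-dimensional metrizable spaces. Your space $\Homeo_\partial(M)$ is infinite-dimensional, so nothing in the literature you cite gets you to ANR; in fact, whether the homeomorphism group of a compact manifold of dimension $\ge 4$ is an ANR (equivalently, after Milnor, has the homotopy type of a CW complex) is a well-known open problem, which your argument would resolve as a two-line corollary of \v{C}ernavski\u{\i}--Edwards--Kirby. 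So the chain ``Polish $+$ locally contractible $\Rightarrow$ ANR $\Rightarrow$ CW homotopy type $\Rightarrow$ countable homotopy groups'' breaks at its first arrow. (A secondary, repairable point: $\Homeo_\partial(M)$ here is the fibre of restriction to the boundary, i.e.\ homeomorphisms fixing $\partial M$ pointwise, whereas the relative Edwards--Kirby statement concerns homeomorphisms fixing a \emph{neighbourhood} of a closed subset; these differ and need to be compared, e.g.\ by a collar argument.)

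The paper avoids ANR theory altogether. It first uses the collar-induced fibration sequence $\Homeo_\partial(M) \to \Homeo(M) \to \Homeo(\partial M)$ to reduce to homeomorphism groups with no boundary condition, for which second countability (Gleason--Palais) and local contractibility (\v{C}ernavski\u{\i}, Edwards--Kirby) are available verbatim. It then proves the purely point-set \cref{lem:countable-homotopy-criterion}: any locally \emph{weakly}-contractible second countable space has countable homotopy groups. The proof uses that second countable spaces are Lindel\"of to extract a countable open hypercover by weakly contractible subsets, and then the Dugger--Isaksen hypercover theorem to identify the space, up to weak equivalence, with the homotopy colimit of a countable diagram of contractible pieces, hence with a countable CW complex. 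If you replace your Hanner--Milnor step by an argument of this kind (countability of homotopy groups directly from a countable weakly contractible cover), the rest of your outline goes through; note also that completeness of the metric plays no role, only separability/second countability and local (weak) contractibility are needed.
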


\cref{lem:homeo-countable} will be a consequence of the following point-set topological fact. Recall that a topological space is \emph{second countable} if its topology has a countable basis, and \emph{locally weakly-contractible} if for every neighbourhood $U$ of a point $p$ there exists a weakly-contractible open neighbourhood $V \subseteq U$ of $p$.

\begin{lem}\label{lem:countable-homotopy-criterion} If $X$ is a locally weakly-contractible second countable space, then the homotopy groups of $X$ based at any basepoint are countable.\end{lem}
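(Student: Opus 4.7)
The plan is to prove directly that $[S^n, X]_{\ast}$ is countable for every $n\geq 1$, and similarly that $\pi_0(X)$ is countable, by showing that every continuous map from a sphere (or a point) into $X$ is homotopic to one drawn from an explicit countable list.

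The first step is to produce a \emph{countable} basis $\mathcal{V}$ of $X$ consisting of weakly-contractible open sets. Since $X$ is locally weakly-contractible, the collection $\mathcal{C}$ of all weakly-contractible open subsets of $X$ is a basis for the topology; combining this with a countable basis $\mathcal{B}$ provided by second countability, one picks, for each pair $(B_1,B_2)\in\mathcal{B}\times\mathcal{B}$ admitting some $V\in\mathcal{C}$ with $B_1\subseteq V\subseteq B_2$, a single such $V$, and the resulting countable subcollection $\mathcal{V}\subseteq\mathcal{C}$ is readily checked to remain a basis of $X$. Now given a continuous map $f\colon S^n\to X$, the compactness and metrisability of $S^n$ together with the Lebesgue number lemma applied to the open cover $\{f^{-1}(V)\}_{V\in\mathcal{V}}$ of $S^n$ allow one to choose a (based) triangulation $K$ of $S^n$ so fine that the closed star of each vertex $v\in K$ is mapped by $f$ into some element of $\mathcal{V}$. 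Selecting for each $v$ one such $V_v$ produces a finite combinatorial datum $(K,\,v\mapsto V_v)$, of which only countably many occur as $f$ varies, because there are countably many isomorphism classes of finite triangulations of $S^n$ and countably many labellings by the countable set $\mathcal{V}$.

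It then suffices to show that any two (based) maps yielding the same combinatorial datum are (based) homotopic. Fixing once and for all a distinguished point in each $V\in\mathcal{V}$ by countable choice, one would construct a canonical ``combinatorial'' representative $g_{K,\{V_v\}}$ by defining it on vertices via those chosen points and extending skeleton by skeleton, using for each simplex $\sigma=[v_0,\dots,v_k]$ that $\bigcap_i V_{v_i}$ is non-empty (it contains $f(\sigma)$) and that each $V_{v_i}$ is weakly contractible; the same skeletal induction then produces a homotopy from $f$ itself to this representative. The principal obstacle is the coherence of the inductive extension---the null-homotopies used on different simplices must agree on their common faces---which can be handled either by carrying out the induction over the skeleta of $K$ with explicit choices at each stage, or, more conceptually, by passing to a \v{C}ech-type simplicial replacement of $X$ built from $\mathcal{V}$ and invoking a nerve-style filling theorem. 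Once this point-set bookkeeping is arranged, the countability of $[S^n,X]_\ast$ is immediate, and the analogous (simpler) argument for unbased maps yields countability of $\pi_0(X)$.
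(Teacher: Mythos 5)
Your reduction to a countable basis $\mathcal{V}$ of weakly-contractible opens is correct and your counting of combinatorial data $(K, v\mapsto V_v)$ is fine, but the step you yourself flag as ``the principal obstacle'' is a genuine gap, not a point-set bookkeeping issue. To build the canonical representative $g_{K,\{V_v\}}$ (or a homotopy $f\simeq g$) by skeletal induction, you must extend a map from $\partial\sigma$ (or from $\partial\sigma\times I\cup\sigma\times\{0,1\}$) to $\sigma$; this requires the partial map to land in a single weakly-contractible set, and it does not. Concretely, already for an edge $[v_0,v_1]$: the chosen path $H|_{\{v_1\}\times I}$ in $V_{v_1}$ from $f_0(v_1)$ to $f_1(v_1)$ need not be homotopable rel endpoints into $V_{v_0}\cap V_{v_1}$, so the boundary loop of the square $[v_0,v_1]\times I$ lies in $V_{v_0}\cup V_{v_1}$ but not in any one weakly-contractible piece, and there is no reason it should bound. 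Non-emptiness of $\bigcap_i V_{v_i}$ together with weak contractibility of the individual $V_{v_i}$ is not enough; you would need the \emph{intersections} to be weakly contractible (a good cover), which local weak-contractibility does not give you. The same obstruction defeats your second suggested fix: the nerve theorem and ``\v{C}ech-type simplicial replacement'' arguments also require a good (or at least hypercomplete in some sense) cover.

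The paper's actual proof sidesteps exactly this issue by using \emph{hypercovers} rather than a single cover: it builds an open hypercover $U_\bullet\to X$ in which each level is a countable disjoint union of weakly-contractible opens, re-covering the intersections at each simplicial stage instead of demanding they be weakly-contractible, and then invokes Dugger--Isaksen's theorem that $\hocolim\,U_\bullet\to X$ is a weak equivalence, together with the observation that collapsing each weakly-contractible piece to a point gives a weak equivalence to a homotopy colimit of a levelwise-countable discrete simplicial space, hence a countable CW complex. If you want to salvage your more hands-on approach, the fix is essentially to rediscover hypercovers: refine the cover appearing on the $(k-1)$-skeleton before extending to the $k$-skeleton, which trades your fixed finite datum $(K, v\mapsto V_v)$ for a more elaborate but still countable object. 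As written, the proof is incomplete at the coherence step.
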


\begin{proof}Recall (for instance from \cite[VIII.6.3]{Dugundji}) that every second countable space $X$ is \emph{Lindel\"of}, i.e.\,every open cover has a countable subcover. For locally weakly contractible $X$, we apply this to the collection of all weakly-contractible open subsets to see that $X$ admits a countable open cover by weakly-contractible subsets. As being a locally weakly-contractible second countable space is preserved by passing to an open subset, the same is true for open subsets of $X$. This allows one to inductively construct an open hypercover $U_\bullet \to X$ such that each $U_\bullet$ has countable many components, each of which is weakly-contractible. Now consider the zigzag
$X \leftarrow \hocolim\,U_\bullet \rightarrow \hocolim\,\pi_0(U_\bullet)$
whose left map is the weak homotopy equivalence of \cite[Theorem 1.3]{DuggerIsaksen} and whose right map is induced by taking path components, so it is also a weak homotopy equivalence since homotopy colimits take objectwise weak homotopy equivalences to weak homotopy equivalences. Now observe that the right term is equivalent to a countable CW complex, e.g.~using the formula in \cite[Proposition 3.2]{DuggerIsaksen} exhibiting the homotopy colimit as the geometric realisation of a simplicial set with countable sets of $k$-simplices for all $k$, and hence has countable homotopy groups.
\end{proof}

\begin{proof}[Proof of \cref{lem:homeo-countable}] For $M$ compact, restriction to the boundary induces a fibration sequence $\Homeo_\partial(M) \ra \Homeo(M) \ra \Homeo(\partial M)$ as a result of the existence of collars. Hence it suffices to prove the result for the topological group of homeomorphisms of a compact manifold with boundary or the interior of such a manifold, with no boundary condition. This space is second countable in the compact-open topology \cite[Proposition 5.4]{GleasonPalais} and locally contractible by \cite[Theorem 1, Theorem 2]{Cernavskii} (or \cite[Corollary]{CernavskiiRn} for the case $\bfR^d$, which also serves an erratum for the previous reference) or \cite[Corollary 1.1, Corollary 6.1]{EdwardsKirby} (or \cite[Theorem 4]{Kirby} for the case $\bfR^d$), so the claim follows from \cref{lem:countable-homotopy-criterion}.
\end{proof}

Applying \cref{lem:homeo-countable} to $\bfR^d=\interior(D^d)$ we conclude:

\begin{cor}\label{cor:btopd-countable} The homotopy groups of $\BTOP(d)$ are countable.\end{cor}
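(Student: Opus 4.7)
The plan is to invoke \cref{lem:homeo-countable} directly. First I would identify $\BTOP(d)$ with $\BHomeo(\bfR^d)$: the topological group $\TOP(d)$ of homeomorphisms of $\bfR^d$ fixing the origin sits in a fibration sequence $\TOP(d) \to \Homeo(\bfR^d) \to \bfR^d$ given by evaluation at $0$, and since $\bfR^d$ is contractible the inclusion $\TOP(d) \hookrightarrow \Homeo(\bfR^d)$ is a weak equivalence. Consequently $\BTOP(d) \simeq \BHomeo(\bfR^d)$, and for the open manifold $\bfR^d$ the notation $\BHomeo_\partial(\bfR^d)$ of the preceding lemma coincides with $\BHomeo(\bfR^d)$ since there is no boundary to fix.

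Next I would observe that $\bfR^d = \interior(D^d)$ is the interior of the compact topological manifold $D^d$, so the hypotheses of \cref{lem:homeo-countable} are satisfied. Applying that lemma with $M = \bfR^d$ yields that $\pi_i(\BHomeo(\bfR^d))$ is countable for every $i \geq 0$ and every choice of basepoint, and by the identification above the same holds for $\pi_i(\BTOP(d))$.

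There is essentially no obstacle here: all the nontrivial work has already been carried out in \cref{lem:homeo-countable}, whose proof combines the second countability of $\Homeo(\bfR^d)$ in the compact-open topology with the Černavski\u{\i}--Edwards--Kirby local contractibility theorem and the elementary criterion of \cref{lem:countable-homotopy-criterion}. The only mild point to confirm is the equivalence $\BTOP(d) \simeq \BHomeo(\bfR^d)$, which depends on the precise definition of $\TOP(d)$ one adopts but is standard regardless.
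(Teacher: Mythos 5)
Your proposal is correct and matches the paper's proof exactly: the paper deduces the corollary by applying \cref{lem:homeo-countable} to $\bfR^d = \interior(D^d)$. The only difference is that you add a short aside about reconciling $\TOP(d)$ with $\Homeo(\bfR^d)$, which is unnecessary under the paper's convention that $\TOP(d)$ is the full group of homeomorphisms of $\bfR^d$, but harmless.
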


\begin{rem}For $d \neq 4$, \cref{cor:btopd-countable} also follows by combining \cite[Lemma 10, p.\,188]{MilnorCollectionIV} with \cite[Essay V.\S 5.0(1)]{KirbySiebenmann}. The advantage of the proof above is that it applies to $d=4$.\end{rem}

The second result on $\BTOP(d)$ we will use follows from works of Krannich, Kupers, Randal-Williams, and Watanabe \cite{KrRW,K-RWdiscs,WatanabeII}. It concerns two commutative squares
\begin{equation}\label{equ:topd-to-top}
	\begin{tikzcd}
		\BO(2n)\dar{\iota}\rar{(e,\mathrm{stab})}\dar&[5pt]K(\bfQ,2n)\times \BO\dar{\id\times\iota}[swap]{\simeq_\bfQ}&[-10pt]\BO(2n+1)\rar{(E,\mathrm{stab})}\dar{\iota}&[5pt] K(\bfQ,4n)\times \BO\dar{\id\times\iota}[swap]{\simeq_\bfQ}\\
		\BTOP(2n)\rar{(e,\mathrm{stab})}&K(\bfQ,2n)\times\BTOP&\BTOP(2n+1)\rar{(E,\mathrm{stab})}&K(\bfQ,4n)\times\BTOP
	\end{tikzcd}
\end{equation}
where the vertical arrows are induced by the inclusion $\oO(d)\subset\TOP(d)$ and the horizontal arrows by the stabilisation map, the Euler class $e\in\oH^{2n}(\BTOP(2n);\bfQ)$, and the odd-dimensional analogue of its square $E\in\oH^{2n+1}(\BTOP(2n+1);\bfQ)$ (see \cite[Sections 1.2.2 and 8.1.1]{KrRW} for further information on this class). That the right vertical maps are rational equivalences follows from the finiteness of the groups $\pi_*(\TOP/\OO)$ \cite[Essay V.\S 5.0(5)]{KirbySiebenmann}.
		
		\begin{thm}\label{thm:topd-surjectivity}
The maps induced by the bottom horizontal arrows
\[\pi_k(\BTOP(2n))_\bfQ\ra\pi_k(K(\bfQ,2n)\times \BTOP)_\bfQ\quad\pi_k(\BTOP(2n+1))_\bfQ\ra\pi_k(K(\bfQ,4n)\times\BTOP)_\bfQ\]	
are surjective in degrees $k\le 4n-1$ for all $n$, and in degrees $k\le 8n-12$ as long as $n\ge3$. Moreover, the right-hand map for $n=2$ is also surjective in degree $4n$.
\end{thm}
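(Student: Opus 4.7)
My plan is to split the argument into the elementary lower range $k \le 4n-1$ and the harder upper range $4n \le k \le 8n-12$ (for $n \ge 3$), together with the exceptional degree $k = 4n = 8$ for $n = 2$ in the odd-dimensional case.

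For the lower range, I would argue that rationally $\mathrm{BO}(2n) \simeq_\bfQ K(\bfQ, 2n) \times \prod_{i=1}^{n-1} K(\bfQ, 4i)$ via the Euler class $e$ and the Pontryagin classes $p_1, \ldots, p_{n-1}$. The top horizontal composition $\mathrm{BO}(2n) \to K(\bfQ, 2n) \times \mathrm{BO}$ is $(e, \mathrm{stab})$, and it hits every rational homotopy generator of the target in degrees $k \le 4n-1$; these generators live in degrees $\{2n\} \cup \{4, 8, \ldots, 4(n-1)\}$. The commutativity of the left-hand square in \eqref{equ:topd-to-top} together with the rational equivalence of its right vertical map then forces the bottom horizontal map to be rationally surjective in this range. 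The odd-dimensional case proceeds analogously using $\mathrm{BO}(2n+1) \simeq_\bfQ \prod_{i=1}^n K(\bfQ, 4i)$, although at the top degree $k = 4n$ the $\mathrm{BO}(2n+1)$-image is only one-dimensional in the rank-two target (both $E$ and $p_n$ pull back from $p_n$), so surjectivity there is not automatic from this argument.

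To extend to the upper range $k \le 8n-12$ for $n \ge 3$, I would need to hit the generators of $\pi_{4i}(\mathrm{BTOP})_\bfQ = \bfQ$ for $i = n, \ldots, 2n-3$. These classes are not accessible from $\mathrm{BO}(2n)$, since the indecomposables in $\mathrm{H}^*(\mathrm{BO}(2n);\bfQ)$ vanish above degree $4(n-1)$ (the class $p_n$ equals $e^2$ and is decomposable). The required lifts to $\pi_*(\mathrm{BTOP}(d))_\bfQ$ are supplied by the cited papers: Kupers--Randal-Williams \cite{K-RWdiscs} and Krannich--Randal-Williams \cite{KrRW} compute enough of $\pi_*(\mathrm{BDiff}_\partial(D^d))_\bfQ$ in the required stable range, and via Morlet's theorem these transfer to rational homotopy classes in $\mathrm{BTOP}(d)$ whose stabilisations to $\mathrm{BTOP} \simeq_\bfQ \mathrm{BO}$ detect the required Pontryagin classes. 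The exceptional case $(d, k) = (5, 8)$ would be handled analogously using Watanabe's construction \cite{WatanabeII} of a non-trivial rational class in $\pi_*(\mathrm{BDiff}_\partial(D^5))$ via configuration-space integrals, which lifts to a class in $\pi_8(\mathrm{BTOP}(5))_\bfQ$ that is detected by $E$ and independent of the $\mathrm{BO}(5)$-image.

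The main obstacle is the upper range, which is non-elementary and depends on the Galatius--Randal-Williams stable moduli theory as it propagates through Morlet's theorem from $\mathrm{BDiff}_\partial(D^d)$ to $\mathrm{BTOP}(d)$. The precise numerical bound $8n-12$ reflects the current range in which these smooth moduli computations are available in the literature and transfer cleanly to the topological setting.
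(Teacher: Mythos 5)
Your outline matches the paper's proof in its overall structure and references: the lower range $k \le 4n-1$ from the classical rational surjectivity of the $\BO$-arrows, the upper range via Morlet smoothing theory feeding into the diffeomorphism-group computations of \cite{K-RWdiscs}, and Watanabe's configuration-space-integral class for the exceptional degree, identified with $E$ using \cite{KrRW}. You also correctly flag the key obstruction (that $p_n = e^2$ is decomposable in $\oH^*(\BO(2n);\bfQ)$) which forces one to leave the world of $\BO(2n)$ in the upper range.

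There is, however, a subtle gap in your treatment of the odd-dimensional upper range. For $d = 2n+1$ with $n \ge 3$ the range $k \le 8n-12$ contains $k = 4n$, and at that degree your plan does not actually produce the required surjectivity. The paper reduces the odd case to the even one by precomposing with $\BTOP(2n) \to \BTOP(2n+1)$, but classes pushed forward from $\pi_{4n}(\BTOP(2n))_\bfQ$ cannot hit the $K(\bfQ,4n)$-summand of the target: $E$ pulls back to $e^2 \in \oH^{4n}(\BTOP(2n);\bfQ)$, which is decomposable and hence vanishes on any rational Hurewicz image. So this reduction handles only $k \neq 4n$, and the Watanabe/Kontsevich-class argument you reserve for $(d,k) = (5,8)$ is in fact needed at $k = 4n$ for \emph{every} $n \ge 2$, including $n \ge 3$. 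The paper runs that argument uniformly for all $n \ge 2$; you would need to do the same rather than treat it as an isolated $n = 2$ exception. Apart from this, and making the Morlet reduction precise (the input from \cite[Corollary 6.7]{K-RWdiscs} is the surjectivity of $\pi_*(\BDiff_\partial(D^{2n}))_\bfQ \to \pi_*(\Omega^{2n}_0\,\TOP/\OO(2n))_\bfQ$ in a range, followed by a long-exact-sequence comparison to pass to $\BTOP(2n) \to \BTOP$), your plan coincides with the paper's argument.
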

	
\begin{proof}In degrees $*\le 4n-1$ the claimed surjectivity follows from the classical fact that the upper horizontal arrows are rationally surjective in exactly this range. 
	
In order to show the claim for the bottom horizontal map in the left square of \eqref{equ:topd-to-top} for $n\ge3$ in the range $*\le 8n-12$, it thus suffices to show that the map $\Omega^{2n}_0\BTOP(2n)\ra \Omega^{2n}_0\BTOP$ is surjective on $\pi_*(-)_\bfQ$ for $*\le 6n-12$ which can be further reduced to showing that the map $\BDiff_\partial(D^{2n})\simeq \Omega^{2n}_0\TOP(2n)/\oO(2n)\ra \Omega^{2n}_0\TOP/\OO(2n)$ is surjective on $\pi_*(-)_\bfQ$ for $*\le 6n-13$; here we have used Morlet's smoothing theory equivalence \cite[p\,241]{KirbySiebenmann}. This surjectivity was proved in \cite[Corollary 6.7]{K-RWdiscs}. By precomposing the map $\BTOP(2n+1)\ra \BTOP$ with $\BTOP(2n)\ra\BTOP(2n+1)$, this argument also shows that the bottom horizontal map in the right square of \eqref{equ:topd-to-top} for $n\ge3$ is surjective on $\pi_*(-)_\bfQ$ for $*\le 6n-12$ as long as $*\neq 4n$.

This leaves us with showing that for all $n\ge2$, the bottom horizontal map of the right square of \eqref{equ:topd-to-top} is surjective on $\pi_{4n}(-)_\bfQ$. Since the pullback of the class $E\in\oH^{4n}(\BTOP(2n+1);\bfQ)$ to $\BO(2n)$ agrees with $e^2$ by definition of $E$ and hence is decomposable, evaluation of the pullback of $E$ on the image of the Hurewicz map $\pi_{4n}(\BO(2n))_\bfQ \to H_{4n}(\BO(2n);\bfQ)$ is trivial. Hence the fact that the map $\BO(2n)\ra \BTOP$ is surjective on $\pi_{4n}(-)_\bfQ$ implies that the direct summand $\pi_{4n}(\BTOP)_\bfQ\subset \pi_{4n}(K(\bfQ,4n)\times \BTOP)_\bfQ$ is in the image. So we are left with showing that the map $E\colon \BTOP(2n+1)\ra K(\bfQ,4n)$ is nontrivial for all $n\ge2$. Using the smoothing theory equivalence $\BDiff^\fr_\partial(D^{2n+1})_0\simeq \Omega^{2n+1}_0\TOP(2n+1)$ involving the framed diffeomorphism group, the composition \vspace{-0.8em} \[\pi_{4n}(\BDiff^\fr_\partial(D^{2n+1}))_\bfQ\cong \pi_{4n}(\BTOP(2n+1))_\bfQ\xlra{E}\bfQ\subset\bfR\] agrees by \cite[Theorem B.4, Remark B.5]{KrRW} up to a constant with the ``Kontsevich class'' $\zeta_{2,3}$ from \cite[p.\,631]{WatanabeII}, so it is nontrivial for $n\ge2$ by Theorem 3.1 loc.cit and \cite{WatanabeIIerr}.\end{proof}

\begin{proof}[Proof of \cref{thm:nontriviality-general}]
Throughout the proof, we  use the facts that $\pi_{k>0}(\BTOP)_\bfQ$ is $1$-dimensional for $k\equiv0\modulo{4}$ and trivial otherwise, and that $\BTOP(d)$ has countable homotopy groups by \cref{cor:btopd-countable}. We divide the proof into three cases.

\begin{itemize}[leftmargin=9mm]
	\item[$d=3$]Applying Theorem \ref{thm:haut-uncountable-or-iso} for $n=m=3$ and $i=6$, we see that either 
	\begin{enumerate}[label=(\alph*)]
		\item $\pi_*(\BAut^h(E_3))$ is uncountable in degrees $6$ or $7$, or 
		\item $\pi_7(\BAut^h(E_3))_\bfQ \cong \pi_7(\BAut^h(E_3^\bfQ))$. 
	\end{enumerate}
	By the long exact sequence of $\Aut^h(E_3)/\TOP(3)\ra\BTOP(3)\ra\BAut^h(E_3)$, there is nothing left to show in the first case since $\BTOP(d)$ has countable homotopy groups. In the second case, we use that firstly the map $\BO(3)\ra \BTOP(3)$ is a weak equivalence by \cite[p.\,605]{Hatcher} and thus $\pi_7(\BTOP(3))_\bfQ \cong \pi_7(\BO(3))_\bfQ$ vanishes, and that secondly \cref{thm:homotopy-Aut-En} \ref{enum:fwt-list-odd} combined with \cref{rem:rationalising-source-does-not-matter} shows that $\pi_7(\BAut^h(\smash{E_3^\bfQ}))\cong \pi_6(\Map^h(\smash{E_3^\bfQ},\smash{E_3^\bfQ});\id)$ is nontrivial, in fact at least $3$-dimensional (since $12n-6=14n-8$ for $n=3$). Using the same long exact sequence as before, this shows the claim in the second case.
		
	\item[$d=4$] The logic is the same as in the case $d=3$: we again apply Theorem \ref{thm:haut-uncountable-or-iso}, this time for $n=m=4$ and $i=3$, to see that either 
	\begin{enumerate}[label=(\alph*)]
		\item $\pi_*(\BAut^h(E_4))$ is uncountable in degrees $3$ or $4$, or 
		\item $\pi_4(\BAut^h(E_4))_\bfQ \cong \pi_4(\BAut^h(E_4^\bfQ))$. 
	\end{enumerate}
	As before, there is nothing left to show in the first case. In the second case, we use that firstly $\pi_4(\BTOP(4))_\bfQ$ is at least $2$-dimensional as a result of \cref{thm:topd-surjectivity} and that secondly $\pi_4(\BAut^h({E_4}^\bfQ))$ is $1$-dimensional as a result of \cref{thm:homotopy-Aut-En} \ref{enum:fwt-list-even} (since $2n-4i-1 = 3$ for $i=0$ and all other terms are in degree $\geq 7$).
		
	\item[$d\ge5$] Theorem \ref{thm:haut-uncountable-or-iso} for $n=m=d$ and $i=k-1$ shows that either 
	\begin{enumerate}[label=(\alph*)]
		\item $\pi_*(\BAut^h(E_{d}))$ is uncountable in degrees $k$ or $k-1$, or 
		\item $\pi_{k}(\BAut^h(E_{d}))_\bfQ \cong \pi_{k}(\BAut^h(E_{d}^\bfQ))$. 
	\end{enumerate}
	As previously, nothing is left to show in the first case. In the second case, we first consider odd $d$. If $d=2n+1\ge 5$ and $1\le k\le 8n-12$ (or $1\le k\le 8n-8$ if $n=2$) such that $k\neq 6n-2$ and $k\equiv0\modulo{4}$, then we use firstly that $\pi_{k}(\BTOP(2n+1))_\bfQ$ is at least $2$-dimensional if $k=4n$ and otherwise at least $1$-dimensional by \cref{thm:topd-surjectivity}, and secondly that \cref{thm:homotopy-Aut-En} \ref{enum:fwt-list-odd} shows that $\pi_k(\BAut^h(E_{2n+1}^\bfQ))$ is trivial for $k\neq 4n$ and $1$-dimensional for $k=4n$. Finally, for even $d=2n\ge6$ and $k\equiv 0\modulo{4}$ with $2n\le k\le 8n-2$ for $n$ even and $k\le 8n-2$ for $n$ odd, we use a) that $\pi_{k}(\BTOP(2n))_\bfQ$ is at least $1$-dimensional and at least $2$-dimensional for $k=2n$ if $n$ is even by \cref{thm:topd-surjectivity}, and b) that \cref{thm:homotopy-Aut-En} \ref{enum:fwt-list-even} shows that $\pi_{k}(\BAut^h(E_{2n}^\bfQ))$ is trivial for $k\neq 2n$ and $1$-dimensional for $k=2n$. \qedhere
\end{itemize}
\end{proof}
		
\begin{rem}\label{rem:map-doesnt-matter}
The proof of \cref{thm:nontriviality-general} simply compares the homotopy groups of $\TOP(d)$ and $\Aut^h(E_d)$ abstractly. It does not use anything about the specific map $\TOP(d)\ra \Aut^h(E_d)$.
\end{rem}

\subsubsection{Applications to $S^{\DiscInf}_\partial(D^d)$}
In view of the $0$-coconnected map of \cref{thm:sdisc-auted-topd} \[\Omega^{d+1}_{\oO(d)}\Aut^h(E_d)/\TOP(d)\lra S_\partial^{\DiscInf}(D^d),\] as long as $k-d-3\ge 0$ the statement $(\operatorname{\mathbf{H}}^d_{k,m})$ implies the following variant for $S^{\DiscInf}_\partial(D^d)$:

\medskip

\begin{center} 
\quad\quad\quad\begin{minipage}{11cm}At least one of the following two scenarios is the case:
\begin{enumerate}
	\item $\pi_*(S^{\DiscInf}_\partial(D^d))$ is uncountable in degree $k-d-3$ or $k-d-2$, or
	\item $\pi_{m-d-1}(S^{\DiscInf}_\partial(D^d))_\bfQ$ is nontrivial.
\end{enumerate}
\end{minipage} \hfill $(\operatorname{\mathbf{H}}^{d,\DiscInf}_{k,m})$\end{center}

\medskip

\noindent For $k-d-3=0$, this implication uses that if $\pi_{0}(\Omega^{d+1}\Aut^h(E_d)/\TOP(d))$ is uncountable, then so is $\pi_{0}(\Omega^{d+1}_{\oO(d)}\Aut^h(E_d)/\TOP(d))$. This is because $\pi_{d+1}(\BTOP(d))$ is countable, so if the domain of the map $\pi_{d+1}(\Aut(E_d)/\TOP(d))\ra \pi_{d+1}(\BTOP(d))$ is uncountable, then so is its kernel. Combined with \cref{thm:nontriviality-general} we therefore obtain:

\begin{cor}\label{cor:precise-nontriviality-sdisc}
Under the additional assumption $k-d-3\ge 0$, the statement $(\operatorname{\mathbf{H}}^{d,\DiscInf}_{k,m})$ holds for all choices of triples $(d,k,m)$ to which  \cref{thm:nontriviality-general} applies. 
\end{cor}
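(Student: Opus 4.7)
The proof is essentially book-keeping that transports the dichotomy of \cref{thm:nontriviality-general} along the $0$-coconnected comparison map
\[ \phi \colon \Omega^{d+1}_{\oO(d)}\bigl(\Aut^h(E_d)/\TOP(d)\bigr) \lra S_\partial^{\DiscInf}(D^d) \]
furnished by \cref{thm:sdisc-auted-topd}. Being $0$-coconnected, $\phi$ induces an isomorphism on $\pi_{i\ge1}$ and an injection on $\pi_0$; and since restricting to a sub-collection of components leaves $\pi_{i\ge 1}$ unaltered, for $i\ge1$ I have the identification $\pi_i\bigl(\Omega^{d+1}_{\oO(d)}(\Aut^h(E_d)/\TOP(d))\bigr)\cong \pi_{i+d+1}(\Aut^h(E_d)/\TOP(d))$.

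Assuming the second alternative of $(\operatorname{\mathbf{H}}^d_{k,m})$, a quick inspection of the four cases of \cref{thm:nontriviality-general} shows that whenever the hypothesis $k-d-3\ge 0$ is satisfied, one automatically has $m-d-1\ge 2$, so the preceding observation transfers $\pi_m(\Aut^h(E_d)/\TOP(d))_\bfQ\neq 0$ to $\pi_{m-d-1}(S_\partial^{\DiscInf}(D^d))_\bfQ\neq 0$, which is the second alternative of $(\operatorname{\mathbf{H}}^{d,\DiscInf}_{k,m})$.

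Assuming instead the first alternative of $(\operatorname{\mathbf{H}}^d_{k,m})$, suppose $\pi_j(\Aut^h(E_d)/\TOP(d))$ is uncountable for some $j\in\{k-2,k-1\}$, so $j-d-1\in\{k-d-3,k-d-2\}$. If $j-d-1\ge 1$, the same identification transfers uncountability across $\phi$ to $\pi_{j-d-1}(S_\partial^{\DiscInf}(D^d))$ and we are done. The one delicate case is $j-d-1=0$, which forces $k-d-3=0$ and $j=d+1$; here I will argue that by definition of the restricted loop space, $\pi_0(\Omega^{d+1}_{\oO(d)}(-))$ contains the kernel of the connecting homomorphism $\pi_{d+1}(\Aut^h(E_d)/\TOP(d))\to\pi_{d+1}(\BTOP(d))$ (since that kernel maps to the class $0\in\pi_{d+1}(\BTOP(d))$, which lies in the image of $\pi_{d+1}(\BO(d))$), and since $\pi_{d+1}(\BTOP(d))$ is countable by \cref{cor:btopd-countable} this kernel is uncountable. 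The $\pi_0$-injectivity of $\phi$ then transfers this uncountability to $\pi_0(S_\partial^{\DiscInf}(D^d))$, proving the first alternative of $(\operatorname{\mathbf{H}}^{d,\DiscInf}_{k,m})$. The only non-formal step in the whole argument is this $k-d-3=0$ edge case, which is handled precisely by the countability provided by \cref{cor:btopd-countable}.
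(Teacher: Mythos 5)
Your proof is correct and takes essentially the same route as the paper: transport the dichotomy through the $0$-coconnected map of \cref{thm:sdisc-auted-topd}, with the only non-formal step being the edge case $k-d-3=0$, which both you and the paper handle identically by noting that the kernel of $\pi_{d+1}(\Aut^h(E_d)/\TOP(d))\to\pi_{d+1}(\BTOP(d))$ is uncountable (countable codomain, uncountable domain) and lands in $\pi_0$ of the restricted loop space since $0$ lies in the image of $\pi_{d+1}(\BO(d))$. You spell out the index bookkeeping in more detail than the paper does, but the core argument, especially the invocation of \cref{cor:btopd-countable}, is the same.
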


We now use \cref{cor:precise-nontriviality-sdisc} to prove that $S^{\DiscInf}_\partial(D^d)$ is not contractible for all $d\ge5$ with $d\neq3$.
		
\begin{thm}\label{thm:thm-for-discs}
For $d=3$ or $d\ge5$, the space $S^{\DiscInf}_\partial(D^d)$ is not contractible.
\end{thm}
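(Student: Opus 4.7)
The plan is to apply \cref{cor:precise-nontriviality-sdisc} directly, since either horn of the dichotomy $(\operatorname{\mathbf{H}}^{d,\DiscInf}_{k,m})$ produces a nontrivial homotopy group of $S^{\DiscInf}_\partial(D^d)$: an uncountable group in case~(i), and a $\bfQ$-nontrivial (hence infinite) abelian group in case~(ii). To prove the theorem it therefore suffices, for each $d\in\{3\}\cup\{5,6,7,\ldots\}$, to exhibit integers $k,m$ satisfying both the hypotheses of \cref{thm:nontriviality-general} in that dimension and the auxiliary inequality $k-d-3\ge 0$ required by the corollary.

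I would dispose of the low-dimensional cases individually. For $d=3$, part~(i) of \cref{thm:nontriviality-general} supplies $(k,m)=(7,6)$ with $k-d-3=1\ge 0$. For $d=5$---corresponding to $n=2$ in part~(iii)---the improved bound $k\le 8n-8=8$ permits the choice $(k,m)=(8,8)$; indeed $k\equiv 0\pmod{4}$, $k\ne 6n-2=10$, and $k-d-3=0$. For each $d\in\{6,7,8,9\}$ the pair $(k,m)=(12,12)$ satisfies all the demands of parts~(iii)--(iv) together with $k-d-3\ge 0$, as a short case-by-case check confirms.

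For $d\ge 10$ I would produce $k=m$ as any multiple of $4$ in the interval $[d+3,\,8\lfloor d/2\rfloor-12]$ demanded by parts~(iii)--(iv), subject to the exclusion $k\ne 6n-2$ in the odd-dimensional case $d=2n+1$. A direct count shows that this interval contains at least three multiples of $4$ for every $d\ge 10$ (since its length is at least $15$), so the single excluded value in the odd case is easily avoided. Setting $m=k$ then supplies the data required by \cref{cor:precise-nontriviality-sdisc}.

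The main obstacle, if any, is entirely clerical: all the substantive input---the Boavida de Brito--Weiss reduction to $\Aut^h(E_d)/\TOP(d)$ via \cref{thm:sdisc-auted-topd}, the G\"oppl-tower dichotomy of \cref{thm:haut-uncountable-or-iso}, and the Fresse--Turchin--Willwacher computation of $\pi_{\ast}(\Map^h(E_d,E_d^\bfQ))$ used in \cref{thm:nontriviality-general}---has already been absorbed into \cref{cor:precise-nontriviality-sdisc}. What remains is only the finite numerical verification sketched above.
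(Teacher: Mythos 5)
Your proposal is correct and follows the same strategy as the paper: reduce to \cref{cor:precise-nontriviality-sdisc} and exhibit, for each relevant $d$, a pair $(k,m)$ satisfying the hypotheses of \cref{thm:nontriviality-general} together with $k-d-3\ge 0$. The only difference from the paper is organizational — the paper treats $d=6,7$ as isolated small cases and then handles $d\ge 8$ uniformly by choosing the unique $k\equiv 0\pmod{4}$ in $[2n+5,2n+8]$ (odd $d$) or $[2n+4,2n+7]$ (even $d$), while you treat $d\le 9$ case-by-case and then argue for $d\ge 10$ by counting multiples of $4$ in a larger interval. One tiny inaccuracy: for odd $d=2n+1\ge 11$ the interval $[d+3,\,8n-12]$ has length $6n-16\ge 14$, not $\ge 15$ as you state; but length $\ge 12$ already guarantees three multiples of $4$, so your conclusion stands.
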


\begin{proof}
For $d=3$, the claim follows from $(\operatorname{\mathbf{H}}^{d,\DiscInf}_{k,m})$ for the triple $(d,k,m)=(3,7,6)$ since this statement holds true in this case $k-d-3\ge 0$ and \cref{thm:nontriviality-general} applies to this triple. In the case $d\ge5$ the claim follows similarly as long as we ensure that there exists a $k$ such that $k-d-3\ge0$ and \cref{thm:nontriviality-general} applies to the triple $(d,k,k)$. For $d=2n+1$ with $n\ge 4$, we pick the unique $k\equiv0\modulo{4}$ with $2n+5\le k\le 2n+8$. This satisfies the requirements because $k-d-3\ge 0$ and $k\neq 6n-2$ as $2n+8<6n-2$ and $2n+8\le 8n-12$. For $d=2n+1$ with $n=3$, we choose $k=12$. This works because $k-d-3=2\ge0$ and $12\le 8n-12=12$. For $d=2n+1$ with $n=2$, we choose $k=8$ which works using the improvement of the bound since $k-d-3=0\ge0$ and $k\le 8n-8=8$. For $d=2n$ with $n\ge4$, we can pick the unique $k\equiv0\modulo{4}$ with $2n+4\le k\le 2n+7$ which is valid since $k-d-3\ge0$ and $k\le 2n+7\le 8n-12$. Finally for $d=2n$ with $n=3$ we pick $k=12$ which works because $k-d-3=3\ge0$ and $k\le 8n-12=12$.
\end{proof}
		
\begin{rem}\label{rem:classical-proof}
If one relaxes the range $k\le 8n-12$ in \cref{thm:nontriviality-general} \ref{nontriviality-general:iv} and \ref{nontriviality-general:v} to $k\le 4n-1$, then the proof we gave does not rely on the recent works \cite{KrRW,K-RWdiscs,WatanabeII}, since the proof of \cref{thm:topd-surjectivity} does not use them in this range. This is sufficient to deduce \cref{bigcor:top-vs-auted}. It also gives a weaker version of \cref{cor:precise-nontriviality-sdisc} that does not rely on these works. The latter is good enough to conclude \cref{thm:thm-for-discs} \emph{except in dimensions $d=5,6,7$}.
\end{rem}

Combining \cref{thm:thm-for-discs} with \cref{cor:homotopy-retract} implies \cref{bigthm:nontrivial}. 

\begin{rem}\label{rem:3-manifolds}
Even though \cref{thm:thm-for-discs} applies to $d=3$ and all orientable $3$-manifolds $M$ are spin, we cannot conclude that $S^{\DiscInf}_\partial(M)$ is nontrivial in this case, because our tangential $2$-type invariance result does not apply if $d=3$, so \cref{cor:homotopy-retract} is not available. Nonetheless, $S^{\DiscInf}_\partial(M)$ is nontrivial if $M$ embeds into $D^3$ after removing finitely many codimension $0$ discs, since 
\begin{enumerate}
	\item removing discs does not change the homotopy type of $S^{\DiscInf}_\partial(M)$ by \cref{prop:invariance-handles}, 
	\item $S^{\DiscInf}_\partial(D^3)$ is a homotopy retract of $S^{\DiscInf}_\partial(M)$ if $M$ embeds into $D^3$ by the same argument as in the second part of proof of \cref{cor:homotopy-retract}, and
	\item $S^{\DiscInf}_\partial(D^3)$ is nontrivial by \cref{thm:thm-for-discs}. 
\end{enumerate} This applies in particular to $S^3$ or to the handlebodies $(S^1\times D^2)^{\natural g}\natural(S^2\times D^1)^{\natural g}$ for $g,h\ge0$, with $\natural$ denoting the boundary connected sum operation.
\end{rem}

\subsection{Positive codimension} We conclude this section with a brief discussion of an analogue of the nontriviality results of the previous section in positive codimension, by which we mean the following: the subgroup $\oO(c)\subset \oO(d)$ acting on the last $c$ coordinates stabilises the standard inclusion $E_{d-c}\ra E_d$ for $c\ge0$ under the $\oO(d)$-action on $\Map^h(E_{d-c},E_d)$, so we have a map $\oO(d)/\oO(c)\ra \Map^h(E_{d-c},E_d)$. In the same way as in the case $c=d$ discussed in \cref{sec:conf-cats}, Boavida de Brito--Weiss' work \cite{BdBWConf} shows that this action factors as a composition
\[
	\oO(d)/\oO(c)\lra \TOP(d)/\TOP(d,d-c) \lra \Map^h(E_{d-c},E_d)
\]
where $\TOP(d,d-c)\subset \TOP(d)$ is the subgroup of those homeomorphism that fix $\{0\}\times\bfR^{d-c}\subset \bfR^d$. Generalising from the codimension $c=0$ case of \cref{bigcor:top-vs-auted}, one might wonder whether
\begin{equation}\label{equ:relative-comparison-top-en}
	\TOP(d)/\TOP(d,d-c) \lra \Map^h(E_{d-c},E_{d}),
\end{equation}
is a weak equivalence. In codimension $c\ge 3$, this was shown to be the case by Boavida de Brito--Weiss \cite[Theorem 1.6]{BdBWConf} after taking $(d-c+1)$-fold loop spaces. Adapting the methods of the previous subsection, we consider the remaining codimensions $c=1,2$. As before, we phrase the result in terms of the following placeholder statement involving dimension $d\ge1$, codimension $c\in\{1,2\}$, and degrees $k\ge3$ and $m\ge1$.

\medskip

\begin{center}
\quad\begin{minipage}{11.5cm}At least one of the following two scenarios is the case:
\begin{enumerate}
	\item The homotopy groups \[\pi_*\Big(\hofib_\iota\big(\TOP(d)/\TOP(d,d-c) \ra \Map^h(E_{d-c},E_{d})\big)\Big)\] are uncountable in degree $k-3$ or $k-2$, or
	\item The homotopy group \[\pi_{m-1}\Big(\hofib_\iota\big(\TOP(d)/\TOP(d,d-c) \ra \Map^h(E_{d-c},E_{d})\big)\Big)_\bfQ\] is nontrivial.
\end{enumerate}
\end{minipage} \hfill $(\operatorname{\mathbf{H}}^{d,c}_{k,m})$\end{center}

\medskip

\noindent The proof requires some results about the homotopy groups of the spaces $\TOP(d,d-c)$:

\begin{lem}\label{lem:topdd-c} \quad
	\begin{enumerate}
		\item \label{enum:c-is-1} The map $(-) \times \bfR^{d-1} \colon \OO(1) \simeq \TOP(1) \to \TOP(d,d-1)$ is a homotopy equivalence.
		\item \label{enum:c-is-2} The map $(-) \times \bfR^{d-2} \colon \OO(2) \simeq \TOP(2) \to \TOP(d,d-2)$ is $(d-2)$-connected.
	\end{enumerate}
\end{lem}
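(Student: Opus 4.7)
For part (i), the plan is to decompose $\TOP(d, d-1)$ by the action on the two open half-spaces of $\bfR^d \setminus (\{0\} \times \bfR^{d-1})$. Each element either preserves or swaps these two half-spaces, yielding a continuous surjective homomorphism $\TOP(d, d-1) \to \bfZ/2$ that restricts to an isomorphism on the subgroup $\OO(1) \cong \bfZ/2$ included via $(-) \times \id_{\bfR^{d-1}}$. It thus suffices to show the identity component $\TOP(d, d-1)^+$ is contractible. Restriction to each closed half-space identifies $\TOP(d, d-1)^+$ as a topological group with $\Homeo_{\partial H}(H) \times \Homeo_{\partial H}(H)$, where $H = [0, \infty) \times \bfR^{d-1}$ is the closed half-space, so the task reduces to proving that $\Homeo_{\partial H}(H)$ is weakly contractible.

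For this weak contractibility, my plan is to use one-point compactification. Every self-homeomorphism of the locally compact Hausdorff space $H$ is automatically proper and so extends uniquely to a homeomorphism of the one-point compactification $H^+$ fixing $\infty$. Since $H^+$ is homeomorphic to the closed disk $D^d$ via a homeomorphism carrying $\partial H \cup \{\infty\}$ to $\partial D^d$, this yields a natural isomorphism of topological groups $\Homeo_{\partial H}(H) \cong \Homeo_{\partial D^d}(D^d)$, and the latter is contractible by the classical Alexander trick, completing the proof of (i).

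For part (ii), my plan is analogous in spirit but technically more delicate. The ``transverse germ at the origin'' should define a continuous map $\TOP(d, d-2) \to \TOP(2)$ of which the inclusion $\TOP(2) \to \TOP(d, d-2)$ in the statement is a section, so it suffices to show that the fiber---the subgroup of homeomorphisms with trivial transverse germ at the origin of the fixed axis---is $(d-3)$-connected. Using Edwards--Kirby's topological isotopy extension theorem, one can reduce to homeomorphisms that are the identity on a neighbourhood of the origin in $\{0\} \times \bfR^{d-2}$; then handle-straightening techniques of Kirby--Siebenmann, together with the contractibility of $\Homeo_{\partial D^d}(D^d)$ from part (i), give the required connectivity bound. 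The main obstacle will be rigorously justifying the existence and splitting of the germ map in the topological category, since extracting a transverse germ of a topological homeomorphism along a codimension-two locally flat submanifold is a nontrivial operation that depends essentially on the handle-straightening machinery, and requires the dimension restriction $d \geq 4$ implicit in its inputs.
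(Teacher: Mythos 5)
Your proposal for part (i) takes a different route from the paper, which does not decompose by half-spaces or compactify but instead writes down a single explicit isotopy on $\bfR^d$ that translates $f$ off to infinity in the direction normal to the fixed hyperplane while inserting a growing product slab around it. Your route can be made to work, but it has a gap at the step ``this yields a natural isomorphism of topological groups $\Homeo_{\partial H}(H) \cong \Homeo_{\partial D^d}(D^d)$.'' The extension map $f \mapsto \hat{f}$ is a group isomorphism whose inverse (restriction) is obviously continuous, but its own continuity---from the compact--open topology on the non-compact $H$ to the uniform topology on the compact $D^d$---is not automatic: a priori a sequence could converge compactly on $H$ while misbehaving near $\infty$. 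It is in fact true, but proving it requires either Arens' theorem (continuity of inversion for $\Homeo$ of a locally compact, locally connected space in the compact--open topology) together with a short compactness argument, or else the observation that the Alexander isotopy on $D^d$ centered at an interior point of $H$ only ever evaluates $\hat{f}=f$ on a compact subset of $H$ and hence descends directly to a continuous contraction of $\Homeo_{\partial H}(H)$. As written, you assert the topological-group isomorphism without proof.

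For part (ii) you acknowledge that you do not have a proof, and the obstacle you name is in fact the entire content of the statement: a homeomorphism of $\bfR^d$ fixing $\bfR^{d-2}$ pointwise has no intrinsic ``transverse germ,'' since it need not respect any normal structure, and producing and controlling one is exactly the problem of existence and uniqueness of topological normal microbundles in codimension two. This is Kirby--Siebenmann's normal-bundle theorem away from the exceptional dimension, with Freedman--Quinn supplying the missing case, and this is precisely what the paper cites. The handle-straightening and isotopy-extension tools you gesture at are indeed what Kirby--Siebenmann use, but they must be invoked as a black box; there is no shortcut from part (i). So for (ii) your text is an outline of what would have to be done rather than a proof, and at minimum you would need to identify and cite those references.
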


\begin{proof}
Part \ref{enum:c-is-1} admits an elementary argument: if $f(-,-) \colon \bfR^{d-1}\times \bfR=\bfR^d \to \bfR^d$ is an orientation-preserving homeomorphism fixing $\bfR^{d-1}\times\{0\}$ pointwise, then
\[
	[0,\infty)\times \bfR^{d-1}\times \bfR \ni (t,x,s) \longmapsto f_t(x,s) \coloneqq \begin{cases} (x,s) & \text{if $|s| \leq t$}, \\
	f(x,s-t) & \text{if $s>t$,} \\
	f(x,s+t) & \text{if $s<-t$,}\end{cases}
\]
gives an isotopy of homeomorphisms that extends continuously to $t=\infty$ with value $\id_{\bfR^d}$ and depends continuously on $f$. If $f$ is orientation-reversing a similar formula works. 
	
Part \ref{enum:c-is-2} is due to Kirby--Siebenmann \cite[Theorem B]{KirbySiebenmannCodim2} for $d\neq 4$, who deduce it using immersion theory from an existence and uniqueness result for normal bundles of codimension $2$ locally flat embeddings into $d$-manifolds. In the remaining case $d=4$, the necessary results on normal bundles of locally flat embeddings of surfaces into $4$-manifolds were established later by Freedman--Quinn \cite[Section 9.4]{FreedmanQuinn}. 
\end{proof}

\begin{rem}
The proof of \cref{cor:btopd-countable} extends to show that $\TOP(d,d-c)$ has countable homotopy groups: use \cref{lem:countable-homotopy-criterion}, that it is second countable being a subspace of $\TOP(d)$, and that it is locally weakly-contractible by the variant of \cite[Corollary 7.3]{EdwardsKirby} for this group.
\end{rem}

\begin{thm}\label{equ:technical-poscodim}The statement $(\operatorname{\mathbf{H}}^{d,c}_{k,m})$ holds in the following cases.
\begin{enumerate}
	\item\label{codim1} For $c=1$, it holds for all choices of $(d,k,m)$ to which \cref{thm:nontriviality-general} applies. 
	\item\label{codim2} For $c=2$, it holds for $d\ge3$, $k=d$, and $m=d-1$.
\end{enumerate}
\end{thm}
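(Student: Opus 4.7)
Both parts of the theorem follow the template of the proof of \cref{thm:nontriviality-general}: apply \cref{thm:haut-uncountable-or-iso} to $\Map^h(E_{d-c},E_d)$ at $i=k-1$ to obtain a dichotomy between uncountability of $\pi_*(\Map^h(E_{d-c},E_d))$ in two consecutive degrees and a rational identification with $\pi_*(\Map^h(E_{d-c}^\bfQ,E_d^\bfQ))$, then transport both conclusions to $\pi_*(\hofib)$ through the long exact sequence of the fibre sequence $\hofib \to \TOP(d)/\TOP(d,d-c) \to \Map^h(E_{d-c},E_d)_\iota$. Uncountability propagates with a shift by one degree (so $k-2,k-1$ becomes $k-3,k-2$) provided $\pi_*(\TOP(d)/\TOP(d,d-c))$ is countable; this is the case by \cref{cor:btopd-countable} together with the countability of $\pi_*(\TOP(d,d-c))$ noted in the remark following \cref{lem:topdd-c}.

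For part (1) ($c=1$), \cref{lem:topdd-c} \ref{enum:c-is-1} gives $\TOP(d,d-1)\simeq \OO(1)$, which is finite and therefore rationally trivial, so $(\TOP(d)/\TOP(d,d-1))_\bfQ\simeq\TOP(d)_\bfQ$. Combined with the rational equivalence $\Map^h(E_d,E_d^\bfQ)_{r_\bfQ}\simeq\Map^h(E_{d-1},E_d^\bfQ)_{r_\bfQ\circ\iota}$ from \cref{thm:homotopy-Aut-En} \ref{enum:fwt-cerf-lemma} (together with \cref{rem:rationalising-source-does-not-matter}), the rationalised fibre sequence agrees with $\Omega(\Aut^h(E_d)/\TOP(d))_\bfQ\to\TOP(d)_\bfQ\to\Aut^h(E_d)_\bfQ$, yielding $\pi_{m-1}(\hofib)_\bfQ\cong\pi_m(\Aut^h(E_d)/\TOP(d))_\bfQ$. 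The two alternatives of $(\operatorname{\mathbf{H}}^{d}_{k,m})$ therefore translate into the two alternatives of $(\operatorname{\mathbf{H}}^{d,1}_{k,m})$: the rational one directly via this isomorphism, and the uncountability one by running the proof of \cref{thm:nontriviality-general} verbatim with $\Map^h(E_d,E_d)$ replaced by $\Map^h(E_{d-1},E_d)$ (whose rationalisation is the same by \ref{enum:fwt-cerf-lemma}), then transferring uncountability from $\Map^h(E_{d-1},E_d)$ to $\hofib$ through the long exact sequence.

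For part (2) ($c=2$, $k=d$, $m=d-1$), \cref{lem:topdd-c} \ref{enum:c-is-2} gives a $(d-2)$-connected map $\OO(2)\to\TOP(d,d-2)$, which ensures that $\pi_{d-2}(\TOP(d,d-2))_\bfQ$ is a quotient of the finite-dimensional group $\pi_{d-2}(\OO(2))_\bfQ$. Applying \cref{thm:haut-uncountable-or-iso} with $n=d-2$, $m=d$, $i=d-1$ gives: either (a) $\pi_*(\Map^h(E_{d-2},E_d))$ is uncountable in degree $d-2$ or $d-1$, which propagates to uncountability of $\pi_*(\hofib)$ in degree $d-3$ or $d-2$ by the long exact sequence; or (b) $\pi_{d-1}(\Map^h(E_{d-2},E_d))_\bfQ\cong\pi_{d-1}(\Map^h(E_{d-2}^\bfQ,E_d^\bfQ))$, which is countably infinite-dimensional by \cref{thm:homotopy-Aut-En} \ref{enum:homotopy-codim-2} (applied with $d$ replaced by $d-2$) together with \cref{rem:rationalising-source-does-not-matter}. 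In case (b), the map $\TOP(d)/\TOP(d,d-2)\to\Map^h(E_{d-2},E_d)$ factors through $\Aut^h(E_d)$ via $(-)\circ\iota$, so the naturality square for rationalisation identifies the image of $\pi_{d-1}(\TOP(d))_\bfQ$ in $\pi_{d-1}(\Map^h(E_{d-2},E_d))_\bfQ$ with a subspace of the image of $\pi_{d-1}(\Aut^h(E_d^\bfQ))$ in $\pi_{d-1}(\Map^h(E_{d-2}^\bfQ,E_d^\bfQ))$; by the explicit formulae of \cref{thm:homotopy-Aut-En} parts \ref{enum:fwt-list-even}--\ref{enum:fwt-list-odd}, $\pi_{d-1}(\Aut^h(E_d^\bfQ))$ is finite-dimensional for all $d\ge 3$ (the degree $d-1$ lies in the claimed explicit range). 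Combining this finite-dimensional contribution with the finite-dimensional piece coming from $\pi_{d-2}(\TOP(d,d-2))_\bfQ$ through the exact sequence for $\TOP(d)/\TOP(d,d-2)$, the image of $\pi_{d-1}(\TOP(d)/\TOP(d,d-2))_\bfQ$ is finite-dimensional, so the cokernel in the infinite-dimensional target is infinite-dimensional and injects into $\pi_{d-2}(\hofib)_\bfQ$, forcing the latter to be nontrivial.

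The main obstacle is case (b) of part (2): a priori, both the image in and the target $\pi_{d-1}(\Map^h(E_{d-2},E_d))_\bfQ$ are merely countably infinite-dimensional $\bfQ$-vector spaces, so countability of $\pi_*(\TOP(d))$ alone does not suffice to bound the image. The key idea is that the image is constrained by the explicit description of $\Aut^h(E_d^\bfQ)$ in \cref{thm:homotopy-Aut-En}, which exhibits $\pi_{d-1}(\Aut^h(E_d^\bfQ))$ as finite-dimensional; this is the precise source of the mismatch that produces nontrivial rational homotopy of the fibre.
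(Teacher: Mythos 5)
Your overall strategy is the same as the paper's: apply the dichotomy of \cref{thm:haut-uncountable-or-iso} to $\Map^h(E_{d-c},E_d)$, propagate uncountability through the long exact sequence using countability of $\pi_*(\TOP(d)/\TOP(d,d-c))$, and in the rational alternative compare dimensions against $\pi_*(\TOP(d))_\bfQ$ via the Cerf-type lemma \cref{thm:homotopy-Aut-En}~\ref{enum:fwt-cerf-lemma}. However, in part (1) the claim that ``the rationalised fibre sequence agrees with $\Omega(\Aut^h(E_d)/\TOP(d))_\bfQ\to\TOP(d)_\bfQ\to\Aut^h(E_d)_\bfQ$, yielding $\pi_{m-1}(\hofib)_\bfQ\cong\pi_m(\Aut^h(E_d)/\TOP(d))_\bfQ$'' is not justified and is a genuine gap. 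It conflates the rationalisation $(\Map^h(E_{d-1},E_d))_\bfQ$ of the mapping space with the mapping space $\Map^h(E_{d-1},E_d^\bfQ)$ into the rationalised operad; these need \emph{not} agree, and measuring when they do is precisely the content of \cref{thm:haut-uncountable-or-iso}. \cref{thm:homotopy-Aut-En}~\ref{enum:fwt-cerf-lemma} only identifies $\Map^h(E_{d-1},E_d^\bfQ)$ with $\Map^h(E_d,E_d^\bfQ)$, not either of these with $\Aut^h(E_d)_\bfQ$. The same confusion recurs when you say that ``the rationalisation'' of $\Map^h(E_{d-1},E_d)$ ``is the same'' as that of $\Map^h(E_d,E_d)$ by the Cerf lemma. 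Moreover rationalisation does not commute with homotopy fibres without hypotheses. The correct route, which your parenthetical ``running the proof verbatim'' gestures at and which the paper takes, is to apply the dichotomy to $\Map^h(E_{d-1},E_d)$ at $i=k-1$, use the Cerf lemma only to identify the target $\pi_{k-1}(\Map^h(E_{d-1}^\bfQ,E_d^\bfQ))\cong\pi_{k-1}(\Map^h(E_d^\bfQ,E_d^\bfQ))$, and then rerun the numerical comparisons from the proof of \cref{thm:nontriviality-general} unchanged (as \cref{rem:map-doesnt-matter} emphasises, they never used the particular map).

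In part (2) the detour through $\Aut^h(E_d^\bfQ)$ is correct in spirit but unnecessary, and the phrase ``$\TOP(d)/\TOP(d,d-2)\to\Map^h(E_{d-2},E_d)$ factors through $\Aut^h(E_d)$'' is imprecise: only the precomposition from $\TOP(d)$ does. The paper's argument is more direct and avoids this: $\pi_{d-1}(\TOP(d))$ is finitely generated (by \cite[Essay V.\S 5.0]{KirbySiebenmann} for $d\neq4$ and \cite[Theorem 8.7A]{FreedmanQuinn} for $d=4$) and $\pi_{d-2}(\TOP(d,d-2))$ is finitely generated by \cref{lem:topdd-c}~\ref{enum:c-is-2}, so $\pi_{d-1}(\TOP(d)/\TOP(d,d-2))$ is finitely generated by the long exact sequence; hence its rationalisation is finite-dimensional and cannot surject onto the infinite-dimensional $\pi_{d-1}(\Map^h(E_{d-2},E_d^\bfQ))$, and the nonzero cokernel injects into $\pi_{d-2}(\hofib)_\bfQ$. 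Your argument reaches the same conclusion, but the finite generation of $\pi_{d-1}(\TOP(d))$ that you ultimately need is already available without appealing to Fresse--Turchin--Willwacher.
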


\begin{proof}
For \ref{codim1}, we consider the following zig-zag of maps \vspace{-0.2em}
\[
	\TOP(d) \ra \TOP(d)/\TOP(d,d-1) \ra \Map^h(E_{d-1},E_d)\xrightarrow{r_\bfQ\circ(-)} \Map^h(E_{d-1},E_d^\bfQ)\xleftarrow{(-)\circ \iota}\Map^h(E_{d},E_d^\bfQ)
\]
After taking loop spaces the leftmost and the rightmost arrow become weak equivalences; the former by \cref{lem:topdd-c} \ref{enum:c-is-1} and the latter by \cref{thm:homotopy-Aut-En} \ref{enum:fwt-cerf-lemma}. Since the homotopy groups of $\TOP(d)$ are countable by \cref{cor:btopd-countable}, it thus suffices to prove that for choices $k\ge3$ and $m\ge1$ as in the claim either
\begin{enumerate}[label=(\alph*)]
\item $\pi_{*}(\Map(E_d,E_d);\id)$ is uncountable in degrees $k-2$ or $k-1$, or
\item the dimension of $\pi_{m-1}(\TOP(d);\id)_\bfQ$ is larger than that of $\pi_{m-1}(\Map(E_d,E_d);\id)_\bfQ$.
\end{enumerate}
But we already showed this, as part of the proof of \cref{thm:nontriviality-general}. To establish \ref{codim2}, we apply \cref{thm:haut-uncountable-or-iso} to degree $i=d-1$ to conclude that either 
\begin{enumerate}[label=(\alph*)]
	\item $\pi_*(\Map^h(E_{d-c},E_{d}),\iota)$ is uncountable in degrees $d-2$ or $d-1$, or 
	\item $\pi_{d-1}(\Map^h(E_{d-c},E_{d}),\iota)_\bfQ \cong \pi_{d-1}(\Map^h(E_{d-c},E^\bfQ_{d}),r_\bfQ\circ \iota)$. 
\end{enumerate}
Since $\pi_{d-1}(\Map^h(E_{d-2},E^\bfQ_{d}),r_\bfQ\circ\iota)$ is infinite-dimensional by \cref{thm:homotopy-Aut-En} \ref{enum:homotopy-codim-2}, it suffices to show that the groups $\pi_*(\TOP(d)/\TOP(d,d-2))$ are finitely generated in degrees $*\le d-1$. The latter follows from a combination of the following facts:
\begin{enumerate}
	\item $\pi_*(\TOP(d))$ is finitely generated in degrees $*\le d-1$ for all $d$.
	\item The map $(-)\times \bfR^{d-2}\colon \oO(2)\simeq \TOP(2)\ra\TOP(d,d-2)$ is $(d-2)$-connected, so in particular $\pi_*(\TOP(d,d-2))$ is finitely generated in degrees $*\le d-2$.
\end{enumerate}
The first statement follows from \cite[Essay V.\S 5.0]{KirbySiebenmann} for $d\neq 4$ and from \cite[Theorem 8.7A]{FreedmanQuinn} for $d=4$, and the second is \cref{lem:topdd-c} \ref{enum:c-is-2}.
\end{proof}

Unwrapping the statement, \cref{equ:technical-poscodim} in particular implies the following:

\begin{cor}The map \eqref{equ:relative-comparison-top-en} is not an equivalence if $d \geq 3$ and $c=\{1,2\}$.\end{cor}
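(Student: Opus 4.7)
The plan is to derive the corollary directly from Theorem~\ref{equ:technical-poscodim} by unpacking the meaning of the placeholder statement $(\operatorname{\mathbf{H}}^{d,c}_{k,m})$. If the map \eqref{equ:relative-comparison-top-en} were a weak equivalence, then the homotopy fibre
\[
\hofib_\iota\big(\TOP(d)/\TOP(d,d-c)\ra\Map^h(E_{d-c},E_d)\big)
\]
would be contractible, so in particular all of its homotopy groups would vanish at every basepoint. However, $(\operatorname{\mathbf{H}}^{d,c}_{k,m})$ asserts that either some homotopy group of this fibre in degree $k-3$ or $k-2$ is uncountable, or that its rational homotopy group in degree $m-1$ is non-zero. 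Each alternative contradicts contractibility, so to prove the corollary it suffices to produce, for each $d\ge3$ and each $c\in\{1,2\}$, a pair $(k,m)$ for which $(\operatorname{\mathbf{H}}^{d,c}_{k,m})$ holds.

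For $c=2$, part \ref{codim2} of Theorem~\ref{equ:technical-poscodim} applies with $(k,m)=(d,d-1)$ for every $d\ge3$, so we are done immediately in this case. For $c=1$, part \ref{codim1} reduces us to exhibiting a triple $(d,k,m)$ to which Theorem~\ref{thm:nontriviality-general} applies. Such triples are supplied directly by Theorem~\ref{thm:nontriviality-general}\ref{nontriviality-general:ii} for $d=3$ (namely $(3,7,6)$) and by \ref{nontriviality-general:iii} for $d=4$ (namely $(4,4,4)$). For $d\ge5$, parts \ref{nontriviality-general:iv} and \ref{nontriviality-general:v} of the same theorem supply triples of the form $(d,k,k)$ with $k\equiv 0\pmod 4$: the case analysis in the proof of Theorem~\ref{thm:thm-for-discs} already constructs an explicit admissible $k$ in each remaining dimension (and one easily checks $k\ge3$ and $m\ge1$ hold in all cases, so the hypotheses of $(\operatorname{\mathbf{H}}^{d,1}_{k,m})$ are met).

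No step looks like a real obstacle: the substantive work has already been done in Theorem~\ref{equ:technical-poscodim}. The only thing to be careful about is bookkeeping — verifying that the triples produced by the earlier theorem genuinely satisfy the modest side-conditions $k\ge3$, $m\ge1$ built into the statement $(\operatorname{\mathbf{H}}^{d,c}_{k,m})$, and that the alternative "uncountable $\pi_{k-3}$ or $\pi_{k-2}$" does indeed rule out contractibility (it does, since contractible spaces have vanishing, hence countable, homotopy groups in every degree).
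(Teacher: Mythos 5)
Your proof is correct and follows the same route as the paper, which simply states that the corollary is obtained by "unwrapping" Theorem~\ref{equ:technical-poscodim}; you have merely spelled out the bookkeeping that the paper leaves implicit (noting that either alternative in $(\operatorname{\mathbf{H}}^{d,c}_{k,m})$ forces a nonzero homotopy group of the fibre, and exhibiting valid $(k,m)$ in each dimension, with the side-conditions $k\ge3$, $m\ge1$ checked).
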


\begin{rem}There are no maps of the form $E_{d-c} \to E_d$ for $c<0$. Indeed, by restricting to $2$-ary operations such a map would induce an equivariant map $\smash{S^{d-c-1}} \to \smash{S^{d-1}}$ with respect to the antipodal action, which implies $c\ge0$ by the Borsuk--Ulam theorem.
\end{rem}

\newpage
\appendix
\printglossary[title=\listofsymbolsname,type=symbols,style=long4col,nogroupskip]

\bibliographystyle{amsalpha}
\bibliography{literature}

\end{document}